\newcommand\notion[1]{\textit{#1}\index[notion]{#1}}
\newcommand\wcnotion[2]{\textit{#1}\index[notion]{#2}}
\newcommand\wcnotionsym[3]{\textit{#1}\index[notation]{#2}\index[notion]{#3}}
\newcommand\wcsnotion[3]{\textit{#1}\index[notion]{#2!\textit{#3}}}
\newcommand\snotion[2]{\textit{#1}\index[notion]{#1!\textit{#2}}}
\newcommand\snotionsym[3]{\textit{#1}\index[notion]{#1!\textit{#3}}\index[notation]{#2!\textit{#3}}}
\newcommand\wcsnotionsym[4]{\textit{#1}\index[notation]{#2!\textit{#4}}\index[notion]{#3!\textit{#4}}}
\newcommand\wcnotation[2]{\textit{#1}\index[notation]{#2}}
\newcommand\wcsnotation[3]{\textit{#1}\index[notation]{#2!\textit{#3}}}
\newcommand\sym[1]{\index[notation]{#1}}
\newcommand\ssym[2]{\index[notation]{#1!\textit{#2}}}
\newcommand{\exclam}{!}
\newcommand{\Zb}{\mathbb{Z}} 
\newcommand{\Nb}{\mathbb{N}}
\newcommand{\Tb}{\mathbf{T}} 
\newcommand{\Ib}{\mathbb{I}}
\newcommand{\Sb}{\mathbb{S}}
\def\-{\raisebox{.75pt}{-}}
\newcommand{\uvar}{\_}
\newcommand{\Db}{\mathbf{D}} 
\DeclareMathOperator*{\dom}{dom}
\DeclareMathOperator*{\codom}{codom}
\DeclareMathOperator{\tw}{tw}
\newcommand{\Rb}{\mathbf{R}} 
\newcommand{\Lb}{\mathbf{L}} 
\newcommand{\Fb}{\mathbf{F}} 
\DeclareMathOperator{\Gb}{G} 
\DeclareMathOperator{\N}{N}
\DeclareMathOperator{\T}{T}
\DeclareMathOperator{\J}{J}
\DeclareMathOperator*{\W}{W}
\DeclareMathOperator*{\Wm}{tW}
\DeclareMathOperator*{\Wseg}{W_{Seg}}
\DeclareMathOperator*{\Wsat}{W_{Sat}}
\DeclareMathOperator*{\M}{M}
\DeclareMathOperator*{\Mm}{tM}
\DeclareMathOperator*{\Mseg}{M_{Seg}}
\DeclareMathOperator*{\Msat}{M_{Sat}}
\DeclareMathOperator*{\I}{I}
\DeclareMathOperator*{\F}{F}
\DeclareMathOperator*{\CDA}{ADC}
\DeclareMathOperator*{\CDAB}{ADC_B}
\newcommand\omegacat{\omega\mbox{-$\cat$}}
\DeclareMathOperator\Set{Set}
\DeclareMathOperator\Sp{Sp}
\DeclareMathOperator*{\Sq}{Sq}
\DeclareMathOperator{\Hom}{Hom}
\DeclareMathOperator*{\Lfib}{LFib}
\DeclareMathOperator*{\LCartoperator}{LCart}
\newcommand{\LCart}{\mbox{$\LCartoperator$}}
\newcommand{\LCartc}{\mbox{$\LCartoperator$}^c}
\DeclareMathOperator*{\RCart}{RCart}
\newcommand{\uLCart}{\underline{\LCartoperator}}
\newcommand{\uLCartc}{\underline{\LCartoperator}^c}
\DeclareMathOperator{\uHom}{\underline{Hom}}
\DeclareMathOperator{\gHom}{\underline{Hom}_{\ominus}}
\DeclareMathOperator{\Map}{Map}
\DeclareMathOperator{\im}{Im}
\newcommand{\uni}{\underline{\omega}}
\newcommand\w[1]{\widehat{#1}}
\DeclareMathOperator*{\ev}{ev}
\DeclareMathOperator*{\Arr}{Arr}
\newcommand{\Noiun}{\N_{\tiny{(\omega,1)}}}
\newcommand{\colim}{\operatornamewithlimits{colim}}
\newcommand{\laxcolim}{\operatornamewithlimits{laxcolim}}
\newcommand{\laxlim}{\operatornamewithlimits{laxlim}}
\DeclareMathOperator{\Lan}{Lan}
\newcommand\iun{(\infty,1)}
\newcommand\io{(\infty,\omega)}
\newcommand\ioun{(\infty,\omega,1)}
\newcommand\zo{(0,\omega)}
\DeclareMathOperator{\hstar}{\hat{\star}}
\DeclareMathOperator{\htimes}{\hat{\times}}
\newcommand{\costar}{\mathbin{\overset{co}{\star}}}
\newcommand{\fwedge}{\mathbin{\rotatebox[origin=c]{270}{$\gtrdot$}}}
\newcommand{\invamalg}{\mathbin{\rotatebox[origin=c]{180}{$\amalg$}}}
\DeclareMathOperator{\botimes}{\bar{\otimes}}
\DeclareMathOperator\cst{cst}
\DeclareMathOperator\Operatormark{mk}
\newcommand{\mk}{\Operatormark}
\DeclareMathOperator\Fun{Fun}
\DeclareMathOperator\End{End}
\DeclareMathOperator\mcat{cat_m}
\DeclareMathOperator\cat{cat}
\DeclareMathOperator\grd{grd}
\DeclareMathOperator\R{R}
\newcommand\ocat{(\infty,\omega)\mbox{-$\cat$}}
\newcommand\ouncat{(\infty,\omega,1)\mbox{-$\cat$}}
\newcommand\ocatm{{(\infty,\omega)\mbox{-$\mcat$}}}
\newcommand\zocatm{(0,\omega)\mbox{-$\mcat$}}
\newcommand\zocat{(0,\omega)\mbox{-$\cat$}}
\DeclareMathOperator\zocatB{\zocat_B}
\newcommand\icat{(\infty,1)\mbox{-$\cat$}}
\newcommand\qcat{\mbox{Q$\cat$}}
\newcommand\ncat[1]{(\infty, #1)\mbox{-$\cat$}}
\newcommand\zncat[1]{(0, #1)\mbox{-$\cat$}}
\newcommand\igrd{\infty\mbox{-$\grd$}}
\DeclareMathOperator{\OperatorinfiniPsh}{Psh^\infty}
\DeclareMathOperator{\OperatorinfinitPsh}{tPsh^\infty}
\DeclareMathOperator{\OperatorPsh}{Psh}
\DeclareMathOperator{\OperatormPsh}{mPsh}
\DeclareMathOperator{\OperatortPsh}{tPsh}
\newcommand\iPsh[1]{\OperatorinfiniPsh({#1})}
\newcommand\tiPsh[1]{\OperatorinfinitPsh({#1})}
\newcommand\Psh[1]{\OperatorPsh({#1})}
\newcommand\ssetPsh[1]{\OperatorPsh_\Delta({#1})}
\newcommand\tPsh[1]{\OperatortPsh({#1})}
\newcommand\tPshM[1]{{\OperatortPsh}_M({#1})}
\newcommand\mPsh[1]{\OperatormPsh({#1})}
\newcommand\mPshM[1]{{\OperatormPsh}_M({#1})}
\DeclareMathOperator{\OperatorSeg}{Seg}
\DeclareMathOperator{\OperatortSeg}{tSeg}
\DeclareMathOperator{\OperatormSeg}{mSeg}
\newcommand\Seg{\OperatorSeg}
\newcommand\mSeg{\OperatormSeg}
\newcommand\stratSeg{\OperatortSeg}
\DeclareMathOperator{\Sset}{\Psh{\Delta}}
\newcommand{\mSset}{\mPsh{\Delta}}
\newcommand{\stratSset}{\tPsh{\Delta}}
\DeclareMathOperator{\U}{\mathbf{U}}
\DeclareMathOperator{\V}{\mathbf{V}}
\DeclareMathOperator{\Wcard}{\mathbf{W}}
\DeclareMathOperator{\Z}{\mathbf{Z}}
\newcommand{\ringpartial}{\mathring{\partial}}
\title{\Huge{Theory and models of  $(\infty,\omega)$-categories}}
\author{Félix Loubaton}
\date{}
\begin{document}
\newgeometry{top=0cm, bottom=0cm, left=0cm, right=0cm}

\noindent\makebox[\textwidth]{\includegraphics[width=\paperwidth]{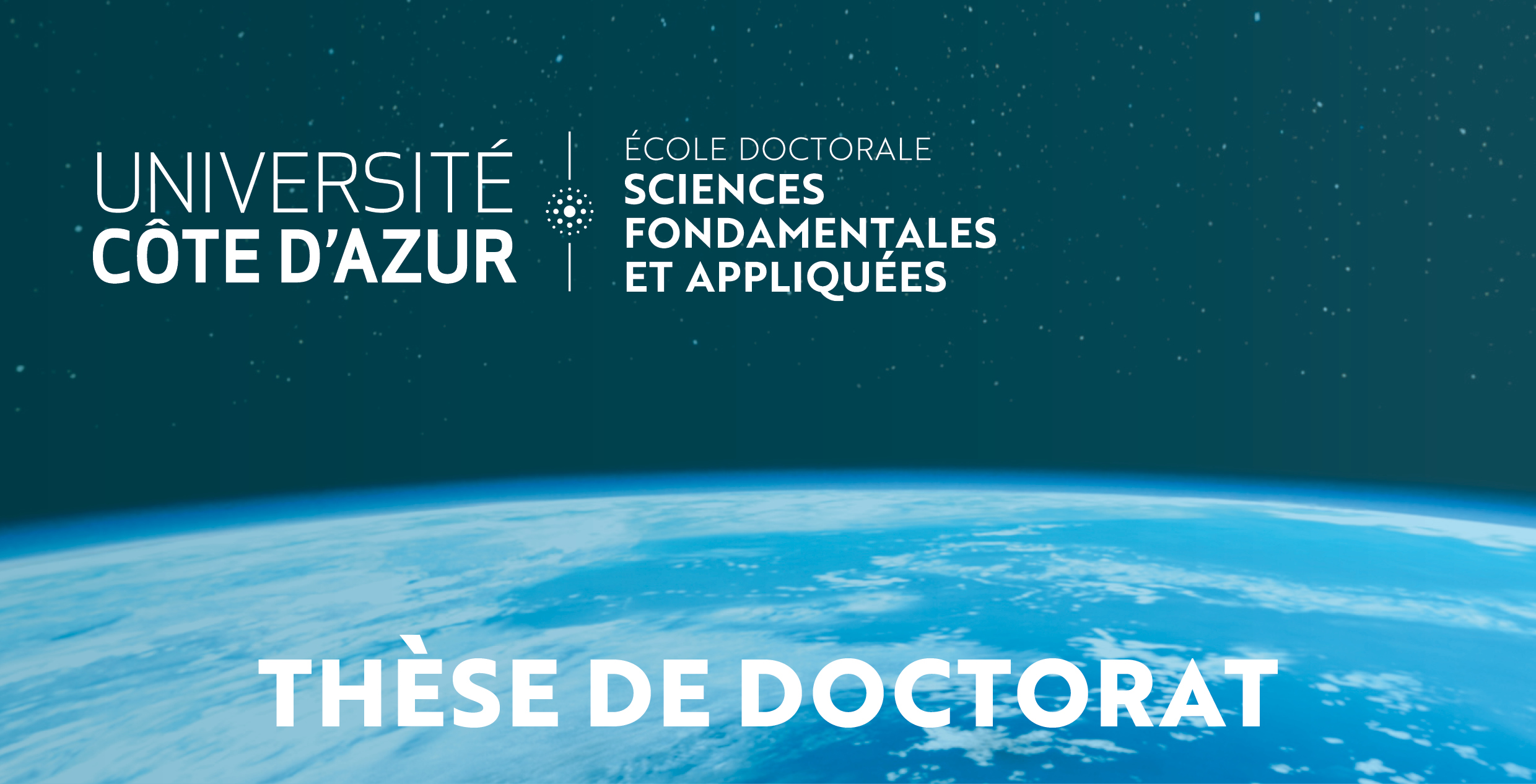}}

\vspace{1cm} 
\begin{center}
  {\Huge\bfseries Théorie et modèles des $(\infty,\omega)$-catégories} \\
  \vspace{1cm}
  {\Large\bfseries Félix Loubaton} \\
    \vspace{0.5cm}
  {\Large Laboratoire J.A. Dieudonné}
\end{center}

\hspace{2cm}

\begin{center}
\begin{minipage}[t]{0.30\linewidth}	
\raggedright
\textbf{Présentée en vue de l’obtention 
du grade de docteur en mathématiques
de l'Université Côte d’Azur.}\\
\textbf{Dirigée par :} Carlos Simpson\\
\textbf{Co-dirigé par :} Denis-Charles Cisinski\\
\textbf{Soutenue le} : 10 Octobre 2023
\end{minipage}
\hspace{0.1\linewidth}
\begin{minipage}[t]{0.40\linewidth}
\raggedright
\textbf{Devant le jury, composé de :}\\	
Dimitri Ara, Maitre de Conférences, Université d'Aix-Marseille\\
Clemens Berger, Professeur, Université Côte d'Azur \\
Yonatan Harpaz, Chargé de Recherche, Université Sorbonne Paris Nord\\
Georges Maltsiniotis, Directeur de Recherche Emerite, Université Paris Cité\\
Emily Riehl, Professeure, Université Johns Hopkins\\
Dominic Verity, Professeur Emerite, Université Macquarie\\
\end{minipage}
\end{center}
\restoregeometry

\clearpage
\thispagestyle{empty}

\cleardoublepage

\thispagestyle{empty}

\vspace{3cm}
	
\begin{center}
$$~$$  $$~$$
  {\Huge\bfseries Theory and models of $(\infty,\omega)$-categories} \\
  \vspace{1cm}
\end{center}
\vspace{1cm}
\vspace{1cm}

\paragraph{Jury :}

\paragraph{Rapporteurs}$~$\\
Yonatan Harpaz, Chargé de Recherche, Université Sorbonne Paris Nord\\
Dominic Verity, Professeur Emerite, Université Macquarie\\

\paragraph{Examinateur·rice·s}$~$\\
Dimitri Ara, Maitre de Conférences, Université d'Aix-Marseille\\
Clemens Berger, Professeur, Université Côte d'Azur \\
Georges Maltsiniotis, Directeur de Recherche Emerite, Université Paris Cité\\
Emily Riehl, Professeure, Université Johns Hopkins\\

\paragraph{Directeurs de Thèse}$~$\\
Carlos Simpson, Directeur de Recherche, Université d'Aix-Marseille\\
Denis-Charles Cisinski (co-directeur), Professeur, Université de Ratisbonne\\

\clearpage
\thispagestyle{empty}

\cleardoublepage

\pagestyle{empty}

%
%
%
%
%
%
%
%
%
%

\subsection*{Résumé}

La théorie des $(\infty,1)$-catégories est aujourd'hui un domaine de recherche prolifique avec des applications dans divers domaines. Ces dernières années ont également vu l'essor des $(\infty,n)$-catégories. Par exemple, les travaux de Gaitsgory et Rozenblyum (\cite{Gaitsgory_A_study_on_DAG}) en géométrie algébrique dérivée utilisent les $(\infty,2)$-catégories pour encoder le formalisme des six foncteurs. On peut aussi citer la théorie topologique des champs quantiques, qui utilise la notion de $(\infty,n)$-catégories dans la formalisation et la preuve de l'hypothèse du cobordisme (\cite{Baez_Higher-dimensional_algebra_and_topological_quantum_field_theory}, \cite{lurie_on_the_classification_of_topological_field_theories}, \cite{Grady_the_geometric_cobordism_hypothesis}, \cite{Calaque_a_note_on_the_category_of_cobordism}).

Il convient donc de développer une théorie des $(\infty,n)$-catégories. Cependant, pour réaliser une telle tâche, il est utile de manipuler les catégories $(\infty,k)$ pour $k \geq n$. Par exemple, la construction de Grothendieck, qui est toujours essentielle lorsque l'on travaille avec n'importe quel type de catégories, est une colimite lax dans la $(\infty,n+1)$-catégorie ambiante des $(\infty,n)$-catégories. Un deuxième exemple provient du produit tensoriel de Gray, qui est nécessaire pour encoder la notion de transformation lax, et donc pour définir la notion de colimite et limite lax. Le produit tensoriel de Gray ajoute la dimension des entrées, et les $(\infty,n)$-catégories ne sont donc pas stables sous ce bifoncteur. Une façon d'éviter tous ces problèmes liés à l'augmentation de la dimension est de se concentrer directement sur les $\io$-catégories, ce qui sera le parti pris de ce travail.

Dans la première partie de cette thèse, nous étudions les modèles des $\io$-catégories. Le résultat principal consiste à établir une équivalence de Quillen entre les $\Theta$-espaces complets de Segal et les ensembles compliciaux de Verity. Une des conséquences majeures de ce résultat est que lorsque l'on travaillera dans la $\iun$-catégorie correspondant à ces structures modèles, son lien avec les $\Theta$-espaces complets de Segal de Rezk nous permettra d'utiliser le langage globulaire, tandis que son lien avec les ensembles compliciaux nous donnera accès au produit tensoriel de Gray.

Dans la seconde partie de cette thèse, nous adapterons les constructions de la théorie classique des catégories au cas $\io$. Le chapitre \ref{chapter:the infini 1 categorory of io categories} est consacré à la théorie de base des $\io$-catégories. Le chapitre \ref{chapter:chapter the 1 category of marked categories} introduit la notion de {$\io$-catégories marquées} et étudie les {fibrations cartésiennes}. Le chapitre \ref{chapter:The io-category of small io-categories} est consacré à {la construction de Grothendieck}, à l'{univalence}, au {lemme de Yoneda}, et à d'autres constructions catégoriques standard.

\paragraph{Mots clés.} Ensembles compliciaux, $\io$-catégories, $(\infty,n)$-catégories, fibrations cartésiennes, construction de Grothendieck, univalence, lemme de Yoneda, (co)limites lax.
\newpage
\subsection*{Abstract}
The theory of $(\infty,1)$-categories is today a prolific area of research with applications in a variety of fields. Recent years have also seen the rise of $(\infty,n)$-categories. For example, the work of Gaitsgory and Rozenblyum (\cite{Gaitsgory_A_study_on_DAG}) in derived algebraic geometry uses $(\infty,2)$-categories to encode the formalism of six functors. Another example is topological quantum field theory, which uses the notion of $(\infty,n)$-categories in the formalization and proof of the cobordism hypothesis (\cite{Baez_Higher-dimensional_algebra_and_topological_quantum_field_theory}, \cite{lurie_on_the_classification_of_topological_field_theories}, \cite{Grady_the_geometric_cobordism_hypothesis}, \cite{Calaque_a_note_on_the_category_of_cobordism}).

A theory of $(\infty,n)$-categories therefore needs to be developed. However, to accomplish such a task, it is useful to manipulate $(\infty,k)$-categories for $k \geq n$. For example, the Grothendieck construction, which is always essential when working with any type of category, is a lax colimit in the ambient $(\infty,n+1)$-category of $(\infty,n)$-categories. A second example comes from the Gray tensor product, which is needed to encode the notion of lax transformation, and thus to define the notion of lax colimit and limit. The Gray tensor product adds the dimension of the inputs, and the $(\infty,n)$-categories are not stable under this bifonctor. One way of avoiding all these problems associated with the increasing of the dimension is to focus directly on $\io$-categories.

In the first part of this thesis, we study models of $\io$-categories. The main result is to establish a Quillen equivalence between Segal $\Theta$-complete spaces and Verity complicial sets. A major consequence of this result is that when working in the $\iun$-category corresponding to these model structures, its link with Rezk's $\Theta$-complete Segal spaces will allow us to use the globular language, while its link with complicial sets will give us access to Gray's tensor product.

In the second part of this thesis, we will adapt the constructions of classical category theory to the $\io$ case. Chapter \ref{chapter:the infini 1 categorory of io categories} is devoted to the basic theory of $\io$-categories. The chapter \ref{chapter:chapter the 1 category of marked categories} introduces the notion of {$\io$-marked categories} and studies {cartesian fibrations}. The chapter \ref{chapter:The io-category of small io-categories} is devoted to the {Grothendieck construction}, the {univalence}, the {Yoneda lemma}, and other standard categorical constructions.

\paragraph{Keywords.} Complicial sets, $\io$-categories, $(\infty,n)$-categories, cartesian fibrations,  Grothendieck construction, univalence, lemme de Yoneda, lax (co)limits.


\clearpage

%
%
%
%
%
%
%
%
%
%

\section*{Remerciements}
Tout d'abord, je tiens à remercier mes directeurs de thèse. Merci à vous, Carlos, d'avoir immédiatement répondu oui à ma demande d'inscrire ma thèse à Nice, sous votre direction, alors que nous ne nous connaissions pas. Merci de votre invitation enthousiasmante à participer à un séminaire à Miami avec vous et de nombreux mathématiciens dès la première année : le Covid ne l'a pas permis, cela reste un grand regret. Merci de m'avoir permis de travailler dans de si bonnes conditions à Nice, au sein du LJAD. Merci à toi, Denis-Charles. Travailler sous ta direction fut un honneur et un plaisir. Merci pour les visions, les intuitions et, je ne sais pas comment le dire autrement, la sagesse que tu as partagée avec moi pendant les dernières années. Aussi bien tes recherches que ta pratique de la recherche m'ont profondément inspiré.

I would like to thank Dominic Verity and Yonatan Harpaz for the honor of being my referees. Thank you, Yonatan Harpaz, for helping me to improve my text through your careful and rigorous reading. Thank you, Dominic Verity, for your extraordinarily detailed report, and I have no doubt that all your comments and advice will help me for the continuation of this work and beyond. As my work is built upon your work, having you as a referee has a real special meaning for me.

I would also like to thank Dimitri Ara, Clemens Berger, George Maltsiniotis, and Emily Riehl for agreeing to be part of my jury. 
I think you'll have understood that each of you has produced works that have deeply inspired me. It is therefore a great honor to defend my thesis in front of you.

\vspace{1cm}
Merci à Marnie Valentini pour la relecture attentive de ce manuscrit. Cela fait maintenant plusieurs années qu'elle est l'une de mes plus fidèles (re)lectrices.

\vspace{1cm}
Merci à Clara Salaun du LJAD et Birgit Tiefenbach de l'université de Ratisbonne pour avoir rendu l'organisation de tous les déplacements que j'ai effectués lors des dernières années si simple. Merci à Roland Ruelle et Jean-Marc Lacroix pour les multiples aides informatiques.
 Merci plus généralement à tous ceux qui travaillent dans l'administration et la gestion des laboratoires que j'ai fréquentés.

\vspace{1cm}
Cette thèse n'est pas seulement le résultat de trois années de travail, mais aussi l'aboutissement d'un long parcours scolaire qui a commencé il y a un peu plus d'une vingtaine d'années. Merci à tous les professeurs, du primaire au secondaire, que j'ai eus. Merci à Louis Ritter, Thomas Rouvier, Etienne Chardonnet, Sophie Durrieu et mes autres amis qui ont choisi cette profession. Nos métiers sont cousins, et de toutes les vocations liées à la transmission, la vôtre est sans doute l'une des plus importantes.

Ces trois dernières années ont été rythmées par les emissions des radios du service public. En particulier, merci à Nicolas Stoufflet (Le jeu des 1000 euros) qui sonnait le moment où je devais aller à la fac, merci à Fabienne Sintes (Le téléphone sonne) pour tous les dîners que nous avons partagés, et merci à Jacques Monin (Secrets d'info) pour tous les dimanches midi que nous avons passés ensemble. 

Enfin, dans une envolée que je m'autorise, je tiens à remercier plus généralement tous les agents du service public pour leur engagement.

\vspace{1cm}
Merci à tous les chercheurs qui m'ont accueilli parmi eux. Merci à George Maltsiniotis pour m'avoir initié à la recherche avec tant de patience, d'acharnement, et de générosité. C'est aussi grâce à toi que Marie se passionne maintenant pour ceux qui pratiquent les mathématiques, et je te remercie pour cela. Merci à Dimitri Ara pour sa prévenance et le climat de confiance qu'il a su instaurer entre nous, si précieux pour moi. Merci à Paul-André Melliès pour sa constante chaleur et curiosité. Merci à Simon Henry grâce à qui j'ai pu me remettre sur pied après presque un an de travail infructueux. Thanks to Viktoriya Ozornova for welcoming me to her team. I'm looking forward to my two years in Bonn!

\vspace{1cm}
Merci à tous ceux qui ont été doctorants en même temps que moi, et qui rendent moins solitaire ce travail qui l'est parfois tant. Merci à Hugo Pourcelot, qui fut la première personne que j'ai rencontrée par les mathématiques, et qui me prouva que ce monde était peuplé de belles personnes. Merci à Corentin Le Bars pour cette fin de thèse, ce moment si spécial, que nous avons partagé et durant lequel nous nous sommes aidés. Merci à Hugo Moeneclaey pour avoir été un si bon guide, puis compagnon, dans la découverte de l'homotopie et la logique. Merci à Nicolas Longuet Marx, pour sa proche et constante présence malgré 6419 kilomètres. Thanks to Niklas Kipp, Sebastian Wolf, Linda Hu, and all the people I met during my stays in Regensburg. The warmth of your welcome was more important than you might imagine.

Merci encore à Arnaud Vanhaecke, Léonard Pille-Schneider, Lucie Leszez, Nicolas Le Borgne, Dimitri Navarro, Pauline Rocca, Jonas Pentzien, Leo Hubert, et à tous les doctorants avec lesquels nous avons partagé nos peines et nos joies.

\vspace{1cm}
Merci à tous ceux qui rendent le départ de Nice plus triste. Merci à Victor Iwaniack pour sa gentillesse, sa franchise et pour avoir toujours eu un moment pour rêver math, bavarder ou prendre un verre. Je pense que nous avons été de fidèles alliés. Merci à Yash Chopra pour son exigence du doute et son amour du pathétique, ce sont pour moi de très belles qualités. Merci à Victor Pecanha Brittes pour ce bureau que nous avons partagé trop peu de temps. Merci encore à Christian Tayou Fotso, Alex Moriani, Antoine Commaret, Jérémie Marquès, Gustave Billon, Marc Monticelli et toutes les autres personnes gravitant autour du LJAD. A ceux qui arriveront après mon départ, parlez de moi comme celui qui apporta le café.

Merci à Simon Girel et Maëlle Bertier pour avoir été le plus proches de ce qui ressemblait à une famille, et merci à Violette pour l'accueil si chaleureux qu'elle a réservé à notre ami commun.

Merci à Sophie et Laurent d'avoir tant fait pour que je me sente chez moi à Nice.

\vspace{1cm}
Merci à tous mes amis proches et ma sœur qui participent pour beaucoup à mon bonheur, et qui sont une composante essentielle de ma vie. En cela, ils ont aidé à la réalisation de ce travail.

Merci à mon père qui sait mieux que quiconque s'occuper du concret, et qui en même temps, rêve peut-être encore plus que moi aux objets que je manipule. Merci à ma mère. Ce choix de vie doit beaucoup à l'admiration que j'ai pour toi, j'espère que tu t'en rends compte. Merci à vous deux pour votre soutien constant.

Enfin, merci Marie. Il n'est pas facile d'exprimer dans un texte public, à la mesure de ce que je pense, ma reconnaissance. Je me contenterais donc pudiquement de te remercier pour transformer tout ce qui aurait pu nous éloigner en des choses qui nous rapprochent, et bien sûr, encore plus, pour tout le reste.

%
%
%
%
%
%

\cleardoublepage
\pagestyle{plain}

\setcounter{page}{1}

\dominitoc
\tableofcontents

\clearpage
\pagestyle{fancy}
\fancyhf{}
\fancyhfoffset[RO,LE]{0.5cm}
\fancyhfoffset[LE,RO]{0.5cm}
\fancyhead[RO]{ }
\fancyhead[LE]{Introduction }
\fancyfoot[C]{\thepage}
\minitoc
\vspace{1cm}

\cleardoublepage
\phantomsection
\addcontentsline{toc}{part}{Introduction} 

\chapter*{Introduction}

%
%
%
%
%
%

The theory of $(\infty,1)$-categories is now a prolific field of research with applications in various domains. The past years have also witnessed the rise of $(\infty,2)$-categories. We will provide two reasons motivating the study of $(\infty,2)$-categories.

A first motivation comes from their applications in other domains. We think in particular of the work of Gaitsgory and Rozenblyum (\cite{Gaitsgory_A_study_on_DAG}) in derived algebraic geometry, where $(\infty,2)$-categories are an essential tool for encoding the six functor formalism.

A second motivation for considering $(\infty,2)$-categories arises from the theory of $\iun$-categories itself. Just as $1$-categories organize into a $2$-category, $\iun$-categories organize into an $(\infty,2)$-category. Working with this richer structure provides a powerful framework for developing formal category theory, as performed in \cite{Gray_Formal_category_theory} for the strict case and \cite{Riehl_element_of_infini_categories} for $\iun$-categories.

However, there is no reason to stop at dimension $2$. Let us once again mention two reasons for exploring $(\infty,n)$-categories for $n\in \Nb\cup\{\omega\}$.

Firstly, $(\infty,n)$-categories are already being used in other research fields, such as topological quantum field theory, where this notion is essential to the formalization and proof of the cobordism hypothesis (\cite{Baez_Higher-dimensional_algebra_and_topological_quantum_field_theory}, \cite{lurie_on_the_classification_of_topological_field_theories}, \cite{Grady_the_geometric_cobordism_hypothesis}, \cite{Calaque_a_note_on_the_category_of_cobordism}).

Secondly, even to understand the theory of $(\infty,n)$-categories, it is useful to manipulate $(\infty,k)$-categories for $k \geq n$. A first example is given by the fact that $(\infty,n)$-categories organize into an $(\infty,n+1)$-category, and this richer structure plays an important role in the theory of $(\infty,n)$-categories. For instance, the Grothendieck construction, which is always essential when working with any flavor of categories, is a lax colimit in the ambient $(\infty,n+1)$-category of $(\infty,n)$-categories. A second example arises from the Gray tensor product, which is a fundamental operation that arises when $n>1$. This operation is necessary to encode the notion of lax transformation, which leads to the concepts of lax colimits and limits. It is also worth noticing that it plays a crucial role in \cite{Gaitsgory_A_study_on_DAG}.
\begin{example*}[examples of some Gray tensor products]
We denote by $\Db_1$ the $1$-category generated by the $1$-graph
\[\begin{tikzcd}
	0 & 1
	\arrow[from=1-1, to=1-2]
\end{tikzcd}\]
and  by $\Db_2$ the $2$-category generated by the $2$-graph
\[\begin{tikzcd}
	0 & 1
	\arrow[""{name=0, anchor=center, inner sep=0}, curve={height=-12pt}, from=1-1, to=1-2]
	\arrow[""{name=1, anchor=center, inner sep=0}, curve={height=12pt}, from=1-1, to=1-2]
	\arrow[shorten <=3pt, shorten >=3pt, Rightarrow, from=0, to=1]
\end{tikzcd}\]
The Gray tensor product of $\Db_1$ with itself, denoted by $\Db_1\otimes\Db_1$, is the $2$-category generated by the diagram
\[\begin{tikzcd}
	00 & 01 \\
	10 & 11
	\arrow[from=1-1, to=2-1]
	\arrow[from=2-1, to=2-2]
	\arrow[from=1-1, to=1-2]
	\arrow[from=1-2, to=2-2]
	\arrow[shorten <=4pt, shorten >=4pt, Rightarrow, from=1-2, to=2-1]
\end{tikzcd}\]
The Gray tensor product of $\Db_2$ with $\Db_1$, denoted by $\Db_2\otimes\Db_1$, is the $3$-category generated by the diagram
\[\begin{tikzcd}
	00 & 01 & 00 & 01 \\
	10 & 11 & 10 & 11
	\arrow[from=1-1, to=1-2]
	\arrow[""{name=0, anchor=center, inner sep=0}, from=1-1, to=2-1]
	\arrow[from=2-1, to=2-2]
	\arrow[""{name=1, anchor=center, inner sep=0}, from=1-2, to=2-2]
	\arrow[shorten <=4pt, shorten >=4pt, Rightarrow, from=1-2, to=2-1]
	\arrow[""{name=2, anchor=center, inner sep=0}, from=1-3, to=2-3]
	\arrow[from=1-3, to=1-4]
	\arrow[""{name=3, anchor=center, inner sep=0}, from=1-4, to=2-4]
	\arrow[shorten <=4pt, shorten >=4pt, Rightarrow, from=1-4, to=2-3]
	\arrow[""{name=4, anchor=center, inner sep=0}, curve={height=30pt}, from=1-1, to=2-1]
	\arrow[from=2-3, to=2-4]
	\arrow[""{name=5, anchor=center, inner sep=0}, curve={height=-30pt}, from=1-4, to=2-4]
	\arrow["{ }"', shorten <=6pt, shorten >=6pt, Rightarrow, from=0, to=4]
	\arrow["{ }"', shorten <=6pt, shorten >=6pt, Rightarrow, from=5, to=3]
	\arrow[shift left=0.7, shorten <=6pt, shorten >=8pt, no head, from=1, to=2]
	\arrow[shift right=0.7, shorten <=6pt, shorten >=8pt, no head, from=1, to=2]
	\arrow[shorten <=6pt, shorten >=6pt, from=1, to=2]
\end{tikzcd}\]
\end{example*}
As we can see from these examples, the Gray tensor product adds the dimension of the inputs (in contrast to the cartesian product, which takes the maximum). Thus, $(\infty,n)$-categories are not stable under this operation. One can handle this by considering a truncated version of the Gray tensor product, but we believe that avoiding such violent operation will lead to a more natural understanding of the complex combinatorics it encodes.

One way to avoid all these issues related to the increasing of dimension is to directly focus on $(\infty,\omega)$-categories, which will be the standpoint of this thesis.

\phantomsection
\addcontentsline{toc}{section}{A brief definition of $(\gamma,n)$-categories for $n\in \Nb\cup\{\omega\}$}
\section*{A brief definition of $(\gamma,n)$-categories for $n\in \Nb\cup\{\omega\}$}

A \textit{globular set} is the data of a diagram of sets
\[\begin{tikzcd}
	{X_0} & {X_1} & {X_2} & {...}
	\arrow["{\pi_0^+}"', shift right=2, from=1-2, to=1-1]
	\arrow["{\pi_1^+}"', shift right=2, from=1-3, to=1-2]
	\arrow["{\pi_3^+}"', shift right=2, from=1-4, to=1-3]
	\arrow["{\pi_0^-}", shift left=2, from=1-2, to=1-1]
	\arrow["{\pi_1^-}", shift left=2, from=1-3, to=1-2]
	\arrow["{\pi_3^-}", shift left=2, from=1-4, to=1-3]
\end{tikzcd}\]
with the relations $\pi_{n-1}^{\epsilon}\pi_{n}^+ = \pi_n^{\epsilon} \pi_{n}^-$ for any $n>0$ and $\epsilon \in \{+,-\}$. We also denote by $\pi^{\epsilon}_k$ the map $X_n \to X_k$ for $k< n$ obtained by composing any string of arrows starting with $\pi^\epsilon_{k}$. An \textit{$\omega$-category} is a globular set $X$ together with
\begin{enumerate}
\item operations of \textit{compositions}
\[ X_n\times_{X_k} X_n\to X_n ~~~(0\leq k<n) \]
which associate to two $n$-cells $(x,y)$ verifying $\pi_k^+(x) = \pi_k^-(y)$, an $n$-cell $x\circ_ky$,
\item as well as \textit{units}
\[X_n\to X_{n+1}\]
which associate to an $n$-cell $x$, an $(n+1)$-cell $\Ib_x$, 
\end{enumerate}
and satisfying some associativity and unitaly axioms which will be expected by any reader familiar with $2$-categories (see \ref{para:def of omega cat} for the precise formulation of these axioms).
A \textit{morphism of $\omega$-categories} is a map of globular sets commuting with both operations. The category of $\omega$-categories is denoted by \textit{$\omegacat$}.

The category $\Theta$ of Joyal is the full subcategory of $\omegacat$ spanned by the \textit{globular sums}. These objects are precisely defined in paragraph \ref{para:les sommes glob}. Roughly speaking, globular sums are the $\omega$-categories obtained by "directed" gluing of \textit{globes}. In particular, globes are the easiest example of globular sums. Here are a few examples of globes and globular sums, where we identify the pasting diagrams with the $\omega$-categories they generate.

\begin{example*}[some examples of globes]
\label{exe:exemple 0}
\[\begin{tikzcd}
	{} & \bullet & {} & \bullet & \bullet & {} & \bullet & \bullet & {} & \bullet & \bullet \\
	\\
	{}
	\arrow[from=1-4, to=1-5]
	\arrow[""{name=0, anchor=center, inner sep=0}, curve={height=-24pt}, from=1-7, to=1-8]
	\arrow[""{name=1, anchor=center, inner sep=0}, curve={height=24pt}, from=1-7, to=1-8]
	\arrow["{\Db_0:=}"{description}, draw=none, from=1-1, to=1-2]
	\arrow["{\Db_1:=}"{description}, draw=none, from=1-3, to=1-4]
	\arrow["{\Db_2:=}"{description}, draw=none, from=1-6, to=1-7]
	\arrow[""{name=2, anchor=center, inner sep=0}, curve={height=-24pt}, from=1-10, to=1-11]
	\arrow[""{name=3, anchor=center, inner sep=0}, curve={height=24pt}, from=1-10, to=1-11]
	\arrow["{\Db_3:=}"{description}, draw=none, from=1-9, to=1-10]
	\arrow[shorten <=6pt, shorten >=6pt, Rightarrow, from=0, to=1]
	\arrow[""{name=4, anchor=center, inner sep=0}, shift left=3, shorten <=6pt, shorten >=6pt, Rightarrow, from=2, to=3]
	\arrow[""{name=5, anchor=center, inner sep=0}, shift right=3, shorten <=6pt, shorten >=6pt, Rightarrow, from=2, to=3]
	\arrow["\Rrightarrow"{description}, draw=none, from=5, to=4]
\end{tikzcd}\]
\end{example*}

\begin{example*}[some examples of globular sums]
\label{exe:exemple 1}
\[\begin{tikzcd}
	{} & \bullet & \bullet & \bullet & {} & \bullet & \bullet & {} & \bullet & \bullet & \bullet
	\arrow[""{name=0, anchor=center, inner sep=0}, curve={height=-24pt}, from=1-6, to=1-7]
	\arrow[""{name=1, anchor=center, inner sep=0}, curve={height=24pt}, from=1-6, to=1-7]
	\arrow[""{name=2, anchor=center, inner sep=0}, from=1-6, to=1-7]
	\arrow[from=1-2, to=1-3]
	\arrow[from=1-3, to=1-4]
	\arrow[""{name=3, anchor=center, inner sep=0}, from=1-9, to=1-10]
	\arrow[""{name=4, anchor=center, inner sep=0}, curve={height=24pt}, from=1-9, to=1-10]
	\arrow[""{name=5, anchor=center, inner sep=0}, curve={height=-24pt}, from=1-9, to=1-10]
	\arrow[""{name=6, anchor=center, inner sep=0}, curve={height=-24pt}, from=1-10, to=1-11]
	\arrow[""{name=7, anchor=center, inner sep=0}, curve={height=24pt}, from=1-10, to=1-11]
	\arrow["{a_0:=}"{description}, draw=none, from=1-1, to=1-2]
	\arrow["{a_1:=}"{description}, draw=none, from=1-5, to=1-6]
	\arrow["{a_2:=}"{description}, draw=none, from=1-8, to=1-9]
	\arrow[shorten <=3pt, shorten >=3pt, Rightarrow, from=0, to=2]
	\arrow[shorten <=3pt, shorten >=3pt, Rightarrow, from=2, to=1]
	\arrow[""{name=8, anchor=center, inner sep=0}, shift left=3, shorten <=3pt, shorten >=5pt, Rightarrow, from=3, to=4]
	\arrow[""{name=9, anchor=center, inner sep=0}, shift right=3, shorten <=3pt, shorten >=5pt, Rightarrow, from=3, to=4]
	\arrow[shorten <=3pt, shorten >=3pt, Rightarrow, from=5, to=3]
	\arrow[shorten <=6pt, shorten >=6pt, Rightarrow, from=6, to=7]
	\arrow["\Rrightarrow"{description}, shift left=1, draw=none, from=9, to=8]
\end{tikzcd}\]
\end{example*}
\begin{example*}[some examples of morphisms between globular sums]
\[\begin{tikzcd}[column sep=0.367in]
	\bullet && \bullet && \bullet & \bullet & \bullet & \bullet & \bullet && \bullet & \bullet \\
	\\
	\\
	\bullet & \bullet & \bullet && \bullet & \bullet & \bullet & \bullet & \bullet && \bullet & \bullet
	\arrow[from=4-11, to=4-12]
	\arrow[""{name=0, anchor=center, inner sep=0}, curve={height=-24pt}, from=4-6, to=4-7]
	\arrow[""{name=1, anchor=center, inner sep=0}, curve={height=24pt}, from=4-6, to=4-7]
	\arrow[""{name=2, anchor=center, inner sep=0}, curve={height=24pt}, from=4-8, to=4-9]
	\arrow[""{name=3, anchor=center, inner sep=0}, curve={height=-24pt}, from=4-8, to=4-9]
	\arrow["{f_3}", shorten <=19pt, shorten >=19pt, maps to, from=4-9, to=4-11]
	\arrow[""{name=4, anchor=center, inner sep=0}, from=4-5, to=4-6]
	\arrow["{f_2}", shorten <=19pt, shorten >=19pt, maps to, from=4-3, to=4-5]
	\arrow[""{name=5, anchor=center, inner sep=0}, curve={height=-24pt}, from=1-8, to=1-9]
	\arrow[""{name=6, anchor=center, inner sep=0}, curve={height=24pt}, from=1-8, to=1-9]
	\arrow[""{name=7, anchor=center, inner sep=0}, curve={height=-24pt}, from=1-11, to=1-12]
	\arrow[""{name=8, anchor=center, inner sep=0}, curve={height=24pt}, from=1-11, to=1-12]
	\arrow[""{name=9, anchor=center, inner sep=0}, from=1-11, to=1-12]
	\arrow["{f_1}", shorten <=19pt, shorten >=19pt, maps to, from=1-9, to=1-11]
	\arrow[curve={height=-24pt}, from=4-2, to=4-3]
	\arrow[curve={height=24pt}, from=4-1, to=4-2]
	\arrow[from=1-1, to=1-3]
	\arrow["{f_0}", shorten <=19pt, shorten >=19pt, maps to, from=1-3, to=1-5]
	\arrow[from=1-5, to=1-6]
	\arrow[from=1-6, to=1-7]
	\arrow[""{name=10, anchor=center, inner sep=0}, curve={height=-24pt}, from=4-5, to=4-6]
	\arrow[""{name=11, anchor=center, inner sep=0}, curve={height=24pt}, from=4-5, to=4-6]
	\arrow[shorten <=6pt, shorten >=6pt, Rightarrow, from=0, to=1]
	\arrow[shorten <=6pt, shorten >=6pt, Rightarrow, from=3, to=2]
	\arrow[shorten <=3pt, shorten >=3pt, Rightarrow, from=7, to=9]
	\arrow[shorten <=3pt, shorten >=3pt, Rightarrow, from=9, to=8]
	\arrow[shorten <=6pt, shorten >=6pt, Rightarrow, from=5, to=6]
	\arrow[shorten <=3pt, shorten >=3pt, Rightarrow, from=10, to=4]
	\arrow[""{name=12, anchor=center, inner sep=0}, shift left=3, shorten <=3pt, shorten >=5pt, Rightarrow, from=4, to=11]
	\arrow[""{name=13, anchor=center, inner sep=0}, shift right=3, shorten <=3pt, shorten >=5pt, Rightarrow, from=4, to=11]
	\arrow["\Rrightarrow"{description}, shift left=1, shorten <=2pt, shorten >=2pt, from=13, to=12]
\end{tikzcd}\]
\end{example*}

 For $n\in \Nb\cup \{\omega\}$, we define $\Theta_n$ as the full subcategory of $\Theta$ whose objects correspond to $n$-categories. In particular, $\Theta_0$ is the terminal category, $\Theta_1$ is $\Delta$, and $\Theta_\omega$ is $\Theta$.

Let $\gamma$ be a complete $\iun$-category and $n\in \Nb\cup \{\omega\}$. A \textit{$(\gamma,n)$-category} is a functor $\Theta_n^{op}\to \gamma$ that satisfies the \textit{Segal conditions} and \textit{completeness conditions}. We denote by $(\gamma,n)\mbox{-$\cat$}$ the $\iun$-category of $(\gamma,n)$-categories. Since we have not given a precise definition of $\Theta$, we cannot explicitly state these conditions, but we will try to explain their essence.

\textbf{Segal conditions.} As the diagrams given in the examples suggest, every globular sum is a colimit of globes. For instance, $a_2$ is the colimit of the following diagram
\[\begin{tikzcd}
	{\Db_2} \\
	{\Db_1} & {\Db_0} & {\Db_2} \\
	{\Db_3}
	\arrow["{i_1^+}", from=2-1, to=1-1]
	\arrow["{i_1^-}"', from=2-1, to=3-1]
	\arrow["{i_0^+}"', from=2-2, to=2-1]
	\arrow["{i_0^-}", from=2-2, to=2-3]
\end{tikzcd}\]
A functor $X:\Theta_n^{op}\to \gamma$ satisfies the \textit{Segal conditions} if it sends these colimits to limits. For instance, the presheaf $X$ must send $a_2$ to the limit of the diagram 
\[\begin{tikzcd}
	{X(\Db_2)} \\
	{X(\Db_1)} & {X(\Db_0)} & {X(\Db_2)} \\
	{X(\Db_3)}
	\arrow["{\pi_1^+}"', from=1-1, to=2-1]
	\arrow["{\pi_1^-}", from=3-1, to=2-1]
	\arrow["{\pi_0^+}", from=2-1, to=2-2]
	\arrow["{\pi_0^-}"', from=2-3, to=2-2]
\end{tikzcd}\]
The morphisms $X(f_0)$ and $X(f_1)$ can then be interpreted as compositions and the morphism $X(f_3)$ as a unit.

\textbf{Completeness conditions.} Let $X:\Theta_n^{op}\to \gamma$ be a functor satisfying the Segal conditions. Given an integer $k\leq n$, we have two notions of equivalence on the $k$-cells of $X$, i.e. the morphisms $1\to X(\Db_k)$. The first comes from the canonical equivalence provided by the $\infty$-groupoid $\Hom(1, X(\Db_k))$, and the second is more categorical and identifies \textit{isomorphic} elements, i.e. $k$-cells $a,b$ such that there exists $(k+1)$-cells $f:a\to b$, $g:b\to a$ and equivalences
$$g\circ_k f\sim id_a~~~~~~~~~\mbox{and}~~~~~~~~~f\circ_k g\sim id_b.$$
 The presheaf $X$ satisfies the completeness condition if these two notions of equivalence coincide. Thus, \textit{groupoids}, i.e., $(\gamma,n)$-categories in which all $k$-cells are equivalent to the identity of their source (or target), correspond to constant functors $\Theta^{op}\to \gamma$. The datum of the $(\infty,1)$-category $\gamma$ can be understood as a \textit{choice of a notion of groupoid}.

\paragraph{}
When $\gamma$ is the category of sets, the $(\gamma,n)$-categories will simply be denoted as $(0,n)$-categories, and when $\gamma$ is the $\iun$-category of spaces, they will be denoted as $(\infty,n)$-categories.

 For instance, $(0,\omega)$-categories correspond to $\Theta$-sets satisfying the Segal and completeness conditions. The first one induce an inclusion of $(0,\omega)$-categories into $\omega$-categories and the latter forces isomorphisms to be identities. The $(0,\omega)$-categories then correspond to \textit{Gaunt $\omega$-categories}.

Although this concept is not studied in the present thesis, it is worth noticing that one could define $(k,n)$-categories for any $k\in \Nb$. In this case, we would consider the $(\gamma,n)$-categories with $\gamma$ being the $\iun$-category of $k$-truncated $\infty$-groupoids. This notation is compatible with the one given in \cite{Rezk_a_cartesian_of_weak_n_categories} when $k\geq n$ but it also allows to  give meaning to $(k,n)$-categories for $k<n$.

\paragraph{}
As stated earlier, this work is devoted to the concept of $\io$-categories, which corresponds to the case where $\gamma$ is the category of spaces. This notion is sometimes considered ambiguous. Indeed, Schommer-Pries and Rezk have independently argued (\cite{134099}) that there should be more than one notion of $(\infty,\omega)$-categories. The one we use here is commonly referred to as \textit{the inductive one}, in the sense that $\ocat$ is identified with the limit of the sequence:
$$\ncat{0}\xleftarrow{\tau_0} \ncat{1}\xleftarrow{}... \leftarrow\ncat{n} \xleftarrow{\tau_{n}}\ncat{n+1}\xleftarrow{}...$$
where the functors $\tau_n$ "forget" the cells of dimension $n$. For a more detailed discussion in the (semi-)strict case, we refer to \cite{Henry_an_inductive_model_structure_for_infini_categories}.

\vspace{1cm}
\phantomsection
\addcontentsline{toc}{section}{Overview of the thesis}
\section*{Overview of the thesis}

This thesis is divided into two parts which can be read independently. However, each of them uses results from the preliminary section.

\phantomsection
\addcontentsline{toc}{subsection}{Preliminaries}
\subsection*{Preliminaries}

\paragraph{Chapter \ref{chapter:The category of zocategories}.}
The first section is devoted to the definition of $\zo$-categories and of the category $\Theta$ of Joyal. We also show that the category $\Theta$ presents the category of $\zo$-categories, and we also exhibit an other presentation of this category (corollary \ref{cor:changing theta}).

The second section begins with a review of Steiner theory, which is an extremely useful tool for providing concise and computational descriptions of $\zo$-categories. Following Ara and Maltsiniotis, we employ this theory to define the Gray tensor product, denoted by $\otimes$, in $\zo$-categories. We then introduce the Gray operations, starting with the Gray cylinder $\uvar\otimes[1]$ which is the Gray tensor product with the directed interval $[1]:=0\to 1$. Then, we have the Gray cone and Gray $\circ$-cone, denoted by $\uvar\star 1$ and $1\costar \uvar$, that send an $\zo$-category $C$ onto the following pushouts:
\[\begin{tikzcd}
	{C\otimes\{1\}} & {C\otimes[1]} && {C\otimes\{0\}} & {C\otimes[1]} \\
	1 & {C\star 1} && 1 & {1\costar C}
	\arrow[from=1-5, to=2-5]
	\arrow[from=1-4, to=2-4]
	\arrow[from=2-4, to=2-5]
	\arrow[from=1-4, to=1-5]
	\arrow[from=1-2, to=2-2]
	\arrow[from=2-1, to=2-2]
	\arrow[from=1-1, to=2-1]
	\arrow[from=1-1, to=1-2]
	\arrow["\lrcorner"{anchor=center, pos=0.125, rotate=180}, draw=none, from=2-2, to=1-1]
	\arrow["\lrcorner"{anchor=center, pos=0.125, rotate=180}, draw=none, from=2-5, to=1-4]
\end{tikzcd}\]

We also present a formula that illustrates the interaction between the suspension and the Gray cylinder. As this formula plays a crucial role in both Part I and Part II, we provide its intuition at this stage.

 If $A$ is any $\zo$-category, the suspension of $A$, denoted by $[A,1]$, is the $\zo$-category having two objects - denoted by $0$ and $1$- and such that 
$$\Hom_{[A,1]}(0,1) := A,~~~\Hom_{[A,1]}(1,0) := \emptyset,~~~\Hom_{[A,1]}(0,0)=\Hom_{[A,1]}(1,1):=\{id\}.$$
We also define $[1]\vee[A,1]$ as the gluing of $[1]$ and $[A,1]$ along the $0$-target of $[1]$ and the $0$-source of $[A,1]$. We define similarly $[A,1]\vee[1]$.
These two objects come along with \textit{whiskerings}:
$$\triangledown:[A,1]\to [1]\vee [A,1] ~~~~\mbox{and}~~~~ \triangledown:[A,1] \to [A,1]\vee [1]$$ 
that preserve the extremal points.

The $\zo$-category $[1]\otimes [1]$ is induced by the diagram:
\[\begin{tikzcd}
	00 & 01 \\
	10 & 11
	\arrow[from=1-1, to=2-1]
	\arrow[from=2-1, to=2-2]
	\arrow[from=1-1, to=1-2]
	\arrow[from=1-2, to=2-2]
	\arrow[shorten <=4pt, shorten >=4pt, Rightarrow, from=1-2, to=2-1]
\end{tikzcd}\]
and is then equal to the colimit of the following diagram: 
$$[1]\vee [1]\xleftarrow{\triangledown} [1]\hookrightarrow [[1],1]\hookleftarrow[1]\xrightarrow{\triangledown } [1]\vee [1].$$
The $\zo$-category $ [[1],1]\otimes [1]$ is induced by the diagram:
\[\begin{tikzcd}
	00 & 01 & 00 & 01 \\
	10 & 11 & 10 & 11
	\arrow[from=1-1, to=1-2]
	\arrow[""{name=0, anchor=center, inner sep=0}, from=1-1, to=2-1]
	\arrow[from=2-1, to=2-2]
	\arrow[""{name=1, anchor=center, inner sep=0}, from=1-2, to=2-2]
	\arrow[shorten <=4pt, shorten >=4pt, Rightarrow, from=1-2, to=2-1]
	\arrow[""{name=2, anchor=center, inner sep=0}, from=1-3, to=2-3]
	\arrow[from=1-3, to=1-4]
	\arrow[""{name=3, anchor=center, inner sep=0}, from=1-4, to=2-4]
	\arrow[shorten <=4pt, shorten >=4pt, Rightarrow, from=1-4, to=2-3]
	\arrow[""{name=4, anchor=center, inner sep=0}, curve={height=30pt}, from=1-1, to=2-1]
	\arrow[from=2-3, to=2-4]
	\arrow[""{name=5, anchor=center, inner sep=0}, curve={height=-30pt}, from=1-4, to=2-4]
	\arrow["{ }"', shorten <=6pt, shorten >=6pt, Rightarrow, from=0, to=4]
	\arrow["{ }"', shorten <=6pt, shorten >=6pt, Rightarrow, from=5, to=3]
	\arrow[shift left=0.7, shorten <=6pt, shorten >=8pt, no head, from=1, to=2]
	\arrow[shift right=0.7, shorten <=6pt, shorten >=8pt, no head, from=1, to=2]
	\arrow[shorten <=6pt, shorten >=6pt, from=1, to=2]
\end{tikzcd}\]
and is then equal to the colimit of the following diagram: 
 $$[1]\vee[[1],1]\xleftarrow{\triangledown} [[1]\otimes\{0\},1]\hookrightarrow[[1]\otimes[1],1]\hookleftarrow [[1]\otimes\{1\},1]\xrightarrow{\triangledown}[[1],1]\vee[1]$$
We prove a formula that combines these two examples:

\begin{itheorem}[\ref{theo:appendice formula for otimes}]
In the category of $\zo$-categories, there exists an isomorphism, natural in $A$, between $[A,1]\otimes[1]$ and the colimit of the following diagram
\[\begin{tikzcd}
	{[1]\vee[A,1]} & {[A\otimes\{0\},1]} & { [A\otimes[1],1]} & {[A\otimes\{1\},1]} & {[A,1]\vee[1]}
	\arrow["\triangledown"', from=1-2, to=1-1]
	\arrow[from=1-4, to=1-3]
	\arrow["\triangledown", from=1-4, to=1-5]
	\arrow[from=1-2, to=1-3]
\end{tikzcd}\]
\end{itheorem} 

We also provide similar formulas for the \textit{Gray cone} and the \textit{Gray $\circ$-cone}.
\begin{itheorem}[\ref{theo:appendice formula for star}]
There is a natural identification between $1\costar [A,1]$ and the colimit of the following diagram
\[\begin{tikzcd}
	{[1]\vee[A,1]} & {[A,1]} & { [A\star 1,1]}
	\arrow["\triangledown"', from=1-2, to=1-1]
	\arrow[from=1-2, to=1-3]
\end{tikzcd}\]
There is a natural identification between $[A,1]\star 1$ and the colimit of the following diagram
\[\begin{tikzcd}
	{ [1\costar A,1]} & {[A,1]} & {[A,1]\vee[1]}
	\arrow[from=1-2, to=1-1]
	\arrow["\triangledown", from=1-2, to=1-3]
\end{tikzcd}\]
\end{itheorem}

\phantomsection
\addcontentsline{toc}{subsection}{On the side of models}
\subsection*{On the side of models}

Following the terminology of Barwick and Schommer-Pries (\cite{Barwick_on_the_unicity_of_the_theory_of_higher_categories}), we call \textit{model of $(\infty,n)$-categories} any model category whose corresponding $(\infty, 1)$-category is $\ncat{n}$.

With the definition of $(\infty,n)$-categories given above, we have a natural model for the $\iun$-category $\ncat{n}$, given by Rezk's complete Segal $\Theta_n$-spaces, i.e. space valued presheaves on $\Theta_n$ satisfying the (homotopical) Segal conditions and (homotopical) completeness conditions. However, there are many other models, see for instance \cite{Ara_Higher_quasi_cat}, \cite{Bergner_Comparison_of_model_of_infini_n_categories}, \cite{Bergner_Comparison_of_model_for_infini_n_categories_II}, \cite{Bergner_reedy_category_and_the_theta_construction} (we refer to \cite{Barwick_on_the_unicity_of_the_theory_of_higher_categories}
for a comprehensive presentation of these models and their equivalences). For example, one can mention $n$-fold Segal spaces and Simpson's and Tamsamani's Segal $n$-categories among others.

It was conjectured (\cite{Street_algebra_of_orianted_simplexes}, \cite{Verity_a_complicial_compendium}, \cite{Barwick_on_the_unicity_of_the_theory_of_higher_categories}) that Verity's $n$-complicial sets were also a model of $(\infty,n)$-categories. This would imply that Campion-Kapulkin-Maehara's $n$-comical sets also are, as they are shown to be Quillen equivalent to $n$-complicial sets in \cite{Doherty_Equivalence_of_cubical_and_simplicial_approaches}. In the second chapter, we will give a positive answer to this conjecture (theorem \ref{theo:letheo}).

One of the major consequences of this result is to endow $\ocat$ with a monoidal product called the \textit{Gray tensor product}. This operation will play a crucial role in the second part of this thesis, which is dedicated to the theory of $\io$-categories.

\vspace{1cm}
The two main models we work with are Verity's complical sets (definition \ref{defi:complicial set}) and (a slight modification of) Segal $A$-precategories (defined in paragraph \ref{para:def sega a cat}) as developed by Simpson (\cite{Simpson_Homotopy_theory_of_higher_categories}). In the complical model, we will make crucial use of the strictification results of Ozornova and Rovelli (\cite{Ozornova_Fundamental_pushouts_of_n_complical_set}, \cite{Ozornova_a_quillen_adjunction_between_globular_and_complicial}). 

\paragraph{Chapter \ref{chapter:Studies of the complicial model}.}
One of the benefits of complicial sets is that they admit a simple definition of the Gray tensor product. Being strongly linked to $\zo$-categories by the Street nerve, they are also a privileged framework for stating and proving strictification results, as done in \cite{Ozornova_Fundamental_pushouts_of_n_complical_set}, \cite{Gagna_Nerves_and_cones_of_free_loop_free_omega_categories}, \cite{Ozornova_a_quillen_adjunction_between_globular_and_complicial} and \cite{Maehara_oriental_as_free_weak_omega_categories}. 
However, they do not interact \textit{a priori} well with the globular language. The goal of this chapter is to show that, with some computation, it is possible to have a globular point of view in this model. 

The first section is a recollection of usual results and definitions about complicial sets. 
In the second section, we aim to prove an analogue of the formula given in \ref{theo:appendice formula for otimes} to the complicial setting.
We also have a suspension in this category, which is denoted by $X\mapsto \Sigma X$. Objects $[1]\fwedge \Sigma X$ and $\Sigma X\fwedge [1]$ are defined in \ref{subsection:wedge}, but for now, we can suppose that they are fibrant replacements of respectively $[1]\coprod_{[0]}\Sigma X$ and $\Sigma X\coprod_{[0]}[1]$.
They come along with morphisms that are analogue to whiskerings, and that we also note by $\triangledown$: 
$$\triangledown:\Sigma X\to [1]\fwedge\Sigma X ~~~~\mbox{and}~~~~ \triangledown:\Sigma X\to\Sigma X\fwedge [1].$$ 
We then show the following theorem:
\begin{itheorem}[\ref{theo:interval_first_formula}]
There exists a zigzag of acyclic cofibrations, natural in $X$, between $(\Sigma X)\otimes [1]$ and the colimit of the following diagram:
 $$\Sigma X\fwedge [1]\xleftarrow{\triangledown} \Sigma (X\otimes\{0\}) \hookrightarrow \Sigma (X\otimes[1])\hookleftarrow \Sigma (X\otimes\{1\})\xrightarrow{\triangledown} [1]\fwedge \Sigma X.$$
\end{itheorem}
We also provide similar formulas for the \textit{Gray cone} and Gray \textit{$\circ$-cone}:
\begin{itheorem}[\ref{theo:cyl_formula}]
There exists a zigzag of acyclic cofibrations, natural in $X$, between $\Sigma X \star[0]$ and the colimit of the following diagram: 
$$ \Sigma X\fwedge [1]\leftarrow \Sigma X\to \Sigma([0]\costar X).$$
There exists a zigzag of acyclic cofibrations, natural in $X$, between  $[0]\costar \Sigma X$ and the colimit of the following diagram: 
$$\Sigma(X\star[0]) \leftarrow \Sigma X\to [1]\fwedge\Sigma X.$$
\end{itheorem}

The third section uses this formula and the strictification result of Gagna, Ozornova and	 Rovelli (\cite{Gagna_Nerves_and_cones_of_free_loop_free_omega_categories}) to demonstrate a criterion for detecting autoequivalences of complicial sets by their behavior on globes.
Indeed, in section \ref{section:Globular equivalences}, by iterating the suspension, we construct a globular object: 
\[\begin{tikzcd}
	{\Db_0} & {\Db_1} & {\Db_2} & {...}
	\arrow["{i_0^+}", shift left=2, from=1-1, to=1-2]
	\arrow["{i_1^+}", shift left=2, from=1-2, to=1-3]
	\arrow["{i_3^+}", shift left=2, from=1-3, to=1-4]
	\arrow["{i_0^-}"', shift right=2, from=1-1, to=1-2]
	\arrow["{i_1^-}"', shift right=2, from=1-2, to=1-3]
	\arrow["{i_3^-}"', shift right=2, from=1-3, to=1-4]
\end{tikzcd}\]
\begin{itheorem}[\ref{theo:criterion_to_be_linked_to_identity}]
Let $i$ be a left Quillen endofunctor for the model category for complicial sets. Suppose that there exists a zigzag of weakly invertible natural transformations:
$$i(\Db_{\uvar}) \leftrightsquigarrow \Db_{\uvar}.$$
Then, there exists a zigzag of weakly invertible natural transformations between $i$ and $id$.
\end{itheorem} 
Proposition 15.10 of \cite{Barwick_on_the_unicity_of_the_theory_of_higher_categories} provides a similar result for models of $(\infty,n)$-categories.

\paragraph{Chapter \ref{chapter:complicial set as a model of io categories}.}
Results of Bergner, Gagna, Harpaz, Lanari, Lurie and Rezk (\cite{Bergner_Comparison_of_model_of_infini_n_categories},\cite{Bergner_Comparison_of_model_for_infini_n_categories_II}, \cite{Rezk_a_cartesian_of_weak_n_categories}, \cite{Lurie_Htt},\cite{Lurie_goodwillie_calculus}, \cite{Gagna_on_the_equivallence_of_all_model_for_infini2_cat}) imply that $2$-complicial sets are a model of $(\infty,2)$-categories (see \cite{Gagna_on_the_equivallence_of_all_model_for_infini2_cat} to understand how to use all this source to obtained the desired result and \cite{Bergner_explicit_comparaison_bt_theta_2_space_and_2_complicial_set} for a direct comparison between complete Segal $\Theta_2$-spaces and $2$-complicial sets).
The purpose of this chapter is to generalize this result to any $n\in \Nb\cup\{\omega\}$.

To this extend, we first address the more general problem of finding sufficient conditions on a model category $A$ to build a \textit{Gray cylinder} $C\mapsto I\otimes C$ and a \textit{Gray cone} $C\mapsto e\star C$ on Segal precategories enriched in $A$. These two operations should be linked by the following homotopy cocartesian square
\[\begin{tikzcd}
	{\{0\}\otimes C} & {I\otimes C} \\
	e & {e\star C}
	\arrow[from=1-2, to=2-2]
	\arrow[from=1-1, to=2-1]
	\arrow[from=2-1, to=2-2]
	\arrow[from=1-1, to=1-2]
\end{tikzcd}\]
where $e$ is the terminal object. The conditions that $A$ has to	 fulfill are encapsulated in the notion of \textit{Gray module} (paragraph \ref{para:Gray module}). Thanks to the Gray cylinder and cone, we can show the following theorem:

\begin{itheorem}[\ref{theo:Quillen adjunction}]
If $A$ is a Gray module, there is a Quillen adjunction between the Ozornova-Rovelli model structure for $\omega$-complicial sets on stratified simplicial sets and stratified Segal precategories enriched in $A$ where the left adjoint sends $[n]$ to $e\star e\star ... \star e\star \emptyset$
\end{itheorem} 

We will apply this theorem to the case where $A$ is the category of stratified simplicial sets endowed with the model structure for $\omega$-complicial sets, and after tedious work, we get
\begin{itheorem}[\ref{theo:letheo}]
Let $n\in \Nb$.
The model structure for $n$-complicial sets is a model of $(\infty,n)$-categories.
\end{itheorem}
As a corollary we have
\begin{itheorem}[\ref{theo:lecorozo}]
The adjunction between the model structure for complete Segal $\Theta$-spaces and $\omega$-complicial set constructed in \cite{Ozornova_a_quillen_adjunction_between_globular_and_complicial} is a Quillen equivalence.
\end{itheorem}

\phantomsection
\addcontentsline{toc}{subsection}{On the side of theory} 
\subsection*{On the side of theory}

In the second part of this thesis, we will adapt the constructions of classical category theory to the case $\io$. 
In this part, we will freely use the language of $\iun$-categories\footnote{ As there are currently several directions for the formalization of the language of $\iun$-categories (\cite{Riehl_element_of_infini_categories}, \cite{Riehl_A_type_theory_for_synthetic_-categories}, \cite{North_Towards_a_directed_homotopy_type_theory}, \cite{Cisinski-Univalent-Directed-Type-theory}), talking about "the" language of $\iun$-categories may be confusing.

In such case, the reader may consider that we are working within the quasi-category $\qcat$ of $\Tb$-small quasi-categories for $\Tb$ a Grothendieck universe. This quasi-category may be obtained either using the coherent nerve as described in \cite[chapter 3]{Lurie_Htt}, or by considering it as the codomain of the universal cocartesian fibration with $\Tb$-small fibers as done in \cite{Cisinski_The_universal_coCartesian_fibration}. In both cases, the straightening/unstraightening correspondence provides a morphism
$$\N(\Sset_{\Tb})\to \qcat$$
that exhibits $\qcat$ as the quasi-categorical localization of $\N(\Sset_{\Tb})$ with respect to the weak equivalences of the Joyal's model structure (\cite[theorem 8.13]{Cisinski_The_universal_coCartesian_fibration}). 

The constructions we use to build new objects - (co)limits of functor between quasi-categories, quasi-categories of functor, localization of quasi-categories, sub maximal Kan complex, full sub quasi-category, adjunction, left and right Kan extension, Yoneda lemma - are well documented in the Joyal model structure (see \cite{Lurie_Htt} or \cite{Cisinski_Higher_categories_and_homotopical_algebra})
, and therefore have direct incarnation in the quasi-category $\qcat$. }.

Chapter \ref{chapter:the infini 1 categorory of io categories} is devoted to the basic theory of $\io$-categories. Chapter \ref{chapter:chapter the 1 category of marked categories} introduces the notion of \textit{marked $\io$-categories} and studies \textit{left Cartesian fibrations}. Chapter \ref{chapter:The io-category of small io-categories} is dedicated to the \textit{Grothendieck construction}, \textit{univalence}, the \textit{Yoneda lemma}, and other standard categorical constructions.

Several of these results, or their analogues in the $(\infty,n)$ setting for some integer $n$, are already present in the literature. The case $n=1$, i.e. that of $(\infty,1)$-category theory, is now a prolific research field, and it would be impossible to list all the authors who have contributed to it. Nonetheless, we would like to mention Joyal for his pioneering work (\cite{Joyal_Quasi-categories_and_Kan_complexes}), Lurie for his major contribution (\cite{Lurie_Htt}), and Cisinski (\cite{Cisinski_Higher_categories_and_homotopical_algebra}) because his approach has deeply inspired the present work.

For the case $n=2$, the Grothendieck construction as well as lax limits and colimits have been extensively studied by Gagna, Lanari and Harpaz in \cite{Gagna_fibrations_and_lax_limit_infini_2_categories} and \cite{Gagna_Cartesian_Fibrations_of_infini_2_categories}, as well as by García and Stern in \cite{Garcia_2_cartesian_fibration_I} and \cite{Garcia_2_cartesian_fibration_II}.

For arbitrary $n$, Grothendieck construction has been described in \cite{Nuiten_on_straightening_for_segal_spaces} and \cite{Rasekh_yoneda_lemma_for_simplicial_spaces}. A partial version of the Yoneda lemma is also present in \cite{Rasekh_yoneda_lemma_for_simplicial_spaces}, \cite{Hinich_colimit_in_enriched_infini_categories}, and \cite{Heine_an_equivalence_between_enricherd_infini_categorories_and_categories_with_weak_action}.

\vspace{1cm}

\paragraph{Chapter \ref{chapter:the infini 1 categorory of io categories}.} 
This chapter is dedicated to the basic definition of $\io$-categories. In the first section, we recall some results on factorization systems in presentable $\iun$-categories. In the second section, we define $\io$-categories and give some basic properties. 
We also define and study \textit{discrete Conduché functor}, which are morphisms having the unique right lifting property against 
units $\Ib_{n+1}:\Db_{n+1}\to \Db_n$ for any integer $n$, and against compositions $\triangledown_{k,n}:\Db_n\to \Db_n\coprod_{\Db_k}\Db_n$ for any pair of integers $k\leq n$. This notion was originally defined and studied in the context of strict $\omega$-category by Guetta in \cite{Guetta_conduche}.
\begin{itheorem}[\ref{theo:pullback along conduche preserves colimits}]
Let $f:C\to D$ be a discrete Conduché functor. The pullback functor $f^*:\ocat_{/D}\to \ocat_{/C}$ preserves colimits.
\end{itheorem}

 In the third section, we study Gray operations for $\io$-categories. We conclude this chapter by proving results of strictification. In particular, we demonstrate the following theorem:
\begin{itheorem}[\ref{prop:strict stuff are pushout}]
Let $C$ be an $\io$-category, $b$ a globular sum, and $f:b\to C$ any morphism. The $\io$-categories $$1\costar b\coprod_b C,~C\coprod_b b\otimes[1]~\mbox{and}~C\coprod_b b\star 1$$
are strict whenever $C$ is.
\end{itheorem}
We will also prove the following theorem:
\begin{itheorem}[\ref{theo:strictness}]
If $C$ is strict, so are $C\star 1$, $1\costar C$ and $C\otimes [1]$.
\end{itheorem}
In the process, we will demonstrate another fundamental equation combining $C\otimes[1]$, $1\costar C$, $C\star 1$, and $[C,1]$.
\begin{itheorem}[\ref{theo:formula between pullback of slice and tensor}]
Let $C$ be an $\io$-category. The five squares appearing in the following canonical diagram are both cartesian and cocartesian:
\[\begin{tikzcd}
	& {C\otimes\{0\}} & 1 \\
	{C\otimes\{1\}} & {C\otimes[1]} & {C\star 1} \\
	1 & {1\costar C} & {[C,1]}
	\arrow[from=2-3, to=3-3]
	\arrow[from=3-2, to=3-3]
	\arrow[from=2-2, to=3-2]
	\arrow[from=2-2, to=2-3]
	\arrow[from=1-2, to=1-3]
	\arrow[from=1-3, to=2-3]
	\arrow[from=1-2, to=2-2]
	\arrow[from=2-1, to=2-2]
	\arrow[from=3-1, to=3-2]
	\arrow[from=2-1, to=3-1]
\end{tikzcd}\]
where $[C,1]$ is the \textit{suspension of $C$}.
\end{itheorem}

\paragraph{Chapter \ref{chapter:chapter the 1 category of marked categories}.}
This chapter is dedicated to the study of \textit{marked $\io$-categories}, which are pairs $(C,tC)$, where $C$ is an $\io$-category and $tC:=(tC_n)_{n>0}$ is a sequence of full sub $\infty$-groupoids of $C_n$ that include identities and are stable under composition and whiskering with (possibly unmarked) cells of lower dimensions. There are two canonical ways to mark an $\io$-category $C$. In the first, denoted by $C^\flat$, we mark as little as possible. In the second, denoted by $C^\sharp$, we mark everything.

The first section of the chapter defines these objects and establishes analogs of many results from section \ref{chapter:Basica construciton} to this new framework. In particular, the \textit{marked Gray cylinder} $\uvar\otimes [1]^\sharp$ is defined. If $A$ is an $\io$-category, the underlying $\io$-category of $A^\sharp\otimes[1]^\sharp$ is $A\times [1]$, and the underlying $\io$-category of $A^\flat\otimes[1]^\sharp$ is $A\otimes[1]$. By varying the marking, and at the level of underlying $\io$-categories, we "continuously" move from the cartesian product with the directed interval to the Gray tensor product with the directed interval.

The motivation for introducing markings comes from the notion of \textit{left (and right) cartesian fibrations}. A left cartesian fibration is a morphism between marked $\io$-categories such that only the marked cells of the codomain have cartesian lifting, and the marked cells of the domain correspond exactly to such cartesian lifting. For example, a left cartesian fibration $X\to A^\sharp$ is just a "usual" left cartesian fibration where we have marked the cartesian lifts of the domain, and every morphism $C^\flat \to D^\flat$ is a left cartesian fibration. This shows that marking plays a very different role here than in the case of marked simplicial sets, where it was there to represent (weak) invertibility. For example, if we had wanted to carry out this work in a complicial-like model category, we would have had to consider bimarked simplicial sets.

After defining and enumerating the stability properties enjoyed by this class of left (and right) cartesian fibration, we give several characterizations of this notion in theorem \ref{theo:other characterisation of left caresian fibration}. 

The more general subclass of left cartesian fibrations that still behaves well is the class of \textit{classified left cartesian fibrations}. 
This corresponds to left cartesian fibrations $X\to A$ such that there exists a cartesian square:
\[\begin{tikzcd}
	X & Y \\
	A & {A^\sharp}
	\arrow[from=1-1, to=2-1]
	\arrow[from=2-1, to=2-2]
	\arrow[from=1-1, to=1-2]
	\arrow[from=1-2, to=2-2]
	\arrow["\lrcorner"{anchor=center, pos=0.125}, draw=none, from=1-1, to=2-2]
\end{tikzcd}\]
 where the right vertical morphism is a left cartesian fibration and $A^\sharp$ is obtained from $A$ by marking all cells. In the second section, we prove the following fundamental result:

\begin{itheorem}[\ref{theo:pullback along un marked cartesian fibration}]
Let $p:X\to A$ be a classified left cartesian fibration. Then the functor $p^*:\ocatm_{/A}\to \ocatm_{/X}$ preserves colimits.
\end{itheorem}

The third subsection is devoted to the proof of the following theorem
\begin{itheorem}[\ref{theo:left cart stable by colimit}]
Let $A$ be an $\io$-category and $F:I\to \ocatm_{/A^\sharp}$ be a diagram that is pointwise a left cartesian fibration. The induced morphism 
$\colim_IF$ is a left cartesian fibration over $A^\sharp$.
\end{itheorem}

In the fourth subsection we study \textit{smooth} and \textit{proper} morphisms and we obtain the following expected result:
\begin{iprop}[\ref{prop:quillent theorem A}]
For a morphism $X\to A^\sharp$, and an object $a$ of $A$, we denote by $X_{/a}$ the marked $\io$-category fitting in the following cartesian squares. 
\[\begin{tikzcd}
	{X_{a/}} & X \\
	{A^\sharp_{a/}} & {A^\sharp}
	\arrow[from=2-1, to=2-2]
	\arrow[from=1-2, to=2-2]
	\arrow[from=1-1, to=1-2]
	\arrow[from=1-1, to=2-1]
	\arrow["\lrcorner"{anchor=center, pos=0.125}, draw=none, from=1-1, to=2-2]
\end{tikzcd}\]
We denote by $\bot:\ocatm\to \ocat$ the functor sending a marked $\io$-category to its localization by marked cells.
\begin{enumerate}
\item Let $E$, $F$ be two elements of $\ocatm_{/A^\sharp}$ corresponding to morphisms $X\to A^\sharp$, $Y\to A^\sharp$, and
 $\phi:E\to F$ a morphism between them. We denote by $\Fb E$ and $\Fb F$ the left cartesian fiborant replacement of $E$ and $F$. 
 
The induced morphism $\Fb\phi:\Fb E\to \Fb F$ is an equivalence if and only if for any object $a$ of $A$, the induced morphism 
$$\bot X_{/a}\to \bot Y_{/a}$$ 
is an equivalence of $\io$-categories.
\item A morphism $X\to A^\sharp$ is initial if and only if for any object $a$ of $A$, $\bot X_{/a}$ is the terminal $\io$-category.
\end{enumerate}
\end{iprop}

Finally, in the last subsection, for a marked $\io$-category $I$, we define and study a (huge) $\io$-category $\uLCartc(I)$ that has classified left cartesian fibrations as objects and morphisms between classified left cartesian fibrations as arrows.

\paragraph{Chapter \ref{chapter:The io-category of small io-categories}.}
This chapter aims to establish analogs of the fundamental categorical constructions to the $\io$ case. In the first section, we define the $\io$-category of small $\io$-categories $\uni$ (paragraph \ref{para:defi of uni}), and we prove a first incarnation of the Grothendieck construction:
\begin{icor}[\ref{cor: Grt equivalence}]
Let $\uni$ be the $\io$-category of small $\io$-categories, and $A$ an $\io$-category. There is an equivalence
$$\int_A:\Hom(A,\uni)\to \tau_0 \LCart(A^\sharp).$$
where $\tau_0 \LCart(A^\sharp)$ is the $\infty$-groupoid of left cartesian fibrations over $A^\sharp$ with small fibers.
\end{icor}
Given a functor $f:A\to \uni$, the left cartesian fibration $\int_Af$ is a colimit (computed in $\ocatm_{/A^\sharp}$) of
a simplicial object whose value on $n$ is of shape
$$\coprod_{x_0,...,x_n:A_0}X(x_0)^\flat\times\hom_A(x_0,...,x_n)^\flat\times A^\sharp_{x_n/}\to A^\sharp$$
This formula is similar to the one given in \cite{Gepner_Lax_colimits_and_free_fibration}
 for $\iun$-categories, and to the one given in \cite{Warren_the_strict_omega_groupoid_interpretation_of_type_theory} for strict $\omega$-categories.

We also prove a univalence result:

\begin{icor}[\ref{cor:univalence}]
Let $I$ be a marked $\io$-category. We denote by $I^\sharp$ the marked $\io$-category obtained from $I$	 by marking all cells and $\iota:I\to I^\sharp$ the induced morphism. There is a natural correspondence between \begin{enumerate}
\item functors
$f:I\otimes [1]^\sharp\to \uni^\sharp,$

\item pairs of small left cartesian fibration $X\to I^\sharp$, $Y\to I^\sharp$ together with diagrams 
\[\begin{tikzcd}
	& {\iota^*X} && X \\
	{\iota^*Y} && Y \\
	& I && {I^\sharp}
	\arrow[""{name=0, anchor=center, inner sep=0}, "\iota"', from=3-2, to=3-4]
	\arrow[from=2-1, to=3-2]
	\arrow[from=2-3, to=3-4]
	\arrow[from=2-1, to=2-3]
	\arrow[from=1-2, to=3-2]
	\arrow[from=1-4, to=3-4]
	\arrow[from=1-2, to=1-4]
	\arrow["\phi"{description}, from=1-2, to=2-1]
	\arrow["\lrcorner"{anchor=center, pos=0.125}, draw=none, from=1-2, to=0]
	\arrow["\lrcorner"{anchor=center, pos=0.125}, draw=none, from=2-1, to=0]
\end{tikzcd}\]
\end{enumerate}
\end{icor}

Recall that if $I$ is of shape $B^\sharp$, then the underlying $\io$-category of $B^\sharp\otimes[1]^\sharp$ is $B\times [1]$, and if $I$ is of shape $B^\flat$, the underlying $\io$-category of $B^\flat\otimes[1]^\sharp$ is $B\otimes[1]$. On the other hand, if $I$ is $B^\sharp$, $\iota$ is the identity, and $\phi$ then preserves all cartesian liftings, and if $I$ is $B^\flat$, $\phi$ doesn't need to preserve cartesian liftings.

By varying the marking, we can continuously move from the cartesian product with the interval to the Gray product with the interval on one side, and on the other side, we can continuously move from morphisms between left cartesian fibrations that preserve the marking to the ones that do not preserve it \textit{a priori}.

Eventually, we also get an $\io$-functorial Grothendieck construction, expressed by the following corollary:

\begin{icor}[\ref{cor:lcar et hom}]
Let $A$ be a $\U$-small $\io$-category.
Let $\uLCart(A^\sharp)$ be the $\io$-category of small left cartesian fibrations over $A^\sharp$. 
There is an equivalence
$$\uHom(A,\uni)\sim \uLCart(A^\sharp)$$
natural in $A$.
\end{icor}

In the second section of this chapter, for a locally small $\io$-category $C$, we construct the Yoneda embedding, which is a functor $y:C\to \widehat{C}$ where $\widehat{C}:=\uHom(C^t,\uni)$. We prove the Yoneda lemma:
\begin{itheorem}[\ref{theo:Yoneda ff}]
The Yoneda embedding is fully faithful.
\end{itheorem}
\begin{itheorem}[\ref{theo:Yoneda lemma}]
Let $C$ be an $\io$-category. There is an equivalence between the functor
$$\hom_{\w{C}}(y_{\uvar},\uvar):C^t\times \w{C}\to \uni$$ and
the functor 
$$ev:C^t\times \w{C}\to \uni .$$
\end{itheorem}
In the last three sections, we use these results to study and demonstrate the basic properties of adjunctions, lax (co)limits, and left Kan extensions.

We begin by studying adjunctions, and we establish the following expected result.
\begin{itheorem}[\ref{theo:two adjunction definition}]
Let $u:C\to D$ and $v:D\to C$ be two functors between locally $\U$-small $\io$-categories. 
The two following are equivalent. 
\begin{enumerate}
\item The pair $(u,v)$ admits an adjoint structure.
\item Their exists a pair of natural transformations $\mu: id_C \to vu$ and $\epsilon:uv\to id_D$ together with equivalences $(\epsilon\circ_0 u)\circ_1(u\circ_0 \mu) \sim id_{u}$ and $(v\circ_0 \epsilon)\circ_1 (\mu \circ_0 v )\sim id_{v}$.
\end{enumerate}
\end{itheorem}

In the next subsection, given a morphism $f:I\to C^\sharp$ between marked $\io$-categories, we define the notion of lax colimit and lax limit for the functor $f$. If $f$ admits such a lax colimit, for any $1$-cell $i:a\to b$ in $I$, we have a triangle
\[\begin{tikzcd}
	{} & {F(b)} \\
	{F(a)} & {\laxcolim_IF}
	\arrow["{F(i)}", curve={height=-30pt}, from=2-1, to=1-2]
	\arrow[from=2-1, to=2-2]
	\arrow[shorten <=8pt, shorten >=8pt, Rightarrow, from=1-2, to=2-1]
	\arrow[draw=none, from=1-1, to=2-1]
	\arrow[from=1-2, to=2-2]
\end{tikzcd}\]
If $i$ is marked, the preceding $2$-cell is an equivalence. 
For any $2$-cell $u:i\to j$, we have a diagram
\[\begin{tikzcd}
	& {F(b)} & {} & {F(b)} \\
	{F(a)} & {\laxcolim_IF} & {F(a)} & {\laxcolim_IF}
	\arrow[""{name=0, anchor=center, inner sep=0}, "{F(i)}"{description}, from=2-1, to=1-2]
	\arrow[""{name=1, anchor=center, inner sep=0}, from=2-1, to=2-2]
	\arrow[from=1-2, to=2-2]
	\arrow[""{name=2, anchor=center, inner sep=0}, from=1-2, to=2-2]
	\arrow[""{name=3, anchor=center, inner sep=0}, "{F(j)}", curve={height=-30pt}, from=2-1, to=1-2]
	\arrow["{F(j)}", curve={height=-30pt}, from=2-3, to=1-4]
	\arrow[from=2-3, to=2-4]
	\arrow[from=1-4, to=2-4]
	\arrow[shorten <=8pt, shorten >=8pt, Rightarrow, from=1-4, to=2-3]
	\arrow[""{name=4, anchor=center, inner sep=0}, draw=none, from=1-3, to=2-3]
	\arrow[shift right=2, shorten <=12pt, shorten >=12pt, Rightarrow, from=2, to=1]
	\arrow[shorten <=4pt, shorten >=4pt, Rightarrow, from=3, to=0]
	\arrow[shift left=0.7, shorten <=14pt, shorten >=16pt, no head, from=2, to=4]
	\arrow[shorten <=14pt, shorten >=14pt, from=2, to=4]
	\arrow[shift right=0.7, shorten <=14pt, shorten >=16pt, no head, from=2, to=4]
\end{tikzcd}\]
If $u$ is marked, the $3$-cell is an equivalence. We can continue these diagrams in higher dimensions and we have
similar assertions for lax limits.
The marking therefore allows us to play on the "lax character" of the universal property that the lax colimit must verify.

After providing several characterizations of lax colimits and limits, we prove the following result:
\begin{itheorem}[\ref{theo:presheaevs colimi of representable}]
Let $C$ be a $\U$-small $\io$-category. Let $f$ be an object of $\w{C}$. We define $C^\sharp_{/f}$ as the following pullback
\[\begin{tikzcd}
	{C^\sharp_{/f}} & {\w{C}^\sharp_{/f}} \\
	{C^\sharp} & {\w{C}^\sharp}
	\arrow[from=1-1, to=2-1]
	\arrow[from=1-1, to=1-2]
	\arrow[from=1-2, to=2-2]
	\arrow["{y^\sharp}"', from=2-1, to=2-2]
\end{tikzcd}\]
The colimit of the functor 
$\pi:C^\sharp_{/f}\to C^\sharp\xrightarrow{y^\sharp} \w{C}^\sharp$ is $f$.
\end{itheorem}

We conclude this chapter by studying Kan extensions.

\phantomsection
\addcontentsline{toc}{section}{Notice of authority}
\section*{Notice of authority}

The chapter \ref{chapter:Studies of the complicial model} is a shorter version of the preprint \cite{Loubaton_dualities_in_the_complicial_model}. Chapter \ref{chapter:complicial set as a model of io categories} is almost identical to the preprint \cite{Loubaton_complicial_sets_as_a_model_of_infini_n_categories}. During this thesis, two other papers were written: \cite{Loubaton_condition_de_kan} (in progress of publication at the SMF) and \cite{Henry_an_inductive_model_structure_for_infini_categories} (in collaboration with Simon Henry). Although the topics are similar, the questions addressed are quite different, and these papers are thus not included in the present text.


\cleardoublepage

\pagestyle{fancy}
\fancyhf{}
\fancyhfoffset[RO,LE]{0.5cm}
\fancyhfoffset[LE,RO]{0.5cm}

\fancyhead[RO]{\rmfamily\nouppercase{\rightmark}}
\fancyhead[LE]{\rmfamily\nouppercase{\leftmark}}
\fancyfoot[C]{\thepage}

\phantomsection
\addcontentsline{toc}{part}{Preliminaries} 
\part*{Preliminaries}
%
%
%
%
%
%
%
%
%
%

\chapter{The category of $\zo$-categories}
\label{chapter:The category of zocategories}

\minitoc
\vspace{1cm}
The first section is devoted to the definition of $\zo$-categories and of the category $\Theta$ of Joyal. We also show that the category $\Theta$ presents the category of $\zo$-categories, and we also exhibit an other presentation of this category (corollary \ref{cor:changing theta}).

The second section begins with a review of Steiner theory, which is an extremely useful tool for providing concise and computational descriptions of $\zo$-categories. Following Ara and Maltsiniotis, we employ this theory to define the Gray tensor product, denoted by $\otimes$, in $\zo$-categories. We then introduce the Gray operations, starting with the Gray cylinder $\uvar\otimes[1]$ which is the Gray tensor product with the directed interval $[1]:=0\to 1$. Then, we have the Gray cone and Gray $\circ$-cone, denoted by $\uvar\star 1$ and $1\costar \uvar$, that send an $\zo$-category $C$ onto the following pushouts:
\[\begin{tikzcd}
	{C\otimes\{1\}} & {C\otimes[1]} && {C\otimes\{0\}} & {C\otimes[1]} \\
	1 & {C\star 1} && 1 & {1\costar C}
	\arrow[from=1-5, to=2-5]
	\arrow[from=1-4, to=2-4]
	\arrow[from=2-4, to=2-5]
	\arrow[from=1-4, to=1-5]
	\arrow[from=1-2, to=2-2]
	\arrow[from=2-1, to=2-2]
	\arrow[from=1-1, to=2-1]
	\arrow[from=1-1, to=1-2]
	\arrow["\lrcorner"{anchor=center, pos=0.125, rotate=180}, draw=none, from=2-2, to=1-1]
	\arrow["\lrcorner"{anchor=center, pos=0.125, rotate=180}, draw=none, from=2-5, to=1-4]
\end{tikzcd}\]

We also present a formula that illustrates the interaction between the suspension and the Gray cylinder. As this formula plays a crucial role in both Part I and Part II, we provide its intuition at this stage.

 If $A$ is any $\zo$-category, the suspension of $A$, denoted by $[A,1]$, is the $\zo$-category having two objects - denoted by $0$ and $1$- and such that 
$$\Hom_{[A,1]}(0,1) := A,~~~\Hom_{[A,1]}(1,0) := \emptyset,~~~\Hom_{[A,1]}(0,0)=\Hom_{[A,1]}(1,1):=\{id\}.$$
We also define $[1]\vee[A,1]$ as the gluing of $[1]$ and $[A,1]$ along the $0$-target of $[1]$ and the $0$-source of $[A,1]$. We define similarly $[A,1]\vee[1]$.
These two objects come along with \textit{whiskerings}:
$$\triangledown:[A,1]\to [1]\vee [A,1] ~~~~\mbox{and}~~~~ \triangledown:[A,1] \to [A,1]\vee [1]$$ 
that preserve the extremal objects.

The $\zo$-category $[1]\otimes [1]$ is induced by the diagram:
\[\begin{tikzcd}
	00 & 01 \\
	10 & 11
	\arrow[from=1-1, to=2-1]
	\arrow[from=2-1, to=2-2]
	\arrow[from=1-1, to=1-2]
	\arrow[from=1-2, to=2-2]
	\arrow[shorten <=4pt, shorten >=4pt, Rightarrow, from=1-2, to=2-1]
\end{tikzcd}\]
and is then equal to the colimit of the following diagram: 
$$[1]\vee [1]\xleftarrow{\triangledown} [1]\hookrightarrow [[1],1]\hookleftarrow[1]\xrightarrow{\triangledown } [1]\vee [1].$$
The $\zo$-category $ [[1],1]\otimes [1]$ is induced by the diagram:
\[\begin{tikzcd}
	00 & 01 & 00 & 01 \\
	10 & 11 & 10 & 11
	\arrow[from=1-1, to=1-2]
	\arrow[""{name=0, anchor=center, inner sep=0}, from=1-1, to=2-1]
	\arrow[from=2-1, to=2-2]
	\arrow[""{name=1, anchor=center, inner sep=0}, from=1-2, to=2-2]
	\arrow[shorten <=4pt, shorten >=4pt, Rightarrow, from=1-2, to=2-1]
	\arrow[""{name=2, anchor=center, inner sep=0}, from=1-3, to=2-3]
	\arrow[from=1-3, to=1-4]
	\arrow[""{name=3, anchor=center, inner sep=0}, from=1-4, to=2-4]
	\arrow[shorten <=4pt, shorten >=4pt, Rightarrow, from=1-4, to=2-3]
	\arrow[""{name=4, anchor=center, inner sep=0}, curve={height=30pt}, from=1-1, to=2-1]
	\arrow[from=2-3, to=2-4]
	\arrow[""{name=5, anchor=center, inner sep=0}, curve={height=-30pt}, from=1-4, to=2-4]
	\arrow["{ }"', shorten <=6pt, shorten >=6pt, Rightarrow, from=0, to=4]
	\arrow["{ }"', shorten <=6pt, shorten >=6pt, Rightarrow, from=5, to=3]
	\arrow[shift left=0.7, shorten <=6pt, shorten >=8pt, no head, from=1, to=2]
	\arrow[shift right=0.7, shorten <=6pt, shorten >=8pt, no head, from=1, to=2]
	\arrow[shorten <=6pt, shorten >=6pt, from=1, to=2]
\end{tikzcd}\]
and is then equal to the colimit of the following diagram: 
 $$[1]\vee[[1],1]\xleftarrow{\triangledown} [[1]\otimes\{0\},1]\hookrightarrow[[1]\otimes[1],1]\hookleftarrow [[1]\otimes\{1\},1]\xrightarrow{\triangledown}[[1],1]\vee[1]$$
We prove a formula that combines these two examples:

\begin{itheorem}[\ref{theo:appendice formula for otimes}]
In the category of $\zo$-categories, there exists an isomorphism, natural in $A$, between $[A,1]\otimes[1]$ and the colimit of the following diagram
\[\begin{tikzcd}
	{[1]\vee[A,1]} & {[A\otimes\{0\},1]} & { [A\otimes[1],1]} & {[A\otimes\{1\},1]} & {[A,1]\vee[1]}
	\arrow["\triangledown"', from=1-2, to=1-1]
	\arrow[from=1-4, to=1-3]
	\arrow["\triangledown", from=1-4, to=1-5]
	\arrow[from=1-2, to=1-3]
\end{tikzcd}\]
\end{itheorem} 

We also provide similar formulas for the \textit{Gray cone} and the \textit{Gray $\circ$-cone}.
\begin{itheorem}[\ref{theo:appendice formula for star}]
There is a natural identification between $1\costar [A,1]$ and the colimit of the following diagram
\[\begin{tikzcd}
	{[1]\vee[A,1]} & {[A,1]} & { [A\star 1,1]}
	\arrow["\triangledown"', from=1-2, to=1-1]
	\arrow[from=1-2, to=1-3]
\end{tikzcd}\]
There is a natural identification between $[A,1]\star 1$ and the colimit of the following diagram
\[\begin{tikzcd}
	{ [1\costar A,1]} & {[A,1]} & {[A,1]\vee[1]}
	\arrow[from=1-2, to=1-1]
	\arrow["\triangledown", from=1-2, to=1-3]
\end{tikzcd}\]
\end{itheorem}

\section{Basic constructions}
\label{chapter:Basica construciton preliminaire}
\subsection{$\zo$-Categories}
\label{section:zocategories}
\p A \notion{globular set} is a presheaf on the \textit{category of globes} $\Gb$, which is the category induces by the diagram
\[\begin{tikzcd}
	{\Db_0} & {\Db_1} & {\Db_2} & {...}
	\arrow["{i_0^+}", shift left=2, from=1-1, to=1-2]
	\arrow["{i_1^+}", shift left=2, from=1-2, to=1-3]
	\arrow["{i_3^+}", shift left=2, from=1-3, to=1-4]
	\arrow["{i_0^-}"', shift right=2, from=1-1, to=1-2]
	\arrow["{i_1^-}"', shift right=2, from=1-2, to=1-3]
	\arrow["{i_3^-}"', shift right=2, from=1-3, to=1-4]
\end{tikzcd}\]
with the relations $i_n^{+} i_{n-1}^\epsilon = i_n^{-} i_{n-1}^\epsilon $ for any $n>0$ and $\epsilon \in \{+,-\}$. We also denote by $i^{\epsilon}_k$ the map $\Db_{k} \to \Db_n$ for $k< n$ obtained by composing any string of arrows ending with $i^\epsilon_{k}$. These and the identity arrows are the only maps in the category $\Gb$.

If $X$ is a globular set, one denotes by $X_n$ the set $X(\Db_n)$. Its elements are called \wcsnotion{$n$-cells}{cell@$n$-cell}{for $\zo$-categories}. The $0$-cells are sometimes called \textit{objects}. The maps $X_n \to X_k$ induced by $i^\epsilon_k : \Db_k \to \Db_n$ is denoted by $\pi^\epsilon_k$.

\p
\label{para:def of omega cat}
An \wcnotion{$\omega$-category}{category@$\omega$-category} is a globular set $X$ together with
\begin{enumerate}
\item operations of \textit{compositions}
\[ X_n\times_{X_k} X_n\to X_n ~~~(0\leq k<n) \]
which associate to two $n$-cells $(x,y)$ verifying $\pi_k^-(x) = \pi_k^+(y)$, a $n$-cells $x\circ_ky$,
\item as well as \textit{units}
\[X_n\to X_{n+1}\]
which associate to an $n$-cell $x$, a $(n+1)$-cell $\Ib_x$, 
\end{enumerate}
and satisfying the following axioms:
\begin{enumerate}

\item $\forall x \in X_n, \pi^\epsilon_n(\Ib_x) = x $.

\item $\pi^+_k (x \circ_n y) = \pi_k^{+}(x)$ and $\pi^-_k(x \circ_n y) = \pi_k^-(y)$ whenever the composition is defined and $k \leqslant n$.

\item $\pi^\epsilon_k (x \circ_n y) = \pi_k^{\epsilon}(x) \circ_n \pi^\epsilon_k(y)$ whenever the composition is defined and $k > n$.

\item $ x \circ_n \Ib_{\pi^-_n x} = x$ and $ \Ib_{\pi^+_n x} \circ_n x = x$.

\item $(x \circ_n y) \circ_n z = x \circ_n (y \circ_n z) $ as soon as one of these is defined.

\item If $k <n$

\[ (x \circ_n y) \circ_k ( z \circ_n w) = (x \circ_k z) \circ_n (y \circ_k w) \]
when the left-hand side is defined.

\end{enumerate}
A $n$-cell $a$ is \textit{non trivial} if is not in the image of the application $\Ib:X_{n-1}\to X_n$.

A \textit{morphism of $\omega$-categories} is a map of globular sets commuting with both operations. The category of $\omega$-categories is denoted by \textit{$\omegacat$}.

\p
\index[notion]{globe@$n$-globe!for $\zo$-categories}
By abuse of notation, we also denote by \wcsnotation{$\Db_n$}{(da@$\Db_n$}{for $\zo$-categories} the $\omega$-category that admits for any $k<n$ only two $k$-non-trivial cells, denoted by $e_k^-$ and $e_k^+$, and a single $n$-non-trivial cell, denoted by $e_n$ verifying :
\[
\begin{array}{rcl}
\pi_l^{-}(e_k^\epsilon)= e_l^{-}&\pi_l^{+}(e_k^\epsilon)= e_l^{+}& \mbox{ for $l\leq k<n$}\\
\pi_l^{-}(e_n)= e_l^{-}&\pi_l^{+}(e_n)= e_l^{+}& \mbox{ for $l\leq n$}\\
\end{array}
\]

Remark furthermore that the $\omega$-category $\Db_n$ represents $n$-cells, in the sense that $\Hom(\Db_n,C)\cong C_n$. We will not make the difference between $n$-cells and the corresponding morphism of $\Db_n\to C$. 

The $\omega$-category $\partial\Db_n$ is obtained from $\Db_n$ by removing the $n$-cell $e_n$. We thus have a morphism
\[i_n: \partial\Db_n\to \Db_n.\]
Note that $\partial \Db_0 = \emptyset$.

\p 
We say that an $\zo$-category $X$ is a \notion{polygraph} if it can be constructed from the empty $\zo$-category by freely adding arrows with specified source and target. That is if $X$ can be obtained as a transfinite composition $\emptyset = X_0 \to X_1 \to \dots \to X_i \to \colim X_i = X$ where for each $i$, the map $X_i \to X_{i+1}$ is a pushout of $\coprod_S \partial \Db_n \to \coprod_S \Db_{n+1}$.

 An arrow of a polygraph is said to be a \emph{generator} if it is one of the arrows that has been freely added at some stage.

Each cell in a polygraph can be written as an iterated composite of generators or iterated unit of generators (not necessarily in a unique way). For a $n$-cell $f$, the set of generators of dimension $n$ that appear in such an expression (and even the number of times they appear) is the same for all such expressions. As a consequence, a iterated composition of non trivial cells is always non trivial.

\p \label{para:dualities strict case}
 For any subset $S$ of $\Nb^*$, we define the functor $(\uvar)^S:\omegacat\to \omegacat$ \ssym{((b49@$(\uvar)^S$}{for $\zo$-categories} sending a $\omega$-category $C$ to the category $C^S$ such that for any $n$, there is an isomorphism $C_n\to C_{n}^S$ that sends every $n$-cell $f$ to a cell $\overline{f}$ fulfilling
$$\pi_{n-1}^-(\overline{f})=\overline{\pi^+_{n-1}(f)}~~~~\pi_{n-1}^+(\overline{f})=\overline{\pi^-_{n-1}(f)}$$
if $i\in S$ and 
$$\pi_{n-1}^-(\overline{f})=\overline{\pi^-_{n-1}(f)}~~~~\pi_{n-1}^+(\overline{f})=\overline{\pi^+_{n-1}(f)}$$
if $i\notin S$.
These functors are called \snotion{dualities}{for $\zo$-categories} as they are inverse of themselves. Even if there are plenty of them, we will be interested in only a few of them. In particular, we have the \snotionsym{odd duality}{((b60@$(\uvar)^{op}$}{for $\zo$-categories} $(\uvar)^{op}$, corresponding to the set of odd integer, the \snotionsym{even duality}{((b50@$(\uvar)^{co}$}{for $\zo$-categories} $(\uvar)^{co}$, corresponding to the subset of non negative even integer, the \snotionsym{full duality}{((b80@$(\uvar)^{\circ}$}{for $\zo$-categories} $(\uvar)^{\circ}$, corresponding to $\Nb^*$ and the \snotionsym{transposition}{((b70@$(\uvar)^t$}{for $\zo$-categories} $(\uvar)^t$, corresponding to the singleton $\{1\}$. Eventually, we have equivalences
$$((\uvar)^{co})^{op}\sim (\uvar)^{\circ} \sim ((\uvar)^{op})^{co}.$$

\p Let $\Psh{\Gb}_{\bullet,\bullet}$ be the category of globular set with two distinguished points, i.e. of triples $(X,a,b)$ where $a$ and $b$ are elements of $X_0$.
Let $[\uvar,1]:\Gb\to \Psh{\Gb}_{\bullet,\bullet}$ be the functor sending $\Db_n$ on $(\Db_{n+1},\{0\},\{1\})$ and $i_n^{\epsilon}$ on $i_{n+1}^{\epsilon}$. This induces a functor $[\uvar,1]:\Psh{\Gb}\to \Psh{\Gb}$ that we call the \textit{suspension}. We leave it to the reader to check that whenever $C$ has a structure of $\omega$-category, $[C,1]$ inherits one from it. This functor then induces a functor 
$$[\uvar,1]:\omegacat\to \omegacat$$
that we calls again the \snotionsym{suspension}{((d60@$[\uvar,1]$}{for $\zo$-categories}. Eventually, we denote by $i_0^-:\{0\}\to [C,1]$ (resp. $i_0^+:\{1\}\to [C,1]$) the morphism corresponding to the left point (resp. to the right point). For an integer $n$, we define by induction the functor $\Sigma^n:\Psh{\Gb}\to \Psh{\Gb}$\ssym{(sigma@$\Sigma^n$}{for $\zo$-categories} with the formula:
$$\Sigma^0:= id ~~~~~\Sigma^{n+1}:=\Sigma^n[\uvar,1].$$

\p Let $n$ be a non null integer.
A $n$-cells $f:s\to t$ is an \notion{equivalence} if there exists $n$-cells $g:t\to s$ and $g':t\to s$ such that 
$$f\circ_{n-1} g=\Ib_t~~~~~~g\circ_{n-1} f=\Ib_s$$
A \wcnotion{$\zo$-category}{category1@$\zo$-category} is an $\omega$-category whose only equivalences are the identities.
These objects are called \textit{Gaunt $\omega$-categories} in \cite{Barwick_on_the_unicity_of_the_theory_of_higher_categories} and \textit{rigid $\omega$-categories} in \cite{Rezk_a_cartesian_of_weak_n_categories}. Remark that $\zo$-categories are stable under suspensions and dualities.
We then define \wcnotation{$\zocat$}{((a20@$\zocat$} as the full subcategory of $\omegacat$ whose objects are the $\zo$-categories.

\p
Let $n$ be an integer. An \wcnotion{$(0,n)$-category}{category2@$(0,n)$-category} is an $\zo$-category whose cell of dimension strictly higher than $n$ are units. The category of $n$-categories is denoted by \wcnotation{$\zncat{n}$}{((a10@$\zncat{n}$} and is the full subcategory of $\zocat$ whose objects are $(0,n)$-categories.

 Remark that the category $\zncat{n}$ is the localization of $\zocat$ along morphisms $\Db_{k}\to \Db_{n}$ for $k\geq n$. We then have for any $n$ an adjunction 
\[\begin{tikzcd}
	{i_n:\zncat{n}} & {\zocat:\tau_n}
	\arrow[""{name=0, anchor=center, inner sep=0}, shift left=2, from=1-2, to=1-1]
	\arrow[""{name=1, anchor=center, inner sep=0}, shift left=2, from=1-1, to=1-2]
	\arrow["\dashv"{anchor=center, rotate=-90}, draw=none, from=1, to=0]
\end{tikzcd}\]
The right adjoint is called the \wcsnotionsym{$n$-truncation}{(tau@$\tau_n$}{truncation@$n$-truncation}{for $\zo$-categories}.
For any $n$, we define the colimit preserving functor $\tau^i_n:\zocat\to \zncat{n}$, called the \snotionsym{intelligent $n$-truncation}{(taui@$\tau^i_n$}{for $\zo$-categories}, sending $\Db_k$ on $\Db_{\min(n,k)}$. The functor $\tau^i_n$ fits in an adjunction
\[\begin{tikzcd}
	{\tau^i_n:\zocat} & {\zncat{n}:i_n}
	\arrow[""{name=0, anchor=center, inner sep=0}, shift left=2, from=1-1, to=1-2]
	\arrow[""{name=1, anchor=center, inner sep=0}, shift left=2, from=1-2, to=1-1]
	\arrow["\dashv"{anchor=center, rotate=-90}, draw=none, from=0, to=1]
\end{tikzcd}\]
We will identify objects of $\zncat{n}$ with their image in $\zocat$ and we will then also note by $\tau_n$ and $\tau^i_n$ the composites $i_n\tau^i_n$ and $i_n\tau^i_n$.

\p The family of truncation functor induces a sequence 
$$...\to \zncat{n+1}\xrightarrow{\tau_{n}} \zncat{n}\to...\to \zncat{1}\xrightarrow{\tau_{0}}\zncat{0}.$$
The canonical morphism
$$\zocat\to \lim_{n:\Nb}\zncat{n},$$
that sends an $\zo$-category $C$ to the sequence $(\tau_n C,\tau_n\tau_{n+1}C\cong \tau_n C)$, has an inverse given by the functor
$$\colim_{\Nb}:\lim_{n:\Nb}\zncat{n}\to \zocat$$
that sends a sequence $(C_n, \tau_{n}C_{n+1}\cong C_n) $ to the colimit of the induced sequence:
$$i_0C_0\to i_1C_1\to...\to i_nC_n\to...$$	
 We then have an equivalence 
$$\zocat\cong \lim_{n:\Nb}\zncat{n}.$$

\subsection{The category $\Theta$}
\label{subsection:the categoru theta}

\p Let $n$ be a non negative integer and $\textbf{a}:=\{a_0,a_1,...,a_{n-1}\}$ a sequence of $\zo$-categories. We denote \wcnotation{$[\textbf{a},n]$}{((g00@$[\textbf{a},n]$} the colimit of the following diagram:
\[\begin{tikzcd}
	& 1 && 1 && 1 \\
	{[a_0,1]} && {[a_1,1]} && {...} && {[a_{n-1},1]}
	\arrow["{i_0^+}"', from=1-2, to=2-1]
	\arrow["{i_0^-}", from=1-2, to=2-3]
	\arrow["{i_0^+}"', from=1-4, to=2-3]
	\arrow["{i_0^-}", from=1-4, to=2-5]
	\arrow["{i_0^+}"', from=1-6, to=2-5]
	\arrow["{i_0^-}", from=1-6, to=2-7]
\end{tikzcd}\]

\p \label{para:les sommes glob}
We define \wcnotation{$\Theta$}{(theta@$\Theta$} as the smallest full subcategory of $\zocat$ that includes the terminal $\zo$-category $[0]$, and such that
for any non negative integer $n$, and any finite sequence $\textbf{a}:=\{a_0,a_1,...,a_{n-1}\}$ of objects of $\Theta$, it includes the $\zo$-category $[\textbf{a},n]$.
Objects of $\Theta$ are called \notion{globular sum}.

Remark that a morphism $g:[\textbf{a},n]\to [\textbf{b},m]$ is exactly the data of a morphism $f:[n]\to [m]$, and for any integer $i$, a morphism
$$a_i\to \prod_{f(i)\leq k< f(i+1)}b_k.$$

\begin{example}
\label{exemple:of globular sum}
For any $n$, $\Db_n$ is a globular sum. The $\zo$-category induced by the $\omega$-graph 
\[\begin{tikzcd}
	\bullet & \bullet & \bullet
	\arrow[from=1-1, to=1-2]
	\arrow[""{name=0, anchor=center, inner sep=0}, curve={height=-24pt}, from=1-2, to=1-3]
	\arrow[""{name=1, anchor=center, inner sep=0}, curve={height=24pt}, from=1-2, to=1-3]
	\arrow[""{name=2, anchor=center, inner sep=0}, from=1-2, to=1-3]
	\arrow[shorten <=3pt, shorten >=3pt, Rightarrow, from=0, to=2]
	\arrow[shorten <=3pt, shorten >=3pt, Rightarrow, from=2, to=1]
\end{tikzcd}\]
is a globular sum.
\end{example}

\p For a globular sum $a$ and an integer $n$, we define $[a,n]:=[\{a,a,...,a\},n]$.\ssym{((g10@$[a,n]$}{for $\zo$-categories}
For a sequence of integer $\{n_0,..,n_k\}$ and a sequence of globular sum $\{a_0,..,a_k\}$, we define \wcsnotation{$[a_0,n_0]\vee[a_1,n_1]\vee...\vee [a_k,n_k]$}{((g20@$[a_0,n_0]\vee[a_1,n_1]\vee...\vee [a_k,n_k]$}{for $\Theta$} as the globular sum $[\{a_0,..,a_1,...,a_k,...\},n_0+n_1+...+n_k]$.

We denote by $[0]$ the terminal $\io$-category, and $[n]$ the globular sum $[[0],n]$.
We have a fully faithful functor $\Delta\to \Theta$ sending $[n]$ onto $[n]$..

\p \label{para:reedy}
 A \notion{Reedy category} is a small category $A$ equipped with two subcategories $A_+$, $A_-$ and a \textit{degree} function $d:ob(A)\to \Nb$ such that: 
\begin{enumerate}
\item for every non identity morphism $f:a\to b$, if $f$ belongs to $A_-$, $d(a)>d(b)$, and if $f$ belongs to $A_+$, $d(a)<d(b)$.
\item every morphism of $A$ uniquely factors as a morphism of $A_-$ followed by a morphism of $A_+$.
\end{enumerate}

A Reedy category $A$ is \wcnotion{elegant}{elegant Reedy category} if for any presheaf $X$ on $A$, for any $a\in A$ and any $c\in X(a)$, there exists a unique morphism $f:a\to a'\in A_{-}$ and a unique non degenerate object $c'\in X(a')$ such that $c=X(f)(c')$. 

\begin{prop}
\label{prop:elelangat stable by slice}
Let $X$ be a presheaf on an elegant Reedy category $A$. The category $A_{/X}$ is an elegant Reedy category.
\end{prop}
\begin{proof}
We have a canonical projection $\pi:A_{/X}\to A$. A morphism is positive (resp. negative) if it's image by $\pi$ is. The degree of an element $c$ of $A_{/X}$ is the degree of $\pi(c)$. We leave it to the reader to check that this endows $A_{/X}$ with a structure of Reedy category. 

The fact that $A_{/X}$ is elegant is a direct consequence of the isomorphism $\Psh{A_{/X}}\cong \Psh{A}_{/X}$.
\end{proof}

\p We define by induction the \wcnotion{dimension}{dimension of a globular sum} of a globular sum $a$, denoted by $|a|$. The dimension of $[0]$ is $0$, and the dimension of $[\textbf{a},n]$ is the maximum of the set $\{|a_k|+1\}_{k< n}$. We denote by \wcnotation{$\Theta_n$}{(thetan@$\Theta_n$} the full subcategory of $\Theta$ whose objects are the globular sum of dimension inferior or equal to $n$.

\begin{prop}[Berger, Bergner-Rezk]
\label{prop:theta is elegan reedy}
The category $\Theta$ and, for any $n\in \Nb$, the category $\Theta_n$ are elegant Reedy category.

A morphism $g:[\textbf{a},n]\to [\textbf{b},m]$ is \wcnotion{degenerate}{degenerate morphism of $\Theta$} (i.e a morphism of $\Theta_{-}$) if the corresponding morphism $f:[n]\to [m]$ is a degenerate morphism of $\Delta$, and for any $i<n$ and any $f(i)\leq k<f(k+1)$, the corresponding morphism $a_i\to b_k$ is degenerate. Furthermore, a morphism is degenerate  if and only if it is a epimorphism in $\Psh{\Theta}$.

A morphism is in $\Theta^+$ if and only if it is a monomorphism in $\Psh{\Theta}$. 
\end{prop}
\begin{proof}
The Reedy structure is a consequence of lemma 2.4 of \cite{Berger_a_cellular_nerve}. The fact that for any $n<\omega$, $\Theta_n$ is elegant is 
\cite[corollary 4.5.]{Bergner_reedy_category_and_the_theta_construction}. As for any $n<\omega$, the inclusion $\Theta_n\to \Theta$ preserves strong pushout, the characterization of elegant Reedy category given by \cite[proposition 3.8.]{Bergner_reedy_category_and_the_theta_construction} implies that $\Theta$ is also elegant.
\end{proof}

\p 
\label{para:algebraic and globular}
We recall that a morphism $g:[\textbf{a},n]\to [\textbf{b},m]$ is exactly the data of a morphism $f:[n]\to [m]$, and for any integer $i$, a morphism
$$a_i\to \prod_{f(i)\leq k< f(i+1)}b_k.$$
The morphism $g$ is \wcsnotion{globular}{globular morphism}{for $\zo$-categories} if for any $k<n$, $f(k+1)=f(k)+1$ and the morphism $a_k\to b_k$ is globular. The morphism $g$ is \wcnotion{algebraic}{algebraic morphism of $\Theta$} if it cannot be written as a composite $ig'$ where $i$ is a globular morphism.

\begin{example}
The morphism 
\[\begin{tikzcd}
	\bullet & \bullet & \bullet && \bullet & \bullet & \bullet
	\arrow[from=1-5, to=1-6]
	\arrow[""{name=0, anchor=center, inner sep=0}, curve={height=-24pt}, from=1-6, to=1-7]
	\arrow[""{name=1, anchor=center, inner sep=0}, curve={height=24pt}, from=1-6, to=1-7]
	\arrow[""{name=2, anchor=center, inner sep=0}, from=1-6, to=1-7]
	\arrow[""{name=3, anchor=center, inner sep=0}, from=1-2, to=1-3]
	\arrow[""{name=4, anchor=center, inner sep=0}, curve={height=-24pt}, from=1-2, to=1-3]
	\arrow[from=1-1, to=1-2]
	\arrow[shorten <=14pt, shorten >=14pt, maps to, from=1-3, to=1-5]
	\arrow[shorten <=3pt, shorten >=3pt, Rightarrow, from=0, to=2]
	\arrow[shorten <=3pt, shorten >=3pt, Rightarrow, from=2, to=1]
	\arrow[shorten <=3pt, shorten >=3pt, Rightarrow, from=4, to=3]
\end{tikzcd}\]
is globular. This is not the case for the morphism
\[\begin{tikzcd}
	\bullet & \bullet & \bullet && \bullet & \bullet & \bullet
	\arrow[from=1-5, to=1-6]
	\arrow[""{name=0, anchor=center, inner sep=0}, curve={height=-24pt}, from=1-6, to=1-7]
	\arrow[""{name=1, anchor=center, inner sep=0}, curve={height=24pt}, from=1-6, to=1-7]
	\arrow[""{name=2, anchor=center, inner sep=0}, from=1-6, to=1-7]
	\arrow[""{name=3, anchor=center, inner sep=0}, curve={height=24pt}, from=1-2, to=1-3]
	\arrow[""{name=4, anchor=center, inner sep=0}, curve={height=-24pt}, from=1-2, to=1-3]
	\arrow[from=1-1, to=1-2]
	\arrow[shorten <=14pt, shorten >=14pt, maps to, from=1-3, to=1-5]
	\arrow[shorten <=3pt, shorten >=3pt, Rightarrow, from=0, to=2]
	\arrow[shorten <=3pt, shorten >=3pt, Rightarrow, from=2, to=1]
	\arrow[shorten <=6pt, shorten >=6pt, Rightarrow, from=4, to=3]
\end{tikzcd}\]
that sends the $2$-cell of the left globular sum on the $1$-composite of the two $2$-cells of the right globular sum.
\end{example}

\begin{prop}[{\cite[Proposition 3.3.10]{Ara_thesis}}]
\label{prop:algebraic ortho to globular}
Every morphism in $\Theta$ can be factored uniquely in an algebraic morphism followed by a globular morphism.
\end{prop}

\p 
\label{para:definition of source et but}
We define for any globular sum $a$ and any integer $n$ a globular sum $s_n(a):=:t_n(a)$ and two morphisms
$$s_n(a)\to a\leftarrow t_n(a).$$
We first set $s_0(a):=:t_0(a) :=[0]$. The inclusion $s_0(a)\to a$ corresponds to the initial point and $t_0(a)\to a$ to the terminal point.
 For $n>0$, we define $s_n([\textbf{a},n]):=:t_n([\textbf{a},n]) :=[s_{n-1}(\textbf{a}),n]$ where $s_{n-1}(\textbf{a})$ is the sequence 
 $\{s_{n-1}(a_i)\}_{i<n}$.

\begin{example}
If $a$ is the globular sum of example \ref{exemple:of globular sum}, we have:
\[\begin{tikzcd}
	{s_1(a):=} & \bullet & \bullet & \bullet \\
	\\
	{a:=} & \bullet & \bullet & \bullet \\
	\\
	{t_1(a):=} & \bullet & \bullet & \bullet
	\arrow[from=3-2, to=3-3]
	\arrow[""{name=0, anchor=center, inner sep=0}, curve={height=-24pt}, from=3-3, to=3-4]
	\arrow[""{name=1, anchor=center, inner sep=0}, curve={height=24pt}, from=3-3, to=3-4]
	\arrow[""{name=2, anchor=center, inner sep=0}, from=3-3, to=3-4]
	\arrow[curve={height=-24pt}, from=1-3, to=1-4]
	\arrow[from=1-2, to=1-3]
	\arrow[from=5-2, to=5-3]
	\arrow[curve={height=24pt}, from=5-3, to=5-4]
	\arrow[shorten <=13pt, shorten >=13pt, maps to, from=1-3, to=3-3]
	\arrow[shorten <=13pt, shorten >=13pt, maps to, from=5-3, to=3-3]
	\arrow[shorten <=3pt, shorten >=3pt, Rightarrow, from=0, to=2]
	\arrow[shorten <=3pt, shorten >=3pt, Rightarrow, from=2, to=1]
\end{tikzcd}\]
\end{example}

\p
\label{para:definition of W}
The morphism $[\uvar,1]:\Theta\to \Theta$ induces by extension by colimit a functor 
$$[\uvar,1]:\Psh{\Theta}\to \Psh{\Theta}.$$
We define by induction on $a$ a $\Theta$-presheaf \wcnotation{$\Sp_a$}{(sp@$\Sp_{a}$} and a morphism $\Sp_a\to a$. 
If $a$ is $[0]$, we set $\Sp_{[0]}:=[0]$. For $n>0$, we define 
$\Sp_{[\textbf{a},n]}$ as the set valued presheaf on $\Theta$ obtained as the colimit of the diagram
\[\begin{tikzcd}
	& 1 && 1 && 1 \\
	{[ \Sp_{a_0},1]} && {[\Sp_{a_1},1]} && \cdots && {[\Sp_{a_{n-1}},1]}
	\arrow["{i_0^-}", from=1-2, to=2-3]
	\arrow["{i_0^+}"', from=1-4, to=2-3]
	\arrow["{i_0^+}"', from=1-2, to=2-1]
	\arrow["{i_0^-}", from=1-6, to=2-7]
	\arrow["{i_0^-}", from=1-4, to=2-5]
	\arrow["{i_0^+}"', from=1-6, to=2-5]
\end{tikzcd}\]
We define \wcnotation{$E^{eq}$}{(eeq@$E^{eq}$} as the set valued preheaves on $\Delta$ obtained as the colimit of the diagram
\[\begin{tikzcd}
	& {[1]} && {[1]} && {[1]} \\
	{[0]} && {[2]} && {[2]} && {[0]}
	\arrow[from=1-2, to=2-1]
	\arrow["{d^1}", from=1-2, to=2-3]
	\arrow["{d^1}"', from=1-6, to=2-5]
	\arrow["{d^0}"', from=1-4, to=2-3]
	\arrow["{d^2}", from=1-4, to=2-5]
	\arrow[from=1-6, to=2-7]
\end{tikzcd}\]
For any integer $n$, the morphism $\Sigma^n:\Theta\to \Theta$, which is the $n$-iteration of $[\uvar,1]$, induces by colimit a functor \ssym{(sigma@$\Sigma^n$}{for $\io$-categories}
$$\Sigma^n:\Psh{\Theta}\to \Psh{\Theta}.$$
 We define two sets of morphisms of $\Psh{\Theta}$:\sym{(w@$\W$}\sym{(wseg@$\Wseg$}\sym{(wsat@$\Wsat$}
$$\Wseg := \{\Sp_a\to a,~a\in\Theta\}~~~~\Wsat:= \{\Sigma^n E^{eq}\to \Db_n\}$$
and we set $$\W:=\Wseg\cup \Wsat.$$
For any $n$, we also define $$\mbox{$\W_n$}:= \W\cap \Theta_n.$$

\p
\label{para:defi of delta theta}
We recall that for an integer $n$ and a globular sum $a$, we defined $[a,n]:=[\{a,a,...,a\},n]$.
 This defines a functor $i:\Delta[\Theta] \to \Theta$
sending $(n,a)$ on $[a,n]$ where 
 \wcnotation{$\Delta[\Theta]$}{(deltaTheta@$\Delta[\Theta]$} is the following pushout of category: 
\[\begin{tikzcd}
	{\{[0]\}\times \Theta} & \Delta\times\Theta \\
	1 & {\Delta[\Theta]}
	\arrow[from=1-1, to=1-2]
	\arrow[from=1-1, to=2-1]
	\arrow[from=1-2, to=2-2]
	\arrow[from=2-1, to=2-2]
	\arrow["\lrcorner"{anchor=center, pos=0.125, rotate=180}, draw=none, from=2-2, to=1-1]
\end{tikzcd}\]
For the sake of simplicity, we will also denote by $[a,n]$ (resp. $[n]$) the object of $\Delta[\Theta]$ corresponding to $(n,a)$ (resp. to $(n,[0])$).
 We define two sets of morphisms:\sym{(m@$\M$} \sym{(mseg@$\Mseg$}\sym{(msat@$\Msat$}
$$
\begin{array}{c}
\Mseg := \{[a,\Sp_n]\to [a,n],~a:\Theta\}\cup\{[f,1],~f\in \Wseg\}\\
\Msat:= \{E^{eq}\to [0]\} \cup\{[f,1],~f\in \Wsat\}
\end{array}$$
and we set $$\M := \Mseg \cup \Msat.$$

For an integer $n$, we define \wcnotation{$\Delta[\Theta_n]$}{(deltaThetan@$\Delta[\Theta_n]$} as the following pushout of category: 
\[\begin{tikzcd}
	{\{[0]\}\times\Theta_n} & {\Delta\times\Theta_n} \\
	1 & {\Delta[\Theta_n]}
	\arrow[from=1-1, to=1-2]
	\arrow[from=1-1, to=2-1]
	\arrow[from=1-2, to=2-2]
	\arrow[from=2-1, to=2-2]
	\arrow["\lrcorner"{anchor=center, pos=0.125, rotate=180}, draw=none, from=2-2, to=1-1]
\end{tikzcd}\]
and the functor $i$ induces a functor $\Delta[\Theta_n]\to \Theta_{n+1}$.
For any $n$, we define $$\mbox{$\M_n$}:= \M\cap \Delta[\Theta_n].$$

\p Let $C$ be a presentable category and $S$ a set of monomorphisms with small codomains. An object $x$ is \textit{$S$-local} if for any $i:a\to b\in S$, the induced functor $\Hom(i,x):\Hom(b,x)\to \Hom(a,x)$ is an isomorphism. 
We define \textit{$C_{S}$} as the full subcategory of $C$ composed of $S$-local objects.
According to theorem \ref{theo:adjunction between presheaves and local presheaves}, the inclusion $\iota:C_S\to C$ is part of an adjunction
\[\begin{tikzcd}
	{\Fb_S:C} & {C_S:\iota}
	\arrow[""{name=0, anchor=center, inner sep=0}, shift left=2, from=1-1, to=1-2]
	\arrow[""{name=1, anchor=center, inner sep=0}, shift left=2, from=1-2, to=1-1]
	\arrow["\dashv"{anchor=center, rotate=-90}, draw=none, from=0, to=1]
\end{tikzcd}\]
Moreover, the theorem \textit{op cit} also states that $\Fb_S:C\to C_S$ is the localization of $C$ by the smallest class of morphisms containing $S$ and stable under composition and colimit.

Suppose given an other category $D$ fitting in an adjunction
\[\begin{tikzcd}
	{F:C} & {D:G}
	\arrow[""{name=0, anchor=center, inner sep=0}, shift left=2, from=1-1, to=1-2]
	\arrow[""{name=1, anchor=center, inner sep=0}, shift left=2, from=1-2, to=1-1]
	\arrow["\dashv"{anchor=center, rotate=-90}, draw=none, from=0, to=1]
\end{tikzcd}\]
with unit $\nu$ and counit $\epsilon$,
as well as a set of morphisms $T$ of $D$ such that $F(S)\subset T$. 
By adjunction property, it implies that for any $T$-local object $d\in D$, $G(d)$ is $S$-local.
The previous adjunction induces a derived adjunction
\[\begin{tikzcd}
	{\Lb F:C_S} & {D_T:\Rb G}
	\arrow[""{name=0, anchor=center, inner sep=0}, shift left=2, from=1-1, to=1-2]
	\arrow[""{name=1, anchor=center, inner sep=0}, shift left=2, from=1-2, to=1-1]
	\arrow["\dashv"{anchor=center, rotate=-90}, draw=none, from=0, to=1]
\end{tikzcd}\]
where $\Lb F$ is defined by the formula $c\mapsto \Fb_T F(c)$ and $\Rb G$ is the restriction of $G$ to $D_T$. The unit is given by $\nu\circ \Fb_S$ and the counit by the restriction of $\epsilon$ to $D_T$.

\p The functor $i:\Delta[\Theta]\to \Theta$ defined in paragraph \ref{para:defi of delta theta} induces an adjunction:
$$
\begin{tikzcd}
	{ i_!:\Psh{\Delta[\Theta]}} & {\Psh{\Theta}:i^*}
	\arrow[shift left=2, from=1-1, to=1-2]
	\arrow[shift left=2, from=1-2, to=1-1]
\end{tikzcd}
$$
where the left adjoint is the left Kan extension of the functor $\Delta[\Theta]\to \Theta\to \Psh{\Theta}$.
Remark that there is an obvious inclusion $i_!(\M)\subset \W$. In virtue of the last paragraph, this induces an adjunction between derived categories:
\begin{equation}
\label{eq:derived adjunction strict}
\begin{tikzcd}
	{\Lb i_!:\Psh{\Delta[\Theta]}_{\M}} & {\Psh{\Theta}_{\W}:\Rb i^*}
	\arrow[shift left=2, from=1-1, to=1-2]
	\arrow[shift left=2, from=1-2, to=1-1]
\end{tikzcd}
\end{equation}
The corollary 12.3 of \cite{Barwick_on_the_unicity_of_the_theory_of_higher_categories} and  the corollary \ref{cor:changing theta} (which is proved in the next section) induce equivalences
$$\zocat\cong \Psh{\Theta}_{\W}\cong \Psh{\Delta[\Theta]}_{\M}.$$

Similarly, for any integer $n$, the inclusion $i:\Delta[\Theta_n]\to \Theta_{n+1}$ induces an adjunction between derived categories:
\begin{equation}
\label{eq:derived adjunction case n strict}
\begin{tikzcd}
	{\Lb i_!:\Psh{\Delta[\Theta]_n}_{\M_n}} & {\Psh{\Theta_{n+1}}_{\W_n}:\Rb i^*}
	\arrow[shift left=2, from=1-1, to=1-2]
	\arrow[shift left=2, from=1-2, to=1-1]
\end{tikzcd}
\end{equation}
and corollary 12.3 of \cite{Barwick_on_the_unicity_of_the_theory_of_higher_categories} and corollary \ref{cor:changing theta} induce equivalences
$$\zncat{n+1}\cong \Psh{\Theta_{n+1}}_{\W_{n+1}}\cong \Psh{\Delta[\Theta_n]}_{\M_{n+1}}.$$

\subsection{The link between presheaves on $\Theta$ and on $\Delta[\Theta]$}
\p 
\label{para:precomplet}
A class of monomorphism $T$ is \wcnotionsym{precocomplete}{(ss@$\overline{S}$}{precocomplete set of arrows} if
\begin{enumerate}
\item[$-$] It is closed by transfinite compositions and pushouts.
\item[$-$] It is closed by left \textit{cancellation}, i.e for any pair of composable morphisms $f$ and $g$, if $gf$ and $f$ are in $S$ , so is $g$.
\item[$-$] For any elegant Reedy category $A$, and any functor $F:A\to \Arr(C)$ such that the induced morphism $\colim_{\partial a}F\to F(a)$ is a monomorphism for any object $a$, and such that  $F$ is pointwise in $S$, then $\colim_AF$ is in $S$.
\end{enumerate} 
For a set of morphisms $S$, we denote $\overline{S}$ the smallest precocomplete class of morphisms containing $S$.

\p
The aim of this subsection is to demonstrate the following proposition:

\begin{theorem}
\label{theo:unit and counit are in W}
For any $a\in \Theta$ and $b\in \Delta[\Theta]$, morphisms $i_!i^*a\to a$ and $b\to i^*i_! b$ are respectively in $\overline{\W}$ and $\overline{\M}$.
\end{theorem}

As a corollary, we directly have:
\begin{cor}
\label{cor:changing theta}
The adjunction 
$$\begin{tikzcd}
	{\Lb i_!:\Psh{\Delta[\Theta]}_{\M}} & {\Psh{\Theta}_{\W}:\Rb i^*}
	\arrow[shift left=2, from=1-1, to=1-2]
	\arrow[shift left=2, from=1-2, to=1-1]
\end{tikzcd}$$
given in \eqref{eq:derived adjunction strict} is an adjoint equivalence. For any integer $n$, the adjunction 
$$\begin{tikzcd}
	{\Lb i_!:\Psh{\Delta[\Theta]_n}_{\M_n}} & {\Psh{\Theta_{n+1}}_{\W_n}:\Rb i^*}
	\arrow[shift left=2, from=1-1, to=1-2]
	\arrow[shift left=2, from=1-2, to=1-1]
\end{tikzcd}$$
given in \eqref{eq:derived adjunction case n strict} is an adjoint equivalence. 
\end{cor}
\begin{proof}
The first assertion is a consequence of theorem \ref{theo:unit and counit are in W} and of the fact that $\overline{\W}$ (resp. $\overline{\M}$) is a included in the smallest class containing $\W$ (resp. $\M$) and stable by two out of three and colimits.
We prove the second assertion similarly.
\end{proof}

\p 
We denote by 
$$[\uvar,\uvar]: \Psh{\Theta}\times \Psh{\Delta}\to \Psh{\Delta[\Theta]}$$
the extension by colimit of the functor $\Theta\times \Delta\to \Psh{\Delta[\Theta]}$ sending $(a,n)$ onto $[a,n]$.
For an integer $n$, we denote
$$[\uvar,n]:\Psh{\Theta}^n\to \Psh{\Theta}$$ 
the extension by colimit of the functor 
$\Theta^n\to\Psh{\Theta}$ sending $\textbf{a}:=\{a_1,...,a_n\}$ onto $[\textbf{a},n]$. Eventually, we define 
$$[\uvar,d^0\cup d^n]:\Psh{\Theta}^n\to \Psh{\Theta}$$ 
the extension by colimit of the functor 
$\Theta^n\to\Psh{\Theta}$ sending $\textbf{a}:=\{a_1,...,a_n\}$ onto the colimit of the span.
$$[\{a_0,...,a_{n-2}\},{n-1}]\leftarrow [\{a_1,...,a_{n-2}\},{n-2}]\to [\{a_1,...,a_{n-1}\},{n-1}]$$

\begin{lemma}
\label{lemma:the functor [] preserves classes}
The image of $\overline{\W}\times \overline{\W_1}$ by the functor $[\uvar,\uvar]:\Psh{\Theta}\times \Psh{\Delta}\to \Psh{\Delta[\Theta]}$ is included in $\overline{\W}$.
\end{lemma}
\begin{proof}
As $[\uvar,\uvar]$ preserves colimits and monomorphisms, it is enough to show that for any pair $f,g\in \W\times \W_1$, $[f,g]$ is in $\W$ which is obvious.
\end{proof}

\begin{lemma}
\label{lemma:i etoile of W is in M 0}
For any globular sum $v$, and any integer $n$,
the morphism $[v,d^0\cup d^n]\cup[\partial v,n]\to [v,n]$ appearing in the diagram
\[\begin{tikzcd}
	{[\partial v,d^0\cup d^n]} & {[v,d^0\cup d^n]} \\
	{[\partial v,n]} & {[\partial v,n]\cup[v,d^0\cup d^n]} \\
	&& {[ v,n]}
	\arrow[from=1-1, to=2-1]
	\arrow[from=1-2, to=2-2]
	\arrow[from=2-2, to=3-3]
	\arrow[curve={height=18pt}, from=2-1, to=3-3]
	\arrow[curve={height=-18pt}, from=1-2, to=3-3]
	\arrow[from=1-1, to=1-2]
	\arrow[from=2-1, to=2-2]
\end{tikzcd}\]
is in $\overline{\M}$.
\end{lemma}
\begin{proof}
Let $a$ be a globular sum.
Remark that the morphism $[a,\Sp_n]\to [a,d^0\cup d^n]$ is in $\overline{\M}$. By left cancellation, this implies that $[a,d^0\cup d^n]\to [a,n]$ is in $\overline{\M}$. For any presheaf $X$ on $\Theta$, $\Theta_{/X}$ is an elegant Reedy category, and $[X,d^0\cup d^n]\to [X,n]$ is then in $\overline{\M}$. In particular, $[\partial v,d^0\cup d^n]\to [\partial v,n]$ is in $\overline{\M}$, and so is $[v,d^0\cup d^n]\to [\partial v,n]\cup[v,d^0\cup d^n]$ by stability by coproduct. A last use of the stability by left cancellation then concludes the proof.
\end{proof}

\p 
Let  $[b,m]$ be an element of $\Delta[\Theta]$. We denote $\Hom^*(i([b,m]),[\textbf{a},n])$ the subset of $\Hom(i([b,m]),[\textbf{a},n])$ that consists of morphisms that preserve extremal objects. The explicit expression of morphism in $\Theta$ implies the bijection:
\begin{equation}
\label{eq:hom in theta}
\Hom_{\Theta}^*(i([b,m]),[\textbf{a},n])\cong\Hom_{\Delta}([n],[m])^*\times \prod_{i<n}\Hom_{\Theta}(b,a_i)
\end{equation}
where $\Hom_{\Delta}^*([n],[m])$ is the subset of $\Hom_{\Delta}([n],[m])$ consisting of morphisms that preserve extremal objects.

Let $\textbf{a}:=\{a_0,a_1,...,a_{n-1}\}$ be a finite sequence of globular sums. We define $\Theta^{\hookrightarrow}_{/\textbf{a}}$ as the category whose objects are collections of maps $\{b\to a_i\}_{ i< n}$ such that there exists no degenerate morphism $b\to b'$ factorizing all $b\to a_i$. Morphisms are monomorphisms $b\to b'$ making all induced triangles commute.

The bijection \eqref{eq:hom in theta}  induces a bijection between the objects of $\Theta^{\hookrightarrow}_{/\textbf{a}}$ and the morphisms $[b,n]\to i^*[\textbf{a},n]$ that are the identity on objects and that can not be factored through any 	degenerate morphism $[b,n]\to [\tilde{b},n]$.

\begin{lemma}
\label{lemma:i etoile of W is in M 1}
For any morphism $p:[b,m]\to i^*[\textbf{a},n]$ in $\Psh{\Delta[\Theta]}$ that preserves extremal objects, there exists a unique pair $(\{b'\to a_i\}_{i<n},[f,i]:[b,m]\to [b',n])$ where $\{b'\to a_i\}_{i<n}$ is an element of $\Theta^{\hookrightarrow}_{/\textbf{a}}$, $f$ is a degenerate morphism, and such that the induced triangle
\[\begin{tikzcd}
	{[b,m]} & {[b',n]} \\
	& {i^*[\textbf{a},n]}
	\arrow["{[f,i]}", from=1-1, to=1-2]
	\arrow["{p'}", from=1-2, to=2-2]
	\arrow["p"', from=1-1, to=2-2]
\end{tikzcd}\]
commutes.
\end{lemma}
\begin{proof}
By adjunction and thanks to the bijection \eqref{eq:hom in theta}, $p$ corresponds to a pair $(j:[m]\to [n], \{b\to a_i\}_{i<n})$, and $i$ has to be equal to $j$.

Using once again this bijection, and the fact that degeneracies are epimorphisms, we have to show that there exists a unique degenerate morphism $g:b\to b'$ that factors the morphisms $b\to a_i$ for all $i<n$, and such that the induced family of morphisms $\{b'\to a_i\}_{i<n}$ is an element of $\Theta^{\hookrightarrow}_{/\textbf{a}}$.

As any infinite sequence of degenerate morphisms is constant at some point, the existence is immediate.

Suppose given two morphisms $b\to b'$, $b\to b''$ fulfilling the previous condition.
The proposition 3.8 of \cite{Bergner_reedy_category_and_the_theta_construction} implies that there exists a globular sum $\tilde{b}$ and two degenerate morphisms $b'\to \tilde{b}$ and $b''\to \tilde{b}$  such that the induced square
\[\begin{tikzcd}
	b & {b'} \\
	{b''} & {\tilde{b}}
	\arrow[from=1-1, to=2-1]
	\arrow[from=2-1, to=2-2]
	\arrow[from=1-1, to=1-2]
	\arrow[from=1-2, to=2-2]
\end{tikzcd}\]
is cartesian. The universal property of pushout implies that $b\to \tilde{b}$ also fulfills the previous condition. By definition of $b'$ and $b''$, this implies that they are equal to $\tilde{b}$, and this shows the uniqueness.
\end{proof}

\begin{lemma}
\label{lemma:i etoile of W is in M 0.5}
Let $\{b\to a_i\}_{ i< n}$ be an element of  $\Theta^{\hookrightarrow}_{/\textbf{a}}$  and $i:b'\to b$ a monomorphism of $\Theta$. The induced family $\{b'\to b\to a_i\}_{ i< n}$ is an object of $\Theta^{\hookrightarrow}_{/\textbf{a}}$. 
\end{lemma}
\begin{proof}
The lemma \ref{lemma:i etoile of W is in M 1} implies that there exists  a  unique degenerate morphism $j:b'\to \tilde{b}$ that factors all the morphism $b'\to b\to  a_i$ for $i<n$, and such the induced family of morphisms $\{\tilde{b}\to a_i\}_{i<n}$ is an element of $\Theta^{\hookrightarrow}_{/\textbf{a}}$.  We proceed by contradiction, and we then suppose that $j$ is different from the identity.

We then have, for any $i<n$, a commutative square
\[\begin{tikzcd}
	{b'} & b \\
	{\tilde{b}} & {a_i}
	\arrow["i", from=1-1, to=1-2]
	\arrow[from=1-2, to=2-2]
	\arrow["j"', from=1-1, to=2-1]
	\arrow[from=2-1, to=2-2]
\end{tikzcd}\]
As the morphism $j$ is degenerate and different of the identity, there exists an integer $k$ and a non trivial $k$-cell $d$ of $b'$ that is sent to an identity by $j$. Now, let $d'$ be a $k$-generator  of the polygraph $b$ that appears in the decomposition of $i(d)$. The commutativity of the previous square and the fact that the $\zo$-categories $a_i$ are polygraphs implies that for any $i$, the $k$-cell $a'$ is sent to an identity by the morphism $b\to a_i$.  As for any $i< n$ and any $l\geq k$,  there is no non trivial $l$-cell in $a_i$ whose $(k-1)$-source and $(k-1)$-target are the same, this implies that every $l$-cell of $b$ that is $(k-1)$-parallel with $d'$ is send to the identity by the morphism $b\to a_i$.

We denote $\bar{b}$ the globular sum obtained by crushing all $l$-cells of $b$ that are $(k-1)$-parallel with $d'$. The induced degenerate morphism $b\to \bar{b}$ factors all the morphisms $b\to a_i$ which is in contradiction with the fact that  $\{{b}\to a_i\}_{i<n}$ is an element of $\Theta^{\hookrightarrow}_{/\textbf{a}}$.
\end{proof}

\p 
We say that an element $\{v\to a_i\}_{i<n}$ in the category $\Theta^{\hookrightarrow}_{/\textbf{a}}$ is \textit{of height $0$} if $v\to a_0$ factors through $\partial a_0$ or $v\to a_{n-1}$ factors through $\partial a_{n-1}$. The \textit{height of an element $w$} is the maximal integer $m$ such that there exists a sequence 
$v_0\to v_1\to...\to v_m=w$ in $\Theta^{\hookrightarrow}_{/\textbf{a}}$ with $v_i\neq v_{i+1}$ for any $i<m$ and such that $v_0$ is of height $0$ and $v_1$ is not. As $\Theta$ is a Reedy category, all elements have finite height.

\begin{lemma}
\label{lemma:i etoile of W is in M 1.5}
For any morphism $p:[b,m]\to i^*[\textbf{a},n]$ that preserves extremal objects, there exists a unique integer $k$, a unique element $\{b'\to a_i\}_{i<n}$ of height $k$, and a unique morphism $[f,i]:[b,m]\to [b',n]$ that doesn't factors through $[\partial b',n]$, and such that the induced triangle
\[\begin{tikzcd}
	{[b,m]} & {[b',n]} \\
	& {i^*[\textbf{a},n]}
	\arrow["{p'}", from=1-2, to=2-2]
	\arrow["{[f,i]}", from=1-1, to=1-2]
	\arrow[from=1-1, to=2-2]
\end{tikzcd}\]
commutes.

If $\{\tilde{b}\to a_i\}_{i<n}$ is any other object of non negative height, and $[\tilde{f},j]:[b,m]\to [\tilde{b},n]$ is a morphism that make the induced triangle
\[\begin{tikzcd}
	{[b,m]} & {[\tilde{b},n]} \\
	& {i^*[\textbf{a},n]}
	\arrow["{\tilde{p}}", from=1-2, to=2-2]
	\arrow["{[\tilde{f},j]}", from=1-1, to=1-2]
	\arrow[from=1-1, to=2-2]
\end{tikzcd}\]
commutative, then $\{\tilde{b}\to a_i\}_{i<n}$ is of height strictly superior to $k$ and $[\tilde{f},j]$ factors through $[\partial\tilde{b},n]$.
\end{lemma}
\begin{proof}
The lemma \ref{lemma:i etoile of W is in M 1} implies the first assertion. For the second one, suppose given an object $\{\tilde{b}\to a_i\}_{i<n}$ of non negative height and a morphism  $[\tilde{f},j]:[b,m]\to [\tilde{b},n]$ fulfilling the desired condition. The bijection \eqref{eq:hom in theta} directly implies that $j$ is equal to $i$, and the first assertion  implies that $\tilde{f}$ is non degenerate.

We can then factor $\tilde{f}:b\to \tilde{b}$ in a degenerate morphism $b\to \bar{b}$ followed by a monomorphism $ \bar{b}\to \tilde{b}$ which is not the identity. The lemma \ref{lemma:i etoile of W is in M 0.5} then implies that $\{\bar{b}\to \tilde{b}\to a_i\}_{i<n}$ is an element of  $\Theta^{\hookrightarrow}_{/\textbf{a}}$. The first assertion then implies that the two morphisms $[b,m]\to [b',n]$ and $[b,m]\to {[\bar{b},n]}$ are equals. As the  monomorphism $  {[{b}',n]}={[\bar{b},n]}\to [\tilde{b},n]$ is not the identity, this concludes the proof.
\end{proof}

\begin{lemma}
\label{lemma:i etoile of W is in M 2}
The morphism $i^*[\partial^0\textbf{a},n]\cup i^*[\partial^{n-1}\textbf{a},n]\to i^*[\textbf{a},n]$ is in $\overline{\M}$, where $\partial^j\textbf{a}$ corresponds to the sequence $\{a_1,..,\partial a_j,..,a_n\}$. 
\end{lemma}
\begin{proof}
For $k\in\Nb\cup\{\infty\}$, we define $x_k$ as the smallest sub object of $i^*[\textbf{a},n]$ such that for any element 
of height inferior or equal to $k$ of $\Theta^{\hookrightarrow}_{/\textbf{a}}$, the corresponding morphism $[b,n]\to i^*[\textbf{a},n]$ factors through $x_k$. In particular we have $x_0= i^*[\partial^0\textbf{a},n]\cup i^*[\partial^{n-1}\textbf{a},n]$, and the lemma \ref{lemma:i etoile of W is in M 1} implies that $x_{\infty} =i^*[\textbf{a},n]$. 

Every morphism $[b,m]\to i^*[\textbf{a},n]$ that does not preserve extremal points then factors through $x_0$. 
The lemma \ref{lemma:i etoile of W is in M 1.5} implies that for any integer $k$, the canonical square 
\begin{equation}
\label{eq:lemma:i etoile of W is in M 2}
\begin{tikzcd}
	{\coprod_{(\Theta^{\hookrightarrow}_{/\textbf{a}})_{k+1}}[b,d^0\cup d^n]\cup[\partial b,n]} & {x_k} \\
	{\coprod_{(\Theta^{\hookrightarrow}_{/\textbf{a}})_{k+1}}[b,n]} & {x_{k+1}}
	\arrow[from=1-1, to=2-1]
	\arrow[from=2-1, to=2-2]
	\arrow[from=1-1, to=1-2]
	\arrow[from=1-2, to=2-2]
\end{tikzcd}
\end{equation}
is cocartesian.  The lemma \ref{lemma:i etoile of W is in M 0} and the stability under pushout of $\overline{\M}$  imply that $x_k\to x_{k+1}$ is in $\overline{\M}$.
 As $i^*[\textbf{a},n]$ is the transfinite composition of the sequence $x_0\to x_1\to...$, this implies that $x_0\to i^*[\textbf{a},n]$ is in $\overline{\M}$ which conclude the proof.
\end{proof}

\begin{lemma}
The morphism $i^*\Sp_a\to i^*a$ is in $\overline{\M}$ for any globular sum $a$.
\end{lemma}
\begin{proof}
Let $[\textbf{a},n]:= a$. As $\overline{\M}$ is closed under pushouts and composition, lemma \ref{lemma:i etoile of W is in M 2} implies that the morphism 
$$i^*[\{a_0,...,a_{n-2}\},n-1]\cup i^*[\{a_1,...,a_{n-1}\},n-1]\to i^*[\textbf{a},n]$$
is in $\widehat{\M}$. 
An easy induction on $n$ shows that this is also the case for the morphism 
$$[a_0,1]\cup... \cup [a_{n-1},1]= i^*[a_0,1]\cup... \cup i^*[a_{n-1},1]\to i^*[\textbf{a},n].$$
Now remark that $i^*\Sp_{[\textbf{a},n]}$ is equivalent to 
$$[\Sp_{a_0},1]\cup... \cup [\Sp_{a_{n-1}},1].$$
As the morphisms $[\Sp_i,1]\to [a_i,1]$ are by definition in $\M$, this concludes the proof.
\end{proof}

\begin{prop}
\label{prop:i etoile of W is in M}
There is an inclusion $i^*\W\subset \overline{\M}$.
\end{prop}
\begin{proof}
For Segal extensions, this is precisely the content of the last lemma. For saturation extensions, remark that $i^*\Wsat = \Msat$.
\end{proof}

\begin{proof}[Proof of theorem \ref{theo:unit and counit are in W}]
Let $a$ be a globe. We then have $i_!i^*a = a$. Suppose now that $a$ is any globular sum. We then have a commutative diagram
\[\begin{tikzcd}
	{i_!i^*\Sp_a} & {\Sp_a} \\
	{i_!i^*a} & a
	\arrow[from=1-1, to=2-1]
	\arrow[Rightarrow, no head, from=1-1, to=1-2]
	\arrow[from=1-2, to=2-2]
	\arrow[from=2-1, to=2-2]
\end{tikzcd}\]
where the upper horizontal morphism is an identity.
The proposition \ref{prop:i etoile of W is in M} and the fact that $i_!(\M)\subset \W$ implies that the vertical morphisms of the previous diagram are in $\overline{\W}$. By left cancellation, this implies that $i_!i^*a\to a$ belongs to $\overline{\W}$ for any globular sum. We proceed analogously to show that for any $b\in \Delta[\Theta]$, $b\to i^*i_! b$ is in $\overline{\M}$.
\end{proof}

\section{Gray Operations}
\subsection{Recollection on Steiner theory}
\label{section:Steiner thery} 

We present here the Steiner theory developed in \cite{Steiner_omega_categories_and_chain_complexes}.

\p
An augmented directed complex $(K,K^*,e)$ is given by a complex of abelian groups $K$, with an augmentation $e$: $$\Zb \xleftarrow{e} K_0 \xleftarrow{\partial_0} K_1 \xleftarrow{\partial_1} K_2 \xleftarrow{\partial_2} K_3 \xleftarrow{\partial_3}. .. $$
and a graded set $K^* = (K^*_n)_{n\in\Nb}$ such that for any $n$, $K_n^*$ is a submonoid of $K_n$. A morphism of directed complexes between $(K,K^*,e)$ and $(L,L^*,e')$ is given by a morphism of augmented complexes of abelian groups $f : (K,e)\to (L,e')$ such that $f(K^*_n)\subset L^*_n$ for any $n$. We note by \wcnotation{$\CDA$}{(adc@$\CDA$} the category of augmented directed complexes. 

Steiner then constructs an adjunction
\[\begin{tikzcd}
	{\lambda:\omegacat} & {\CDA:\nu}
	\arrow[""{name=0, anchor=center, inner sep=0}, shift left=2, from=1-1, to=1-2]
	\arrow[""{name=1, anchor=center, inner sep=0}, shift left=2, from=1-2, to=1-1]
	\arrow["\dashv"{anchor=center, rotate=-90}, draw=none, from=0, to=1]
\end{tikzcd}\]
The functor $\lambda$ is the simplest to define: \sym{(lambda@$\lambda:\omegacat\to \CDA$}

\begin{definition}
Let $C$ be a $\omega$-category.
We denote by $(\lambda C)_n$ the abelian group generated by the set $\{[x]_n: x\in C_n\}$ and the relations
$$[x*_m y]_n \sim [x]_n + [y]_n \mbox{ for $m<n$ }.$$
We define the morphism $\partial_n: (\lambda C)_{n+1}\to (\lambda C)_n$ on generators by the formula:
$$\partial_n([x]_{n+1}) := [d_n^+ x]_{n} - [d_n^- x]_{n}.$$
\end{definition}
We can easily check that the morphism $\partial$ is a differential. We define an augmentation $e:(\lambda C)_{0}\to \Zb$ by setting $e([x]_0) = 1$ on generators. 
We denote by $(\lambda C)_n^*$ the additive submonoid generated by the elements $[x]_n$. We then set:
$$\lambda C := (\{(\lambda C)_n \}_{n\in \Nb},\{(\lambda C)^*_n \}_{n\in \Nb},e ).$$ This assignation lifts to a functor:
$$\begin{array}{ccccc}
\lambda &:& \omegacat&\to&\CDA\\
&&C&\mapsto&\lambda C.
\end{array}$$
\begin{example}~
\begin{enumerate}
\item
For any integer $n$, $\lambda\Db_n$ is the augmented directed complex whose underlying chain complex is given by:
$$
\Zb\xleftarrow{e}
\Zb[e_0^-,e_0^+] \xleftarrow{\partial_0}
... \xleftarrow{\partial_{n-2}}
\Zb[e_{n-1}^-,e_{n-1}^+] \xleftarrow{\partial_{n-1}}
\Zb[e_{n}] \xleftarrow{\partial_{n}}
0\leftarrow ...$$
where for any $0<k<n$ and $\alpha\in\{-,+\}$
$$e(e_0^\alpha)=1~~~\partial_{k-1}(e_k^\alpha)= e_{k-1}^+-e_{k-1}^-~~~\partial_{n-1}(e_n)= e_{n-1}^+-e_{n-1}^-.$$
\item
The augmented directed complex $\lambda[n]$ has for underlying chain complex:
$$
\Zb\xleftarrow{e}
\Zb[v_0,v_1,...,v_n] \xleftarrow{\partial_0}
\Zb[v_{0,1},v_{1,2}...,v_{n-1,n}] \xleftarrow{\partial_{1}}
0\leftarrow ...$$
where for any $k<n$ and $\alpha\in\{-,+\}$
$$e(v_k)=e(v_n)=1~~~ \partial_{1}(v_{k,k+1})=v_{k+1}-v_k.$$
\end{enumerate}
\end{example}

\p We now define the functor $\nu:\CDA\to \omegacat$. Throughout, we fix an augmented directed complex $(K,K^*,e)$.
A \textit{Steiner array} (or simply a \notion{array}) of dimension $n$ is the data of a finite double sequence: \sym{(nu@$\nu:\CDA\to \omegacat$}
$$\left(\begin{matrix}
x^-_0 &x^-_1&x^-_2&x^-_3 &...&x_n^-\\
x^+_0 &x^+_1&x^+_2&x^+_3 &...&x_n^+
\end{matrix}\right)$$
such that
\begin{enumerate}
\item $x^-_n=x^+_n$;
\item For any $i\leq n$ and $\alpha\in\{-,+\}$, $x_i^\alpha$ is an element of $K^*_i$;
\item For any $0<i\leq n$, $\partial_{i-1}(x_i^\alpha)= x_{i-1}^+ - x_{i-1}^-$;
\end{enumerate}
An array is said to be \wcnotion{coherent}{coherent array} if $e(x^+_0) = e(x^-_0) = 1$.
\begin{definition}
We define the globular set $\nu K$, whose $n$-cells are the coherent arrays of dimension $n$. The source and target maps are defined for $k<n$ by the formula: 

$$d^\alpha_k\begin{pmatrix}
x^-_0 &x^-_1&x^-_2&...&x^-_n\\
x^+_0 &x^+_1&x^+_2&...& x^+_n
\end{pmatrix} = \begin{pmatrix}
x^-_0 &x^-_1&x^-_2&...& x^-_{k-1}&x^\alpha_k\\
x^+_0 &x^+_1&x^+_2&...& x^+_{k-1}&x^\alpha_k\end{pmatrix}$$

There is an obvious group structure on the arrays:
$$\begin{pmatrix}
x^-_0 &x^-_1&...& x^-_n\\
x^+_0 &x^+_1&...& x^+_n
\end{pmatrix}
+
\begin{pmatrix}
y^-_0 &y^-_1&...& y^-_n\\
y^+_0 &y^+_1&...& y^+_n
\end{pmatrix}
=
\begin{pmatrix}
x^-_0+y^-_0 &x^-_1+ y^-_1&...&x^-_n+ y^-_n \\
x^+_0+y^+_0 &x^+_1+ y^+_1&...&x^+_n +y^+_n 
\end{pmatrix}
$$
\label{defi:definition of composition and units of nu k}

\begin{itemize}
\item[$-$]
For two coherent arrays $x$ and $y$ such that $d^-_k(x) =d^+_k(y) = z$, we define their $k$-composition by the following formula: 
$$x*_k y := x- z + y .$$ More explicitly:
$$\begin{pmatrix}
x^-_0 &...& x^-_n\\
x^+_0 &...& x^+_n
\end{pmatrix}
*_k
\begin{pmatrix}
y^-_0 &...& y^-_n\\
y^+_0 &...& y^+_n
\end{pmatrix}
 := 
\begin{pmatrix}
y^-_0&...&y_k^-& y_{k+1}^- + x_{k+1}^- & ...& y_{n}^- + x_{n}^-\\
x^+_0 &...&x_k^+& y_{k+1}^+ + x_{k+1}^+ & ...& y_{n}^+ + x_{n}^+ 
\end{pmatrix}
$$
\item[$-$]
For an integer $m>n$, we define the $m$-sized array $1^m_x$ as follows:
$$1^m_x :=
\begin{pmatrix}
x^-_0 &...& x^-_n& 0 &...&0\\
x^+_0 &...& x^+_n& 0 &...&0	
\end{pmatrix}$$
\end{itemize}
The globular set $\nu K$, equipped with these compositions and units is an $\omega$-category.
\end{definition}

\begin{definition}
We define the functor $\nu: \CDA \to \omegacat$ which associates to an augmented directed complex $K$, the $\omega$-category $\nu K$, and to a morphism of augmented directed complexes $f: K \to L$, the morphism of $\omega$-categories.
$$
\begin{array}{rccc}
\nu f : &\nu K &\to& \nu L\\
& \left(\begin{matrix}
x^-_0 &...&x_n^-\\
x^+_0&...&x_n^+
\end{matrix}\right) 
&\mapsto&
\left(\begin{matrix}
f_0(x^-_0) &...&f_n(x_n^-)\\
f_0(x^+_0)&...&f_n(x_n^+)
\end{matrix}\right) 
\end{array}
$$
\end{definition}

\begin{theorem}[Steiner]
\label{theo:ajdonction de steiner avec unite et counite explicite}
The functors $\lambda$ and $\nu$ form an adjoint pair 
\[\begin{tikzcd}
	{\lambda:\omegacat} & {\CDA:\nu}
	\arrow[""{name=0, anchor=center, inner sep=0}, shift left=2, from=1-1, to=1-2]
	\arrow[""{name=1, anchor=center, inner sep=0}, shift left=2, from=1-2, to=1-1]
	\arrow["\dashv"{anchor=center, rotate=-90}, draw=none, from=0, to=1]
\end{tikzcd}\]
For a $\omega$-category $C$, the unit of the adjunction is given by:
$$\begin{array}{rrcl}
~~~~~\eta :& C &\to & \nu \lambda C \\
& x\in C_n &\mapsto & 
\begin{pmatrix}
[d^-_0(x)]_0&...&[d^-_{n-1}(x)]_{n-1}&[x]_n\\
[d^+_0(x)]_0&...& [d^+_{n-1}(x)]_{n-1}&[x]_n
\end{pmatrix}
\end{array}
$$
For an augmented directed complex $K$, the counit is given by:
$$\begin{array}{rrcl}
\pi :& \lambda \nu K &\to & K~~~~~~~~~~~~~~~~ \\
& [x ]_n \in (\lambda \nu K)_n&\mapsto & x_n^+ = x_n^-
\end{array}
$$
\end{theorem}
\begin{proof}
This is \cite[theorem 2.11]{Steiner_omega_categories_and_chain_complexes}.
\end{proof}

\p 
A \snotion{basis}{for augmented directed complexes} for an augmented directed complex $(K,K^*,e)$ is a graded set $B = (B_n)_{n\in\Nb}$ such that for every $n$, $B_n$ is both a basis for the monoid $K_n^*$ and for the group $K_n$.
\begin{remark}
The elements of $B_n$ can be characterized as the minimal elements of $K_n^*\backslash{0}$ for the following order relation:
	$$x\leq y \mbox{ iff } y-x \in K_n^*$$
This shows that if a basis exists, it is unique.
\end{remark}
\p Any element of $K_n$ can then be written uniquely as a sum $\sum_{b\in B_n} \lambda_b b$. This leads us to define new operations:
For an element $x := \sum_{b\in B_n} \lambda_b b$ of $K_n$, we define the \textit{positive part} and the \textit{negative part}:
$$
\begin{array}{rcl}
(x)_+ &:=& \sum_{b\in B_n, \lambda_b> 0} ~\lambda_bb\\
(x)_- &:=& \sum_{b\in B_n, \lambda_b< 0} -\lambda_bb
\end{array}
$$
We then have $x = (x)_+ - (x)_-$. An element $x$ is \textit{positive} (resp. \textit{negative}) when $x =(x)_+$ (resp. when $x =-(x)_-$).
Let $y = \sum_{b\in B_n} \mu_b b$, we set : 
$$
\begin{array}{rcl}
x\wedge y &:=& \sum_{b\in B_n} \mbox{ min}(\lambda_b, \mu_b)~ b \\
\end{array}
$$
Eventually, we set \sym{(partialna@$\partial_n^+(\uvar)$}\sym{(partialnb@$\partial_n^-(\uvar)$}
$$
\begin{array}{rcl}
\partial_n^+(\uvar) &:=& (\partial_n(\uvar))_+ : K_{n+1}\to K^*_n\\
\partial_n^-(\uvar) &:= &(\partial_n(\uvar))_- : K_{n+1}\to K^*_n
\end{array}
$$

When an element $b$ of the basis is in the support of $x$, i.e $\lambda_b\neq 0$, we say that \textit{$b$ belongs to $x$}, which is denoted by $b\in x$.
\begin{example}
For any integer $n$, $\lambda\Db_n$ admits a basis, given by the graded set $B_{\lambda\Db_n}$ fulfilling:
$$(B_{\lambda\Db_n})_k:= \left\{ 
\begin{array}{ll}
\{e_k^-,e_k^+\}&\mbox{ if $k<n$}\\
\{e_n\}&\mbox{ if $k=n$}\\
\emptyset&\mbox{ if $k>n$}\\
\end{array}\right.$$ 
The augmented directed complex $\lambda[n]$ also admits a basis, given by the graded set $B_{\lambda\Db_n}$ fulfilling:
$$(B_{\lambda\Db_n})_k:= \left\{ 
\begin{array}{ll}
\{v_0,v_1,...,v_n\}&\mbox{ if $k=0$}\\
\{v_{0,1},v_{1,2}...,v_{n-1,n}\}&\mbox{ if $k=1$}\\
\emptyset&\mbox{ if k>1}\\
\end{array} \right.$$ 
\end{example}

\p
Let $a\in K^*_n$. We set by a decreasing induction on $k\leq n$ : 
 $$ \begin{array}{rclc}
 \langle a\rangle_k^\alpha &:= & a & \mbox{if $k = n$}\\
 &:= & \partial_k^\alpha\langle a\rangle^\alpha_{k+1} & \mbox{if not}
\end{array} 
$$
The array associated to $a$ is then: 
$$\langle a\rangle := \begin{pmatrix}
\langle a\rangle^-_0 &...&\langle a\rangle^-_{n-1}&a\\
\langle a\rangle^+_0 &...&\langle a\rangle^+_{n-1}&a
\end{pmatrix}$$
The basis is said to be \wcnotion{unitary}{unitary basis} when for any $b\in B$, the array $\langle b\rangle$ is coherent.

\p We define the relation $\odot$ on $B$ as being the smallest transitive and reflexive relation such that for any pair of elements of the basis $a,b$, 
$$a\odot_n b \mbox{ if } \mbox{($|a|>0$ and $b\in\langle a\rangle_{|a|-1}^-$)}~~\mbox{or}~~\mbox{($|b|>0$ and $a\in \langle b\rangle_{|b|-1}^+$)}$$
A basis is said to be \wcsnotion{loop free}{loop free basis}{for augmented directed complexes} when for any $n$, the relation $\odot_n$ is a (partial) order on $B$.
\begin{remark}
In \cite{Ara_Maltsiniotis_joint_et_tranche}, this notion is called \textit{strongly loop free}.
\end{remark}

\begin{example}
For any integer $n$, $\lambda\Db_n$ and $\lambda[n]$ admit a loop free and unitary basis.
\end{example}

\p We now define the subcategory \wcnotation{$\CDAB$}{(adcb@$\CDAB$} of $\CDA$ composed of augmented directed complexes which admit a unitary and loop free basis. 
We will now describe the analog of the notion of basis for $\omega$-categories. 

\begin{definition}
A $\omega$-category $C$ is \wcnotion{generated by composition}{generated by composition} by a set $E\subset C$ when any cell can be written as a composition of elements of $E$ and iterated units of elements of $E$. This set is a \snotion{basis}{for $\io$-categories} if $\{[e]_{d(e)}\}_{e\in E}$ is a basis of the augmented directed complex $\lambda C$. 
\end{definition}

\begin{prop}
An $\omega$-category $C$ that admits a basis is an $\zo$-category.
\end{prop}
\begin{proof}
Let $C$ be an $\omega$-category that admits a basis $E$. Suppose that there exists a non trivial $n$-cell $\alpha$ that admits an inverse $\beta$. We then have $[\alpha]_n+ [\beta]_n=[\alpha \circ_{n-1} \beta]_n =0$. As $\lambda C$ is free, we have $[\alpha]_n=0$. This implies the equality $[e]_n=0$ for any element $e\in E$ of dimension $n$ that appears in a decomposition of $\alpha$. This is obviously in contradiction with the fact that $\{[e]_{d(e)}\}_{e\in E}$ is a basis of the augmented directed complex $\lambda C$. 
\end{proof}

\begin{definition}
A basis $E$ of an $\zo$-category is : 
\begin{enumerate}
\item \wcsnotion{Loop free}{loop free basis}{for $\zo$-categories} when $\{[e]_{d(e)}\}_{e\in E}$ is.
\item \wcnotion{Atomic}{atomic basis} when $[d_n^+ e]_n \wedge [d_n^- e]_n = 0$ for any $e\in E$ and any natural number $n$ strictly smaller than the dimension of $e$. 
\end{enumerate}
\end{definition}

\begin{prop}
 If a loop free basis $E$ is atomic then $\{[e]\}_{e\in E}$ is unitary.
 \end{prop}
\begin{proof}
 This is \cite[proposition 4.6]{Steiner_omega_categories_and_chain_complexes}.
 \end{proof}

\begin{example}
For any integer $n$, $\Db_n$ and $[n]$ admit a loop free and atomic basis.
More generally, \cite[proposition 4.13]{Ara_Maltsiniotis_joint_et_tranche} states that 
any globular sum admits a loop free and atomic basis. 
\end{example}

\p Proposition $1.23$ of \cite{Ara_a_categorical_characterization_of_strong_Steiner_omega_categories} states that if an $\zo$-category admits a loop-free and atomic basis, it is unique.
We then define the category \wcnotation{$\zocatB$}{((a30@$\zocatB$} as the full subcategory of $\omegacat$ composed of $\zo$-categories admitting an atomic and loop-free basis.

 \begin{theorem}[Steiner]
 \label{theorem:steiner}
 Once restricted to $\zocat_B$ and $\CDAB$, the adjunction 
\[\begin{tikzcd}
	{\lambda:\omegacat} & {\CDA:\nu}
	\arrow[""{name=0, anchor=center, inner sep=0}, shift left=2, from=1-1, to=1-2]
	\arrow[""{name=1, anchor=center, inner sep=0}, shift left=2, from=1-2, to=1-1]
	\arrow["\dashv"{anchor=center, rotate=-90}, draw=none, from=0, to=1]
\end{tikzcd}\]
becomes an adjoint equivalence, i.e. :
$$ \lambda_{|\zocatB } \circ \nu_{|\CDAB} \cong id_{|\CDAB}~~~~~~~ id_{|\zocatB }\cong \nu_{|\CDAB} \circ \lambda_{|\zocatB }$$
\end{theorem}
\begin{proof}
See \cite[theorem 5.11]{Steiner_omega_categories_and_chain_complexes}.
\end{proof}

If $K$ is an augmented directed complex admitting a unitary and loop-free basis $B$, then the $\zo$-category $\nu K$ admits an atomic and loop-free basis given by the set $\langle B\rangle := \{\langle b\rangle,b\in B\}$. Conversely if an $\zo$-category $C$ admits an atomic and loop-free basis $E$, then the augmented directed complex $\lambda C$ admits a unitary and loop-free basis given by the family of sets $[E_n] := \{[e]_{d(e)}, e\in E_n\}$. 
The isomorphisms
$$\lambda \nu K\cong K \mbox{~~~ and ~~~} C\cong \nu\lambda C$$
induce isomorphisms:
$$[\langle B\rangle ]\cong B \mbox{~~~ and ~~~} E \cong \langle [E]\rangle.$$
\p We define the \snotion{full duality}{for augmented directed complexes} \ssym{((b80@$(\uvar)^{\circ}$}{for augmented directed complexes}
$$(\uvar)^\circ:\CDA\to \CDA$$
that sends a augmented directed complex $((K,\partial),K^*,e)$ to $((K, - \partial),K^*,e)$. We left the reader to check that $K^\circ$ admits a loop free and atomic basis when this is the case for $K$. This functor then induces a functor:
$$(\uvar)^\circ:\CDAB\to \CDAB.$$
Morever, we have a canonical equivalence: 
$$\lambda (C^\circ)\cong (\lambda C)^\circ$$
natural in $C$.

\p 
Let $f:M\to N$ be a morphism between two augmented directed complexes admitting unitary and loop-free bases $B_M$ and $B_N$. The morphism $f$ is \wcnotion{quasi-rigid}{quasi-rigid morphism} if for any $n$, and any $b\in (B_M)_n$,
$$f_n(b)\neq 0 ~\Rightarrow ~ f_n(b)\in B_N\mbox{ and }\nu(f)\langle b\rangle = \langle f_n(b)\rangle.$$

\begin{theorem}
\label{theo:Kan condition}
Suppose given a commutative square in $\CDAB$
\[\begin{tikzcd}
	K & {M_1} \\
	{M_0} & M
	\arrow["{k^0}", from=1-1, to=1-2]
	\arrow["{l^1}", from=1-2, to=2-2]
	\arrow["{k^0}"', from=1-1, to=2-1]
	\arrow["{l^0}"', from=2-1, to=2-2]
\end{tikzcd}\]
and such that all morphisms are quasi-rigid. Let $B_K,~B_{M_0},~B_{M_1},~B_{M}$ be the bases of $K,~M_0,~M_1,~ M$.

Then, this square is cocartesian if and only if for any $n$, the induced diagram of sets
\[\begin{tikzcd}
	{(B_{K})_n\cup\{0\}} & {(B_{M_1})_n\cup\{0\}} \\
	{(B_{M_0})_n\cup\{0\}} & {(B_{M})_n\cup\{0\}}
	\arrow["{k^0_n}", from=1-1, to=1-2]
	\arrow["{l^1_n}", from=1-2, to=2-2]
	\arrow["{k^0_n}"', from=1-1, to=2-1]
	\arrow["{l^0_n}"', from=2-1, to=2-2]
\end{tikzcd}\]
is cocartesian. Furthermore, the induced square in $\zocat$
\[\begin{tikzcd}
	{\nu K} & {\nu M_1} \\
	{\nu M_0} & {\nu M}
	\arrow["{\nu k^0}", from=1-1, to=1-2]
	\arrow["{\nu l^1}", from=1-2, to=2-2]
	\arrow["{\nu k^0}"', from=1-1, to=2-1]
	\arrow["{\nu l^0}"', from=2-1, to=2-2]
\end{tikzcd}\]
is cocartesian.
\end{theorem}
\begin{proof}
This is a combination of theorems 3.1.2 and 3.2.7 of \cite{Loubaton_condition_de_kan}.
\end{proof}

\subsection{Gray operations on augmented directed complexes}
We follow Steiner (\cite{Steiner_omega_categories_and_chain_complexes}) and Ara-Maltsiniotis (\cite{Ara_Maltsiniotis_joint_et_tranche}) for the definitions and first properties of Gray operations on augmented directed complexes.

\p 
Let $(K,K^*,e)$ and $(L,L^*,f)$ be two augmented directed complexes. We define the \snotion{Gray tensor product}{for augmented directed complexes} of $(K,K^*,e)$ and $(L,L^*,f)$ as the augmented directed complex
$$(K,K^*,e)\otimes (L,L^*,f):= (K\otimes L,(K\otimes L)^*,e\otimes f)$$
where 
\begin{enumerate}
\item[$-$] $K\otimes L$ is the chain complex whose value on $n$ is:
$$(K\otimes L)_n:= \oplus_{k+l=n}K_k\otimes L_l$$
and the differential is the unique graded group morphism fulfilling: 
$$\partial (x\otimes y):= \partial x\otimes y + (-1)^{|x|}x\otimes \partial y$$
where we set the convention $\partial x:=0$ if $|x|=0$.
\item[$-$] $(K\otimes L)^*$ is given on all integer $n$ by :
$$(K\otimes L)^*_n:= \oplus_{k+l=n}K_k^*\otimes L_l^*.$$
\item[$-$] $e\otimes f:K_0\otimes L_0\to \Zb$ is the unique morphism fulfilling 
$$(e\otimes f)(x\otimes y)= e(x)f(y).$$
\end{enumerate}
\p The Gray tensor product induces a monoidal structure on $\CDA$. Its unit is given by $\lambda \Db_0$. Furthermore, Steiner shows that if $K$ and $L$ admit loop free and unitary bases, so does $K\otimes L$. The monoidal structure then restricts to a monoidal structure on $\CDAB$. 
Eventually \cite[proposition A.20]{Ara_Maltsiniotis_joint_et_tranche} provides an equivalence 
\begin{equation}
\label{eq:dualities and otimes}
(K\otimes L)^\circ \cong K^\circ\otimes L^\circ
\end{equation}

\p To simplify notion, the augmented directed complex $\lambda[1]$ will simply be denoted by $[1]$. The induced functor 
$$\uvar\otimes [1]:\CDA\to \CDA$$
is called the \snotionsym{Gray cylinder}{((d30@$\uvar\otimes[1]$}{for augmented directed complexes}. 
For $(K,K^*,e)$ an augmented directed complex, we then have
$$(K,K^*,e)\otimes [1]:=(K\otimes [1] ,(K\otimes [1])^*,e)$$
where
\begin{enumerate}
\item[$-$] $K\otimes [1]$ is the chain complex whose value on $n$ is:
$$(K\otimes [1])_n:=\left\{
\begin{array}{ll}
\{x\otimes \{\epsilon\},x\in K_0,\epsilon=0,1\}&\mbox{if $n=0$}\\
\{x\otimes \{\epsilon\},x\in K_n,\epsilon=0,1\}\oplus \{x\otimes[1],x\in K_{n-1}\} &\mbox{if $n>0$}
\end{array}\right.$$
and the differential is the unique graded group morphism fulfilling: 
$$\partial (x\otimes [1]):= \partial x\otimes [1] + (-1)^{|x|}(x\otimes \{1\}-x\otimes \{0\} )~~~~~\partial (x\otimes\{\epsilon\}) = (\partial x)\otimes\{\epsilon\}$$
for $\epsilon\in\{0,1\}$, and
where we set the convention $\partial x:=0$ if $|x|=0$.
\item[$-$] $(K\otimes [1])^*$ is given on all integer $n$ by :
$$(K\otimes [1])^*_n:=\left\{
\begin{array}{ll}
\{x\otimes \{\epsilon\},x\in K^*_0,\epsilon=0,1\}&\mbox{if $n=0$}\\
\{x\otimes\{ \epsilon\},x\in K^*_n,\epsilon=0,1\}\oplus \{x\otimes[1],x\in K^*_{n-1}\} &\mbox{if $n>0$}
\end{array}\right.$$
\item[$-$] $e:(K\otimes [1])_0\to \Zb$ is the unique morphism fulfilling 
$$e(x\otimes \{0\})=e(x\otimes \{1\})= e(x).$$
\end{enumerate}

\p We define the \snotionsym{Gray cone}{((d40@$\uvar\star 1$}{for augmented directed complexes} and the \snotion{Gray $\circ$-cone}{for augmented directed complexes}\index[notation]{((d50@$1\overset{co}{\star}\_$!\textit{for augmented directed complexes}}:
$$\begin{array}{ccccccc}
\CDA &\to&\CDA&&\CDA &\to&\CDA\\
K&\mapsto &K\star 1 & &K &\mapsto &1\costar K
\end{array}
$$
where $K\star 1$ and $1\costar K$ are defined as the following pushout: 
\begin{equation}
\label{eq:defin of cstar costar CDA}
\begin{tikzcd}
	{K\otimes\{1\}} & {K\otimes [1]} & {K\otimes\{0\}} & {K\otimes [1]} \\
	1 & {K\star 1} & 1 & {1\costar K}
	\arrow[from=1-1, to=2-1]
	\arrow[from=1-1, to=1-2]
	\arrow[from=2-1, to=2-2]
	\arrow[from=1-2, to=2-2]
	\arrow["\lrcorner"{anchor=center, pos=0.125, rotate=180}, draw=none, from=2-2, to=1-1]
	\arrow[from=1-3, to=2-3]
	\arrow[from=2-3, to=2-4]
	\arrow[from=1-3, to=1-4]
	\arrow[from=1-4, to=2-4]
	\arrow["\lrcorner"{anchor=center, pos=0.125, rotate=180}, draw=none, from=2-4, to=1-3]
\end{tikzcd}
\end{equation}
The equation \eqref{eq:dualities and otimes} provides an equivalence
$$(C\star 1)^\circ\cong 1\costar C^\circ.$$
According to \cite[corollary 6.21]{Ara_Maltsiniotis_joint_et_tranche} and to the previous equivalence, if $K$ admits a loop free and unitary basis, this is also the case for $K\star 1$ and $1\costar K$. The {Gray cone} and the {Gray $\circ$-cone} then induce functors:
$$\begin{array}{ccccccc}
\CDAB&\to&\CDAB&&\CDAB &\to&\CDAB\\
K&\mapsto &K\star 1 & &K &\mapsto &1\costar K
\end{array}
$$

\p Unfolding the definition, we have
$$(K,K',e)\star 1:=(K\star 1, (K\star 1)^*,e)~~~~~1\costar(K,K',e):=(1\costar K, (1\costar K)^*,e)$$
where
\begin{enumerate}
\item[$-$] $K\star 1$ and $1\costar K$ are the chain complex whose value on $n$ are:
$$(K\star 1)_n:=\left\{
\begin{array}{ll}
\Zb[\emptyset\star 1]\oplus \{x\star \emptyset,x\in K_0\}&\mbox{if $n=0$}\\
\{\emptyset\star x,x\in K_n\}\oplus \{x\star 1,x\in K_{n-1}\} &\mbox{if $n>0$}
\end{array}\right.$$
$$(1\costar K)^n:=\left\{
\begin{array}{ll}
\Zb[1\costar\emptyset]\oplus \{\emptyset\costar x,x\in K_0\}&\mbox{if $n=0$}\\
\{\emptyset\costar x,x\in K_n\}\oplus \{1\costar x,x\in K_{n-1}\} &\mbox{if $n>0$}
\end{array}\right.$$
and the differentials are the unique graded group morphisms fulfilling: 
$$\begin{array}{rr}
\partial (x\star 1)= \partial x\star 1 + (-1)^{|x|} x\star \emptyset&\partial( x \star \emptyset )=\partial x\star \emptyset \\
 \partial (1\costar x)= 1\costar \partial x + (-1)^{|x|} \emptyset\costar x& \partial( \emptyset \costar x )= \emptyset \costar x\\
\end{array}$$
where we set the convention $\partial x:=0$ if $|x|=0$.
\item[$-$] The graded monoids $(K\star 1)^*$ and $(1\costar K)^*$ are given on all integer $n$ by :
$$(K\star 1)^*:=\left\{
\begin{array}{ll}
\Nb[\emptyset\star 1]\oplus \{x\star \emptyset,x\in K^*_0\}&\mbox{if $n=0$}\\
\{\emptyset\star x,x\in K^*_n\}\oplus \{x\star 1,x\in K^*_{n-1}\} &\mbox{if $n>0$}
\end{array}\right.$$
$$(1\costar K)^*:=\left\{
\begin{array}{ll}
\Nb[1\costar\emptyset]\oplus \{\emptyset\costar x,x\in K^*_0\}&\mbox{if $n=0$}\\
\{\emptyset\costar x,x\in K^*_n\}\oplus \{1\costar x,x\in K^*_{n-1}\} &\mbox{if $n>0$}
\end{array}\right. .$$
\item[$-$] The augmentations $e:(K\star 1)_0\to \Zb$ and $e:(1\costar K)_0\to\Zb$ are the unique ones fulfilling 
$$
\begin{array}{cc}
e( \emptyset \star 1) =1 & e(x\star \emptyset)=e(x)\\
e( 1\costar \emptyset ) =1 & e( \emptyset\costar x)=e(x).
\end{array}$$
\end{enumerate}

\begin{prop}
\label{prop:non trivial automorphisme 1}
Let $A$ be an augmented directed complex admitting no non-trivial automorphisms. Then the augmented directed complexes $A\star 1$ and $1\costar A$ have no non-trivial automorphisms.
\end{prop}
\begin{proof}
Let $\phi:A\star 1\to A\star 1$ be an automorphism. The morphism $\phi$ then induces a bijection on the elements of the basis of $A\star 1$.

 As the element $\emptyset\star 1\in (A\star 1)_0$ is the only element of the basis such that for all $v\in (A\star 1)_1$  $\partial_0^-(v)\neq \emptyset\star 1$, it is preserved by $\phi$. As a consequence, for any element $x$ of the basis of $A_0$, $\phi(x\star \emptyset)$ is of shape $x'\star \emptyset$. The morphism $\phi$ then preserves $(A\star \emptyset)_0$.

Now, remark that for any element $e\in (A\star 1)^*_{n+1}$, there exists $x\in (A\star 1)^*_n$ such that $x\star 1\leq e$ if and only if there exists $y\in (A\star 1)^*_{n-1}$ such that $y\star 1\leq \partial^+(e)$. By a direct induction, this implies that there exists $x\in (A\star 1)^*_n$ such that $x\star 1\leq e$ if and only if $\partial^+_0(e)\in \Zb[\emptyset\star 1]$.

Combined with the previous observation, this implies that for any element $x$ of the basis of $A_{n+1}$, $\phi(x\star \emptyset)$ is of shape $x'\star \emptyset$.
The automorphism $\phi$ then induces by restriction an automorphism $\phi_{|A\star\emptyset}:A\to A$, and the hypothesis implies that it is the identity.

We now show by induction on $n$ that $\phi_n:(A\star 1)_n\to (A\star 1)_n$ is the identity. Suppose the result true at the stage $n$. For any element $x$ of the basis of $A_{n}$, we then have 
$$\partial \phi(x\star 1) = \phi(\partial (x\star 1)) = \partial (x\star 1).$$
By the definition of the derivative of $A\star 1$, and as $\phi$ preserves the basis, this forces the equality $\phi(x\star 1)=x\star 1$. As we already know that for any element $x$ of the basis of $A_{n+1}$ we have $\phi(x\star \emptyset)=x\star \emptyset$, this concludes the induction.

We then have $\phi=id$ and $A\star 1$ has no non trivial automorphisms.
The case $1\costar A$ follows directly by using the fact that dualities preserve  augmented directed complexes admitting no non-trivial automorphisms. 

\end{proof}

\p 
We define the \snotionsym{suspension}{((d60@$[\uvar,1]$}{for augmented directed complexes} as the functor 
$$[\uvar,1]:\CDA\to \CDA$$
where $[K,1]$ is defined as the following pushout:
\begin{equation}
\label{eq:def of suspension cda}
\begin{tikzcd}
	{K\otimes \{0,1\}} & {K\otimes [1]} \\
	{1\coprod 1} & {[K,1]}
	\arrow[from=1-1, to=2-1]
	\arrow[from=2-1, to=2-2]
	\arrow[from=1-1, to=1-2]
	\arrow[from=1-2, to=2-2]
	\arrow["\lrcorner"{anchor=center, pos=0.125, rotate=180}, draw=none, from=2-2, to=1-1]
\end{tikzcd}
\end{equation}
We leave to the reader to check that $[K,1]$ admits a loop free and unitary basis when this is the case for $K$. This functor then induces a functor:
$$[\uvar,1]:\CDAB\to \CDAB$$

\p Unfolding the definition, we have
$$[(K,K',e),1]:=([K,1] ,([K,1])^*,e)$$
where
\begin{enumerate}
\item[$-$] $[K,1]$ is the chain complex whose value on $n$ is:
$$[K,1]:=\left\{
\begin{array}{ll}
 \Zb[\{0\},\{1\}]&\mbox{if $n=0$}\\
 \{[x,1],x\in K_{n-1}\} &\mbox{if $n>0$}
\end{array}\right.$$
and the differential is the unique graded group morphism fulfilling: 
$$\partial([x,1]):= \left\{
 \begin{array}{lll} 
 \{1\}-\{0\}&\mbox{if $|x|=0$}\\ 
 ~[\partial x,1]&\mbox{if $|x|>0$}
 \end{array}\right.
$$
\item[$-$] $([K,1])^*$ is given on all integer $n$ by:
$$([K,1])^*_n:=\left\{
\begin{array}{ll}
\Nb[0,1]&\mbox{if $n=0$}\\
 \{[x,1],x\in K^*_{n-1}\} &\mbox{if $n>0$}
\end{array}\right.$$
\item[$-$] $e:([K,1])_0\to \Zb$ is the unique morphism	 fulfilling 
$$e( 0)=e( 1)= e(x).$$
\end{enumerate}

\begin{prop}
\label{prop:non trivial automorphisme 2}
Let $A$ be a non null augmented directed complex admitting no non-trivial automorphisms. Then the augmented directed complex $[A,1]$ has no non-trivial automorphisms.
\end{prop}
\begin{proof}
Let $\phi:[A,1]\to [A,1]$ be an automorphism. As the element $\{1\}\in ([A,1])_0$ is the only element of the basis such that for all $v\in [A,1]_1$  $\partial_0^-(v)\neq \{1\}$, it is preserved by $\phi$. As a consequence, $\phi$ also preserves $\{0\}$. The induced morphism $\phi_0:[A,1]_0\to [A,1]_0$ is then the identity. 

Now, remark that $(\phi_{n+1})_{n\in \Nb}:A\to A$ is an automorphism and is then the identity. This implies that for all $n>0$, $\phi_n:[A,1]_n\to [A,1]_n$ is then identity, which concludes the proof.
\end{proof}

\p 
We define the \textit{wedges} as the functors
$$[\uvar,1]\vee[1]:\CDA\to \CDA~~~~~~ [1]\vee[\uvar,1]:\CDA\to \CDA$$
where $[K,1]\vee [1]$ and $[1]\vee[K,1]$ are defined as the following pushouts:
\[\begin{tikzcd}
	{\lambda [0]} & {[1]} && {\lambda [0]} & {[K,1]} \\
	{[K,1]} & { [K,1]\vee[1]} && {[1]} & {[1]\vee[K,1]}
	\arrow["{\{1\}}"', from=1-1, to=2-1]
	\arrow[from=2-1, to=2-2]
	\arrow["{\{0\}}", from=1-1, to=1-2]
	\arrow[from=1-2, to=2-2]
	\arrow["\lrcorner"{anchor=center, pos=0.125, rotate=180}, draw=none, from=2-2, to=1-1]
	\arrow["{\{0\}}", from=1-4, to=1-5]
	\arrow["{\{1\}}"', from=1-4, to=2-4]
	\arrow[from=1-5, to=2-5]
	\arrow[from=2-4, to=2-5]
	\arrow["\lrcorner"{anchor=center, pos=0.125, rotate=180}, draw=none, from=2-5, to=1-4]
\end{tikzcd}\]
Once again, we can easily check that $[K,1]\vee[1]$ and $[1]\vee[K,1]$ have a loop free and unitary basis when this is the case for $K$. These functors then induce functors
$$[\uvar,1]\vee[1]:\CDAB\to \CDAB~~~~~~ [1]\vee[\uvar,1]:\CDAB\to \CDAB$$
\p Unfolding the definition, we have
$$[(K,K',e),1]\vee [1]:=([K,1]\vee [1] ,([K,1]\vee [1])^*,e)$$ $$
[1]\vee(K,K',e),1]:=([1]\vee[K,1] ,([1]\vee[K,1])^*,e)$$
where
\begin{enumerate}
\item[$-$] $[K,1]\vee [1]$ and $[1]\vee[K,1]$ are the chain complexes whose value on $n$ are:
$$[K,1]\vee[1]:=\left\{
\begin{array}{ll}
\Zb[\{0\},\{1\},\{2\}]&\mbox{if $n=0$}\\
 \{[x,1],x\in K_{0}\}\oplus \Zb[e_1] &\mbox{if $n=1$}\\
 \{[x,1],x\in K_{n-1}\} &\mbox{if $n>1$}
\end{array}\right.$$
$$[1]\vee[K,1]:=\left\{
\begin{array}{ll}
\Zb[\{0\},\{1\},\{2\}]&\mbox{if $n=0$}\\
\Zb[e_1] \oplus \{[x,1],x\in K_{0}\} &\mbox{if $n=1$}\\
 \{[x,1],x\in K_{n-1}\} &\mbox{if $n>1$}
\end{array}\right.$$
and the differentials are the unique graded group morphism fulfilling: 
$$\partial_{[K,1]\vee[1]} (e_1):= \{2\}-\{1\}
~~~
\partial_{[K,1]\vee[1]} ([x,1]):=
\left\{
\begin{array}{ll}
 \{1\}-\{0\}&\mbox{if $|x|=0$}\\
 ~[\partial x,1]&\mbox{if $|x|>0$}\\
\end{array}\right.
$$
$$
\partial_{[1]\vee[K,1]} (e_1):= \{1\}-\{0\}
~~~
\partial_{[1]\vee[K,1]} ([x,1]):=
\left\{
\begin{array}{ll}
 \{2\}-\{1\}&\mbox{if $|x|=0$}\\
~ [\partial x,1]&\mbox{if $|x|>0$}\\
\end{array}\right.
$$
\item[$-$] $([K,1]\vee [1])^*$ and $([1]\vee[K,1])^*$ are given on all integer $n$ by:
$$([K,1]\vee[1])^*:=\left\{
\begin{array}{ll}
\{\{0\},\{1\},\{2\}\}&\mbox{if $n=0$}\\
 \{[x,1],x\in K_0^*\}\oplus \Nb[e_1] &\mbox{if $n=1$}\\
 \{[x,1],x\in K_{n-1}\} &\mbox{if $n>1$}
\end{array}\right.$$
$$([1]\vee[K,1])^*:=\left\{
\begin{array}{ll}
\{\{0\},\{1\},\{2\}\}&\mbox{if $n=0$}\\
\Nb[e_1]\oplus\cup \{[x,1],x\in K^*_{0}\} &\mbox{if $n=1$}\\
 \{[x,1],x\in K^*_{n-1}\} &\mbox{if $n>1$}
\end{array}\right.$$
\item[$-$] The augmentations $e$ are the unique morphism fulfilling 
$$e( \{0\})=e(\{ 1\})= e(\{2\})=1.$$
\end{enumerate}

\begin{prop}
\label{prop:non trivial automorphisme 3}
Let $A$ be a non null augmented directed complex admitting no non-trivial automorphisms. Then the augmented directed complexes $[A,1]\vee[1]$ and $[1]\vee[A,1]$ have no non-trivial automorphisms.
\end{prop}
\begin{proof}
The proof is similar to the one of proposition \ref{prop:non trivial automorphisme 2} and we leave it to the reader.
\end{proof}

\p 
There are two canonical morphisms 
$$\triangledown:\Sigma K\to \Sigma K \vee [1]
~~~~~~~ \triangledown:\Sigma K\to [1]\vee \Sigma K $$
that are the unique ones fulfilling
$$\triangledown(\{0\}):= \{0\}~~~\triangledown(\{1\}):= \{2\}~~~
\triangledown([x,1]):=\left\{ 
\begin{array}{ll}
~[x,1]+e_1&\mbox{if $|x|=0$}\\
~[x,1]&\mbox{if $|x|>0$}\\
\end{array}\right.$$
When we write $ \Sigma K\to \Sigma K \vee [1]$ and $\Sigma K\to [1]\vee \Sigma K$ and nothing more is specified, it will always mean that we considered the morphisms $\triangledown$.

\begin{prop}
 \label{prop:appendice formula for otimes cda}
 Let $K$ be an augmented directed complex. 
 There is a natural transformation between the colimit of the following diagram
$$
\begin{tikzcd}
	{[1]\vee [K,1]} & {[K\otimes\{0\},1]} & {[K\otimes [1],1]} & {[K\otimes\{1\},1]} & {[K,1]\vee [1]}
	\arrow[from=1-2, to=1-1]
	\arrow[from=1-2, to=1-3]
	\arrow[from=1-4, to=1-3]
	\arrow[from=1-4, to=1-5]
\end{tikzcd}$$
and $[K,1]\otimes [1]$.
\end{prop}
\begin{proof}
The cone is induced by morphisms
$$
\begin{array}{rl}
&[1]\vee [K,1]\to [K,1]\otimes [1]\\
(\mbox{resp}.&[ K,1]\vee[1]\to [ K,1] \otimes [1])
\end{array}
$$ sending an element $x$ in the basis of $[1]$ to $\{0\}\otimes x$ (resp. $\{1\}\otimes x$), an element $y$ in the basis of $[ K,1]$ to $y\otimes\{1\}$ (resp. $y\otimes\{0\}$), 
and by the morphism 
$$f:[K\otimes [1],1]\to [K,1]\otimes [1]$$
defined by the formula 
$$f([x\otimes y,1]):= [ x,1]\otimes y$$ 
for $x$ in the basis of $K$ and $y$ in the basis of $[1]$.
We leave it to the reader to check the compatibilities of this three morphisms.
\end{proof}

\subsection{Gray operations on $\zo$-categories}
\label{section:definition of Gray operations}
We follow Ara-Maltsiniotis \cite{Ara_Maltsiniotis_joint_et_tranche} for the definitions and first properties of Gray operations on $\zo$-categories. Originally, these authors work with $\omega$-categories, and not with $\zo$-categories. However, this modification does not affect proof, and we then allow ourselves to use their results in our framework.

\begin{theorem}[Steiner, Ara-Maltsiniotis]
There is a unique colimit preserving monoidal structure on $\zocat$,
up to a unique monoidal isomorphism, making the functor
$\nu_{|\CDAB}:\CDAB\to \zocat$
a monoidal functor, when $\CDAB$ is endowed with the monoidal structure given by the Gray tensor product.
\end{theorem}
\begin{proof}
This is \cite[theorem A.15]{Ara_Maltsiniotis_joint_et_tranche}.
\end{proof}

\p The monoidal product on $\zocat$ induced by the previous theorem is called the \snotionsym{Gray tensor product}{((d00@$\otimes$}{for $\zo$-categories} and is denoted by $\otimes$. It's unit is $ \Db_0$. If $C$ and $D$ are $\zo$-categories with an atomic and loop free basis, we have by construction
$$C\otimes D := \nu(\lambda C\otimes \lambda D).$$
The induced functor 
$$\uvar\otimes[1]:\zocat\to \zocat$$
is called the \snotionsym{Gray cylinder}{((d30@$\uvar\otimes[1]$}{for $\zo$-categories}.

\begin{prop}
Let $C$ be an $\io$-category.
The following canonical square 
\[\begin{tikzcd}
	{C\otimes\{0,1\}} & {C\otimes[1]} \\
	{1\coprod 1} & {[C,1]}
	\arrow[from=1-1, to=2-1]
	\arrow[from=1-1, to=1-2]
	\arrow[from=2-1, to=2-2]
	\arrow[from=1-2, to=2-2]
	\arrow["\lrcorner"{anchor=center, pos=0.125, rotate=180}, draw=none, from=2-2, to=1-1]
\end{tikzcd}\]
is cocartesian
\end{prop}
\begin{proof}
As all these functors commute with colimits, it is sufficient to demonstrate this assertion when $C$ is a globular sum, and \textit{a fortiori} when $C$ admits a loop free and atomic basis. In this case, remark that all the morphisms appearing in canonical cartesian square
\[\begin{tikzcd}
	{\lambda C\otimes\{0,1\}} & {\lambda C\otimes[1]} \\
	{1\coprod 1} & {[\lambda C,1]}
	\arrow[from=1-1, to=2-1]
	\arrow[from=1-1, to=1-2]
	\arrow[from=2-1, to=2-2]
	\arrow[from=1-2, to=2-2]
	\arrow["\lrcorner"{anchor=center, pos=0.125, rotate=180}, draw=none, from=2-2, to=1-1]
\end{tikzcd}\]
 are quasi-rigid. 
The results then follow from an application of theorem \ref{theo:Kan condition}.
\end{proof}

\p 
\label{para:explicit Dbn otiomes [1]}
Applying the duality $(\uvar)^{op}$ to the computation achieved in appendix B.1 of \cite{Ara_Maltsiniotis_joint_et_tranche}, we can give an explicit expression of $\Db_n\otimes [ 1]$. As a polygraph, the generating arrows of $\Db_n\otimes [1]$ are:
$$ e^\epsilon_k\otimes\{0\}~~~~~e^\epsilon_k\otimes\{1\}~~~~~e^\epsilon_k\otimes[1]$$
 \[ a^-_0 \otimes e^\epsilon_k \qquad a^+_0 \otimes e^\epsilon_k \qquad a \otimes e^\epsilon_k \]
 where $\epsilon$ is either $+$ or $-$, $k \leqslant n$ and $e^+_n = e^-_n$. Their source and target are given as follows:
 \[ \pi^-( e^\epsilon_k \otimes\{0\}) = e^-_{k-1} \otimes\{0\} \qquad\qquad\qquad \pi^+(e^\epsilon_k \otimes\{0\}) = e^+_{k-1}\otimes\{0\} \]
 \[ \pi^-(e^\epsilon_k \otimes\{1\} ) = e^-_{k-1} \otimes\{1\}\qquad\qquad\qquad \pi^+(e^\epsilon_k\otimes\{1\} ) = e^+_{k-1}\otimes\{1\} \]
$$\pi^{-}(e^\epsilon_{2k}\otimes[1]) =...\circ_2(e^+_0\otimes[1])\circ_0(e^\epsilon_{2k}\otimes\{0\})\circ_1 (e^-_1\otimes[1])\circ_3... \circ_{2k-1}(e_{2k-1}^-\otimes[1])$$
$$\pi^{+}(e^\epsilon_{2k}\otimes[1]) = (e_{2k-1}^+\otimes[1])\circ_{2k-1}...\circ_3(e^+_1\otimes[1])\circ_1(e^\epsilon_{2k}\otimes\{1\})\circ_0 (e^-_0\otimes[1])\circ_2...$$
$$\pi^{-}(e^\epsilon_{2k+1}\otimes[1]) = ...\circ_3(e^+_1\otimes[1])\circ_1(e^\epsilon_{2k+1}\otimes\{1\})\circ_0 (e^-_0\otimes[1])\circ_2...\circ_{2k}(e_{2k}^-\otimes[1])$$
$$\pi^{+}(e^\epsilon_{2k+1}\otimes[1]) = (e_{2k}^+\otimes[1])\circ_{2k}...\circ_2(e^+_0\otimes[1])\circ_0(e^\epsilon_{2k+1}\otimes\{0\})\circ_1 (e^-_1\otimes[1])\circ_3...$$
 We did not put parenthesis in the expression above, to keep them shorter, the default convention is to do the composition $\circ_i$ in order of increasing values of $i$.
 
\begin{example}
The $\zo$-category $\Db_1\otimes[1]$ is the polygraph: 
\[\begin{tikzcd}
	00 & 01 \\
	10 & 11
	\arrow[from=1-1, to=2-1]
	\arrow[from=2-1, to=2-2]
	\arrow[from=1-1, to=1-2]
	\arrow[from=1-2, to=2-2]
	\arrow[shorten <=4pt, shorten >=4pt, Rightarrow, from=1-2, to=2-1]
\end{tikzcd}\]
The $\zo$-category $\Db_2\otimes[1]$ is the polygraph: 
\[\begin{tikzcd}
	00 & 01 & 00 & 01 \\
	10 & 11 & 10 & 11
	\arrow[from=1-1, to=1-2]
	\arrow[""{name=0, anchor=center, inner sep=0}, from=1-1, to=2-1]
	\arrow[from=2-1, to=2-2]
	\arrow[""{name=1, anchor=center, inner sep=0}, from=1-2, to=2-2]
	\arrow[shorten <=4pt, shorten >=4pt, Rightarrow, from=1-2, to=2-1]
	\arrow[""{name=2, anchor=center, inner sep=0}, from=1-3, to=2-3]
	\arrow[from=1-3, to=1-4]
	\arrow[""{name=3, anchor=center, inner sep=0}, from=1-4, to=2-4]
	\arrow[shorten <=4pt, shorten >=4pt, Rightarrow, from=1-4, to=2-3]
	\arrow[""{name=4, anchor=center, inner sep=0}, curve={height=30pt}, from=1-1, to=2-1]
	\arrow[from=2-3, to=2-4]
	\arrow[""{name=5, anchor=center, inner sep=0}, curve={height=-30pt}, from=1-4, to=2-4]
	\arrow["{ }"', shorten <=6pt, shorten >=6pt, Rightarrow, from=0, to=4]
	\arrow["{ }"', shorten <=6pt, shorten >=6pt, Rightarrow, from=5, to=3]
	\arrow[shift left=0.7, shorten <=6pt, shorten >=8pt, no head, from=1, to=2]
	\arrow[shift right=0.7, shorten <=6pt, shorten >=8pt, no head, from=1, to=2]
	\arrow[shorten <=6pt, shorten >=6pt, from=1, to=2]
\end{tikzcd}\]
\end{example}

\p We define the \snotionsym{Gray cone}{((d40@$\uvar\star 1$}{for $\zo$-categories} and the \snotion{Gray $\circ$-cone}{for $\zo$-categories}\index[notation]{((d50@$1\overset{co}{\star}\_$!\textit{for $\zo$-categories}}:
$$\begin{array}{ccccccc}
\zocat &\to&\zocat_{\cdot}&&\zocat &\to&\zocat_{\cdot}\\
C&\mapsto &C\star 1 & &C &\mapsto &1\costar C
\end{array}
$$
where $C\star 1$ and $1\costar C$ are defined as the following pushout: 
\[\begin{tikzcd}
	{C\otimes\{1\}} & {C\otimes [1]} & {C\otimes\{0\}} & {C\otimes [1]} \\
	1 & {C\star 1} & 1 & {1\costar C}
	\arrow[from=1-1, to=2-1]
	\arrow[from=1-1, to=1-2]
	\arrow[from=2-1, to=2-2]
	\arrow[from=1-2, to=2-2]
	\arrow["\lrcorner"{anchor=center, pos=0.125, rotate=180}, draw=none, from=2-2, to=1-1]
	\arrow[from=1-3, to=2-3]
	\arrow[from=2-3, to=2-4]
	\arrow[from=1-3, to=1-4]
	\arrow[from=1-4, to=2-4]
	\arrow["\lrcorner"{anchor=center, pos=0.125, rotate=180}, draw=none, from=2-4, to=1-3]
\end{tikzcd}\]

\begin{example}
The $\zo$-categories $\Db_1\star 1$ and $1\costar \Db_1$ correspond respectively to the polygraphs: 
\[\begin{tikzcd}
	0 &&&& 0 \\
	1 & \star && \star & 1
	\arrow[from=1-1, to=2-1]
	\arrow[from=2-1, to=2-2]
	\arrow[""{name=0, anchor=center, inner sep=0}, from=1-1, to=2-2]
	\arrow[""{name=1, anchor=center, inner sep=0}, from=1-5, to=2-5]
	\arrow[from=2-4, to=1-5]
	\arrow[""{name=2, anchor=center, inner sep=0}, from=2-4, to=2-5]
	\arrow[shorten <=2pt, Rightarrow, from=0, to=2-1]
	\arrow[shift right=2, shorten <=4pt, shorten >=4pt, Rightarrow, from=1, to=2]
\end{tikzcd}\]
The $\zo$-categories $\Db_2\star 1$ and $1\costar \Db_2$ correspond respectively to the polygraphs: 
\[\begin{tikzcd}
	0 & {~} & 0 &&& 0 & {~} & 0 \\
	1 & \star & 1 & \star & \star & 1 & \star & 1
	\arrow[""{name=0, anchor=center, inner sep=0}, from=1-1, to=2-1]
	\arrow[from=2-1, to=2-2]
	\arrow[""{name=1, anchor=center, inner sep=0}, from=1-3, to=2-3]
	\arrow[""{name=2, anchor=center, inner sep=0}, curve={height=30pt}, from=1-1, to=2-1]
	\arrow[from=2-3, to=2-4]
	\arrow[""{name=3, anchor=center, inner sep=0}, from=1-1, to=2-2]
	\arrow[""{name=4, anchor=center, inner sep=0}, draw=none, from=1-2, to=2-2]
	\arrow[""{name=5, anchor=center, inner sep=0}, from=1-3, to=2-4]
	\arrow[from=1-6, to=2-5]
	\arrow[""{name=6, anchor=center, inner sep=0}, from=1-6, to=2-6]
	\arrow[""{name=7, anchor=center, inner sep=0}, from=2-5, to=2-6]
	\arrow[from=1-8, to=2-7]
	\arrow[""{name=8, anchor=center, inner sep=0}, from=1-8, to=2-8]
	\arrow[""{name=9, anchor=center, inner sep=0}, from=2-8, to=2-7]
	\arrow[""{name=10, anchor=center, inner sep=0}, curve={height=-30pt}, from=1-8, to=2-8]
	\arrow[""{name=11, anchor=center, inner sep=0}, draw=none, from=1-7, to=2-7]
	\arrow["{ }"', shorten <=6pt, shorten >=6pt, Rightarrow, from=0, to=2]
	\arrow[shorten <=2pt, shorten >=2pt, Rightarrow, from=3, to=2-1]
	\arrow[shift left=0.7, shorten <=6pt, shorten >=8pt, no head, from=4, to=1]
	\arrow[shift right=0.7, shorten <=6pt, shorten >=8pt, no head, from=4, to=1]
	\arrow[shorten <=6pt, shorten >=6pt, from=4, to=1]
	\arrow[shorten <=2pt, Rightarrow, from=5, to=2-3]
	\arrow[shorten <=6pt, shorten >=6pt, Rightarrow, from=10, to=8]
	\arrow[shift right=2, shorten <=4pt, shorten >=4pt, Rightarrow, from=8, to=9]
	\arrow[shift right=2, shorten <=4pt, shorten >=4pt, Rightarrow, from=6, to=7]
	\arrow[shift right=0.7, shorten <=6pt, shorten >=8pt, no head, from=6, to=11]
	\arrow[shorten <=6pt, shorten >=6pt, from=6, to=11]
	\arrow[shift left=0.7, shorten <=6pt, shorten >=8pt, no head, from=6, to=11]
\end{tikzcd}\]
\end{example}

\begin{prop}
Let $C$ be an $\zo$-category with an unitary and loop free basis. The canonical comparaisons
$$ (\lambda C)\star 1\to \lambda (C\star 1) ~~~~~~~ 1\costar (\lambda C)\to \lambda (1\costar C)$$
are equivalences.

Let $K$ be an augmented directed complex with a loop free and unitary basis. The canonical comparaisons
$$ (\nu K)\star 1\to \nu (K\star 1) ~~~~~~~ 1\costar (\nu K)\to \nu (1\costar K)$$
are equivalences. 
\end{prop}
\begin{proof}
The first assertion directly follows from the fact $\lambda$ commutes with colimits. For the second one,
we can easily check that all the morphisms appearing in the squares \eqref{eq:defin of cstar costar CDA} are quasi-rigid.
The results then follow from an application of theorem \ref{theo:Kan condition}.
\end{proof}

\p We now give some technical results that we will use later.
\begin{lemma}
\label{lemma:non trivial automorphisme 4}
Let $S$ be the smallest set of $\zo$-categories such that
\begin{enumerate}
\item $S$ is stable by isomorphisms,
\item the terminal $\zo$-category belong to $S$,
\item $S$ is stable by $\uvar\star 1$, $1\costar \uvar$, $[\uvar,1]$, $[\uvar,1]\vee[1]$ and $[1]\vee[\uvar,1]$.
\end{enumerate}
Then, the $\zo$-categories belonging to $S$ have non non-trivial automorphisms.
\end{lemma}
\begin{proof}
The set of $\zo$-categories admitting an atomic and loop free basis fulfills the three condition. As a consequence, every $\zo$-category in $S$ has an atomic and loop free basis. Using theorem \ref{theorem:steiner}, it is then sufficient to show that any augmented directed complex in $\lambda(S)$ has no non-trivial automorphisms. The result then follows from propositions \ref{prop:non trivial automorphisme 1}, \ref{prop:non trivial automorphisme 2} and \ref{prop:non trivial automorphisme 3}.
\end{proof}

\begin{prop}
\label{prop:the globes a non non trivial automorphisms}
Let $n$ be an integer $n$. The $\zo$-categories $\Db_n$ and $\underbrace{1\star 1\star ... \star 1}_{n}$ have no non-trivial automorphisms.
\end{prop}
\begin{proof}
This is a direct consequence of lemma \ref{lemma:non trivial automorphisme 4} as these two $\zo$-categories belong to $S$.
\end{proof}

\p The following propositions express the link between the Gray operations and the suspension. They will play a fundamental role in the rest of this work.
\begin{theorem}
 \label{theo:appendice formula for otimes} 
 Let $C$ be an $\zo$-category.
There is a natural identification between $[ C,1]\otimes [1]$ and the colimit of the following diagram
$$
\begin{tikzcd}
	{[1]\vee [ C,1]} & {[C\otimes\{0\},1]} & {[C\otimes [1],1]} & {[C\otimes\{1\},1]} & {[C,1]\vee[1]}
	\arrow[from=1-2, to=1-1]
	\arrow[from=1-2, to=1-3]
	\arrow[from=1-4, to=1-3]
	\arrow[from=1-4, to=1-5]
\end{tikzcd}$$
\end{theorem}
\begin{proof}
As all these functors preserve colimits, it is sufficient to construct the comparison when $C$ is a globular sum, and to show that it is an equivalence when $C$ is a globe. 
As globular sums have atomic and loop free bases, the comparison is induced by proposition \ref{prop:appendice formula for otimes cda}. Using the explicit description of the $\zo$-category $\Db_n\otimes[1]$ given in paragraph \ref{para:explicit Dbn otiomes [1]}, it is straightforward to see that it induces an equivalence on globes.
\end{proof}

The definitional cocartesian squares
\[\begin{tikzcd}
	{C\otimes\{1\}} & {C\otimes [1]} & {C\otimes\{0\}} & {C\otimes [1]} \\
	1 & {C\star 1} & 1 & {1\costar C}
	\arrow[from=1-1, to=2-1]
	\arrow[from=1-1, to=1-2]
	\arrow[from=2-1, to=2-2]
	\arrow[from=1-2, to=2-2]
	\arrow["\lrcorner"{anchor=center, pos=0.125, rotate=180}, draw=none, from=2-2, to=1-1]
	\arrow[from=1-3, to=2-3]
	\arrow[from=2-3, to=2-4]
	\arrow[from=1-3, to=1-4]
	\arrow[from=1-4, to=2-4]
	\arrow["\lrcorner"{anchor=center, pos=0.125, rotate=180}, draw=none, from=2-4, to=1-3]
\end{tikzcd}\]
 imply the following proposition:
\begin{theorem}
 \label{theo:appendice formula for star} 
There is a natural identification between $1\costar [C,1]$ and the colimit of the following diagram
\[\begin{tikzcd}
	{[1]\vee [C,1]} & {[C,1]} & {[C\star 1,1]}
	\arrow[from=1-2, to=1-3]
	\arrow[from=1-2, to=1-1]
\end{tikzcd}\]
There is a natural identification between $[C,1]\star 1$ and the colimit of the following diagram
\[\begin{tikzcd}
	{[1\costar C,1]} & {[C,1]} & {[C,1]\vee[1]}
	\arrow[from=1-2, to=1-3]
	\arrow[from=1-2, to=1-1]
\end{tikzcd}\]
\end{theorem}

\begin{prop}
\label{prop:cartesian squares}
Let $C$ be an $\zo$-category with an atomic and loop free basis. The two following canonical squares are cartesian:
\[\begin{tikzcd}
	1 & {1\costar C} & 1 & {C\star 1} \\
	{\{0\}} & {[C,1]} & {\{1\}} & {[C,1]}
	\arrow[from=1-1, to=1-2]
	\arrow[from=2-1, to=2-2]
	\arrow[from=1-1, to=2-1]
	\arrow[from=1-2, to=2-2]
	\arrow[from=1-3, to=1-4]
	\arrow[from=2-3, to=2-4]
	\arrow[from=1-3, to=2-3]
	\arrow[from=1-4, to=2-4]
\end{tikzcd}\]
The five squares appearing in the following canonical diagram are both cartesian and cocartesian:
\[\begin{tikzcd}
	& {C\otimes\{0\}} & 1 \\
	{C\otimes\{1\}} & {C\otimes[1]} & {C\star 1} \\
	1 & {1\costar C} & {[C,1]}
	\arrow[from=2-3, to=3-3]
	\arrow[from=3-2, to=3-3]
	\arrow[from=2-2, to=3-2]
	\arrow[from=2-2, to=2-3]
	\arrow[from=1-2, to=1-3]
	\arrow[from=1-3, to=2-3]
	\arrow[from=1-2, to=2-2]
	\arrow[from=2-1, to=2-2]
	\arrow[from=3-1, to=3-2]
	\arrow[from=2-1, to=3-1]
\end{tikzcd}\]
\end{prop}
\begin{proof}
The five squares are cocartesian by construction. 
Since the proofs of the cartesianess of all squares are identical, we will only show the proof for the square
\[\begin{tikzcd}
	{C\otimes[1]} & {C\star 1} \\
	{1\costar C} & {[C,1]}
	\arrow[from=1-2, to=2-2]
	\arrow[from=2-1, to=2-2]
	\arrow[from=1-1, to=2-1]
	\arrow[from=1-1, to=1-2]
\end{tikzcd}\]
To this extend, remark that for any integer $n$, the  following square is cartesian. 
\[\begin{tikzcd}
	{(B_{\lambda C\otimes[1]})_n\cup \{0\}} & {(B_{1\costar \lambda C})_n\cup \{0\}} \\
	{(B_{\lambda C\star 1})_n\cup \{0\}} & {(B_{[\lambda C,1]})_n\cup \{0\}}
	\arrow[from=1-1, to=2-1]
	\arrow[from=1-2, to=2-2]
	\arrow[from=1-1, to=1-2]
	\arrow[from=2-1, to=2-2]
\end{tikzcd}\]
This then implies that the following square in the category $\CDA$ is cartesian. 
\[\begin{tikzcd}
	{\lambda C\otimes[1]} & {1\costar \lambda C} \\
	{\lambda C\star 1} & {[\lambda C,1]}
	\arrow[from=1-1, to=2-1]
	\arrow[from=1-2, to=2-2]
	\arrow[from=1-1, to=1-2]
	\arrow[from=2-1, to=2-2]
\end{tikzcd}\]
As $\nu$ is a right adjoint, it preserves limits,  and as it commutes with Gray operation, this concludes the proof.
\end{proof}

\begin{lemma}
\label{lemma: pullback and sum}
Let $a$, $b$, $c$ and $d$ be four globular sums.
Suppose given a cartesian square:
\[\begin{tikzcd}
	a & b \\
	c & d
	\arrow[from=1-1, to=2-1]
	\arrow[from=2-1, to=2-2]
	\arrow[from=1-1, to=1-2]
	\arrow[from=1-2, to=2-2]
	\arrow["\lrcorner"{anchor=center, pos=0.125}, draw=none, from=1-1, to=2-2]
\end{tikzcd}\]
where the two horizontal morphisms are globular.
The two following squares are cartesian 
\[\begin{tikzcd}
	{b\coprod_aa\star 1} & {b\star 1} & {1\costar a\coprod_a b} & {1\costar b} \\
	{\Sigma c} & {\Sigma d} & {\Sigma c} & {\Sigma d}
	\arrow[from=1-1, to=2-1]
	\arrow[from=2-1, to=2-2]
	\arrow[from=1-1, to=1-2]
	\arrow[from=1-2, to=2-2]
	\arrow["\lrcorner"{anchor=center, pos=0.125}, draw=none, from=1-1, to=2-2]
	\arrow[from=1-3, to=2-3]
	\arrow[from=1-4, to=2-4]
	\arrow[from=2-3, to=2-4]
	\arrow[from=1-3, to=1-4]
	\arrow["\lrcorner"{anchor=center, pos=0.125}, draw=none, from=1-3, to=2-4]
\end{tikzcd}\]
\end{lemma}
\begin{proof}
We show only the cartesianess of the first square, as the cartesianess of the second one follows by applying the duality $(\uvar)^\circ$. A direct computation shows that for any integer $n$, the following square is cartesian
\[\begin{tikzcd}
	{\lambda b\coprod_{\lambda a}\lambda a\star 1} & {\lambda b\star 1} \\
	{\Sigma\lambda c} & {\Sigma \lambda d}
	\arrow[from=1-1, to=2-1]
	\arrow[from=2-1, to=2-2]
	\arrow[from=1-1, to=1-2]
	\arrow[from=1-2, to=2-2]
	\arrow["\lrcorner"{anchor=center, pos=0.125}, draw=none, from=1-1, to=2-2]
\end{tikzcd}\]
To conclude, one has to show that the canonical morphism
$$ \nu(\lambda b)\coprod_{\nu(\lambda a)}\nu(\lambda a\star 1)\to \nu (\lambda b\coprod_{\lambda a}\lambda a\star 1) $$
is an equivalence. 	
As $a\to b$ is globular, all the morphisms of the following cocartesian square are quasi-rigid. 
\[\begin{tikzcd}
	{\lambda a} & {\lambda b} \\
	{\lambda a\star 1 } & {\lambda b\coprod_{\lambda b}\lambda a\star 1 }
	\arrow[from=1-1, to=2-1]
	\arrow[from=1-1, to=1-2]
	\arrow[from=1-2, to=2-2]
	\arrow[from=2-1, to=2-2]
\end{tikzcd}\]
The results then follow from an application of theorem \ref{theo:Kan condition}.
\end{proof}

\p The end of this section is devoted to proving the following theorem: 
\begin{theorem}
\label{theo:appendince unicity of operation}
Let $F$ be an endofunctor of $\zocat$ such that the induced functor $\zocat\to \zocat_{F(\emptyset)/}$ is colimit preserving and $\psi$ an invertible natural transformation between $\Gb\cup \{\emptyset\}\to \zocat\xrightarrow{F}\zocat$ and $\Gb\cup \{\emptyset\}\to \zocat\xrightarrow{G}\zocat$ where $G$ is either the Gray cylinder, the Gray cone, the Gray $\circ$-cone or an iterated suspension.

Then, the natural transformation $\psi$ can be extended to an invertible natural transformation between $F$ and $G$.
\end{theorem}
The previous theorem implies that the equations given in theorem \ref{theo:appendice formula for otimes} and \ref{theo:appendice formula for star} characterize respectively the Gray cylinder, the Gray cone, and the Gray $\circ$-cone.
We also have the following corollary:
\begin{cor}
\label{cor:crushing of Gray tensor is identitye strict case}
The colimit preserving endofunctor $F:\zocat\to \zocat$, sending $[a,n]$ to the colimit of the span
$$\coprod_{k\leq n}\{k\}\leftarrow \coprod_{k\leq n}a\otimes\{k\}\to a\otimes[n]$$
is equivalent to the identity.
\end{cor}
\begin{proof}
The theorem \ref{theo:appendice formula for otimes} implies that the restriction of $F$ to globes is equivalent to the restriction of the identity to globes. As the identity is the $0$-iterated suspension, we can apply theorem \ref{theo:appendince unicity of operation}.
\end{proof}

\begin{lemma}
\label{lemma:sub categgory of Theta}
A sub category $\Theta'$ of $\Theta$, stable by colimit and containing globular morphisms is equal to $\Theta$ iff
\begin{enumerate}
\item for any integer $n$, $i_n^{-}:\Db_n\to \Db_{n+1}$ belongs to $\Theta'$.
\item For any integer $n$, the unit $\Ib_n:\Db_{n+1}\to \Db_n$ belongs to $\Theta'$.
\item For any pair of integers $k<n$, the composition $\triangledown_{k,n}:\Db_n\to \Db_n\coprod_{k}\Db_n$ belongs to $\Theta'$.
\end{enumerate}
\end{lemma}
\begin{proof}
Suppose that $\Theta'$ fulfills these conditions.
As globular morphisms are compositions of pushouts along morphisms of shape $i_n^{-}$, they belong to $\Theta'$.
 As algebraic morphisms are compositions of colimits of morphism of shape $\triangledown_{k,n}$ or $\Ib_n$, they belong to $\Theta'$.
The result then follows from \cite[proposition 3.3.10]{Ara_thesis} that states that every morphism factors as an algebraic morphism followed by a globular morphism.
\end{proof}

\begin{lemma}
\label{lemma:unit forced}
Let $n$ be an integer, and $G$ be either the Gray cylinder, the Gray cone, the Gray $\circ$-cone or an iterated suspension, and suppose
given a square 
\[\begin{tikzcd}
	{G(\Db_n)} \\
	& {G(\Db_{n+1})} & {G(\Db_n)} \\
	{G(\Db_n)}
	\arrow["f", from=2-2, to=2-3]
	\arrow["{G(i_n^-)}", from=1-1, to=2-2]
	\arrow["{G(i_n^+)}"', from=3-1, to=2-2]
	\arrow["id", curve={height=-18pt}, from=1-1, to=2-3]
	\arrow["id"', curve={height=18pt}, from=3-1, to=2-3]
\end{tikzcd}\]
Then, the morphism $f$ is $G(\Ib_n)$.
\end{lemma}
\begin{proof}
As the proof for any possibilities of $G$ are similar, we will show only the case $G:=\uvar\otimes [1]$.
As for any integer $n$, $\Db_n\otimes[1]$ admits a loop free and atomic basis, we can then show the desired assertion after applying the functor $\lambda$.
Remark first that the assumption implies that $\partial f((e_{n+1}\otimes \{\alpha\})=0$, and so $f((e_{n+1}\otimes \{\alpha\}) =0$. We also have $f(e_{n+1}\otimes[1])=0$ as $(\lambda (\Db_n\otimes[1])_{n+2} =0$. This implies that $f$ is equal to $\lambda(G(\Ib_n))$.
\end{proof} 
\begin{lemma}
\label{lemma:comp forced}
Let $k<n$ be two integers, and $G$ be either the Gray cylinder, the Gray cone, the Gray $\circ$-cone or an iterated suspension, and suppose
given a square 
\[\begin{tikzcd}
	{G(\Db_{n-1})} && { G(\Db_{n-1}\coprod_k\Db_{n-1})} \\
	& {G(\Db_n)} && { G(\Db_{n}\coprod_k\Db_n)} \\
	{G(\Db_{n-1})} && { G(\Db_{n-1}\coprod_k\Db_{n-1})}
	\arrow["f", from=2-2, to=2-4]
	\arrow["{G(i_n^-)}"{description}, from=1-1, to=2-2]
	\arrow["{G(i_n^+)}"{description}, from=3-1, to=2-2]
	\arrow["{G(i_n^+)\coprod_k G(i_n^+)}"{description}, from=3-3, to=2-4]
	\arrow["{G(i_n^-)\coprod_k G(i_n^-)}"{description}, from=1-3, to=2-4]
	\arrow["{\triangledown_{n-1,k}}"', from=3-1, to=3-3]
	\arrow["{\triangledown_{n-1,k}}", from=1-1, to=1-3]
\end{tikzcd}\]
where we set $\triangledown_{n,n}:=id$.
Then, the morphism $f$ is $G(\triangledown_{n,k})$.
\end{lemma}
\begin{proof}
As the proof for any possibilities of $G$ are similar, we will show only the case $G:=\uvar\otimes [1]$.
As for any integer $n$, $\Db_n\otimes[1]$ admits a loop free and atomic basis, we can then show the desired assertion after applying the functor $\lambda$. Suppose first that $k<n-1$.
By assumption, we have 
$$
\begin{array}{rcl}
\partial f(e_n\otimes \{\alpha\})&=& \partial (e_n^0\otimes \{\alpha\} +e_n^1\otimes \{\alpha\})\\
\partial f(e_n\otimes [1])&=& \partial (e_n^0\otimes [1]) + \partial (e_n^1\otimes [1]) \\
\end{array}
$$
This forces the equalities
$$
\begin{array}{rcl}
 f(e_n\otimes \{\alpha\})&=& e_n^0\otimes \{\alpha\} +e_n^1\otimes \{\alpha\}\\
 f(e_n\otimes [1])&=& e_n^0\otimes [1] + e_n^1\otimes [1] \\
\end{array}
$$
and $f$ is then equal to $\triangledown_{n,k}\otimes[1]$. The case $k=n-1$ is similar.
\end{proof}

\begin{proof}[Proof of theorem \ref{theo:appendince unicity of operation}]
As every globular sum is a colimit of globes, we can extend $\psi$ to a (\textit{a priori} non natural) transformation, 
$\psi:F_{|\Theta}\to G_{|\Theta}$.
Let $\Theta'$ be the maximal sub category of $\Theta$ such that $\psi_{\Theta'}$ is an equality.
As $G(\Db_n)$ does not have non trivial automorphisms, the assumption implies that	 $\Theta'$ fulfills the first condition of lemma \ref{lemma:sub categgory of Theta}. The lemma \ref{lemma:unit forced} implies that it fulfills the second condition, and an easy induction on $(n-k)$ using lemma \ref{lemma:comp forced} implies that it fulfills the last condition. Applying the lemma \ref{lemma:sub categgory of Theta}, this concludes the proof.
\end{proof}

%

\part{On the side of models}

\chapter{Study of the complicial model}
\label{chapter:Studies of the complicial model}

\minitoc
\vspace{1cm}
%
%
%
%
%
%
%
%
%
%
%
%
%

This chapter is devoted to the study of Verity's complicial sets (\cite{Verity_complicial_set_characterising_the_simplicial_nerve}).
One of the benefits of complicial sets is that they admit a simple definition of the Gray tensor product. Being strongly linked to $\zo$-categories by the Street nerve, they are also a privileged framework for stating and proving strictification results, as done in \cite{Ozornova_Fundamental_pushouts_of_n_complical_set}, \cite{Gagna_Nerves_and_cones_of_free_loop_free_omega_categories}, \cite{Ozornova_a_quillen_adjunction_between_globular_and_complicial} and \cite{Maehara_oriental_as_free_weak_omega_categories}. 
However, they do not interact \textit{a priori} well with the globular language. The goal of this chapter is to show that, with some computation, it is possible to have a globular point of view in this model. 

The first section is a recollection of usual results and definitions about complicial sets. 
In the second section, we aim to prove an analogue of the formula given in \ref{theo:appendice formula for otimes} to the complicial setting.
We also have a suspension in this category, which is denoted by $X\mapsto \Sigma X$. Objects $[1]\fwedge \Sigma X$ and $\Sigma X\fwedge [1]$ are defined in \ref{subsection:wedge}, but for now, we can suppose that they are fibrant replacements of respectively $[1]\coprod_{[0]}\Sigma X$ and $\Sigma X\coprod_{[0]}[1]$.
They come along with morphisms that are analogue to whiskerings, and that we also note by $\triangledown$: 
$$\triangledown:\Sigma X\to [1]\fwedge\Sigma X ~~~~\mbox{and}~~~~ \triangledown:\Sigma X\to\Sigma X\fwedge [1].$$ 
We then show the following theorem:
\begin{itheorem}[\ref{theo:interval_first_formula}]
There exists a zigzag of acyclic cofibrations, natural in $X$, between $(\Sigma X)\otimes [1]$ and the colimit of the following diagram:
 $$\Sigma X\fwedge [1]\xleftarrow{\triangledown} \Sigma (X\otimes\{0\}) \hookrightarrow \Sigma (X\otimes[1])\hookleftarrow \Sigma (X\otimes\{1\})\xrightarrow{\triangledown} [1]\fwedge \Sigma X.$$
\end{itheorem}
We also provide similar formulas for the \textit{Gray cone} and Gray \textit{$\circ$-cone}:
\begin{itheorem}[\ref{theo:cyl_formula}]
There exists a zigzag of acyclic cofibrations, natural in $X$, between $\Sigma X \star[0]$ and the colimit of the following diagram: 
$$ \Sigma X\fwedge [1]\leftarrow \Sigma X\to \Sigma([0]\costar X).$$
There exists a zigzag of acyclic cofibrations, natural in $X$, between  $[0]\costar \Sigma X$ and the colimit of the following diagram: 
$$\Sigma(X\star[0]) \leftarrow \Sigma X\to [1]\fwedge\Sigma X.$$
\end{itheorem}
The third section uses this formula and the strictification result of Gagna, Ozornova and	 Rovelli (\cite{Gagna_Nerves_and_cones_of_free_loop_free_omega_categories}) to demonstrate a criterion for detecting autoequivalences of complicial sets by their behavior on globes.
Indeed, in section \ref{section:Globular equivalences}, by iterating the suspension, we construct a globular object: 
\[\begin{tikzcd}
	{\Db_0} & {\Db_1} & {\Db_2} & {...}
	\arrow["{i_0^+}", shift left=2, from=1-1, to=1-2]
	\arrow["{i_1^+}", shift left=2, from=1-2, to=1-3]
	\arrow["{i_3^+}", shift left=2, from=1-3, to=1-4]
	\arrow["{i_0^-}"', shift right=2, from=1-1, to=1-2]
	\arrow["{i_1^-}"', shift right=2, from=1-2, to=1-3]
	\arrow["{i_3^-}"', shift right=2, from=1-3, to=1-4]
\end{tikzcd}\]
\begin{itheorem}[\ref{theo:criterion_to_be_linked_to_identity}]
Let $i$ be a left Quillen endofunctor for the model category for complicial sets. Suppose that there exists a zigzag of weakly invertible natural transformations:
$$i(\Db_{\uvar}) \leftrightsquigarrow \Db_{\uvar}.$$
Then, there exists a zigzag of weakly invertible natural transformations between $i$ and $id$.
\end{itheorem} 
Proposition 15.10 of \cite{Barwick_on_the_unicity_of_the_theory_of_higher_categories} provides a similar result for models of $(\infty,n)$-categories.

\section{Preliminaries}

\subsection{Generalities on model categories}
\label{chapter:Generalities on model categories}
For this chapter, we fix a model category $C$ whose cofibrations are monomorphisms.

\p We give first some results on homotopy colimits. These results will be used freely throughout these first two chapters.

\begin{prop}
\label{prop:hom colimit 1}
Suppose given a square
\[\begin{tikzcd}
	a & b \\
	c & d
	\arrow[from=1-1, to=2-1]
	\arrow[from=1-2, to=2-2]
	\arrow[from=2-1, to=2-2]
	\arrow[from=1-1, to=1-2]
\end{tikzcd}\]
such that the two horizontal morphisms are weak equivalences. Then this square is homotopy cocartesian. 
\end{prop}
\begin{proof}
This is \cite[proposition 2.3.26]{Cisinski_Higher_categories_and_homotopical_algebra}.
\end{proof}
\begin{prop}
\label{prop:hom colimit 2}
Suppose given a cocartesian square
\[\begin{tikzcd}
	a & b \\
	c & d
	\arrow[from=1-1, to=2-1]
	\arrow[from=1-2, to=2-2]
	\arrow[from=2-1, to=2-2]
	\arrow[from=1-1, to=1-2]
	\arrow["\lrcorner"{anchor=center, pos=0.125, rotate=180}, draw=none, from=2-2, to=1-1]
\end{tikzcd}\]
where the left vertical morphism is a cofibration. Then this square is homotopy cocartesian. 
\end{prop}
\begin{proof}
This is \cite[corollary 2.3.28]{Cisinski_Higher_categories_and_homotopical_algebra}.
\end{proof}

\begin{prop}
\label{prop:hom colimit 3}
Let $F:\alpha\to C$ be a diagram indexed by an ordinal. The transfinite composition $\colim_\alpha F$ is the homotopy colimit of the diagram $F$.
\end{prop}
\begin{proof}
This is \cite[proposition 2.3.13]{Cisinski_Higher_categories_and_homotopical_algebra}.
\end{proof}

\begin{prop}
\label{prop:hom colimit 4}
Suppose given a diagram 
\[\begin{tikzcd}
	& {b_0} && {...} && {b_{n-1}} \\
	{a_0} && {a_1} && {a_{n-1}} && {a_{n}}
	\arrow[from=1-2, to=2-1]
	\arrow[hook, from=1-2, to=2-3]
	\arrow[from=1-4, to=2-3]
	\arrow[hook, from=1-4, to=2-5]
	\arrow[from=1-6, to=2-5]
	\arrow[hook, from=1-6, to=2-7]
\end{tikzcd}\]
where all morphisms labelled by $\hookrightarrow$ are cofibrations.
The colimit of this diagram is also the homotopy colimit of this diagram.
\end{prop}
\begin{proof}
Let $I_n$ be the category indexing the previous diagram. We denote $i_0$, $j_0$,..., $i_{n-1}$, $j_{n-1}$, $i_{n}$ it's objects.
The projective model structure on $\Fun(I_n,C)$ is given by functor $G$ such that for any $k<n$, $F(j_k)\to F(i_k)$, $F(j_k)\to F(i_{k+1})$ are monomorphisms, and such that for any $0<k<n$, $F(j_k)\coprod F(j_{k+1}) \to F(i_k)$ is a monomorphism. Remark that such presheaves verify the condition given in the statement of the proposition.

 We will show on induction on $n$ that a natural transformation $\psi$ between two diagrams $F,G:I_n\to C$ that fulfills the desired condition induces a weak equivalence between their colimits. As we can always chose $F$ to be the cofibrant replacement of $G$ in the projective model structure on $\Fun(I_n,C)$, it will imply the desired result. 

The case $n=1$ is proposition \ref{prop:hom colimit 2}. Suppose now the result is true at the stage $(n-1)$ and let $\psi$ be a weakly invertible natural transformation between two diagram $F,G:I_n\to C$ that fulfills the desired condition. We denote by $\iota:I_{n-1}\to I_n$ the canonical inclusion that sends $i_k$(resp. $j_k$) on $i_k$(resp. $j_k$) for $k<n$ (resp. $k<n-1$).
We then have a diagram 
\[\begin{tikzcd}
	{\colim_{I_{n-1}}F\circ\iota} & {F(j_{n-1})} & {F(i_n)} \\
	{\colim_{I_{n-1}}G\circ\iota} & {G(j_{n-1})} & {G(i_n)}
	\arrow[hook, from=1-2, to=1-3]
	\arrow[from=1-2, to=1-1]
	\arrow["\sim"', from=1-1, to=2-1]
	\arrow["\sim"', from=1-3, to=2-3]
	\arrow["\sim"', from=1-2, to=2-2]
	\arrow[from=2-2, to=2-1]
	\arrow[hook, from=2-2, to=2-3]
\end{tikzcd}\]
where all arrows labeled by $\sim$ are weak equivalences. Remark furthermore that the limit of the two lines are respectively $\colim_{I_{n}}F$ and $\colim_{I_{n}}G$. A last application of proposition \ref{prop:hom colimit 2} concludes the proof.
\end{proof}

\p 
The definition of elegant Reedy category is given in paragraph \ref{para:reedy}.
As all the presheaves categories that we will encounter through this text are presheaves on elegant Reedy categories, we will use freely the following theorem:
\begin{theorem}[Hirschhorn]
\label{theo:hom colimi}
We suppose that $C$ is a simplicial model category.
Let $A$ be a elegant Reedy category, and $F:A\to C$ a functor such that the induced morphism $\colim_{\partial a}F\to F(a)$ is a monomorphism for any object $a$. The object $\colim_A F$ is the homotopy colimit of $F$. In particular, if $C$ is $\Psh{A}$, every object $X$ is the homotopy colimit of the diagram $A_{/X}\to A\to \Psh{A}$.
\end{theorem}
\begin{proof}
Using the characterization of elegant Reedy category given by proposition 3.8 of \cite{Bergner_reedy_category_and_the_theta_construction}, and \cite[proposition 15.10.2]{Hirschhorn_Model_categories_and_their_localizations},
it's easy to see that they have fibrant constant in the sens of \cite[definition 15.10.1]{Hirschhorn_Model_categories_and_their_localizations}. We can then apply the theorem 19.9.1	 of \cite{Hirschhorn_Model_categories_and_their_localizations}.
\end{proof}

\p
\label{para:nice model structure}
A model structure is \wcnotion{nice}{nice model structure} if it is simplicial, combinatorial, cartesian and its cofibrations are monomorphisms. 

\begin{notation}
Let $\uvar\square\uvar:C\times D\to E$ be a bifunctor. 
If $f:a\to b$ and $g:x\to y$ are respectively morphisms of $C$ and $D$, we will note by $f~\hat{\square}~g$ the induced morphism $a\square y\coprod_{a\square x} b\square x\to b\square y$.\sym{((g38@$\hat{\square}$}
\end{notation}
\begin{prop}[{\cite[proposition A.3.7.3]{Lurie_Htt}}]
\label{prop:left_boosfiled_localization}
Let $A$ be a nice model structure and $S$ a set of cofibrations. There exists a model structure $A_S$ on the same category, and a left Quillen adjoint $L:A\to A_S$, such that an object is fibrant in $A_S$ if and only if it is fibrant in $A$ and has the right lifting property against all morphisms of shape $i\htimes f$ where $i$ is a cofibration and $f$ in $S$. Moreover, a left Quillen functor $F:A\to C$ lifts to $A_{S}$ if and only if for any cofibration $i$ and morphism $f\in S$, $F(i\htimes f)$ is a weak equivalence.
\end{prop}

\begin{cor}
\label{cor:left_boosfiled_localization}
Let $A$, $C$ be two nice model categories, $F:A\to C$ a left Quillen functor, $S$ a set of cofibrations and $T$ a set of morphisms such that for any cofibrations $i$ and morphisms $f\in S$, the morphism $i\htimes f$ is included in the smallest saturated class stable by two out of three, containing weak equivalences and $T$. Then a left Quillen functor $F:A\to C$ lifts to $A$ if and only if it sends morphisms of $T$ to weak equivalences.
\end{cor}
\begin{proof}
Let $U$ be the class of morphisms in $A$ that are sent to weak equivalences by $F$. This class is obviously stable by two out of three, retracts and contains weak equivalences. As the model structure on $C$ is combinatorial and left proper, it is saturated. The class $U$ then includes all morphisms of shape $i\htimes f$ for $i$ a cofibration and $f\in S$, which implies that $F$ can be lifted to $A_S$.
\end{proof}

\p Let $i:A\to B$ and $i':A'\to B'$ be two cofibrations. A \notion{zigzag of acyclic cofibration} between $i$ and $i'$, denoted $i\leftrightsquigarrow i'$ is a zigzag in the category of arrows such that all the horizontal maps are acyclic cofibrations, and all the vertical maps are cofibrations.

\begin{lemma}
Let $i$ and $j$ be two cofibrations, and $f:X\to Y$ a fibration between fibrant objects. Suppose that we have a morphism in the category of arrows $i\to j$ which is  pointwise an acyclic cofibration. Then, if $j$ has the left lifting property against $f$, so has $i$.
\end{lemma}
\begin{proof}
We consider a diagram of the following shape:
\[\begin{tikzcd}
	A & {A'} & X \\
	B & {B'} & Y.
	\arrow["i", from=1-1, to=2-1]
	\arrow["\sim", from=1-1, to=1-2]
	\arrow["\sim"', from=2-1, to=2-2]
	\arrow["j"', from=1-2, to=2-2]
	\arrow[from=1-3, to=2-3]
	\arrow[curve={height=-18pt}, from=1-1, to=1-3]
	\arrow[curve={height=18pt}, from=2-1, to=2-3]
	\arrow["{l_0}"', dotted, from=2-2, to=2-3]
	\arrow["{l_1}"{description}, dotted, from=1-2, to=1-3]
	\arrow["{l_2}"{description}, dotted, from=2-2, to=1-3]
\end{tikzcd}\]
We construct, one after the other, the lifting $l_0$, $l_1$ and $l_2$.
\end{proof}

\begin{lemma}
Let $i$ and $j$ be two cofibrations, and $f:X\to Y$ a fibration between fibrant objects. Suppose that we have a morphism in the category of arrows $i\to j$ which is  pointwise an acyclic cofibration. Then, if $i$ has the right lifting property against $f$, so has $j$.
\end{lemma}
\begin{proof}
We consider a diagram of the following shape:
\[\begin{tikzcd}
	A & {A'} &&& X \\
	B & {B\coprod_A A'} \\
	&& {B'} && Y.
	\arrow[from=1-1, to=2-1]
	\arrow["\sim"', from=1-1, to=1-2]
	\arrow["\sim", from=2-1, to=2-2]
	\arrow["\sim"{description}, from=2-2, to=3-3]
	\arrow["\sim"', curve={height=6pt}, from=2-1, to=3-3]
	\arrow[curve={height=-6pt}, from=1-2, to=3-3]
	\arrow[from=1-2, to=2-2]
	\arrow[from=1-2, to=1-5]
	\arrow[from=3-3, to=3-5]
	\arrow[from=1-5, to=3-5]
	\arrow["{l_0}"{description}, dotted, from=2-2, to=1-5]
	\arrow["{l_1}"{description}, dotted, from=3-3, to=1-5]
\end{tikzcd}\]
We construct, one after the other, the lifting $l_0$, $l_1$.
\end{proof}

\begin{prop}
\label{prop:lifting_property_zigzag_of_acyclic_cofibration}
Let $f$ be a fibration between fibrant objects and $i$ and $j$ two cofibrations such that there exists a zigzag of acyclic cofibrations $i\leftrightsquigarrow j$. Then $f$ has the right lifting property against $i$ if and only if it has the right lifting property against $j$. 
\end{prop} 
\begin{proof}
This is a direct consequence of the last two lemmas.
\end{proof}

\subsection{Marked and stratified presheaves}
\label{section:Marked and stratified presheaves}
\p 
Let $B$ be an elegant Reedy category and $M$ a subset of the set of objects of $B$. A \wcnotion{$M$-stratified presheaf on $B$}{stratified presheaf on $B$}, or just a \textit{stratified prehsheaf on $B$} when the subset $M$ will be non-ambiguous, is a pair $(X,tX)$ where $X$ is a presheaf on $B$ and $tX:=\coprod_{a\in M} tX_a$ is the disjoint union of sets, such that for any $a\in M$, $tX_a$ is a subset of $X_a$ including degeneracies, i.e the image of morphisms $X_p:X_b\to X_a$ for $p:b\to a$ in $B_-$.

A \notion{stratified morphism} $f:(X,tX)\to (Y,tY)$ is the data of a morphism on the underlying presheaf such that $f(tX_n)\subset tY_n$.
The category of stratified presheaves is denoted by \wcnotation{$\tPshM{B}$}{(tpsh@$\tPshM{B}$}. 

A morphism between two stratified presheaves is \wcnotion{entire}{entire morphism} if it is the identity on the underlying presheaves.

We then have an adjunction
\[\begin{tikzcd}
	{(\uvar)^\flat:\Psh{B}} & {\tPshM{B}:(\uvar)^\natural}
	\arrow[""{name=0, anchor=center, inner sep=0}, shift left=2, from=1-1, to=1-2]
	\arrow[""{name=1, anchor=center, inner sep=0}, shift left=2, from=1-2, to=1-1]
	\arrow["\dashv"{anchor=center, rotate=-90}, draw=none, from=0, to=1]
\end{tikzcd}\]
where the left adjoint is a fully faithful inclusion that sends a presheaf $X$ onto $(X,S)$ where $S$ is the smaller stratification on $X$, and where the right adjoint is the obvious forgetful functor. We will identify presheaves on $B$ with their image by the functor $(\uvar)^\flat$.

\p If $b$ is an object of $M$, we denote by $b_t$ the stratifed presheaf $(b,S)$, where $S$ is the smaller stratification that includes $id:b\to b$.

We then define $t_MB$ as the full subcategory of $\tPshM{B}$ spanned by the objects of shape $a$ or $b_t$ with $a\in B$ and $b\in M$. We then have equalities:
$$\begin{array}{rcl}
\Hom_{t_MB}(a,b)&:=& \Hom_B(a,b),\\
\Hom_{t_MB}(a,b_t)&:=& \Hom_B(a,b),\\
\Hom_{t_MB}(a_t,b)&:=& \Hom_B(a,b)\cap B_- \diagdown \{id_a\}, \\
\Hom_{t_MB}(a_t,b_t)&:=&\Hom_B(a,b)\cap B_-.\\
\end{array}$$
The canonical functor $B\to t_MB$ is then fully faithful and we will identify object of $B$ with their image through this functor.
\begin{prop}
\label{prop:reedy structure on tB}
The category $t_MB$ admits a structure of elegant Reedy category, that makes the inclusion $B\to t_MB$ a morphism of Reedy category. There is no non trivial negative morphism whose codomain is of shape $b_t$ for $b\in M$. There is no non trivial positive morphism whose domain is of shape $b_t$ for $b\in M$. 
\end{prop}
\begin{proof}
We define the degree degree function $ob(t_MB)\to \Nb$ by the assignment 
$$d'(b):= 2 d(b)~~~~~d'(b_t):= 2d(b)+1$$
The category $(t_MB)_+$ is the smallest that includes $B_+$ and morphisms of shape $a\to a_t$. The category $(t_MB)_-$ is the smallest that includes $B_-$ and morphisms of shape $b_t\to a$.

To prove the axioms of Reedy category, we can replicate the strategy used in proposition C.2 of \cite{Ozornova_model_structure_for_infini_n_categories} with obvious modification to this more general framework.

We still have to show that $tB$ is elegant. Let $X$ be a presheaf on $t_MB$, $a$ an element of $t_MB$, $f:a\to a'$ and $g:a\to a'$ two negative morphisms, an element $x$ of $X(a)$, two non degenerate elements $y\in X(a')$ and $z\in X(a'')$ such that $f^*y=x$, $g^*z=x$. 

Suppose first that $a$ is in $B$. In this case, $f$ and $g$ are also in $B$, and as this Reedy category is elegant by assumption, this implies $f=g$ and $y=z$. Suppose now that $a$ is of shape $b_t$ for $b\in B$. We denote $\alpha$ the canonical morphism $\alpha:b\to b_t$. By definition of negative morphism, the codomain of $f$ and $g$ are in $B$. The morphisms $\alpha f $ and $\alpha g$ then are in $B$. Moreover, these two morphisms are negative, and we have $(\alpha f)^*y=\alpha^* x$, $(\alpha g)^*z=\alpha^* x$. As $B$ is elegant, $\alpha f=\alpha g$ and $y=z$. Eventually, remark that the first equality implies that $f$ is equal to $g$. 
\end{proof}
A cellular model for $t_MB$ is given by $C\cup\{b\to b_t,b\in M\}$ where $C$ is a cellular model for $B$.

\p The category of $M$-stratified presheaves is then equivalent to the fully faithful subcategory of presheaves $X$ on $t_MB$ such that for any $b\in M$, $X(b_t)\to X(b)$ is a monomorphism. 
In particular, we have an adjunction 
\begin{equation}
\label{eq:entre presheaveds on tB et stratified presheages}
\begin{tikzcd}
	{\pi:\Psh{t_MB}} & {\tPshM{B}:\iota}
	\arrow[""{name=0, anchor=center, inner sep=0}, shift left=2, from=1-1, to=1-2]
	\arrow[""{name=1, anchor=center, inner sep=0}, shift left=2, from=1-2, to=1-1]
	\arrow["\dashv"{anchor=center, rotate=-90}, draw=none, from=0, to=1]
\end{tikzcd}
\end{equation}
Remark furthermore that the unit $X\to \iota \pi X$ is a trivial fibration. Indeed, the cellular model is given $C\cup\{b\to b_t,b\in M\}$, where  $C$ is a cellular model for $B$, and the unit obviously has the right lifting property against it.

\begin{prop}
\label{prop:transfert from presheaves on tB to stratified presheaves}
Suppose given a combinatorial on $\Psh{t_MB}$ whose cofibrations are monomorphisms. Then there exists a combinatorial model structure on $\tPshM{B}$ making the adjunction \ref{eq:entre presheaveds on tB et stratified presheages} a Quillen equivalence.

A morphism of $\tPshM{B}$ is a cofibration if and only if it is a monomorphism. A morphism is a fibration (resp. a weak equivalence) if and only if its image by $\iota$ is.
\end{prop}
\begin{proof}
We are willing to apply \cite[theorem 11.3.2]{Hirschhorn_Model_categories_and_their_localizations}. As two adjoints of \eqref{eq:entre presheaveds on tB et stratified presheages} preserve smallness, the first condition is obviously fulfilled. Using the fact that $\iota$ is fully faithful, the second condition of theorem \textit{op cit} is equivalent to asking that for any acyclic cofibration $i$ of $\Psh{t_MB}$, the morphism $\iota\pi i$ is a weak equivalence. As the unit $id\to \iota \pi $ is pointwise a trivial fibration, this directly follows from the stability of weak equivalences by two out of three.

This provides the model structure. As the unit is pointwise a trivial fibration and the counit is the identity, the adjunction
\eqref{eq:entre presheaveds on tB et stratified presheages} induces a Quillen equivalence. 
\end{proof}

\p We now fix a Reedy category $B$, a subset $M$ of objects of $B$, and we suppose given a nice model structure on $\tPshM{B}$ (as defined in paragraph \ref{para:nice model structure}).
A \wcnotion{$M$-marked presheaf on $B$}{marked presheaf on $B$} is a stratified presheaf having the unique right lifting property against all entire acyclic cofibrations. In particular, any fibrant objects is marked. 

We denote by \wcnotation{$\mPshM{B}$}{(mpsh@$\mPshM{\uvar}$} the full subcategory of marked presheaves on $B$. We then have an adjunction: \sym{((b91@$(\uvar)_{\mk}$}
\begin{equation}
\label{eq:adj beetwen stratified and marked}
\begin{tikzcd}
	{(\uvar)_{\mk}:\tPshM{B}} & {\mPshM{B}:\iota}
	\arrow[""{name=0, anchor=center, inner sep=0}, shift left=2, from=1-2, to=1-1]
	\arrow[""{name=1, anchor=center, inner sep=0}, shift left=2, from=1-1, to=1-2]
	\arrow["\dashv"{anchor=center, rotate=-90}, draw=none, from=1, to=0]
\end{tikzcd}
\end{equation}
where the left adjoint $(\uvar)_{\mk}$ sends a stratified presheaf $(X,tX)$ to the marked presheaf $(X,\overline{tX})$, where $\overline{tX}$ is the smaller stratification that includes $tX$ and makes $(X,\overline{tX})$ a marked presheaf, and where the right adjoint is a fully faithful inclusion.
Remark furthermore that at the level of presheaves, these two adjoints are the identity. 

\begin{prop}
\label{prop:X to Xmk is acycli cof}
Let $X$ be a $M$-stratified presheaf on $B$.
The canonical morphism $X\to \iota (X_{\mk})$ is an entire acyclic cofibration.
\end{prop}
\begin{proof}
Let $\kappa$ be a regular cardinal such that $X$ is $\kappa$-small. Remark first the domain of a entire monomorphism is $\kappa$-small if and only if its codomain is.

Let $I$ be the set of entire acyclic cofibrations with $\kappa$-small codomains and domains. This set generates via the small object argument a weak factorization system, and we denote by $X\to X'\to 1$ the factorization of $X\to 1$. We are willing to show that $X'$ is $M$-marked. As $X\to X'$ is an entire acyclic cofibration by construction, this will directly imply that $X'$ is equal to $\iota (X_{\mk})$ and so demonstrate the desired result.

Suppose then given a diagram
\[\begin{tikzcd}
	K & {X'} \\
	L & 1
	\arrow[from=2-1, to=2-2]
	\arrow[from=1-1, to=1-2]
	\arrow[from=1-2, to=2-2]
	\arrow["i"', from=1-1, to=2-1]
\end{tikzcd}\]
with $i$ an entire acyclic cofibration. We have to show that it admits a lift.
Remark that this square factors as:
\[\begin{tikzcd}
	K & {X'} & {X'} \\
	L & {X'\coprod_KL} & 1
	\arrow[from=1-1, to=1-2]
	\arrow["i"', from=1-1, to=2-1]
	\arrow[from=1-3, to=2-3]
	\arrow[Rightarrow, no head, from=1-2, to=1-3]
	\arrow[from=2-1, to=2-2]
	\arrow[from=2-2, to=2-3]
	\arrow["{i'}", from=1-2, to=2-2]
	\arrow["\lrcorner"{anchor=center, pos=0.125, rotate=180}, draw=none, from=2-2, to=1-1]
\end{tikzcd}\]
The morphism $i'$ is an entire acyclic cofibration with $\kappa$-small codomain and domain and then belongs to $i$. The right square of the previous diagram then admits a lift. This induces a lift in the in the original square, and this concludes the proof.
\end{proof}
\begin{prop} 
\label{prop:model structure on marked presheaves}
Suppose given a nice model structure on $\tPshM{B}$.
This induces a nice model structure on $\mPshM{B}$, making the adjunction \eqref{eq:adj beetwen stratified and marked} a Quillen equivalence. A morphism between two marked presheaves is a cofibration (resp. a fibration) (resp. a weak equivalence) if it is a cofibration (resp. a fibration) (resp. a weak equivalence) when seen as a morphism of $\tPshM{B}$. 
\end{prop} 
\begin{proof} Let $f:X\to Y$ be a fibration between stratified presheaves. If $Y$ is marked, so is $X$. The two weak factorization systems on $\mPshM{B}$ are then induced by the one of $\tPshM{B}$. We leave it to the reader to check that this model structure is nice. 

The unit is pointwise a weak equivalence according to proposition \ref{prop:X to Xmk is acycli cof} and the counit is the identity. 
The adjunction \eqref{eq:adj beetwen stratified and marked} is then a Quillen equivalence.
\end{proof}

\section{The complicial model}

\subsection{Model structure on marked simplicial sets}
This section is a recollection of  the principal results of \cite{Ozornova_model_structure_for_infini_n_categories}. We refer to \cite{Rhiel_Complicial_sets_an_ouverture} for an introduction to complicial sets.

\p
A \notion{stratified simplicial set} is a pair $(X,tX)$ where $X$ is a simplicial set and $tX := \cup_{n>0}tX_n$
 a graded set such that for any $n\geq 1$, $tX_n$ is a subset of $X_n$ that includes all degenerate simplices. A simplex in $tX$ is called \wcnotion{thin}{thin simplex}.

A \textit{stratified morphism} $f:(X,tX)\to (Y,tY)$ is the data of a morphism on the underlying simplicial set such that $f(tX_n)\subset tY_n$.
The category of stratified simplicial sets is denoted by \wcnotation{$\stratSset$}{(tpshdelta@$\stratSset$}.

Given a functor $i:I\mapsto (F(i),tF(i))$ with value in stratified simplicial sets, its colimit is given by $(\colim F(i),M)$ where $M$ is the smaller stratification that includes the image of $tF(i)\to \colim F(i)$ for any $i:I$.

We can extend the join to stratified simplicial sets as follows: 
If $(X,tX)$ and $(Y,tY)$ are two stratified simplicial sets, we define $tX\star tY$ as the set of simplices of $X\star Y$ of shape $x\star y$ where either $x$ or $y$ are thin. We then define 
$$(X,tX)\star (Y,tY) := (X\star Y, tX\star tY).$$

\begin{definition}
A stratified monomorphism $f:X\to Y$ is 
\begin{enumerate}
\item \textit{entire} if it is an identity on underlying simplicial sets.
\item \wcnotion{regular}{regular morphism} if for every $n\geq 1$ the following diagram is a pullback:
\[\begin{tikzcd}
	{tX_n} & {X_n} \\
	{tY_n} & {Y_n}.
	\arrow[from=2-1, to=2-2]
	\arrow[from=1-2, to=2-2]
	\arrow[from=1-1, to=1-2]
	\arrow[from=1-1, to=2-1]
	\arrow["\lrcorner"{anchor=center, pos=0.125}, draw=none, from=1-1, to=2-2]
\end{tikzcd}\]
\end{enumerate}
\end{definition}

\begin{definition}
We define several stratified structures on $[n]$. 
\begin{enumerate}
\item \wcnotation{$[n]_t$}{((g31@$[n]_t$}. The top $n$-simplex is thin. All degeneracies are thin.
\item \wcnotation{$[n]^k$}{((g32@$[n]^k$}. All simplices that include $\{k-1,k,k+1\}\cap[n]$ are thin. All degeneracies are thin.
\item \wcnotation{$([n]^k)'$}{((g33@$([n]^k)'$}. All simplices that include $\{k-1,k,k+1\}\cap[n]$, together with the $(k-1)$-face and the $(k+1)$ face are thin. All degeneracies are thin.
\item \wcnotation{$([n]^k)''$}{((g34@$([n]^k)''$}. All simplices that include $\{k-1,k,k+1\}\cap[n]$, together with the $(k-1)$-face, the $k$-face and the $(k+1)$ face are thin. All degeneracies are thin.
\item \wcnotation{$[3]^{eq}$}{((g35@$[3]^{eq}$}. All simplices of dimension strictly higher than $2$, together with $[0,2]$ and $[1,3]$ are thin. All degeneracies are thin.
\item \wcnotation{$[n]^\sharp$}{((g36@$[n]^\sharp$}. All simplices are thin.
\end{enumerate}
\end{definition}
\begin{definition}	
An \snotion{elementary anodyne extension}{for stratified simplicial sets} is one of the following:
\begin{enumerate}
\item The \notion{complicial horn inclusions} are the regular extensions:
$$\Lambda^k[n]\to [n]^k,~n\geq 1,~ n\geq k\geq 0.$$
\item The \notion{complicial thinness extensions}:
$$([n]^k)'\to ([n]^k)'',~n\geq 2,~ n\geq k\geq 0.$$
\item The \notion{saturation extensions}:
$$[n]\star[3]^{eq}\star[m]\to [n]\star[3]^{\sharp}\star[m],~ n,m\geq -1.$$
\end{enumerate}
The set of complicial horn inclusions is $\Lambda$ and the reunion of \textit{complicial thinness extensions} and of \textit{saturation extensions} is $S$.
\end{definition}

\begin{definition}
\label{defi:complicial set}
Let $n\in \Nb\cup\{\omega\}$.	
A \wcnotion{$n$-complicial set}{complicial set} is a stratified set having the right lifting property against all elementary anodyne extensions and against all morphisms $[k]\to [k]_t$ for $k>n$. 
\end{definition}

\begin{theorem}[Ozornova, Rovelli, Verity]
\label{theo:model structure on complicial set}
Let $n\in \Nb\cup\{\omega\}$.	
There exists a nice model structure on stratified simplicial sets, denoted by $\stratSset^n$, whose fibrant objects are $n$-complicial sets. 

A left adjoint $F:\stratSset\to D$ to a model category is a left Quillen functor if it preserves cofibrations and sends all elementary anodyne extensions and morphisms $[k]\to [k]_t$, for $k>n$, to weak equivalences. \sym{(tpshdeltan@$\stratSset^n$}
\end{theorem}
\begin{proof}
This is \cite[theorem 1.25]{Ozornova_model_structure_for_infini_n_categories}.
\end{proof}
During this chapter, we will only be interested in the model structure for $\omega$-complicial sets, and we will therefore drop the index $\omega$. The $\omega$-complicial sets will then just be called \textit{complicial sets} and we will denote by $\stratSset$ the model category $\stratSset^{\omega}$.

\p A \notion{marked simplicial set} is a stratified simplicial set that has the right lifting property against entire acyclic cofibrations. In particular, all complicial sets are marked. The category of marked simplicial sets is denoted by \wcnotation{$\mSset$}{(mpsh@$\mSset$}. There is an adjunction:
\begin{equation}
\label{adj:beetwen marked an stratified}
\begin{tikzcd}
	{(\uvar)_{\mk}:\stratSset} & {\mSset:\iota}
	\arrow[""{name=0, anchor=center, inner sep=0}, shift left=2, from=1-1, to=1-2]
	\arrow[""{name=1, anchor=center, inner sep=0}, "i", shift left=2, from=1-2, to=1-1]
	\arrow["\dashv"{anchor=center, rotate=-90}, draw=none, from=0, to=1]
\end{tikzcd}
\end{equation}
The left adjoint $(\uvar)_{\mk}$ sends a stratified simplicial set $(X,tX)$ to the marked simplicial set $(X,\overline{tX})$, where $\overline{tX}$ is the smaller stratification that includes $tX$ and makes $(X,\overline{tX})$ a marked simplicial set. Moreover, the proposition \ref{prop:X to Xmk is acycli cof} implies that the canonical morphism $X\to \iota (X)_{\mk}$ is an entire acyclic cofibration.

Given a functor $i:I\mapsto (F(i),tF(i))$ with value in marked simplicial sets, its colimit is given by $(\colim F(i),\overline{M})$ where $M$ is the smaller stratification that includes the image of $tF(i)\to \colim F(i)$ for any $i:I$.

\begin{prop}
\label{prop:model structure on marked simplicial set}
The category $\mSset$ admits a nice model structure that makes the adjunction \ref{adj:beetwen marked an stratified} a Quillen equivalence.
\end{prop}
\begin{proof}
This is a direct consequence of proposition \ref{prop:model structure on marked presheaves} and theorem \ref{theo:model structure on complicial set}.
\end{proof}

\p
\label{para:inteligentr trucation for simplicial set}
Let $n$ be an integer, and $(X,tX)$ a marked simplicial set. We define $\tau^i_n(tX)$ as the reunion of $tX$ and all simplices of dimension strictly superior to $n$. This induces a functor, called the \snotionsym{intelligent $n$-truncation}{(taui@$\tau^i_n$}{for marked simplicial sets}:
$$\begin{array}{rcll}
\tau^i_n :& \mSset&\mapsto &\mSset\\
 &(X,tX)&\mapsto &(X, \overline{\tau^i_n(tX)}).
\end{array}$$
This functor preserves cofibrations.	
Given the explicit description of colimits in marked simplicial sets, it is easy to see that $\tau^i_n$ preserves colimits. 
For every elementary anodyne extension $i:K\to L$, we have a pushout 
\[\begin{tikzcd}
	K & L \\
	{\tau^i_n(K)} & {\tau^i_n(L).}
	\arrow[from=1-1, to=2-1]
	\arrow[from=2-1, to=2-2]
	\arrow[from=1-2, to=2-2]
	\arrow[from=1-1, to=1-2]
	\arrow["\lrcorner"{anchor=center, pos=0.125, rotate=180}, draw=none, from=2-2, to=1-1]
\end{tikzcd}\]
The intelligent $n$-truncation is then a left Quillen functor.

It's associated right adjoint is called the \wcsnotionsym{$n$-truncation}{(tau@$\tau_n$}{truncation@$n$-truncation}{for marked simplicial sets} and is denoted by 
$$\tau_n:\mSset\to \mSset.$$

\subsection{Gray tensor product}

\begin{construction}[{\cite[Notation 5]{Verity_weak_complicial_sets_I}}] For any $n,p,q\geq 0$ such that $n=p+q$, we define:
\begin{itemize}
\item the \notion{degeneration partition operator}:
$$
\begin{array}{rclllrrclll}
\invamalg^1_{p,q}:&[n]&\to&[p]&&~~~~~~&\invamalg^2_{p,q}:&[n]&\to&[q]&\\
&k&\mapsto &k &\mbox{if $k\leq p$} &&&k&\mapsto &0& \mbox{if $k\leq p$}\\
&k&\mapsto &p 	&\mbox{if $k>p$} &&&k&\mapsto &k-p& \mbox{if $k> p$}.
\end{array}
$$
\item the \notion{face partition operator}:
$$
\begin{array}{rcllrrcll}
\amalg^1_{p,q}:&[p]&\to&[n]&~~~~~~&\amalg^2_{p,q}:&[q]&\to&[n]\\
&k&\mapsto &k &&&k&\mapsto &k+p.
\end{array}
$$
\end{itemize}
\end{construction}

\begin{definition}[{\cite[Definition 128]{Verity_weak_complicial_sets_I}}]
Let $(X,tX)$ and $(Y,tY)$ be two stratified simplicial sets. 
We define the \snotionsym{Gray tensor product}{((d00@$\otimes$}{for stratified simplicial sets} of $(X,tX)$ and $(Y,tY)$ as the stratified simplicial set 
$$(X,tX)\otimes (Y,tY):=(X\times Y,tX\otimes tY)$$ where $tX\otimes tY$ is the set of pairs $(x,y)$ such that for any partitions $(p,q)$ of $n$ either $\amalg^1_{p,q}x$ or $\amalg^2_{p,q}y$ is thin. 
\end{definition}

\begin{remark}
Let $X,Y$ be two stratified simplicial sets such that all simplices of $X$ are thin. The morphism 
$X\otimes Y\to X\times Y$ is then an isomorphism.
\end{remark}

\p In \cite{Verity_weak_complicial_sets_I}, it is shown that the Gray tensor is associative. The problem of this operation comes from the fact that it doesn't commute with colimits. Verity then defines an other binary operation, which is cocontinuous, the \textit{Gray pretensor} (\cite[definition 135]{Verity_weak_complicial_sets_I}) $(X,tX)\boxtimes(Y,tY):=(X\times Y, tX\boxtimes tY)$, together with a natural transformation: 
$$\uvar\boxtimes\uvar\to \uvar\otimes\uvar$$
that is pointwise an entire acyclic cofibration (\cite[lemma 149]{Verity_complicial_set_characterising_the_simplicial_nerve}). Moreover, in \cite{Ozornova_Gray_tensor_product_and_saturated_n_complicia}, it is shown that this pretensor is a Quillen bifunctor for the model structure on $\stratSset$. 

\begin{definition}[Gray tensor product for marked simplicial sets]
Let $X$ and $Y$ be two marked simplicial sets. We define the \snotionsym{Gray tensor product}{((d00@$\otimes$}{for marked simplicial sets} of $X$ and $Y$ as the marked simplicial set 
$$X\otimes Y:= (\iota(X)\otimes \iota(Y))_{\mk}$$
 where $((\uvar)_{\mk},\iota)$ is the adjunction \ref{adj:beetwen marked an stratified}.
As $\uvar\boxtimes\uvar\to \uvar\otimes\uvar$ is pointwise a entire acyclic cofibration, we have an equality: 
$$X\otimes Y:= (\iota(X)\boxtimes \iota(Y))_{\mk}.$$
\end{definition}

\begin{prop}
\label{prop:R_commutes_with_gray_tensor}
We have equalities
$$(\uvar\boxtimes \uvar)_{\mk}=(\uvar\otimes \uvar)_{\mk}= (\uvar)_{\mk}\otimes (\uvar)_{\mk}.$$
\end{prop}
\begin{proof}
The first equality is a consequence of the fact that $\uvar\boxtimes\uvar\to \uvar\otimes\uvar$ is pointwise a entire acyclic cofibration.

 For the second one, we have to show that $(X\otimes Y)_{\mk}=(\iota(X_{\mk})\otimes \iota(Y_{\mk}))_{\mk}$.
The unit of the adjunction $(\iota,(\uvar)_{\mk})$ induces a morphism $h:(X\otimes Y)_{\mk}\to (\iota(X_{\mk})\otimes \iota(Y_{\mk}))_{\mk}$. This morphism is an entire acyclic cofibration according to proposition \ref{prop:X to Xmk is acycli cof}, 
and the corollary 2.2 of \cite{Ozornova_Gray_tensor_product_and_saturated_n_complicia} and the fact that $(\uvar)_{\mk}$ is a left Quillen functor.

 We then have lifts in the following diagram:
\[\begin{tikzcd}
	{(X\otimes Y)_{\mk}} & {(X\otimes Y)_{\mk}} \\
	{(\iota(X_{\mk})\otimes \iota(Y_{\mk}))_{\mk}}
	\arrow["id", from=1-1, to=1-2]
	\arrow["h"', from=1-1, to=2-1]
	\arrow["k"', from=2-1, to=1-2]
\end{tikzcd}\]
As both $k$ and $h$ are the identity on the underlying simplicial sets, this implies that the stratifications of $(X\otimes Y)_{\mk}$ and $(X\otimes Y)_{\mk}$ coincide, and this two objects are then equal. 
\end{proof}

We can then deduce the following proposition:
\begin{prop}
\label{prop:gray_product_is_a_left_Quillen_bifunctor}
The Gray tensor product is associative, and is a left Quillen bifunctor in $\mSset$.
\end{prop}
\begin{proof}
The first assertion is a consequence of proposition \ref{prop:R_commutes_with_gray_tensor} and the fact that the binary operation $\otimes$ on $\stratSset$ is associative. The second one is a consequence of proposition \ref{prop:R_commutes_with_gray_tensor} and \cite[Theorem 2.1]{Ozornova_Gray_tensor_product_and_saturated_n_complicia}.
\end{proof}

We now give a lemma investigating the interaction between the truncation, the intelligent truncation and the Gray tensor product.
\begin{lemma}
\label{lemma:technique marked oicategoros}
Let $C$ and $D$ be two stratified simplicial sets.
\begin{enumerate}
\item
The following canonical square is cocartesian
\[\begin{tikzcd}
	{\coprod_{n} \tau_nC\otimes \tau_nD} & {C\otimes D} \\
	{\coprod_{n} \tau^i_n(\tau_nC\otimes \tau_nD)} & {C\times D}
	\arrow[from=1-1, to=1-2]
	\arrow[from=1-1, to=2-1]
	\arrow[from=2-1, to=2-2]
	\arrow[from=1-2, to=2-2]
	\arrow["\lrcorner"{anchor=center, pos=0.125, rotate=180}, draw=none, from=2-2, to=1-1]
\end{tikzcd}\]

\item 
If $D$ is invariant under $\tau^i_2$, the following canonical square is cocartesian
\[\begin{tikzcd}
	{\coprod_{ n} \tau_{n}C\otimes D} & {C\otimes D} \\
	{\coprod_{n} \tau^i_{n+1}(\tau_{n}C\otimes D)} & {C\otimes \tau^i_1D}
	\arrow[from=1-1, to=2-1]
	\arrow[from=2-1, to=2-2]
	\arrow[from=1-2, to=2-2]
	\arrow["\lrcorner"{anchor=center, pos=0.125, rotate=180}, draw=none, from=2-2, to=1-1]
	\arrow[from=1-1, to=1-2]
\end{tikzcd}\]
\end{enumerate}
\end{lemma}
\begin{proof}
Let $C^\natural$ and $D^\natural$ be the underlying simplicial sets of $C$ and $D$.
Remark first that the two vertical morphisms of the first square are the identity. 
The induced morphism 
\begin{equation}
\label{eq:felkjzfezoifjezoi}
\coprod_{n} \tau^i_n(\tau_nC\otimes \tau_nD)\coprod_{\coprod_{n} \tau_nC\otimes \tau_nD}C\otimes D\to C\times D
\end{equation}
is then the identity of $C^\natural\times D^\natural$ at the level of underlying simplicial sets. To conclude, one has to show that every simplex $C^\natural\times D^\natural$ that is marked in the right term of \eqref{eq:felkjzfezoifjezoi} is also marked in the left term.
For this, let $n$ be a non negative integer, $x\in C^\natural_k$ and $y\in D^\natural_k$, such that $x$ is marked in $C$ and $y$ is marked in $D$.
The $k$-simplex $(x,y)$ then is in the image of $\tau^i_{k-1}(\tau_{k-1}C\otimes \tau_{k-1}D)$ and is then marked in the left term of \eqref{eq:felkjzfezoifjezoi}. This concludes the proof of the first assertion. 

The two vertical morphisms of the second square also are the identity and the induced morphism 
\begin{equation}
\label{eq:felkjzfezoifjezoibfdbd}
\coprod_{n} \tau^i_{n+ 1}(\tau_{n}C\otimes D)\coprod_{\coprod_{ n} \tau_{n}C\otimes D}C\otimes D\to C\otimes \tau^i_1	D
\end{equation}
is then once again the identity of $C^\natural\times D^\natural$ at the level of underlying simplicial sets.
Unfolding the definition, the marking of the left term is the smaller one that includes the one of $C\otimes D$ and every 
$k$-simplex $(x,y)$ such that both $x$ and $d^kx$ are marked in $C$.

Let $(x,y)$ be a $k$-simplex of $C^\natural\times D^\natural$. Suppose first that it is marked in $C\otimes D$. Remark that $(x,y)$ is then marked in $\tau_{k}C\otimes D$, and so is in the left term of \eqref{eq:felkjzfezoifjezoibfdbd}. Suppose now that both $x$ and $d^kx$ are marked in $C$. This implies that $s^{k-1}d^kx$ is in the image of $\tau_{k-1} C$. The simplex $(s^{k-1}d^kx,y)$ is then in the image of $\tau^i_{k}(\tau_{k-1}C\otimes D)$ and is then marked in the left term of \eqref{eq:felkjzfezoifjezoibfdbd}.

Now remark that we have 
$$ d^{k-1}(s^{k-1}x,s^ky)=(x,s^{k-1}d^{k-1}y) ~~~~~~~~~~ d^{k}(s^{k-1}x,s^ky)= (x,y)$$
$$d^{k+1}(s^{k-1}x,s^ky)= (s^{k-1}d^kx,y)$$
and both the $(k-1)$ and $(k+1)$ faces of $(s^{k-1}x,s^ky)$ are marked.
We leave it to the reader to check that by definition every sub $l$-simplex $z$ of $(s^{k-1}x,s^ky)$ containing the points $k-1$, $k$ and $k+1$ is marked in $C\otimes D$, and so in $\tau_{k}C\otimes D$, and, therefore, in the left term of \eqref{eq:felkjzfezoifjezoibfdbd}. As the marking is stable by complicial thinness extension, this implies that $(x,y)$ is also marked in the left term of \eqref{eq:felkjzfezoifjezoibfdbd}.

The marking of the right term of \eqref{eq:felkjzfezoifjezoibfdbd} is then included in the marking of the left term. They then coincide, which concludes the proof.
\end{proof}

\begin{remark}

The reason for including the assumption that $D$ is invariant under $\tau^i_2$ is solely because it will be the only relevant case. If we remove this assumption, the statement remains true, but the proof becomes a little bit more technical.
\end{remark}

\p
Let $X$ be a marked simplicial set. We define the \snotion{suspension}{for marked simplicial sets} of $X$, noted by \wcnotation{$\Sigma X$}{(sigma@$\Sigma\uvar$}, as the following pushout:
\[\begin{tikzcd}
	{X\otimes\partial [1]} & {X\otimes [1]} \\
	{\partial[1]} & {\Sigma X}
	\arrow[from=1-2, to=2-2]
	\arrow["\lrcorner"{anchor=center, pos=0.125, rotate=180}, draw=none, from=2-2, to=1-1]
	\arrow[from=1-1, to=2-1]
	\arrow[from=2-1, to=2-2]
	\arrow[from=1-1, to=1-2]
\end{tikzcd}\]
This assignation defines a cocontinuous functor $\Sigma:\mSset\to \mSset_{\partial[1]/}.$ For every acyclic cofibration $K\to L$, we have cartesian squares
\[\begin{tikzcd}
	{L\otimes\partial[1]} & {K\otimes[1]\cup L\otimes\partial[1]} & {L\otimes[1]} \\
	{\partial[1]} & {\Sigma K} & {\Sigma L}
	\arrow[from=1-1, to=2-1]
	\arrow[""{name=0, anchor=center, inner sep=0}, from=1-1, to=1-2]
	\arrow[from=2-1, to=2-2]
	\arrow[from=1-2, to=2-2]
	\arrow[from=1-3, to=2-3]
	\arrow[from=2-2, to=2-3]
	\arrow[""{name=1, anchor=center, inner sep=0}, from=1-2, to=1-3]
	\arrow["\lrcorner"{anchor=center, pos=0.125, rotate=180}, draw=none, from=2-2, to=0]
	\arrow["\lrcorner"{anchor=center, pos=0.125, rotate=180}, draw=none, from=2-3, to=1]
\end{tikzcd}\]
The suspension then preserves acyclic cofibration and is then a left Quillen functor.

This functor admits a right adjoint, that sends a pair $(a,b,C)$ to \wcnotation{$C(a,b)$}{(cab@$C(a,b)$} where $a,b$ are two $0$-simplices of $C$. If $p:C\to D$ is a morphism between complicial sets, and $a,b$ two $0$-simplices of $C$, we denote by 
$$p(a,b):C(a,b)\to D(pa,pb)$$
the induced morphism.

\p
We introduce an other operation, the \notion{diamond product}, that makes the link between the Gray tensor product and the join. 
Let $X$ and $Y$ be two marked simplicial sets. We define \sym{((d21@$\diamond$}$X\diamond Y$ as the colimit of the diagram:
\[\begin{tikzcd}
	X & {X\otimes \{0\}\otimes Y} & {X\otimes[1]\otimes Y} & {X\otimes \{1\}\otimes Y} & Y
	\arrow[from=1-4, to=1-3]
	\arrow[from=1-4, to=1-5]
	\arrow[from=1-2, to=1-1]
	\arrow[from=1-2, to=1-3]
\end{tikzcd}\]
The functors 
$$\uvar\diamond X:\mSset\to \mSset_{/X} ~~~~\mbox{and}~~~~ X\diamond \uvar:\mSset\to \mSset_{/X}$$
are colimit preserving. Furthermore, for every acyclic cofibration $K\to L$, the morphism $K\diamond X\to L\diamond X$ is the horizontal colimit of the diagram: 
\[\begin{tikzcd}
	{K\amalg X} & {K\otimes \partial[1]\otimes X} & {K\otimes [1]\otimes X} \\
	{L\amalg X} & {L\otimes \partial[1]\otimes X} & {L\otimes [1] \otimes X}
	\arrow[from=1-2, to=1-1]
	\arrow[from=1-2, to=1-3]
	\arrow[from=2-2, to=2-3]
	\arrow[from=2-2, to=2-1]
	\arrow[from=1-2, to=2-2]
	\arrow[from=1-1, to=2-1]
	\arrow[from=1-3, to=2-3]
\end{tikzcd}\]
However, these two horizontal colimits are homotopy colimits, and all the horizontal maps of the previous diagram are weak equivalences. This morphism is then an acyclic cofibration. This shows that 
 $\uvar\diamond X$ is a left Quillen functor. We show analogously that $X\diamond \uvar$ is a left Quillen functor.

\begin{lemma}
There exists a unique natural transformation $\gamma_{X,Y}:X\diamond Y\to X\star Y$ that fits in the following diagram: 
\[\begin{tikzcd}
	{X\coprod Y} & {X\star Y} \\
	{X\diamond Y} & {[1]}
	\arrow[from=1-1, to=2-1]
	\arrow[from=1-1, to=1-2]
	\arrow[from=1-2, to=2-2]
	\arrow[from=2-1, to=2-2]
	\arrow["{\gamma_{X,Y}}", from=2-1, to=1-2]
\end{tikzcd}\]
\end{lemma}
\begin{proof}
We begin by defining this morphism on simplicial sets, and for this we can suppose that both $X$ and $Y$ are representables, ie $X:=[n]$, $Y:=[m]$.
On object, this morphism is induced by the assignation:
$$p(k,0,l) := k~~~p(k,1,l) := l.$$ 

We need to verify that this morphism preserves thin cells. Suppose now that $(x,v,y)$ is a thin $n$-simplex of $X\diamond Y$. There are several cases to consider. \textbf{Case $v_n=0$.} The simplex $x$ is then thin, and is sent to $x\star \emptyset$ which is also thin. \textbf{Case $v_0=1$.} Similar. \textbf{Case $v_0=0$ and $v_n=1$.} Let $p$ be the smaller integer such that $v_p=1$. Either $\amalg_{p-1,n-p+1}^1(x)$ or $\amalg_{p,n-p}^2(y)$ is thin. This implies that $\phi_{X,Y}(x,v,y)= \amalg_{p-1,n-p+1}^1(x)\star \amalg_{p,n-p}^2(y)$ is thin. 
\end{proof}

\begin{prop}
\label{prop:equivalence between diamond and join product}
For any $X,Y$, the morphism $\gamma_{X,Y}$ is a weak equivalence. 
\end{prop}
\begin{proof}
The set of couples $(X,Y)$ such that $\gamma_{X,Y}$ is a weak equivalence is saturated by monomorphisms. It is then enough to show the result for any couples of representables. 

Let's start by the case $(X,Y)=([n],[m])$. Let $s:X\star Y\to X\diamond Y$ be the morphism defined on objects by the formula: 
$$s(k\star \emptyset) := (k,0,0)~~~s(\emptyset \star l) := (n,1,l)$$
We have
$$\gamma_{X,Y}s = id ~~~~s\gamma_{X,Y} (k,\epsilon,l) =(k + \epsilon (n-k), \epsilon,\epsilon l).$$

Let $\eta:[n]\diamond [m]\to [n]\diamond [m]$ be induced by the application
$$(k,\epsilon,l)\mapsto (k,\epsilon,\epsilon l).$$
We are now going to construct two morphisms
$$\epsilon_0: ([n]\diamond[m])\times [1]_t\to [n]\diamond[m]~~~~\mbox{ and }~~~~\epsilon_1: ([n]\diamond[m])\times [1]_t\to [n]\diamond[m]$$
such that $$
\begin{array}{rrl}
\epsilon_0(\uvar,0)=\eta&&\epsilon_0(\uvar,1)=s\gamma_{X,Y}\\
\epsilon_1(\uvar,0)=\eta&~~~~&\epsilon_1(\uvar,1)=id\\
\end{array}$$
The first one is induced on the level of simplicial sets by
$$(k,\epsilon,l,\alpha)\mapsto (k + \alpha\epsilon (n-k),\epsilon,\epsilon l ),$$
and the second one by
$$(k,\epsilon,l,\alpha)\mapsto (k,\epsilon,(\epsilon\vee\alpha)l),$$
where $\epsilon\vee\alpha := \epsilon+\alpha - \epsilon\alpha.$
These two morphisms extend to marked simplicial sets. 

We proceed in a similar way with cases $(X,Y) = ([n]_t,[m]), ([n],[m]_t)$ or $([n]_t,[m]_t)$. 
\end{proof}

As we already now that functors $\uvar\diamond X$ and $X\diamond \uvar$ preserve weak equivalences, the previous proposition implies that for any marked simplicial sets $X$, functors $\uvar\star X$ and $X\star \uvar$ preserves weak equivalences and are then
 left Quillen functors.

\p 
\label{para:sigma star}
Let $X$ be a marked simplicial set. We now describe an variation on the suspension. We define \wcnotation{$\Sigma^\star X$}{(sigmastar@$\Sigma^\star\uvar$}, as the following pushout:
\[\begin{tikzcd}
	X & {X\star [0]} \\
	1 & {\Sigma^\star X}
	\arrow[from=1-2, to=2-2]
	\arrow["\lrcorner"{anchor=center, pos=0.125, rotate=180}, draw=none, from=2-2, to=1-1]
	\arrow[from=1-1, to=2-1]
	\arrow[from=1-1, to=1-2]
	\arrow[from=2-1, to=2-2]
\end{tikzcd}\]
This assignation defines a cocontinuous functor $\Sigma^\star:\mSset\to \mSset_{\partial[1]/}.$ Using proposition \ref{prop:equivalence between diamond and join product}, all the vertical morphisms of the following diagram are weak equivalences:
\[\begin{tikzcd}
	1 & X & {X\diamond 1} \\
	1 & X & X\star1
	\arrow[from=1-2, to=1-3]
	\arrow[from=1-2, to=1-1]
	\arrow[from=2-2, to=2-1]
	\arrow[from=1-3, to=2-3]
	\arrow[from=2-2, to=2-3]
	\arrow[from=1-2, to=2-2]
	\arrow[from=1-1, to=2-1]
\end{tikzcd}\]
Remark furthermore that the colimits of these lines are also homotopy colimits. Taking the horizontal colimit, this induces a weak equivalence
\begin{equation}
\label{eq:sigma et sigam star}
\Sigma X\to \Sigma^{\star}X
\end{equation}
natural in $X$.

\p
We define the \notion{co-join} of $X$ and $Y$, denoted by \index[notation]{((d22@$\overset{co}{\star}$}$X\costar Y$, as the colimit of the following diagram:
\[\begin{tikzcd}
	Y & {Y\otimes \{1\}\otimes X} & {Y\otimes [1]\otimes X} & {Y\otimes \{0\}\otimes X} & X
	\arrow[from=1-2, to=1-1]
	\arrow[from=1-2, to=1-3]
	\arrow[from=1-4, to=1-5]
	\arrow[from=1-4, to=1-3]
\end{tikzcd}\]
The functors 
$$\uvar\costar X:\mSset\to \mSset_{/X} ~~\mbox{and}~~ X\costar \uvar:\mSset\to \mSset_{/X}$$
are colimit preserving. Furthermore, for every acyclic cofibration $K\to L$, the morphism $K\costar X\to L\costar X$ is the horizontal colimit of the diagram:
\[\begin{tikzcd}
	{K\amalg X} & {X\otimes \partial[1]\otimes K} & {X\otimes [1]\otimes K} \\
	{L\amalg X} & {X\otimes \partial[1]\otimes L} & {X\otimes [1] \otimes K}
	\arrow[from=1-2, to=1-1]
	\arrow[from=1-2, to=1-3]
	\arrow[from=2-2, to=2-3]
	\arrow[from=2-2, to=2-1]
	\arrow[from=1-2, to=2-2]
	\arrow[from=1-3, to=2-3]
	\arrow[from=1-1, to=2-1]
\end{tikzcd}\]
However, these two horizontal colimits are homotopy colimits, and all the horizontal maps of the previous diagram are weak equivalences. This morphism is then an acyclic cofibration.
This shows that $\uvar\costar X$ is a left Quillen functor. We show analogously that $X\costar \uvar$ is a left Quillen functor.

\p
\label{subsection:wedge} Let $X$ be a simplicial set. We define the \textit{wedge} of $\Sigma X$ and $[1]$, noted by \sym{(sigmavee@$\Sigma X~\rotatebox[origin=c]{270}{$\gtrdot$}~[1]$}\sym{(sigmave@$[1]~\rotatebox[origin=c]{270}{$\gtrdot$}\Sigma X$}$\Sigma X\fwedge [1]$, as the colimit of the following diagram:
\[\begin{tikzcd}
	{X\otimes[0,1]} & {X\otimes[2]_t} & {X\otimes[1,2]} \\
	{\Sigma X} & {X\fwedge[1]} & {[1,2]}
	\arrow[from=1-1, to=2-1]
	\arrow[from=1-3, to=2-3]
	\arrow[from=1-1, to=1-2]
	\arrow[from=1-3, to=1-2]
	\arrow[from=1-2, to=2-2]
	\arrow[from=2-3, to=2-2]
	\arrow[from=2-1, to=2-2]
\end{tikzcd}\]
This assignation defines a cocontinuous functor $\uvar\fwedge [1]:\mSset\to \mSset_{[0]\amalg [1]/}.$ For every acyclic cofibration $K\to L$, the morphism $K\fwedge [1]\to L\fwedge [1]$ is the horizontal colimit of the diagram:
\[\begin{tikzcd}
	{[0]\coprod[1]} & {K\otimes([0]\coprod[1,2])} & {K\otimes[2]_t} \\
	{K\otimes[2]_t} & {L\otimes[2]_t} & {L\otimes[2]_t}
	\arrow[from=1-2, to=1-1]
	\arrow[from=1-2, to=1-3]
	\arrow[from=1-1, to=2-1]
	\arrow[from=2-2, to=2-1]
	\arrow[from=2-2, to=2-3]
	\arrow[from=1-3, to=2-3]
	\arrow[from=1-2, to=2-2]
\end{tikzcd}\]
However, these two horizontal colimits are homotopy colimits, and all the horizontal maps of the previous diagram are weak equivalences. This morphism is then an acyclic cofibration.
This shows that this functor is a left Quillen functor. We denote by $$\triangledown:\Sigma X\to \Sigma X\fwedge [1]$$ the morphism induced by the inclusion $X\otimes [0,2]\subset X\otimes [2]_t$ and 
$$\Sigma X\hookrightarrow \Sigma X\fwedge [1]$$
the morphism induced by the inclusion $X\otimes [1,2]\subset X\otimes [2]_t$.
We define similarly the left Quillen functor $$[1]\fwedge\uvar:\mSset\to \mSset_{[1]\amalg [0]/}$$ and the morphisms
$$\triangledown:\Sigma X\to [1]\fwedge\Sigma X~~~\mbox{and}~~~\Sigma X\hookrightarrow [1]\fwedge\Sigma X .$$

\begin{prop}
Morphisms 
$$ \Sigma X\coprod_{[0]}[1]\to \Sigma X\fwedge [1]~~~~\mbox{and}~~~~ [1]\coprod_{[0]}\Sigma X\to [1]\fwedge \Sigma X$$
are acyclic cofibrations. 
\end{prop}
\begin{proof}
We have cartesian squares:
\[\begin{tikzcd}
	{X\otimes([0]\coprod[1,2])} & {X\otimes \Lambda^{1}[2]} & {X\otimes[2]_t} \\
	{[0]\coprod[1]} & {\Sigma X\coprod_{[0]} [1]} & {\Sigma X\fwedge [1].}
	\arrow[from=1-1, to=2-1]
	\arrow[from=2-1, to=2-2]
	\arrow[from=1-1, to=1-2]
	\arrow[from=1-2, to=1-3]
	\arrow[from=1-3, to=2-3]
	\arrow[from=1-2, to=2-2]
	\arrow["\lrcorner"{anchor=center, pos=0.125, rotate=180}, draw=none, from=2-3, to=1-2]
	\arrow["\lrcorner"{anchor=center, pos=0.125, rotate=180}, draw=none, from=2-2, to=1-1]
	\arrow[from=2-2, to=2-3]
\end{tikzcd}\]
The upper right horizontal morphism is an acyclic cofibration, and so is the downer right horizontal one. We proceed similarly for the other morphism.
\end{proof}

\subsection{Gray cylinder, Gray cone and Gray $\circ$-cone}
\p The Gray tensor product induced a left Quillen functor 
$$\uvar\otimes[1]:\mSset\to \mSset$$
called the \snotionsym{Gray cylinder}{((d30@$\uvar\otimes[1]$}{for marked simplicial sets}. 
The join and the co-join also incuce two left Quillen functors
$$\uvar\star [0]:\mSset\to \mSset_{[0]/}~~~~~[0]\costar \uvar:\mSset\to \mSset_{[0]/}$$
called the \snotionsym{Gray cone}{((d40@$\uvar\star 1$}{for marked simplicial sets} and the \snotion{Gray $\circ$-cone}{for marked simplicial sets}\index[notation]{((d50@$1\overset{co}{\star}\_$!\textit{for marked simplicial sets}}. We denote by 
$$
\begin{array}{rclcrcl}
 \mSset_{\cdot} &\to &\mSset & & \mSset_{\cdot}&\to & \mSset\\
(X,x)&\mapsto & X_{/x} &~~~~~ & (X,x)&\mapsto & X_{x/}\\
\end{array}
$$
respectively called the \wcsnotionsym{slice of $X$ over $x$}{(cc@$C_{c/}$}{slice over}{for marked simplicial sets} and the \wcsnotionsym{slice of $X$ under $x$}{(cc@$C_{/c}$}{slice under}{for marked simplicial sets}, the right adjoints of the Gray cone and the Gray $\circ$-cone.

Remark furthermore that we have canonical natural transformation $X_{x/}\to X$ and $X_{/x}\to X$, induced by the natural transformation $X\to X\star [0]$ and $X\to [0]\costar X$.
\p 
The category of endomorphisms of marked simplicial sets has a monoidal structure given by the composition. The endomorphism $[0]\costar \uvar$ admits a monoid structure, where the multiplication is the natural transformation:
$[0]\costar ([0]\costar X)\to [0]\costar X$, induced by the pairing: 
$$
\begin{array}{rcl}
X\otimes[1]\otimes[1]&\to& X\otimes[1]\\
(x,i,j)&\mapsto& (x,i\wedge j).
\end{array}$$

This defines a cosimplicial object in $\End(\mSset)$, which evaluated on $\emptyset$, provides a cosimplicial object in $\mSset$: 
$$\begin{array}{rcl}
\Delta &\to & \mSset\\
n&\mapsto&[n]_{\circ}:=[0]\costar (... ([0]\costar[0])).
\end{array}$$
Eventually, we set $([n]_t)_{\circ} := \tau^i_{n-1}([n]_{\circ})$. We then have defined a functor:
$$(\uvar)_{\circ}:t\Delta \to \mSset.$$ 
\sym{((b90@$(\uvar)_{\circ}$}

\subsection{Street nerve}
\label{section:Street nerve}

We recall that $\zo$-categories are defined in section \ref{section:zocategories}. The Gray operations on $\zo$-categories - 
$\uvar\otimes[1]$, $\uvar\star 1$, $1\costar \uvar$ -
are defined in section \ref{section:definition of Gray operations}.

In \cite{Street_algebra_of_orianted_simplexes}, Street defines a cosimplicial object in $\zocat$, that associates to $n$, the $n^{th}$ \notion{oriental} $O_n$. 
The original construction of this object is complicated, but Ara and Maltsiniotis have shown that it can be easily defined using Gray operations. Indeed, in \cite[Corollaire 7.10]{Ara_Maltsiniotis_joint_et_tranche}, these authors construct an isomorphism
$$O_n\cong \overbrace{1\star...\star 1}^{n+1}$$
natural in $n$.

We can extend the functor $O_{\uvar}:\Delta\to \zocat$ to $t\Delta$ by defining
$$(O_n)_t:=\tau^i_{n-1}(O_n).$$
By extention by colimit, this induces a functor 
$$\R:\stratSset\to \zocat.$$
As explained in example 11 of \cite{Verity_weak_complicial_set_part2_nerve_of_complicial_Gray_categories}, $\R$ preserves the Gray tensor product, and so also the suspension, the wedge, the Gray cone and the Gray $\circ$-cone. 
 Moreover, \cite[Theorem 249]{Verity_complicial_set} states that this functor sends complicial horn inclusions and complicial thinness extensions to isomorphisms. It obviously also sends saturation extensions to isomorphisms. This functor then sends every weak equivalences to isomorphisms, and then lifts to a colimit preserving functor $\R:\mSset\to \zocat$ and induces an adjoint pair: \sym{(r@$\R:\mSset\to \zocat$}\sym{(n@$\N:\zocat\to \mSset$}
\[\begin{tikzcd}
	{\R:\mSset} & {\zocat:\N}
	\arrow[""{name=0, anchor=center, inner sep=0}, shift left=2, from=1-1, to=1-2]
	\arrow[""{name=1, anchor=center, inner sep=0}, shift left=2, from=1-2, to=1-1]
	\arrow["\dashv"{anchor=center, rotate=-90}, draw=none, from=0, to=1]
\end{tikzcd}\]

We now recall two fundamental results of strictification:
\begin{theorem}[Gagna, Ozornova, Rovelli]
\label{theo:strict representable}
Let $n$ be an integer. The canonical morphism
$$[n]\to \N(\R([n]))$$
is an acyclic cofibration.
\end{theorem}
\begin{proof}
This is \cite[corollary 5.4]{Gagna_Nerves_and_cones_of_free_loop_free_omega_categories}.
\end{proof}
\begin{theorem}[Ozornova, Rovelli]
\label{theo:strict susension}
Let $C$ be an $\zo$-category.
The canonical morphism
$$\Sigma \N C \to \N([C,1])$$
is an acyclic cofibration.
\end{theorem}
\begin{proof}
The morphism \eqref{eq:sigma et sigam star} provides a weak equivalence
$\Sigma \N C\to \Sigma^{\star} \N C$.
As this morphism is sent to an isomorphism by $R$, it induces a commutative triangle
\[\begin{tikzcd}
	& {\Sigma^{\star} \N C} \\
	{\Sigma \N C} && {\N([C,1])}
	\arrow[from=2-1, to=2-3]
	\arrow["\sim", from=2-1, to=1-2]
	\arrow[from=1-2, to=2-3]
\end{tikzcd}\]
The theorem 3.22 of \cite{Ozornova_a_quillen_adjunction_between_globular_and_complicial} stipulates that $\Sigma^{\star} \N C\to \N([C,1])$ is a weak equivalence, which concludes the proof.
\end{proof}

\begin{definition}
We define the \notion{Street endofunctor} \wcnotation{$i_{str}$}{(istr@$i_{str}$} to be the colimit preserving functor defined on representables by: 
$$i_{str}([n]) := \N(\R([n]))~~~\mbox{ and }~~~i_{str}([n]_t) :=\tau^i_{n-1} (i_{str}([n]))$$
\end{definition}

\begin{prop}
\label{prop:i_str_is_Quillen}
 The functor $i_{srt}$ is left Quillen and 
the natural transformation 
$$id \to i_{srt}$$ 
is weakly invertible.
\end{prop}
\begin{proof}
As noticed earlier, for any integer $n$, the map $[n]\to i_{srt}([n])$ is a weak equivalence.
We recall that the intelligent truncation functor $\tau^i_{n-1}:\mSset\to \mSset$ is a left Quillen functor, and so preserves weak equivalences between cofibrant objects. The morphism $[n]_t\to i_{str}([n]_t)$ is then a weak equivalence.
The set of objects $X$ such that the morphism $X\to i_{srt}X$ is a weak equivalence is closed by homotopy colimits and includes all representables. As $i_{srt}$ preserves monomorphisms, it then consists of all marked simplicial sets. Now let $K\to L$ be an acyclic cofibration. We have a commutative square:
\[\begin{tikzcd}
	K & {i_{str}(K)} \\
	L & {i_{str}(L)}
	\arrow["\sim"', from=1-1, to=2-1]
	\arrow["\sim", from=1-1, to=1-2]
	\arrow["\sim"', from=2-1, to=2-2]
	\arrow[from=1-2, to=2-2]
\end{tikzcd}\]
By two out of three, $i_{str}(K)\to i_{str}(L)$ is then an acyclic cofibration. The functor $i_{srt}$ is then left Quillen. 
\end{proof}

\section{Suspension and Gray operations}
\label{section:Suspension and Gray operation}
\subsection{Formula for the Gray cylinder}
The aim of this subsection is to demonstrate the following theorem:
\begin{theorem}
\label{theo:interval_first_formula}
There is a zigzag of acyclic cofibrations, natural in $X$, between the colimit of the diagram
$$[1]\fwedge\Sigma X\xleftarrow{\triangledown} \Sigma (X\otimes\{0\})\hookrightarrow \Sigma(X\otimes[1])\hookleftarrow \Sigma (X\otimes\{1\})\xrightarrow{\triangledown} \Sigma X\fwedge[1]$$
and $(\Sigma X)\otimes [1]$.
\end{theorem}

\begin{construction}

Let $C$ be the following colimit:
\[\begin{tikzcd}
	{[3]\times\{0\}\coprod [3]\times\{1\}} & {[3]\times[1]} \\
	{[1]\coprod[1]} & {C.}
	\arrow["{s^0s^0\coprod s^2s^3}"', from=1-1, to=2-1]
	\arrow[""{name=0, anchor=center, inner sep=0}, from=1-1, to=1-2]
	\arrow[from=1-2, to=2-2]
	\arrow[from=2-1, to=2-2]
	\arrow["\lrcorner"{anchor=center, pos=0.125, rotate=180}, draw=none, from=2-2, to=0]
\end{tikzcd}\]

We define several marked simplicial sets whose underlying simplicial sets are sub objects of C: 
\[\begin{tikzcd}
	{} & 00 & 01 && 00 & 01 \\
	& 10 & 11 && 20 & 21 \\
	& 20 & 21 && 00 & 01 \\
	& 30 & 31 & {} & 30 & 31
	\arrow[from=1-3, to=2-3]
	\arrow["{\large{A_1:=~~~~~~}}"', Rightarrow, no head, from=2-2, to=3-2]
	\arrow[Rightarrow, no head, from=2-3, to=3-3]
	\arrow["{\large{A_0:=~~~~~~}}"', Rightarrow, no head, from=1-2, to=2-2]
	\arrow[from=1-2, to=1-3]
	\arrow[from=3-2, to=3-3]
	\arrow[from=2-2, to=2-3]
	\arrow[""{name=0, anchor=center, inner sep=0}, from=1-2, to=2-3]
	\arrow[""{name=1, anchor=center, inner sep=0}, from=2-2, to=3-3]
	\arrow["{\large{A_3:=~~~~~~}}"', Rightarrow, no head, from=1-5, to=2-5]
	\arrow[from=1-6, to=2-6]
	\arrow[from=2-5, to=2-6]
	\arrow[""{name=2, anchor=center, inner sep=0}, from=1-5, to=2-6]
	\arrow[from=4-2, to=4-3]
	\arrow["{\large{A_4:=~~~~~~}}"', from=3-5, to=4-5]
	\arrow[from=4-5, to=4-6]
	\arrow[from=3-6, to=4-6]
	\arrow[from=3-5, to=3-6]
	\arrow[""{name=3, anchor=center, inner sep=0}, from=3-5, to=4-6]
	\arrow[from=1-5, to=1-6]
	\arrow["{\large{A_2:=~~~~~~}}"', from=3-2, to=4-2]
	\arrow[""{name=4, anchor=center, inner sep=0}, from=3-2, to=4-3]
	\arrow[Rightarrow, no head, from=3-3, to=4-3]
	\arrow["\sim"{description}, Rightarrow, draw=none, from=0, to=1-3]
	\arrow["\sim"{description}, Rightarrow, draw=none, from=1, to=2-3]
	\arrow["\sim"{description}, Rightarrow, draw=none, from=0, to=2-2]
	\arrow["\sim"{description}, Rightarrow, draw=none, from=2, to=1-6]
	\arrow[shorten <=2pt, Rightarrow, from=2, to=2-5]
	\arrow["\sim"{description}, Rightarrow, draw=none, from=3, to=3-6]
	\arrow[shorten <=2pt, Rightarrow, from=3, to=4-5]
	\arrow[shorten <=2pt, Rightarrow, from=1, to=3-2]
	\arrow["\sim"{description}, Rightarrow, draw=none, from=4, to=4-2]
	\arrow["\sim"{description}, Rightarrow, draw=none, from=4, to=3-3]
\end{tikzcd}\]
where arrows labeled by $=$ are degenerate and simplicies labeled by $\sim$ are thin.

Let $B_0$ be the sub object corresponding to the image of $[0,1,2]\times[0,1]$ where the marking includes all cells of dimension $\leq 2$, except $[10,20,21]$ and $[00,20,21]$.

Let $B_1$ be the sub object corresponding to the image of $[0,2,3]\times[0,1]$ where the marking includes all cells of dimension $\leq 2$, except $[00,20,21]$, $[00,30,31]$ and $[00,20,31]$.

Let $B$ be the reunion of $[0,1,2]\times[0,1]$ and $[0,2,3]\times[0,1]$ where the marking is the reunion of $B_0$ and $B_1$.
\end{construction}

\begin{lemma}
Morphisms $A_0\cup A_1\to B_0$ and $A_3\to B_0$ are acyclic cofibrations. 
\end{lemma}
\begin{proof}
The cofibration $A_0\cup A_1\to B_0$ fits in the following pushout square:
\[\begin{tikzcd}
	{\Lambda^{1}[2]\otimes [1]\cup[2]_t\otimes \partial[1]} & {A_1\cup A_2} \\
	{[2]_t\otimes [1]} & {B_0}
	\arrow[""{name=0, anchor=center, inner sep=0}, from=1-1, to=1-2]
	\arrow[from=1-1, to=2-1]
	\arrow[from=1-2, to=2-2]
	\arrow["{[0,1,2]\times[0,1]}"', from=2-1, to=2-2]
	\arrow["\lrcorner"{anchor=center, pos=0.125, rotate=180}, draw=none, from=2-2, to=0]
\end{tikzcd}\]

The cofibration $A_3\to B_0$ is a sequence of inclusions:
$$A_3=:(D_0,M_0)\subset (D_1,M_1)\subset (D_2,M_2)\subset(D_3,M_3)\subset(D_4,M_4)\subset (D_5,M_5)\subset (D_6,M_6):= B_0,$$ where 

\begin{itemize}[leftmargin=* ,parsep=0cm,itemsep=0cm,topsep=0cm]
\item $D_1 = D_0\cup [00,{01},11]$ ;
\item $D_2 = D_1\cup [ {00},10,11]$ ;
\item $D_2 = D_1\cup [ {00},10,21]$ ;
\item $D_4 = D_3\cup [00, {01},11,21]$; 
\item $D_5 = D_4\cup [ {00},10,11,21]$; 
\item $D_6 = D_5\cup [ {00},10,20,21]$; 
\end{itemize} and
\begin{itemize}[leftmargin=* ,parsep=0cm,itemsep=0cm,topsep=0cm]
\item $(D_0,M_0)\to (D_1,M_1)$ is a pushout of $\Lambda^1[2]\to [2]^1$;
\item $(D_1,M_1)\to (D_2,M_2)$ is a pushout of $\Lambda^0[2]\to [2]^0$;
\item $(D_2,M_2)\to (D_3,M_3)$ is a pushout of $\Lambda^0[2]\to [2]^0$;
\item $(D_3,M_3)\to (D_4,M_4)$ is a pushout of $\Lambda^1[3]\to [3]^1$;
\item $(D_4,M_4)\to (D_5,M_5)$ is a pushout of $\Lambda^0[3]\to [3]^0$;
\item $(D_5,M_5)\to (D_6,M_6)$ is a pushout of $\Lambda^0[3]\to [3]^0$.
\end{itemize}
\end{proof}

\begin{lemma}
Morphisms $A_2\cup A_3\to B_1$ and $A_4\to B_1$ are acyclic cofibrations. 
\end{lemma}
\begin{proof}
The cofibration $A_2\cup A_3\to B_1$ fits in the pushout square:
\[\begin{tikzcd}
	{\Lambda^{1}[2]\otimes [1]\cup [2]_t\otimes \partial[1]} & {A_2\cup A_3} \\
	{[2]_t\otimes [1]} & {B_1}
	\arrow[from=1-1, to=1-2]
	\arrow["{[0,2,3]\times[0,1]}"', from=2-1, to=2-2]
	\arrow[from=1-1, to=2-1]
	\arrow[from=1-2, to=2-2]
\end{tikzcd}\]
The cofibration $A_4\to B_1$ is a sequence of inclusions:
$$A_4=:(D_0,M_0)\subset (D_1,M_1)\subset (D_2,M_2)\subset(D_3,M_3)\subset(D_4,M_4)\subset (D_5,M_5)\subset (D_6,M_6):= B_1$$ where 
\begin{itemize}[leftmargin=* ,parsep=0cm,itemsep=0cm,topsep=0cm]
\item $D_1 = D_0\cup [00,21, {31}]$ ;
\item $D_2 = D_1\cup [20, {30},31]$ ;
\item $D_3 = D_2\cup [20,21, {31}]$;
\item $D_4 = D_3\cup [00,01,21, {31}]$;
\item $D_5 = D_4\cup [00,20, {30},31]$ ;
\item $D_6 = D_5\cup [00,20,21, {31}]$ ;
\end{itemize}
and
\begin{itemize}[leftmargin=* ,parsep=0cm,itemsep=0cm,topsep=0cm]
\item $(D_0,M_0)\to (D_1,M_1)$ is a pushout of $\Lambda^2[2]\to [2]^2$;
\item $(D_1,M_1)\to (D_2,M_2)$ is a pushout of $\Lambda^1[2]\to [2]^1$;
\item $(D_2,M_2)\to (D_3,M_3)$ is a pushout of $\Lambda^2[2]\to [2]^2$;
\item $(D_3,M_3)\to (D_4,M_4)$ is a pushout of $\Lambda^3[3]\to [3]^3$;
\item $(D_4,M_4)\to (D_5,M_5)$ is a pushout of $\Lambda^2[3]\to [3]^2$;
\item $(D_5,M_5)\to (D_6,M_6)$ is a pushout of $\Lambda^3[3]\to [3]^3$.
\end{itemize}
\end{proof}

\begin{lemma}
\label{lemma:formula for gray 0}
The maps $A_0\cup A_1\cup A_2\to B$ and $A_4\to B$ are acyclic cofibrations. 
\end{lemma}
\begin{proof}
This is a direct consequence of the last two lemmas.
\end{proof}

\begin{construction}
The marked simplicial set
$\overline{X\otimes B}$ is the pushout:
\[\begin{tikzcd}
	{X\otimes([00,01]\coprod [30,31])} & {X\otimes B} \\
	{[00,01]\coprod [30,31]} & {\overline{X\otimes B}.}
	\arrow[from=1-1, to=2-1]
	\arrow[from=2-1, to=2-2]
	\arrow[from=1-2, to=2-2]
	\arrow[""{name=0, anchor=center, inner sep=0}, from=1-1, to=1-2]
	\arrow["\lrcorner"{anchor=center, pos=0.125, rotate=180}, draw=none, from=2-2, to=0]
\end{tikzcd}\]

Let $\overline{X\otimes A_i}$ and $\overline{X\otimes B_i}$ be the sub-objects of $\overline{X\otimes B}$ corresponding to image of ${X\otimes A_i}$ and $	{X\otimes B_i}$.
\end{construction}

\begin{lemma}
\label{lemma:formula for gray 1}
The inclusion 
$	\overline{X\otimes A_{0}}\cup \overline{X\otimes A_{1}}\cup \overline{X\otimes A_{2}}\to	\overline{X\otimes B}$ and 
$\overline{X\otimes A_{4}}\to \overline{X\otimes B}$ are acyclic cofibrations.
\end{lemma}
\begin{proof}
Remark that we have cocartesian squares
\[\begin{tikzcd}
	{X\otimes([00,01]\coprod [30,31])} & {{X\otimes A_{0}}\cup {X\otimes A_{1}}\cup {X\otimes A_{2}}} & {{X\otimes B}} \\
	{[00,01]\coprod [30,31]} & {\overline{X\otimes A_{0}}\cup \overline{X\otimes A_{1}}\cup \overline{X\otimes A_{2}}} & {\overline{X\otimes B}}
	\arrow[""{name=0, anchor=center, inner sep=0}, from=1-2, to=1-3]
	\arrow[from=1-1, to=2-1]
	\arrow[from=2-1, to=2-2]
	\arrow[""{name=1, anchor=center, inner sep=0}, from=1-1, to=1-2]
	\arrow[from=1-2, to=2-2]
	\arrow[from=2-2, to=2-3]
	\arrow[from=1-3, to=2-3]
	\arrow["\lrcorner"{anchor=center, pos=0.125, rotate=180}, draw=none, from=2-2, to=1]
	\arrow["\lrcorner"{anchor=center, pos=0.125, rotate=180}, draw=none, from=2-3, to=0]
\end{tikzcd}\]
and 
\[\begin{tikzcd}
	{X\otimes([00,01]\coprod [30,31])} & {{X\otimes A_{4}}} & {{X\otimes B}} \\
	{[00,01]\coprod [30,31]} & {\overline{X\otimes A_{4}}} & {\overline{X\otimes B}}
	\arrow[""{name=0, anchor=center, inner sep=0}, from=1-2, to=1-3]
	\arrow[from=1-1, to=2-1]
	\arrow[from=2-1, to=2-2]
	\arrow[""{name=1, anchor=center, inner sep=0}, from=1-1, to=1-2]
	\arrow[from=1-2, to=2-2]
	\arrow[from=2-2, to=2-3]
	\arrow[from=1-3, to=2-3]
	\arrow["\lrcorner"{anchor=center, pos=0.125, rotate=180}, draw=none, from=2-2, to=1]
	\arrow["\lrcorner"{anchor=center, pos=0.125, rotate=180}, draw=none, from=2-3, to=0]
\end{tikzcd}\]
The result then follows from lemma \ref{lemma:formula for gray 0}.
\end{proof}
\begin{lemma}
\label{lemma:formula for gray 2}
The morphisms 
$\overline{X\otimes A_0} \to [1]\fwedge \Sigma X$ and $\overline{X\otimes A_2} \to \Sigma X\fwedge [1],$
induced by the morphism $A_0\to [00,01,11]_t$ and $A_2\to [20,30,31]_t$, are acyclic cofibrations. 
\end{lemma}
\begin{proof}
We have cocartesian squares
\[\begin{tikzcd}
	{X\otimes ([00,01]\coprod \{11\})} & {X\otimes [00,01]\coprod_{X\otimes[01]} X\otimes[01,11]} & {X\otimes A_0} \\
	{[00,01]\coprod \{11\}} & {[1]\coprod_{[0]}\Sigma X} & {\overline{X\otimes A_0}}
	\arrow[from=1-1, to=2-1]
	\arrow[""{name=0, anchor=center, inner sep=0}, "\sim", from=1-2, to=1-3]
	\arrow[""{name=1, anchor=center, inner sep=0}, from=1-1, to=1-2]
	\arrow["\sim", from=2-2, to=2-3]
	\arrow[from=1-2, to=2-2]
	\arrow[from=1-3, to=2-3]
	\arrow[from=2-1, to=2-2]
	\arrow["\lrcorner"{anchor=center, pos=0.125, rotate=180}, draw=none, from=2-2, to=1]
	\arrow["\lrcorner"{anchor=center, pos=0.125, rotate=180}, draw=none, from=2-3, to=0]
\end{tikzcd}\]
That shows that $[1]\coprod_{[0]} \Sigma X \to \overline{X\otimes A_0}$ is an acyclic cofibration. We then have a commutative diagram: 
\[\begin{tikzcd}
	{[1]\coprod_{[0]}\Sigma X} & {\overline{X\otimes A_0}} & {[1]\fwedge\Sigma X}
	\arrow["\sim", from=1-1, to=1-2]
	\arrow[from=1-2, to=1-3]
	\arrow["\sim", curve={height=-24pt}, from=1-1, to=1-3]
\end{tikzcd}\]
and by two out of three, this shows that $\overline{X\otimes A_0} \to [1]\fwedge \Sigma X$ is an acyclic cofibration. 
We proceed similarly for the second morphism. 
\end{proof}

\begin{lemma}
\label{lemma:formula for gray 3}
Marked simplicial sets $\overline{X\otimes A_1}$ and $\overline{X\otimes A_4}$ are respectively equal to $\Sigma (X\otimes [1])$ and $(\Sigma X)\otimes [1]$.
\end{lemma}
\begin{proof}
This is true by the definition of these objects.
\end{proof} 

\begin{proof}[Proof of theorem \ref{theo:interval_first_formula}]
According to lemma \ref{lemma:formula for gray 3} we have a cocartesian square
\[\begin{tikzcd}
	{\overline{X\otimes A_{0}}\coprod\overline{X\otimes A_{2}}} & {\overline{X\otimes A_{0}}\cup \overline{X\otimes A_{1}}\cup \overline{X\otimes A_{2}}} \\
	{[1]\fwedge\Sigma X\coprod \Sigma X\fwedge[1]} & {[1]\fwedge\Sigma X\coprod_{\Sigma (X\otimes\{0\})} \Sigma(X\otimes[1])\coprod_{\Sigma (X\otimes\{1\})} \Sigma X\fwedge[1]}
	\arrow[from=1-1, to=2-1]
	\arrow[from=1-1, to=1-2]
	\arrow[from=1-2, to=2-2]
	\arrow[from=2-1, to=2-2]
\end{tikzcd}\]
The left vertical morphism is a weak equivalence according to lemma \ref{lemma:formula for gray 2}, and the horizontal morphisms are cofibrations. By left properness, the right vertical morphism is a weak equivalence. Combined with lemmas \ref{lemma:formula for gray 1} and \ref{lemma:formula for gray 3}, this provides a zigzag of weak equivalences between 
$
[1]\fwedge\Sigma X\coprod_{\Sigma (X\otimes\{0\})} \Sigma(X\otimes[1])\coprod_{\Sigma (X\otimes\{1\})} \Sigma X\fwedge[1]$
 and $(\Sigma X)\otimes[1].$
\end{proof}

\subsection{Formulas for the Gray cone and the Gray $\circ$-cone}
\begin{theorem}
\label{theo:cyl_formula}
There is a zigzag of acyclic cofibrations, natural in $X$, between the colimit of the diagram 
$$ \Sigma X\fwedge [1]\leftarrow \Sigma X\to \Sigma([0]\costar X)$$
and $\Sigma X \star[0]$.

There is a zigzag of acyclic cofibrations, natural in $X$, between the colimit of the diagram 
$$\Sigma(X\star[0]) \leftarrow \Sigma X\to [1]\fwedge\Sigma X$$
and $[0]\costar \Sigma X$.
\end{theorem}

\begin{proof}
We consider the diagram:
\[\begin{tikzcd}
	{[1]} & {[1]\coprod_{[0]}\Sigma X} & {\Sigma X\fwedge[1]\coprod_{\Sigma X} \Sigma( X\otimes[1]) \coprod_{\Sigma X}[1]\fwedge\Sigma X} \\
	{[1]} & {[1]\fwedge \Sigma X} & {\Sigma X\fwedge[1]\coprod_{\Sigma X} \Sigma( X\otimes[1]) \coprod_{\Sigma X}[1]\fwedge\Sigma X}
	\arrow["id", from=1-3, to=2-3]
	\arrow[from=1-2, to=1-1]
	\arrow[from=2-2, to=2-1]
	\arrow[from=1-2, to=1-3]
	\arrow[from=2-2, to=2-3]
	\arrow["\sim"', from=1-2, to=2-2]
	\arrow["id"', from=1-1, to=2-1]
\end{tikzcd}\]
All vertical morphisms are weak equivalences.
We denote by $A$ the colimit of the first line. The theorem \ref{theo:interval_first_formula} implies that there is a zigzag of acyclic cofibrations between $A$ and $X\diamond [0]$. Colimits of the two lines are homotopy colimits, and the comparison morphism is then an acyclic cofibration. 
We then have a zigzag of acyclic cofibrations: 
$$
X\star [0]\leftarrow X\diamond[0] \leftrightsquigarrow A\to \Sigma X\fwedge [1]\coprod_{\Sigma X} \Sigma([0]\costar X)
$$

The second assertion is demonstrated similarly.
\end{proof}

\begin{cor}
\label{cor:star and zigzag}
Let $f:C\to D$ be a fibration between complicial sets, and $K\to L$ a cofibration. It $f$
has the right lifting property against $$\Sigma( [0]\costar K\cup \emptyset \star L )\to \Sigma([0]\costar L),$$ then $f$
 has the right lifting property against $$(\Sigma K)\star [0]\cup (\Sigma L)\star \emptyset \to \Sigma K\star [0].$$
 
If $f$ has the right lifting property against $\Sigma [1]\to \Sigma[1]_t$, then $f$ has the right lifting property against
$$[1]_t\star\emptyset \cup[1]\star [0] \to [1]_t\star [0]$$
\end{cor}
\begin{proof}
Suppose that $f$ fulfills the condition. The class of cofibration having the right lifting property against $f$ is closed by pushouts and, according to \ref{prop:lifting_property_zigzag_of_acyclic_cofibration}, by zigzag of acyclic cofibration. The morphism 
$$\alpha:\Sigma L\fwedge [1]\coprod\limits_{\Sigma L} \Sigma([0]\costar K\coprod\limits_{\emptyset \star K}\emptyset\star L )\to
 \Sigma L\fwedge [1]\coprod\limits_{\Sigma L} \Sigma([0]\costar L)$$ is then in this class. 
Remark that we have a cocartesian square
\[\begin{tikzcd}
	{\Sigma L \cup[1]\coprod\limits_{\Sigma K \cup[1]}\Sigma K\fwedge [1]} & {\Sigma L \cup[1]\coprod\limits_{\Sigma K \cup[1]}\Sigma K\fwedge [1]\coprod\limits_{\Sigma L} \Sigma([0]\costar K)} \\
	{\Sigma L\fwedge [1]} & {\Sigma L\fwedge [1]\coprod\limits_{\Sigma L} \Sigma([0]\costar K\coprod\limits_{\emptyset \star K}\emptyset\star L )}
	\arrow[from=1-1, to=1-2]
	\arrow[from=1-1, to=2-1]
	\arrow[from=1-2, to=2-2]
	\arrow[from=2-1, to=2-2]
\end{tikzcd}\]
where the left vertical morphism, and so also the right vertical morphism, is an acyclic cofibration. This induces a zigzag of acyclic cofibration between $\alpha$ and $\beta$ where $\beta$ is 
$$\Sigma L \cup[1]\coprod\limits_{\Sigma K \cup[1]}\Sigma K\fwedge [1]\coprod\limits_{\Sigma L} \Sigma([0]\costar K)\to 
 \Sigma L\fwedge [1]\coprod\limits_{\Sigma L} \Sigma([0]\costar L)$$
Eventually, the theorem \ref{theo:cyl_formula} induces a zigzag of acyclic cofibration between $\beta$ and 
$(\Sigma K)\star [0]\cup (\Sigma L)\star \emptyset \to \Sigma K\star [0]$ which concludes the proof of the first assertion.

For the second assertion, remark that $[1]_t\star [0]$ is $\tau^i_1([1]_t\star\emptyset \cup [1]\star [0])$. As $\tau^i_1$ is a left Quillen functor, 
the theorem \ref{theo:cyl_formula} induces a zigzag of acyclic cofibration between $[1]_t\star\emptyset \cup[1]\star [0] \to [1]_t\star [0]$ and 
$$[1]_t\fwedge [1]\coprod_{[1]}\Sigma [1]\to [1]_t\fwedge [1]\coprod_{[1]}\Sigma [1]_t.$$
As this cofibration is a pushout of $\Sigma [1]\to \Sigma [1]_t$, this concludes the proof.
\end{proof}

\begin{cor}
\label{cor:costar and zigzag}
Let $f:C\to D$ be a fibration between complicial sets, and $K\to L$ a cofibration. It $f$
has the right lifting property against
$$\Sigma (L\star \emptyset \cup K \star [0])\to \Sigma (L\star [0]),$$
then $f$ has the right lifting property against 
$$ [0]\costar \Sigma K \cup \emptyset \star \Sigma L \to [0]\costar \Sigma L.$$

If $f$ has the right lifting property against $\Sigma [1]\to \Sigma[1]_t$, then $f$ has the right lifting property against
$$[0]\costar [1]\cup \emptyset \star [1]_t\to [0]\costar [1]_t$$
\end{cor}
\begin{proof}
The proof is similar to the one of corollary \ref{cor:star and zigzag}.

\end{proof}

\section{Globular equivalences}
\label{section:Globular equivalences}

\subsection{Homotopy categories}

\p
The \wcsnotionsym{$n$-globe}{(da@$\Db_n$}{globe@$n$-globe}{for marked simplicial sets} is the marked simplicial set $\Db_n:=\Sigma^n [0]$. We then have 
$\Db_0:=[0]$ and $\Db_{n+1}:= \Sigma \Db_n$.
This defines a globular object in $\mSset$:
\[\begin{tikzcd}
	{\Db_0} & {\Db_1} & {\Db_2} & {...}
	\arrow["{i_0^+}", shift left=2, from=1-1, to=1-2]
	\arrow["{i_1^+}", shift left=2, from=1-2, to=1-3]
	\arrow["{i_3^+}", shift left=2, from=1-3, to=1-4]
	\arrow["{i_0^-}"', shift right=2, from=1-1, to=1-2]
	\arrow["{i_1^-}"', shift right=2, from=1-2, to=1-3]
	\arrow["{i_3^-}"', shift right=2, from=1-3, to=1-4]
\end{tikzcd}\]
and we have equalities:
$$i_{n+1}^- i^+_n=i^+_{n+1} i^-_n~~~~i^+_{n+1} i^-_n=i^+_{n+1} i^+_n.$$
 We also set $(\Db_n)_t:= \tau^i_{n-1}(\Db_n)$ for $n>0$ and $\partial\Db_n:=\Sigma^n \emptyset$. 
 We then have a canonical inclusions 
 $$\partial \Db_0\to \Db_0$$ and
 for any $n>0$, we have canonical inclusions 
 $$\partial \Db_n\to \Db_n\to (\Db_n)_t.$$

Let $C$ be a complicial set. A \wcsnotion{$n$-cell}{cell@$n$-cell}{for marked simplicial sets} $a$ of $C$ is a morphism $a:\Db_n\to C$. If $n$ is non null, the \textit{source} of $a$ (resp. the \textit{target} of $a$)
is the $(n-1)$-cell $a\circ i^-_{n-1}$ (resp. $a\circ i^+_{n-1}$). The cell $a$ is thin if the corresponding morphism $\Db_n\to C$ factorizes via $(\Db_n)_t$.

\p From now on, and until the end of this section, we fix a complicial set $C$. All considered cells are cells of $C$.

Let $n$ be a non null integer, and $a,b$ two $n$-cells.
Cells $a$ and $b$ are \textit{parallel} if they share the same source and the same target. They are \textit{composable} if the source of $a$ is the target of $b$.

Let $a$ and $b$ be two parallel cells. The cell $a$ is \wcnotion{equivalent}{equivalent $n$-cells} to the cell $b$ if there exists a thin $(n+1)$-cell $d:a\to b$, or equivalently, if there exists a homotopy $\Db_n\times [1]_t$ between $a$ and $b$, and constant on $\partial \Db_n\times [1]_t$. This relation is denoted by $\sim$.

\begin{lemma}
The relation $\sim$ is reflexive, symmetric and transitive.
\end{lemma}
\begin{proof}
This comes from usual properties of fibrant objects.
\end{proof}

\begin{lemma}
Let $a$, $b$ be two equivalent cells. If $a$ is thin, so is $b$.
\end{lemma}
\begin{proof}
As $\{0\}\to [1]_t$ is a weak equivalence, so is $\Db_n\times [1]_t\cup (\Db_n)_t\times \{0\}\to (\Db_n)_t\times [1]_t$. As $C$ is fibrant, this directly implies the result. 
\end{proof}

\begin{construction}
\label{cons:composition_homotopy_category}
Let $a,b$ be two composable $n$-cells . A composition of ${a}$ and ${b}$ is a $n$-cell $a\circ b$ that fits in a diagram:
\[\begin{tikzcd}
	{\Db_n\coprod_{\Db_{n-1}}\Db_n} \\
	{\Sigma^{n-1}([2]_t)} & C \\
	{\Db_{n}}
	\arrow[from=1-1, to=2-1]
	\arrow["{a\coprod b}", from=1-1, to=2-2]
	\arrow["{a\circ b}"', from=3-1, to=2-2]
	\arrow[from=3-1, to=2-1]
	\arrow[from=2-1, to=2-2]
\end{tikzcd}\]
As $C$ is a fibrant object, if $(a\circ b)'$ is any other composition, $(a\circ b)'\sim a\circ b$.
\end{construction}

\begin{lemma}
\label{lemma:associativity_of_composition_in_homotopy_category}
Let $a,b,c$ be three composable cells. There exists compositions such that $(a\circ b)\circ c = a\circ (b\circ c)$.
\end{lemma}
\begin{proof}
Let $M$ be the marking on $[3]$ that includes all simplices of dimension superior or equal to $2$. We define $\Sp_{[3]}$ as the simplicial set $[1]\coprod_{[0]} [1]\coprod_{[0]} [1]$. Remark that the cofibration $\Sp_{[3]}\to ([3],M)$ is acyclic. We then have a lift $f$ in the following diagram
\[\begin{tikzcd}
	{\Sigma^{n-1}\Sp_{[3]}} & C \\
	{\Sigma^{n-1}([3],M)}
	\arrow[from=1-1, to=2-1]
	\arrow["{a\coprod b\coprod c}", from=1-1, to=1-2]
	\arrow["f"', dashed, from=2-1, to=1-2]
\end{tikzcd}\]
The morphism $f$ provides all the desired compositions.
\end{proof}

\begin{definition}
We define the category $\pi_0(C)$ whose objects are $0$-cells $x:s\to t$, and edges between $x,y:s\to t$ are equivalence classes of the set of $1$-cells $f:x\to y$ quotiented by the relation $\sim$. The composition is given by construction \ref{cons:composition_homotopy_category} which is associative according to lemma \ref{lemma:associativity_of_composition_in_homotopy_category}.

 Let $n>0$ be an integer, and $s,t$ two parallel $(n-1)$-cells. We define the category $\pi_n(s,t,C)$ whose objects are $n$-cells $x:s\to t$, and edges between $x,y:s\to t$ are equivalence classes of the set of $(n+1)$-cells $f:x\to y$ quotiented by the relation $\sim$. 
The composition is given by construction \ref{cons:composition_homotopy_category} which is associative according to lemma \ref{lemma:associativity_of_composition_in_homotopy_category}.
\end{definition}

\begin{prop}
\label{prop:in_the_homotopy_category_thin_is_iso}
Let $x,y:s\to t$ be two parallel $n$-cells, and $f:x\to y$ a $n+1$-cell. The cell $f$ is thin if and only if $[f]:x\to y$ is an isomorphism in $\pi_n(s,t,C)$.
\end{prop}
\begin{proof}
Suppose first that $f$ is thin. There are liftings in the following diagrams:
\[\begin{tikzcd}
	{\Sigma^{n}\Lambda^0[2]} & C & {\Sigma^{n}\Lambda^2[2]} & C \\
	{\Sigma^{n}[2]^0} && {\Sigma^{n}[2]^0}
	\arrow["{f\amalg id}", from=1-1, to=1-2]
	\arrow[from=1-1, to=2-1]
	\arrow["h"', dotted, from=2-1, to=1-2]
	\arrow["{id\amalg f}", from=1-3, to=1-4]
	\arrow["k"', dotted, from=2-3, to=1-4]
	\arrow[from=1-3, to=2-3]
\end{tikzcd}\]
Let $g:y\to z$ be the restriction of $h$ to $\Sigma^{n}[1,2]$ and $l:y\to z$ be the restriction of $k$ to $\Sigma^{n}[0,1]$. We then have $[f][g]= id$, and $[h][f]=id$, and $[f]$ is then an isomorphism. 

For the other direction, suppose that $[f]$ is an isomorphism. Let $M$ be the marking on $[3]$ that includes all simplices of dimension superior or equal to $2$. As $\Sp_{[3]}\to ([3],M)$ is a weak equivalence, there is a lifting in the following diagram:
\[\begin{tikzcd}
	{\Sigma^n([0,1]\coprod_{\{1\}}[1,2]\coprod_{\{2\}}[2,3])} && C \\
	{\Sigma^n([3],M)}
	\arrow["{f^{-1}\amalg f\amalg f^{-1}}", from=1-1, to=1-3]
	\arrow[from=1-1, to=2-1]
	\arrow["h"', dotted, from=2-1, to=1-3]
\end{tikzcd}\]
Now $h(\Sigma^n[0,3])$ and $h(\Sigma^n[0,2])$ are respectively compositions of $(f,f^{-1})$ and $(f^{-1},f)$. Hypotheses imply that these compositions are equivalent to identities, and so are thin. The morphism then lifts to $\Sigma^n [3]^{eq}$. The object $C$ being fibrant, $h$ lifts to $\Sigma^n [3]^\sharp$, and $f$ is then thin.
\end{proof}

\begin{lemma}
\label{lemma:homotopycategory_are_idenpendant_of}
Let $s,t$ and $s',t'$ be two pairs of parallel cells, and $\psi:\partial \Db_n\times [1]_t\to C$ a homotopy between $s\cup t:\partial \Db_n\to C$ and $s'\cup t':\partial \Db_n\to C$. Then 
$$\pi_n(s,t,C)\cong \pi_n(s',t',C)$$
\end{lemma}
\begin{proof}
For each $x:s\to t$, there exists a lifting $h_x$ in the following diagram:
\[\begin{tikzcd}
	{\Db_n\times\{0\}\cup\partial\Db_n\times [1]_t} & C \\
	{\Db_n\times [1]_t}
	\arrow[from=1-1, to=2-1]
	\arrow["x\cup\psi", from=1-1, to=1-2]
	\arrow["h"', dotted, from=2-1, to=1-2]
\end{tikzcd}\]
and we define $F(x)$ as the restriction of $h_x$ to $\Db_n\times \{1\}$. For a $(n+1)$-cell $f:x\to y$, there exists a lifting $h_f$ in the following diagram:
\[\begin{tikzcd}
	{\Db_{n+1}\times \{0\}\cup \partial\Db_{n+1}\times [1]_t} & C \\
	{\Db_{n+1}\times [1]_t}
	\arrow["{f\cup h_x\cup h_y}", from=1-1, to=1-2]
	\arrow[from=1-1, to=2-1]
	\arrow["{h_f}"', from=2-1, to=1-2]
\end{tikzcd}\]
and we define $F(f)$ as the restriction of $h_f$ to $\Db_{n+1}\times \{1\}$. Furthermore, the unicity up to homotopy of lifting implies that $[F(f)]$ is independent of the choice of the lifting, and that $f\sim g$ implies $[F(f)]=[F(g)]$.
If $g:y\to z$ is an other morphism, and $\psi:\Sigma^n[2]_t \to C$ corresponds to the composition of $f$ and $g$, 
there is a lift in the following diagram:
\[\begin{tikzcd}
	{\Sigma^n [2]_t\cup (\Sigma^n\partial[2])\times [1]_t} && C \\
	{\Sigma^n [2]_t\times [1]_t}
	\arrow["{ \phi \cup h_f\cup h_g\cup h_{f\circ g}}", from=1-1, to=1-3]
	\arrow[from=1-1, to=2-1]
	\arrow[dotted, no head, from=2-1, to=1-3]
\end{tikzcd}\]
Restricted to $\Sigma^n [2]_t\times \{1\}$ this shows that $F$ commutes with compositions. We then have defined a functor 
$$F:\pi_n(s,t,C)\to \pi_n(s',t',C).$$

Using exactly the same procedure, where we just invert $0$ and $1$, we define a functor:
$$G:\pi_n(s',t',C)\to \pi_n(s,t,C).$$
Now, we have a lift in the following diagram:
\[\begin{tikzcd}
	{\Db_{n}\times \Lambda^{2}[2]^\sharp\cup\partial\Db_n\times[2]^\sharp} &&& C \\
	{\Db_n\times[2]^\sharp}
	\arrow["{h_x\cup h_{F(x)}\cup\psi(id\times s^0)}", from=1-1, to=1-4]
	\arrow[from=1-1, to=2-1]
	\arrow["{k_x}"', dotted, from=2-1, to=1-4]
\end{tikzcd}\]
The restriction of $k_x$ to $\Db_n\times [0,1]_t$ provides a thin cell $x\to G(F(x))$, which corresponds to an isomorphism in $\pi_n(s,t,C)$ according to proposition \ref{prop:in_the_homotopy_category_thin_is_iso}. If $f:x\to y$ is a $(n+1)$-cell, there is a lifting in the following diagram:
\[\begin{tikzcd}
	{\Db_{n+1}\times \Lambda^{2}[2]^\sharp\cup\partial\Db_{n+1}\times[2]^\sharp} &&& C \\
	{\Db_{n+1}\times[2]^\sharp}
	\arrow["{h_f\cup h_{F(f)}\cup k_x\cup k_y}", from=1-1, to=1-4]
	\arrow["{k_f}"', dotted, from=2-1, to=1-4]
	\arrow[from=1-1, to=2-1]
\end{tikzcd}\]
The restriction of $k_f$ to $\Db_{n+1}\times[0,1]_t$ induces in $\pi_n(s,t,C)$ a commutative diagram:
\[\begin{tikzcd}
	x & GFx \\
	y & GFy.
	\arrow["{[GFf]}", from=1-2, to=2-2]
	\arrow["{[f]}"', from=1-1, to=2-1]
	\arrow[from=2-1, to=2-2]
	\arrow[from=1-1, to=1-2]
\end{tikzcd}\]
We then have an invertible natural transformation $\psi: id\to GF$. Similarly we can construct an other natural transformation $id\to GF$, which shows the desired equivalence of categories.
\end{proof}
\p
Let $a$ be an element of  $\Hom_{ho(\mSset)}(\partial \Db_n,C)$. We define 
\begin{equation}
\label{eq:def of pi a}
\pi_n(a, C) := \pi_n(s,t,C)
\end{equation}
where $s,t$ is a pair of parallel arrows such that $s\cup t$ represents $a$.
The previous proposition shows that this is well defined.

\subsection{A criterion to be a weak equivalence}
\p
A morphism $p:C\to D$ between complicial sets is a \wcnotion{$\Db$-equivalence}{Dequivalence@$\Db$-equivalence} if 
$$\pi_0(C)\to \pi_0(D)$$
is an equivalence of categories, and for any $n>0$ and pair of parallel arrow $s,t$, the induced functor
$$\pi_n(s,t,C)\to \pi_n(ps,pt,D)$$
is an equivalence of categories. 

A \wcnotion{$\Db$-trivial fibration}{Dtrivial fibration@$\Db$-trivial fibration} is a fibration having the right lifting property against $\partial\Db_n\to \Db_n$ and $\Db_n\to (\Db_{n})_t$.

\begin{lemma}
\label{lemma:fibration_are_isofibration}
Let $\alpha\in\{-,+\}$.
The morphism $i^\alpha_{n+1}:\Db_n\to (\Db_{n+1})_t$ is an acyclic cofibration. 
\end{lemma}
\begin{proof}
We have a pushout diagram
\[\begin{tikzcd}
	{\Db_n\times\{\alpha\}\cup\partial\Db_n\times [1]_t} & {\Db_n\times \{\alpha\}} \\
	{\Db_n\times [1]_t} & {(\Db_n)_t}
	\arrow[""{name=0, anchor=center, inner sep=0}, "{id\cup \partial\times s^0}", from=1-1, to=1-2]
	\arrow[from=1-1, to=2-1]
	\arrow[from=2-1, to=2-2]
	\arrow["{i^\alpha_{n+1}}", from=1-2, to=2-2]
	\arrow["\lrcorner"{anchor=center, pos=0.125, rotate=180}, draw=none, from=2-2, to=0]
\end{tikzcd}\]
The left hand morphism being an acyclic cofibration, this concludes the proof.
\end{proof}

\begin{lemma}
\label{lemma:acyclic_cofibration_are_G_equivalence}
Acyclic cofibrations between complicial sets are $\Db$-equivalences.
\end{lemma}
\begin{proof}
Let $i:A\to B$ be an acyclic cofibration. The morphism $i$ admits a retraction $r:B\to A$: 
\[\begin{tikzcd}
	A & A \\
	B.
	\arrow["id", from=1-1, to=1-2]
	\arrow["i"', from=1-1, to=2-1]
	\arrow["r"', from=2-1, to=1-2]
\end{tikzcd}\]
and a homotopy $\psi$ between $id_B$ and $ir$ which is constant on the image of $i$, obtained as the lift in the following diagram:
\[\begin{tikzcd}
	{B\times\{0\}\coprod _{A\times\{0\}}A\times[1]_t} & B \\
	{B\times[1]_t}
	\arrow[from=1-1, to=2-1]
	\arrow["\phi"', dashed, from=2-1, to=1-2]
	\arrow[from=1-1, to=1-2]
\end{tikzcd}\]
Let $n>0$ be an integer, and $s$, $t$ be two $(n-1)$-cells of $C$. The retraction implies that $i_!$ is an injection on morphisms. For any $n$-cell $y:i(s)\to i(t)$ in $B$, the homotopy $\psi$ induces a thin cell $y\to ir(y)$ which corresponds to an isomorphism in $\pi_n(is,it,B)$ according to proposition \ref{prop:in_the_homotopy_category_thin_is_iso}. The functor $i_!$ is then essentially surjective. For any $(n+1)$-cell $f:i(x)\to i(y)$, the homotopy $\psi$ induces an equivalence $[ir(f)]\sim [f]$. The morphism $i_!$ is a surjection on morphisms. All put together, $i_!$ is fully faithfull and essentially surjective, and is then an equivalence. We proceed similarly to show that $i_!:\pi_0(A)\to \pi_0(B)$ is an equivalence.
\end{proof}

\begin{lemma}
\label{lemma:2_out_of_3_for_G_equivalence}
Suppose given a commutative triangle between complicial sets
\[\begin{tikzcd}
	& B \\
	A && C
	\arrow["g"', from=2-1, to=2-3]
	\arrow["f", from=1-2, to=2-3]
	\arrow["i", from=2-1, to=1-2]
\end{tikzcd}\]
If $i$ is an acyclic cofibration, and $g$ is a $\Db$-equivalence, then $f$ is a $\Db$-equivalence.
\end{lemma}
\begin{proof}
Let $s,t$ be any pair of parallel arrows in $B$. There exists a pair of parallel arrows $s',t'$ in $A$ such that $s\cup t$ and $is'\cup it'$ correspond to the same element in $[\partial\Db_n,B]$. We then have a diagram:
\[\begin{tikzcd}
	& { \pi(s,t,B)} & {\pi(fs,ft,C)} \\
	{\pi(s,t,B)} & { \pi(is,it,B)} & {\pi(gs,gt,C).}
	\arrow["\sim", from=2-1, to=2-2]
	\arrow[from=2-2, to=2-3]
	\arrow["\sim", from=1-3, to=2-3]
	\arrow["\sim", from=1-2, to=2-2]
	\arrow[from=1-2, to=1-3]
	\arrow["\sim"', curve={height=18pt}, from=2-1, to=2-3]
\end{tikzcd}\]
where arrows labeled by $\sim$ are isomorphisms according to lemmas \ref{lemma:homotopycategory_are_idenpendant_of} and \ref{lemma:acyclic_cofibration_are_G_equivalence}. By two out of three, this shows that $ \pi(s,t,B)\to \pi(fs,ft,C)$ is an isomorphism, and $f$ is then a $\Db$ equivalence.
\end{proof}

\begin{prop}
\label{prop:caracterisation_of_G_fibration}
Let $p:C\to D$ be a fibration between complicial sets.
The morphism $p$ is a $\Db$-trivial fibration if and only if it is a $\Db$-equivalence.
\end{prop}
\begin{proof}
If $p$ is a $\Db$-trivial fibration, it is obvious that it is a $\Db$-equivalence. For the converse, suppose $p$ is a fibration and a $\Db$-equivalence, and consider a diagram 
\[\begin{tikzcd}
	{\partial\Db_n} & C \\
	{\Db_n} & D
	\arrow[from=1-1, to=2-1]
	\arrow[from=1-1, to=1-2]
	\arrow["x"', from=2-1, to=2-2]
	\arrow["p", from=1-2, to=2-2]
\end{tikzcd}\]
As $p$ is a $\Db$-equivalence this implies that there exists a cell $\overline{x}:\Db_n\to C$ together with a thin $(n+1)$-cell $y:p(\overline{x})\to y$. All this data corresponds to a diagram:
\[\begin{tikzcd}
	{\Db_n} & C \\
	{(\Db_{n+1})_t} & D
	\arrow["p", from=1-2, to=2-2]
	\arrow["{\delta^0_{n+1}}"', from=1-1, to=2-1]
	\arrow["{\bar{x}}", from=1-1, to=1-2]
	\arrow["y"', from=2-1, to=2-2]
\end{tikzcd}\]
The left hand morphism being an acyclic cofibration according to \ref{lemma:fibration_are_isofibration}, this diagram admits a lift $h:(\Db_{n+1})_t\to C$. The restriction of $h$ to $i^+_{n+1}$ provides a lift in the first diagram. Now, we consider a diagram of shape: 
\[\begin{tikzcd}
	{\Db_n} & C \\
	{(\Db_n)_t} & D
	\arrow[from=1-1, to=2-1]
	\arrow["g", from=1-1, to=1-2]
	\arrow[from=2-1, to=2-2]
	\arrow["p", from=1-2, to=2-2]
\end{tikzcd}\]
with $n>1$.
Let $s,t$ be respectively the $(n-1)$-source and the $(n-1)$-target of $g$. Hypotheses imply that $[p(g)]$ is an isomorphism in $\pi_n(s,t,D)$ and because $p$ is a $\Db$-equivalence, so is $[g]$. According to lemma \ref{prop:in_the_homotopy_category_thin_is_iso}, this implies that $g$ is thin. There exists then a lifting in the previous diagram. The case $n=1$ is similar.
 The morphism $f$ is then a $\Db$-trivial fibration.
\end{proof}

\begin{lemma}
\label{lemma:slice_G_fibrations}
Let $p:X\to Y$ be a $\Db$-trivial fibration between complicial sets. Then for any $x\in X_0$, the induced fibrations
$$X_{/x}\to X\times_Y Y_{/p(x)} ~~\mbox{and}~~ X_{x/}\to X\times_Y Y_{p(x)/}$$
are $\Db$-trivial fibrations.
\end{lemma}
\begin{proof}
We define $\mathbb{P}(p,n)$ to be the statement that $p$ has the right lifting property against 
$$ \Db_n\cup \partial\Db_n\star [0]\to \Db_{n+1}\star[0] \mbox{ and }(\Db_n)_t\cup \Db_n\star [0]\to (\Db_{n})_t\star[0]$$
and against
$$[0]\costar \partial \Db_n\cup \Db_n\to [0]\costar \Db_{n+1} \mbox{ and }[0]\star \Db_n\cup (\Db_n)_t\to [0]\costar (\Db_n)_t$$
We then have to show that for any $n$, $\mathbb{P}(p,n)$ holds.

First, it is obvious that each $\Db$-equivalence $p$ satisfies $\mathbb{P}(p,0)$. As $p$ is a fibration, the corollaries \ref{cor:star and zigzag} and \ref{cor:costar and zigzag} then imply that $\mathbb{P}(p,n+1)$ is equivalent to $\mathbb{P}(p(a,b),n)$ for any $a,b\in X_0$, where $p(a,b)$ is the induced morphism: $X(a,b)\to Y(p(a),p(b))$. 

Using the fact that $p(a,b)$ is a $\Db$-trivial fibration as soon as $p$ is, this shows the desired result.
\end{proof}

\begin{lemma}
\label{lemma:G_fibration_right lifting property_against_partial}
 $\Db$-Trivial fibrations between complicial sets have the right lifting property against $\partial[n]\to [n]$.
\end{lemma}
\begin{proof}
Let $C$ be the class of cofibrations having the right lifting property against $\Db$-equivalences. The lemma \ref{lemma:slice_G_fibrations} implies that for any 
 $K\to L$ in $C$, the induced morphism:
$$L\cup K\star[0]\to L\star[0]$$
is in $C$. 
The class $C$ is then closed under Leibniz join. Furthermore, it includes $\partial[1]\to [1]$, and then, by induction, it includes $\partial[n]\to[n]$ for any integer $n$.
\end{proof}

\begin{lemma}
\label{lemma:G_fibration_right lifting property_against_sat}
$\Db$-Trivial fibrations between complicial sets have the right lifting property against $[n]\to [n]_t$.
\end{lemma}
\begin{proof}
Let $p$ be $\Db$-trivial fibrations between complicial sets, and
 $C_{n,p}$ be the set of objects $A$ such that $p$ has the right lifting property against:
$$A\to \tau^i_{n-1}(A).$$
This set is then closed under colimits, and by zigzags of acyclic cofibrations.
Let $k\leq n$ be two integers. We define $\mathbb{P}(k,n,p)$ to be the statement that 
$$ \Sigma [n-k]_{\circ}\star[k-1]~~~\mbox{and}~~~ [k-1]_{\circ}\costar\Sigma [n-k] $$
are in $C_{n+1,p}$.
The statement $\mathbb{P}(0,0,f)$ corresponds to the belonging of $\Db_1$ to $C_{1,p}$, which is obviously true. Suppose that $0<k$ and $\mathbb{P}(k-1,n,p)$. 
According to theorem \ref{theo:cyl_formula}, the object $\Sigma [n-k]_{\circ}\star[k-1]$ is linked by a zigzag of acyclic cofibrations to the colimit of 
$$
(\Sigma [n-k]_{\circ} \fwedge [1])\star [k-2] \leftarrow (\Sigma [n-k]_{\circ})\star [k-2] \to (\Sigma [n-k+1]_{\circ})\star [k-2]
$$
The center object and the left hand object are in $C_{n+1,p}$ because there are invariant under $\tau^i_{n}$, and the 
 right hand object is in $C_{n+1,p}$ by induction hypothesis. The object $\Sigma [n-k]_{\circ}\star[k-1]$ is then in $C_{n+1,p}$.
We demonstrate similarly that $[k-1]_{\circ}\costar\Sigma [n-k]$ is in $C_{n+1,p}$.

This then implies $\mathbb{P}(k,n,p)$. Eventually, $\mathbb{P}(0,n+1,p)$ is equivalent to $\mathbb{P}(n,n,p(a,b))$ for any pair of objects $(a,b)\in X_0$.
The statement $\mathbb{P}(k,n,p)$ is then true for any $k,n$ and $\Db$-trivial fibrations between complicial sets $p$. This implies that $p$ has the right lifting property against $[n]\to [n]_t$.
\end{proof}

\begin{theorem}
\label{theo:f_weak_equivalence_ssi_f_G_equivalence}
Let $p$ be a map between complicial sets. Then $p$ is a weak equivalence if and only if it is a $\Db$-equivalence.
\end{theorem}
\begin{proof}
According to lemmas \ref{lemma:acyclic_cofibration_are_G_equivalence} and \ref{lemma:2_out_of_3_for_G_equivalence} we can restrict ourselves to the case where $p$ is a fibration. If it is a weak equivalence, $p$ is then a trivial fibration and is then a $\Db$-equivalence. Suppose now that $p$ is a $\Db$-equivalence. According to proposition \ref{prop:caracterisation_of_G_fibration}, $p$ is then a $\Db$-trivial fibration. Lemmas \ref{lemma:G_fibration_right lifting property_against_partial} and \ref{lemma:G_fibration_right lifting property_against_sat} imply that $p$ is a trivial fibration.
\end{proof}

\begin{definition}
Let $p:X\to Y$ be a morphism between complicial sets. The morphism $p$ is \snotion{essentially surjective}{for marked simplicial sets} if for any $x\in Y_0$, there exists $\bar{x}\in X_0$ together with a thin cell $\bar{x}\to x$.
The morphism $f$ is \snotion{fully faithful}{for marked simplicial sets} if the induced morphisms: 
$$X(a,b)\to Y(pa,pb)$$
are weak equivalences for any $a,b\in X_0$.
\end{definition}

\begin{cor}
Let $p$ be a map between complicial sets. Then $p$ is a weak equivalence if and only if it is fully faithfull and essentially surjective.
\end{cor}
\begin{proof}
If $p$ is a weak equivalence, it is then fully faithfull and essentially surjective. Conversely, suppose $p$ is fully faithfull and essentially surjective. 
The morphism $\pi_0(X)\to \pi_0(Y)$ is fully faithfull and essentially surjective, and then an equivalence of category. For $(a,b)$ a pair of $0$-cells, we have equalities:
\[\begin{tikzcd}
	{\pi_1(a,b,X)} & {\pi_0(X(a,b))} \\
	{\pi_1(pa,pb,Y)} & {\pi_0(Y(pa,pb)).}
	\arrow["{\pi_0p(a,b)}", from=1-2, to=2-2]
	\arrow["{\pi_1p}"', from=1-1, to=2-1]
	\arrow[Rightarrow, no head, from=2-1, to=2-2]
	\arrow[Rightarrow, no head, from=1-1, to=1-2]
\end{tikzcd}\]
The morphism $\pi_1(a,b,p)$ is then an equivalence of categories. For $(s,t)$ a pair of parallel arrows of dimension $>1$, if we denote by $a$ and $b$ the $0$-source and the $0$-target of $s$ and $t$, we have a diagram:
\[\begin{tikzcd}
	{\pi_n(s,t,X)} & {\pi_{n-1}(s,t,X(a,b))} \\
	{\pi_n(pa,pb,Y)} & {\pi_{n-1}(s,t,Y(pa,pb)).}
	\arrow["{\pi_{n-1}(s,t ,p(a,b))}", from=1-2, to=2-2]
	\arrow["{\pi_np}"', from=1-1, to=2-1]
	\arrow[Rightarrow, no head, from=2-1, to=2-2]
	\arrow[Rightarrow, no head, from=1-1, to=1-2]
\end{tikzcd}\]
The morphism $\pi_n(a,b,p)$ is then an equivalence of categories.
The morphism $p$ is then a $\Db$-equivalence, and according to \ref{theo:f_weak_equivalence_ssi_f_G_equivalence}, a weak equivalence.
\end{proof}

\subsection{A criterion to be a weakly invertible transformation}
\label{section:A criterion to be a weakly invertible transformation}
The purpose of this section is to show the following proposition:
\begin{prop}
\label{prop:criterimu_to_be_an_weak_equivalence}
Let $i:\mSset\to \mSset$ and $j:\mSset\to \mSset$ be two left Quillen functors and $\psi:i\to j$ a natural transformation. If 
$\psi(\Db_n):i (\Db_n) \to j (\Db_n)$ is a weak equivalence for any $n$, then $\psi(X):i(X)\to j(X)$ is a weak equivalence for any $X$.
\end{prop}
For the remaining of this section, we fix two left Quillen functors $i$, $j$ and a natural transformation $\psi:i\to j$ satisfying the previous hypothesis. We denote by $N_i$ and $N_j$ the right adjoints of $i$ and $j$.

\begin{lemma}
\label{lemma:psipartial_is_a_weak_equivalence}
Morphisms $\psi(\partial\Db_n):i(\partial\Db_n)\to j(\partial\Db_n)$ are weak equivalences. 
\end{lemma}
\begin{proof}
We proceed by induction on $n$. The case $n=0$ is trivial. Suppose then the result true at the stage $n-1$. Remark then that $\partial \Db_n$ is the colimit and the homotopy colimit of the span
$$\Db_{n-1}\leftarrow \partial\Db_{n-1}\to \Db_{n-1}$$
As $i$ and $j$ are left Quillen functors, the induction hypothesis implies that $\psi(\partial\Db_n):i(\partial\Db_n)\to j(\partial\Db_n)$ is a weak equivalence.
\end{proof}

\begin{lemma}
\label{lemma:psisat_is_a_weak_equivalence}
Morphisms $\psi((\Db_n)_t):i((\Db_n)_t)\to j((\Db_n)_t)$ are weak equivalences. 
\end{lemma}
\begin{proof}
There is a diagram:
\[\begin{tikzcd}
	{i_!\Db_{n-1}} && {j_!\Db_{n-1}} \\
	{i_!(\Db_n)_t} && {j_!(\Db_n)_t}
	\arrow["{\psi(\Db_n)}", from=1-1, to=1-3]
	\arrow["{i_!(i^-_n)}"', from=1-1, to=2-1]
	\arrow["{j_!(i^-_n)}", from=1-3, to=2-3]
	\arrow["{\psi((\Db_n)_t)}"', from=2-1, to=2-3]
	\arrow["\sim"', draw=none, from=1-1, to=1-3]
	\arrow["\sim", draw=none, from=1-1, to=2-1]
	\arrow["\sim"', draw=none, from=1-3, to=2-3]
\end{tikzcd}\]
By two out of three, this shows that $\psi((\Db_n)_t)$ is a weak equivalence.
\end{proof}

\begin{lemma}
\label{lemma:j*is_a_trivial_fibration}
For any complicial set $Y$, the canonical morphism $N_jY\to N_i Y$ is a weak equivalence.
\end{lemma}
\begin{proof}
Let $Y$ be a complicial set. For any integer $n$, we have by adjunction a bijection
$$\Hom_{ho(\mSset)}(\Db_n, N_jY)\cong \Hom_{ho(\mSset)}(\Db_n, N_iY)$$
and according to lemmas \ref{lemma:psipartial_is_a_weak_equivalence} and \ref{lemma:psisat_is_a_weak_equivalence}, we have bijections
$$\Hom_{ho(\mSset)}(\partial \Db_n, N_jY)\cong \Hom_{ho(\mSset)}(\partial\Db_n, N_iY)$$
$$\Hom_{ho(\mSset)}((\Db_n)_t, N_jY)\cong \Hom_{ho(\mSset)}((\Db_n)_t, N_iY).$$
Let $a$ be an element of $\Hom_{ho(\mSset)}(\partial \Db_n, N_jY)$. We recall that the category $\pi_n(a,N_jY)$ is defined in \ref{eq:def of pi a}. The previous equivalences implies that we have an isomorphism of category
$$\pi_n(a,N_jY)\cong \pi_n(a,N_jY).$$
which concludes the proof according to theorem \ref{theo:f_weak_equivalence_ssi_f_G_equivalence}.
\end{proof}

\begin{proof}[Proof of the proposition \ref{prop:criterimu_to_be_an_weak_equivalence}]
Let $X$ be any marked simplicial set and $Y$ a complicial set. We have equalities:
\[\begin{tikzcd}
	{\Hom_{ho(\mSset)}(j_!X,Y)} & {\Hom_{ho(\mSset)}(X,j^*Y)} \\
	{\Hom_{ho(\mSset)}(i_!X,Y)} & {\Hom_{ho(\mSset)}(X,i^*Y)}
	\arrow[from=1-2, to=2-2]
	\arrow[from=1-1, to=2-1]
	\arrow[Rightarrow, no head, from=1-1, to=1-2]
	\arrow[Rightarrow, no head, from=2-1, to=2-2]
\end{tikzcd}\]
Lemma \ref{lemma:j*is_a_trivial_fibration} implies that the right hand morphism is a bijection, and so is the left hand morphism. 
For any $X$, $\psi(X)$ is then a weak equivalence.
\end{proof}

\subsection{Weak characterization of the identity}
 For the rest of this section, we fix a left Quillen functor $i:\mSset\to \mSset$ such that there exists a zigzag of weakly invertible natural transformations:
$$i(\Db_{\uvar}) \leftrightsquigarrow \Db_{\uvar}.$$

\begin{lemma}
\label{lemma:weak characteroeiation 1}
Let $n$ be any integer, the following natural transformations are pointwise acyclic cofibrations:
$$i\tau^i_n\to \tau^i_{n}i\tau^i_n \leftarrow \tau^i_{n}i.$$
\end{lemma}
\begin{proof}
These are natural transformations between left Quillen functors. The hypothesis implies that they induce weak equivalences on globes of dimension inferior or equal to $n$. Remark that for any $k> n$, as $i_{k-1}^-:\Db_{k-1}\to (\Db_{k})_t$ is an acyclic cofibration and $\tau^i_n$ preserves them, 
$\tau^i_n\Db_{k-1}\to \tau^i_n\Db_{k}$ is an acyclic cofibration. A direct induction implies that $\Db_{n}= \tau^i_n\Db_{n}\to \tau^i_n\Db_{k}$ is an acyclic cofibration.
We then have a commutative diagram: 
\[\begin{tikzcd}
	{i{\tau^i_n}(\Db_k)} & {{\tau^i_n}i {\tau^i_n}(\Db_k)} & {{\tau^i_n}i(\Db_k)} \\
	& {i(\Db_{n})}
	\arrow["\sim"', from=2-2, to=1-1]
	\arrow["\sim"', from=2-2, to=1-2]
	\arrow["\sim", from=2-2, to=1-3]
	\arrow[from=1-1, to=1-2]
	\arrow[from=1-3, to=1-2]
	\arrow[draw=none, from=2-2, to=1-1]
	\arrow[draw=none, from=2-2, to=1-2]
	\arrow[draw=none, from=2-2, to=1-3]
\end{tikzcd}\]
where all morphisms labelled by $\sim$ are weak equivalences.

By two out of three, this implies that theses natural transformations induce weak equivalences on all globes, and proposition \ref{prop:criterimu_to_be_an_weak_equivalence} concludes the proof.
\end{proof}

\begin{prop}
\label{prop:modification_of_the_value_on_thin_representables}
 There exists a zigzag of weakly invertible natural transformations 
$$i\leftrightsquigarrow j$$
where $j$ is a left Quillen functor such that $j([n])=i([n])$ and $j([n]_t)=\tau^i_{n-1}i([n])$, and such that the image of $[n]\to [n]_t$ by $j$ is induced by the canonical morphism $id\to \tau^i_{n-1}(id)$.
\end{prop} 
\begin{proof}
We define $\tilde{i}$ (resp. $j$) to be the colimit preserving functor defined on representables by $\tilde{i}([n]):=i([n])$ and $\tilde{i}:=([n]_t)=\tau^i_{n-1}i([n]_t)$ (resp. $j([n]):=i([n])$ and $j([n]_t):=\tau^i_{n-1}i([n])$). We then have a zigzag of natural transformations 
$$i\xrightarrow{\sim} \tilde{i}\xleftarrow{\sim} j.$$
that are pointwise acyclic cofibrations according to \ref{lemma:weak characteroeiation 1}.
This implies that both $\tilde{i}$ and $j$ are left Quillen functors.
\end{proof}

\p
In the following lemmas, we use the Steiner theory recalled in section \ref{section:Steiner thery}. 
\begin{lemma}
\label{lemma:unicity_of_composition}
Let $m$ be an integer and $X$ and $Y$ be two $\zo$-categories admitting a loop free and atomic basis. We denote by $0$, $1$ and $t$ the three points of $\Sigma X\vee[1]$.
 Let $$f: \Sigma^m ([X,1]\star Y)\to \Sigma^m( ([X,1]\vee[1])\star Y)$$ be a morphism fitting in the following diagram:
\[\begin{tikzcd}
	{\Sigma^m((\{0\}\coprod\{1\})\star Y)} && {\Sigma^m( ([X,1]\vee[1])\star Y)} \\
	{\Sigma^m([X,1]\star Y)} && {\Sigma^m([X,1]\star Y)}
	\arrow["f", from=2-1, to=1-3]
	\arrow["{\Sigma^m(g\star Y)}", from=1-1, to=1-3]
	\arrow[from=1-3, to=2-3]
	\arrow["id"', from=2-1, to=2-3]
	\arrow[from=1-1, to=2-1]
\end{tikzcd}\]
where $g$ sends $0$ on $0$, and sends $1$ on $t$ and the right vertical morphism induced by the retraction $[X,1] \vee[1]\to [X,1]$.

Then $f$ is $\Sigma^m(\triangledown\star Y)$. 
\end{lemma}
\begin{proof}
All these categories admit loop free and atomic basis. We can then show this lemma in the category of augmented directed complexes. Furthermore, in this category, the suspension only makes an index shift, so we can assume without loss of generality that $m=0$.

The commutativity of the diagram implies that 
$$
\begin{array}{rclr}
f(0\star x)&=& 0\star x\\
f(1\star x)&=&t\star x\\
f([x,1] \star y)&=& [x,1] \star y +r_{x ,y} &\\
\end{array}
$$
where $r_{x,y}$ is a positive sum of elements of $(B_{[1]\star Y})_{|x|+|y|+1}$.
We show by induction on $|x|+|y|$ that: 
$$\begin{array}{rcll}
r_{x,y}&=& [1]\star y&\mbox{ if $|x|= 0$}\\
&=&0&\mbox{ if $|x|> 0$} .\\
\end{array}$$

Suppose the result true when the sum of dimensions of $x$ and $y$ is $(k-1)$. Let $x, y$ be two cells such that $|x|+|y|=k$.
\textbf{Case $|x|=0$.} The commutativity of $f$ with $\partial$ and the induction hypothesis imply that 
$$\begin{array}{rcl}
\partial r_{x,y} &=& f(\partial ([x,1]\star y)) - \partial ([x,1]\star y)\\
 &=& \{t\}\star y - \{0\}\star y + f([x,1]\star \partial y) - \{1\}\star y + \{0\}\star y - [x,1]\star \partial y\\
 &=& \{t\}\star y - \{1\}\star y + [1]\star \partial y\\
\end{array}$$
and $r_{x,y}$ is then equal to $[1]\star y$. \textbf{Case $|x|>0$.} The commutativity of $f$ with $\partial$ implies that 
$$ \partial r_{x,y} = 0$$
and $r_{x,y}$ is then equal to $0$.
\end{proof}

\begin{lemma}
\label{lemma:unicity_of_composition 2}
Let $m$ be an integer and $X$ and $Y$ be two $\zo$-categories admitting a loop free and atomic basis. We denote by $0$, $1$ and $t$ the three points of $\Sigma X\vee[1]$. 
 Let $$f: \Sigma^m ([X,1]\star Y)\to \Sigma^m( ([X,1]\vee[1])\star Y)$$ be a morphism fitting in the following diagram:
\[\begin{tikzcd}
	{\Sigma^m(\{t\}\star Y)} & {\Sigma^m(\{1\}\star Y)} \\
	& {\Sigma^m(([X,1]\vee[1])\star Y)} & {\Sigma^m([X,1]\star Y)} \\
	{\Sigma^m([X,1]\star Y)}
	\arrow["f", from=2-2, to=2-3]
	\arrow["id"', curve={height=12pt}, from=3-1, to=2-3]
	\arrow[hook, from=3-1, to=2-2]
	\arrow["\cong", from=1-1, to=1-2]
	\arrow[from=1-2, to=2-3]
	\arrow[from=1-1, to=2-2]
\end{tikzcd}\]
Then $f$ is the morphism induced by the retraction $[X,1]\vee[1]\to [X,1]$.
\end{lemma}
\begin{proof}
The proof is an easy computation using Steiner theory, similar to the one done in lemma \ref{lemma:unicity_of_composition}, and left to the reader.
\end{proof}

\p 
\label{para:def of C}
Let $C$ be the subcategory of marked simplicial sets whose
\begin{enumerate}
\item[$-$] objects are the marked simplicial sets $X$ such that $\R(X)$ has no non-trivial automorphisms, and such that there exists a (necessary unique) isomorphism $$\phi_X:\R(iX) \to\R(X),$$
\item[$-$] morphisms are the maps $f:X\to Y$ making the induced diagram
\[\begin{tikzcd}
	{\R(i(X))} & {\R(X)} \\
	{\R(i(Y))} & {\R(Y)}
	\arrow["{\phi_X}", from=1-1, to=1-2]
	\arrow["{\phi_Y}"', from=2-1, to=2-2]
	\arrow["{\R(f)}", from=1-2, to=2-2]
	\arrow["{\R(i(f))}"', from=1-1, to=2-1]
\end{tikzcd}\]
commutative.
\end{enumerate}

\begin{remark}
\label{rem:about_P_modified_3}
As $\R$ sends acyclic cofibrations to isomorphisms, $C$ is stable by zigzags of acyclic cofibrations. Moreover, as $\R$ and $i$ preserve colimits, for any diagram $F:I\to C$ such that the $\zo$-category $\R(\colim_IF)$ has no non-trivial automorphisms, $\colim_IF$ is in $C$. Eventually, the colimit of any natural transformation between two such diagrams is in $C$.
\end{remark}

\begin{lemma}
\label{lemma:about_P_modified_2}
Let $(k,n)$ be a couple of integers such that $k\leq n$.  We set the convention $[-1]:=\emptyset$.
For any integer $m$, the following assertion holds:
\begin{enumerate}
\item $\Sigma^m( \Sigma [n-k]_{\circ}\star[k-1])$ and $\Sigma^m( [k-1]_{\circ}\costar \Sigma [n-k])$ are in $C$.
\item For any $-1\leq l\leq k-1$ and $0\leq p\leq n-k$, and any monomorphisms $[l]\to [k-1]$ and $[p]\to [n-k]$, the morphisms
$$\Sigma^m( \Sigma [p]_{\circ}\star[l]) \to \Sigma^m( \Sigma [n-k]_{\circ}\star[k-1])~~\mbox{and}~~
\Sigma^m( [l]_{\circ} \costar \Sigma[p])\to \Sigma^m( [k-1]_{\circ} \costar \Sigma[n-k])$$
are in $C$.
\item For any $\epsilon\in \{0,1\}$, the morphisms
$$\Sigma^m( \{\epsilon\}\star[k-1]) \to \Sigma^m( \Sigma [n-k]_{\circ}\star[k-1])~~\mbox{and}~~
\Sigma^m( [k-1]_{\circ} \costar \{\epsilon\})\to \Sigma^m( [k-1]_{\circ} \costar \Sigma[n-k])$$
are in $C$.
\item If $k>0$, the morphisms
$$\Sigma^m( \emptyset\star[k-1]) \to \Sigma^m( \Sigma [n-k]_{\circ}\star[k-1])~~\mbox{and}~~
\Sigma^m( [k-1]_{\circ} \costar \emptyset)\to \Sigma^m( [k-1]_{\circ} \costar \Sigma[n-k])$$
are in $C$.
\end{enumerate}
\end{lemma}
\begin{proof}
We will proceed by induction on $(k,n)$.

- The case $(0,0)$ corresponds to the belonging of globes to $C$, which is true by the assumptions we made on the functor $i$ and by the proposition \ref{prop:the globes a non non trivial automorphisms} that assert that the globes have no non-trivial automorphisms.

- We now suppose that the case $(n-1,n-1)$ holds and we are willing to show the case $(0,n)$. The assertions $(1)$ and $(2)$ are direct consequences of the case $(n-1,n-1)$ after remarking the isomorphisms:
$$\Sigma^m \Sigma [n]\cong \Sigma^{m+1}((\Sigma[0]_{\circ}) \star[n-2])~~~~~
\Sigma^m \Sigma [n]_{\circ}\cong \Sigma^{m+1}([n-2]_{\circ}\costar(\Sigma[0]))
$$
It remains to show the third assertion.  Let $m$ be any integer and $\epsilon\in \{0,1\}$. 
By induction hypothesis and by the belonging of globes to $C$, the following morphism
$$\Sigma^m( \{\epsilon\})\to \Sigma^m(\Sigma \{0\})\cong \Sigma^{m+1}\{0\}\to \Sigma^{m+1}((\Sigma[0]_{\circ}) \star[n-2])\cong\Sigma^m \Sigma [n]$$
is in $C$. As the morphism $\Sigma^m( \{\epsilon\})\to \Sigma^m \Sigma [n]$ is their composite, it belongs to $C$. We proceed similarly to show that
$\Sigma^m( \{\epsilon\})\to \Sigma^m \Sigma [n]_{\circ}$ belongs to $C$. This concludes the proof of the case  $(0,n)$.

- Suppose the result true for the couples $(k-1,n)$, $(k-1,n-1)$ and $(k-1,k-1)$ for an integer $k$ strictly superior to $0$ and inferior or equal to $n$. We are willing to show the case $(k,n)$. Let $m$ be any integer.

As $R$ commutes with Gray operations and pushouts, the lemma \ref{lemma:non trivial automorphisme 4} implies that 
$\Sigma^m ( (\Sigma[n-k]_{\circ}\coprod_{[0]}[1])\star [k-2])$
together with all the objects appearing in the statement of this lemma are sent by $\R$ to $\zo$-categories with loop free and atomic basis and with no non-trivial automorphisms. Remark \ref{rem:about_P_modified_3} implies that for one of these objects (resp. a morphism between them) to belong to $C$, it is sufficient to show that it is linked by a zigzag of acyclic cofibrations to the colimit, computed in $\mSset$, of a diagram with value in $C$ (resp. in the arrow category of $C$).

As $\Sigma [0]_\circ=[1]$, the case $(k-1,k-1)$ implies that the morphism 
$$\Sigma^m(\{0\}\star [k-1])\to \Sigma^m([1]\star [k-1])$$
is in $C$. Combined with the case $(k-1,n-1)$, this implies that the diagram
\[\begin{tikzcd}
	{\Sigma^m ( (\Sigma[n-k]_{\circ})\star [k-2])} & {\Sigma^m ( (\Sigma[n-k]_{\circ})\star [k-2])} \\
	{\Sigma^m ( [0]\star [k-2])} & {\Sigma^m ( [0]\star [k-2])} \\
	{\Sigma^m ( [0]\star [k-2])} & {\Sigma^m ( [1]\star [k-2])}
	\arrow[from=2-2, to=1-2]
	\arrow[from=2-2, to=3-2]
	\arrow[from=1-1, to=1-2]
	\arrow["id"', from=2-1, to=3-1]
	\arrow[from=2-1, to=1-1]
	\arrow["id", from=2-1, to=2-2]
	\arrow[from=3-1, to=3-2]
\end{tikzcd}\]
is in $C$, and so is it's vertical colimits. 
As the codomain is weakly equivalent to $\Sigma^m ( (\Sigma[n-k]_{\circ}\fwedge[1])\star [k-2])$, this implies that $C$ includes the canonical morphism
\begin{equation}
\label{eq:eqlemmaunicity_of_composition1}
\Sigma^m ( (\Sigma[n-k]_{\circ})\star [k-2]) \hookrightarrow \Sigma^m ( (\Sigma[n-k]_{\circ}\fwedge[1])\star [k-2]).
\end{equation}
We can show similarly that the canonical morphism
\begin{equation}
\label{eq:eqlemmaunicity_of_composition1.5}
\Sigma^m ( [1]\star [k-2]) \hookrightarrow \Sigma^m ( (\Sigma[n-k]_{\circ}\fwedge[1])\star [k-2]).
\end{equation}
is in $C$.

The image by $\R$ of the canonical morphism 
$$\Sigma^m ( (\Sigma[n-k]_{\circ}\fwedge[1])\star [k-2])\to \Sigma^m ( (\Sigma[n-k]_{\circ})\star [k-2]) $$
induced by the retraction $\Sigma[n-k]_{\circ}\fwedge[1]\to \Sigma[n-k]_{\circ}$ fulfills the condition of lemma \ref{lemma:unicity_of_composition 2} and then belongs to $C$. 
The lemma \ref{lemma:unicity_of_composition} then implies that the morphism 
\begin{equation}
\label{eq:eqlemmaunicity_of_composition2}
\Sigma^m ( \triangledown\star [k-2]):\Sigma^m ( (\Sigma[n-k]_{\circ})\star [k-2])\to \Sigma^m ( (\Sigma[n-k]_{\circ}\fwedge[1])\star [k-2])
\end{equation}
is in $C$. 
We will use freely in the rest of the proof that morphisms \eqref{eq:eqlemmaunicity_of_composition1}, \eqref{eq:eqlemmaunicity_of_composition1.5} and \eqref{eq:eqlemmaunicity_of_composition2} are in $C$.
 
Theorem \ref{theo:cyl_formula} implies that the object $\Sigma^m( \Sigma [n-k]_{\circ}\star[k-1])$ is linked by a zigzag of acyclic cofibrations to the colimit of 
$$
\Sigma^m ( (\Sigma[n-k]_{\circ}\fwedge[1])\star [k-2]) \leftarrow
\Sigma^m ( \Sigma[n-k]_{\circ}\star [k-2]) \to
\Sigma^m ( \Sigma[n-k+1]_{\circ}\star [k-2])
$$
and the induction hypothesis implies that it belongs to $C$. 
We proceed similarly to show that $\Sigma^m( [k-1]_{\circ}\costar \Sigma [n-k])$ belongs to $C$.

Let $0\leq l\leq k-1$ and $-1\leq p\leq n-k$ be two integers, and $f:[l]\to [k-1]$ and $g:[p]\to [n-k]$ be two monomorphisms. Suppose first that $f$ is of shape $[0]\star f'$ for $f':[l-1]\to [k-2]$.
In this case, $\Sigma^m( \Sigma [p]_{\circ}\star[l]) \to \Sigma^m( \Sigma [n-k]_{\circ}\star[k-1])$ is linked by a zigzag of acyclic cofibrations to the vertical colimit of the diagram
\[\begin{tikzcd}
	{ \Sigma^m ( (\Sigma[p]_{\circ}\fwedge[1])\star [l-1])} & {\Sigma^m ( (\Sigma[n-k]_{\circ}\fwedge[1])\star [k-2])} \\
	{ \Sigma^m ( \Sigma[p]_{\circ}\star [l-1])} & { \Sigma^m ( \Sigma[n-k]_{\circ}\star [k-2])} \\
	{ \Sigma^m ( \Sigma[p+1]_{\circ}\star [l-1])} & { \Sigma^m ( \Sigma[n-k+1]_{\circ}\star [k-2])}
	\arrow[from=2-2, to=1-2]
	\arrow[from=2-2, to=3-2]
	\arrow[from=2-1, to=2-2]
	\arrow[from=1-1, to=1-2]
	\arrow[from=2-1, to=1-1]
	\arrow[from=2-1, to=3-1]
	\arrow[from=3-1, to=3-2]
\end{tikzcd}\]
and the induction hypothesis implies that it belongs to $C$. 
Suppose now that $f$ avoids the initial object of $[k-1]$. In this case, the morphism $\Sigma^m( \Sigma [p]_{\circ}\star[l]) \to \Sigma^m( \Sigma [n-k]_{\circ}\star[k-1])$
 is linked by a zigzag of acyclic cofibrations to the vertical colimit of the diagram
\[\begin{tikzcd}
	{\Sigma^m( \Sigma [p]_{\circ}\star[l])} & {\Sigma^m ( (\Sigma[n-k]_{\circ})\star [k-2])} & {\Sigma^m ( (\Sigma[n-k]_{\circ}\fwedge[1])\star [k-2])} \\
	&& { \Sigma^m ( \Sigma[n-k]_{\circ}\star [k-2])} \\
	&& { \Sigma^m ( \Sigma[n-k+1]_{\circ}\star [k-2])}
	\arrow[from=2-3, to=1-3]
	\arrow[from=2-3, to=3-3]
	\arrow[hook, from=1-2, to=1-3]
	\arrow[from=1-1, to=1-2]
\end{tikzcd}\]
and the induction hypothesis implies that it belongs to $C$.
We prove similarly that $$\Sigma^m( [l]_{\circ} \costar \Sigma[p])\to \Sigma^m( [k-1]_{\circ} \costar \Sigma[n-k])$$ belongs to $C$.

The morphism $\Sigma^m( \{0\}\star[k-1]) \to \Sigma^m( \Sigma [n-k]_{\circ}\star[k-1])$  is linked by a zigzag of acyclic cofibrations to the vertical colimit of the diagram
\[\begin{tikzcd}
	& {\Sigma^m ( (\Sigma[n-k]_{\circ}\fwedge[1])\star [k-2])} \\
	& { \Sigma^m ( \Sigma[n-k]_{\circ}\star [k-2])} \\
	{\Sigma^m( \{0\}\star[k-1])\cong \Sigma^m ( (\Sigma\{n-k+1\})\star [k-2])} & { \Sigma^m ( \Sigma[n-k+1]_{\circ}\star [k-2])}
	\arrow[from=2-2, to=1-2]
	\arrow[from=2-2, to=3-2]
	\arrow[from=3-1, to=3-2]
\end{tikzcd}\]
and the induction hypothesis implies that it belongs to $C$.  The morphism $\Sigma^m( \{1\}\star[k-1]) \to \Sigma^m( \Sigma [n-k]_{\circ}\star[k-1])$   
is linked by a zigzag of acyclic cofibrations to the vertical colimit of the diagram
\[\begin{tikzcd}
	{\Sigma^m( \{1\}\star[k-1])\cong \Sigma^m ( [1]\star [k-2])} & {\Sigma^m ( (\Sigma[n-k]_{\circ}\fwedge[1])\star [k-2])} \\
	& { \Sigma^m ( \Sigma[n-k]_{\circ}\star [k-2])} \\
	& { \Sigma^m ( \Sigma[n-k+1]_{\circ}\star [k-2])}
	\arrow[from=2-2, to=1-2]
	\arrow[from=2-2, to=3-2]
	\arrow[hook, from=1-1, to=1-2]
\end{tikzcd}\]
and the induction hypothesis implies that it belongs to $C$.
We prove similarly that for any $\epsilon\in \{0,1\}$, $$\Sigma^m( [k-1]_{\circ} \costar \{\epsilon\})\to \Sigma^m( [k-1]_{\circ} \costar \Sigma[n-k])$$ belongs to $C$.

Eventually, the morphism $\Sigma^m( \emptyset\star[k-1]) \to \Sigma^m( \Sigma [n-k]_{\circ}\star[k-1])$  is 
is linked by a zigzag of acyclic cofibrations to the vertical colimit of the diagram
\[\begin{tikzcd}
	{\Sigma^m( \{1\}\star[k-2])} & { \Sigma^m ( [1]\star [k-2])} & {\Sigma^m ( (\Sigma[n-k]_{\circ}\fwedge[1])\star [k-2])} \\
	&& { \Sigma^m ( \Sigma[n-k]_{\circ}\star [k-2])} \\
	&& { \Sigma^m ( \Sigma[n-k+1]_{\circ}\star [k-2])}
	\arrow[from=2-3, to=1-3]
	\arrow[from=2-3, to=3-3]
	\arrow[hook, from=1-2, to=1-3]
	\arrow[from=1-1, to=1-2]
\end{tikzcd}\]
and the induction hypothesis implies that it belongs to $C$. We prove similarly that $$\Sigma^m( [k-1]_{\circ} \costar \emptyset)\to \Sigma^m( [k-1]_{\circ} \costar \Sigma[n-k])$$ belongs to $C$.

We have then proven the case $(k,n)$, and this concludes the proof.
\end{proof}

\begin{lemma}
\label{lemma:about_P_modified}
Let $F:\Delta\to \zocat$ be a functor and $\phi:F\to \R$ be a invertible transformation such that for any monomorphism $i:[k]\to [n]$, the induced square
\[\begin{tikzcd}
	{F([k])} & {\R([k])} \\
	{F([n])} & {\R([n])}
	\arrow["{\phi_{[k]}}", from=1-1, to=1-2]
	\arrow["{\phi_{[n]}}"', from=2-1, to=2-2]
	\arrow["{R(i)}", from=1-2, to=2-2]
	\arrow["{F(i)}"', from=1-1, to=2-1]
\end{tikzcd}\]
commutes. Then $\phi$ is an invertible natural transformation between $F$ and $\R$.
\end{lemma}
\begin{proof}
We can suppose without loss of generality that for all integer $n$, $F([n])=\R([n])$. The hypotheses implies that for any monomorphism $i:[n]\to [m]$, $F(i)=\R(i)$ and it then remains to show that for any degeneracy $p:[n]\to [m]$, $F(p)=\R(p)$.

We proceed by induction and we then suppose that for any $0<k\leq n$ and any degeneracy $s:[k]\to [k-1]$, $F(s)=\R(s)$. As any  morphism of $\Delta$ factors as a degeneracy followed by a monomorphism, the induction hypothesis implies that for any $f:[k]\to [n]$ with $k\leq n$,  $F(f)=\R(f)$.

Let $s:[n+1]\to [n]$ be a degeneracy. 
We have a \textit{a priori} non commutative diagram:
\[\begin{tikzcd}[ampersand replacement=\&]
	{\colim_{[k]\underset{\neq id}{\hookrightarrow}[n+1] }\R([k])} \& {\colim_{[k]\underset{\neq id}{\hookrightarrow}[n+1] }\R([k])} \\
	{{\R}([n+1])} \& {{\R}([n+1])} \\
	{{\R}([n])} \& {{\R}([n])}
	\arrow["{F(s)}"', from=2-1, to=3-1]
	\arrow[from=1-1, to=2-1]
	\arrow[from=1-2, to=2-2]
	\arrow["{\R(s)}", from=2-2, to=3-2]
	\arrow[Rightarrow, no head, from=2-1, to=2-2]
	\arrow[Rightarrow, no head, from=1-1, to=1-2]
	\arrow[Rightarrow, no head, from=3-1, to=3-2]
\end{tikzcd}\]
The induction hypothesis implies that the outer and the upper square commute. As $R$ commutes with colimits, $\colim_{[k]\to\partial [n]}\R([k])$ is equivalent to $\R(\partial[n])$. Moreover, the inclusion $\R(\partial[n])\to \R([n])$ induces an isomorphisms on cells of dimension lower or equal to $n$. 
For the lower square to commutes, we then only have to check that the top cell of $\R([n+1])$ is sent on the same element on ${\R}([n])$. That is the case because the two paths send it to an unity as there is no non trivial $(n+1)$-cells in $\R([n])$. 

We then have $F(s)=\R(s)$, which concludes the induction and then the proof.
\end{proof}

\begin{prop}
\label{prop:existence_of_comparaison_with_street}
There exists an invertible natural transformation $\R i\to \R$.
\end{prop} 
\begin{proof}
As $\Sigma[0]_{\circ}$ is isomorphic to $[1]$, the case $(n,n)$ for any integer $n$ of the lemma \ref{lemma:about_P_modified_2} imply that there exists an invertible transformation $\phi:(\R i)_{|\Delta}\to \R_{|\Delta}$ which is natural when restricted to the full subcategory of $\Delta$ whose morphisms are the monomorphisms. 

The lemma \ref{lemma:about_P_modified} then implies that $\phi:(\R i)_{|\Delta}\to \R_{|\Delta}$ is natural. We can extend it to a natural transformation $\phi':(\R i)_{|t\Delta}\to \R_{|t\Delta}$ thanks to the proposition \ref{prop:modification_of_the_value_on_thin_representables}. 

Eventually, as both $\R i$ and $\R$ preserves colimits, we can extend $\phi'$ to a invertible natural transformation between $\R i$ and $\R$.
\end{proof}

\begin{theorem}
\label{theo:criterion_to_be_linked_to_identity}
Let $i: \mSset\to \mSset$ be a left Quillen functor. Suppose that there exists a zigzag of weakly invertible natural transformations:
$$i(\Db_{\uvar}) \leftrightsquigarrow \Db_{\uvar}.$$
Then, there exists a zigzag of weakly invertible natural transformations between $i$ and $id$. In particular, $i$ is a left Quillen equivalence.
\end{theorem} 
\begin{proof} 
The proposition \ref{prop:existence_of_comparaison_with_street} implies that we have a natural transformation $\psi:i\to i_{str}$. 
Furthermore, hypotheses imply that this natural transformation is a weak equivalence on globes. According to proposition \ref{prop:criterimu_to_be_an_weak_equivalence}, $\psi$ is then a weakly invertible natural transformation.
We then have a zigzag of weakly invertible natural transformations: 
$$i\xrightarrow{\sim} i_{str}\xleftarrow{\sim} id.$$
\end{proof}

\begin{cor}
\label{cor:criterion_to_be_linked_to_identity_case stratified}
Let $i: \stratSset\to \stratSset$ be a left Quillen functor. Suppose that there exists a zigzag of weakly invertible natural transformations:
$$i(\Db_{\uvar}) \leftrightsquigarrow \Db_{\uvar}.$$
Then, there exists a zigzag of weakly invertible natural transformations between $i$ and $id$. In particular, $i$ is a left Quillen equivalence.
\end{cor} 
\begin{proof} 
We recall that the adjunction between stratified and marked simplicial sets is denoted by:
\[\begin{tikzcd}
	{(\uvar)_{\mk}:\stratSset} & {\mSset:\iota}
	\arrow[""{name=0, anchor=center, inner sep=0}, shift left=2, from=1-1, to=1-2]
	\arrow[""{name=1, anchor=center, inner sep=0}, shift left=2, from=1-2, to=1-1]
	\arrow["\dashv"{anchor=center, rotate=-90}, draw=none, from=0, to=1]
\end{tikzcd}\]
The proposition \ref{prop:transfert from presheaves on tB to stratified presheaves} states that this adjonction is a Quillen equivalence and that the functor $\iota$ preserves acyclic cofibrations.

Remark now that the functor $(\uvar)_{\mk}\circ i\circ \iota:\mSset\to \mSset$ verifies the hypothesis of theorem \ref{theo:criterion_to_be_linked_to_identity} and we then have a zigzag of of weakly invertible natural transformations:
$$(\uvar)_{\mk}\circ i\circ \iota  \leftrightsquigarrow id$$
This induces a zigzag of of weakly invertible natural transformations:
$$i\to \iota\circ(\uvar)_{\mk}\circ i\circ \iota \circ (\uvar)_{\mk} \leftrightsquigarrow \iota\circ(\uvar)_{\mk}\leftarrow id$$
\end{proof}

%

\chapter{Complicial sets as a model of $\io$-categories}	
\label{chapter:complicial set as a model of io categories}

\minitoc
\vspace{1cm}

%
%
%
%
%
%
%
%
%
%
%

Let $n\in \Nb\cup\{\omega\}$.
Following the terminology of Barwick and Schommer-Pries (\cite{Barwick_on_the_unicity_of_the_theory_of_higher_categories}), we call \textit{model of $(\infty,n)$-categories} any model category whose corresponding $(\infty, 1)$-category is $\ncat{n}$.

With the definition of $(\infty,n)$-categories given in the introduction, we have a natural model for the $\iun$-category $\ncat{n}$, given by Rezk's complete Segal $\Theta_n$-spaces, i.e. space valued presheaves on $\Theta_n$ satisfying the (homotopical) Segal conditions and (homotopical) completeness conditions. However, there are many other models, see for instance \cite{Ara_Higher_quasi_cat}, \cite{Bergner_Comparison_of_model_of_infini_n_categories}, \cite{Bergner_Comparison_of_model_for_infini_n_categories_II}, \cite{Bergner_reedy_category_and_the_theta_construction} (we refer to  \cite{Barwick_on_the_unicity_of_the_theory_of_higher_categories}
for a comprehensive presentation of these models and their equivalence). For example, one can mention $n$-fold Segal spaces and Simpson's and Tamsamani's Segal $n$-categories among others.

It was conjectured (\cite{Street_algebra_of_orianted_simplexes}, \cite{Verity_a_complicial_compendium}, \cite{Barwick_on_the_unicity_of_the_theory_of_higher_categories}) that Verity's $n$-complicial sets were also a model of $(\infty,n)$-categories. This would imply that Campion-Kapulkin-Maehara's $n$-comical sets also are, as they are shown to be Quillen equivalent to $n$-complicial sets in \cite{Doherty_Equivalence_of_cubical_and_simplicial_approaches}.

Results of Bergner, Gagna, Harpaz, Joyal, Lanari, Lurie, Rezk and Tierney (\cite{Bergner_Comparison_of_model_of_infini_n_categories},\cite{Bergner_Comparison_of_model_for_infini_n_categories_II}, \cite{Rezk_a_cartesian_of_weak_n_categories}, \cite{Lurie_Htt},\cite{Lurie_goodwillie_calculus}, \cite{Gagna_on_the_equivallence_of_all_model_for_infini2_cat}, \cite{Joyal_Quasi-categories_vs_Segal_spaces}) imply that $2$-complicial sets are a model of $(\infty,2)$-categories (see \cite{Gagna_on_the_equivallence_of_all_model_for_infini2_cat} to understand how to use all this source to obtained the desired result and \cite{Bergner_explicit_comparaison_bt_theta_2_space_and_2_complicial_set} for a direct comparison between complete Segal $\Theta_2$-spaces and $2$-complicial sets).
The purpose of this chapter is to generalize this result to any $n\in \Nb\cup\{\omega\}$.

To this extend, we first address the more general problem of finding sufficient conditions on a model category $A$ to build a \textit{Gray cylinder} $C\mapsto I\otimes C$ and a \textit{Gray cone} $C\mapsto e\star C$ on Segal precategories enriched in $A$. These two operations should be linked by the following homotopy cocartesian square
\[\begin{tikzcd}
	{\{0\}\otimes C} & {I\otimes C} \\
	e & {e\star C}
	\arrow[from=1-2, to=2-2]
	\arrow[from=1-1, to=2-1]
	\arrow[from=2-1, to=2-2]
	\arrow[from=1-1, to=1-2]
\end{tikzcd}\]
where $e$ is the terminal object. The conditions that $A$ has to	 fulfill are encapsulated in the notion of \textit{Gray module} (paragraph \ref{para:Gray module}). Thanks to the Gray cylinder and cone, we can show the following theorem:

\begin{itheorem}[\ref{theo:Quillen adjunction}]
If $A$ is a Gray module, there is a Quillen adjunction between the Ozornova-Rovelli model structure for $\omega$-complicial sets on stratified simplicial sets and stratified Segal precategories enriched in $A$ where the left adjoint sends $[n]$ to $e\star e\star ... \star e\star \emptyset$
\end{itheorem} 

We will apply this theorem to the case where $A$ is the category of stratified simplicial sets endowed with the model structure for $\omega$-complicial sets, and after tedious work, we get
\begin{itheorem}[\ref{theo:letheo}]
Let $n\in \Nb$.
The model structure for $n$-complicial sets is a model of $(\infty,n)$-categories.
\end{itheorem}
As a corollary we have
\begin{itheorem}[\ref{theo:lecorozo}]
The adjunction between the model structure for complete Segal $\Theta$-spaces and $\omega$-complicial set constructed in \cite{Ozornova_a_quillen_adjunction_between_globular_and_complicial} is a Quillen equivalence.
\end{itheorem}

\section{Preliminaries}

\subsection{Segal $A$-precategories}
Let $A$ be a category of stratified presheaves on a elegant Reedy category (as defined in paragraph \ref{para:reedy} and section \ref{section:Marked and stratified presheaves}), endowed with a nice model structure (as defined in paragraph \ref{para:nice model structure}). We suppose furthermore that the terminal element of $A$, denoted by $e$, is representable. We then have an adjunction 
\begin{equation}
\label{eq:ob adj}
\begin{tikzcd}
	{\iota:\Set} & {A:ob}
	\arrow[""{name=0, anchor=center, inner sep=0}, shift left=2, from=1-1, to=1-2]
	\arrow[""{name=1, anchor=center, inner sep=0}, shift left=2, from=1-2, to=1-1]
	\arrow["\dashv"{anchor=center, rotate=-90}, draw=none, from=0, to=1]
\end{tikzcd}
\end{equation}
where the left adjoint sends a set $S$ onto $\coprod_S e$ and the right adjoint is the evaluation at $e$.
The objects lying in the image of $\iota$ are called \notion{discrete objects}.

An object $C$ of $\Fun( \Delta^{op},A)$ is a \notion{Segal $A$-precatagory} if $C_0$ is discrete. We denote by \wcnotation{$\Seg(A)$}{(seg@$\Seg(A)$} the full subcategory of $\Fun(\Delta^{op},A)$ spanned by the Segal $A$-precategories.

\p We consider the functor $A\times \Delta \to \Fun(\Delta^{op},A)$ defined by the assignation $a\times [n]\to |[a,n]|$ where $|[a,n]|([m]):=a\times \iota (\Hom_\Delta([m],[n]))$. We define the Segal $A$-precategory \wcsnotation{$[a,n]$}{((g10@$[a,n]$}{for $A$-Segal precategories} as the pushout: 
\[\begin{tikzcd}
	{\underset{k\leq n}{\cup}{|[a,\{k\}]|}} & {|[a,n]|} \\
	{|[e,0]|} & {[a,n]}
	\arrow[from=1-1, to=1-2]
	\arrow[from=1-1, to=2-1]
	\arrow[from=2-1, to=2-2]
	\arrow[from=1-2, to=2-2]
	\arrow["\lrcorner"{anchor=center, pos=0.125, rotate=180}, draw=none, from=2-2, to=1-1]
\end{tikzcd}\]
The object $[e,0]$ is simply denoted by $[0]$. Remark that this object is the terminal Segal $A$-precategory.

The assignation $(a,n)\mapsto [a,n]$ induces by left Kan extension a colimit preserving functor 
$$[\uvar,\uvar]:A\times \Sset \to \Seg(A).$$
The image of this functor is dense in $\Seg(A)$.

For $\{n_i\}_{i\leq k}$ and $\{a\to a_i\}_{i\leq k}$ two finite sequences, we denote by \wcsnotation{$[a_0,n_0]\vee[a_1,n_1]\vee...\vee [a_k,n_k]$}{((g20@$[a_0,n_0]\vee[a_1,n_1]\vee...\vee [a_k,n_k]$}{for Segal $A$-precategories} the Segal $A$-precategory fitting in the following pushout:
\[\begin{tikzcd}
	{\amalg_{i\leq k}[a,n_i]} & {[a,\Sigma_{i\leq k}n_i]} \\
	{\amalg_{i\leq k}[a_i,n_i]} & {[a_0,n_0]\vee[a_1,n_1]\vee...[a_k,n_k]}
	\arrow[""{name=0, anchor=center, inner sep=0}, from=1-1, to=1-2]
	\arrow[from=1-2, to=2-2]
	\arrow[from=1-1, to=2-1]
	\arrow[from=2-1, to=2-2]
	\arrow["\lrcorner"{anchor=center, pos=0.125, rotate=180}, draw=none, from=2-2, to=0]
\end{tikzcd}\]

The case we will use the most is the one of the Segal $A$-precategories $[e,1]\vee[a,n]$ and $[a,n]\vee[e,1]$ corresponding to the sequence $((1,n),(a\to e,a\to a))$ and $((n,1),(a\to a,a\to e))$.

\p
\label{para:defi of delta[B]}
Let $B$ be the Reedy category and $M$ the subset of objects of $B$ such that $A$ is the category of $M$-stratified presheaves on $B$. We define the category $\Delta[B]$ as the fully faithful subcategory of $\Seg(A)$ whose objects are of shape $[b,n]$ for $b\in B$ and $n$ an integer. Eventually, we define $\Delta[M]$ as the set of objects of shape $[b,n]$ for $b\in M$ and $n>0$. 
We can easily check that the category $\Seg(A)$ is the category of $\Delta[M]$-stratified presheaves on $\Delta[B]$.

A cellular model for $\stratSeg(A)$ is given by the set of morphisms $[b,\partial n]\cup [a,n]\to [b,n]$ for $n$ an integer, and $a\to b$ a generating cofibration of $A$.

Eventually, for any Segal $A$-precategory $C$, we have an isomorphism $$C\cong \colim_{\Delta[tB]_{/C}}[b,n].$$

Following the definition of section \ref{section:Marked and stratified presheaves}, a morphism between Segal precategories is \textit{entire} if it is the identity on the underlying $\Delta[B]$-presheaves.

\begin{prop}
\label{prop:delta[B] is reedy}
The category $\Delta[B]$ as a structure of elegant Reedy category.
\end{prop}
\begin{proof}
Remark first that $\Hom_{\Delta[B]}([a,n],[b,m])$ fits in the following cocartesian square:
\[\begin{tikzcd}
	{\coprod_{k\leq m}\Hom_{B}(a,b)\times \Hom_{\Delta}([n],\{k\})} & {\Hom_{B}(a,b)\times \Hom_{\Delta}([n],[m])} \\
	{\coprod_{k\leq m} \Hom_{\Delta}([n],\{k\})} & {\Hom_{\Delta[B]}([a,n],[b,m])}
	\arrow[from=1-2, to=2-2]
	\arrow[from=1-1, to=2-1]
	\arrow[from=2-1, to=2-2]
	\arrow[from=1-1, to=1-2]
\end{tikzcd}\]
We then define the degree functor $ob(\Delta[B])\to \Nb$ by the formula $d([b,n])=d(b)d(n)$. The subcategory $(\Delta[B])_{+}$ is the image of $\Delta_+\times B_+$, and the subcategory $(\Delta[B])_{-}$ is the image of $\Delta_-\times B_-$.

We recall that we suppose that the Reedy category $B$ is elegant. 
Let $X$ be a presheaf on $\Delta[B]$, $[a,n]$ an element of $\Delta[A]$, $[f,g]:[a,n]\to [a',n']$ and $[h,i]:[a,n]\to [a',n']$ two negative morphisms, an element $x$ of $X([a,n])$, two non degenerate elements $y\in X([a',n'])$ and $z\in X([a'',n''])$ such that $[f,g]^*y=x$, $[h,i]^*z=x$.

We suppose first that $n\neq 0$.
 We denote $\pi:B\times \Delta\to \Delta[B]$ the canonical projection and 
$$\pi^*:\Psh{\Delta[B]}\to \Psh{\Delta\times B}$$ the functor obtained by precomposing.
Remark that for any $a,n$, $(\pi^*X)(a,n)=X([a,n])$.
Furthermore, we  have again equalities $(f,g)^*y=x$, $(h,i)^*z=x$.
As $\Delta\times B$ is Reedy elegant, this implies that $f=h$, $g=i$ and $y=z$. 

If $n=0$, then $[f,g]$ and $[h,i]$ are the identity, and we directly have $y=z$. 
The Reedy category $\Delta[B]$ is then elegant.
\end{proof}

\begin{definition}
\label{defi:generating_acyclic_cofibration}
We define the simplicial set \wcnotation{$E^{\cong}$}{(econg@$E^{\cong}$} as the colimit of the diagram:
$$[e,0]\leftarrow [e,1]\xrightarrow{[e,d^1d^3]} [e,3]\xleftarrow{[e,d^0d^2]} [e,1]\to [e,0].$$
An \snotion{elementary anodyne extension}{for Segal $A$-precategory} is one of the following:
\begin{enumerate}
 \item The \notion{generating Reedy cofibrations}:
 $$[a,n]\cup [b,\partial[n]]\to [b,n],~~\mbox{for $a\to b$ a generating acyclic cofibration of A.}$$
\item The \notion{Segal extensions}: 
$$[a,1]\cup[a,1]\cup ...\cup [a,1]\to [a,n],~~\mbox{for $a$ an object of $A$ and $n>0$.}$$
\item The \notion{completeness extensions}: 
$$\{0\}\to E^{\cong}.$$
\end{enumerate}
\end{definition}

\p 
\label{para:def sega a cat}
A \notion{Segal $A$-category} is a Segal $A$-precategory having the right lifting property against all elementary anodyne extensions.

Let $C$ be a Segal $A$-categories. We define the presheaf $ho(C):\Delta^{op}\to \textbf{Set}$ sending $[n]$ to $\Hom_{ho(A)}(e,C_n)$. As explained in \cite[$\S $ 14.5]{Simpson_Homotopy_theory_of_higher_categories}, this simplicial set has the unique right lifting property against Segal's maps, and is then the nerve of a category that we also note by $ho(C)$.
An arrow $x:[e,1]\to C$ is an \wcnotion{isomorphism}{isomorphism for an arrow $x:[e,1]\to C$} if its image in $ho(C)$ is. 

We can give an other characterization of isomorphisms in Segal $A$-categories. An arrow $x:[e,1]\to C$ is an isomorphism if and only if there exists a lifting in the following diagram: 
\[\begin{tikzcd}
	{[e,1]} & C \\
	{E^{\cong}}
	\arrow["x", from=1-1, to=1-2]
	\arrow[from=1-1, to=2-1]
	\arrow[dashed, from=2-1, to=1-2]
\end{tikzcd}\]

 A morphism $f:C\to D$ between Segal $A$-categories is an \textit{equivalence of Segal $A$-categories} if $C_1\to D_1$ is a weak equivalence in $A$, and for any element $x\in ob(D)$, there exists $y\in ob(C)$ and an isomorphism in $D$ between $f(y)$ and $x$.

\begin{theorem}[{\cite[21.2.1]{Simpson_Homotopy_theory_of_higher_categories}}]
\label{theo:carlos}
There exists a nice model structure on $\Seg(A)$ where fibrant objects are Segal $A$-categories and weak equivalences between Segal $A$-categories are equivalences of Segal $A$-categories. 

A left adjoint from $\Seg(A)$ to a model category $C$ is a left Quillen functor if it preserves cofibrations, and sends elementary anodyne extensions to weak equivalences.
\end{theorem}

\begin{prop}
Any Segal $A$-precategory is a homotopy colimit of objects of shape $[a,n]$.
\end{prop}
\begin{proof}
Let $C$ be a Segal $A$-precategory. We have $C\cong \colim_{\Delta[tB]_{/C}}\uvar.$
The result then follows from propositions \ref{prop:elelangat stable by slice}, \ref{prop:reedy structure on tB} and \ref{prop:delta[B] is reedy}.
\end{proof}

\subsection{Stratified Segal $A$-precategories}

\p A \notion{stratified Segal $A$-precatagory} is a pair $(C,tC)$ where $tC$ is a subset of $ob(C_1)$ that factors $s^0: C_0\to ob(C_1)$. A \textit{morphism of stratified Segal $A$-precatagory} $(C,tC)\to (D,tD)$ is the data of a morphism $f:C\to D$ such that $f(tC)\subset tD$.
The category of stratified Segal $A$-precategories is denoted by \wcnotation{$\stratSeg(A)$}{(tseg@$\stratSeg(A)$}.

We have an adjunction \ssym{((b30@$(\uvar)^\natural$}{for stratified Segal $A$-precategories}
\begin{equation}
\label{eq:adj u flat}
\begin{tikzcd}
	{(\uvar)^\flat:\Seg(A)} & {\stratSeg(A):(\uvar)^\natural}
	\arrow[""{name=0, anchor=center, inner sep=0}, shift left=2, from=1-2, to=1-1]
	\arrow[""{name=1, anchor=center, inner sep=0}, shift left=2, from=1-1, to=1-2]
	\arrow["\dashv"{anchor=center, rotate=-90}, draw=none, from=1, to=0]
\end{tikzcd}
\end{equation}
where the left adjoint is a fully faithful inclusion that sends $C$ to $C^\flat:=(C,Im(s^0))$. The right adjoint is the obvious forgetful functor.
We will identify Segal $A$-precategories with their images in stratified Segal $A$-precategories under the left adjoint.

\p
We define $[e,1]_t:= ([e,1],[e,1]_1)$. The subcategory of objects of shape $[a,n]$ or $[e,1]_t$ is then dense in $\stratSeg(A)$.

Let $B$ be the Reedy category and $M$ the subset of objects of $B$ such that $A$ is the category of $M$-stratified presheaves on $B$. We recall that we defined the category $\Delta[B]$ and the set of morphism $\Delta[M]$ in paragraph \ref{para:defi of delta[B]}. We set $t\Delta[M]$ as the reunion of $\Delta[M]$ and the singleton $\{[e,1]_t\}$.
We can easily check that the category $\stratSeg(A)$ is the category of $t\Delta[M]$-stratified presheaves on $\Delta[B]$. The set of generating cofibrations for $\stratSeg(A)$ then consists of morphisms of shape $[e,1]\to [e,1]_t$ or $[a,n]\cup[b,\partial n]\to [b,n]$ where $a\to b $ is a generating cofibration of $A$. \sym{((g21@$[e,1]_t$}	
For any stratified Segal $A$-precategory $C$, we then have an isomorphism 
$$C\cong \colim_{t\Delta[tB]_{/C}}\uvar.$$
where $t\Delta[tB]$ is the full subcategory of $\stratSeg(A)$ whose objects are of in $\Delta[B]$ or $t\Delta[M]$.

Following the definition of section \ref{section:Marked and stratified presheaves}, a morphism between stratified Segal precategories is \textit{entire} if it is the identity on the underlying $\Delta[B]$-presheaves.

\p A \notion{marked Segal $A$-category} is a pair $(C,C^{\cong})$ where $C$ is a Segal $A$-category and $C^{\cong}$ is the subset of $ob(C_1)$ consisting of all isomorphisms. A morphism $f:(C,C^{\cong})\to (D,D^{\cong})$ between marked Segal $A$-categories is an \notion{equivalence of marked Segal $A$-categories} if $C_1\to D_1$ is a weak equivalence in $A$, and for any element $x\in ob(D)$, there exists $y\in ob(C)$ and $v:f(y)\to x\in D^{\cong}$.

\p We are now willing to endow $\stratSeg(A)$ with a nice model structure whose fibrant objects are marked Segal $A$-category and weak equivalences between fibrant objects are equivalences of marked Segal $A$-categories. 
We define the stratified Segal $A$-precategories $(E^{\cong})'$ as the following pushout: 
\[\begin{tikzcd}
	{[e,1]} & {E^{\cong}} \\
	{[e,1]_t} & {(E^{\cong})'}
	\arrow["{d^0d^3}", from=1-1, to=1-2]
	\arrow[from=2-1, to=2-2]
	\arrow[from=1-1, to=2-1]
	\arrow[from=1-2, to=2-2]
	\arrow["\lrcorner"{anchor=center, pos=0.125, rotate=180}, draw=none, from=2-2, to=1-1]
\end{tikzcd}\]
We define the set of map $J$ as the reunion of the set of generating acyclic cofibration of $\Seg(A)$ and of $\{[e,1]_t\to (E^{\cong})'\}$ and $\{E^{\cong}\to (E^{\cong})'\}$. We suppose furthermore that $J$ includes the acyclic cofibrations $\{0\}\to E^{\cong}$ and $\{1\}\to E^{\cong}$.

\begin{lemma}
\label{lemma:fib a marked segal cat}
A morphism $f$ has the right lifting property against $J$ if and only if $f^\natural$ is a fibration and  $f$ has the right lifting property against $[e,1]_t\to (E^{\cong})'$ and $E^{\cong}\to (E^{\cong})'$. An object $X$ has the right lifting property against $J$ if and only if it is a marked Segal $A$-category.
\end{lemma}
\begin{proof}
Straightforward.
\end{proof}

\begin{lemma}
\label{lemma:invertivble natural trans are detect pointwise}
Let $i:K\to L$ be a cofibration that induces an isomorphism on objects. 
The morphism 
$$K\times E^{\cong}\coprod_{K\times [e,1]}L\times [e,1]\to L\times E^{\cong}$$
is an acyclic cofibration of the model strucure on $\Seg(A)$.
\end{lemma}
\begin{proof}
By two out of three, and some diagram chasing, is it sufficent to demonstrate the result for $K$ being $L_0$. We then have to show that the square
\[\begin{tikzcd}
	{L_0\times [e,1]} & {L\times [e,1]} \\
	{L_0\times E^{\cong}} & {L\times E^{\cong}}
	\arrow[from=2-1, to=2-2]
	\arrow[from=1-1, to=2-1]
	\arrow[from=1-2, to=2-2]
	\arrow[from=1-1, to=1-2]
\end{tikzcd}\]
is homotopy cocoartesian. As the model structure is cartesian, and as $E^{\cong}\to 1$ is a weak equivalence, this is suffisent to show that the following square is homotopy cocartesian:
\[\begin{tikzcd}
	{L_0\times [e,1]} & {L\times [e,1]} \\
	{L_0} & L
	\arrow[from=2-1, to=2-2]
	\arrow[from=1-1, to=2-1]
	\arrow[from=1-2, to=2-2]
	\arrow[from=1-1, to=1-2]
\end{tikzcd}\]
 As $\uvar\times [e,1]$ and $\uvar\times E^{\cong}$ are left Quillen functors, we can reduce to the case where $L$ is $[a,n]$ and using Segal extension, to the case where $L$ is $[a,1]$. We then have to show that the following square is homotopy cocartesian
\begin{equation}
\label{eq:cartesian square9870}
\begin{tikzcd}
	{(\{0\}\cup\{1\})\times [e,1]} & {[a,1]\times [e,1]} \\
	{\{0\}\cup\{1\}} & {[a,1]}
	\arrow[from=2-1, to=2-2]
	\arrow[from=1-1, to=2-1]
	\arrow[from=1-2, to=2-2]
	\arrow[from=1-1, to=1-2]
\end{tikzcd}
\end{equation}
 Remark then that $[a,1]\times[e,1]$ is the colimit of the following span:
\[\begin{tikzcd}
	{[e,1]\vee[a,1]} & {[a,1]} & {[a,1]\vee[e,1]}
	\arrow["{[a,d^1]}"', from=1-2, to=1-1]
	\arrow["{[a,d^1]}", from=1-2, to=1-3]
\end{tikzcd}\]
The pushout of the span of \eqref{eq:cartesian square9870} is then the (homotopy) colimit of 
\[\begin{tikzcd}
	{[0]\coprod\limits_{[e,1]}[e,1]\vee[a,1]} & {[a,1]} & {[a,1]\vee[e,1]\coprod\limits_{[e,1]}[0]}
	\arrow["{[a,d^1]}"', from=1-2, to=1-1]
	\arrow["{[a,d^1]}", from=1-2, to=1-3]
\end{tikzcd}\]
By two out of three, and using Segal extensions, the two morphisms 
$$[0]\coprod_{[e,1]}[e,1]\vee[a,1]\to [a,1] ~~~~~\mbox{ and }~~~~~[a,1]\vee[e,1]\coprod_{[e,1]}[0]\to [a,1]$$ induced by $[a,d^0]$ and $[a,d^2]$ are weak equivalences.
In particular, this implies that the canonical morphism from the pushout of the span of \eqref{eq:cartesian square9870} to $[a,1]$ is a weak equivalence. As the upper horizontal vertical morphisms of \eqref{eq:cartesian square9870} is a cofibration, this implies that this square is homotopy cocartesian which concludes the proof.
\end{proof}

\begin{lemma}
\label{lemma:j stable by leibtnis}
Let $i:K\to L$ be a monomorphism and $f:X\to Y$ a morphism having the right lifting property against $J$. The induced morphism 
$$f^i:X^{L}\to X^{K}\times_{Y^{K}} Y^{L}$$ has the right lifting property against $J$.
\end{lemma}
\begin{proof}
As the model structure on $\Seg(A)$ is cartesian, $(f^i)^\natural$ is a fibration. We then have to show that this morphism has the right lifting property against $[e,1]_t\to (E^{\cong})'$ and $E^{\cong}\to (E^{\cong})'$. We can reduce to the case where $i$ is a generating acyclic cofibration. If $i$ is $\emptyset\to [0]$, this is obvious. We then suppose that $i$ is $[e,1]\to [e,1]_t$ or $[a,\partial n]\cup [b,n]\to [b,n]$ for $a\to b$ a generating acyclic cofibration of $A$. In both case, $i$ induces an equivalence on objects. The morphism $i\hat{\times}(E^{\cong}\to (E^{\cong})')$ is then the identity. Moreover, $i\hat{\times}([e,1]_t\to (E^{\cong})')$ fits in the following cocartesian square
\[\begin{tikzcd}
	{L^\natural\times [e,1]\coprod_{K^\natural\times [e,1]}K^\natural\times (E^{\cong})} & {L\times [e,1]_t\coprod_{K\times [e,1]_t}K\times (E^{\cong})'} \\
	{L^\natural\times E^{\cong}} & {L\times (E^{\cong})'}
	\arrow[from=1-1, to=2-1]
	\arrow[from=1-1, to=1-2]
	\arrow[from=2-1, to=2-2]
	\arrow[from=1-2, to=2-2]
\end{tikzcd}\]
The lemma \ref{lemma:invertivble natural trans are detect pointwise} implies $f$ has the right lifting property against the left vertical morphism, and so also against the right vertical one.
 By adjunction, this implies that $f^i$ has the desired lifting property.
\end{proof}

\begin{prop}
\label{prop:model structure on stratified Segal category}
There exists a nice model structure on $\stratSeg(A)$ where fibrant objects are stratified Segal $A$-categories and weak equivalences between marked Segal $A$-categories are stratified equivalences. The adjunction 
\[\begin{tikzcd}
	{(\uvar)^\flat:\Seg(A)} & {\stratSeg(A):(\uvar)^\natural}
	\arrow[""{name=0, anchor=center, inner sep=0}, shift left=2, from=1-2, to=1-1]
	\arrow[""{name=1, anchor=center, inner sep=0}, shift left=2, from=1-1, to=1-2]
	\arrow["\dashv"{anchor=center, rotate=-90}, draw=none, from=1, to=0]
\end{tikzcd}\]
induces a Quillen equivalence.

 A left adjoint from $\stratSeg(A)$ to a model category $C$ is a left Quillen functor if it preserves cofibrations, and sends elementary anodyne extensions and morphisms $[e,1]_t\to 1$, $E^{\cong}\to (E^{\cong})'$ to weak equivalences.
\end{prop}
\begin{proof}
We recall that we define $J$ as the reunion of the set of generating acyclic cofibrations of $\Seg(A)$ and of $\{[e,1]_t\to (E^{\cong})'\}$ and $\{E^{\cong}\to (E^{\cong})'\}$ and we suppose that it includes the trivial cofibrations $\{0\}\to E^{\cong}$ and $\{1\}\to E^{\cong}$. We denote $I$ a cellular model for $\Psh{t\Delta[tB]}$.

As $\stratSeg(A)$ is the category of $t\Delta[M]$ stratified presheaves on $\Delta[B]$, we have an adjunction
\[\begin{tikzcd}
	{\pi:\Psh{t\Delta[tB]}} & {\stratSeg(A):\iota}
	\arrow[""{name=0, anchor=center, inner sep=0}, shift left=2, from=1-1, to=1-2]
	\arrow[""{name=1, anchor=center, inner sep=0}, shift left=2, from=1-2, to=1-1]
	\arrow["\dashv"{anchor=center, rotate=-90}, draw=none, from=0, to=1]
\end{tikzcd}\]
where the right adjoint is fully faithfull.

The set $l(r(\iota(\J)\hat{\times}I))$ is a class of anodyne extension relative to the interval $\uvar\times E^{\cong}$ as defined in \cite[paragraph 1.3.12]{cisinski_prefaisceaux_comme_modele}. We then consider $\Psh{t\Delta[tB]}$ endowed with the model structure induced by \cite[théorème 1.3.22]{cisinski_prefaisceaux_comme_modele}. An object is fibrant if and only if it has the right lifting property against $\iota(\J)\hat{\times}I$. A morphism between fibrant objects is a fibration if and only if it has the right lifting property against $\iota(\J)\hat{\times}I$.

According to proposition \ref{prop:transfert from presheaves on tB to stratified presheaves}, this induces a model structure on $\stratSeg(A)$. By adjunction and using lemma \ref{lemma:j stable by leibtnis}, an object is fibrant if and only if it has the right lifting property against $J$ and a morphism between fibrant objects is a fibration if and only if it has the right lifting property against $J$. According to lemma \ref{lemma:fib a marked segal cat}, the fibrant objects correspond to marked Segal $A$-categories.

The theorem \ref{theo:carlos} implies that the adjunction \eqref{eq:adj u flat} is a Quillen adjunction.
It's unit is the identity, and lemma \ref{lemma:fib a marked segal cat} implies that the counit, computed on a fibrant object $(C,C^{\cong})$, is the canonical inclusion $(C,C^\flat)\to (C,C^{\cong})$. As this morphism is a transfinite composite of $E^{\cong}\to (E^{\cong})'$, it is a weak equivalence. The Quillen pair \ref{lemma:fib a marked segal cat} is then a Quillen equivalence.
As a consequence, the model structure on $\stratSeg(A)$ is cartesian and simplicial, and weak equivalences between fibrant objects are stratified equivalences.

It then remains to prove the last assertion. Suppose given a left adjoint $F:\stratSeg(A)\to C$ that preserves cofibrations, and sends elementary anodyne extensions and morphisms $[e,1]_t\to 1$, $E^{\cong}\to (E^{\cong})'$ to weak equivalences. The theorem \ref{theo:carlos} implies that the restriction of $F$ to $\Seg(A)$ is a left Quillen functor, and this functors then sends any acyclic cofibration of $\Seg(A)$ to a weak equivalence. 
As we have a commutative diagram,
\[\begin{tikzcd}
	{(E^{\cong})'} & {[1]_t} \\
	{E^{\cong}} & {[0]}
	\arrow[from=2-1, to=1-1]
	\arrow[from=1-2, to=1-1]
	\arrow[from=2-1, to=2-2]
	\arrow[from=1-2, to=2-2]
	\arrow[from=1-1, to=2-2]
\end{tikzcd}\]
we deduce by two out of three that $F$ sends $[1]_t\to (E^{\cong})'$ to a weak equivalence. The functor $F$ then sends any morphism of $J$ to a weak equivalence. 

As fibrant objects and fibrations between fibrant objects are detected by right lifting property against $J$, the right adjoint of $F$ preserves them. The corollary A.2 of \cite{Dugger_Replacing_model_categories_with_simplicial_ones} implies that $F$ is a left Quillen functor.
\end{proof}

\begin{prop}
Any stratified Segal $A$-precategory is a homotopy colimit of objects of shape $[a,n]$ or $[e,1]_t$.
\end{prop}
\begin{proof}
Let $C$ be a stratified Segal $A$-precategory. We have 
$C\cong \colim_{t\Delta[tB]_{/C}}\uvar.$
The result then follows from propositions \ref{prop:elelangat stable by slice}, \ref{prop:reedy structure on tB} and \ref{prop:delta[B] is reedy}.
\end{proof}

\p
We now present the main way of constructing functors whose codomain is $\stratSeg(A)$. 
\begin{construction}
\label{cons:lifting of a functor from A times Delta}
Suppose given a colimit preserving functor $G:A\times \Delta\to D$ in a complete category, an object $G(e,1)'$ and a morphism $p:G(e,1)\to G(e,1)'$ such that for any object $d$ of $D$, $\Hom(p,d)$ is a monomorphism. We define the functor 
$\overline{G}:\stratSeg(A)\to D$
as the unique colimit preserving functor such that $\overline{G}([e,1]_t):= G(e,1)'$ and for any $a,n$, $\overline{G}([a,n])$ fits in the following cocartesian square: 
\[\begin{tikzcd}
	{\coprod_{i\in[n]} G(a,\{i\})} & {G(a,[n])} \\
	{\coprod_{i\in[n]} G(e,\{i\})} & {\overline{G}([a,n])}
	\arrow[from=1-1, to=1-2]
	\arrow[from=1-1, to=2-1]
	\arrow[from=2-1, to=2-2]
	\arrow[from=1-2, to=2-2]
\end{tikzcd}\]
Remark that if the top horizontal morphism is a cofibration, the previous square is homotopy cocartesian.
\end{construction}

\p In this model structure, the morphism $[e,1]_t\to 1$ is a weak equivalence. For any $a\in A$ and $n\in \Nb$, we define $[e,1]_t\vee[a,n]$ as the pushout:
\[\begin{tikzcd}
	{[e,1]} & {[e,1]\vee[a,n]} \\
	{[e,1]_t} & {[e,1]_t\vee[a,n]}
	\arrow[from=1-1, to=2-1]
	\arrow[from=2-1, to=2-2]
	\arrow[from=1-1, to=1-2]
	\arrow[from=1-2, to=2-2]
\end{tikzcd}\]
The canonical morphism $[e,1]_t\cup [a,1]\cup...\cup [a,1]\to [e,1]_t\vee[a,n]$ is then a weak equivalence. By two out of three, and using the weak equivalence $[e,1]_t\to 1$, this implies that $[e,1]_t\vee[a,n]\to [a,n]$ is a weak equivalence. \sym{((g22@$[e,1]_t\vee[a,n]$}

We define similarly the object $[a,n]\vee[e,1]_t$ that comes along with a weak equivalence $[a,n]\vee[e,1]_t\to [a,n]$.

\subsection{Gray module}
\p
Let $A$ be a category of stratified presheaves on an elegant Reedy category (as defined in paragraph \ref{para:reedy} and section \ref{section:Marked and stratified presheaves}), endowed with a nice model structure (as defined in paragraph \ref{para:nice model structure}). We suppose furthermore that the terminal element of $A$, denoted by $e$, is representable. We also suppose that $A$ is endowed with \textit{intelligent $n$-truncation} for any $n\in \mathbb{N}\cup\{\omega\}$, i.e a family of left Quillen functors $\tau^i_{\uvar}:(\mathbb{N}\cup\{\omega\})^{op}\to \End(A)$ such that
\begin{enumerate}[itemsep=0mm]
\item[$-$] $\tau^i_\omega = id$,
\item[$-$] for any $n\leq m$, $\tau^i_n\tau^i_m=\tau^i_n$,
\item[$-$] for any $n\leq m$, the natural transformation $\tau^i_m\to \tau^i_n$ is an entire monomorphism,
\end{enumerate}
and a left Quillen bifunctor $\uvar\otimes \uvar: \stratSset^1\times A \to A$ such that 
\begin{enumerate}
\item[$-$] for $K$ and $L$ two stratified simplicial sets, and $a\in A$, there is a morphism $K\otimes (L\otimes a)\to (K\times L)\otimes a$ natural in $K,L$ and $a$, such that the following square commutes
\[\begin{tikzcd}
	{K\otimes (L\otimes (M\otimes a))} & {(K\times L)\otimes (M\otimes a)} \\
	{K\otimes ((L\times M)\otimes a)} & {(K\times L\times M) \otimes a}
	\arrow[from=1-1, to=1-2]
	\arrow[from=1-2, to=2-2]
	\arrow[from=2-1, to=2-2]
	\arrow[from=1-1, to=2-1]
\end{tikzcd}\]
for any stratified simplicial sets $M$.
\item[$-$] The functor $[0]\otimes \uvar: A\to A$ is the identity.
\item[$-$] For any integer $n$, for any object $a$ invariant under $\tau^i_n$, and for any stratified simplicial set $K$, the object $K\otimes a$ is invariant under $\tau^i_{n+1}$.
\end{enumerate}
Here, the model category $\stratSset^1$ corresponds to the model structure for $1$-complicial sets on stratified simplicial sets given in theorem \ref{theo:model structure on complicial set}.

\p
We define $e\star a$ as the pushout:
\[\begin{tikzcd}
	{\{0\}\times a} & {[1]\otimes a} \\
	e & {e\star a}
	\arrow[from=1-1, to=2-1]
	\arrow[from=1-1, to=1-2]
	\arrow[from=1-2, to=2-2]
	\arrow[from=2-1, to=2-2]
	\arrow["\lrcorner"{anchor=center, pos=0.125, rotate=180}, draw=none, from=2-2, to=1-1]
\end{tikzcd}\]
We consider the natural transformations $s^0\star a:e\star e\star a\to e\star a$ and $d^0\star a:a\to e\star a$,
induced respectively by the morphism
$$
\begin{array}{cclcccccccc}
[1]\otimes [1]\otimes a&\to & ([1]\times [1])\otimes a &\to & [1]\otimes a\\
&&(\{i\}\times \{j\})\otimes a&\mapsto & \{i\wedge j\}\otimes a.
\end{array}$$
and the morphism 
$$\{1\}\otimes a \to [1]\otimes a.$$
These natural transformations induce commutative diagrams:
\[\begin{tikzcd}
	{e\star e\star e\star a } & { e\star e\star a } && {e\star a} & { e\star e\star a } & {e\star a} \\
	{ e\star e\star a } & {e\star a} &&& {e\star a}
	\arrow["{s^0\star a}", from=1-2, to=2-2]
	\arrow["{s^0\star a}"', from=2-1, to=2-2]
	\arrow["{s^0\star (e\star a)}", from=1-1, to=1-2]
	\arrow["{e\star (s^0\star a)}"', from=1-1, to=2-1]
	\arrow["{e\star d^0}", from=1-4, to=1-5]
	\arrow["{d^0\star (e\star a)}", from=1-5, to=1-6]
	\arrow["{s^0\star a}", from=1-5, to=2-5]
	\arrow["id", curve={height=-6pt}, from=1-6, to=2-5]
	\arrow["id"', curve={height=6pt}, from=1-4, to=2-5]
\end{tikzcd}\]
The (inverted) composition $g,f\mapsto g\circ f$ is a monoidal structure on the category of endomorphisms of $A$ and the natural transformation $s^0:e\star e \star\uvar \to e\star \uvar$ defines a structure of monoid for $e\star\uvar$.
This induces a functor $\Delta\times A\to A$ sending $([n],a)$ to $e\star e\star ....\star a$. We extend this to a functor $\Delta_t\times A\to A$ in defining $[n]_t\star a$ as the pushout:
\[\begin{tikzcd}
	{\underset{k\geq -1}{\coprod}~~\underset{b,~\tau^i_k(b)=b}{\coprod}~~\underset{b\to a}{\coprod}[n]\star b} & {[n]\star a} \\
	{\underset{k\geq -1}{\coprod}~~\underset{b,~\tau^i_k(b)=b}{\coprod}~~\underset{b\to a}{\coprod}\tau^i_{n+k}([n]\star b)} & {[n]_t\star a}
	\arrow[from=2-1, to=2-2]
	\arrow[""{name=0, anchor=center, inner sep=0}, from=1-1, to=1-2]
	\arrow[from=1-2, to=2-2]
	\arrow[from=1-1, to=2-1]
	\arrow["\lrcorner"{anchor=center, pos=0.125, rotate=180}, draw=none, from=2-2, to=0]
\end{tikzcd}\]
where $\tau^i_{-1}$ is the constant functor with value $\emptyset$.

\p 
\label{para:Gray module}
Such model category $A$ is a \notion{Gray module} if for any $a$, the induced functor $\uvar\star a:\Delta_t\to A_{a/}$ lifts to a left Quillen functor $\uvar\star a:\stratSset^\omega\to A_{a/}$.

We recall that $\stratSset^\omega$ denotes the model structure for $\omega$-complicial sets given in theorem \ref{theo:model structure on complicial set}.

For the rest of this chapter, we fix a Gray module $A$. For a stratified simplicial set $K\in \stratSset$, the object $K\star \emptyset\in A$ is simply noted by $K$.

\begin{remark}
In general, $[n]\otimes e$ and $[n]\star \emptyset$ are two very different objects. Indeed $[n]\otimes e$ has to be invariant up to homotopy under $\tau^i_1$ which is not the case for $[n]\star \emptyset$. Analogously $[k]\otimes ([l]\otimes [a])$ and $([k]\otimes [l])\otimes [a]$ have \textit{a priori} no links. When we write $[n_0]\otimes[n_1]\otimes..[n_k]\otimes a$, we will always mean $[n_0]\otimes([n_1]\otimes..([n_k]\otimes a))$.
 \end{remark}

\begin{example}
\label{example:stratsset is gray module}
For any $d\in \mathbb{N}\cup \{\omega\}$,
the model category $\stratSset^d$, corresponding to the model structure for $d$-complicial sets on stratified simplicial sets, and where $K\otimes L := \tau^i_1(K)\boxtimes L$, is an example of Gray module.

Indeed, if $n$ is any integer, we define $[n]^{\diamond}:=[0]\diamond [0]\diamond...\diamond [0]$ and $[n]_t^{\diamond}:= \tau^i_n([n]^{\diamond})$. This induces a colimit preserving functor $K\mapsto K^{\diamond}$. The join coming from $\tau^i_1(\uvar)\boxtimes \uvar$ then corresponds to the functor $(K,L)\mapsto K^\diamond\diamond L$. The proposition \ref{prop:equivalence between diamond and join product} provides a natural transformation $K^{\diamond}\diamond L\to K\star L$, wich implies that the first functor is left Quillen. 
\end{example}

\section{Gray constructions for stratified Segal $A$-categories}
 We now construct a Gray cylinder and a Gray cone on $\stratSeg(A)$, using the structure of Gray module that $A$ has. We denote by $\Delta_+$ the augmented simplex category and $d^0$ the unique morphism $\emptyset\to [0]$.
\subsection{Gray cylinder}
\p
We define the functor 
$$
\begin{array}{ccl}
\Delta^3\times A &\to& \Seg(A)\\
~[n_0],[n_1],[n_2],a&\mapsto &[a,n_0]\vee[[n_1]\otimes a,1]\vee[a,n_2]
\end{array}$$
where $[a,n_0]\vee[[n_1]\otimes a,1]\vee[a,n_2]$ fits in the following pushout:
\[\begin{tikzcd}
	{[[n_1]\otimes a,n_0]\amalg [[n_1]\otimes a,n_2]} & {[[n_1]\otimes a,n_0+1+n_2]} \\
	{[[0]\otimes a,n_0]\amalg [[0]\otimes a,n_2]} & {[a,n_0]\vee[[n_1]\otimes a,1]\vee[a,n_2]}
	\arrow[from=1-1, to=2-1]
	\arrow[from=1-2, to=2-2]
	\arrow[from=2-1, to=2-2]
	\arrow[""{name=0, anchor=center, inner sep=0}, from=1-1, to=1-2]
	\arrow["\lrcorner"{anchor=center, pos=0.125, rotate=180}, draw=none, from=2-2, to=0]
\end{tikzcd}\]
If $n$ is an integer, \wcnotation{$\Delta^3_{/[n]}$}{(delta3@$\Delta^3_{/[n]}$} is the pullback: 
\[\begin{tikzcd}
	{\Delta^3_{/[n]}} & {\Delta^3} \\
	{\Delta_{/[n]}} & \Delta
	\arrow[from=1-1, to=1-2]
	\arrow[from=1-2, to=2-2]
	\arrow[from=2-1, to=2-2]
	\arrow[from=1-1, to=2-1]
	\arrow["\lrcorner"{anchor=center, pos=0.125}, draw=none, from=1-1, to=2-2]
\end{tikzcd}\]
where the right hand functor sends $([n_0],[n_1],[n_2])$ to $[n_0]\star [n_1]^{op}\star [n_2]$.
\begin{prop}
\label{prop:delta 3 n is reedy elegant}
The category $\Delta^3_{/[n]}$ is an elegant Reedy category. 
\end{prop}
\begin{proof}
We denote $X$ the trisimplicial set whose value on $[n_0],[n_1],[n_2]$ is $\Hom_{\Delta}([n_0]\star [n_1]^{op}\star [n_2],[n])$. The category $\Delta^3_{/[n]}$ fits in the pullback
\[\begin{tikzcd}
	{\Delta^3_{/[n]}} & {\Psh{\Delta^3}_{/X}} \\
	{\Delta^3} & {\Psh{\Delta^3}}
	\arrow[from=1-2, to=2-2]
	\arrow[from=1-1, to=2-1]
	\arrow[from=2-1, to=2-2]
	\arrow[from=1-1, to=1-2]
\end{tikzcd}\]
and is then an elegant Reedy category according to proposition \ref{prop:elelangat stable by slice}.
\end{proof}

\p
We define the functor 
$$
\begin{array}{rcl}
A\times \Delta&\to& \Seg(A)\\
~[n],a&\mapsto &F(a,n) 
\end{array}$$
by the formula 
$ F(a,n) :=\underset{\Delta^3_{/[n]}}{\colim}~[a,n_0]\vee[[n_1]\otimes a,1]\vee [a,n_2].$

In order to extend this functor to stratified Segal $A$-precategories with construction \ref{cons:lifting of a functor from A times Delta}, we will need to define the value on $[e,1]_t$, i.e. to choose an object $F(e,1)'$ and an entire cofibration $F(e,1)\to F(e,1)'$. It will be useful to have a more explicit description of this object.
\begin{example}
\label{exe:explicit Gray cycinder 1}
The sub-category of $\Delta^3_{/[1]}$ composed of non degenerate objects can be pictured by the graph:
\[\begin{tikzcd}
	{[0]\star[0]^{op}\star[0]} & {[0]\star[1]^{op}\star[0]} & {[0]\star[0]^{op}\star[0]} \\
	{[0]\star[0]^{op}\star[1]} & {[1]} & {[1]\star[0]^{op}\star[0]} \\
	{[0]\star[0]^{op}\star[0]} && {[0]\star[0]^{op}\star[0]}
	\arrow["{d^0}"', from=3-3, to=2-3]
	\arrow["{d^3}", from=3-1, to=2-1]
	\arrow["{d^2}"', from=1-1, to=2-1]
	\arrow["{d^1}"', from=1-3, to=1-2]
	\arrow["{d^2}", from=1-1, to=1-2]
	\arrow["{s^0s^2}"{description}, from=1-2, to=2-2]
	\arrow["{d^1}", from=1-3, to=2-3]
	\arrow["{s^1}"{description}, from=1-3, to=2-2]
	\arrow["{d^0s^0s^1}", from=3-3, to=2-2]
	\arrow["{s^1s^2}"{description}, from=2-3, to=2-2]
	\arrow["{s^0}"{description}, from=1-1, to=2-2]
	\arrow["{s^0s^0}"{description}, from=2-1, to=2-2]
	\arrow["{d^1s^0s^1}"', from=3-1, to=2-2]
\end{tikzcd}\]
The Segal $A$-precategory $F(e,1)$ is then the colimit of the following diagram:
\[\begin{tikzcd}
	{[e,2]} & {[e,1]} & {[[1],1]} & {[e,1]} & {[e,2]}
	\arrow["{[e,d^1]}"', from=1-2, to=1-1]
	\arrow["{[d^1,1]}"', from=1-4, to=1-3]
	\arrow["{[d^0,1]}", from=1-2, to=1-3]
	\arrow["{[e,d^1]}", from=1-4, to=1-5]
\end{tikzcd}\]
\end{example}
\p \sym{(iotimes@$I\otimes\uvar$}
We define the functor $$ I \otimes\uvar: \stratSeg(A)\to \stratSeg(A)$$ induced, as in the construction \ref{cons:lifting of a functor from A times Delta}, by $F$ and with $F(e,1)'$ as the colimit of the following diagram:
\[\begin{tikzcd}[sep = small]
	{[e,1]_t} & {[e,1]} & {[e,2]} & {[e,1]} & {[[1]_t,1]} & {[e,1]} & {[e,2]} & {[e,1]} & {[e,1]_t}
	\arrow["{[e,d^0]}"', from=1-8, to=1-7]
	\arrow["{[e,d^2]}", from=1-2, to=1-3]
	\arrow["{[e,d^1]}"', from=1-4, to=1-3]
	\arrow["{[d^1,1]}"', from=1-6, to=1-5]
	\arrow["{[d^0,1]}", from=1-4, to=1-5]
	\arrow["{[e,d^1]}", from=1-6, to=1-7]
	\arrow[from=1-2, to=1-1]
	\arrow[from=1-8, to=1-9]
\end{tikzcd}\]

The two objects of $\Delta^3_{[n]}$, $s^ns^{n+1}:[n]\star [0]^{op}\star[0]\to [n]$ and $s^0s^0:[0]\star[0]^{op}\star[n]\to [n]$, induce two morphisms:
$d^1\otimes [a,n]:\{0\}\otimes[a,n]:= [a,n]\hookrightarrow [a,n]\vee[e,1]\to I\otimes [a,n]$ and $d^0\otimes [a,n]:\{1\}\otimes[a,n]:= [a,n]\hookrightarrow [e,1]\vee[a,n]\to I\otimes [a,n]$. By extending them by colimits we get two maps 
$$d^1\otimes C:\{0\}\otimes C := C\to I\otimes C~~~\mbox{and}~~~
d^0\otimes C: \{1\}\otimes C := C\to I\otimes C.$$

\begin{prop}
The Segal $A$-precategory $I \otimes[a,1]$ is the colimit and the homotopy colimit of the diagram:
\[\begin{tikzcd}
	{[e,1]\vee[a,1]} & {[a,1]} & {[[1]\otimes a,1]} & {[a,1]} & {[a,1]\vee[e,1]}
	\arrow["{[e,d^1]}"', from=1-2, to=1-1]
	\arrow["{[d^1,1]}"', from=1-4, to=1-3]
	\arrow["{[d^0,1]}", from=1-2, to=1-3]
	\arrow["{[e,d^1]}", from=1-4, to=1-5]
\end{tikzcd}\]
\end{prop}
\begin{proof}
 The description of $\Delta^{3}_{/[1]}$ is given in the example \ref{exe:explicit Gray cycinder 1}.
The stratified Segal $A$-precategory $F(a,1)$ is then the colimit of the following diagram:
\[\begin{tikzcd}
	{[a,2]} & {[a,1]} & {[[1]\otimes a,1]} & {[a,1]} & {[a,2]}
	\arrow["{[e,d^1]}"', from=1-2, to=1-1]
	\arrow["{[d^1,1]}"', from=1-4, to=1-3]
	\arrow["{[d^0,1]}", from=1-2, to=1-3]
	\arrow["{[e,d^1]}", from=1-4, to=1-5]
\end{tikzcd}\]
and the Segal $A$-precategory $I \otimes[a,1]$ is the colimit of the given diagram. As all the morphisms are cofibrations, this colimit is a homotopy colimit.
\end{proof}

\begin{remark}
\label{rem:link with thestrict case cyinder}
To justify why this definition of the Gray interval is the good one, let's study the case of $\zo$-categories.
We denote by $I$ the $\zo$-category generated by the graph $0\to 1$.
If $C$ is an $\zo$-category, we denote by $[C,1]$ the $\zo$-category with two objects - denoted by $0$ and $1$ - and verifying: 
$$\Hom_{[C,1]}(0,1) :=C,~~~\Hom_{[C,1]}(1,0) := \emptyset,~~~\Hom_{[C,1]}(0,0)=\Hom_{[C,1]}(1,1):=\{id\}.$$
We denote by $e$ the terminal $\zo$-category. For example $[e,1] = I$. 
Applying the duality $(\uvar)^{op}$ to the formula given in theorem \ref{theo:appendice formula for otimes}, the $\zo$-category $I\otimes [C,1]$ is the colimit of the following diagram:
\[\begin{tikzcd}
	{[e,1]\vee[C,1]} & {[C,1]} & {[[1]\otimes C,1]} & {[C,1]} & {[C,1]\vee[e,1]}
	\arrow["\triangledown"', from=1-2, to=1-1]
	\arrow["{ [d^1\otimes C,1]}"', from=1-4, to=1-3]
	\arrow["{ [d^0\otimes C,1]}", from=1-2, to=1-3]
	\arrow["\triangledown", from=1-4, to=1-5]
\end{tikzcd}\]
where $\triangledown$ denotes the whiskerings.
\end{remark}
\subsection{Gray cone}
\p 
We define the functor 
$$
\begin{array}{ccl}
\Delta^2\times A &\to& \Seg(A)\\
~[n_0],[n_1] ,a&\mapsto &[[n_0]\otimes a,1]\vee[a,n_1]
\end{array}$$
where $[[n_0]\otimes a,1]\vee[a,n_1]$ fits in the following pushouts:
\[\begin{tikzcd}
	{ [[n_0]\otimes a,n_1]} & {[[n_0]\otimes a,1+n_1]} \\
	{[a,n_1]} & {[[n_0]\otimes a,1]\vee[a,n_1]}
	\arrow[from=1-1, to=2-1]
	\arrow[from=1-2, to=2-2]
	\arrow[from=2-1, to=2-2]
	\arrow[""{name=0, anchor=center, inner sep=0}, from=1-1, to=1-2]
	\arrow["\lrcorner"{anchor=center, pos=0.125, rotate=180}, draw=none, from=2-2, to=0]
\end{tikzcd}\]

If $n$ is an integer, \wcnotation{$\Delta^2_{/[n]}$}{(delta2@$\Delta^2_{/[n]}$} is the pullback: 
\[\begin{tikzcd}
	{\Delta^2_{/[n]}} & {\Delta^2} \\
	{\Delta_{/[n]}} & \Delta
	\arrow[from=1-1, to=1-2]
	\arrow[from=1-2, to=2-2]
	\arrow[from=2-1, to=2-2]
	\arrow[from=1-1, to=2-1]
	\arrow["\lrcorner"{anchor=center, pos=0.125}, draw=none, from=1-1, to=2-2]
\end{tikzcd}\]
where the right hand functor sends $([n_0],[n_1])$ to $[n_0]^{op}\star [n_1]$.
\begin{prop}
\label{prop:delta 2 n is reedy elegant}
The category $\Delta^2_{/[n]}$ is an elegant Reedy category. 
\end{prop}
\begin{proof}
The proof is analogue to the one of proposition \ref{prop:delta 3 n is reedy elegant}.
\end{proof}

\p 
We define the functor 
$$
\begin{array}{rcl}
A\times \Delta&\to& \Seg(A)\\
~[n],a&\mapsto &H(a,n)
\end{array}$$
by the formula 
$H(a,n):=\colim_{\Delta^2_{/[n]}} [[n_0]\otimes a,1]\vee[a,n_1]$.
 
In order to extend this functor to stratified Segal $A$-precategories with construction \ref{cons:lifting of a functor from A times Delta}, we will need to define the value on $[e,1]_t$, i.e. to choose an object $H(e,1)'$ and an entire cofibration $H(e,1)\to H(e,1)'$. It will be useful to have a more explicit description of this object. 
\begin{example}
\label{exe:explicit Gray cone 1}
The sub-category of $\Delta^2_{/[1]}$ composed of non degenerate objects can be pictured by the graph:
\[\begin{tikzcd}
	{[0]^{op}\star[0]} & {[1]^{op}\star[0]} & {[0]^{op}\star[0]} \\
	{[0]^{op}\star[1]} & {[1]} \\
	{[0]^{op}\star[0]}
	\arrow["{d^2}", from=3-1, to=2-1]
	\arrow["{d^1}"', from=1-1, to=2-1]
	\arrow["{d^1}"', from=1-3, to=1-2]
	\arrow["{d^2}", from=1-1, to=1-2]
	\arrow["{s^1}"{description}, from=1-2, to=2-2]
	\arrow["{d^0s^0}", from=1-3, to=2-2]
	\arrow["id"{description}, from=1-1, to=2-2]
	\arrow["{s^0}"{description}, from=2-1, to=2-2]
	\arrow["{d^1s^0}"', from=3-1, to=2-2]
\end{tikzcd}\]
The Segal $A$-precategory $ H(e,1)$ is then the colimit of the following diagram:
\[\begin{tikzcd}
	{[e,2]} & {[e,1]} & {[[1],1]}
	\arrow["{[e,d^1]}"', from=1-2, to=1-1]
	\arrow["{[d^0,1]}", from=1-2, to=1-3]
\end{tikzcd}\]
\end{example}

\p \sym{(estar@$e\star\uvar$}
We define the functor $$ e \star\uvar: \stratSeg(A)\to \stratSeg(A)$$ induced, as in the construction \ref{cons:lifting of a functor from A times Delta} by $ H$ and with $H(e,1)'$ as the colimit of the following diagram:
\[\begin{tikzcd}
	{[e,1]_t} & {[e,1]} & {[e,2]} & {[e,1]} & {[[1]_t,1]}
	\arrow["{[e,d^0]}", from=1-2, to=1-3]
	\arrow["{[e,d^1]}"', from=1-4, to=1-3]
	\arrow["{[d^0,1]}", from=1-4, to=1-5]
	\arrow[from=1-2, to=1-1]
\end{tikzcd}\]

The object $s^0:[0]^{op}\star[n]\to [n]$ induces a composite morphism $d^0\star[a,n]:\emptyset\star[a,n]:= [a,n]\hookrightarrow [1,1]\vee[a,n]\to e\star [a,n]$, which induces by extension by colimit a natural transformation $$d^0\star C:\emptyset\star C:= C\to e\star C.$$

\begin{prop}
\label{prop:explicit expression of e star a,1}
The Segal $A$-precategory
 $e\star [a,1]$ is the colimit and the homotopy colimit of the following diagram:
$$\begin{tikzcd}
	{[e,1]\vee[a,1]} & {[a,1]} & {[e\star a,1]}
	\arrow["{[e,d^1]}"', from=1-2, to=1-1]
	\arrow["{[d^{0}\star a,1]}", from=1-2, to=1-3]
\end{tikzcd}
$$
\end{prop}
\begin{proof}
We have already given the description of $\Delta^2_{/[1]}$ in
the example \ref{exe:explicit Gray cone 1}.
The Segal $A$-precategory $H(a,1)$ is the colimit of the following diagram:
\[\begin{tikzcd}
	{[a,2]} & {[a,1]} & {[[1]\otimes a,1]}
	\arrow["{[e,d^1]}"', from=1-2, to=1-1]
	\arrow["{[d^0\otimes a,1]}", from=1-2, to=1-3]
\end{tikzcd}\]
and $e\star [a,1]$ is the colimit of the given diagram.
As all the morphisms are cofibrations, this colimit is a homotopy colimit.
\end{proof}

\begin{remark}
Using again notations of remark \ref{rem:link with thestrict case cyinder}, if $C$ is an $\zo$-category, the $\zo$-category $e\star C$ is the colimit of the following diagram:
\[\begin{tikzcd}
	{[e,1]\vee[C,1]} & {[C,1]} & {[e\star C,1]}
	\arrow["\triangledown"', from=1-2, to=1-1]
	\arrow["{[d^{0}\star C,1]}", from=1-2, to=1-3]
\end{tikzcd}\]
where $\triangledown$ is the whiskering. Our definition of the join is therefore analogous to that of the strict world.
\end{remark}

\begin{prop}
\label{prop:explicit expression of e star e star a,1}
The Segal $A$-precategory
 $[1]\star [a,1]$ is the colimit of the following diagram:
$$
\begin{tikzcd}[column sep=0.7 cm]
	{[[2]\botimes a,1]} & {[[1]\otimes a,1]} & {[[1],1]\vee[a,1]} & {[e,1]\vee[a,1]} & {[e,2]\vee[a,1]} \\
	{[e\star a,1]} &&& {[a,1]} & {[e,1]\vee[a,1]} \\
	{[[1]\star a,1]} &&& {[e\star a,1]} & {[e,1]\vee[e\star a,1]}
	\arrow["{[d^0\otimes a,1]}"', from=1-2, to=1-1]
	\arrow["{[[1]\otimes a,d^1]}", from=1-2, to=1-3]
	\arrow["{[d^0\otimes a,2]}"', from=1-4, to=1-3]
	\arrow["{[a,d^1]}", from=1-4, to=1-5]
	\arrow["{[a,d^1]}", from=2-4, to=2-5]
	\arrow["{[d^1\botimes a,1]}", from=2-1, to=1-1]
	\arrow["{[a,d^1]}", from=2-4, to=1-4]
	\arrow["{[a,d^2]}"', from=2-5, to=1-5]
	\arrow["{[d^1\star a,1]}"', from=2-1, to=3-1]
	\arrow["{[d^{0}\star a,2]}", from=2-5, to=3-5]
	\arrow["{[d^{0}\star a,1]}"', from=2-4, to=2-1]
	\arrow["{[d^0\star a,1]}", from=3-4, to=3-1]
	\arrow["{[d^{0}\star a,1]}"', from=2-4, to=3-4]
	\arrow["{[e\star a,d^1]}"', from=3-4, to=3-5]
\end{tikzcd}$$
where $[2]\botimes a$ and $[[1],1]\vee[a,1]$ are the pushouts:
\[\begin{tikzcd}
	{[1]\otimes a\amalg [1]\otimes a} && {[2]\otimes a} & {[[1]\otimes a,1]\amalg [[1]\otimes a,2]} & {[[1]\otimes a,2]} \\
	{e\star a\amalg e\star a} && {[2]\botimes a} & {[[1],1]\amalg[a,1]} & {[[1],1]\vee[a,1]}
	\arrow[""{name=0, anchor=center, inner sep=0}, "{d^1\otimes a\amalg d^2\otimes a}", from=1-1, to=1-3]
	\arrow["{d^1\botimes a\amalg d^2\botimes a}"', from=2-1, to=2-3]
	\arrow[from=1-1, to=2-1]
	\arrow[from=1-3, to=2-3]
	\arrow["{[[1]\otimes a,d^2\amalg d^1]}", from=1-4, to=1-5]
	\arrow[from=1-4, to=2-4]
	\arrow[from=2-4, to=2-5]
	\arrow[from=1-5, to=2-5]
	\arrow["\lrcorner"{anchor=center, pos=0.125, rotate=180}, draw=none, from=2-5, to=1-4]
	\arrow["\lrcorner"{anchor=center, pos=0.125, rotate=180}, draw=none, from=2-3, to=0]
\end{tikzcd}\]
\end{prop}
\begin{proof}
Let's start by studying the object $H(a,2)$. Here is a final subcategory of $\Delta^2_{/[2]}$:
\[\begin{tikzcd}
	{[1]^{op}\star[0]} & {[1]^{op}\star[1]} & {[0]^{op}\star[1]} \\
	{[2]^{op}\star[0]} & {[2]} & {[0]^{op}\star[2]}
	\arrow["{s^2}"', from=2-1, to=2-2]
	\arrow["{d^2}"', from=1-1, to=2-1]
	\arrow["{d^2}", from=1-1, to=1-2]
	\arrow["{s^1}", from=1-2, to=2-2]
	\arrow["{d^1}"', from=1-3, to=1-2]
	\arrow["{d^1}", from=1-3, to=2-3]
	\arrow["{s^0}", from=2-3, to=2-2]
\end{tikzcd}\]
The Segal $A$-precategory $H(a,2)$ is then the colimit of the following diagram:
\[\begin{tikzcd}
	{[[2]\otimes a,1]} & {[[1]\otimes a,1]} & {[[1]\otimes a,1]\vee[a,1]} & {[a,2]} & {[a,3]}
	\arrow["{[d^0\otimes a,1]}"', from=1-2, to=1-1]
	\arrow["{[[1]\otimes a,d^1]}", from=1-2, to=1-3]
	\arrow["{[d^0\otimes a,2]}"', from=1-4, to=1-3]
	\arrow["{[a,d^1]}", from=1-4, to=1-5]
\end{tikzcd}\]
The Segal $A$-precategory $e\star([e,1]\vee[a,1])$ is then the colimit of the following diagram:
\[\begin{tikzcd}[column sep=0.7 cm]
	{[[2]\botimes a,1]} & {[[1]\otimes a,1]} & {[[1],1]\vee[a,1]} & {[e,1]\vee[a,1]} & {[e,2]\vee[a,1]}
	\arrow["{[d^0\otimes a,1]}"', from=1-2, to=1-1]
	\arrow["{[[1]\otimes a,d^1]}", from=1-2, to=1-3]
	\arrow["{[d^0\otimes a,2]}"', from=1-4, to=1-3]
	\arrow["{[a,d^1]}", from=1-4, to=1-5]
\end{tikzcd}\]
The fact that $[1]\star[a,1]$ is the colimit of the given diagram then follows from the equality $[1]\star[a,1]=e\star(e\star[a,1])$ and from the explicit expression of 
$e\star [a,1]$ given in proposition \ref{prop:explicit expression of e star a,1}.
\end{proof}

\subsection{Link between the Gray cylinder and Gray cone}
\p There is a canonical morphism $I\otimes [a,n]\to e\star [a,n]$ sending $[a,n_0]\vee[[n_1]\otimes a,1]\vee[a,n_2]$ to $[[n_1]\otimes a,1]\vee[a,n_2]$.
Note that the induced morphism $I\otimes [e,1]\to e\star [e,1]\to e\star [e,1]_t$ factors through $I\otimes [e,1]_t$. We can then extend it by colimit to a natural transformation $I\otimes C\to e\star C$.

We now define $(I\otimes [a,n])_{/\{0\}\otimes [a,n]}$ and $[a,n_0]\vee[[n_1]\otimes a,1]\vee[a,n_2]_{/[a,n_0]}$ as the pushouts:
\[\begin{tikzcd}[column sep = 0.5cm]
	{[a,n]\otimes\{0\}} & {I\otimes[a,n]} & { [a,n_0]} & { [a,n_0]\vee[[n_1]\otimes a,1]\vee[a,n_2]} \\
	e & {(I\otimes [a,n])_{/\{0\}\otimes [a,n]}} & e & { [a,n_0]\vee[[n_1]\otimes a,1]\vee[a,n_2]_{/[a,n_0]}}
	\arrow[""{name=0, anchor=center, inner sep=0}, from=1-1, to=1-2]
	\arrow[from=2-1, to=2-2]
	\arrow[from=1-1, to=2-1]
	\arrow[from=1-2, to=2-2]
	\arrow[from=1-3, to=2-3]
	\arrow[from=1-4, to=2-4]
	\arrow[from=2-3, to=2-4]
	\arrow[""{name=1, anchor=center, inner sep=0}, from=1-3, to=1-4]
	\arrow["\lrcorner"{anchor=center, pos=0.125, rotate=180}, draw=none, from=2-4, to=1]
	\arrow["\lrcorner"{anchor=center, pos=0.125, rotate=180}, draw=none, from=2-2, to=0]
\end{tikzcd}\]
By Segal extensions and by two out of three, the following canonical morphism 
$$ [a,n_0]\vee[[n_1]\otimes a,1]\vee[a,n_2]_{/[a,n_0]}\to [[n_1]\otimes a,1]\vee[a,n_2]$$
is a weak equivalence. As $\Delta^3_{/[n]}$ is Reedy elegant, this induces a weak equivalence
$$\colim_{\Delta^3_{/[n]}} [a,n_0]\vee[[n_1]\otimes a,1]\vee[a,n_2]_{/[a,n_0]}\to \colim_{\Delta^3_{/[n]}} [[n_1]\otimes a,1]\vee[a,n_2].$$
Remark furthermore that the left hand object is equivalent to $(I\otimes [a,n])_{/\{0\}\otimes [a,n]}$ and the right one to 
 $H(a,n)$. As the construction \ref{cons:lifting of a functor from A times Delta} preserves weakly invertible natural transformations between functors that preserve cofibration, this induces a weakly invertible natural transformation $(I\otimes [a,n])_{/\{0\}\otimes [a,n]}\to e \star[a,n]$. This directly implies that squares
\[\begin{tikzcd}
	{\{0\}\otimes [a,n]} & {I\otimes[a,n]} && {\{0\}\otimes [e,1]_t} & {I\otimes[e,1]_t} \\
	e & {e\star[a,n]} && e & {e\star[e,1]_t}
	\arrow[from=1-1, to=2-1]
	\arrow[from=2-1, to=2-2]
	\arrow[from=1-1, to=1-2]
	\arrow[from=1-2, to=2-2]
	\arrow[from=1-4, to=2-4]
	\arrow[from=2-4, to=2-5]
	\arrow[from=1-5, to=2-5]
	\arrow[from=1-4, to=1-5]
\end{tikzcd}\]
are homotopy cocartesian. As every stratified Segal $A$-precategory is a homotopy colimit of objects of shape $[a,n]$ and $[e,1]_t$, and as $I\otimes \uvar$ and $e\star \uvar$ preserves monomorphisms, this implies the following proposition:

\begin{prop}
\label{labe:Link between the Gray cylinder and cone}
For any stratified Segal $A$-precategory $C$, the natural transformation $I\otimes \uvar\to e\star \uvar$ fits into a homotopy cocartesian square:
\[\begin{tikzcd}
	{\{0\}\otimes C} & {I\otimes C} \\
	e & {e\star C}
	\arrow[from=1-1, to=1-2]
	\arrow[from=2-1, to=2-2]
	\arrow[from=1-1, to=2-1]
	\arrow[from=1-2, to=2-2]
\end{tikzcd}\]
\end{prop}

\p
We define the functor 
$$
\begin{array}{rcl}
A\times \Delta&\to& \Seg(A)\\
~[n],a&\mapsto &T(a,n)
\end{array}$$
by the formula 
$T(a,n) := [[n]\otimes a,1]$.

Eventually
we define the functor $ \Sigma^{\circ}[a,n]: \stratSeg(A)\to \stratSeg(A)$ \sym{(sigmacirc@$\Sigma^{\circ}$} induced, as in the construction \ref{cons:lifting of a functor from A times Delta}, by $ T$ and with $T(e,1):= [[1]_t\otimes e,1]$. This functor is called the \wcnotion{$\circ$-suspension}{suspensions@$\circ$-suspension}.
With a proof similar to the on of proposition \ref{labe:Link between the Gray cylinder and cone}, one can show:
\begin{prop}
\label{labe:Link between the Gray cylinder and cosuspension}
There exists a natural transformation $e\star \uvar\to\Sigma^{\circ}(\uvar)$ such that
for any marked Segal $A$-precategory $C$, $e\star C\to \Sigma^{\circ}C$ induces a homotopy cocartesian square:
\[\begin{tikzcd}
	{ C} & {e\star C} \\
	{ e} & {\Sigma^{\circ}C}
	\arrow[from=1-1, to=1-2]
	\arrow[from=2-1, to=2-2]
	\arrow[from=1-1, to=2-1]
	\arrow[from=1-2, to=2-2]
\end{tikzcd}\]
\end{prop}

\subsection{Gray constructions are left Quillen}
In this section, we show that the Gray cylinder is a Quillen functor. Combined with the proposition \ref{labe:Link between the Gray cylinder and cone}, this will imply that the Gray cone is Quillen.
\p
Let $x:[k_0]\star[k_1]^{op}\star [k_2] \to[n]$ be an element of $\Delta^3_{/[n]}$. The \wcnotion{degree}{degree of an element of $\Delta^3_{/[n]}$} of $x$, is $f(0)-f(k_1)$ where $f$ is the composite morphism:
$$f:[k_1]^{op}\to [k_0]\star[k_1]^{op}\star [k_2] \to [n]$$
We will denote by $K_{\leq i}$ the full subcategory of $\Delta^3_{/[n]}$ whose objects are of degree inferior or equal to $i$.

An element $x:[k_0]\star[k_1]^{op}\star [k_2] \to[n]$ of degree $d$ is \wcnotion{regular}{regular elements of $\Delta^3_{/[n]}$} if $k_1 = d$, $k_0+k_1+k_2 = n$ and 
$$x(l):=
\left\{
\begin{array}{ll}
l &\mbox{if $l\leq k_0$}\\
l-1 &\mbox{if $k_0<l\leq k_0+k_1$}\\
l-2 &\mbox{if $k_0+k_1<l$}\\
\end{array} \right.$$
Remark that the regular object $x$ is characterized by the triple $(k_0,k_1,k_2)$.

\p
Let $x:[k_0]\star[k_1]^{op}\star [k_2] \to[n]$ be an element $\Delta^3_{/[n]}$, and $i:[0]\to [k_0]\star[k_1]^{op}\star [k_2]$ a morphism. We denote by $d^ix :=[k_0']\star[k_1']^{op}\star [k_2'] \xrightarrow{d} [k_0]\star[k_1]^{op}\star [k_2] \to[n]$ the morphism that avoids $i$, and where $k_j' := k_j-1$ if $i$ factors through $[k_j]$ and $k_j' := k_j$ if not. We then define
$(\Delta^3_{/[n]})_{/\Lambda^ix}$ as the full subcategory of $(\Delta^3_{/[n]})_{/x}$ that includes any non negative object $x'\to x$ that are different of $d^ix\to x$ and $id:x\to x$.

\begin{lemma}
\label{lem:Gray cinder is Quillen 2}
For any regular object $x:[k_0]\star[k_1]^{op}\star [k_2] \to[n]$ and for any $i:[0]\to [k_0]\star[k_1]^{op}\star [k_2]$ which is neither $k_0+1$ nor $k_0+k_1+1$, the morphism
$$\underset{(\Delta^3_{/[n]})_{\Lambda^ix}}{\colim} [a,\uvar]\vee[\uvar \otimes a,1]\vee[a,\uvar] 
\to [a,k_0]\vee[[k_1]\otimes a,1]\vee[a,k_2]$$
is an acyclic cofibration. 
\end{lemma} 
\begin{proof}
Suppose first that the image of $i$ is in $[k_0]$. There is a cocartesian square:
\[\begin{tikzcd}[column sep=0.38cm]
	{[[k_1]\otimes a, \Lambda^{i}[k_0+1+k_2]]\cup [\partial[k_1]\otimes a,  [k_0+1+k_2]]} & {\underset{(\Delta^3_{/[n]})_{/\Lambda^ix}}{\colim} [a,\uvar]\vee[\uvar \otimes a,1]\vee[a,\uvar] } \\
	{[[k_1]\otimes a, [k_0+1+k_2]]} & {[a,k_0]\vee[[k_1]\otimes a,1]\vee[a,k_2]}
	\arrow[from=1-1, to=1-2]
	\arrow[from=2-1, to=2-2]
	\arrow[from=1-1, to=2-1]
	\arrow[from=1-2, to=2-2]
\end{tikzcd}\]
where the left-hand morphism is an acyclic cofibration. The case where the image of $i$ is in $[k_2]$ is similar. 
Suppose now that $i$ lands in $[k_1]$. We then define $i':=i-k_0-1$, and there is a cocartesian square:
\[\begin{tikzcd}[column sep=0.33cm]
	{[\Lambda^{i'}[k_1]\otimes a, [k_0+1+k_2]]\cup[ [k_1]\otimes a,\partial [k_0+1+k_2]]} & {\underset{(\Delta^3_{/[n]})_{/\Lambda^ix}}{\colim} [a,\uvar]\vee[\uvar \otimes a,1]\vee[a,\uvar] } \\
	{[ [k_1]\otimes a, [k_0+1+k_2]]} & {[a,k_0]\vee[[k_1]\otimes a,1]\vee[a,k_2]}
	\arrow[from=1-1, to=1-2]
	\arrow[from=2-1, to=2-2]
	\arrow[from=1-1, to=2-1]
	\arrow[from=1-2, to=2-2]
\end{tikzcd}\]
where the left-hand morphism is an acyclic cofibration.
\end{proof}

\begin{lemma}
\label{lem:Gray cinder is Quillen 3}
Let $0<k<n$ be two integers. The morphism
$$\underset{\Delta^3_{/\Lambda^k[n]} \cup K_{\leq d}}{\colim} [a,\uvar]\vee[\uvar \otimes a,1]\vee[a,\uvar] \to \underset{\Delta^3_{/\Lambda^k[n]} \cup K_{\leq d+1}}{\colim} [a,\uvar]\vee[\uvar \otimes a,1]\vee[a,\uvar] $$
is an acyclic cofibration
\end{lemma}
\begin{proof}
For $x:=[k_0]\star [k_1]^{op}\star[k_2]\to [n]$ a regular element of degree $d+1$, we denote by $s_x$ the section of $x$ that avoids $k_0+1$ and $k_0+k_1+1$. We denote $R_{d+1}$ the set of regular elements of degree $d+1$.
We claim that we have a cocartesian square
\begin{equation}
\label{eq:Gray cinder is Quillen 3}
\begin{tikzcd}
	{\coprod_{x\in R_{d+1}}(\Delta^3_{/[n]})_{/\Lambda^{s_k(x)} x}} & {\Delta^3_{/\Lambda^k[n]} \cup K_{\leq d}} \\
	{\coprod_{x\in R_{d+1}}(\Delta^3_{/[n]})_{/x}} & {\Delta^3_{/\Lambda^k[n]}\cup K_{\leq d+1}}
	\arrow[from=1-1, to=2-1]
	\arrow[from=1-1, to=1-2]
	\arrow[from=2-1, to=2-2]
	\arrow[from=1-2, to=2-2]
\end{tikzcd}
\end{equation}
This will induce a cocartesian square:
\[\begin{tikzcd}[column sep =0.3cm]
	{\coprod_{x\in R_{d+1}}\underset{(\Delta^3_{/[n]})_{/\Lambda^{s_x(k)}x}}{\colim} [a,\uvar]\vee[\uvar \otimes a,1]\vee[a,\uvar] } & {\underset{\Delta^3_{/\Lambda^k[n]} \cup K_{\leq d }}{\colim} [a,\uvar]\vee[\uvar \otimes a,1]\vee[a,\uvar] } \\
	{\coprod_{x\in R_{d+1}}\ [a,k_0]\vee[[k_1]\otimes a,1]\vee[a,k_2] } & {\underset{\Delta^3_{/\Lambda^k[n]} \cup K_{\leq d +1}}{\colim} [a,\uvar]\vee[\uvar \otimes a,1]\vee[a,\uvar] }
	\arrow[from=1-1, to=2-1]
	\arrow[from=1-2, to=2-2]
	\arrow[from=2-1, to=2-2]
	\arrow[from=1-1, to=1-2]
\end{tikzcd}\]
where the left vertical morphism is an acyclic cofibration according to lemma \ref{lem:Gray cinder is Quillen 2}, which will conclude the proof.

We then have to justify the cocartesianess of the square \eqref{eq:Gray cinder is Quillen 3}. We denote by $D$ the colimit of the underlying span of this square and $\psi:D\to \Delta^3_{/\Lambda^k[n]}\cup K_{\leq d+1}$ the induced morphism. We will construct an inverse $\phi$ of this functor.

Let $x:[k_0]\star[k_1]^{op}\star [k_2] \to[n]$ be an element of $\Delta^3_{/[n]}$ of degree $(d+1)$. We denote by $x_r$ the regular element characterized by the triple $(x(k_1),d+1,n-x(k_0+k_1+1))$. There is a unique morphism $x\to x_r$. Furthermore, for any other regular element $x'$, $\Hom(x,x')=\emptyset$. We then set $$\phi(x):= x\to x_r\in (\Delta^3_{/[n]})_{/x_r}.$$ If $x:[k_0]\star[k_1]^{op}\star [k_2] \to[n]$ is an element of $\Delta^3_{/\Lambda^k[n]}$, we set $$\phi(x):=x\in \Delta^3_{/\Lambda^k[n]}\cup K_{\leq d}.$$ To justify that this is well defined, remark that for any object $x$ of $\Delta^3_{\Lambda^k[n]}$ of degree $d+1$, the morphism $x\to x_r$ factors through $\Lambda^{s_k(x_r)}x_r$. This assignation lifts to a functor $\phi:\Delta^3_{/\Lambda^k[n]}\cup K_{\leq d+1}\to D$ that is an inverse of $\psi$.
\end{proof}

\begin{prop}
\label{prop:Gray cinder is Quillen 5}
The morphism 
$I\otimes([a,1]\cup[a,1]\cup... \cup [a,1])\to I\otimes [a,n]$ is an acyclic cofibration.
\end{prop}
\begin{proof}
Let $0<k< n$ be two integers.
Let's demonstrate first that morphisms $I\otimes[a,\Lambda^k[n]]\to I\otimes [a,n]$ are acyclic cofibrations.
We set
$$P_d:=\underset{\Delta^3_{/\Lambda^k[n]} \cup K_{\leq d}}{\colim} [a,\uvar]\vee[\uvar \otimes a,1]\vee[a,\uvar].$$
According to lemma \ref{lem:Gray cinder is Quillen 3} , we have a sequence of acyclic cofibrations 
$I\otimes[a,\Lambda^k[n]]=P_0\to P_1...\to P_n= I\otimes [a,n] $. This implies that the functor $I\otimes [a,\uvar]:\Sset\to \stratSeg(A)$ sends inner anodyne extensions to weak equivalences. 

Eventually, proposition 3.7.4 of \cite{Cisinski_Higher_categories_and_homotopical_algebra} states that the inclusion $[1]\cup...\cup[1]\cup[1]\to[n]$ is an inner anodyne extension, which concludes the proof.
\end{proof}

\begin{lemma}
\label{lem:Gray cinder is Quillen 6}
Let $a\to b$ be a generating acyclic cofibration. The morphism 
$I\otimes ([a,n]\cup [b,\partial[n]])\to I\otimes [b,n]$ is an acyclic cofibration.
\end{lemma}
\begin{proof}
It is obvious that $I\otimes [a,n]\to I\otimes [b,n]$ is an acyclic cofibration. As $I\otimes [\uvar,\partial[n]]$ is the homotopy colimit of element of shape $I\otimes [\uvar,[k]]$, the morphism $I\otimes [a,\partial[n]]\to I\otimes [b,\partial[n]]$ also is an acyclic cofibration. Now, we consider the diagram: 
\[\begin{tikzcd}
	{I\otimes [a,\partial[n]]} & {I\otimes [a,n]} \\
	{I\otimes [b,\partial[n]]} & {I\otimes [a,n]\cup [b,\partial[n]]} \\
	&& {I\otimes [b,n]}
	\arrow[from=1-1, to=2-1]
	\arrow[from=2-1, to=2-2]
	\arrow[from=2-2, to=3-3]
	\arrow[from=1-2, to=2-2]
	\arrow[from=1-1, to=1-2]
	\arrow["\lrcorner"{anchor=center, pos=0.125, rotate=180}, draw=none, from=2-2, to=1-1]
	\arrow[curve={height=-12pt}, from=1-2, to=3-3]
	\arrow[curve={height=12pt}, from=2-1, to=3-3]
\end{tikzcd}\]
By stability of acyclic cofibration by pushouts and by two out of three, this implies the result.
\end{proof}

\begin{lemma}
\label{lem:Gray cinder is Quillen 7}
The morphism $I\otimes E^{\cong}\to I\otimes (E^{\cong})'$ is an acyclic cofibration. 
\end{lemma}
\begin{proof}
First of all, remark that $E^{\cong}\to [0]$ is a weak equivalence in $\stratSeg(A)$. 
According to the proposition \ref{labe:Link between the Gray cylinder and cone}, we then have a commutative square:
\[\begin{tikzcd}
	{I\otimes E^{\cong}} & {I\otimes (E^{\cong})'} \\
	{[E^{\cong}\otimes e,1]} & {[(E^{\cong})'\otimes e,1]}
	\arrow["\sim"', from=1-1, to=2-1]
	\arrow["\sim", from=1-2, to=2-2]
	\arrow["\sim", from=2-1, to=2-2]
	\arrow[from=1-1, to=1-2]
\end{tikzcd}\]
where all arrows labelled by $\sim$ are weak equivalences. By two out of three, this implies the result.
\end{proof}

\begin{lemma}
\label{lem:Gray cinder is Quillen 8}
The morphism $ I\otimes [e,1]_t\to I\otimes e$ is a weak equivalence.
\end{lemma}
\begin{proof}
This morphism is the horizontal colimit of the diagram
\[\begin{tikzcd}
	{[e,1]_t\vee[e,1]} & {[e,1]} & {[[1]_t,1]} & {[e,1]} & {[e,1]\vee[e,1]_t} \\
	{[e,1]} & {[e,1]} & {[e,1]} & {[e,1]} & {[e,1]}
	\arrow[from=2-2, to=2-1]
	\arrow[from=1-4, to=1-5]
	\arrow[from=2-4, to=2-5]
	\arrow[from=2-4, to=2-3]
	\arrow[from=2-2, to=2-3]
	\arrow[from=1-2, to=1-3]
	\arrow[from=1-4, to=1-3]
	\arrow[from=1-1, to=2-1]
	\arrow[from=1-2, to=1-1]
	\arrow[from=1-2, to=2-2]
	\arrow[from=1-3, to=2-3]
	\arrow[from=1-5, to=2-5]
	\arrow[from=1-4, to=2-4]
\end{tikzcd}\]
As all the vertical morphisms are weak equivalences, and as these colimits are homotopy colimits, this concludes the proof.
\end{proof}

\begin{prop}
\label{prop:cylinder is Quillen}
The functor $I\otimes\uvar:\stratSeg(A)\to \stratSeg(A)$ is a left Quillen functor. 
\end{prop}
\begin{proof}
It is obvious that this functor preserves cofibrations. Proposition \ref{prop:Gray cinder is Quillen 5} and lemmas \ref{lem:Gray cinder is Quillen 6}, \ref{lem:Gray cinder is Quillen 7} and \ref{lem:Gray cinder is Quillen 8} imply that it sends elementary anodyne extensions, and morphisms $E^{\cong}\to (E^{\cong})'$, $[e,1]_t\to 1$ to weak equivalences. According to proposition \ref{prop:model structure on stratified Segal category}, this implies the result.
\end{proof}

\begin{cor}
\label{cor:cone is Quillen}
The functor $e\star \uvar:\stratSeg(A)\to \stratSeg(A)_{e/}$ is a left Quillen functor.
\end{cor}
\begin{proof}
First of all, it is obvious that this functor preserves cofibrations. It is then enough to show that it preserves weak equivalences.
Proposition \ref{labe:Link between the Gray cylinder and cone} implies that $e\star \uvar$ is the homotopy colimit of the diagram of functors
$e\leftarrow id \xrightarrow{i_0} I\otimes \uvar.$
Each of these functors preserves weak equivalences, and so does $e\star\uvar$.
\end{proof}

\section{Quillen Adjunction with $\stratSset$}
The purpose of this section is to construct a Quillen adjunction
\[\begin{tikzcd}
	\stratSset & {\stratSeg(A)}
	\arrow[""{name=0, anchor=center, inner sep=0}, shift left=2, from=1-1, to=1-2]
	\arrow[""{name=1, anchor=center, inner sep=0}, shift left=2, from=1-2, to=1-1]
	\arrow["\dashv"{anchor=center, rotate=-90}, draw=none, from=0, to=1]
\end{tikzcd}\]
where the left adjoint sends $[n]$ to $e\star e\star ...\star e$.

In section \ref{section:Cosimplicial object}, we show that this assignment extends to a left adjoint. In sections \ref{section:Complicial horn inclusion}, \ref{section:Complicial thinness extensions}, and \ref{section:Saturation extensions}, we show that this left adjoint sends complicial horn inclusions, complicial thinness extensions, and saturation extensions to weak equivalences.
\subsection{Cosimplicial object}
\label{section:Cosimplicial object}
\p 
We consider the following span:
\[\begin{tikzcd}
	{\Delta^2_{/[n]}} & {\underset{\Delta^2_{/[n]}}{\colim}~\Delta^2_{/[n_1]}} & {\underset{\Delta^2_{/[n]}}{\colim}~\Delta^2_{/[1+n_1]}}
	\arrow[from=1-2, to=1-3]
	\arrow[from=1-2, to=1-1]
\end{tikzcd}\]
where the right functor is induced by $1+\uvar:[n_1]\to[1+n_1]$ and where the left one sends an element 
$([n_0]^{op}\star [n_1]\to [n], [n_2]^{op}\star[n_3]\to [n_1])$ to the composite:
$h:[n_2]^{op}\star[n_3]\to [n_1]\to [n].$	
We define $H^2(a,n)$ as the pushout: 
\[\begin{tikzcd}
	{\underset{\Delta^2_{/[n]}}{\colim}~\underset{\Delta^2_{/[n_1]}}{\colim}~[[n_2]\otimes[n_0]\otimes a,1]\vee[[n_0]\otimes a,n_3]} & {\underset{\Delta^2_{/[n]}}{\colim}~[[n_2]\otimes a,1]\vee[ a,n_3]} \\
	{\underset{\Delta^2_{/[n]}}{\colim}~\underset{\Delta^2_{/[1+n_1]}}{\colim}~[[n_2]\otimes[n_0]\otimes a,1]\vee[[n_0]\otimes a,n_3]} & {H^2(a,n)}
	\arrow[from=1-1, to=2-1]
	\arrow[""{name=0, anchor=center, inner sep=0}, from=1-1, to=1-2]
	\arrow[from=1-2, to=2-2]
	\arrow[from=2-1, to=2-2]
	\arrow["\lrcorner"{anchor=center, pos=0.125, rotate=180}, draw=none, from=2-2, to=0]
\end{tikzcd}\]
By construction, we have a cocartesian square
\begin{equation}
\label{eq:lin H2 avec ee an}
\begin{tikzcd}[column sep = 0.3cm]                                                                                                                                                                                                                                                                                                                                                                                                                                                                                   
	{\coprod\limits_{l\leq 1+n_1}\underset{\Delta^2_{/[n]}}{\colim}~\underset{\Delta^2_{/\{l\}}}{\colim}~[[n_2]\otimes[n_0]\otimes a,1]\vee[[n_0]\otimes a,n_3]} & {H^2(a,n)\coprod\limits_{H^2(a,\amalg_{p\leq n}\{p\})}H^2(e,\amalg_{p\leq n}\{p\})} \\
	{\coprod\limits_{l\leq 1+n_1}\underset{\Delta^2_{/[n]}}{\colim}~\underset{\Delta^2_{/\{l\}}}{\colim}~[[n_2]\otimes e,1]\vee[e,n_3]} & {e\star e\star[a,n]}
	\arrow[from=1-1, to=2-1]
	\arrow[""{name=0, anchor=center, inner sep=0}, from=1-1, to=1-2]
	\arrow[from=2-1, to=2-2]
	\arrow[from=1-2, to=2-2]
	\arrow["\lrcorner"{anchor=center, pos=0.125, rotate=180}, draw=none, from=2-2, to=0]
\end{tikzcd}
\end{equation}
Let $x:= ([n_0]^{op}\star [n_1]\to [n], [n_2]^{op}\star[n_3]\to [1+n_1])$ be an element of $\underset{\Delta^2_{/[n]}}{\colim}~\Delta^2_{/[1+n_1]}$. We define two integers $-1\leq \tilde{n}_2\leq n_2$ and $-1\leq \tilde{n}_3\leq n_3$ as the ones fitting in the following pullbacks in $\Delta_+$
\[\begin{tikzcd}
	{ [\tilde{n}_2]^{op}} && {[n_1]} && {[\tilde{n}_3]} \\
	{[n_2]^{op}} & { [n_2]^{op}\star[n_3]} & {[1+n_1]} & { [n_2]^{op}\star[n_3]} & {[n_3]}
	\arrow[from=1-1, to=1-3]
	\arrow[from=1-3, to=2-3]
	\arrow[from=2-1, to=2-2]
	\arrow[from=2-2, to=2-3]
	\arrow[from=2-4, to=2-3]
	\arrow[from=2-5, to=2-4]
	\arrow[from=1-5, to=2-5]
	\arrow[from=1-5, to=1-3]
	\arrow[from=1-1, to=2-1]
	\arrow["\lrcorner"{anchor=center, pos=0.125}, draw=none, from=1-1, to=2-2]
	\arrow["\lrcorner"{anchor=center, pos=0.125, rotate=-90}, draw=none, from=1-5, to=2-4]
\end{tikzcd}\]
where we set the convention $[-1]=\emptyset$. This induces a cartesian square
\[\begin{tikzcd}
	{[n_0]^{op}\star [\tilde{n}_2]^{op}\star[\tilde{n}_3]} & {[n_0]^{op}\star [n_2]^{op}\star[n_3]} \\
	{[n_0]^{op}\star[n_1]} & {[n_0]^{op}\star[1+n_1]} \\
	{[n]}
	\arrow[from=1-1, to=2-1]
	\arrow[from=2-1, to=3-1]
	\arrow[""{name=0, anchor=center, inner sep=0}, from=2-1, to=2-2]
	\arrow[from=1-1, to=1-2]
	\arrow[from=1-2, to=2-2]
	\arrow["\lrcorner"{anchor=center, pos=0.125}, draw=none, from=1-1, to=0]
\end{tikzcd}\]

We consider the morphism $j:[n_2]\otimes [n_0]\otimes a \to ([n_2]\times [n_0])\otimes a\to ([\tilde{n}_2]\star [n_0])\otimes a$ where the right-hand morphism sends $\{(k,l)\}\otimes a$ to $(\{k\}\star \emptyset)\otimes a$ if $k\leq \tilde{n_2}$ and to $(\emptyset\star \{l\})\otimes a$ if not.
The inclusion $[\tilde{n}_3]\to [n_3]$ induces an inclusion $i:[1+\tilde{n}_3]\to [1+n_3]$. We denote $r$ the unique retraction of this inclusion that verifies $r(k) = 0$ if $k\notin Im(i)$.
Put together, $j$ and $r$ induce a morphism:
$$\psi_x:[[n_2]\otimes [n_0]\otimes a,1]\vee[[n_0]\otimes a,n_3] \to [([\tilde{n}_2]\star [n_0])\otimes a,1]\vee[a,\tilde{n}_3]$$
where we set the convention $[([\tilde{n}_2]\star [n_0])\otimes a,1]\vee[a,-1] := [0]$.

Remark that if $[n_2]^{op}\star [n_3]\to [1+n_1]$ factors through $[n_1]\to [1+n_1]$, we have $\tilde{n}_2=n_2$ and $\tilde{n}_3=n_3$, and a unique arrow fitting in a commutative triangle
\[\begin{tikzcd}
	& {[([\tilde{n}_2]\star \emptyset)\otimes a,1]\vee[a,\tilde{n}_3]} \\
	{[[n_2]\otimes [n_0]\otimes a,1]\vee[[n_0]\otimes a,n_3] } & {[([\tilde{n}_2]\star [n_0])\otimes a,1]\vee[a,\tilde{n}_3]}
	\arrow["{\psi_x}"', from=2-1, to=2-2]
	\arrow[from=1-2, to=2-2]
	\arrow[dashed, from=2-1, to=1-2]
\end{tikzcd}\]

Considering the canonical morphism
$$[([\tilde{n}_2]\star [n_0])\otimes a,1]\vee[a,\tilde{n}_3]\to e\star[a,n]$$
if $\tilde{n}_3\geq 0$ (coming from the fact that $([n_0]^{op}\star [\tilde{n}_2]^{op})\star[\tilde{n}_3]\to [n]$ is an element of $\Delta^2_{[n]}$),
and the morphism 
$$[([\tilde{n}_2]\star [n_0])\otimes a,1]\vee[a,\tilde{n}_3]\to e\star \emptyset\to e\star[a,n]$$
if $\tilde{n}_3=-1$, this induces a natural transformation 
$$H^{s^0}(a,n):H^2(a,n)\to e\star[a,n]$$
induced by $\psi_{\uvar}$ on $\underset{\Delta^2_{/[n]}}{\colim}~\underset{\Delta^2_{/[1+n_1]}}{\colim}~[[n_2]\otimes[n_0]\otimes a,1]\vee[[n_0]\otimes a,n_3]$ 
and by the identity on $\underset{\Delta^2_{/[n]}}{\colim}~[[n_2]\otimes a,1]\vee[ a,n_3]$.

By construction, if $[n_0]^{op}\star [n_1]\to [n]$ factor through $\{p\}$ for $p\leq n$ we have a commutative diagram
\[\begin{tikzcd}
	{[[n_2]\otimes[n_0]\otimes a,1]\vee[[n_0]\otimes a,n_3]} && {H^2(a,n)} \\
	{[[n_2]\otimes[n_0]\otimes e,1]\vee[[n_0]\otimes e,n_3]} & {e\star\{p\}} & { e\star[a,n]}
	\arrow[from=1-1, to=2-1]
	\arrow[from=1-1, to=1-3]
	\arrow[from=1-3, to=2-3]
	\arrow[from=2-1, to=2-2]
	\arrow[from=2-2, to=2-3]
\end{tikzcd}\]
If $[n_2]^{op}\star[n_3]\to [1+n_1]$ factors through $\{0\}$, $\tilde{n}_3$ is equal to $-1$, and we have a commutative diagram
\[\begin{tikzcd}
	{[[n_2]\otimes[n_0]\otimes a,1]\vee[[n_0]\otimes a,n_3]} && {H^2(a,n)} \\
	{[[n_2]\otimes e,1]\vee[e,n_3]} & e\star\emptyset & { e\star[a,n]}
	\arrow[from=1-1, to=2-1]
	\arrow[from=1-1, to=1-3]
	\arrow[from=2-1, to=2-2]
	\arrow[from=1-3, to=2-3]
	\arrow[from=2-2, to=2-3]
\end{tikzcd}\]
and if  $[n_2]^{op}\star[n_3]\to [1+n_1]$ factors through any other point, $\tilde{n}_3$ is equal to $0$, and we have a commutative diagram
\[\begin{tikzcd}
	{[[n_2]\otimes[n_0]\otimes a,1]\vee[[n_0]\otimes a,n_3]} && {H^2(a,n)} \\
	{[[n_2]\otimes e,1]\vee[e,n_3]} & {e\star\{k\}} & { e\star[a,n]}
	\arrow[from=1-1, to=2-1]
	\arrow[from=1-1, to=1-3]
	\arrow[from=2-1, to=2-2]
	\arrow[from=1-3, to=2-3]
	\arrow[from=2-2, to=2-3]
\end{tikzcd}\]
where $k$ is the image of the composite morphism $[\tilde{n}_2]^{op}\star[\tilde{n}_3]\to [n_1]\to[n]$.
The cocartesian square \eqref{eq:lin H2 avec ee an} then implies that $H^2(a,n)$ 
lifts to a natural transformation 
$$s^0\star [a,n]: e\star e\star [a,n]\to e \star [a,n].$$
By extension by colimits, this induces a natural transformation
$$C\mapsto \big(s^0\star C: e\star e\star C\to e\star C\big).$$

To define the cosimplicial object, we will need to show the commutativity of several diagrams whose initial objects are of shape $e\star..\star e\star [a,n]$. To this extend, it is enough to find coverings of these objects by easier one, and to show that the induced diagrams commute. 
\begin{lemma}
\label{lem:cover of n star a}
We set $\Pi^0_{/[n]} := \Delta^{2}_{/[n]}$ and 
$$\Pi^{k+1}_{/[n]}:= \colim_{\Delta^2_{/[n]}} \colim_{\Delta^2_{/[n_1+1]}}...\colim_{\Delta^2_{/[n_{2k-1}+1]}}\Delta^2_{/[n_{2k+1}+1]}$$
There is an epimorphism:
$$\colim_{\Pi^k_{/[n]}\times A}[[n_{2k}]\otimes [n_{2k-2}]\otimes...\otimes [n_0]\otimes a,1+ n_{2k-1}]\to \underbrace{e\star e\star ... \star e}_{k+1}\star [a,n]$$
\end{lemma}
\begin{proof}
This is an easy proof by induction, after remarking that $${[[n_0]\otimes a, 1+n_1]\to [[n_0]\otimes a, 1]\vee[a,n_1]}$$
 is an epimorphism.
\end{proof}

\begin{lemma}
\label{lemma:monoid 1}
The following triangles commute:
\[\begin{tikzcd}
	{e\star[a,n]} & {e\star e\star[a,n]} & {e\star [a,n]} \\
	& {e\star[a,n]}
	\arrow["{d^{0}\star{e\star[a,n]}}", from=1-1, to=1-2]
	\arrow["{s^0\star{[a,n]}}", from=1-2, to=2-2]
	\arrow["id"', from=1-1, to=2-2]
	\arrow["id", from=1-3, to=2-2]
	\arrow["{e\star d^{0}\star{[a,n]}}"', from=1-3, to=1-2]
\end{tikzcd}\]
\end{lemma}
\begin{proof}
We will prove only the left triangle and we leave the other to the reader. Let $x:= ([n_0]^{op}\star [n_1]\to [n], [n_2]^{op}\star[n_3]\to [1+n_1])$ be an element of $\colim_{\Delta^2_{/[n]}}~\Delta^2_{/[1+n_1]}$. We have a diagram:
\[\begin{tikzcd}
	& {e\star[a,n]} & {e\star e\star[a,n]} \\
	{[[n_0]\otimes a, 1+n_1]} & {[[0]\otimes[n_0]\otimes a, 1]\vee[[n_0]\otimes a, 1+n_1]} & {e\star[a,n]} \\
	& {[[n_0]\otimes a, 1+n_1]}
	\arrow["{d^0\star{e\star[a,n]}}", from=1-2, to=1-3]
	\arrow["{s^0\star{[a,n]}}", from=1-3, to=2-3]
	\arrow[from=2-1, to=1-2]
	\arrow["{[[n_0]\otimes a, d^0]}", from=2-1, to=2-2]
	\arrow[from=2-2, to=1-3]
	\arrow["{\psi_x}", from=2-2, to=3-2]
	\arrow[from=3-2, to=2-3]
	\arrow["id"', from=2-1, to=3-2]
	\arrow[dotted, from=1-2, to=2-3]
\end{tikzcd}\]
where we know that everything except the right triangle commutes. As this is true for any $x$, lemma \ref{lem:cover of n star a} implies the desired commutativity.
\end{proof}

\begin{lemma}
\label{lemma:monoid 2}
The following square commutes 
\[\begin{tikzcd}
	{e\star e\star e\star[a,n]} & {e\star e\star[a,n]} \\
	{e\star e\star[a,n]} & {e\star[a,n]}
	\arrow["{e\star s^1\star{[a,n]}}"', from=1-1, to=2-1]
	\arrow["{s^1\star{[a,n]}}"', from=2-1, to=2-2]
	\arrow["{s^1\star{e\star[a,n]}}", from=1-1, to=1-2]
	\arrow["{s^1\star{[a,n]}}", from=1-2, to=2-2]
\end{tikzcd}\]
\end{lemma}
\begin{proof}
Let 
$x = (f:[n_0]^{op}\star[n_1]\to[n],g:[n_2]^{op}\star[n_3]\to[1+n_1],h:[n_4]^{op}\star[n_5]\to[n_3+1]) $ be an object of $\Pi^2_k$. We define integers $-1\leq \bar{n}_4\leq n_4$ and $-1\leq \bar{n}_5\leq n_5$ as the one fitting in the following pullbacks in $\Delta_+$.
\[\begin{tikzcd}
	{[\bar{n}_4]^{op}} & {[1+\tilde{n}_3]} & {[\bar{n}_5]} \\
	{[n_4]^{op}} & { [1+n_3]} & {[n_5]}
	\arrow[""{name=0, anchor=center, inner sep=0}, from=1-2, to=2-2]
	\arrow[from=2-1, to=2-2]
	\arrow[from=1-1, to=1-2]
	\arrow[from=1-1, to=2-1]
	\arrow[from=1-3, to=2-3]
	\arrow[from=2-3, to=2-2]
	\arrow[from=1-3, to=1-2]
	\arrow["\lrcorner"{anchor=center, pos=0.125, rotate=-90}, draw=none, from=1-3, to=2-2]
	\arrow["\lrcorner"{anchor=center, pos=0.125}, draw=none, from=1-1, to=0]
\end{tikzcd}\]
This induces cartesian squares
\[\begin{tikzcd}[column sep=0.5cm]
	{[n_0]^{op}\star[\tilde{n}_2]^{op}\star[\tilde{\bar{n}}_4]^{op}\star[\tilde{\bar{n}}_5]} & {[n_0]^{op}\star[\tilde{n}_2]^{op}\star[\bar{n}_4]^{op}\star[\bar{n}_5]} & {[n_0]^{op}\star[n_2]^{op}\star[n_4]^{op}\star[n_5]} \\
	{[n_0]^{op}\star[\tilde{n}_2]^{op}\star[\tilde{n}_3]} & {[n_0]^{op}\star[\tilde{n}_2]^{op}\star[1+ \tilde{n}_3]} & {[n_0]^{op}\star[n_2]^{op}\star[1+ n_3]}
	\arrow[from=2-2, to=2-3]
	\arrow[from=2-1, to=2-2]
	\arrow[from=1-1, to=2-1]
	\arrow[from=1-1, to=1-2]
	\arrow[from=1-2, to=2-2]
	\arrow[from=1-3, to=2-3]
	\arrow[from=1-2, to=1-3]
\end{tikzcd}\]
The outer squares fits in the following cartesian squares:
\[\begin{tikzcd}[column sep=0.5cm]
	{[n_0]^{op}\star[\tilde{n}_2]^{op}\star[\tilde{\bar{n}}_4]^{op}\star[\tilde{\bar{n}}_5]} & {[n_0]^{op}\star[n_2]^{op}\star[\tilde{n}_4]^{op}\star[\tilde{n}_5]} & {[n_0]^{op}\star[n_2]^{op}\star[n_4]^{op}\star[n_5]} \\
	{[n_0]^{op}\star[\tilde{n}_2]^{op}\star[\tilde{n}_3]} & {[n_0]^{op}\star[n_2]^{op}\star[n_3]} & {[n_0]^{op}\star[n_2]^{op}\star[1+ n_3]} \\
	{[n_0]^{op}\star[n_1]} & {[n_0]^{op}\star[1+n_1]} \\
	{[n]}
	\arrow[from=2-2, to=2-3]
	\arrow[from=2-1, to=2-2]
	\arrow[from=3-1, to=3-2]
	\arrow[from=3-1, to=4-1]
	\arrow[from=2-1, to=3-1]
	\arrow[from=2-2, to=3-2]
	\arrow[from=1-3, to=2-3]
	\arrow[from=1-1, to=2-1]
	\arrow[from=1-2, to=2-2]
	\arrow[from=1-1, to=1-2]
	\arrow[from=1-2, to=1-3]
\end{tikzcd}\]
This induces a diagram:
\[\begin{tikzcd}[column sep=0.5cm]
	& {e\star e\star e\star[a,n]} & {e\star e\star[a,n]} \\
	& {e\star e\star[a,n]} & {e\star[a,n]} \\
	{[[n_{4}]\otimes [n_{2}]\otimes [n_0]\otimes a,1+ n_{5}]} & {[([\tilde{n}_{4}]\star [n_{2}])\otimes [n_0]\otimes a,1+ \tilde{n}_{5}]} \\
	{[[\bar{n}_{4}]\otimes ( [\tilde{n}_{2}]\star [n_0])\otimes a,1+ \bar{n}_{5}]} & {[[\tilde{\bar{n}}_4]\star [\tilde{n}_{2}]\star [n_0]\otimes a,1+ \tilde{\bar{n}}_5]}
	\arrow[from=4-1, to=4-2]
	\arrow[from=3-1, to=4-1]
	\arrow[from=3-2, to=4-2]
	\arrow[from=3-1, to=3-2]
	\arrow["{s^1\star{[a,n]}}", from=1-3, to=2-3]
	\arrow["{s^1\star{[a,n]}}"', dotted, from=2-2, to=2-3]
	\arrow["{s^1\star{e\star[a,n]}}", from=1-2, to=1-3]
	\arrow["{e\star s^1\star{[a,n]}}", from=1-2, to=2-2]
	\arrow[from=3-1, to=1-2]
	\arrow[from=3-2, to=1-3]
	\arrow[from=4-2, to=2-3]
	\arrow[dotted, from=4-1, to=2-2]
\end{tikzcd}\]
where we know that everything except the behind square commutes. As this is true for any $x$, lemma \ref{lem:cover of n star a} implies the desired commutativity.

\end{proof}

\begin{definition}
For $k\leq 1$, the \wcsnotionsym{intelligent $k$-truncation functor}{(taui@$\tau^i_n$}{truncation@$n$-truncation}{for stratified Segal $A$-precategories}, noted by $\tau^i_k$, is the colimit preserving functor such that $\tau^i_k([a,n]) = [\tau^i_{k-1}(a),n]$ and $\tau^i_k[e,1]_t = [e,1]_t$. The intelligent \textit{$0$-truncation functor}, denoted by $\tau^i_0$, is the colimit preserving functor such that $\tau^i_0([a,n])$ fits in the following pushout 
\[\begin{tikzcd}
	{\underset{ob(a)\times Hom([1],[n])}{\coprod}[e,1]} & {[\tau^i_0(a),n]} \\
	{\underset{ob(a)\times Hom([1],[n])}{\coprod}[e,1]_t} & {\tau^i_0([a,n])}
	\arrow[from=1-1, to=1-2]
	\arrow[from=1-2, to=2-2]
	\arrow[from=1-1, to=2-1]
	\arrow[from=2-1, to=2-2]
\end{tikzcd}\]
and such that $\tau^i_0[e,1]_t = [e,1]_t$.
As the intelligent $k$-truncations on $A$ are left Quillen, the intelligent $k$-truncations on $\stratSeg(A)$ preserve generating Reedy cofibrations and Segal extensions. It is staightforward that they also send $[e,1]_t\to [0]$ and $E^{\cong}\to (E^{\cong})'$ to weak equivalences. According to theorem \ref{prop:model structure on stratified Segal category}, they are left Quillen functors. 
\end{definition}

\p 
\label{para:definition of the cosimplicial object}
The (inverted) composition $g,f\mapsto g\circ f$ is a monoidal structure on the category of endomorphisms of $\stratSeg(A)$. Lemmas \ref{lemma:monoid 1} and \ref{lemma:monoid 2} show that $e\star \uvar$ is a monoid for this monoidal structure. This induces a cosimplicial object: 
$$\begin{array}{rcl}
\Delta &\to & \End(\stratSeg(A))\\
~[n] &\mapsto & [n]\star\uvar :=\underbrace{e\star e\star...\star e}_{n+1}\star \uvar
\end{array}$$
We extend this functor to $\Delta_t$ in setting for a stratified Segal $A$-precategory $C$ and an integer $n>0$:
\[\begin{tikzcd}
	{\underset{k\geq -1}{\coprod}~~\underset{D,~\tau^i_k(D)=D}{\coprod}~~\underset{D\to C}{\coprod}[n]\star D} & {[n]\star C} \\
	{\underset{k\geq -1}{\coprod}~~\underset{D,~\tau^i_k(D)=D}{\coprod}~~\underset{D\to C}{\coprod}\tau^i_{n+k}([n]\star D)} & {[n]_t\star C}
	\arrow[from=2-1, to=2-2]
	\arrow[""{name=0, anchor=center, inner sep=0}, from=1-1, to=1-2]
	\arrow[from=1-2, to=2-2]
	\arrow[from=1-1, to=2-1]
	\arrow["\lrcorner"{anchor=center, pos=0.125, rotate=180}, draw=none, from=2-2, to=0]
\end{tikzcd}\]
where $\tau^i_{-1}$ is the constant functor with value $\emptyset$.
Evaluated on the empty Segal $A$-category, and by extension under colimits, this gives a functor 
\begin{equation}
\stratSset\to \stratSeg(A).
\end{equation}
The image of $[n]$ (resp. $[n]_t$) is also noted by $[n]$ (resp. $[n]_t$).

By construction, for $K,L$ two stratified sets and $D$ a stratified Segal $A$-precategory, we have $K\star (L\star C)\cong (K\star L)\star C$.

\begin{lemma}
\label{lemma:leibnizt joint is Quillen}
Let $K$ be a stratified simplicial set. The morphism $K\star\uvar$ is a left Quillen functor. 
Moreover, if
 $i$ is a cofibration of stratified simplicial sets and $g$ an acyclic cofibration of stratified Segal $A$-precategories, the morphism $i\hstar g$ is an acyclic cofibration. 
\end{lemma}
\begin{proof}
As every simplicial set is a homotopy colimit of representables and as $\star$ preserves monomorphisms, it is enough to show the first assertion for $K= [n]$. In this case, this is a repeated application of the corollary \ref{cor:cone is Quillen}.
By diagram chasing and the use of two out of three, this implies the second assertion.
\end{proof}

\subsection{Complicial horn inclusions}
\label{section:Complicial horn inclusion}

\begin{notation*}
In this section, we will often consider morphisms $\tilde{a}\to \tilde{b}$ that fit into cocartesian squares:
\[\begin{tikzcd}
	a & b \\
	{\tilde{a}} & {\tilde{b}}
	\arrow[from=1-1, to=2-1]
	\arrow["i", from=1-1, to=1-2]
	\arrow[from=2-1, to=2-2]
	\arrow[from=1-2, to=2-2]
	\arrow["\lrcorner"{anchor=center, pos=0.125}, draw=none, from=1-1, to=2-2]
\end{tikzcd}\]
where $a\to \tilde{a}$ and $b\to \tilde{b}$ are epimorphisms.
To avoid complicating the notations unnecessarily, the induced morphism $\tilde{a}\to \tilde{b}$ will just be denoted $i$.
\end{notation*}

\p 
\label{para:marked segal}
A \notion{marked Segal $A$-precategory} is a stratified Segal $A$-precategory having the right lifting property against all entire acyclic cofibrations. We denote by \wcnotation{$\mSeg(A)$}{(mseg@$\mSeg(A)$} the full subcategory of marked Segal $A$-precategory. We then have an adjunction: 
\[\begin{tikzcd}
	{(\uvar)_{\mk}:\stratSeg(A)} & {\mSeg(A):\iota}
	\arrow[""{name=0, anchor=center, inner sep=0}, shift left=2, from=1-2, to=1-1]
	\arrow[""{name=1, anchor=center, inner sep=0}, shift left=2, from=1-1, to=1-2]
	\arrow["\dashv"{anchor=center, rotate=-90}, draw=none, from=1, to=0]
\end{tikzcd}\]
where the left adjoint $(\uvar)_{\mk}$ sends a stratified Segal $A$-precategory $(C,tC)$ to the marked Segal $A$-precategory $(C,\overline{tC})$, where $\overline{tC}$ is the smaller stratification that includes $tC$ and makes $(C,\overline{tC})$ a marked Segal $A$-precategory, and where the right adjoint is a fully faithful inclusion.
Remark furthermore that at the level of preshaves, these two adjoints are the identity. We denote \wcnotation{$r_C:C\to C_{\mk}$}{(rc@$r_C:C\to C_{\mk}$} the canonical inclusion. The proposition \ref{prop:X to Xmk is acycli cof} states that $r_C$ is an entire acyclic cofibration.

There is an isomorphism $(e\star C_{\mk})_{\mk}\cong (e\star C)_{\mk}$. Indeed $e\star\uvar$ preserves both entire cofibrations and weak equivalences, we have two entire acyclic cofibration $e\star C\to (e\star C)_{\mk}$ and $e \star C\to (e\star C_{\mk})_{\mk}$.  As the two codomain are marked, they are isomorphic.

The fact that will be used the most with the marked Segal $A$-precategory is their right lifting property with respect to morphisms of shape $[\tau^i_n(a),\Lambda^1[2]]\cup [a,2]\to [\tau^i_n(a),2]$. This fact will  be used freely.

\p We recall that $[2]\botimes a$ is the following pushout:
\[\begin{tikzcd}
	{[1]\otimes a\amalg [1]\otimes a} && {[2]\otimes a} \\
	{e\star a\amalg e\star a} && {[2]\botimes a}
	\arrow[""{name=0, anchor=center, inner sep=0}, "{d^1\otimes a\amalg d^2\otimes a}", from=1-1, to=1-3]
	\arrow["{d^1\botimes a\amalg d^2\botimes a}"', from=2-1, to=2-3]
	\arrow[from=1-1, to=2-1]
	\arrow[from=1-3, to=2-3]
	\arrow["\lrcorner"{anchor=center, pos=0.125, rotate=180}, draw=none, from=2-3, to=0]
\end{tikzcd}\]
We define $[e,1]\vee(e\star[a,1])$ as the colimit of the following diagram
\[\begin{tikzcd}
	{[e,1]\vee[e\star a,1]} & {[e,1]\vee[a,1]} & {[e,2]\vee[a,1]}
	\arrow["{[d^0\star a,2]}"', from=1-2, to=1-1]
	\arrow["{[a,d^2]}", from=1-2, to=1-3]
\end{tikzcd}\]
The canonical composite morphism 
$$[e\star a,1]\xrightarrow{[e\star a,d^1]}[e,1]\vee[e\star a,1]\to [e,1]\vee(e\star[a,1])$$ 
is also denoted by $[e\star a,d^1]$. Eventually, we define $\overline{[1]\star[a,1]}$ as the following pushout
\[\begin{tikzcd}
	{[1]\star\{0\}} & {{[1]\star[a,1]}} \\
	{[2]_t} & {\overline{[1]\star[a,1]}}
	\arrow[from=1-1, to=2-1]
	\arrow[from=2-1, to=2-2]
	\arrow[from=1-2, to=2-2]
	\arrow[from=1-1, to=1-2]
	\arrow["\lrcorner"{anchor=center, pos=0.125, rotate=180}, draw=none, from=2-2, to=1-1]
\end{tikzcd}\] 

\begin{lemma}
\label{lemma:le lemme quon voulais pas faire}
There is a weak equivalence from $\overline{[1]\star[a,1]}$ to the colimit of the diagram
\[\begin{tikzcd}
	{[[1]\star a,1]} & {[e\star a,1]} & {[e,1]\vee(e\star[ a,1])}
	\arrow["{[e\star a,d^1]}", from=1-2, to=1-3]
	\arrow["{[d^0\star a,1]}"', from=1-2, to=1-1]
\end{tikzcd}\]
making $\overline{[1]\star[a,1]}$ the homotopy colimit of the previous diagram.
\end{lemma}
\begin{proof}
The proposition \ref{prop:explicit expression of e star e star a,1} implies that $(\overline{[1]\star [a,1]})_{\mk}$ is the colimit of the diagram
\begin{equation}
\label{eq:changemeet markage2}
\begin{tikzcd}[column sep=0.7cm]
	{[[2]^2\botimes a,1]} & {[[1]_t\otimes a,1]} & {[[1]_t,1]\vee[a,1]} & {[e,1]\vee[a,1]} & {[e,2]\vee[a,1]} \\
	{[e\star a,1]} &&& {[a,1]} & {[e,1]\vee[a,1]} \\
	{[[1]\star a,1]} &&& {[e\star a,1]} & {[e,1]\vee[e\star a,1]}
	\arrow["{[d^0\otimes a,1]}"', from=1-2, to=1-1]
	\arrow["{[[1]\otimes a,d^1]}", from=1-2, to=1-3]
	\arrow["{[d^0\otimes a,2]}"', from=1-4, to=1-3]
	\arrow["{[a,d^1]}", from=1-4, to=1-5]
	\arrow["{[a,d^1]}", from=2-4, to=2-5]
	\arrow["{[d^1\botimes a,1]}", from=2-1, to=1-1]
	\arrow["{[a,d^1]}", from=2-4, to=1-4]
	\arrow["{[a,d^2]}"', from=2-5, to=1-5]
	\arrow["{[d^{0}\star a,2]}", from=2-5, to=3-5]
	\arrow["{[d^{0}\star a,1]}"', from=2-4, to=2-1]
	\arrow["{[d^{0}\star a,1]}"', from=2-4, to=3-4]
	\arrow["{[e\star a,d^1]}"', from=3-4, to=3-5]
	\arrow["{[d^1\star a,1]}"', from=2-1, to=3-1]
	\arrow["{[d^0\star a,1]}", from=3-4, to=3-1]
\end{tikzcd}
\end{equation}
In the previous diagram, the fact that we have $[[1]_t\otimes a,1]$  instead of $[[1]\otimes a,1]$ comes from the fact that we have considered $(\overline{[1]\star [a,1]})_{\mk}$ instead of $\overline{[1]\star [a,1]}$.

Consider now the morphism
\begin{equation}
\label{eq:changemeet markage}
[[2]^2\botimes a,1]\coprod_{[[1]_t\otimes a,1]}   [[1]_t,1]\vee[a,1]\to e\star[a,1]
\end{equation}
induces by the vertical colimit of the diagram
\begin{equation}
\label{eq:changemeet markage5}
\begin{tikzcd}
	{[[2]^2\botimes a,1]} & {[[1]_t\otimes a,1]} & {[[1]_t,1]\vee[a,1]} \\
	{[e\star a,1]} & {[a,1]} & {[e,1]\vee[a,1]}
	\arrow["{[d^0\otimes a,1]}"', from=1-2, to=1-1]
	\arrow["{[[1]\otimes a,d^1]}", from=1-2, to=1-3]
	\arrow["{[s^0,1]\vee[a,1]}", from=1-3, to=2-3]
	\arrow[from=2-2, to=2-3]
	\arrow[from=2-2, to=2-1]
	\arrow["{[s^0\otimes a,1]}", from=1-2, to=2-2]
	\arrow["{[s^0\botimes a,1]}"', from=1-1, to=2-1]
\end{tikzcd}
\end{equation}
As all the vertical morphisms of \eqref{eq:changemeet markage5} are cofibrations, the colimit of each line is a homotopy colimit. As all the horizontal morphisms of \eqref{eq:changemeet markage5} are weak equivalences, the morphism \eqref{eq:changemeet markage} also is a weak equivalence.  

Consider now the span
\begin{equation}
\label{eq:deojfoizejfgorezj}
 e\star[a,1]\xleftarrow{\eqref{eq:changemeet markage}} [[2]^2\botimes a,1]\coprod_{[[1]_t\otimes a,1]}   [[1]_t,1]\vee[a,1]\to (\overline{[1]\star [a,1]})_{\mk}
 \end{equation}
As the right hand morphism is a cofibration, and as  \eqref{eq:changemeet markage} is a weak equivalence, the canonical morphism from 
$(\overline{[1]\star [a,1]})_{\mk}$ to the colimit of \eqref{eq:deojfoizejfgorezj} is a weak equivalence.
Using the diagram \eqref{eq:changemeet markage2}, the colimit of \eqref{eq:deojfoizejfgorezj} is also the colimit of the following diagram
\[\begin{tikzcd}
	{e\star[a,1]} & {[e,1]\vee[a,1]} & {[e,2]\vee[a,1]} \\
	{[e\star a,1]} & {[a,1]} & {[e,1]\vee[a,1]} \\
	{[[1]\star a,1]} & {[e\star a,1]} & {[e,1]\vee[e\star a,1]}
	\arrow["{[a,d^1]}", from=1-2, to=1-3]
	\arrow["{[a,d^1]}", from=2-2, to=2-3]
	\arrow["{[a,d^1]}", from=2-2, to=1-2]
	\arrow["{[a,d^2]}"', from=2-3, to=1-3]
	\arrow["{[d^{0}\star a,2]}", from=2-3, to=3-3]
	\arrow["{[d^{0}\star a,1]}"', from=2-2, to=2-1]
	\arrow["{[d^{0}\star a,1]}"', from=2-2, to=3-2]
	\arrow["{[e\star a,d^1]}"', from=3-2, to=3-3]
	\arrow["{[d^1\star a,1]}"', from=2-1, to=3-1]
	\arrow["{[d^0\star a,1]}", from=3-2, to=3-1]
	\arrow[from=1-2, to=1-1]
	\arrow[from=2-1, to=1-1]
	\arrow["\lrcorner"{anchor=center, pos=0.125}, draw=none, from=1-1, to=2-2]
\end{tikzcd}\]
As the  upper left square is cocartesian, the colimit of the previous diagram is equivalent to the colimit of the  given diagram.
All put together, we have demonstrated the assertion.
\end{proof}

\begin{lemma}
\label{lemma:le lemme quon voulais pas faire2}
The morphism 
$$[e,1]\vee(e\star[a,1])\cup \{1\}\star [e\star a,1]\to[e,1]\vee(e\star[e\star a,1])$$
is a weak equivalence. 
\end{lemma}
\begin{proof}
We have a cocartesian square
\begin{equation}
\label{eq:lemma:le lemme quon voulais pas faire2}
\begin{tikzcd}
	{[e,1]\cup e\star[ a,1]} & {[e,1]\cup e\star[e\star a,1]} \\
	{[e,1]\vee(e\star[a,1])} & {[e,1]\vee(e\star[a,1])\cup \{1\}\star [e\star a,1]}
	\arrow[from=1-1, to=2-1]
	\arrow[from=2-1, to=2-2]
	\arrow["{[e,1]\cup e\star[d^0\star a,1]}", from=1-1, to=1-2]
	\arrow[from=1-2, to=2-2]
\end{tikzcd}
\end{equation}
Remark that the left vertical morphism is the vertical colimit and homotopy colimit of the diagram
\[\begin{tikzcd}
	{[e,1]\cup[e\star a,1]} & {[e,1]\cup[a,1]} & {[e,1]\cup[e,1]\vee[a,1]} \\
	{[e,1]\vee[e\star a,1]} & {[e,1]\vee[a,1]} & {[e,2]\vee[a,1]}
	\arrow[from=2-2, to=2-1]
	\arrow[from=2-2, to=2-3]
	\arrow[from=1-2, to=1-1]
	\arrow[from=1-2, to=1-3]
	\arrow[from=1-2, to=2-2]
	\arrow[from=1-3, to=2-3]
	\arrow[from=1-1, to=2-1]
\end{tikzcd}\]
and is then a weak equivalence. Similarly, $[e,1]\cup e\star[e\star a,1]\to [e,1]\vee(e\star[e\star a,1])$ is a weak equivalence. 
This implies that the right vertical morphism of \eqref{eq:lemma:le lemme quon voulais pas faire2} is a weak equivalence. By two out of three this concludes the proof.
\end{proof}

\begin{lemma}
\label{lem:outer horn inclusion2}
The morphism $\{1\}\star [0]\to [1]_t\star [0]$ is an acyclic cofibration.
\end{lemma}
\begin{proof}
Using proposition \ref{prop:explicit expression of e star a,1} we deduce that $[1]_t\star [0]$ is the colimit of the diagram 
\[\begin{tikzcd}
	{[[1]_t,1]} & {[e,1]} & {[e,1]_t\vee[e,1]}
	\arrow[from=1-2, to=1-3]
	\arrow[from=1-2, to=1-1]
\end{tikzcd}\]
The inclusion $\{1\}\star [0]\to [1]_t\star [0]$ is then the composite of the following sequence
\[\begin{tikzcd}
	& {[e,1]} & {[[1]_t,1]} \\
	{[e,1]} & {[e,1]_t\vee[e,1]} & {[1]_t\star [0]}
	\arrow["{[e,d^0]}", from=2-1, to=2-2]
	\arrow[from=1-2, to=2-2]
	\arrow[from=1-3, to=2-3]
	\arrow["{[d^0,1]}", from=1-2, to=1-3]
	\arrow[from=2-2, to=2-3]
	\arrow["\lrcorner"{anchor=center, pos=0.125, rotate=180}, draw=none, from=2-3, to=1-2]
\end{tikzcd}\]
As the morphism $[e,d^0]$ and $[d^0,1]$ are acyclic cofibrations, this concludes the proof.
\end{proof}

\begin{lemma}
\label{lem:outer horn inclusion1}
The morphism $\{1\}\star[a,1]\to [1]_t\star[a,1]$ is an acyclic cofibration.
\end{lemma}
\begin{proof}
The Segal $A$-precategory $[1]_t\star [a,1]$ is the colimit  and the homotopy colimit of the diagram
\[\begin{tikzcd}
	{[1]\star\emptyset} && {[a\star[1],1]} \\
	{[1]_t\star\emptyset} & {\overline{[1]\star[a,1]}} & {[a\star[1]_t,1]}
	\arrow[from=1-3, to=2-2]
	\arrow[from=1-3, to=2-3]
	\arrow[from=1-1, to=2-1]
	\arrow[from=1-1, to=2-2]
\end{tikzcd}\]
The lemma \ref{lemma:le lemme quon voulais pas faire} then implies that we have a weak equivalence from $[1]_t\star [a,1]$ to the colimit, denoted by $K$, of the diagram 
\[\begin{tikzcd}
	{[[1]_t\star a,1]} & {[e\star a,1]} & {[e,1]_t\vee(e\star [ a,1])}
	\arrow["{[e\star a,d^1]}", from=1-2, to=1-3]
	\arrow["{[d^0\star a,1]}"', from=1-2, to=1-1]
\end{tikzcd}\]
As all the morphisms are cofibrations, $K$ is also the homotopy colimit of the previous diagram.

The morphism $[e,1]_t\vee(e\star [ a,1])\to e\star [ a,1]$ is a  weak equivalence as it is a homotopy colimit of weak equivalences. 
Moreover, the morphism $[[1]_t\star a,1]\to [e\star a,1]$ is also a weak equivalence. This implies that the composite $s^0\star[a,1]:[1]_t\star [a,1]\to K\to [0]\star[a,1]$ is a weak equivalence.
The morphism $\{1\}\star[a,1]\to [1]_t\star[a,1]$ is a section of $s^0\star[a,1]$ and is then also a weak equivalence.
\end{proof}

\begin{lemma}
\label{lem:horn_inclusion_2}
The morphism 
$\Lambda^1[2]\star [0]\to [2]_t\star [0]$
is an acyclic cofibration.
\end{lemma}
\begin{proof}
The Segal $A$-precategory $[2]_t\star [0]$ is the colimit of the following diagram
\[\begin{tikzcd}
	{[[2]_t,1]} & {[[2],1]} & {\overline{[1]\star[1]}}
	\arrow[from=1-2, to=1-1]
	\arrow[from=1-2, to=1-3]
\end{tikzcd}\]
The lemma \ref{lemma:le lemme quon voulais pas faire} then implies that we have a weak equivalence from $[2]_t\star [0]$ to the colimit, denoted by $K$, of the diagram 
\[\begin{tikzcd}
	{[[2]_t,1]} & {[[1],1]} & {[e,1]\vee(e\star[e,1])}
	\arrow["{[[1],d^1]}", from=1-2, to=1-3]
	\arrow["{[d^{0},1]}"', from=1-2, to=1-1]
\end{tikzcd}\]
On the other side, $\Lambda^1[2]\star [0]$ is the colimit of the diagram 
\[\begin{tikzcd}
	&&& {[e,1]} \\
	{[[1],1]} & {[e,1]} & {[e,2]} && {[[1],1]} & {[e,1]} & {[e,2]}
	\arrow["{[d^0,1]}"', from=2-6, to=2-5]
	\arrow["{[d^1,1]}", from=1-4, to=2-5]
	\arrow["{[e,d^1]}", from=2-3, to=2-2]
	\arrow["{[d^0,1]}", from=2-2, to=2-1]
	\arrow["{[e,d^0]}"', from=1-4, to=2-3]
	\arrow["{[e,d^1]}", from=2-6, to=2-7]
\end{tikzcd}\]
The composite $\Lambda^1[2]\star [0]\to [2]_t\star [0]\to K$ fits in the sequence of acyclic cofibrations
\[\begin{tikzcd}[column sep =0.3cm]
	{[e,d^0]\cup[e,d^2]} & {[e,3]} & {[\Lambda^1[2],1]} & {[[2]_t,1]} \\
	{\Lambda^1[2]\star [0]} & \bullet & \bullet & K \\
	& {([e,1]\cup[[1],1])\cup[e,1]\vee[\partial[1],1]} & {[e,1]\vee[[1],1]}
	\arrow[from=1-1, to=2-1]
	\arrow[""{name=0, anchor=center, inner sep=0}, from=1-1, to=1-2]
	\arrow[from=1-2, to=2-2]
	\arrow[from=2-1, to=2-2]
	\arrow[from=1-3, to=1-4]
	\arrow["\lrcorner"{anchor=center, pos=0.125, rotate=180}, draw=none, from=2-4, to=1-3]
	\arrow[from=2-3, to=2-4]
	\arrow[""{name=1, anchor=center, inner sep=0}, from=2-2, to=2-3]
	\arrow[from=1-4, to=2-4]
	\arrow[from=1-3, to=2-3]
	\arrow[from=3-2, to=2-2]
	\arrow[from=3-2, to=3-3]
	\arrow[from=3-3, to=2-3]
	\arrow["\lrcorner"{anchor=center, pos=0.125, rotate=180}, draw=none, from=2-2, to=0]
	\arrow["\lrcorner"{anchor=center, pos=0.125, rotate=180}, draw=none, from=3-3, to=1]
\end{tikzcd}\]
and is then a weak equivalence.
By two out of three, this concludes the proof.
 \end{proof}

\begin{lemma}
\label{lem:horn_inclusion_3}
The morphism 
$\Lambda^1[2]\star [a,1]\to [2]_t\star [a,1]$
is an acyclic cofibration.
\end{lemma}
\begin{proof}
The lemma \ref{lem:horn_inclusion_2} implies that the inclusion $\Lambda^1[2]\star [a,1]\to \Lambda^1[2]\star [a,1]\cup [2]_t\star \{0\}$ is an acyclic cofibration.
Using proposition \ref{prop:explicit expression of e star a,1}, we deduce that the
 Segal $A$-precategory $[2]_t\star[a,1]$ is the colimit of the diagram
\[\begin{tikzcd}
	{[1]\star[e,1]} && {[1]\star[a,1]} \\
	{\overline{\overline{[1]\star[e,1]}}} & {[1]\star([e,1]\vee[a,1])} & {\overline{\overline{[1]\star[e\star a,1]}}}
	\arrow["{[1]\star[d^0\star a,1]}", from=1-3, to=2-3]
	\arrow["{[1]\star[a,d^1]}"', from=1-3, to=2-2]
	\arrow[from=1-1, to=2-1]
	\arrow[from=1-1, to=2-2]
\end{tikzcd}\]
while $ \Lambda^1[2]\star [a,1]\cup [2]_t\star \{0\}$ is the colimit  of the diagram
\[\begin{tikzcd}
	{\{1\}\star[e,1]} && {\{1\}\star[a,1]} \\
	{\overline{\overline{[1]\star[e,1]}}} & {\{1\}\star([e,1]\vee[a,1])\cup[1]\star[a,1]} & {\{1\}\star[e\star a,1]}
	\arrow["{\{1\}\star[a,d^1]}"', from=1-3, to=2-2]
	\arrow["{\{1\}\star[d^0\star a,1]}", from=1-3, to=2-3]
	\arrow[from=1-1, to=2-1]
	\arrow[from=1-1, to=2-2]
\end{tikzcd}\]
where $\overline{\overline{[1]\star[e,1]}}:=[2]_t\star [0]$ and where $\overline{\overline{[1]\star[e\star a,1]}}$ is the following pushout:
\[\begin{tikzcd}
	{ [[2]\star a,1]} & {e\star [[1]\star a,1]} & {\overline{[1]\star [e\star a,1]}} \\
	{ [[2]_t\star a,1]} && {\overline{\overline{[1]\star[e\star a,1]}}}
	\arrow[from=1-1, to=1-2]
	\arrow[from=1-2, to=1-3]
	\arrow[from=1-1, to=2-1]
	\arrow[from=2-1, to=2-3]
	\arrow[from=1-3, to=2-3]
	\arrow["\lrcorner"{anchor=center, pos=0.125, rotate=180}, draw=none, from=2-3, to=1-2]
\end{tikzcd}\]
Let $K_1$ be the following pushout:
\[\begin{tikzcd}
	{\{1\}\star ([e,1]\vee [a,1]) \cup [1]\star ([e,1]\cup [a,1]) } & {\Lambda^1[2]\star[a,1]\cup [2]_t\star\{0\}} \\
	{[1]\star ([e,1]\vee[a,1])} & {K_1}
	\arrow[from=1-1, to=2-1]
	\arrow[""{name=0, anchor=center, inner sep=0}, from=1-1, to=1-2]
	\arrow[from=2-1, to=2-2]
	\arrow[from=1-2, to=2-2]
	\arrow["\lrcorner"{anchor=center, pos=0.125, rotate=180}, draw=none, from=2-2, to=0]
\end{tikzcd}\]
The left-hand morphism is equal to $(d^0:[0]\to [1])\hstar ([e,1]\cup[a,1]\to [e,1]\vee[a,1])$ which is an acyclic cofibration according to lemma \ref{lemma:leibnizt joint is Quillen}.
Furthermore, the morphism $K_1\to [2]_t\star [a,1]$ fits in the following pushout:
\[\begin{tikzcd}
	{\overline{[1]\star[ a,1]}\cup\{1\}\star[e\star a,1]} & {K_1} \\
	{\overline{\overline{[1]\star[e\star a,1]}}} & {[2]_t\star[a,1]}
	\arrow[from=1-2, to=2-2]
	\arrow[""{name=0, anchor=center, inner sep=0}, from=1-1, to=1-2]
	\arrow[from=1-1, to=2-1]
	\arrow[from=2-1, to=2-2]
	\arrow["\lrcorner"{anchor=center, pos=0.125, rotate=180}, draw=none, from=2-2, to=0]
\end{tikzcd}\]
The lemma \ref{lemma:le lemme quon voulais pas faire} implies that we have a weak equivalence from $\overline{[1]\star[ a,1]}\cup\{1\}\star[e\star a,1]$ to the colimit, denoted by $K_2$, of the diagram 
\[\begin{tikzcd}
	{[[1]\star a,1]} & {[e\star a,1]} & {[e,1]\vee(e\star[a,1])\cup \{1\}\star[e\star a,1]}
	\arrow["{[e\star a,d^1]}", from=1-2, to=1-3]
	\arrow["{[d^0\star a,1]}"', from=1-2, to=1-1]
\end{tikzcd}\]
As all the morphisms are cofibrations, $K_2$ is also the homotopy colimit of the previous diagram. We now define $K_3$ as the colimit of the diagram
\[\begin{tikzcd}
	{[\Lambda^1[2]\star  a,1]} & {[[1]\star a,1]} & {[e,1]\vee(e\star[e\star a,1])}
	\arrow["{[d^0\star a,1]}"', from=1-2, to=1-1]
	\arrow["{[[1]\star a,d^1]}", from=1-2, to=1-3]
\end{tikzcd}\]
The canonical morphism $K_2\to K_3$ fits in the cocartesian square
\[\begin{tikzcd}
	{[e,1]\vee(e\star[a,1])\cup \{1\}\star[e\star a,1]} & {K_2} \\
	{[e,1]\vee(e\star[e\star a,1])} & {K_3}
	\arrow[from=1-1, to=2-1]
	\arrow[from=2-1, to=2-2]
	\arrow[""{name=0, anchor=center, inner sep=0}, from=1-1, to=1-2]
	\arrow[from=1-2, to=2-2]
	\arrow["\lrcorner"{anchor=center, pos=0.125, rotate=180}, draw=none, from=2-2, to=0]
\end{tikzcd}\]
and is then a weak equivalence according to the lemma \ref{lemma:le lemme quon voulais pas faire2}.

On the other side, the lemma \ref{lemma:le lemme quon voulais pas faire} also implies that we have a weak equivalence from $\overline{\overline{[1]\star[e\star a,1]}}$ to the colimit, denoted by $K_4$, of the diagram
\[\begin{tikzcd}
	{[[2]_t\star  a,1]} & {[[1]\star a,1]} & {[e,1]\vee(e\star[e\star a,1])}
	\arrow["{[d^0\star a,1]}"', from=1-2, to=1-1]
	\arrow["{[[1]\star a,d^1]}", from=1-2, to=1-3]
\end{tikzcd}\]
As all the morphisms are cofibrations, $K_4$ is also the homotopy colimit of the previous diagram.
As $\Lambda^1[2]\star a \to [2]_t\star a$ is a weak equivalence in $A$, this implies that the canonical morphism $K_3\to K_4$ is also a weak equivalence. We then have  commutative diagram:
\[\begin{tikzcd}
	{[1]\star[a,1]\cup \{1\}\star[e\star a,1]} && {\overline{[1]\star[e\star a,1]}} \\
	{K_2} & {K_3} & {K_4}
	\arrow["\sim"', from=2-2, to=2-3]
	\arrow[from=1-1, to=1-3]
	\arrow["\sim", from=1-3, to=2-3]
	\arrow["\sim"', from=1-1, to=2-1]
	\arrow["\sim"', from=2-1, to=2-2]
\end{tikzcd}\]
where all arrows labelled by $\sim$ are weak equivalences. By two out of three, this implies the result.
\end{proof}

\begin{lemma}
\label{lem:horn_inclusion_4}
For any stratified Segal $A$-precategory $C$, the morphisms $\Lambda^1[2]\star C\to [2]_t\star C$ and $\{1\}\star C\to [1]_t\star C$ are
 acyclic cofibrations.
Moreover, for any cofibration of stratified Segal $A$-precategory $i$, and $j$ being either $\{1\}\to [1]_t$ or $\Lambda^1[2]\to [2]_t$, the morphism $j\hstar i$ is an acyclic cofibration.
\end{lemma}
\begin{proof}
We begin with the first assertion.
The lemma \ref{lemma:leibnizt joint is Quillen} implies that  $\Lambda^1[2]\star\uvar$ and $[2]_t\star\uvar$ are left Quillen functors.  As every object is a homotopy colimits of objects of shape $[a,n]$ or $[e,1]_t$, we can reduce to the case where $C$ is of this shape.
Using Segal extensions, we can reduce to the case where $C$ is $[a,1]$, $[0]$ or $[e,1]_t$.

If $C$ is $[a,1]$ or $[0]$, the result follows from lemmas \ref{lem:outer horn inclusion2}, \ref{lem:outer horn inclusion1}, \ref{lem:horn_inclusion_2} and \ref{lem:horn_inclusion_3}.

Eventually, for $C := [e,1]_t$, we have a diagram:
\[\begin{tikzcd}
	{\{1\}\star[e,1]_t} & {\{0\}\star [0]} && {\Lambda^1[2]\star [e,1]_t} & {\Lambda^1[2]\star [0]} \\
	{[1]_t\star[e,1]_t} & {[1]_t\star [0]} && {[2]_t\star[e,1]_t} & {[2]_t\star [0]}
	\arrow[from=1-4, to=2-4]
	\arrow[from=1-4, to=1-5]
	\arrow[from=2-4, to=2-5]
	\arrow[from=1-5, to=2-5]
	\arrow[from=2-1, to=2-2]
	\arrow[from=1-1, to=2-1]
	\arrow[from=1-2, to=2-2]
	\arrow[from=1-1, to=1-2]
\end{tikzcd}\]
Lemmas \ref{lemma:leibnizt joint is Quillen}, \ref{lem:outer horn inclusion2} and \ref{lem:horn_inclusion_2} imply that all
 horizontal morphisms and right vertical morphisms are weak equivalences. By two out of three, this implies that the left vertical morphisms are weak equivalences.

This concludes the proof of the first assertion. The second one is obtained with some diagram chasing.
\end{proof}

\begin{prop}
\label{prop:horn_inclusion}
The functor $\stratSset\to \stratSeg(A)$ sends complicial horn inclusions to weak equivalences.
\end{prop}
\begin{proof}
Let $k\leq n$ be two integers. First, we suppose that $0<k<n$. We then have an equality $$(\Lambda^k[n]\to [n]^k)= (\partial[k-2]\to [k-2])\hstar (\Lambda^1[2]\to [2]_t)\hstar (\partial [n-k-2]\to [n-k-2]).$$ This is an acyclic cofibration according to lemmas \ref{lemma:leibnizt joint is Quillen} and \ref{lem:horn_inclusion_4}. If $k=0$, we have an equality $$(\Lambda^0[n]\to [n]^0) = (\{1\}\to [e,1]_t)\hstar (\partial[n-2]\to [n-2])$$
and the right hand morphism is an acyclic cofibration again thanks to lemma \ref{lem:horn_inclusion_4}. Eventually, for $k=n$, note that $$(\Lambda^n[n]\to [n]^n) = (\partial[n-2]\to [n-2])\hstar (\{0\}\to [e,1]_t).$$ This morphism is an acyclic cofibration according to lemma \ref{lemma:leibnizt joint is Quillen}.
\end{proof}

\subsection{Complicial thinness extensions}
\label{section:Complicial thinness extensions}
\begin{notation*}
In this section, we will often consider morphisms $\tilde{a}\to \tilde{b}$ that fit into cocartesian squares:
\[\begin{tikzcd}
	a & b \\
	{\tilde{a}} & {\tilde{b}}
	\arrow[from=1-1, to=2-1]
	\arrow["i", from=1-1, to=1-2]
	\arrow[from=2-1, to=2-2]
	\arrow[from=1-2, to=2-2]
	\arrow["\lrcorner"{anchor=center, pos=0.125}, draw=none, from=1-1, to=2-2]
\end{tikzcd}\]
where $a\to \tilde{a}$ and $b\to \tilde{b}$ are epimorphisms.
To avoid complicating the notations unnecessarily, the induced morphism $\tilde{a}\to \tilde{b}$ will just be denoted $i$.
\end{notation*}

\begin{lemma}
\label{lemma:thinnes extension case 0 and n}
Morphisms $([n]^0)'\to ([n]^0)''$ and $([n]^n)' \to ([n]^n)''$ are acyclic cofibrations.
\end{lemma}
\begin{proof}
For $k$ equal to $0$ or $n$, we have pushout diagrams: 
\[\begin{tikzcd}
	{[n]^k} & {([n]^k)'} & {([n]^k)''} \\
	{[n-1]} & {[n-1]_t} & {[n-1]_t}
	\arrow[from=1-1, to=2-1]
	\arrow[from=1-1, to=1-2]
	\arrow[from=1-2, to=2-2]
	\arrow[from=2-1, to=2-2]
	\arrow[from=1-2, to=1-3]
	\arrow[from=1-3, to=2-3]
	\arrow["id"', from=2-2, to=2-3]
	\arrow["\lrcorner"{anchor=center, pos=0.125, rotate=180}, draw=none, from=2-2, to=1-1]
	\arrow["\lrcorner"{anchor=center, pos=0.125, rotate=180}, draw=none, from=2-3, to=1-2]
\end{tikzcd}\]
Lemmas \ref{lemma:leibnizt joint is Quillen} and \ref{lem:horn_inclusion_4} imply that both $s^0:[n]^0\to [n-1]$ and $s^{n-1}:[n]^{n-1}\to [n-1]$ are weak equivalences. As horizontal morphisms are cofibrations, the left properness imply that all the vertical morphisms are weak equivalences.
 By two out of three, this shows that $([n]^k)' \to ([n]^k)''$ is a weak equivalence. 
\end{proof}

\begin{construction}
\label{cons:the big construction}
We consider these objects of $\Delta^2_{/[1]}$ and $\Delta^2_{/[2]}$:
$$\begin{array}{cc}
s^1:[1]^{op}\star [0]\to[1]&s^0:[0]^{op}\star [1]\to[1]
\\s^1:[1]^{op}\star[1]\to[2]&s^2:[2]^{op}\star[0]\to [2].
\end{array}$$
They induce morphisms:
$$\begin{array}{cc}
\alpha_a:[e\star a,1]\to e\star [a,1]&\beta_a:[e,1]\vee [ a,1]\to e\star [a,1] \\
 \delta_a:[e\star a,1]\vee[a,1]\to e\star([a,2])& \epsilon_a:[[2]\botimes a,1]\to e\star([a,2])\end{array}$$
where $[2]\botimes a$ and $[e\star a,1]\vee[a,1]$ are the following pushouts:
\[\begin{tikzcd}
	{[1]\otimes a\amalg [1]\otimes a} && {[2]\otimes a} && {[[1]\otimes a,1]\amalg[[1]\otimes a,1]} & {[[1]\otimes a,2]} \\
	{e\star a\amalg e\star a} && {[2]\botimes a} && {[e\star a,1]\amalg [a,1]} & {[e\star a,1]\vee[a,1]}
	\arrow[""{name=0, anchor=center, inner sep=0}, "{d^1\otimes a\amalg d^2\otimes a}", from=1-1, to=1-3]
	\arrow["{d^1\botimes a\amalg d^2\botimes a}"', from=2-1, to=2-3]
	\arrow[from=1-1, to=2-1]
	\arrow[from=1-3, to=2-3]
	\arrow[from=1-5, to=2-5]
	\arrow[""{name=1, anchor=center, inner sep=0}, "{[[1]\otimes a,d^2\amalg d^0]}", from=1-5, to=1-6]
	\arrow[from=2-5, to=2-6]
	\arrow[from=1-6, to=2-6]
	\arrow["\lrcorner"{anchor=center, pos=0.125, rotate=180}, draw=none, from=2-3, to=0]
	\arrow["\lrcorner"{anchor=center, pos=0.125, rotate=180}, draw=none, from=2-6, to=1]
\end{tikzcd}\]
Moreover there are commutative diagrams:
\[\begin{tikzcd}
	{[1]} & {[0]^{op}\star[1]} & {[0]^{op}\star[0]} & {[1]^{op}\star[0]} \\
	& { [1]} & {[0]^{op}\star[1]} & {[1] } \\
	{[1]^{op}\star[0]} & {[1]^{op}\star[1]} & {[1]^{op}\star[0]} & {[2]^{op}\star[0]} \\
	{[1]} & {[2]} & {[1]} & {[2]} \\
	{[1]^{op}\star[0]} & {[2]^{op}\star[0]} & {[1]^{op}\star[0]} & {[2]^{op}\star[0]} \\
	{[1]^{op}\star[1]} & {[2]} & {[1]} & {[2]}
	\arrow["{s^1}", from=3-2, to=4-2]
	\arrow["{s^2}", from=3-4, to=4-4]
	\arrow["{d^1}", from=3-3, to=3-4]
	\arrow["{s^1}"', from=3-3, to=4-3]
	\arrow["{d^1}"', from=4-3, to=4-4]
	\arrow["{s^1}"', from=3-1, to=4-1]
	\arrow["{d^2}", from=3-1, to=3-2]
	\arrow["{d^2}"', from=4-1, to=4-2]
	\arrow["{s^0}"', from=2-3, to=2-4]
	\arrow["{d^1}"', from=1-3, to=2-3]
	\arrow["{d^1}", from=1-3, to=1-4]
	\arrow["{s^1}", from=1-4, to=2-4]
	\arrow["{s^0}", from=1-2, to=2-2]
	\arrow["{d^0}", from=1-1, to=1-2]
	\arrow["{s^1}"', from=6-1, to=6-2]
	\arrow["{s^2}", from=5-2, to=6-2]
	\arrow["{d^2}"', from=5-1, to=6-1]
	\arrow["{d^2}", from=5-1, to=5-2]
	\arrow["id"', from=1-1, to=2-2]
	\arrow["{d^0}", from=5-3, to=5-4]
	\arrow["{s^1}"', from=5-3, to=6-3]
	\arrow["{d^0}"', from=6-3, to=6-4]
	\arrow[from=5-4, to=6-4]
\end{tikzcd}\]
which induce commutative diagrams:
\[\begin{tikzcd}
	{[a,1]} & {[e,1]\vee[a,1]} & {[a,1]} & {[e\star a,1]} \\
	\textcolor{white}{e\star [a,1]} & {e\star [a,1]} & {[e,1]\vee[a,1]} & {e\star [a,1]} \\
	{[e\star a,1]} & {[e\star a,1]\vee[a,1]} & {[e\star a,1]} & {[[2]\botimes a,1]} \\
	{e\star[ a,1]} & {e\star[a,2]} & {e\star[ a,1]} & {e\star[a,2]} \\
	{[[1]\otimes a,1]} & {[[2]\botimes a,1]} & {[e\star a,1]} & {[[2]\botimes a,1]} \\
	{[e\star a,1]\vee[a,1]} & {e\star[a,2]} & {e\star[ a,1]} & {e\star[a,2]}
	\arrow["{\delta_a}", from=3-2, to=4-2]
	\arrow["{\alpha_a}"', from=3-3, to=4-3]
	\arrow["{[d^1\botimes a,1]}", from=3-3, to=3-4]
	\arrow["{\epsilon_a}", from=3-4, to=4-4]
	\arrow["{e\star [a,d^1]}"', from=4-3, to=4-4]
	\arrow["{\alpha_a}"', from=3-1, to=4-1]
	\arrow["{e\star[a,d^2]}"', from=4-1, to=4-2]
	\arrow["{[e\star a,d^2]}", from=3-1, to=3-2]
	\arrow["{[a,d^1]}"', from=1-3, to=2-3]
	\arrow["{[d^{0}\star a,1]}", from=1-3, to=1-4]
	\arrow["{\alpha_a}", from=1-4, to=2-4]
	\arrow["{\beta_a}", from=2-3, to=2-4]
	\arrow["{\beta_a}", from=1-2, to=2-2]
	\arrow["{[a,d^0]}", from=1-1, to=1-2]
	\arrow["{(3):}"{description}, shift right=5, curve={height=30pt}, draw=none, from=3-1, to=4-1]
	\arrow["{[[1]\otimes a,d^1]}"', from=5-1, to=6-1]
	\arrow["{\epsilon_a}", from=5-2, to=6-2]
	\arrow["{[d^0\otimes a,1]}", from=5-1, to=5-2]
	\arrow["{(5):}"{description}, shift right=5, curve={height=30pt}, draw=none, from=5-1, to=6-1]
	\arrow["{\delta_a}"', from=6-1, to=6-2]
	\arrow["{:(4)}"{description}, shift left=5, curve={height=-30pt}, draw=none, from=3-4, to=4-4]
	\arrow["{d^{0}\star {[a,1]}}"', from=1-1, to=2-2]
	\arrow["{(1):}"{description}, shift right=5, curve={height=30pt}, draw=none, from=1-1, to=2-1]
	\arrow["{[d^2\botimes a,1]}", from=5-3, to=5-4]
	\arrow["{e\star[a,d^0]}"', from=6-3, to=6-4]
	\arrow["{\alpha_a}"', from=5-3, to=6-3]
	\arrow["{\epsilon_a}", from=5-4, to=6-4]
	\arrow["{:(6)}"{description}, shift left=5, curve={height=-30pt}, draw=none, from=5-4, to=6-4]
	\arrow["{:(2)}"{description}, shift left=5, curve={height=-30pt}, draw=none, from=1-4, to=2-4]
\end{tikzcd}\]
\end{construction}
\begin{definition}
Let $b$ be an object of $A$ and $x:a\to b,~x':a'\to b$ two morphisms. The element $b$ is \wcnotion{$n$-relying on $x$}{relying on $x$@$n$-relying on $x$} if for any $k\geq -1$, the following square is homotopy cocartesian:
\[\begin{tikzcd}
	{[k]\star a} & {[k]\star b} \\
	{\tau^i_{n+k+1}([k]\star a)} & {\tau^i_{n+k+1}([k]\star b)}
	\arrow[from=1-1, to=1-2]
	\arrow[from=2-1, to=2-2]
	\arrow[from=1-1, to=2-1]
	\arrow[from=1-2, to=2-2]
\end{tikzcd}\]
The element $b$ is \wcnotion{$n$-relying on $x$ and $x'$}{relying on $x$ and $x'$@$n$-relying on $x$ and $x'$} if for any $k\geq -1$, the following square is homotopy cocartesian:
\[\begin{tikzcd}
	{[k]\star a\amalg [k]\star a'} & {[k]\star b} \\
	{\tau^i_{n+k+1}([k]\star a)\amalg \tau^i_{n+k+1}([k]\star a')} & {\tau^i_{n+k+1}([k]\star b)}
	\arrow[tail reversed, no head, from=2-1, to=1-1]
	\arrow[from=1-1, to=1-2]
	\arrow[from=2-1, to=2-2]
	\arrow[tail reversed, no head, from=2-2, to=1-2]
\end{tikzcd}\]
\end{definition}

\p We recall that we denote by $C_{\mk}$ the marked Segal $A$-precategory associated to a stratified Segal $A$-precategory $C$. The canonical inclusion $C\to C_{\mk}$ is denoted $r_C$ and is an acyclic cofibration according to he proposition \ref{prop:X to Xmk is acycli cof}. These notions and notations are defined in paragraph \ref{para:marked segal}.
The fact that will be used the most with the marked Segal $A$-precategory is their right lifting property with respect to morphisms of shape $[\tau^i_n(a),\Lambda^1[2]]\cup [a,2]\to [\tau^i_n(a),2]$. This fact will  be used freely.

\begin{definition}
\label{def:order relation case 1}
Let $C$ be a Segal $A$-precategory. We define the relation \wcnotation{$\geq_{n}$}{((g37@$\geq_{n}$} on morphisms of shape $[a,1]\to C$ for $a$ verifying $\tau^i_{n}a = a$, as the smallest reflexive and transitive relation such that $(x:[a,1]\to C) \geq_n (x':[a',1]\to C)$ whenever one of the three following conditions is verified:
\begin{enumerate}
\item The elements $a$ and $a'$ are equal and there exists a lifting the following diagram:
\[\begin{tikzcd}
	{[a,1]} \\
	& {[a,1]\vee[\tau^i_{n-1}a,1]} & C \\
	{[a,1]}
	\arrow["{[a,d^2]}"', from=1-1, to=2-2]
	\arrow["x", curve={height=-12pt}, from=1-1, to=2-3]
	\arrow["{[a,d^1]}", from=3-1, to=2-2]
	\arrow[dotted, from=2-2, to=2-3]
	\arrow["{x'}"', curve={height=12pt}, from=3-1, to=2-3]
\end{tikzcd}\]
\item The elements $a$ and $a'$ are equal and there exists a lifting in the following diagram:
\[\begin{tikzcd}
	{[a,1]} \\
	& {[\tau^i_{n-1}a,1]\vee[a,1]} & C \\
	{[a,1]}
	\arrow["{[a,d^0]}"', from=1-1, to=2-2]
	\arrow["x", curve={height=-12pt}, from=1-1, to=2-3]
	\arrow["{[a,d^1]}", from=3-1, to=2-2]
	\arrow[dotted, from=2-2, to=2-3]
	\arrow["{x'}"', curve={height=12pt}, from=3-1, to=2-3]
\end{tikzcd}\]
\item There exists an element $b$ which is $(n-1)$-relying on $a\to b$ and dotted arrows in the following diagram:
\[\begin{tikzcd}
	{[a,1]} \\
	& {[b,1]} & {C_{\mk}} \\
	{[a',1]}
	\arrow["{ }"', from=1-1, to=2-2]
	\arrow[dotted, from=3-1, to=2-2]
	\arrow["{r_C\circ x}", curve={height=-12pt}, from=1-1, to=2-3]
	\arrow["{r_C\circ x'}"', curve={height=12pt}, from=3-1, to=2-3]
	\arrow[dotted, from=2-2, to=2-3]
\end{tikzcd}\]
\end{enumerate}
\end{definition}

\begin{definition}
\label{def:order relation case 2}
We also set $(\bar{x}:[\bar{a},1]\to C,\bar{x}':[\bar{a}',1]\to C)\geq_n \bar{x}'':[\bar{a}'',1]\to C$ if there exists three elements $x:[a,1]\to C$, $x':[a',1]\to C$ and $x'':[a'',1]\to C$ such that $\bar{x}\geq_n x$, $\bar{x}'\geq_n x'$, $x''\geq_n \bar{x}''$ and one of the two following conditions is verified:
\begin{enumerate}
\item The elements $a$, $a'$ and $a''$ are equal and there exists a dotted arrow:
\[\begin{tikzcd}
	{[a,1]\cup[a,1]} \\
	& {[a,2]} & C \\
	{[a,1]}
	\arrow["{[a,d^2\cup d^0]}"'{pos=0.1}, from=1-1, to=2-2]
	\arrow["{[a,d^1]}"{pos=0.4}, from=3-1, to=2-2]
	\arrow["{x\cup x'}", curve={height=-12pt}, from=1-1, to=2-3]
	\arrow["{x''}"', curve={height=12pt}, from=3-1, to=2-3]
	\arrow[dotted, from=2-2, to=2-3]
\end{tikzcd}\]
\item There exists an element $b$ which is $(n-1)$-relying on $a\to b$ and $a'\to b$ and dotted arrows in the following diagram:
\[\begin{tikzcd}
	{[a,1]\amalg[a',1]} \\
	& {[b,1]} & {C_{mk}} \\
	{[a'',1]}
	\arrow[from=1-1, to=2-2]
	\arrow[dotted, from=3-1, to=2-2]
	\arrow["{r_C\circ x\amalg r_C\circ x'}", curve={height=-12pt}, from=1-1, to=2-3]
	\arrow["{r_C\circ x''}"', curve={height=12pt}, from=3-1, to=2-3]
	\arrow[dotted, from=2-2, to=2-3]
\end{tikzcd}\]
\end{enumerate}
\end{definition}

\begin{prop}
\label{prop:meaning of geq case 1}
Let $C$ be a stratified Segal $A$-precategory and $x:[a,1]\to C$, $y:[a',1]\to C$ two morphisms such that $x\geq_n y$. The morphism 
$$ C\coprod_{[a,1]} \tau^i_n( [a,1])\to \tau^i_n( [a',1])\coprod_{[a',1]}C\coprod_{[a,1]} \tau^i_n( [a,1])$$
is an acyclic cofibration. 
\end{prop}
\begin{proof}
By two out of three, we can suppose without loss of generality that $C$ is already a marked Segal $A$-precategory. We suppose first that $x$ and $y$ fulfill one of the three cases of definition \ref{def:order relation case 1}. The following square is then homotopy cartesian: 
\[\begin{tikzcd}
	{[a,1]} & C \\
	{\tau^i_n[a,1]} & {\tau^i_n[a,1]\amalg_{[a,1]} C\coprod_{[a',1]} \tau^i_n[a',1]}
	\arrow["x", from=1-1, to=1-2]
	\arrow[from=1-1, to=2-1]
	\arrow[from=1-2, to=2-2]
	\arrow[from=2-1, to=2-2]
\end{tikzcd}\]
As the cocartesian square:
\[\begin{tikzcd}
	{[a,1]} & C \\
	{\tau^i_n[a,1]} & {\tau^i_n[a,1]\amalg_{[a,1]} C}
	\arrow["x", from=1-1, to=1-2]
	\arrow[from=1-1, to=2-1]
	\arrow[from=1-2, to=2-2]
	\arrow[from=2-1, to=2-2]
	\arrow["\lrcorner"{anchor=center, pos=0.125, rotate=180}, draw=none, from=2-2, to=1-1]
\end{tikzcd}\]
is also homotopy cocartesian, this
 implies that $$C\coprod_{[a,1]} \tau^i_n( [a,1])\to \tau^i_n( [a',1])\coprod_{[a',1]}C\coprod_{[a,1]} \tau^i_n( [a,1])$$ is an acyclic cofibration. Suppose now that there exists a familly of morphisms $(x_k:[a_k,1])_{k\leq m}\to C$ such that $x_0=x$, $x_m=y$ and for any $k$, $x_k$ and $x_{k+1}$ fullfill one of the three cases of definition \ref{def:order relation case 1}. We then have two homotopy cocartesian squares:
\[\begin{tikzcd}
	{C\coprod_{[a',1]} \tau^i_n[a',1]} & {[a,1]} & C \\
	{C\coprod_{\coprod_{k\leq m}[a_k,1]}\coprod_{k\leq m}\tau^i_n[a_k,1]} & {\tau^i_n[a,1]} & {C\coprod_{\coprod_{k\leq m}[a_k,1]}\coprod_{k\leq m}\tau^i_n[a_k,1]}
	\arrow[from=1-2, to=1-3]
	\arrow[from=1-2, to=2-2]
	\arrow[from=1-3, to=2-3]
	\arrow[from=2-2, to=2-3]
	\arrow[from=1-2, to=1-1]
	\arrow[from=2-2, to=2-1]
	\arrow[from=1-1, to=2-1]
\end{tikzcd}\]
As before, this implies that $$C\coprod_{[a,1]} \tau^i_n( [a,1])\to C\coprod_{\coprod_{k\leq m}[a_k,1]}\coprod_{k\leq m}\tau^i_n[a_k,1]$$ and $$\tau^i_n( [a',1])\coprod_{[a',1]}C\coprod_{[a,1]} \tau^i_n( [a,1])\to C\coprod_{\coprod_{k\leq m}[a_k,1]}\coprod_{k\leq m}\tau^i_n[a_k,1]$$ are acyclic cofibrations. 
By two out of three, this implies the result.
\end{proof}

One can show similarly:

\begin{prop}
\label{prop:meaning of geq case 2}
Let $C$ be a stratified Segal $A$-precategory, and $x:[a,1]\to C$, $y:[a',1]\to C$ and $z:[a'',1]\to C$ three morphisms such that $(x,y)\geq_n z$. The morphism 
$$ \tau^i_n( [a',1])\coprod_{[a',1]}C\coprod_{[a,1]} \tau^i_n( [a,1])\to \tau^i_n( [a',1])\coprod_{[a',1]}C\coprod_{[a,1]} \tau^i_n( [a,1]) \coprod_{[a'',1]} \tau^i_n( [a'',1])$$
is an acyclic cofibration. 
\end{prop}

\begin{lemma}
\label{lem:abstract thinness 0}
Let $n$ be a non null integer and $a$ an element such that $\tau^i_{n}(a)=a$. The object $[2]^2\otimes a$ is $n$-relying on $d^1\botimes a:e\star a\to [2]^2\botimes a$.
\end{lemma}
\begin{proof}
As the morphism $d^1\botimes a:e\star a\to [2]^2\botimes a$ is a weak equivalence, so are the horizontal morphisms of the following diagram:
\[\begin{tikzcd}
	{[k]\star e\star a} & {[k]\star([2]^2\botimes a)} \\
	{\tau^i_{n+k+1}([k]\star e\star a)} & {\tau^i_{n+k+1}([k]\star([2]^2\botimes a))}
	\arrow["\sim", from=1-1, to=1-2]
	\arrow[from=1-1, to=2-1]
	\arrow[from=1-2, to=2-2]
	\arrow["\sim"', from=2-1, to=2-2]
\end{tikzcd}\]
As the vertical morphisms are cofibrations, this implies that this square is homotopy cocartesian.
\end{proof}

\begin{lemma}
\label{lem:abstract thinness 1}
Let $n$ be a non null integer and $a$ an element such that $\tau^i_{n}(a)=a$. The object $[2]\botimes a$ is $n$-relying on $d^0\otimes a:[1]\otimes a\to [2]\botimes a$ and $d^2\otimes a:e\star a\to [2]\otimes a$. Moreover, $[2]\botimes a\coprod_{d^0\otimes a} \tau^i_{n}([1]\otimes a)$ (resp. $[2]\botimes a\coprod_{d^2\botimes a} \tau^i_{n}(e\star a)$) is $n$-relying on $d^{2}\otimes a$ (resp. $d^0\botimes a$).
\end{lemma}
\begin{proof}
Consider the following diagram:
\[\begin{tikzcd}[column sep=0.3cm]
	{[k]\star([1]\otimes a)\amalg [k]\star([1]\otimes a)} & {[k]\star(\Lambda^1[2]\otimes a)} & {[k]\star([2]\otimes a)} \\
	{\tau^i_{n+k+1}([k]\star([1]\otimes a))\amalg \tau^i_{n+k+1}([k]\star([1]\otimes a))} & {\tau^i_{n+k+1}([k]\star(\Lambda^1[2]\otimes a))} & {\tau^i_{n+k+1}([k]\star([2]\otimes a))}
	\arrow[from=1-1, to=2-1]
	\arrow[""{name=0, anchor=center, inner sep=0}, from=1-1, to=1-2]
	\arrow[from=1-2, to=2-2]
	\arrow[from=2-1, to=2-2]
	\arrow["\sim", from=1-2, to=1-3]
	\arrow["\sim"', from=2-2, to=2-3]
	\arrow[from=1-3, to=2-3]
	\arrow["\lrcorner"{anchor=center, pos=0.125, rotate=180}, draw=none, from=2-2, to=0]
\end{tikzcd}\]
The left square is cocartesian and so homotopy cocartesian. Horizontal morphisms of the right square are weak equivalences, so this square is also homotopy cocartesian.
The outer square is then homotopy cocartesian and this implies that 
$[[2]\otimes a,1]$ is $n$-relying on $d^0\otimes a$ and $d^2\otimes a$. We then have a diagram:
\[\begin{tikzcd}[column sep=0.3cm]
	{[k]\star([1]\otimes a)\amalg [k]\star([1]\otimes a)} & {[k]\star([2]\otimes a)} & {[k]\star([2]\botimes a)} \\
	{\tau^i_{n+k+1}([k]\star([1]\otimes a))\amalg \tau^i_{n+k+1}([k]\star([1]\otimes a))} & {\tau^i_{n+k+1}([k]\star([2]\otimes a))} & {\tau^i_{n+k+1}([k]\star([2]\botimes a))}
	\arrow[""{name=0, anchor=center, inner sep=0}, from=1-2, to=1-3]
	\arrow[from=1-2, to=2-2]
	\arrow[from=2-2, to=2-3]
	\arrow[from=1-3, to=2-3]
	\arrow[from=2-1, to=2-2]
	\arrow[from=1-1, to=1-2]
	\arrow[from=1-1, to=2-1]
	\arrow["\lrcorner"{anchor=center, pos=0.125, rotate=180}, draw=none, from=2-3, to=0]
\end{tikzcd}\]
where the two squares are homotopy cocartesian and so is the outer one. This implies the first assertion and the two others follow easily.
\end{proof}

\begin{lemma}
\label{lem:abstract thinness 2}
Let $n$ be an integer strictly superior to $1$ and $a$ such that $\tau^i_{n}(a) = a$.
We consider the projection $\pi:[a,2]\to [a,1]\vee[\tau^i_{n-1}(a),1]$ and $\pi':[a,2]\to [\tau^i_{n-1}(a),1]\vee[a,1]$. We then have inequalities
$$ e\star\pi\circ \epsilon_a\circ [d^0\otimes a, 1]\geq_{n+1} e\star\pi\circ \epsilon_a\circ [d^1\botimes a, 1]$$ and 
$$ e\star\pi'\circ \epsilon_a\circ [d^2\botimes a, 1]\geq_{n+1}e\star \pi\circ \epsilon_a\circ [d^1\botimes a, 1].$$
\end{lemma}
\begin{proof}
Using the diagram $(6).\ref{cons:the big construction}$ we get a diagram
\[\begin{tikzcd}
	{[e\star a,1]} & {[[2]\botimes a,1]} \\
	{[\tau^i_{n}(e\star a),1]} & {e\star[ a,1]} & {e\star[a,2]} \\
	& {e\star[\tau^i_{n-1}(a),1]} & {e\star([a,1]\vee[\tau^i_{n-1}(a),1])}
	\arrow["{[d^2\botimes a,1]}", from=1-1, to=1-2]
	\arrow["{e\star[a,d^0]}"', from=2-2, to=2-3]
	\arrow["{\alpha_a}", from=1-1, to=2-2]
	\arrow["{\epsilon_a}", from=1-2, to=2-3]
	\arrow["e\star\pi", from=2-3, to=3-3]
	\arrow[from=2-2, to=3-2]
	\arrow[from=3-2, to=3-3]
	\arrow[from=1-1, to=2-1]
	\arrow[from=2-1, to=3-2]
\end{tikzcd}\]
The morphism $r_{e\star([a,1]\vee[\tau^i_{n-1}(a),1])}\circ e\star\pi\circ \epsilon_a$ then factors through $[[2]\botimes a\coprod_{d^2\botimes a} \tau^i_{n}(e\star a),1]$. According to lemma \ref{lem:abstract thinness 1}, we then get the first inequalities.

For the second inequality, using the diagrams $(3).\ref{cons:the big construction}$ and $(5).\ref{cons:the big construction}$, we have a diagram:
\[\begin{tikzcd}
	& {[[1]\otimes a,1]} & {[[2]\botimes a,1]} \\
	& {[e\star a,1]\vee[a,1]} & {e\star[a,2]} \\
	{[e\star a,1]} & {e\star[ a,1]} & {e\star([\tau^i_{n-1}(a),1]\vee[a,1])} \\
	{[\tau^i_{n}(e\star a),1]} & {e\star[ \tau^i_{n-1}(a),1]}
	\arrow["{\alpha_a}"', from=3-1, to=3-2]
	\arrow["{[[1]\otimes a,d^1]}"', from=1-2, to=2-2]
	\arrow["{\epsilon_a}", from=1-3, to=2-3]
	\arrow["{[d^0\otimes a,1]}", from=1-2, to=1-3]
	\arrow["{\delta_a}", from=2-2, to=2-3]
	\arrow["{e\star \pi'}", from=2-3, to=3-3]
	\arrow["{[e\star a,d^2]}", from=3-1, to=2-2]
	\arrow["{e\star[a,d^2]}"{description}, from=3-2, to=2-3]
	\arrow[from=3-2, to=4-2]
	\arrow[from=4-2, to=3-3]
	\arrow["{\alpha_{\tau^i_{n-1}(a)}}"', from=4-1, to=4-2]
	\arrow[from=3-1, to=4-1]
\end{tikzcd}\]
This implies that $r_{e\star([\tau^i_{n-1}(a),1]\vee[a,1])}\circ e\star\pi'\circ e\star[a,d^2]\circ \alpha_a$ factors through $[\tau^i_n(e\star a),1]$. The morphism $r_{e\star([\tau^i_{n-1}(a),1]\vee[a,1])}\circ e\star\pi\circ \epsilon_a$ then factors through $[[2]\otimes a\coprod_{d^0\otimes a} \tau^i_{n}([1]\otimes a),1]$. According to lemma \ref{lem:abstract thinness 1}, we then get the second inequality.

\end{proof}

\begin{lemma}
\label{lem:abstract thinness 3}
Let $n$ be an integer strictly superior to $1$ and $a$ such that $\tau^i_{n}(a) = a$.
We then have $\delta_a\circ [e\star a,d^2]\geq_{n+1} \delta_a\circ [[1]\otimes a,d^1]$. 
\end{lemma}
\begin{proof}
There is a diagram:
\[\begin{tikzcd}
	{[e\star a,1]} & {[e\star a,1]} & {[[1]\otimes a,1]} \\
	{e\star[a,2]} & {[e\star a,1]\vee[a,1]} & {[[1]\otimes a,1]\vee[a,1]} \\
	& {[[1]\otimes a,1]} & {[[1]\otimes a,1]}
	\arrow["{\delta_a}", from=2-2, to=2-1]
	\arrow["{[e\star a,d^2]}", from=1-2, to=2-2]
	\arrow["{[[1]\otimes a,d^1]}"', from=3-2, to=2-2]
	\arrow["{[[1]\otimes a,d^2]}", from=1-3, to=2-3]
	\arrow["id", from=3-3, to=3-2]
	\arrow["{[[1]\otimes a,d^1]}"', from=3-3, to=2-3]
	\arrow[from=1-3, to=1-2]
	\arrow[from=2-3, to=2-2]
	\arrow["id", from=1-1, to=1-2]
\end{tikzcd}\]
As the morphism $[[1]\otimes a,1]\vee[a,1]\to [e\star a,1]\vee[a,1]$ factors through $[[1]\otimes a,1]\vee[\tau^i_{n}([1]\otimes a),1]$, we get the desired inequality.
\end{proof}

\begin{prop}
\label{prop:geqn stable by star}
Let $a$ be an object such that $\tau^i_{n}(a)=a$.
Let $x:[a,1]\to C,y:[a',1]\to C$ be two morphisms, such that $x\geq_ny$, then if we denote by $\bar{x} := e\star x\circ \alpha_a$ and $\bar{y} :=e\star y\circ \alpha_{a'} $, we have $\bar{x}\geq_{n+1}\bar{y}$.
\end{prop}
\begin{proof}
First, we suppose that we are in the first case of the definition \ref{def:order relation case 1}. We can then suppose without loss of generality that $C= [a,1]\vee[\tau^i_{n-1}(a),1]$. We denote by $\pi$ the projection of $[a,2]$ on $[a,1]\vee[\tau^i_{n-1}(a),1]$.
 Using the diagrams $(3).\ref{cons:the big construction}$, $(4).\ref{cons:the big construction}$ and $(5).\ref{cons:the big construction}$, we have a diagram:
\[\begin{tikzcd}
	{[[1]\otimes a,1]} & {[[2]\botimes a,1]} & {[e\star a,1]} \\
	{[e\star a,1]\vee[a,1]} & {e\star[a,2]} & {e\star[ a,1]} \\
	{[e\star a,1]} & {e\star[ a,1]} & {e\star ( [a,1]\vee[\tau^i_{n-1}(a),1])}
	\arrow["{\delta_a}", from=2-1, to=2-2]
	\arrow["{\alpha_a}", from=1-3, to=2-3]
	\arrow["{\alpha_a}"', from=3-1, to=3-2]
	\arrow["{e\star[a,d^2]}", from=3-2, to=2-2]
	\arrow["{[e\star a,d^2]}", from=3-1, to=2-1]
	\arrow["{[d^0\otimes a,1]}", from=1-1, to=1-2]
	\arrow["e\star\pi", from=2-2, to=3-3]
	\arrow["{[[1]\otimes a,d^1]}"', from=1-1, to=2-1]
	\arrow["{\epsilon_a}", from=1-2, to=2-2]
	\arrow["{[d^1\botimes a,1]}"', from=1-3, to=1-2]
	\arrow["{e\star [a,d^1]}"', from=2-3, to=2-2]
\end{tikzcd}\]
Thanks to lemmas \ref{lem:abstract thinness 2} and \ref{lem:abstract thinness 3}, this implies the result.

If we are in the second case of \ref{def:order relation case 1}, we can suppose that $C = [\tau^i_{n-1}(a),1]\vee[a,1]$, and we note by $\pi'$ the projection from $[a,2]\to[\tau^i_{n-1}(a),1]\vee[a,1]$ . Using the diagrams $(4).\ref{cons:the big construction}$ and $(6).\ref{cons:the big construction}$, we have a diagram:
\[\begin{tikzcd}
	{[e\star a,1]} & {[[2]\botimes a,1]} & {[e\star a,1]} \\
	{e\star[ a,1]} & {e\star[a,2]} & {e\star[ a,1]} \\
	& {e\star([\tau^i_{n-1}(a),1]\vee[a,1])}
	\arrow["{\alpha_a}", from=1-3, to=2-3]
	\arrow["{\epsilon_a}", from=1-2, to=2-2]
	\arrow["{[d^2\botimes a,1]}", from=1-1, to=1-2]
	\arrow["{e\star[a,d^0]}"', from=2-1, to=2-2]
	\arrow["{\alpha_a}"', from=1-1, to=2-1]
	\arrow["{e\star\pi'}", from=2-2, to=3-2]
	\arrow["{e\star [a,d^1]}", from=2-3, to=2-2]
	\arrow["{[d^1\botimes a,1]}"', from=1-3, to=1-2]
\end{tikzcd}\]
Thanks to lemmas \ref{lem:abstract thinness 2}, this implies the result.

If we are in the third case, it is a direct consequence of the naturality of $\alpha$, of the definition of $n$-reliability and of the fact that $(e\star C)_{\mk}\cong (e\star C_{\mk})_{\mk}$ as remarked in \ref{para:marked segal}.
\end{proof}

\begin{prop}
\label{prop:2geqn stable by star}
Let $x:[a,1]\to C$, $y:[a',1]\to C$ and $z:[a'',1]$ be three morphisms, such that $(x,y)\geq_nz$, then if we denote by $\bar{x} := e\star x\circ \alpha_a$, $\bar{y} :=e\star y\circ \alpha_{a'} $ and $\bar{z} :=e\star z\circ \alpha_{a''} $, we have $(\bar{x},\bar{y})\geq_{n+1}\bar{z}$.
\end{prop}
\begin{proof}
Suppose first that we are in the first case of the definition \ref{def:order relation case 2}. We can then suppose without loss of generality that $C= [a,2]$.
We define $\tilde{x}:=\epsilon_a\circ [d^0\otimes a,1]$. Diagram $(6).\ref{cons:the big construction}$ and
lemma \ref{lem:abstract thinness 2} imply that $(\tilde{x},\bar{y})\geq_{n+1} \bar{z}$. Eventually, diagrams $(3).\ref{cons:the big construction}$ and $(5).\ref{cons:the big construction}$ induce a diagram:
\[\begin{tikzcd}
	{[e\star a,1]} & {[e\star a,1]\vee[a,1]} & {[[1]\otimes a,1]} \\
	{e\star[ a,1]} & {e\star[a,2]} & {[[2]\botimes a,1]}
	\arrow["{\delta_a}", from=1-2, to=2-2]
	\arrow["{\alpha_a}"', from=1-1, to=2-1]
	\arrow["{e\star[a,d^2]}"', from=2-1, to=2-2]
	\arrow["{[e\star a,d^2]}", from=1-1, to=1-2]
	\arrow["{[d^0\otimes a,1]}", from=1-3, to=2-3]
	\arrow["{[[1]\otimes a,d^1]}"', from=1-3, to=1-2]
	\arrow["{\epsilon_a}", from=2-3, to=2-2]
\end{tikzcd}\]
wich implies that $\bar{x}\geq_{n+1}\tilde{x}$.

If we are in the second case of the definition, it is a direct consequence of the naturality of $\alpha$, of the definition of $n$-reliability and of the fact that $(e\star C)_{\mk}\cong (e\star C_{\mk})_{\mk}$ as remarked in paragraph \ref{para:marked segal}.
\end{proof}

\begin{lemma}
\label{lemma:case k 0}
For any $a$ such that $\tau^i_na=a$ and $x:[a,1]\to C$, if we denote by $\bar{x} := e\star x \circ d^0\star[a,1]$ and $\tilde{x}:= e\star x \circ \alpha_a\circ [d^0\star a,1]$, then $\bar{x}\geq_{n+1}\tilde{x}$.
\end{lemma}
\begin{proof}
Using the diagrams $(1).\ref{cons:the big construction}$ and $(2).\ref{cons:the big construction}$, we have a diagram:
\[\begin{tikzcd}
	{[a,1]} & {[e\star a,1]} \\
	{[e,1]\vee[a,1]} & {e\star [a,1]} & C \\
	{[a,1]}
	\arrow["{[a,d^0]}", from=3-1, to=2-1]
	\arrow["{d^{0}\star {[a,1]}}"', from=3-1, to=2-2]
	\arrow["{\beta_a}", from=2-1, to=2-2]
	\arrow["{[d^{0}\star a,1]}", from=1-1, to=1-2]
	\arrow["{[a,d^1]}"', from=1-1, to=2-1]
	\arrow["{\alpha_a}", from=1-2, to=2-2]
	\arrow["{e\star x}", from=2-2, to=2-3]
\end{tikzcd}\]
which implies the desired inequality.
\end{proof}

\p
\label{para:a cocartesian square for intelingent truncation} 
We now use these results to show that the thinness extensions are weak equivalences. We define by induction the morphism $\iota_n:[[n-1],1]\to[n]$ where $\iota_2:= \alpha_{[0]}$ and $\iota_{n+1} := e\star\iota_n\circ \alpha_{[n-1]}$.

 We can easily show by induction that $[n]$ is a colimit of terms which are all invariant under $\tau^i_{n-1}$ except the one corresponding to $\iota_n$.
 For any $n$ we then have a pushout square:
\[\begin{tikzcd}
	{[[n-1],1]} & {[n]} \\
	{[[n-1]_t,1]} & {[n]_t}
	\arrow["{\iota_n}", from=1-1, to=1-2]
	\arrow[from=1-1, to=2-1]
	\arrow[from=1-2, to=2-2]
	\arrow[from=2-1, to=2-2]
	\arrow["\lrcorner"{anchor=center, pos=0.125, rotate=180}, draw=none, from=2-2, to=1-1]
\end{tikzcd}\]

\begin{lemma}
\label{lem:thinness extension last step0}
For any $n$ and for any $k<n$, such that $k\neq n-2$, we have inequalities
$d^k\circ\iota_{n-1}\geq_{n-1}\iota_n \circ[d^k,1]$ and $(d^n\circ\iota_{n-1},d^{n-2}\circ \iota_{n-1})\geq_{n-1}\iota_n\circ [d^{n-2},1]$
\end{lemma}
\begin{proof}
We start by showing the first inequality by induction on $n$. If $n=2$, the only case is $k=1$, and the two morphisms are equal.

Suppose now the result true at the stage $n$. 
If $k>0$, we have
$$
\begin{array}{rllc}
d^{k}\circ\iota_{n}&=& e\star d^{k-1}\circ e\star \iota_{n-1}\circ \alpha_{[n-2]}\\
&\geq_{n}&e\star \iota_n \circ e\star [d^{k-1},1] \circ \alpha_{[n-2]}&\mbox{(induction hypothesis and \ref{prop:geqn stable by star})}\\
&=& e\star \iota_n \circ \alpha_{[n-1]} \circ [e\star d^{k-1},1]\\
&=& \iota_{n+1} \circ \alpha_{[n-1]} \circ [d^{k},1]
\end{array}
$$
We still have to deal with the case $k=0$. As $d^0:[n]\to [n+1]$ (resp $[d^0,1]:[[n-1],1]\to [[n],1]$) is equal to $d^0\star [n]$ (resp. $[d^0\star [n-1],1]$), this is exactly the content of lemma \ref{lemma:case k 0}.

For the second inequality, we proceed again by induction. We remark that this is true for $n=2$. Suppose now the result true at the stage $n$. We have
$$
\begin{array}{rllc}
(d^{n+1}\circ\iota_{n},d^{n-1}\iota_{n})&=& (e\star d^{n}\circ e\star \iota_{n-1}\circ\alpha_{[n-2]},e\star d^{n-2}\circ e\star \iota_{n-1}\circ\alpha_{[n-2]})\\
&\geq_{n-1}& e\star \iota_n\circ e\star[d^{n-2},1]\circ \alpha_{[n-2]}~~~~~\mbox{(induction hypothesis and \ref{prop:2geqn stable by star})}\\
&=& e\star \iota_n\circ e\star \alpha_{[n-1]}\circ[e\star d^{n-2},1]\\ 
&=& \iota_{n+1}\circ [d^{n-1},1]
\end{array}
$$
\end{proof}

\begin{lemma}
\label{lem:thinness extension last step1}
Let $0<k<n$ be two integers.
We denote by $\tau^k$ the projection $[n]\to [n]^k$. We then have $$\tau^k\circ \iota_n\circ [d^k,1]\geq_{n-1}\tau^k\circ d^k\circ\iota_{n-1}.$$
\end{lemma}
\begin{proof}
We demonstrate the result by induction on $n$. For the initialization, the only case is $n=2$ and $k=1$, and is obvious. 
Suppose now the result true at the stage $n$, and let $k>1$. 
We have inequalities:
$$\begin{array}{rlll}
\tau^k\circ \iota_{n+1}\circ [d^k,1] &=& e\star \tau^k\circ e\star \iota_n \circ \alpha_{[n-1]}\circ [d^k,1]\\
&=&\star \tau^k\circ e\star \iota_n \circ e\star [d^{k-1},1]\circ \alpha_{[n-2]}\\
&\geq_n & e\star \tau_k\circ e\star d^{k-1}\circ e\star \iota_{n-1}\circ \alpha_{[n-2]}& \mbox{(induction hypothesis and \ref{prop:geqn stable by star})}\\
&=& \tau_k\circ d^k\circ \iota_n
\end{array}$$
We still have to deal with the case $k=1$. Using diagrams $(1)$, $(2)$, $(4)$ and $(5)$, of construction \ref{cons:the big construction}, we get a diagram:
\[\begin{tikzcd}
	{[[n-1],1]} & {e\star[[n-2],1]} & {[n]} \\
	{[[2]\botimes [n-2],1]} & {e\star([e,1]\vee[[n-2],1])} & {[n+1]} & {[n+1]^1} \\
	{[[n-1],1]} & {e\star[[n-2],1]} & {e\star[[n-1],1]}
	\arrow["{d^1}", from=1-3, to=2-3]
	\arrow["{e\star\iota_n}"', from=3-3, to=2-3]
	\arrow["{e\star[d^0,1]}"', from=3-2, to=3-3]
	\arrow["{e\star [[n-1],d^1]}"', from=3-2, to=2-2]
	\arrow["{e\star [[n-1],d^0]}", from=1-2, to=2-2]
	\arrow["{e\star \iota_{n-1}}", from=1-2, to=1-3]
	\arrow["{e\star\beta_{[n-1]}}", from=2-2, to=2-3]
	\arrow["{\alpha_{[n-2]}}"', from=3-1, to=3-2]
	\arrow["{\alpha_{[n-2]}}", from=1-1, to=1-2]
	\arrow["{e\star\pi\circ \epsilon_{[n-2]}}", from=2-1, to=2-2]
	\arrow["{[d^2\botimes [n-2],1]}"', from=1-1, to=2-1]
	\arrow["{[d^1\botimes [n-2],1]}", from=3-1, to=2-1]
	\arrow["{\tau^1}", from=2-3, to=2-4]
\end{tikzcd}\]
where $\pi$ is the projection $[[n-2],2]\to [e,1]\vee[[n-2],1]$.
However, according to the diagrams $(5)$ and $(3)$ of \ref{cons:the big construction}, there is a diagram:
\[\begin{tikzcd}
	{[[1]\otimes [n-2],1]} & {[e\star [n-2],1]\vee[[n-2],1]} & {[e\star[n-2],1]} \\
	{[[2]\botimes [n-2],1]} & {[[n-2],2]} & {e\star[[n-2],1]} \\
	& {e\star([e,1]\vee[[n-2],1])} & {e\star[e,1]} \\
	& {[n+1]^1} & {[2]_t}
	\arrow["{[d^0\otimes [n-2],1]}"', from=1-1, to=2-1]
	\arrow["{[[1]\otimes[n-2],d^1]}", from=1-1, to=1-2]
	\arrow["{d^3\circ...\circ d^{n+1}}", from=4-3, to=4-2]
	\arrow["{\delta_{[n-2]}}", from=1-2, to=2-2]
	\arrow["{ \epsilon_{[n-2]}}"', from=2-1, to=2-2]
	\arrow["{e\star \pi}", from=2-2, to=3-2]
	\arrow["{[e\star[n-2],d^2]}"', from=1-3, to=1-2]
	\arrow["{\alpha_{[n-2]}}", from=1-3, to=2-3]
	\arrow[from=2-3, to=3-3]
	\arrow[from=3-3, to=3-2]
	\arrow[from=2-3, to=2-2]
	\arrow["{\tau_1\circ e\star\beta_{[n-1]}}"', from=3-2, to=4-2]
	\arrow[from=3-3, to=4-3]
\end{tikzcd}\]
This implies that $[[2]\botimes [n-2],1]\to [n+1]^{k}\to ([n+1]^{k})_{\mk} $ factors through $[[2]\botimes[n-2]\coprod_{d^0\otimes a}\tau^i_{n-1}([1]\otimes [n-2]),1]$. We can then apply lemma \ref{lem:abstract thinness 1}. 	
\end{proof}

\begin{lemma}
\label{lem:thinness extension last step2}
Let $0<k<n-1$ be two integers.
We denote by $\tau^k$ the projection $[n]\to [n]^k$. We then have $$(\tau^k\circ \iota_n\circ [d^{k-1},1], \tau^k\circ \iota_n\circ [d^{k+1},1])\geq_{n-1}\tau^k\circ \iota_n\circ [d^{k},1]$$ and $$\tau^{n-1}\circ \iota_n\circ [d^{n-2},1]\geq_{n-1}\tau^k\circ \iota_n\circ [d^{n-1},1].$$
\end{lemma}
\begin{proof}
By construction, for any $a$, the morphism $[[2]\star a,1]\to [2]\star[a,1]\to [2]_t\star[a,1]$ factors through $[[2]_t\star a,1]$. By induction, this implies that the composite morphism $[[n-1],1]\xrightarrow{\iota_n}[n]\to [n]^k$ factors through $[[n-1]^k,1]$ for any $k<n-1$. This implies the first assertion. 

For the second one, note that $[[1],e]\to [2]\to [2]_t$ factors through $[[1]_t,e]$. By induction, this implies that the composite morphism $[[n-1],1]\xrightarrow{\iota_n}[n]\to [n]^{n-1}$ factors through $[[n-1]^{n-2},1]$ which gives the second one.
\end{proof}

\begin{prop}
\label{prop:thinness extension}
For any $0\leq k\leq n$, the morphism $([n]^k)' \to ([n]^k)''$ is a weak equivalence.
\end{prop}
\begin{proof}
The case $k=0$ and $k=n$ are demonstrated in lemma \ref{lemma:thinnes extension case 0 and n}. For the case $0<k<n$, lemmas \ref{lem:thinness extension last step0}, \ref{lem:thinness extension last step1} and \ref{lem:thinness extension last step2} imply that if we denote by $\tau_k$ the projection $[n]\to [n]^k$, we have an inequality: $(\tau_k\circ d^{k-1}\circ \iota_{n-1},\tau_k\circ d^{k+1}\circ \iota_{n-1})\geq_{n-1}\tau_k\circ d^k\circ \iota_{n-1}$. Together with the proposition \ref{prop:meaning of geq case 2}, this implies that the following square is homotopy cartesian:
\[\begin{tikzcd}
	{[n-1]\cup[n-1]} & {[n]^k} \\
	{[n-1]_t\cup[n-1]_t} & {([n]^k)''}
	\arrow["{d^{k+1}\cup d^{k-1}}", from=1-1, to=1-2]
	\arrow[from=1-1, to=2-1]
	\arrow[from=2-1, to=2-2]
	\arrow[from=1-2, to=2-2]
\end{tikzcd}\]
The morphism $([n]^k)' \to ([n]^k)''$ is then a weak equivalence.
\end{proof}

\subsection{Saturation extensions}
\label{section:Saturation extensions}

Let $\Lambda[3]^{eq}\to [3]^{eq}$ be the entire inclusion generated by $Im(d^3)\cup Im(d^0)\subset [3]$. This inclusion fits in the following sequence:
\[\begin{tikzcd}
	{\Lambda^1[2]} & {[2]_t} & {([3]^1)'} & {([3]^1)''} \\
	{\Lambda[3]^{eq}} & \bullet & \bullet & {[3]^{eq}} \\
	& {\Lambda^1[3]} & {[3]}
	\arrow[from=1-1, to=2-1]
	\arrow["{d^2}", from=1-2, to=2-2]
	\arrow[from=1-1, to=1-2]
	\arrow[from=2-1, to=2-2]
	\arrow["\lrcorner"{anchor=center, pos=0.125, rotate=180}, draw=none, from=2-2, to=1-1]
	\arrow[from=3-2, to=2-2]
	\arrow[from=3-3, to=2-3]
	\arrow[from=3-2, to=3-3]
	\arrow[from=2-2, to=2-3]
	\arrow["\lrcorner"{anchor=center, pos=0.125, rotate=-90}, draw=none, from=2-3, to=3-2]
	\arrow[from=1-3, to=2-3]
	\arrow[from=1-4, to=2-4]
	\arrow[from=1-3, to=1-4]
	\arrow[from=2-3, to=2-4]
	\arrow["\lrcorner"{anchor=center, pos=0.125, rotate=180}, draw=none, from=2-4, to=1-3]
\end{tikzcd}\]
This inclusion is then a weak equivalence according to propositions \ref{prop:horn_inclusion} and \ref{prop:thinness extension}.
Now, note that we have a pushout:
\[\begin{tikzcd}
	{[2]_t\amalg [2]_t} & {\Lambda[3]^{eq}} \\
	{[e,2]\coprod [e,2]} & {[e,[3]^{eq}]}
	\arrow[from=1-1, to=2-1]
	\arrow[from=1-1, to=1-2]
	\arrow[from=1-2, to=2-2]
	\arrow[from=2-1, to=2-2]
	\arrow["\lrcorner"{anchor=center, pos=0.125, rotate=180}, draw=none, from=2-2, to=1-1]
\end{tikzcd}\]
As the left vertical morphism is a weak equivalence, so is the right one. 
Let $\Lambda[3]^{\sharp}\to [3]^{\sharp}$ be the entire inclusion generated by $Im(d^3)\cup Im(d^0)\subset [3]$.
Using the same reasoning, we show that this cofibration is acyclic and that there is a weak equivalence $\Lambda[3]^\sharp \to [e,[3]^\sharp]$. We then have a commutative square:
\[\begin{tikzcd}
	{[e,[3]^{eq}]} & {\Lambda[3]^{eq}} & {[3]^{eq}} \\
	{[e,[3]^{\sharp}]} & {\Lambda[3]^{{\sharp}}} & {[3]^{{\sharp}}}
	\arrow["\sim"', from=1-2, to=1-1]
	\arrow["\sim", from=1-2, to=1-3]
	\arrow[from=1-3, to=2-3]
	\arrow["\sim", from=2-2, to=2-1]
	\arrow["\sim"', from=2-2, to=2-3]
	\arrow["\sim"', from=1-1, to=2-1]
	\arrow[from=1-2, to=2-2]
\end{tikzcd}\]
where all arrows labelled by $\sim$ are weak equivalences. By two out of three, this implies that $[3]^{eq}\to [3]^\sharp$ is a weak equivalence. Combined with the lemma \ref{lemma:leibnizt joint is Quillen}, this implies the following proposition:

\begin{prop}
\label{prop:saturation extension}
For any $n\geq -1$, the morphism $[n]\star [3]^{eq}\to [n]\star [3]^{\sharp}$ is an acyclic cofibration.
\end{prop}

\begin{theorem}
\label{theo:Quillen adjunction}
The stratified cosimplicial object constructed in paragraph
\ref{para:definition of the cosimplicial object} induces a Quillen adjunction $\stratSset^\omega\to \stratSeg(A)$.
\end{theorem}
\begin{proof}
It is a direct consequence of theorem \ref{theo:model structure on complicial set} and propositions \ref{prop:horn_inclusion}, \ref{prop:thinness extension}, and \ref{prop:saturation extension}.
\end{proof}

\section{The case $A:=\stratSset^n$}
For $n\in \Nb\cup\{\omega\}$, we denote by $\stratSset^n$ the category of stratified simplicial set endowed with the model structure for $n$-complicial set given in theorem \ref{theo:model structure on complicial set}. As remarked in example \ref{example:stratsset is gray module}, these model categories are Gray modules. The functor $\stratSset\to\stratSeg(\stratSset^n)$ defined in \ref{para:definition of the cosimplicial object} is left Quillen according to theorem \ref{theo:Quillen adjunction}.
 It was noted in paragraph \ref{para:a cocartesian square for intelingent truncation} that for $k>0$, $[k]\to [k]_t$ fits in the following cocartesian square: 
\[\begin{tikzcd}
	{[[k-1],1]} & {[k]} \\
	{[[k-1]_t,1]} & {[k]_t}
	\arrow["{\iota_k}", from=1-1, to=1-2]
	\arrow[from=1-1, to=2-1]
	\arrow[from=2-1, to=2-2]
	\arrow[from=1-2, to=2-2]
\end{tikzcd}\]
The functor $\stratSset\to\stratSeg(\stratSset^n)$ then sends $[k]\to [k]_t$ to an acyclic cofibration for $k>n+1$, and then induces a left Quillen functor 
\begin{equation}
\label{eq:defi of in}
i^{n+1}:\stratSset^{n+1}\to\stratSeg(\stratSset^n)
\end{equation}

\subsection{Comparison with $\zocat$}
We denote by 
\[\begin{tikzcd}
	{\R:\stratSset^\omega} & {\zocat:\N}
	\arrow[""{name=0, anchor=center, inner sep=0}, shift left=2, from=1-1, to=1-2]
	\arrow[""{name=1, anchor=center, inner sep=0}, shift left=2, from=1-2, to=1-1]
	\arrow["\dashv"{anchor=center, rotate=-90}, draw=none, from=0, to=1]
\end{tikzcd}\]
the adjunction between stratified simplicial sets and $\zo$-categories described in section 
\ref{section:Street nerve}. For an $\zo$-category $C$ and an integer $n$, the $\zo$-category $[C,n]$ is defined as the colimit of the following diagram
\[\begin{tikzcd}
	& {[0]} && {[0]} && {[0]} \\
	{\Sigma C} && {\Sigma C} && {...} && {\Sigma C}
	\arrow["{i_0^+}"', from=1-2, to=2-1]
	\arrow["{i_0^-}", from=1-2, to=2-3]
	\arrow["{i_0^+}"', from=1-6, to=2-5]
	\arrow["{i_0^-}", from=1-4, to=2-5]
	\arrow["{i_0^+}"', from=1-4, to=2-3]
	\arrow["{i_0^-}", from=1-6, to=2-7]
\end{tikzcd}\]
This induces an adjunction
\[\begin{tikzcd}
	{\R:\stratSeg(\stratSset)} & {\zocat:\N}
	\arrow[""{name=0, anchor=center, inner sep=0}, shift left=2, from=1-1, to=1-2]
	\arrow[""{name=1, anchor=center, inner sep=0}, shift left=2, from=1-2, to=1-1]
	\arrow["\dashv"{anchor=center, rotate=-90}, draw=none, from=0, to=1]
\end{tikzcd}\]
where the left adjoint sends $[K,n]$ to $[\R(K),n]$ and $[e,1]_t$ on $[0]$.
\sym{(r@$\R:\stratSeg(\stratSset)\to \zocat$}\sym{(n@$\N:\zocat\to \stratSeg(\stratSset)$}
\begin{lemma}
\label{lemma: nerve commute is suspension}
For any $\zo$-category $C$, the canonical morphism 
$$[\N C,1]\to \N[C,1]$$ is an isomorphism. 
\end{lemma}

\begin{proof}
Let $K$ be a stratified simplicial set, $n$ an integer. By construction, we have two cartesian squares
\[\begin{tikzcd}
	{\coprod\limits_{\epsilon\in\{0,1\}}\Hom_{\Delta}([n],\{\epsilon\})\times \Hom_{\stratSset}(K,\N C)} & {\Hom_{\Delta}([n],[1])\times \Hom_{\stratSset}(K,\N C)} \\
	{\coprod\limits_{\epsilon\in\{0,1\}}\Hom_{\Delta}([n],\{\epsilon\})} & {\Hom_{\stratSeg(\stratSset)}([K,n],[\N C,1])}
	\arrow[from=1-1, to=2-1]
	\arrow[from=1-1, to=1-2]
	\arrow[from=2-1, to=2-2]
	\arrow[from=1-2, to=2-2]
\end{tikzcd}\]
\[\begin{tikzcd}
	{\coprod\limits_{\epsilon\in\{0,1\}}\Hom_{\Delta}([n],\{\epsilon\})\times \Hom_{\zocat}(\R(K), C)} & {\Hom_{\Delta}([n],[1])\times \Hom_{\zocat}(\R(K), C)} \\
	{\coprod\limits_{\epsilon\in\{0,1\}}\Hom_{\Delta}([n],\{\epsilon\})} & {\Hom_{\zocat}(\R([K,n]),[C,1])}
	\arrow[from=1-1, to=2-1]
	\arrow[from=1-1, to=1-2]
	\arrow[from=2-1, to=2-2]
	\arrow[from=1-2, to=2-2]
\end{tikzcd}\]
which directly concludes the proof.
\end{proof}

\begin{lemma}
\label{lemma:compairaon beetwen join and the formula of chap 2}
Let $C$ be an $\zo$-category and $n$ an integer. There is a canonical commutative square in $\zocat$: 
\[\begin{tikzcd}
	{\coprod\limits_{k\leq n}\colim_{\Delta^2_{/\{k\}}}[[n_0]\otimes C,1]\vee[C,n_1]} & {\colim_{\Delta^2_{/[n]}}[[n_0]\otimes C,1]\vee[C,n_1]} \\
	{\coprod\limits_{k\leq n}\colim_{\Delta^2_{/\{k\}}}[[n_0],1]\vee[n_1]} & {1\star [C,n]}
	\arrow[from=1-2, to=2-2]
	\arrow[from=1-1, to=2-1]
	\arrow[from=2-1, to=2-2]
	\arrow[from=1-1, to=1-2]
\end{tikzcd}\]
natural in $C:\zocat$ and $[n]:\Delta$.
\end{lemma}
\begin{proof}
In this proof, we use the Steiner theory recalled in section \ref{section:Steiner thery}.
It is sufficient to show the assertion when $C$ is a globular form, and then \textit{a fortiori}, an $\zo$-category with an atomic and loop free basis. Using the equivalence between $\zocatB$ and $\CDAB$ given in \ref{theorem:steiner} and the equivalences
$$(K\otimes L)^{op} \sim L^{op}\otimes K^{op}~~~(K\otimes L)^{co} \sim L^{co}\otimes K^{co} ~~~ (1\star K)^{op}\sim K^{op}\star 1$$
provided by propositions A.20 and 6.10 of \cite{Ara_Maltsiniotis_joint_et_tranche}, it is sufficient to construct for every augmented direct complex $K$ a natural commutative square: 
\[\begin{tikzcd}
	{\coprod_{k\leq n}\colim_{[n_1]\star [n_0]\to \{k\}}[K,n_1]\vee[K\otimes\lambda[n_0],1]} & {\colim_{[n_1]\star [n_0]\to [n]}[K,n_1]\vee[K\otimes \lambda[n_0],1]} \\
	{\coprod_{k\leq n}\colim_{[n_1]\star [n_0]\to \{k\}}\lambda[n_1]\vee[\lambda[n_0],1]} & {[K,n]\star 1}
	\arrow[from=2-1, to=2-2]
	\arrow[from=1-1, to=2-1]
	\arrow[from=1-2, to=2-2]
	\arrow[from=1-1, to=1-2]
\end{tikzcd}\]

For an element $f:[n_0]\star[n_1]\to [n]$ of $\Delta^2_{/[n]}$, we considere the morphism 
$\phi_f:[K,n_1]\vee[K\otimes \lambda[n_0],1]\to [K,n]\star 1$ as the unique morphism fulfilling 
$$\phi_f(	[x,v_{i,i+1}]):= [x,v_{f_0(i),f_0(i)+1}]\star \emptyset+...+ [x,v_{f_0(i)-1,f_0(i+1)}]\star \emptyset$$
$$\phi_f(	[x\otimes v_i,1]):= 0$$
$$\phi_f(	[x\otimes v_{i,i+1},1]):= [x,v_{f_1(i),f_1(i)+1}]\star 1+...+ [x,v_{f_1(i)-1,f_1(i+1)}]\star 1$$
for $x$ an element of $K$ and where we denote by $f_0$ and $f_1$ the induced morphisms $[n_0]\to [n_0]\star [n_1]\to [n]$ and $[n_1]\to [n_0]\star [n_1]\to [n]$.

Peforming this for any such $f:[n_0]\star[n_1]\to [n]$ of $\Delta^2_{/[n]}$, this induces a morphism 
$$\psi:\colim_{\Delta^2_{/[n]}}[[n_0]\otimes a,1]\vee[a,n_1]\to 1\star [a,n]$$
whose restriction to $\coprod\limits_{k\leq n}\colim_{\Delta^2_{/\{k\}}}[[n_0]\otimes a,1]\vee[a,n_1]$ factors through $\coprod\limits_{k\leq n}\colim_{\Delta^2_{/\{k\}}}[[n_0],1]\vee[1,n_1]$ and this concludes the proof.
\end{proof}

\begin{lemma}
\label{lemma:joi commutes with realization}
There is an invertible natural transformation $\R(e\star\uvar)\to 1\star \R(\uvar)$ that firs in a commutative square
\[\begin{tikzcd}
	{\R(\emptyset\star\uvar)} & {\R(e\star\uvar)} \\
	{\emptyset\star \R(\uvar)} & { 1\star \R(\uvar)}
	\arrow[from=2-1, to=2-2]
	\arrow[from=1-1, to=1-2]
	\arrow[from=1-2, to=2-2]
	\arrow["id"', from=1-1, to=2-1]
\end{tikzcd}\]
\end{lemma}
\begin{proof}
The lemma \ref{lemma:compairaon beetwen join and the formula of chap 2} provides such natural transformation. As $\R$ sends weak equivalences to isomorphisms, it is sufficient to show that $\R(e\star [K,1])\to 1\star [\R(K),1]$ is an equivalence, which directly follows from the explicit description of these two objects provided by proposition \ref{prop:explicit expression of e star a,1} and by the example \ref{exe:explicit Gray cone 1}.
\end{proof}

\begin{prop}
\label{prop:first triangle}
The following triangle commutes up to an invertible natural transformation
\[\begin{tikzcd}
	& {\stratSeg(\stratSset^n)} \\
	{\stratSset^{n+1}} & \zocat
	\arrow["\R", from=1-2, to=2-2]
	\arrow["\R"', from=2-1, to=2-2]
	\arrow["{i^{n+1}}", from=2-1, to=1-2]
\end{tikzcd}\]
For any integer $k\leq n+1$, the induced morphism $i^{n+1}(\N \Db_k)\to \N(\Db_k)$ is a weak equivalence.
\end{prop}
\begin{proof}
It is sufficient to show the result for $n:=\omega$.
The lemma \ref{lemma:joi commutes with realization} provides an invertible transformation $\phi:(\R i^{\omega})_{|\Delta}\to \R_{|\Delta}$ which is natural when restricted to the full subcategory of $\Delta$ whose morphisms are the monomorphisms. 
The lemma \ref{lemma:about_P_modified} then implies that $\phi:(\R i^{\omega})_{|\Delta}\to \R_{|\Delta}$ is natural. As all these functors commute with the intelligent truncations, we can extend it to a natural transformation $\phi:(\R i^{\omega})_{|t\Delta}\to \R_{|t\Delta}$. 
Eventually, as all theses morphisms preserves colimits, we can extend $\phi$ to an invertible natural transformation $\phi:\R i^{\omega}\to \R$.

We now turn our attention to the second assertion. We define the functor $\Sigma^{\circ}:\stratSset\to \stratSset$ that sends a stratified simplicial set $K$ onto the following pushout:
\[\begin{tikzcd}
	K & {1\star K} \\
	1 & {\Sigma^{\circ}K}
	\arrow[from=1-1, to=2-1]
	\arrow[from=2-1, to=2-2]
	\arrow[from=1-1, to=1-2]
	\arrow[from=1-2, to=2-2]
	\arrow["\lrcorner"{anchor=center, pos=0.125, rotate=180}, draw=none, from=2-2, to=1-1]
\end{tikzcd}\]
Remark that we have a canonical equivalence
$$(\Sigma^{\circ}X)^{op} \sim \Sigma^\star X^{op}$$
where $\Sigma^\star$ is the functor defined in paragraph \ref{para:sigma star}.
As the nerve commutes with the op-dualities, and as globes are invariant under it, a repeated application of \cite[theorem 3.22]{Ozornova_a_quillen_adjunction_between_globular_and_complicial} imply that the following canonical morphism between stratified simplicial sets
$$(\Sigma^\circ)^k[0]\to \N(\Db_k)$$
is an acylic cofibration.
Furthermore, proposition \ref{labe:Link between the Gray cylinder and cosuspension} provides a weak equivalence
$$i^{n+1}(\Sigma^{\circ} K)\to \Sigma^{\circ} K.$$
A direct induction then induces a weak equivalence 
$$i^{n+1}((\Sigma^\circ)^k[0])\to (\Sigma^\circ)^k[0]$$

Otherwise, remark that by construction, $\Sigma^{\circ}[K,1]:=[[0]\diamond K\coprod_{K}[0],1]$. The weak equivalence $[0]\diamond K\to [0]\star K$ provided by proposition \ref{prop:equivalence between diamond and join product} induces a weak equivalence
$$\Sigma^{\circ}[K,1]\to [\Sigma^{\circ}K,1].$$
As $\Sigma^{\circ}[0] = [[0],1]$, a direct induction induces a weak equivalence 
$$(\Sigma^\circ)^k[0]\to [(\Sigma^\circ)^{k-1}([0]),1].$$

All put together, and using lemma \ref{lemma: nerve commute is suspension}, this induces two acyclic cofibrations
$$\begin{array}{rl}
\psi_k:&i^{n+1}((\Sigma^\circ)^k[0])\xrightarrow{\sim} \N \Db_{k}\\
\psi_k':&i^{n+1}((\Sigma^\circ)^k[0])\xrightarrow{\sim}(\Sigma^\circ)^k[0]\xrightarrow{\sim}[(\Sigma^\circ)^{k-1}[0],1]\xrightarrow{\sim}[\N\Db_{k-1},1]\cong \N \Db_k
\end{array}
$$

To concludes, one have to show that the induces diagram
\[\begin{tikzcd}
	{ i^{n+1}((\Sigma^\circ)^k[0])} & { i^{n+1}(\N \Db_{k} )} \\
	& {\N \Db_k }
	\arrow["{ \psi_k}", from=1-1, to=1-2]
	\arrow[from=1-2, to=2-2]
	\arrow["{\psi'_k}"', from=1-1, to=2-2]
\end{tikzcd}\]
commutes. By adjunction, this is sufficient to show that the diagram 
\[\begin{tikzcd}
	{\R  i^{n+1}((\Sigma^\circ)^k[0])} & {\R i^{n+1}(\N \Db_{k} )} \\
	& {\R \N \Db_k }
	\arrow["{\R  \psi_k}", from=1-1, to=1-2]
	\arrow["{\R \psi'_k}"', from=1-1, to=2-2]
	\arrow["{\phi_{\N \Db_k}}", from=1-2, to=2-2]
\end{tikzcd}\]
commutes. We claim that $\R \N \Db_k$ has no non-trivial automorphisms. This  directly implies the results as $\R$ sends acyclic cofibrations to isomorphisms.

It then remains to show that $\R \N \Db_k$ has no non-trivial automorphisms. If $k=0$, this is trivial as $\R \N \Db_0\cong \Db_0$.
We suppose now that $k>0$. As $\R$ commutes with the suspension and sends acyclic cofibration to isomorphism, the lemma  \ref{lemma: nerve commute is suspension} and a repeated application of the theorem \ref{theo:strict susension} imply that the morphism
$$\begin{array}{rcl}
\Db_k &= &[\Db_{k-1},1]\\
&\cong &[\Sigma^{k-1}\R \N \Db_0,1] \\
&\cong &\R [\Sigma^{k-1} \N \Db_0,1 ]\\
&\to& \R [\N \Sigma^{k-1} \Db_0,1]\\
&\cong &\R \N  [ \Sigma^{k-1} \Db_0,1]  \\
&= & \R \N \Db_k \\ 
\end{array}
$$
is an isomorphism. The result then follows from proposition \ref{prop:the globes a non non trivial automorphisms} that states that $\Db_k$ has no non-trivial automorphisms.
\end{proof}

\subsection{The other adjunction}
We define the colimit preserving functor 
\begin{equation}
\label{eq:defi of jn}
j:\stratSeg(\stratSset)\to \stratSset
\end{equation}
 sending $[K,n]$ to the pushout:
\[\begin{tikzcd}
	{\cup_{i\leq n}K\boxtimes\{i\}} & {K\boxtimes[n]} \\
	{\cup_{i\leq n}[0]} & {j([K,n])}
	\arrow[from=1-1, to=2-1]
	\arrow[from=2-1, to=2-2]
	\arrow[from=1-1, to=1-2]
	\arrow[from=1-2, to=2-2]
	\arrow["\lrcorner"{anchor=center, pos=0.125, rotate=180}, draw=none, from=2-2, to=1-1]
\end{tikzcd}\]
 and $[[0],1]_t$ to $[1]_t$. As $\uvar\boxtimes\uvar$ is a left Quillen bifunctor, and as $j([[0],1]_t\to [0])=[1]_t\to [0]$ and $j([[0],E^{\cong}]\to [[0],(E^{\cong})'])= E^{\cong}\to (E^{\cong})'$ are weak equivalences, 
the proposition \ref{prop:model structure on stratified Segal category} implies that the functor 
$$j^{\omega}:\stratSeg(\stratSset^{\omega})\to \stratSset^{\omega}$$ is a left Quillen functor. By definition of the Gray pre-tensor given in \cite[Definition 128]{Verity_weak_complicial_sets_I}, we remark that $j([[k],n]\to [[k]_t,n])$ is a pushout of a disjoint union of $[k+1]\to [k+1]_t$. This implies that for any $n\in \mathbb{N}$, 
$$j^{n+1}:\stratSeg(\stratSset^{n})\to \stratSset^{n+1}$$ 
 is a left Quillen functor.
\begin{prop}
\label{prop:second triangle}
The following triangle commutes up to an invertible natural transformation:
\[\begin{tikzcd}
	& {\stratSset^{n+1}} \\
	{\stratSeg(\stratSset^n)} & \zocat
	\arrow["\R"', from=2-1, to=2-2]
	\arrow["\R", from=1-2, to=2-2]
	\arrow["{j^{n+1}}", from=2-1, to=1-2]
\end{tikzcd}\]
For any integer $k\leq n+1$, the induced morphism $j^{n+1}(\N \Db_k)\to \N(\Db_k)$ is a weak equivalence.
\end{prop}
\begin{proof}
The first assertion is a direct consequence of the definition of $\R:\stratSeg(\stratSset^n)\to \zocat$ and the corrolary \ref{cor:crushing of Gray tensor is identitye strict case}. We denote $\phi: \R j^{n+1}\to \R$ the corresponding invertible natural transformation. 

For the second assertion, remark that the case $k=0$ is trivial, and for $k>0$, lemma \ref{lemma: nerve commute is suspension}, theorem \ref{theo:strict susension} and the definition of $j^{n+1}$ induce a weak equivalence
$$\psi_k:j^{n+1}(\N \Db_k)\cong j^{n+1}([\N\Db_{k-1},1])= \Sigma \N \Db_{k-1}\to \N [\Db_{k-1},1] = \N \Db_k$$
To conclude, one have to show that $\phi_{\N \Db_k}$ is equal to $\R \psi_k$. We claim that $\R \N \Db_k$ has no non-trivial automorphisms. This  directly implies the results as $\R$ sends acyclic cofibrations to isomorphisms.

It then remains to show that $\R \N \Db_k$ has no non-trivial automorphisms. As $\R$ commutes with the suspension and sends acyclic cofibration to isomorphism, a repeated application of the theorem \ref{theo:strict susension} implies that the morphism
$$\Db_k = \Sigma^k \Db_0\cong \Sigma^k \R\N \Db_0 \cong \R \Sigma^k \N \Db_0\to \R \N \Sigma^k \Db_0\cong  \R\N \Db_k$$
is an isomorphism. The result then follows from proposition \ref{prop:the globes a non non trivial automorphisms} that states that $\Db_k$ has no non-trivial automorphisms.
\end{proof}

\subsection{Complicial sets as a model of $\io$-categories}

\begin{prop}
\label{prop:first equivalence}
For any $n\in \Nb\cup\{\omega\}$, the composite 
$$j^{n+1}\circ i^{n+1}:\stratSset^{n+1}\to \stratSset^{n+1}$$
is a Quillen equivalence.
\end{prop}
\begin{proof}
Using theorem \ref{theo:strict susension}, and propositions \ref{prop:first triangle} and \ref{prop:second triangle},
we have a zigzag of weak equivalences
$$j^{\omega}\circ i^{\omega}(\Db_n)\to j^{\omega}\circ i^{\omega}(\N(\Db_n))\to \N(\Db_n)\leftarrow \Db_n$$
natural in $n$.
The corollary \ref{cor:criterion_to_be_linked_to_identity_case stratified} then provides a zigzag of weakly invertible natural transformations
$$j^{\omega}\circ i^{\omega}\leftrightsquigarrow id_{\stratSset^{\omega}}.$$
This also induces for any integer $n$ a zigzag of weakly invertible natural transformations
$$j^{n+1}\circ i^{n+1}\leftrightsquigarrow id_{\stratSset^{n+1}}.$$
\end{proof}

\begin{theorem}
\label{theo:letheo}
For $n<\omega$, the model category $\stratSset^{n}$ is a model of $(\infty,n)$-categories.
\end{theorem}
\begin{proof}
To demonstrate the theorem, we will proceed by induction. The initialization is exactly the theorem 2.14 of \cite{Bergner_explicit_comparaison_bt_theta_2_space_and_2_complicial_set}. Suppose now the result is true at the stage $n$. We can apply \cite[example 15.8]{Barwick_on_the_unicity_of_the_theory_of_higher_categories} which implies that the $(\infty,1)$-category represented by $\Seg(\stratSset^n)$ is a model of $(\infty,n+1)$-categories, and according to \ref{prop:model structure on stratified Segal category}, so is $\stratSeg(\stratSset^n)$. Eventually, the proposition \ref{prop:first triangle} and \ref{prop:second triangle} imply that the functor
$$i^{n+1}\circ j^{n+1}:\stratSeg(\stratSset^n)\to \stratSeg(\stratSset^n)$$ preserves globes up to homotopy.
Proposition 15.10 of \cite{Barwick_on_the_unicity_of_the_theory_of_higher_categories} states that $i^{n+1}\circ j^{n+1}$ is a Quillen equivalence, and proposition \ref{prop:first equivalence} implies that $j^{n+1}\circ i^{n+1}$ is a Quillen equivalence. The functor $i^{n+1}$ is then a Quillen equivalence, and $\stratSset^{n+1}$ is a model of $(\infty,n+1)$-categories.
\end{proof}

\p For an integer $n$, we consider the model structure on $\ssetPsh{\Theta_n}$ (resp. $\ssetPsh{\Theta}$) obtained as the left Bousfield localization of the projective model structure along the set of map $\W_n$ (resp. $\W$ ) defined in paragraph \ref{para:definition of W}.
For any $n<\omega$, the inclusion $\Theta_n\to \Theta$ induces a Quillen adjunction
\begin{equation}
\label{eq:adjunction beetwen theta n and theta}
\begin{tikzcd}
	{\iota^n:\ssetPsh{\Theta_n}} & {\ssetPsh{\Theta}:\tau_n}
	\arrow[""{name=0, anchor=center, inner sep=0}, shift left=2, from=1-1, to=1-2]
	\arrow[""{name=1, anchor=center, inner sep=0}, shift left=2, from=1-2, to=1-1]
	\arrow["\dashv"{anchor=center, rotate=-90}, draw=none, from=0, to=1]
\end{tikzcd}
\end{equation}

\p Let $n\in\Nb\cup\{\omega\}$.
We consider the functor
$$\Theta_n\times \Delta\to \stratSset$$
sending a pair $(a,[n])$ onto $\N(a)\times \tau^i_0([n])$. By left Kan extension, this induces an adjunction
\begin{equation}
\label{eq:adjunction betwen theta and complicial}
\begin{tikzcd}
	{L_n:\ssetPsh{\Theta_n}} & {\stratSset:N_{L_n}}
	\arrow[""{name=0, anchor=center, inner sep=0}, shift left=2, from=1-1, to=1-2]
	\arrow[""{name=1, anchor=center, inner sep=0}, shift left=2, from=1-2, to=1-1]
	\arrow["\dashv"{anchor=center, rotate=-90}, draw=none, from=0, to=1]
\end{tikzcd}
\end{equation}

\begin{theorem}[Ozornova-Rovelli]
\label{theo:fondamental adj}
The adjunction 
\[\begin{tikzcd}
	{L_n:\ssetPsh{\Theta_n}} & {\stratSset^n:N_{L_n}}
	\arrow[""{name=0, anchor=center, inner sep=0}, shift left=2, from=1-1, to=1-2]
	\arrow[""{name=1, anchor=center, inner sep=0}, shift left=2, from=1-2, to=1-1]
	\arrow["\dashv"{anchor=center, rotate=-90}, draw=none, from=0, to=1]
\end{tikzcd}\]
is a Quillen adjunction.
\end{theorem}
\begin{proof}
This is \cite[theorem 4.16]{Ozornova_a_quillen_adjunction_between_globular_and_complicial}.
\end{proof}

\begin{remark}
The two authors demonstrate this result when $\stratSset$ is endowed with the model structure for $n$-complicial sets with $n<\omega$. However, their argument generalizes directly to the case $n=\omega$.
\end{remark}

A direct induction using \cite[theorem 3.22]{Ozornova_a_quillen_adjunction_between_globular_and_complicial} implies that the left adjoint preserves globes.

\begin{prop}
\label{prop:intermedeire}
For any $n\in \Nb$, the adjunction given in theorem \ref{theo:fondamental adj}
is a Quillen equivalence. 
\end{prop}
\begin{proof}
This is an adjunction between two models of $(\infty,n)$-categories. As the left adjoint preserves globes up to homotopy, the result follows from \cite[proposition 15.10]{Barwick_on_the_unicity_of_the_theory_of_higher_categories}.
\end{proof}
 
 \p If $C$ is a model category, we denote by $C^{\iun}$ the corresponding $\iun$-category.

\begin{lemma}
\label{lemma:iota homotpically fully faitfhfull}
For any integer $n$, the $\iun$-functor 
$$\iota^n:(\ssetPsh{\Theta_n})^{\iun}\to (\ssetPsh{\Theta})^{\iun}$$
is fully faithful.
\end{lemma}
\begin{proof}
This is proposition \ref{ref:infini n a full sub cat}.
\end{proof}

\begin{lemma}
\label{lemma:the composite is ff}
For any integer $n$, the $\iun$-functor $$\tau^i_n:(\stratSset^n)^{\iun}\to (\stratSset^\omega)^{\iun}$$
is fully faithful.
\end{lemma}
\begin{proof}
This is a direct consequence of the fact that  $\stratSset^n$ is the left Bousfield localization of $\stratSset^\omega$ along morphisms $[m]\to [m]_t$ for $m>n$.
\end{proof}

\begin{lemma}
\label{lemma:L ff}
The $\iun$-functor $L_\omega: (\ssetPsh{\Theta})^{\iun}\to (\stratSset^\omega)^{\iun}$ is fully faithful.
\end{lemma}
\begin{proof}
We have to show that for any pair of $\Theta$-spaces $X$ and $Y$, the induced morphism of $\infty$-groupoids
$$\Hom_{(\ssetPsh{\Theta})^{\iun}}(X,Y)\to \Hom_{(\stratSset^\omega)^{\iun}}(L_\omega(X),L_\omega(Y))$$
is an equivalence. 
As every  $\Theta$-space is a $\iun$-colimit of globular sums, which are themself $\iun$-colimits of globes, we can suppose that $X$ is of shape $\Db_n$. In this case $\Db_n$ is $\omega$-small. As $L(\Db_n)$ has a finite presentation, given by the $n$-times interated suspension of $[0]$, it is also $\omega$-small.

Eventually, proposition \ref{prop:infini omega a limit of infini n} implies that every $\Theta$-spaces is a directed colimit of objects that are in the image of  $\iota_n$ for an integer $n$. We can then restrict ourselves to the case where $Y$ is in the image of $\iota_n$. As we have an equivalences $L_{\omega}\circ \iota_n\sim \tau^i_n  \circ L_n$, the results follows from
proposition \ref{prop:intermedeire}, and lemmas \ref{lemma:iota homotpically fully faitfhfull} and \ref{lemma:the composite is ff}.
\end{proof}

\begin{theorem}
\label{theo:lecorozo}
For any $n\in \Nb\cup\{\omega\}$, the adjunction
\[\begin{tikzcd}
	{L_n:\ssetPsh{\Theta_n}} & {\stratSset^\omega:N_{L_n}}
	\arrow[""{name=0, anchor=center, inner sep=0}, shift left=2, from=1-1, to=1-2]
	\arrow[""{name=1, anchor=center, inner sep=0}, shift left=2, from=1-2, to=1-1]
	\arrow["\dashv"{anchor=center, rotate=-90}, draw=none, from=0, to=1]
\end{tikzcd}\]
is a Quillen equivalence. 
The two induced diagrams
\[\begin{tikzcd}
	\zocat & {\stratSset^\omega} & \zocat & {\ssetPsh{\Theta}} \\
	{\ssetPsh{\Theta}} & \zocat & {\stratSset^\omega} & \zocat
	\arrow["{\pi_0}"', from=2-1, to=2-2]
	\arrow["{L_\omega}"{description}, from=2-1, to=1-2]
	\arrow["\R", from=1-2, to=2-2]
	\arrow["\iota"', from=1-1, to=2-1]
	\arrow["\N", from=1-1, to=1-2]
	\arrow["{N_{L_\omega}}"{description}, from=2-3, to=1-4]
	\arrow["\iota", from=1-3, to=1-4]
	\arrow["\N"', from=1-3, to=2-3]
	\arrow["\R"', from=2-3, to=2-4]
	\arrow["{\pi_0}", from=1-4, to=2-4]
\end{tikzcd}\]
commute up to homotopy.
\end{theorem}
\begin{proof}
If $n<\omega$, the first assertion is a consequence of proposition  \ref{prop:intermedeire}. Suppose now that $n=\omega$.
The lemma \ref{lemma:L ff} implies that the left adjoint is homotopically fully faithful. It then remains to show that the right adjoint is conservative. This is a direct consequence of the preservation of globes by $L_{\omega}$ up to homotopy and theorem \ref{theo:f_weak_equivalence_ssi_f_G_equivalence}.

For the second assertion, it it sufficient to demonstrate that the restriction to $\Theta$ of the canonical natural transformation $\R \circ L_{\omega} \to \pi_0$ is an isomorphism. As these two functors send Segal extensions on isomorphisms, it it sufficient to show the result on globes where it directly follows from the preservation of globes by $L_{\omega}$ up to homotopy.
\end{proof}

%
%
%

%
%
%
%
%
%
%
%
%
%

\part{On the side of theory}

\chapter{The $(\infty,1)$-category of $\io$-categories}
\label{chapter:the infini 1 categorory of io categories}

\minitoc
\vspace{2cm}
This chapter is dedicated to the basic definition of $\io$-categories. In the first section, we recall some results on factorization systems in presentable $\iun$-categories. In the second section, we define $\io$-categories and give some basic properties. 
We also define and study \textit{discrete Conduché functor}, which are morphisms having the unique right lifting property against 
units $\Ib_{n+1}:\Db_{n+1}\to \Db_n$ for any integer $n$, and against compositions $\triangledown_{k,n}:\Db_n\to \Db_n\coprod_{\Db_k}\Db_n$ for any pair of integers $k\leq n$. This notion was originally defined and studied in the context of strict $\omega$-category by Guetta in \cite{Guetta_conduche}.
\begin{itheorem}[\ref{theo:pullback along conduche preserves colimits}]
Let $f:C\to D$ be a discrete Conduché functor. The pullback functor $f^*:\ocat_{/D}\to \ocat_{/C}$ preserves colimits.
\end{itheorem}

 In the third section, we study Gray operations for $\io$-categories. We conclude this chapter by proving results of strictification. In particular, we demonstrate the following theorem:
\begin{itheorem}[\ref{prop:strict stuff are pushout}]
Let $C$ be an $\io$-category, $b$ a globular sum, and $f:b\to C$ any morphism. The $\io$-categories $$1\costar b\coprod_b C,~C\coprod_b b\otimes[1]~\mbox{and}~C\coprod_b b\star 1$$
are strict whenever $C$ is.
\end{itheorem}
We will also prove the following theorem:
\begin{itheorem}[\ref{theo:strictness}]
If $C$ is strict, so are $C\star 1$, $1\costar C$ and $C\otimes [1]$.
\end{itheorem}
In the process, we will demonstrate another fundamental equation combining $C\otimes[1]$, $1\costar C$, $C\star 1$, and $[C,1]$.
\begin{itheorem}[\ref{theo:formula between pullback of slice and tensor}]
Let $C$ be an $\io$-category. The five squares appearing in the following canonical diagram are both cartesian and cocartesian:
\[\begin{tikzcd}
	& {C\otimes\{0\}} & 1 \\
	{C\otimes\{1\}} & {C\otimes[1]} & {C\star 1} \\
	1 & {1\costar C} & {[C,1]}
	\arrow[from=2-3, to=3-3]
	\arrow[from=3-2, to=3-3]
	\arrow[from=2-2, to=3-2]
	\arrow[from=2-2, to=2-3]
	\arrow[from=1-2, to=1-3]
	\arrow[from=1-3, to=2-3]
	\arrow[from=1-2, to=2-2]
	\arrow[from=2-1, to=2-2]
	\arrow[from=3-1, to=3-2]
	\arrow[from=2-1, to=3-1]
\end{tikzcd}\]
where $[C,1]$ is the \textit{suspension of $C$}.
\end{itheorem}

\paragraph{About the use of the language of $(\infty,1)$-categories.}
In this chapter and the two following, we will freely use the language of $\iun$-categories\footnote{  As there are currently several directions for the formalization of the language of $\iun$-categories (\cite{Riehl_element_of_infini_categories}, \cite{Riehl_A_type_theory_for_synthetic_-categories}, \cite{North_Towards_a_directed_homotopy_type_theory}, \cite{Cisinski-Univalent-Directed-Type-theory}), talking about "the" language of (infinite,1)-categories may be confusing.

In such case, the reader may consider that we are working within the quasi-category $\qcat$ of $\Tb$-small quasi-categories for $\Tb$ a Grothendieck universe. This quasi-category may be obtained either using the coherent nerve as described in \cite[chapter 3]{Lurie_Htt}, or by considering it as the codomain of the universal cocartesian fibration with $\Tb$-small fibers as done in \cite{Cisinski_The_universal_coCartesian_fibration}. In both cases, the straightening/unstraightening correspondence provides a morphism
$$\N(\Sset_{\Tb})\to \qcat$$
that exhibits $\qcat$ as the quasi-categorical localization of $\N(\Sset_{\Tb})$ with respect to the weak equivalences of the Joyal's model structure (\cite[theorem 8.13]{Cisinski_The_universal_coCartesian_fibration}). 

The constructions we use to build new objects - (co)limits of functor between quasi-categories, quasi-categories of functor, localization of quasi-categories, sub maximal Kan complex, full sub quasi-category, adjunction, left and right Kan extension, Yoneda lemma - are well documented in the Joyal model structure (see \cite{Lurie_Htt} or \cite{Cisinski_Higher_categories_and_homotopical_algebra})
, and therefore have direct incarnation in the quasi-category $\qcat$. }.

We allow ourselves the following abuse of language: when a $\infty$-groupoid $X$ is contractible, we will use the expression \textit{the element of $X$} to refer to any element of $X$. For example, we'll talk about \textit{the} composition of two functors, or \textit{the} colimit/limit of a functor. The adjective \textit{unique} should be understood as \textit{the $\infty$-groupoid of choice is contractible}.

An equivalence $v$ in a $\iun$-category $C$ between an object $a$ and an object $b$ is denoted by $v:a\sim b$.

The maximal sub $\infty$-groupoid of an $\iun$-category $C$ is denoted by $\tau_0(C)$.

Eventually, we will identify (strict) categories with the $\iun$-categories obtained by applying the simplicial nerve.

\paragraph{Cardinality hypothesis.}
We fix during this chapter three Grothendieck universes $\U \in \V\in\Wcard$, such that $\omega\in \U$. 
All defined notions depend on a choice of cardinality. When nothing is specified, this corresponds to the implicit choice of the cardinality $\V$.
With this convention in mind, we denote by {$\Set$} the $\Wcard$-small $1$-category of $\V$-small sets, {$\igrd$} the $\Wcard$-small $\iun$-category of $\V$-small $\infty$-groupoids and {$\icat$} the $\Wcard$-small $\iun$-category of $\V$-small $\iun$-categories.

\section{Preliminaries}
\subsection{Explicit computation of some colimits}

\p
We have an adjunction:
\begin{equation}
\label{eq:adj betwen set and space}
\begin{tikzcd}
	{\pi_0:\igrd} & {\Set:\iota}
	\arrow[""{name=0, anchor=center, inner sep=0}, shift left=2, from=1-2, to=1-1]
	\arrow[""{name=1, anchor=center, inner sep=0}, shift left=2, from=1-1, to=1-2]
	\arrow["\dashv"{anchor=center, rotate=-90}, draw=none, from=1, to=0]
\end{tikzcd}
\end{equation}
For a category $B$, we denote by {$\Psh{B}$} the category of functors $B^{op}\to \Set$.
For a $\iun$-category $A$, we denote by \wcnotation{$\iPsh{A}$}{(psh@$\iPsh{\uvar}$} the $\iun$-category of functors $A^{op}\to \igrd$. A presheaf on $B$, (resp. a $\infty$-presheaves on $A$) is \textit{$\U$-small} if it is pointwise a $\U$-small set (resp. a $\U$-small $\infty$-groupoid).

\p If $A$ is a $1$-category, the adjunction \eqref{eq:adj betwen set and space} induces an adjunction:
\begin{equation}
\label{eq:adj betwen A set and A space}
\begin{tikzcd}
	{\pi_0:\iPsh{A}} & {\Psh{A}:\iota}
	\arrow[""{name=0, anchor=center, inner sep=0}, shift left=2, from=1-2, to=1-1]
	\arrow[""{name=1, anchor=center, inner sep=0}, shift left=2, from=1-1, to=1-2]
	\arrow["\dashv"{anchor=center, rotate=-90}, draw=none, from=1, to=0]
\end{tikzcd}
\end{equation}

\p We recall that the notion of {elegant Reedy category} is defined in paragraph \ref{para:reedy}.
The following lemma provides a powerful way to compute simple colimits in $\iun$-categories by reducing to computations in (stricts) categories. These techniques will be used freely in the rest of this text.

\begin{lemma}
\label{lemma:colimit computed in set presheaves}
Let $A$ be a $\V$-small category. We denote $\iota:\Psh{A}\to \iPsh{A}$ the canonical inclusion.
\begin{enumerate}
\item 
The functor $\iota$ preserves cocartesian square 
\[\begin{tikzcd}
	a & b \\
	c & d
	\arrow[from=1-1, to=2-1]
	\arrow[from=1-2, to=2-2]
	\arrow[from=1-1, to=1-2]
	\arrow[from=2-1, to=2-2]
	\arrow["\lrcorner"{anchor=center, pos=0.125, rotate=180}, draw=none, from=2-2, to=1-1]
\end{tikzcd}\]
where the left vertical morphism is a monomorphism.
\item 
The functor $\iota$ preserves colimit of finite diagrams of shape: 
\[\begin{tikzcd}
	& \bullet && {...} && \bullet \\
	\bullet && \bullet && \bullet && \bullet
	\arrow[from=1-2, to=2-1]
	\arrow[hook, from=1-2, to=2-3]
	\arrow[from=1-4, to=2-3]
	\arrow[hook, from=1-4, to=2-5]
	\arrow[from=1-6, to=2-5]
	\arrow[hook, from=1-6, to=2-7]
\end{tikzcd}\]
where morphisms labeled $\hookrightarrow$ are monomorphisms.
\item The functor $\iota$ preserves transfinite composition. 
\item For any $\V$-small elegant Reedy category, and any functor $F:I\to \Psh{A}$ that is Reedy cofibrant, i.e such that for any $i\in I$, $\colim_{\partial i}F\to F(i)$ is a monomorphism,
the canonical comparison 
$$\iota \colim F\to \colim \iota F$$
is an isomorphism. In particular, if $A$ is itself an elegant Reedy category, for any set-valued presheaf $X$ on $A$, there is an equivalence 
$$\iota(X)\sim \colim_{A_{/X}}a.$$ 
\end{enumerate}
\end{lemma}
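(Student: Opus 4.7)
The plan is to reduce each claim to a pointwise statement about the canonical inclusion $\Set\hookrightarrow\igrd$. Since colimits in both $\Psh{A}$ and $\iPsh{A}$ are computed objectwise, each item reduces to analyzing when a strict colimit of sets, viewed inside $\igrd$, represents the genuine $\infty$-colimit. The guiding principle, most cleanly phrased model-categorically, is that if a diagram of sets is cofibrant in $\Sset$ equipped with the Kan--Quillen model structure, then its strict colimit models its homotopy colimit; and since a discrete simplicial set is both cofibrant and fibrant, the cofibrancy of the diagram boils down to checking that the specified structure maps are monomorphisms.

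For (1), a pushout of sets with one leg monic is, when regarded in $\Sset$, a pushout of cofibrant objects along a cofibration, hence a homotopy pushout, which is precisely the pushout in $\igrd$. Item (2) follows by decomposing the given diagram into an iterated pushout whose gluings all occur along the hypothesized monomorphisms, and applying (1) by induction on the number of outer vertices. Item (3) is handled in the same spirit: a transfinite composition of monomorphisms of sets is a transfinite composition of cofibrations between cofibrant objects in $\Sset$, and such a composition is always a homotopy colimit.

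Item (4) is the crux. Here I would invoke the standard property of elegant Reedy categories recalled at \ref{para:reedy}: for any Reedy cofibrant diagram valued in $\Sset$, the strict colimit agrees with the homotopy colimit. Applied pointwise over $A$ to a Reedy cofibrant $F:I\to\Psh{A}$, this yields the canonical equivalence $\iota\,\colim F\sim\colim\,\iota F$. For the ``in particular'' clause, when $A$ is itself elegant Reedy, the category of elements $A_{/X}$ inherits a Reedy structure and the tautological cocone sending $(a,x)$ to $a$ is Reedy cofibrant: its latching object at $(a,x)$ is the boundary of the representable $a$, whose inclusion into $a$ is monic precisely by elegance. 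Its strict colimit is $X$, and the general principle then upgrades this to the claimed equivalence $\iota(X)\sim\colim_{A_{/X}}a$.

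The main obstacle will be (4): translating the combinatorial content of elegance into latching-cofibrancy of the Yoneda decomposition and doing so uniformly in $X$. Once this is handled, (1)--(3) are essentially bookkeeping, since each of their indexing shapes is itself elegant Reedy in a way that matches the stated monomorphism hypotheses to the required Reedy cofibrancy -- so a unified proof could even deduce the first three items from the fourth, were it not convenient to keep them as free-standing lemmas for later reference.
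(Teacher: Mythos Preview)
Your approach is essentially the paper's: reduce to showing that the relevant strict colimits are homotopy colimits in a model category presenting $\iPsh{A}$. The paper works globally with a Cisinski model structure on $\Psh{A\times\Delta}$ and cites separate propositions for each item, while you reduce pointwise to the Kan--Quillen model structure on $\Sset$; these are equivalent organizations of the same idea, and your arguments for (1), (2), and (4) are correct.

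There is one genuine gap. Item (3) as stated carries no monomorphism hypothesis: it asserts that $\iota$ preserves \emph{all} transfinite compositions, i.e.\ colimits of ordinal-indexed sequences. Your argument only covers sequences whose successor maps are monic, and your closing remark that (3) follows from (4) has the same defect, since Reedy cofibrancy over an ordinal means exactly that the successor maps are cofibrations. The fix is immediate and does not use cofibrancy at all: an ordinal is a filtered category, and filtered colimits in $\Sset$ are always homotopy colimits (they commute with finite limits and hence with homotopy groups), so any sequence of discrete simplicial sets has strict colimit computing the $\infty$-colimit. You should replace your cofibration argument for (3) with this observation.
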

\begin{proof}
For this result, we use model categories. We consider the interval induces by the constant functor $I:A\to \Psh{\Delta}$ with value $[1]$. We then consider the model structure on $\Psh{A\times \Delta}$ produced by \cite[theorem 1.3.22]{cisinski_prefaisceaux_comme_modele} and induces by the homotopical data $(I\times \uvar,\emptyset)$. This model structure represent $\iPsh{A}$.
To conclude, we then have to show that all the given colimits, seen as (simplicialy constant) presheaves on $\Delta\times A$ are also homotopy colimits of the same diagrams. This then follows from proposition \ref{prop:hom colimit 2}, \ref{prop:hom colimit 3}, \ref{prop:hom colimit 4} and theorem \ref{theo:hom colimi}.
\end{proof}

\subsection{Factorization sytems}
\label{section:Factorization system}
\p For the rest of the section, we fix a \textit{presentable $\iun$-category} $C$, i.e a $\iun$-category $C$ that is a reflexive and $\V$-accessible localization of a $\iun$-category of $\infty$-presheaves on a $\V$-small $\iun$-category.

A full sub $\infty$-groupoid of the $\infty$-groupoid of arrows of $C$ is \wcnotionsym{cocomplete}{(s@$\widehat{S}$}{cocomplete $\infty$-groupoid of arrows} if it is closed under colimit and composition and contains the equivalences. For a $\infty$-groupoid $S$, we define $\widehat{S}$ as the smallest cocomplete full sub $\infty$-groupoid of the $\infty$-groupoid of arrows containing $S$. 

\begin{remark}
A cocomplete full sub $\infty$-groupoid $U$ is closed by pushouts along any morphism. Indeed, suppose given a cocartesian square
\[\begin{tikzcd}[row sep=scriptsize]
	a & b \\
	c & d
	\arrow["f"', from=1-1, to=2-1]
	\arrow[from=1-1, to=1-2]
	\arrow["{f'}", from=1-2, to=2-2]
	\arrow[from=2-1, to=2-2]
	\arrow["\lrcorner"{anchor=center, pos=0.125, rotate=180}, draw=none, from=2-2, to=1-1]
\end{tikzcd}\]
with $f$ in $U$. Remark that $f'$ is the horizontal colimit of the diagram
\[\begin{tikzcd}[row sep=scriptsize]
	a & a & b \\
	c & a & b
	\arrow[from=2-2, to=2-1]
	\arrow[from=2-2, to=2-3]
	\arrow["id", from=1-3, to=2-3]
	\arrow["id", from=1-2, to=2-2]
	\arrow[from=1-2, to=1-3]
	\arrow["id"', from=1-2, to=1-1]
	\arrow["f"', from=1-1, to=2-1]
\end{tikzcd}\]
and then is in $U$.
\end{remark}

We say that an $\infty$-groupoid of morphisms $T$ is \wcnotion{closed under left cancellation}{closed under left or right cancellation} (resp. \textit{closed under right cancellation}), if for any pair of composable morphisms $f$ and $g$, if $gf$ and $f$ are in $T$, so is $g$ (resp. if $gf$ and $g$ are in $T$, so is $f$).

\begin{prop}
\label{prop:closed under colimit imply saturated}
Let $U$ be a cocomplete $\infty$-groupoid of arrows of $C$. The $\infty$-groupoid $U$ is closed under left cancellation.
\end{prop}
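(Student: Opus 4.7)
The plan is to realise $g$ as a pushout, formed in the arrow $\iun$-category $C^{\Delta^1}$, of a span whose three vertices already lie in $U$; cocompleteness of $U$ will then yield the conclusion. The three vertices I have in mind are $id_b$, $f$, and $gf$: the last two belong to $U$ by hypothesis, and $id_b$ lies in $U$ because $U$ contains all equivalences.

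The relevant span has apex $f$, and its legs $f \to id_b$ and $f \to gf$ are given by the commutative squares
\[
\begin{tikzcd}[row sep=scriptsize,column sep=scriptsize]
a \ar[r, "f"] \ar[d, "f"'] & b \ar[d, "id_b"] & & a \ar[r, "f"] \ar[d, "id_a"'] & b \ar[d, "g"] \\
b \ar[r, "id_b"'] & b & & a \ar[r, "gf"'] & c
\end{tikzcd}
\]
in $C^{\Delta^1}$. Since colimits in $C^{\Delta^1}$ are computed pointwise on source and target, the source of the pushout is $b \coprod_a a$, which collapses to $b$ because $id_a$ is invertible, while the target is $b \coprod_b c$, which collapses to $c$ because $id_b$ is invertible.

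The only non-formal ingredient is identifying the induced arrow $b \to c$ with $g$ itself; chasing the universal property at each of the two collapsed pushouts produces $g$ directly, and I expect no real obstacle here. Once this identification is made, $g$ stands exhibited as the colimit of a diagram every vertex of which lies in $U$, and cocompleteness of $U$ immediately yields $g \in U$, as required.
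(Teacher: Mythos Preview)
Your proof is correct and is essentially the same argument as the paper's: both realise $g$ as the pushout in $C^{\Delta^1}$ of the span $id_b \leftarrow f \to gf$, whose three vertices lie in $U$. The paper simply draws this span as a single $3\times 2$ grid (with the vertical arrows being $id_b$, $f$, and $gf$) and calls the result the ``horizontal colimit'', whereas you spell out the two legs as separate commuting squares; the content is identical.
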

\begin{proof}
Suppose given $f:a\to b$, $g:b\to c$ such that $gf$ and $f$ are in $S$. As $g$ is the horizontal colimit of the following diagram
\[\begin{tikzcd}
	b & a & a \\
	b & b & c
	\arrow["g", from=1-3, to=2-3]
	\arrow["f", from=1-2, to=2-2]
	\arrow[from=2-2, to=2-1]
	\arrow[from=2-2, to=2-3]
	\arrow[from=1-2, to=1-1]
	\arrow[from=1-2, to=1-3]
	\arrow["{id_b}", from=1-1, to=2-1]
\end{tikzcd}\]
it is in $U$.
\end{proof}

\p We recall some standard results on factorization systems, which appear in many places in the literature, such as in section 5.5.5 of \cite{Lurie_Htt} for the $\iun$-case and  \cite{Joyal_factorisation} for the strict case.

Let $S$ be a $\V$-small $\infty$-groupoid of maps of $C$. We denote by $\Arr_S(C)$ the full sub $\iun$-category of $\Arr(C)$ whose objects correspond to arrows of $S$.

A \notion{weak factorization system in $(L,R)$} is the data of two full sub $\infty$-groupoids $L$ and $R$ of the $\infty$-groupoid of arrows of $C$, stable under composition and containing equivalences, and of section 
$\Arr_R(C)\to \Arr_L(C)\times_C \Arr_R(C)$ of the functor $ \Arr_L(C)\times_C \Arr_R(C)\to \Arr(C)$ sending two arrows onto their composite.
This is a \wcnotion{factorization system}{factorization system in $(L,R)$} if the functor $\Arr(C)\to \Arr_L(C)\times_C \Arr_R(C)$ is an equivalence.

Until the end of this section, we suppose given such factorization system in $(L,R)$.

\begin{definition}
Let $i$ and $p$ be two morphisms, and consider a square of shape:
\[\begin{tikzcd}
	a & b \\
	c & d
	\arrow["i"', from=1-1, to=2-1]
	\arrow["p", from=1-2, to=2-2]
	\arrow[from=2-1, to=2-2]
	\arrow[from=1-1, to=1-2]
\end{tikzcd}\]
A \wcnotion{lift}{lift in a square} in such square is the data of a morphism $h:c\to b$ and of two commutative triangles
\[\begin{tikzcd}
	a & b && b \\
	c && c & d
	\arrow[from=1-1, to=1-2]
	\arrow["i"', from=1-1, to=2-1]
	\arrow["h"', from=2-1, to=1-2]
	\arrow["h", from=2-3, to=1-4]
	\arrow[from=2-3, to=2-4]
	\arrow["p", from=1-4, to=2-4]
\end{tikzcd}\]

Equivalently, we can see a square of the previous shape as a morphism $s:1\to \Sq({i,p}):=\Hom(a,b)\times_{\Hom(a,d)}\Hom(c,d)$\sym{(sq@$\Sq(i,p)$} and a lift as the data of a morphism $h:1\to \Hom(c,d)$ and of a commutative triangle
\[\begin{tikzcd}
	& {\Hom(c,b)} \\
	1 & {\Sq(i,p)}
	\arrow["s"', from=2-1, to=2-2]
	\arrow["h", from=2-1, to=1-2]
	\arrow[from=1-2, to=2-2]
\end{tikzcd}\]

The \textit{$\infty$-groupoid of lift of $s$} is the fibers of $\Hom(c,b)\to \Sq(i,p)$ at $s$.
\end{definition}

\begin{definition}
Let $i$ and $p$ be two morphisms. The morphism \wcnotion{$i$ has the unique left lifting property against $p$}{unique left or right lifting property}, or equivalently, \textit{$p$ has the unique right lifting property against $i$}, if for any square $s\in \Sq(i,p)$, the $\infty$-groupoid of lift of $s$ is contractible. This is equivalent to asking for the morphism $\Hom(c,d)\to \Sq(i,p)$ to be an equivalence.
\end{definition}

\begin{lemma}
\label{lemma:when weak factorization system are factoryzation system}
Suppose that we have a weak factorization system in $(L',R')$ such that morphisms in $R'$ have the unique right lifting property against morphisms of $L'$. The weak factorization system is a factorization system.
\end{lemma}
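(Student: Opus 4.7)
The plan is to show that the comparison functor
$$F : \Arr_{L'}(C) \times_C \Arr_{R'}(C) \to \Arr(C),$$
sending a composable pair $(i, p)$ to the composite $p \circ i$, is an equivalence of $\iun$-categories. The section supplied by the weak factorization system already witnesses essential surjectivity of $F$, so the task reduces to showing that $F$ is fully faithful.

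To this end, fix two objects $(i : a \to b,\, p : b \to c)$ and $(i' : a' \to b',\, p' : b' \to c')$ of the source, with composites $f = p \circ i$ and $f' = p' \circ i'$. A morphism between them in $\Arr_{L'}(C) \times_C \Arr_{R'}(C)$ is a triple $(u : a \to a',\, h : b \to b',\, v : c \to c')$ making both evident squares commute, while a morphism between $f$ and $f'$ in $\Arr(C)$ is only the outer pair $(u, v)$. Consequently the fiber of the induced map on hom-$\infty$-groupoids above a given square $(u, v)$ is the $\infty$-groupoid of arrows $h : b \to b'$ satisfying $h \circ i = i' \circ u$ and $p' \circ h = v \circ p$, i.e.\ the $\infty$-groupoid of lifts in the commutative square
\[\begin{tikzcd}
a \ar[r, "i' \circ u"] \ar[d, "i"'] & b' \ar[d, "p'"] \\
b \ar[r, "v \circ p"'] & c'
\end{tikzcd}\]
whose commutativity is recorded by the identification $v \circ f = f' \circ u$.

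Since $i \in L'$ and $p' \in R'$, the unique right lifting hypothesis makes this $\infty$-groupoid of lifts contractible. As this holds for every choice of $(u, v)$, the map induced by $F$ on hom-$\infty$-groupoids is an equivalence, so $F$ is fully faithful; combined with essential surjectivity, $F$ is an equivalence and $(L', R')$ is a factorization system. The main obstacle is the bookkeeping of the previous paragraph, namely the clean identification of the fiber of the hom-space map with the $\Sq(i, p')$-style space of lifts attached to the constructed square; once this identification is in place, the conclusion is an immediate application of the hypothesis.
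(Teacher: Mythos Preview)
Your proof is correct and follows the same idea as the paper's: morphisms between factorizations are identified with lifts in a square with an $L'$-arrow on the left and an $R'$-arrow on the right, and the unique lifting hypothesis forces these to be contractible. Your version is in fact a bit more complete, since you verify full faithfulness over an arbitrary morphism $(u,v)$ in $\Arr(C)$, whereas the paper's written argument only treats the fibers over objects (the case $(u,v)=\mathrm{id}_f$); the extension is immediate, but you spell it out.
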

\begin{proof}
Our goal is to demonstrate that the fibers of $\Arr_{L'}(C)\times_C\Arr_{R'}(C)\to \Arr(C)$ are contractible. Let $f$ be a morphism of $C$. As we have a weak factorization system, there exists an element in the fiber at $f$. Suppose given two elements in this fiber. This corresponds to a square
\[\begin{tikzcd}
	\cdot & \cdot \\
	\cdot & \cdot
	\arrow["i"', from=1-1, to=2-1]
	\arrow["p"', from=2-1, to=2-2]
	\arrow["{i'}", from=1-1, to=1-2]
	\arrow["{p'}", from=1-2, to=2-2]
\end{tikzcd}\]
Morphisms between these two factorizations correspond to lifts in the previous square, which are contractible by assumption, and the fiber is then contractible. 
\end{proof}
We recall that in this section, we suppose that we have a factorization system in $(L,R)$.
\begin{lemma}
\label{lemma:caracterisation of L and R with lifting property 1}
Morphisms in $L$ have the unique left lifting property with respect to morphisms in $R$. 
\end{lemma}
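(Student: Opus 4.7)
Fix $i:a\to c$ in $L$ and $p:b\to d$ in $R$, and let $s\in\Sq(i,p)$ be a square with top $f:a\to b$, bottom $g:c\to d$ and filler $p\circ f\simeq g\circ i$. We need to show that the fibre of $\Hom(c,b)\to\Sq(i,p)$ over $s$---the space of lifts of $s$---is contractible. The plan is to identify this space with the (contractible) space of $(L,R)$-factorizations of the diagonal $p\circ f\simeq g\circ i:a\to d$ furnished by the factorization system.

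For existence, I would choose $(L,R)$-factorizations $f\simeq r\circ j$ with $j:a\to M$ in $L$ and $r:M\to b$ in $R$, and $g\simeq s'\circ k$ with $k:c\to N$ in $L$ and $s':N\to d$ in $R$. Closure of $L$ and $R$ under composition then exhibits $(j,\,p\circ r)$ and $(k\circ i,\,s')$ as two $(L,R)$-factorizations of the diagonal. The defining hypothesis of a factorization system---that $\Arr(C)\to \Arr_L(C)\times_C \Arr_R(C)$ is an equivalence---yields a canonical equivalence $\alpha:M\xrightarrow{\sim} N$ with $\alpha\circ j\simeq k\circ i$ and $s'\circ\alpha\simeq p\circ r$. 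The composite $h:=r\circ\alpha^{-1}\circ k:c\to b$ is readily checked to lift $s$: indeed $h\circ i\simeq r\circ\alpha^{-1}\circ\alpha\circ j\simeq r\circ j\simeq f$ and dually $p\circ h\simeq s'\circ k\simeq g$.

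For the full contractibility of the space of lifts, I would promote this construction to an equivalence of $\infty$-groupoids between the space of $(L,R)$-factorizations of the diagonal $a\to d$ (contractible by definition of the factorization system) and the space of lifts of $s$. In the reverse direction, a lift $h:c\to b$ produces such a factorization by choosing an $(L,R)$-factorization $h\simeq r_h\circ j_h$ and forming $(j_h\circ i,\,p\circ r_h)$, a construction independent of the choice up to the contractible space of $(L,R)$-factorizations of $h$. The main obstacle is the $\iun$-categorical bookkeeping required to check that these two assignments are mutually inverse up to coherent homotopy; organising the argument as a chain of equivalences of $\infty$-groupoids through intermediate spaces of factorizations of $f$, of $g$, and of the diagonal, and invoking contractibility at each stage, yields the conclusion.
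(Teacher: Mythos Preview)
Your existence argument is correct and is essentially the construction the paper uses: factor $f$ and $g$, compare the two resulting $(L,R)$-factorizations of the diagonal, and read off the lift.

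The contractibility argument, however, has a real gap. You propose to exhibit an equivalence between the space of lifts of $s$ and the space of $(L,R)$-factorizations of the diagonal $a\to d$, but you only describe one direction: a lift $h$ is sent (via a factorization of $h$) to a factorization of the diagonal. Merely producing a map to a contractible space does not show the source is contractible; you would need the map to be an equivalence. The putative inverse (factorization of the diagonal $\mapsto$ lift) is not constructed: given only a factorization $a\to e\to d$, there is no direct way to manufacture the maps $c\to e$ and $e\to b$ without already solving a lifting problem of the same shape. Your closing remark about ``organising the argument as a chain of equivalences'' is the right instinct, but the chain needs to pass through the space of factorizations of morphisms $c\to b$, not of the diagonal.

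The paper avoids this by not fixing $s$ at all. It argues that the map $\Hom(c,b)\to\Sq(i,p)$ is itself an equivalence, via an intermediate space of ``diamonds'': diagrams with a middle object $e$ and maps $c\to e$ in $L$, $e\to b$ in $R$, together with the induced $a\to e$ and $e\to d$. On the one hand, such a diamond is determined by the pair $(c\to e,\,e\to b)$, so the space of diamonds is equivalent to $\Arr_L(C)\times_C\Arr_R(C)$ evaluated from $c$ to $b$, hence to $\Hom(c,b)$ by the defining equivalence of the factorization system. On the other hand, writing $\Sq(i,p)=\Hom(a,b)\times_{\Hom(a,d)}\Hom(c,d)$ and applying the factorization equivalence to each factor identifies $\Sq(i,p)$ with the same space of diamonds. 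This buys you contractibility of every fibre in one stroke, with no bookkeeping about mutual inverses.
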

\begin{proof}
Let $i:a\to c$ be a morphim of $L$ and $p:b\to d$ a morphism of $R$.
The factorization functor induces an equivalence between squares $s\in \Sq(i,p)$ and diagrams of shape
\[\begin{tikzcd}[row sep=tiny]
	a && b \\
	& e \\
	c && d
	\arrow[from=1-1, to=3-1]
	\arrow[from=1-3, to=3-3]
	\arrow[from=1-1, to=2-2]
	\arrow[from=3-1, to=2-2]
	\arrow[from=2-2, to=3-3]
	\arrow[from=2-2, to=1-3]
\end{tikzcd}\]
where all the morphisms of the left triangle are in $L$ and the ones of the right triangle are in $R$.
Such diagrams are then in equivalence between composite $c\to e\to b$ where the first morphism is in $S$ and the second in $R$. Using once again the factorization functor, we can see that this data is exactly equivalent to a lift in the square $s$.
\end{proof}

We now show the converse of the previous lemma.

\begin{lemma}
\label{lemma:caracterisation of L and R with lifting property 2}
A morphism having the unique left lifting property against morphisms of $R$ is in $L$. Analogously, a morphism having the unique right lifting property against morphisms of $L$ is in $R$.
\end{lemma}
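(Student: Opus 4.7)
The plan is to adapt the classical retract argument to the $\iun$-categorical setting. I focus on the first statement: if $f:a\to c$ has the unique left lifting property against every morphism of $R$, then $f\in L$. The second statement (about $R$) will follow from the dual argument, swapping the roles of $L$ and $R$.

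First I would apply the factorization system to $f$ itself, writing $f\simeq p\circ i$ with $i:a\to e$ in $L$ and $p:e\to c$ in $R$. It then suffices to show that $p$ is an equivalence, since then $f\simeq i$ belongs to $L$, because $L$ contains the equivalences and is closed under composition. To produce a candidate inverse of $p$, I would apply the hypothesis on $f$ to the square with top edge $i$, left edge $f$, right edge $p$, and bottom edge $\mathrm{id}_c$; this square commutes thanks to the factorization, and the unique left lifting property of $f$ against $p$ yields a morphism $h:c\to e$ together with identifications $h\circ f\simeq i$ and $p\circ h\simeq \mathrm{id}_c$. In particular $h$ is a section of $p$.

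To see that $h$ is also a retraction of $p$, I would invoke Lemma \ref{lemma:caracterisation of L and R with lifting property 1} on the square whose top and left edges both equal $i$, and whose bottom and right edges both equal $p$. Both $\mathrm{id}_e$ and the composite $h\circ p$ define lifts of this square, since $(h\circ p)\circ i\simeq h\circ f\simeq i$ and $p\circ(h\circ p)\simeq \mathrm{id}_c\circ p\simeq p$. Contractibility of the $\infty$-groupoid of lifts then produces an identification $h\circ p\simeq \mathrm{id}_e$, so $p$ is an equivalence with inverse $h$, and $f\simeq p\circ i$ lies in $L$.

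The main point where I expect to need care is the coherence of the triangles involved: each of $p\circ i\simeq f$, $h\circ f\simeq i$, $p\circ h\simeq \mathrm{id}_c$ and $h\circ p\simeq \mathrm{id}_e$ is a $2$-cell rather than a strict equality, and to exhibit $\mathrm{id}_e$ and $h\circ p$ as two objects of the same fiber of $\Hom(e,e)\to \Sq(i,p)$ these $2$-cells must be pasted consistently. Systematic use of the equivalence $\Hom(c,e)\to \Sq(i,p)$ provided by the unique lifting property keeps this bookkeeping automatic; apart from this coherence check the proof is purely formal.
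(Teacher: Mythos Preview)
Your argument is correct and follows the classical retract strategy: factor $f\simeq p\circ i$, produce a section $h$ of $p$ by lifting $f$ against $p$, and then show $h\circ p\simeq \mathrm{id}_e$ by comparing it with $\mathrm{id}_e$ as two lifts in the square $(i,p)$, invoking Lemma~\ref{lemma:caracterisation of L and R with lifting property 1}.

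The paper takes the same global route but handles the last step differently. After obtaining the lift $l:c\to b$ (your $h$), it factors $l$ itself as $p'\circ r'$ with $r'\in L$ and $p'\in R$, and then uses \emph{uniqueness of factorizations} rather than uniqueness of lifts: the identity $pl\sim (pp')r'\sim \mathrm{id}_c$ forces $r'\sim \mathrm{id}$ and $pp'\sim \mathrm{id}$, so $l\sim p'\in R$; then $lf\sim lpi\sim i$ forces $lp\sim \mathrm{id}$ by the same reasoning. Your approach is arguably cleaner since it reuses Lemma~\ref{lemma:caracterisation of L and R with lifting property 1} directly and avoids the auxiliary factorization of $l$; the paper's approach has the minor advantage of not needing to assemble the coherence data for the square $(i,p)$ with two candidate lifts. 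Either way the content is the same, and your remark about tracking the $2$-cells to land in the correct fiber of $\Hom(e,e)\to\Sq(i,p)$ is exactly the care required.
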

\begin{proof}
Let $f$ be a morphism having the unique left lifting property against morphisms in $R$. We factorize the morphism $f$ in $i\in L$ followed by $p\in R$ and we want to produce an equivalence $f\sim i$. The previous data induces by construction a square
\[\begin{tikzcd}
	a & b \\
	c & d
	\arrow["f"', from=1-1, to=2-1]
	\arrow["i", from=1-1, to=1-2]
	\arrow["p", from=1-2, to=2-2]
	\arrow[Rightarrow, no head, from=2-1, to=2-2]
	\arrow[dashed, from=2-1, to=1-2]
\end{tikzcd}\]
By hypothesis, this square admits a lift $l:c\to b$, that we factorize in a morphism $r'\in L$ followed by a morphism $p'\in R$. The commutativity of the lower triangle implies equivalences $pl'\sim pp'r'\sim id$, and by unicity, $r'\sim id$ and $pp'\sim id$. The lift $l$ is equivalent to $p'$ and is then in $R$. The commutativity of the upper triangle implies $lf\sim lpi \sim i$ and by unicity again, $p'p\sim id$. The morphism $p$ is then an isomorphism, this implies that $f\sim i$, and $f$ is then in $L$. We proceed similarly for the dual assertion.
\end{proof}

\begin{prop}
\label{prop:caracterisation of L and R with lifting property}
A morphism is in $L$ (resp. in $R$) if and only if it has the unique left lifting property against morphisms of $R$ (resp. the unique right lifting property against the morphisms of $R$).
\end{prop}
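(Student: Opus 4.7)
The plan is simply to assemble the two preceding lemmas, which together already give both directions of both equivalences. Lemma \ref{lemma:caracterisation of L and R with lifting property 1} provides the direct implications: every morphism in $L$ has the unique left lifting property against morphisms of $R$, and dually (applying the same argument to the factorization system viewed in the opposite $\iun$-category, or reading the proof symmetrically) every morphism in $R$ has the unique right lifting property against morphisms of $L$. Lemma \ref{lemma:caracterisation of L and R with lifting property 2} supplies the two converses.

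Concretely, I would write: let $f$ be a morphism of $C$. If $f \in L$, then Lemma \ref{lemma:caracterisation of L and R with lifting property 1} asserts that $f$ has the unique left lifting property against every morphism of $R$. Conversely, if $f$ has the unique left lifting property against every morphism of $R$, then Lemma \ref{lemma:caracterisation of L and R with lifting property 2} gives $f \in L$. The argument for the pair ($R$, unique right lifting property against $L$) is identical, using the dual halves of the same two lemmas.

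There is no real obstacle here, since both directions have been proven just above; the only thing worth noting in the proof is that the statement as displayed contains a small typo (the parenthetical should read ``the unique right lifting property against the morphisms of $L$'' in the second case), which one should silently correct when invoking Lemma \ref{lemma:caracterisation of L and R with lifting property 2}. Thus the proof reduces to a single sentence citing the two lemmas.
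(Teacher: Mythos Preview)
Your proposal is correct and essentially identical to the paper's own proof, which consists of a single sentence citing Lemmas \ref{lemma:caracterisation of L and R with lifting property 1} and \ref{lemma:caracterisation of L and R with lifting property 2}. Your observation about the typo in the parenthetical (``$R$'' should be ``$L$'') is also accurate.
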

\begin{proof}
This is the content of lemma \ref{lemma:caracterisation of L and R with lifting property 1} and \ref{lemma:caracterisation of L and R with lifting property 2}.
\end{proof}

\begin{prop}
\label{prop:fonctorialite des relevement}
The forgetful functor from the $\iun$-category of squares with lifts, and whose left (resp. right) vertical morphism is in $L$ (resp. in $R$), to the $\iun$-category of squares whose left (resp. right) vertical morphism is in $L$ (resp. in $R$), is an equivalence.

Roughly speaking, the formation of the lift in squares whose left (resp. right) vertical morphism is in $L$ (resp. in $R$) is functorial.
\end{prop}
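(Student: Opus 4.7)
The plan is to verify that the forgetful functor $U$ is both essentially surjective and fully faithful as a functor of $\iun$-categories, which suffices since equivalences of $\iun$-categories are exactly those that are essentially surjective and fully faithful.

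For essential surjectivity, given an $(L,R)$-square $s \in \Sq(i,p)$, Proposition \ref{prop:caracterisation of L and R with lifting property} asserts that $\Hom(c,b) \to \Sq(i,p)$ is an equivalence (since $i \in L$ has the unique left lifting property against $p \in R$); in particular the fiber over $s$ is non-empty, so $s$ admits a lift, providing a preimage under $U$.

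For fully faithfulness, it is convenient to realize a lifted $(L,R)$-square as a functor $\Delta^3 \to C$, encoding the chain $a\to c\to b\to d$ with the first edge in $L$ and the last in $R$, while a plain $(L,R)$-square is a functor $\Delta^1\times\Delta^1\to C$ with the usual vertical constraints. The functor $U$ is then restriction along the canonical functor $\Delta^1\times\Delta^1\to\Delta^3$ sending the square to the outer rectangle of the chain. Given two lifted squares $(s_1,h_1)$ and $(s_2,h_2)$, the comparison of hom $\infty$-groupoids under $U$ can be computed as a pullback of $\Hom$-$\infty$-groupoids in $C$. The key observation is that given a morphism of the underlying plain squares with components $(f_a,f_b,f_c,f_d)$, the two candidate compatibilities $f_b\circ h_1$ and $h_2\circ f_c$ both provide lifts in the auxiliary $(L,R)$-square
\[\begin{tikzcd}
a_1 \ar[d, "i_1"'] \ar[r] & b_2 \ar[d, "p_2"] \\
c_1 \ar[r] & d_2
\end{tikzcd}\]
so Proposition \ref{prop:caracterisation of L and R with lifting property} applied to this auxiliary square identifies them through a contractible $\infty$-groupoid of choices, which shows the fibers of the induced map on hom $\infty$-groupoids are contractible.

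The main obstacle is the $\iun$-categorical bookkeeping: one must assemble the pointwise uniqueness of lifts into a contractible space of coherent lifting data at the level of mapping $\infty$-groupoids, not merely of their $\tau_0$. The fact that unique lifting is phrased in Proposition \ref{prop:caracterisation of L and R with lifting property} as an equivalence of full mapping $\infty$-groupoids $\Hom(c,b) \to \Sq(i,p)$, rather than only at connected components, is exactly what encodes the higher coherences needed for the auxiliary-square argument to propagate through the entire mapping $\infty$-groupoid and deliver functoriality of the lift.
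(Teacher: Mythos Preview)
Your approach differs from the paper's: you argue via essential surjectivity and full faithfulness, while the paper gives a structural argument at the level of arrow categories.

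Essential surjectivity is fine. The full faithfulness argument, however, is only a sketch. You assert that the fiber of
\[
\Hom\big((s_1,h_1),(s_2,h_2)\big)\longrightarrow \Hom(s_1,s_2)
\]
over a given morphism of plain squares is the space of lifts in your auxiliary $(L,R)$-square, but you do not establish this identification. In the $[3]$-model, a morphism of lifted squares is a functor $[3]\times[1]\to C$, and the fiber in question is the space of extensions of a fixed map out of $[1]^2\times[1]\cup_{[1]^2\times\{0,1\}}[3]\times\{0,1\}$ to all of $[3]\times[1]$; showing this extension problem is controlled precisely by the auxiliary lifting problem (with all its higher coherences) is the actual content, and you have not carried it out. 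Your closing paragraph acknowledges the bookkeeping issue but does not resolve it.

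The paper avoids this entirely. It identifies the category of lifted $(L,R)$-squares with $\Arr_L(C)\times_C\Arr(C)\times_C\Arr_R(C)$ and the category of plain $(L,R)$-squares with the pullback of
\[
\Arr_L(C)\times_C\Arr(C)\xrightarrow{\ \triangledown\ }\Arr(C)\xleftarrow{\ \triangledown\ }\Arr(C)\times_C\Arr_R(C).
\]
The forgetful functor is then the canonical comparison map into this pullback, and one shows it is an equivalence by replacing every occurrence of $\Arr(C)$ by $\Arr_L(C)\times_C\Arr_R(C)$ via the defining equivalence $\triangledown$ of the factorization system: the resulting square has both horizontal maps given by the same projection, hence is visibly cartesian. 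This argument uses nothing beyond the definition and requires no pointwise analysis of lifts or their coherences.
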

\begin{proof}
The $\iun$-category of squares with lifts, and whose left (resp. right) vertical morphism is in $L$ (resp. in $R$), is the $\iun$-category
$$ \mbox{$\Arr_L(C)$}\times_C  \mbox{$\Arr(C)$}\times_C  \mbox{$\Arr_R(C)$}$$
and the  $\iun$-category   whose left (resp. right) vertical morphism is in $L$ (resp. in $R$) of squares is the limit of the diagram
\[\begin{tikzcd}
	{\Arr_L(C)\times_C \Arr(C)} & {\Arr(C)} & {\Arr(C)\times_C\Arr_R(C)}
	\arrow["\triangledown"', from=1-3, to=1-2]
	\arrow["\triangledown", from=1-1, to=1-2]
\end{tikzcd}\]
The forgetful functor is induced by the commutative diagram
\[\begin{tikzcd}
	{ \Arr_L(C)\times_C  \Arr(C)\times_C  \Arr_R(C)} && {\Arr(C)\times_C\Arr_R(C)} \\
	{\Arr_L(C)\times_C \Arr(C)} && {\Arr(C)}
	\arrow["\triangledown", from=1-3, to=2-3]
	\arrow["{ \Arr_L(C)\times_C \triangledown}"', from=1-1, to=2-1]
	\arrow["{ \triangledown\times_C  \Arr_R(C)}", from=1-1, to=1-3]
	\arrow["\triangledown"', from=2-1, to=2-3]
\end{tikzcd}\]
and we then have to show that it is cartesian.

By definition of factorization system, the morphism 
$$\triangledown:  \mbox{$\Arr_L(C)$}\times_C  \mbox{$\Arr_R(C)$}\to \Arr(C)$$ is an equivalence. The previous square is then equivalent to the square
\[\begin{tikzcd}
	{ \Arr_L(C)\times_C \Arr(C)_L\times_C\Arr_R(C)\times_C  \Arr_R(C)} && {\Arr(C)_L\times_C\Arr_R(C)\times_C\Arr_R(C)} \\
	{\Arr_L(C)\times_C \Arr_L(C)\times_C  \Arr_R(C)} && {\Arr_L(C)\times_C  \Arr_R(C)}
	\arrow["{\Arr_L(C)\times_C\triangledown}", from=1-3, to=2-3]
	\arrow["{ \Arr_L(C)\times_C  \Arr_L(C)\times_C\triangledown}"', from=1-1, to=2-1]
	\arrow["{ \triangledown\times_C  \Arr_R(C)\times_C  \Arr_R(C)}", shift left=2, draw=none, from=1-1, to=1-3]
	\arrow["{ \triangledown\times_C  \Arr_R(C)}"', from=2-1, to=2-3]
	\arrow[from=1-1, to=1-3]
\end{tikzcd}\]
which is obviously cartesian.
\end{proof}

\begin{prop}
\label{prop:cloture of L recap}
The $\infty$-groupoid $L$ is stable under colimit, retract, composition, and left cancellation. The $\infty$-groupoid $R$ is stable under limit, retract, composition, and right cancellation. 
\end{prop}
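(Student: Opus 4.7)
The plan is to reduce every closure property to the unique lifting-property characterisation of Proposition \ref{prop:caracterisation of L and R with lifting property}, and then to recast that lifting condition as cartesianness of an explicit square of mapping spaces. Concretely, I would fix $p\colon b\to d$ in $R$ and consider the class $L_p$ of morphisms $i\colon a\to c$ for which the square
\[\begin{tikzcd}
	{\Hom(c,b)} & {\Hom(a,b)} \\
	{\Hom(c,d)} & {\Hom(a,d)}
	\arrow[from=1-1, to=1-2]
	\arrow[from=1-1, to=2-1]
	\arrow[from=1-2, to=2-2]
	\arrow[from=2-1, to=2-2]
\end{tikzcd}\]
is cartesian; by construction of $\Sq(i,p)$ this amounts precisely to $\Hom(c,b)\to \Sq(i,p)$ being an equivalence. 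Since Proposition \ref{prop:caracterisation of L and R with lifting property} identifies $L$ with $\bigcap_{p\in R} L_p$, showing each of the four stability properties for every $L_p$ will give them for $L$.

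Each stability property then follows from a formal property of cartesian squares in the $\iun$-category of $\infty$-groupoids. For composition, the cartesian square associated to $ji$ is the vertical pasting of the squares for $i$ and $j$, and the pasting lemma applies. For colimits, I would use that $\Arr(C)=\Fun([1],C)$ has pointwise colimits and that $\Hom(-,b)$ and $\Hom(-,d)$ turn colimits into limits, so the square attached to $\colim_\alpha i_\alpha$ is a limit of cartesian squares, hence cartesian. For retracts, a retract of some $i'\in L_p$ induces a retract of the corresponding squares in the arrow $\iun$-category of spaces, and retracts of cartesian squares are cartesian. Equivalences are in $L_p$ tautologically, so at this point $L_p$ is stable under composition and colimits and contains equivalences, i.e.\ is cocomplete in the sense of subsection \ref{section:Factorization system}; left cancellation is then immediate from Proposition \ref{prop:closed under colimit imply saturated}.

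The statements for $R$ would be obtained by the dual arguments, fixing $i\in L$, setting $R_i$ to be the class of morphisms with the unique right lifting property against $i$, and noting that limits, retracts, composition and right cancellation of diagrams in $R_i$ correspond, via contravariant functoriality of $\Sq(i,-)$ in the $p$-variable, to the same operations on cartesian squares of spaces. The main pitfall I would watch for is the functoriality step: before invoking the pasting/limit/retract arguments, I want $\Sq(i,p)$ to be manifestly the pullback $\Hom(a,b)\times_{\Hom(a,d)}\Hom(c,d)$ as a functor of $i\in\Arr(C)^{op}$ (and symmetrically of $p$), so that colimits and retracts of arrows translate into limits and retracts of the associated $\Hom$-squares. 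This is built into the definition of $\Sq(i,p)$, but it is the one book-keeping point that must be unwound carefully before the diagram-chase begins.
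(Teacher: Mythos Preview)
Your proposal is correct and follows essentially the same route as the paper: both reduce to the characterisation of Proposition \ref{prop:caracterisation of L and R with lifting property} and then use that $\Hom(c,b)\to \Sq(i,p)$ is an equivalence precisely when your square is cartesian, so that stability under colimits and retracts follows from the behaviour of equivalences under limits and retracts, while left cancellation is Proposition \ref{prop:closed under colimit imply saturated}. One small redundancy: in the paper's setup, $L$ and $R$ are \emph{by definition} stable under composition (this is part of what ``weak factorisation system'' means in the paragraph preceding the proposition), so your pasting argument for composition, while correct, is not needed.
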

\begin{proof}
Let $p:b\to d$ be a morphism of $R$ and $\{i_j:a_j\to c_j\}_{j:J}$ a family of morphisms of $L$ indexed by a functor $J\to \Arr_L(C)$, admitting a colimit $\bar{i}:\bar{a}\to \bar{c}$. Both functors $r\mapsto \Sq(r,p)$ and $c\mapsto\Hom(c,b)$ send colimits on limits. This implies that the morphism \[\Hom(\bar{c},b)\to\Sq(\bar{i},p)\] is the limit in $\Arr(\Sp)$ of the family of morphisms 
$$\Hom(c_j,b)\to\Sq({i_j,p}).$$
Each of these morphisms is an equivalence by assumption, so that implies that $\Hom(\bar{c},b)\to\Sq({\bar{i},p})$ is an equivalence. As this is true for any $p$ in $R$, proposition \ref{prop:caracterisation of L and R with lifting property} implies that $\bar{i}$ is in $L$.

Consider now a retract diagram:
\[\begin{tikzcd}
	a & {a'} & a \\
	c & {c'} & c
	\arrow["i", from=1-1, to=2-1]
	\arrow["i", from=1-3, to=2-3]
	\arrow[from=2-1, to=2-2]
	\arrow["{i'}", from=1-2, to=2-2]
	\arrow[from=1-1, to=1-2]
	\arrow[from=1-2, to=1-3]
	\arrow[from=2-2, to=2-3]
	\arrow["id"', curve={height=12pt}, from=2-1, to=2-3]
	\arrow["id", curve={height=-12pt}, from=1-1, to=1-3]
\end{tikzcd}\]
such that $i'$ is in $L$. For any morphism $p:b\to d$ of $R$, this induces a retract diagram
\[\begin{tikzcd}
	{\Hom(c,b)} & {\Hom(c',b)} & {\Hom(c,b)} \\
	{\Sq(i,p)} & {\Sq(i',p)} & {\Sq(i,p)}
	\arrow["{ }", from=1-1, to=2-1]
	\arrow["{ }", from=1-3, to=2-3]
	\arrow[from=2-1, to=2-2]
	\arrow["{ }", from=1-2, to=2-2]
	\arrow[from=1-1, to=1-2]
	\arrow[from=1-2, to=1-3]
	\arrow[from=2-2, to=2-3]
	\arrow["id"', curve={height=12pt}, from=2-1, to=2-3]
	\arrow["id", curve={height=-12pt}, from=1-1, to=1-3]
\end{tikzcd}\]
As equivalences are stable under retract, $\Hom(c,b)\to \Sq(i,p)$ is an equivalence, and as it is true for any $p$ in $R$, $i$ is in $L$.

For the cloture under left cancellation, this is proposition \ref{prop:closed under colimit imply saturated}.

We proceed similarly for the dual assertion.
\end{proof}

\p We fix an $\infty$-groupoid $S$ of arrows of $C$ with $\U$-small domain and codomain. We define \sym{(ls@$L_S$}$L_S := \widehat{S}$, i.e as the smallest full sub $\infty$-groupoid of arrows of $C$ stable under colimits, composition and including $S$, and \wcnotation{$R_S$}{(rs@$R_S$} as the full sub $\infty$-groupoid of arrows of $C$ having the unique right lifting property against morphisms of $S$. 
\begin{construction}[Small object Argument]
\label{cons:small object argument}
Let $f:x\to y$ be an arrow. We define by induction on $\U$ a sequence $\{x_\alpha\}_{\alpha<\U}$ sending $\emptyset$ on $x$.
For a limit ordinal $\alpha<\U$, we set $x_{\alpha}:= \colim_{\alpha'<\alpha}{x_{\alpha'}}$. For a successor ordinal, we define $x_{\alpha+1}$ as the pushout:
\[\begin{tikzcd}
	{\colim_{a\to b\in S}\big(\colim_{\Sq(a\to b,x_\alpha\to y)}a\underset{\colim_{\Hom(b,x_\alpha)}a}{\coprod} \colim_{\Hom(b,x_\alpha)}b\big)} & {x_\alpha} \\
	{\colim_{a\to b\in S}\big(\colim_{\Sq(a\to b,x_\alpha\to y)} b\big)} & {x_{\alpha+1}} \\
	&& y
	\arrow[""{name=0, anchor=center, inner sep=0}, from=1-1, to=1-2]
	\arrow[from=2-1, to=2-2]
	\arrow[from=1-2, to=2-2]
	\arrow[from=1-1, to=2-1]
	\arrow[curve={height=12pt}, from=2-1, to=3-3]
	\arrow[curve={height=-12pt}, from=1-2, to=3-3]
	\arrow[dashed, from=2-2, to=3-3]
	\arrow["\lrcorner"{anchor=center, pos=0.125, rotate=180}, draw=none, from=2-2, to=0]
\end{tikzcd}\]
Let $i:x\to\tilde{x}$ be the transfinite composition of this sequence. There is an induced morphism $p:\tilde{x}\to y$, and an equivalence $f\sim pi$. 
\end{construction}

\begin{prop}
\label{prop:factorization system from S}
The previous construction defines a factorization system between $L_S$ and $R_S$. 
\end{prop}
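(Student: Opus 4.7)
The plan is to establish the proposition in three stages. First, show that Construction \ref{cons:small object argument} really does produce a weak factorization system in $(L_S, R_S)$. Second, show that morphisms of $L_S$ have the unique left lifting property against morphisms of $R_S$. Third, invoke Lemma \ref{lemma:when weak factorization system are factoryzation system} to upgrade this to an honest factorization system.

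For the first stage, I must check that the two factors produced by the construction lie in the expected classes. That $i : x \to \tilde{x}$ belongs to $L_S = \widehat{S}$ is essentially tautological: $i$ is a $\U$-indexed transfinite composition whose successor steps are obtained as pushouts along a morphism of the form
$$\colim_{a \to b \in S} \bigl( \colim_T a \coprod_{\colim_U a} \colim_U b \bigr) \;\to\; \colim_{a \to b \in S} \colim_T b,$$
which is itself a colimit, in $\Arr(C)$, of morphisms in $S$. Since $\widehat{S}$ is closed under colimits, composition, and pushouts along arbitrary morphisms (by the remark after the definition of cocomplete $\infty$-groupoid of arrows), we conclude $i \in L_S$.

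For the second part of stage one, that $p : \tilde{x} \to y$ has the unique right lifting property against each $a \to b \in S$, I would proceed as follows. Because $S$ has $\U$-small domains and codomains, both $a$ and $b$ are $\U$-compact, so any morphism $b \to \tilde{x}$ factors through some $x_\alpha$ with $\alpha < \U$, and likewise for squares $s \in \Sq(a \to b, p)$. The pushout defining $x_{\alpha+1}$ is precisely engineered so that such a square acquires a lift $b \to x_{\alpha+1} \to \tilde{x}$. For uniqueness — the contractibility of the $\infty$-groupoid of lifts — the identification by $\colim_{\Hom(b, x_\alpha)} a \to \colim_{\Hom(b, x_\alpha)} b$ inside the pushout is exactly the ingredient that kills the redundancy coming from lifts that already factor through $x_\alpha$. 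Combining this with Lemma \ref{lemma:colimit computed in set presheaves}-style cofinality to pass from each stage to the transfinite colimit, one gets that $\Hom(b, \tilde{x}) \to \Sq(a \to b, p)$ is an equivalence, which by Proposition \ref{prop:caracterisation of L and R with lifting property} places $p$ in $R_S$.

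For stage two, I need the unique lifting property between \emph{all} of $L_S$ and $R_S$, not merely between $S$ and $R_S$. The class $L'$ of morphisms having the unique left lifting property against $R_S$ contains $S$ by construction of $R_S$, and the argument of Proposition \ref{prop:cloture of L recap} shows that $L'$ is closed under colimits and composition, using that $\Hom(-, b)$ and $\Sq(-, p)$ send colimits to limits. Hence $L' \supseteq \widehat{S} = L_S$, which gives exactly the hypothesis of Lemma \ref{lemma:when weak factorization system are factoryzation system}, concluding the proof.

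The main obstacle, and the reason the construction differs from the classical small object argument, is the uniqueness of lifts in stage one: one must show the space $\Hom(b, \tilde{x}) \to \Sq(a \to b, p)$ is contractible rather than merely nonempty. This rests on the careful pushout formula that quotients by the diagram indexed by $\Hom(b, x_\alpha)$, and on an inductive identification of the fiber at each stage. The remaining verifications are formal manipulations with the closure properties already established in Proposition \ref{prop:cloture of L recap}.
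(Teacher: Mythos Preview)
Your overall plan matches the paper's: produce $i\in L_S$ and $p\in R_S$ from Construction \ref{cons:small object argument}, then invoke Lemma \ref{lemma:when weak factorization system are factoryzation system}. Stage two (that morphisms of $L_S=\widehat{S}$ have the unique left lifting property against $R_S$) is correct and in fact spelled out more explicitly than in the paper, which leaves that reduction implicit in the appeal to the lemma. Stage three is identical.

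The genuine gap is in stage one, at exactly the point you flag as ``the main obstacle'': showing that $\Hom(b,\tilde{x})\to \Sq(j,p)$ is an \emph{equivalence}, not merely surjective. Your explanation that the $\colim_{\Hom(b,x_\alpha)}b$ term ``kills the redundancy'' is the right intuition but not an argument, and the reference to Lemma \ref{lemma:colimit computed in set presheaves} is misplaced: that lemma compares set-valued and $\infty$-groupoid-valued presheaf colimits on elegant Reedy categories and has nothing to do with the small object argument. The paper's device is to test $\Hom(b,\tilde{x})\to \Sq(j,p)$ against all $\Sb_n\to 1$. By adjunction, such a lifting problem unwinds to a square in $C$
\[
\begin{tikzcd}
a\coprod_{\colim_{\Sb_n}a}\colim_{\Sb_n}b \arrow[r]\arrow[d] & \tilde{x}\arrow[d,"p"]\\
b \arrow[r] & y
\end{tikzcd}
\]
whose upper-left corner is still $\U$-small, hence the top map factors through some $x_\alpha$; the pushout defining $x_{\alpha+1}$ then provides the required diagonal $b\to x_{\alpha+1}$. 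This converts the contractibility claim into a pure \emph{existence} claim for a slightly enlarged family of squares, which the construction handles uniformly. Without this reduction your uniqueness argument is incomplete.

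A minor point: invoking Proposition \ref{prop:caracterisation of L and R with lifting property} to place $p$ in $R_S$ is circular (that proposition presupposes a factorization system) and unnecessary, since $R_S$ is \emph{defined} as the class with the unique right lifting property against $S$.
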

\begin{proof}
Let $f:x\to y $ be any morphism. The previous construction produces functorially morphisms $i:x\to \tilde{x}$ and $p:\tilde{x}\to y $ whose composite is $f$. The morphism $i$ is obviously in $L_S$. We then need to show that $p$ has the unique right lifting property against any morphism of $L_S$. Let $j:a\to b$ be any morphism in $L_S$, $n$ an integer and consider a commutative square
\[\begin{tikzcd}
	{a\coprod_{\colim_{\Sb_n} a} \colim_{\Sb_n} b} & {\tilde{x}} \\
	b & y
	\arrow["j"', from=1-1, to=2-1]
	\arrow["p", from=1-2, to=2-2]
	\arrow[from=2-1, to=2-2]
	\arrow[from=1-1, to=1-2]
\end{tikzcd}\]
By stability by $\omega$-small colimits, the object $a\coprod_{\colim_{\Sb_n} a} \colim_{\Sb_n} b$ is $\U$-small. There exists then $\alpha<\U$ such that the top morphism factors through $x_\alpha$, and by construction there exists a morphism $l:b\to x_{\alpha+1}$ and a comutative square
\[\begin{tikzcd}
	{a\coprod_{\colim_{\Sb_n} a} \colim_{\Sb_n} b} & {x_\alpha} & {\tilde{x}} \\
	& {x_{\alpha+1}} \\
	b && y
	\arrow["j"', from=1-1, to=3-1]
	\arrow["p", from=1-3, to=3-3]
	\arrow[from=3-1, to=3-3]
	\arrow[from=1-1, to=1-2]
	\arrow[from=1-2, to=1-3]
	\arrow["l", dotted, from=3-1, to=2-2]
	\arrow[from=1-2, to=2-2]
	\arrow[from=2-2, to=3-3]
	\arrow[from=2-2, to=1-3]
\end{tikzcd}\]
The induced diagonal is a lift in the first square. This implies that $\Hom(b,x)\to \Sq(j,p)$ has the right lifting property against $\Sb_n\to 1$. Eventually, this implies that $\Hom(b,x)\to \Sq(j,p)$ is an equivalence of $\infty$-groupoid, and $p$ then has the unique right lifting property against $i$. We then have a weak factorization system, which is a factorization system according to lemma \ref{lemma:when weak factorization system are factoryzation system}. 
\end{proof}

\subsection{Reflexive localization}

\p An object $x$ is \wcnotion{$S$-local}{local@$S$-local} if for any $i:a\to b\in S$, the induced functor $\Hom(i,x):\Hom(b,x)\to \Hom(a,x)$ is an equivalence. 
We define \wcnotation{$C_{S}$}{(cs@$C_S$} as the full sub $\iun$-category of $C$ composed of $S$-local objects.

\begin{lemma}
\label{lemma:object is $S$ local if fibrant}
An object is $S$-local if and only if $x\to 1$ is in $R_S$.
\end{lemma}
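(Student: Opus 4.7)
The plan is to directly unfold both conditions into statements about maps between $\infty$-groupoids of morphisms, and then compare them using that $1$ is terminal.

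First, I would write out the definition of $x\to 1$ being in $R_S$. By definition of $R_S$, this means that for every $i:a\to b$ in $S$, the canonical map
$$\Hom(b,x)\to \Sq(i,x\to 1)=\Hom(a,x)\times_{\Hom(a,1)}\Hom(b,1)$$
is an equivalence of $\infty$-groupoids. On the other hand, $S$-locality of $x$ means that $\Hom(i,x):\Hom(b,x)\to \Hom(a,x)$ is an equivalence for every such $i$.

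Next, I would use that $1$ is terminal in $C$, so both $\Hom(a,1)$ and $\Hom(b,1)$ are contractible. The projection
$$\Hom(a,x)\times_{\Hom(a,1)}\Hom(b,1)\to \Hom(a,x)$$
is therefore an equivalence of $\infty$-groupoids, and by direct inspection of the universal property of $\Sq(i,x\to 1)$, the composite
$$\Hom(b,x)\to \Sq(i,x\to 1)\xrightarrow{\sim}\Hom(a,x)$$
is canonically identified with $\Hom(i,x)$.

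Combining the two observations, $\Hom(b,x)\to \Sq(i,x\to 1)$ is an equivalence if and only if $\Hom(i,x)$ is. Quantifying over all $i\in S$ yields the equivalence between $x\to 1\in R_S$ and $x$ being $S$-local. There is no genuine obstacle in this argument; it is a direct unwinding of definitions once one exploits the universal property of the terminal object to collapse the pullback defining $\Sq(i,x\to 1)$.
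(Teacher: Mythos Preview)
Your argument is correct and is essentially the same as the paper's: both identify $\Sq(i,x\to 1)$ with $\Hom(a,x)$ using terminality of $1$, observe that the canonical map $\Hom(b,x)\to\Sq(i,x\to 1)$ is thereby identified with $\Hom(i,x)$, and conclude by unfolding the definitions of $R_S$ and $S$-locality.
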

\begin{proof}
Let $i\in S$.
Remark that the functor $\Hom(b,x)\to \Sq(i,x\to 1)\sim\Hom(a,x)$ is $\Hom(i,f)$. The proposition \ref{prop:caracterisation of L and R with lifting property} then implies the desired result.
\end{proof}

\begin{theorem}
\label{theo:adjunction between presheaves and local presheaves}
The inclusion $\iota:C_S\to C$ is part of an adjunction
\[\begin{tikzcd}
	{\Fb_S:C} & {C_S:\iota}
	\arrow[""{name=0, anchor=center, inner sep=0}, shift left=2, from=1-1, to=1-2]
	\arrow[""{name=1, anchor=center, inner sep=0}, shift left=2, from=1-2, to=1-1]
	\arrow["\dashv"{anchor=center, rotate=-90}, draw=none, from=0, to=1]
\end{tikzcd}\]
Moreover, $\Fb_S:C\to C_S$ is the localization of $C$ by $\widehat{S}$.\sym{(f@$\Fb$}
\end{theorem}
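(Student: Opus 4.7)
My plan rests on using the factorization system $(L_S,R_S)$ constructed in Proposition \ref{prop:factorization system from S}, together with Lemma \ref{lemma:object is $S$ local if fibrant}, which characterises the $S$-local objects as those $x$ for which $x\to 1$ lies in $R_S$. For each object $x$ of $C$, I apply the factorization to the canonical morphism $x\to 1$, producing a decomposition $x\xrightarrow{\eta_x}\Fb_S(x)\xrightarrow{p_x}1$ with $\eta_x\in L_S$ and $p_x\in R_S$. By Lemma \ref{lemma:object is $S$ local if fibrant} the object $\Fb_S(x)$ is $S$-local, so the assignment lands in $C_S$; functoriality in $x$ then follows from Proposition \ref{prop:fonctorialite des relevement}, since any morphism $f:x\to y$ yields a commutative square over $1$ which is canonically extended to a compatible morphism $\Fb_S(f):\Fb_S(x)\to \Fb_S(y)$ between the two factorizations.

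Next I would establish the adjunction $\Fb_S\dashv \iota$ by showing that $\eta$ is the unit, i.e.\ that precomposition with $\eta_x$ induces an equivalence $\Hom(\Fb_S(x),y)\to \Hom(x,y)$ for every $S$-local $y$. Because $y\to 1\in R_S$ by Lemma \ref{lemma:object is $S$ local if fibrant} and $\eta_x\in L_S$, Lemma \ref{lemma:caracterisation of L and R with lifting property 1} gives the unique left lifting property, so the canonical map $\Hom(\Fb_S(x),y)\to \Sq(\eta_x, y\to 1)$ is an equivalence; but $\Sq(\eta_x,y\to 1)$ collapses to $\Hom(x,y)$ since the hom-spaces into the terminal object are contractible, and the resulting composite equivalence is by construction precomposition with $\eta_x$.

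For the claim that $\Fb_S$ exhibits $C_S$ as the localization of $C$ at $\widehat{S}$, I would argue in two steps. First, $\Fb_S$ inverts $\widehat{S}=L_S$: the same lifting argument applied to an arbitrary $i:a\to b$ in $L_S$ shows that $\Hom(b,y)\to \Hom(a,y)$ is an equivalence for every $y\in C_S$, which by the Yoneda lemma inside $C_S$ forces $\Fb_S(i)$ to be an equivalence. Second, given any functor $F:C\to D$ inverting $\widehat{S}$, the natural transformation $F\to F\circ\iota\circ\Fb_S$ induced by $\eta$ is pointwise an equivalence (since $\eta_x\in L_S\subseteq\widehat{S}$), so $F$ factors as $(F\circ\iota)\circ \Fb_S$; uniqueness follows from the counit $\Fb_S\circ\iota\simeq \id_{C_S}$, which holds because $\iota$ is fully faithful. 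The step I expect to be most delicate is the functoriality of the small object argument itself, because in the $\iun$-categorical setting it is not enough to choose a factorization for each object separately; fortunately Proposition \ref{prop:fonctorialite des relevement} is exactly designed to handle this, so once one extracts the right equivalences of hom-spaces the proof should go through cleanly.
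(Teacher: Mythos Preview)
Your proposal is correct and follows essentially the same route as the paper: factor $x\to 1$ via the $(L_S,R_S)$ factorization system, identify $\Fb_S(x)$ as $S$-local via Lemma~\ref{lemma:object is $S$ local if fibrant}, and deduce both the adjunction and the localization property from the fact that the unit $\eta_x$ lies in $L_S=\widehat{S}$. The only cosmetic difference is that the paper invokes the built-in functoriality of the small object argument and the terse observation $\Fb_S\iota\sim\id$, whereas you spell out functoriality via Proposition~\ref{prop:fonctorialite des relevement} and verify the unit property via the equivalence $\Hom(\Fb_S(x),y)\to\Sq(\eta_x,y\to 1)\simeq\Hom(x,y)$; both are valid and amount to the same argument.
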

\begin{proof}
For an object $x$, the small object argument provides a factorization of $x\to 1$ into a morphism $x\to \Fb_S x$ of $L_S$ followed by a morphism $\Fb_S x\to 1$ in $R_S$. According to lemma \ref{lemma:object is $S$ local if fibrant}, $\Fb_Sx$ is in $C_S$. As the factorization is functorial, this defines a functor $\Fb_S:C\to C_S$, and a natural transformation $\nu:id\to \Fb_S$ constant on $S$-local objects. As $\Fb_S\iota$ is equivalent to the identity, this induces the claimed adjunction. 

For the second proposition, let $F:C\to D$ be a functor sending morphisms of $L_S$ on equivalences. We define $\Db(F):= F\circ \iota$, and we have a diagram
\[\begin{tikzcd}
	C && D \\
	& {C_S}
	\arrow["{\Fb_S}"', from=1-1, to=2-2]
	\arrow[""{name=0, anchor=center, inner sep=0}, "F", from=1-1, to=1-3]
	\arrow["{\Db(F)}"', from=2-2, to=1-3]
	\arrow[shorten <=5pt, shorten >=5pt, Rightarrow, from=0, to=2-2]
\end{tikzcd}\]
that commutes up to the natural transformation $F\circ_0 \nu:F\to D(F)\circ \Fb_S$. However, the natural transformation $\nu$ is pointwise in $L_S$, which implies that $F\circ \nu$ is pointwise an equivalence, and the previous diagram then commutes. Now, let $G:C_S\to D$ be any other functor such that $G\circ\Fb_S \sim F$. By precomposing with iota, this implies that $G\sim F\circ \iota$.
\end{proof}

\begin{cor}
 \label{cor:derived colimit preserving functor}
The $\iun$-category $C_S$ is cocomplete. Moreover, if $F:C\to D$ is a colimit preserving functor sending $S$ onto equivalences, the induced functor $\Db F:C_S\to D$ preserves colimits.
\end{cor}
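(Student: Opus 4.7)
The plan is to establish the two claims in sequence, using the reflective adjunction $\Fb_S \dashv \iota$ and the cocomplete closure property of $L_S$.

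\textbf{Cocompleteness of $C_S$.} For the first claim, I would exhibit colimits in $C_S$ by the standard formula for reflective subcategories. Given a diagram $G:I\to C_S$, I claim that $\Fb_S(\colim_I \iota G)$, computed using the cocompleteness of $C$, is a colimit of $G$ in $C_S$. This is a direct application of the adjunction: for any $y \in C_S$,
\[
\Hom_{C_S}(\Fb_S(\colim_I \iota G),\, y) \simeq \Hom_C(\colim_I \iota G,\, \iota y) \simeq \lim_I \Hom_C(\iota G(i),\iota y) \simeq \lim_I \Hom_{C_S}(G(i),y),
\]
which is the universal property of a colimit.

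\textbf{Reducing $F$ via $\Fb_S$.} For the second claim, the key step is to show that $F$ sends every morphism of $L_S$ to an equivalence, not just the generators $S$. For this, I would consider the full sub $\infty$-groupoid $T$ of arrows $f$ of $C$ such that $F(f)$ is an equivalence. Since $F$ preserves colimits and equivalences in $\Arr(D)$ are closed under colimits and composition, $T$ is a cocomplete sub $\infty$-groupoid of arrows containing $S$, hence it contains $\widehat{S} = L_S$ by minimality. By Theorem \ref{theo:adjunction between presheaves and local presheaves}, the unit $\nu_x : x \to \iota\Fb_S x$ lies in $L_S$ for every $x$, so $F(\nu_x)$ is an equivalence. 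Thus the whiskered natural transformation $F\circ_0 \nu : F \Rightarrow F\iota\Fb_S = \Db F \circ \Fb_S$ is an equivalence of functors $C\to D$.

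\textbf{Conclusion.} Now given a diagram $G:I \to C_S$, I would compute
\[
\Db F(\colim_{C_S} G) \;\simeq\; \Db F(\Fb_S(\colim_I \iota G)) \;\simeq\; F(\colim_I \iota G) \;\simeq\; \colim_I F(\iota G(i)) \;=\; \colim_I \Db F(G(i)),
\]
where the second equivalence uses $F \simeq \Db F \circ \Fb_S$ from the previous step and the third uses that $F$ preserves colimits. The main obstacle, such as it is, lies in the closure argument of the middle step: one must invoke the fact that a colimit-preserving functor sends a cocomplete class of morphisms generated by $S$ to equivalences, provided it sends $S$ itself to equivalences. Everything else is bookkeeping with the adjunction $\Fb_S\dashv\iota$.
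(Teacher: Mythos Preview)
Your proof is correct and follows essentially the same approach as the paper: both use that colimits in $C_S$ are computed as $\Fb_S(\colim_I \iota G)$ and that the unit lies in $\widehat{S}$, which a colimit-preserving $F$ inverting $S$ must send to an equivalence. The paper's proof is terser and leaves implicit the closure argument (that $F$ inverts all of $\widehat{S}$, not just $S$), which you spell out explicitly; otherwise the logic is the same.
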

\begin{proof}
The first assertion is a direct consequence of the adjunction given in theorem \ref{theo:adjunction between presheaves and local presheaves}.

This adjunction also implies that the colimit of a functor $G:A\to C_S$ is given by $\Fb_S(\colim_{a:A} \iota G(a))$.
As the canonical morphism $\colim_{a:A} \iota G(a)\to \Fb_S(\colim_{a:A} \iota G(a))$ is  by construction in $\widehat{S}$ this proves the second assertion.
\end{proof}

\p Suppose given an adjunction between two $\iun$-categories
\[\begin{tikzcd}
	{F:C} & {D:G}
	\arrow[""{name=0, anchor=center, inner sep=0}, shift left=2, from=1-1, to=1-2]
	\arrow[""{name=1, anchor=center, inner sep=0}, shift left=2, from=1-2, to=1-1]
	\arrow["\dashv"{anchor=center, rotate=-90}, draw=none, from=0, to=1]
\end{tikzcd}\]
with unit $\nu$ and counit $\epsilon$,
as well as an $\infty$-groupoid of morphisms $S$ of $C$ and $T$ of $D$ such that $F(S)\subset \widehat{T}$. 
By adjunction property, it implies that for any $T$-local object $d\in D$, $G(d)$ is $S$-local.
The previous adjunction induces a derived adjunction\sym{(lf@$\Lb F$} \sym{(rg@$\Rb G$}
\[\begin{tikzcd}
	{\Lb F:C_S} & {D_T:\Rb G}
	\arrow[""{name=0, anchor=center, inner sep=0}, shift left=2, from=1-1, to=1-2]
	\arrow[""{name=1, anchor=center, inner sep=0}, shift left=2, from=1-2, to=1-1]
	\arrow["\dashv"{anchor=center, rotate=-90}, draw=none, from=0, to=1]
\end{tikzcd}\]
where $\Lb F$ is defined by the formula $c\mapsto \Fb_T F(c)$ and $\Rb G$ is the restriction of $G$ to $D_T$. The unit is given by $\nu\circ \Fb_S$ and the counit by the restriction of $\epsilon$ to $D_T$.

\begin{example}
\label{exe:exe localization}
\index[notation]{(f0@$f_{\mbox{$\exclam$}}:C_{/c}\to C_{d/}$}
\index[notation]{(f1@$f^*:C_{/d}\to C_{c/}$}
\index[notation]{(f2@$f_*:C_{/c}\to C_{d/}$}
\index[notation]{(lf0@$\Lb f_{\mbox{$\exclam$}}:(C_{/c})_{S_{c/}}\to (C_{d/})_{S_{d/}}$}
\index[notation]{(rf1@$\Rb f^*:(C_{/d})_{S_{d/}}\to (C_{c/})_{S_{c/}}$}
\index[notation]{(lf2@$\Lb f^*:(C_{/d})_{S_{d/}}\to (C_{c/})_{S_{c/}}$}
\index[notation]{(rf3@$\Rb f_*:(C_{/c})_{S_{c/}}\to (C_{d/})_{S_{d/}}$}
Let $C$ be a presentable $\iun$-category, $S$ a full sub $\infty$-groupoid of morphisms of $\iPsh{A}$ with $\U$-small codomain and domain. 
Eventually, we set \wcnotation{$S_{/c}$}{(sc@$S_{/c}$} as the $\infty$-groupoid of morphisms of shape
\[\begin{tikzcd}
	& b \\
	a & c
	\arrow[from=2-1, to=2-2]
	\arrow[from=1-2, to=2-2]
	\arrow["s", from=2-1, to=1-2]
\end{tikzcd}\]
where $s:S$.

 A morphism $f:c\to d$ induces an adjunction
\[\begin{tikzcd}
	{f_!:C_{/c}} & {C_{/d}:f^*}
	\arrow[""{name=0, anchor=center, inner sep=0}, shift left=2, from=1-1, to=1-2]
	\arrow[""{name=1, anchor=center, inner sep=0}, shift left=2, from=1-2, to=1-1]
	\arrow["\dashv"{anchor=center, rotate=-90}, draw=none, from=0, to=1]
\end{tikzcd}\] 
where the left adjoint is the composition and the right adjoint is the pullback. By construction, $f_!(S_{/c})\subset S_{/d}$. The previous adjunction can then be derived, and induced an adjunction:
\[\begin{tikzcd}
	{\Lb f_!:(C_{/c})_{S_{/c}}} & {(C_{/d})_{S_{/d}}:\Rb f^*}
	\arrow[""{name=0, anchor=center, inner sep=0}, shift left=2, from=1-1, to=1-2]
	\arrow[""{name=1, anchor=center, inner sep=0}, shift left=2, from=1-2, to=1-1]
	\arrow["\dashv"{anchor=center, rotate=-90}, draw=none, from=0, to=1]
\end{tikzcd}\]
where the right adjoint is just the restriction of $f^*$ to $S_{/d}$-local objects.

If the functor $f^*:C_{/d}\to C_{/c}$ preserves colimits and $f^*(S_{/c})\subset S_{/d}$, the adjunction
\[\begin{tikzcd}
	{f^*:C_{/d}} & {C_{/c}:f_*}
	\arrow[""{name=0, anchor=center, inner sep=0}, shift left=2, from=1-1, to=1-2]
	\arrow[""{name=1, anchor=center, inner sep=0}, shift left=2, from=1-2, to=1-1]
	\arrow["\dashv"{anchor=center, rotate=-90}, draw=none, from=0, to=1]
\end{tikzcd}\]
induces an adjunction 
\[\begin{tikzcd}
	{\Lb f^*:(C_{/d})_{S_{/d}}} & {(C_{/c})_{S_{/c}}:\Rb f_*}
	\arrow[""{name=0, anchor=center, inner sep=0}, shift left=2, from=1-1, to=1-2]
	\arrow[""{name=1, anchor=center, inner sep=0}, shift left=2, from=1-2, to=1-1]
	\arrow["\dashv"{anchor=center, rotate=-90}, draw=none, from=0, to=1]
\end{tikzcd}\]

\end{example}

\section{Basic constructions}
\label{chapter:Basica construciton}
\subsection{$\io$-Categories}
\label{section:iocategories}
The definitions of section \ref{subsection:the categoru theta} will be used freely here.
\p
We denote by 
$$[\uvar,\uvar]: \iPsh{\Theta}\times \iPsh{\Delta}\to \iPsh{\Delta[\Theta]}$$
the extension by colimit of the functor $\Theta\times \Delta\to \iPsh{\Delta[\Theta]}$ sending $(a,n)$ onto $[a,n]$.
For an integer $n$, we denote
$$[\uvar,n]:\iPsh{\Theta}^n\to \iPsh{\Theta}$$ 
the extension by colimit of the functor 
$\Theta^n\to\iPsh{\Theta}$ sending $\textbf{a}:=\{a_1,...,a_n\}$ onto $[\textbf{a},n]$.

\p  We have an adjunction 
\begin{equation}
\label{eq:underived adjunction part}
\begin{tikzcd}
	{ i_!:\iPsh{\Delta[\Theta]}} & {\iPsh{\Theta}:i^*}
	\arrow[shift left=2, from=1-1, to=1-2]
	\arrow[shift left=2, from=1-2, to=1-1]
\end{tikzcd}
\end{equation}
where the left adjoint is the left Kan extension of the functor $\Delta[\Theta]\xrightarrow{i} \Theta\to \iPsh{\Theta}$. The sets of morphisms $\W$ and $\M$ are respectively defined in paragraphs \ref{para:definition of W} and \ref{para:defi of delta theta}.
There is an obvious inclusion $i_!(\M)\subset \W$. The previous adjunction then induced a derived adjunction
\begin{equation}
\label{eq:derived adjunction}
\begin{tikzcd}
	{\Lb i_!:\Psh{\Delta[\Theta]}_{\M}} & {\Psh{\Theta}_{\W}:\Rb i^*}
	\arrow[shift left=2, from=1-1, to=1-2]
	\arrow[shift left=2, from=1-2, to=1-1]
\end{tikzcd}
\end{equation}

\begin{prop}
\label{prop:infini changing theta}
The unit and counit of the adjunction \eqref{eq:underived adjunction part} are respectively in $\widehat{\M}$ and $\widehat{\W}$. As a consequence, the adjunction \eqref{eq:derived adjunction} is an adjoint equivalence.
\end{prop}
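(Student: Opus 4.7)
The plan is to establish the two pointwise statements about the unit $\eta$ and counit $\epsilon$ of the adjunction \eqref{eq:underived adjunction part}; the ``adjoint equivalence'' assertion then follows formally. Indeed, once $\eta_X\in\widehat{\M}$ for every $X$, the derived unit $X\to \Rb i^*\Lb i_! X$ is inverted by $\Fb_\M$ and so is an equivalence in $\iPsh{\Delta[\Theta]}_{\M}$; dually for the counit via $\widehat{\W}$. Combined with the adjunction provided by corollary~\ref{cor:derived colimit preserving functor}, this forces \eqref{eq:derived adjunction} to be an adjoint equivalence.

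For the two pointwise claims I would first reduce to representables. The functor $i_!$ preserves colimits as a left adjoint, and $i^*$ preserves colimits because colimits in $\infty$-presheaf categories are computed pointwise; hence both composites $i^*i_!$ and $i_!i^*$ are colimit-preserving, so $\eta$ and $\epsilon$ are natural transformations between colimit-preserving endofunctors on $\iPsh{\Delta[\Theta]}$ and $\iPsh{\Theta}$ respectively. Since $\widehat{\M}$ and $\widehat{\W}$ are closed under colimits by construction, it therefore suffices to check that $\eta_{[a,n]}\in\widehat{\M}$ for every representable $[a,n]\in\Delta[\Theta]$ and that $\epsilon_a\in\widehat{\W}$ for every representable $a\in\Theta$.

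The substance of the proof is in these two combinatorial checks. For the counit, one identifies $i_!i^*a$ with the colimit $\colim_{[b,n]\to a}[b,n]$ indexed over $\Delta[\Theta]_{/a}$, and recognises $\epsilon_a$ as the canonical comparison to $a$; the set $\W$ is designed so that its saturation contains precisely the Segal-type decompositions that present a globular pasting diagram $a\in\Theta$ as such a colimit of $\Delta[\Theta]$-cells, possibly after a cofinality argument replacing the full slice by a small generating subdiagram. For the unit, one computes $i_![a,n]$ as the representable $[a,n]$ viewed in $\iPsh{\Theta}$ through $i$, and analyses $\eta_{[a,n]}\colon [a,n]\to i^*i_![a,n]$ as the transfinite composition of pushouts of generating maps of $\M$ that adjoin to the $\Delta[\Theta]$-representable the identifications forced by the identifications in $\Theta$.

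The principal obstacle is the explicit decomposition in the previous paragraph: writing each $\epsilon_a$ (resp.\ $\eta_{[a,n]}$) as a colimit of pushouts of generators of $\W$ (resp.\ $\M$), which is a direct but delicate unwinding of the definitions of $\W$ and $\M$ given in paragraphs~\ref{para:definition of W} and~\ref{para:defi of delta theta}. Once both pointwise statements are in hand, no further argument is needed: the derived unit and counit are equivalences in the respective localizations, and \eqref{eq:derived adjunction} is an adjoint equivalence.
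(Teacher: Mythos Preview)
Your reduction to representables (using that both $i_!$ and $i^*$ preserve colimits, so the unit and counit are natural transformations between colimit-preserving endofunctors) is correct and is the right first move. However, you leave the actual content of the proof undone: the ``delicate unwinding'' you mention is precisely where the work lies, and you have only gestured at it.

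The paper's proof takes a genuinely different and more efficient route. Rather than carrying out the combinatorics directly in the $\infty$-categorical setting, it invokes a strict (i.e.\ $1$-categorical, $\Set$-valued presheaf) analogue already established earlier in the text: theorem~\ref{theo:unit and counit are in W}, which asserts that the unit and counit of the corresponding adjunction between $\Psh{\Delta[\Theta]}$ and $\Psh{\Theta}$ lie in the \emph{precocomplete} closures $\overline{\M}$ and $\overline{\W}$. The passage to the $\infty$-setting is then a formal transfer: by the definition of precocomplete classes and by lemma~\ref{lemma:colimit computed in set presheaves} (which says that the inclusion $\iota:\Psh{A}\hookrightarrow\iPsh{A}$ preserves the relevant colimit shapes---pushouts along monomorphisms, transfinite compositions, Reedy-cofibrant diagrams), one obtains inclusions $\iota(\overline{\W})\subset\widehat{\W}$ and $\iota(\overline{\M})\subset\widehat{\M}$. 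Since the unit and counit of \eqref{eq:underived adjunction part} are, on representables, the images under $\iota$ of their strict counterparts, the result follows.

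What this buys: the ``explicit decomposition'' you identify as the obstacle is done \emph{once}, at the $\Set$-valued level, where it is a concrete combinatorial argument; the $\infty$-categorical statement then follows for free via the transfer lemma. Your approach would in effect reprove theorem~\ref{theo:unit and counit are in W} inside $\iPsh{}$, which is possible but redundant given the infrastructure already in place.
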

\begin{proof}
We denote by $\iota:\Psh{\Theta}\to \iPsh{\Theta}$ and $\iota:\Psh{\Delta[\Theta]}\to \iPsh{\Delta[\Theta]}$ the two canonical inclusions. By the definition of the smallest precocomplete class (paragraph \ref{para:precomplet}) and according to lemma \ref{lemma:colimit computed in set presheaves}, we have inclusions $\iota(\overline{\W})\subset \widehat{\W}$ and $\iota(\overline{\M})\subset \widehat{\M}$. The result then directly follows from theorem \ref{theo:unit and counit are in W}.  
\end{proof}

\p A \wcnotion{$\io$-category}{category4@$\io$-category} is a $\W$-local $\infty$-presheaf $C\in \iPsh{\Theta}$. We then define \index[notation]{((a60@$\ocat$}
$$\ocat := \iPsh{\Theta}_{\W}.$$
Proposition \ref{prop:infini changing theta} implies that $\ocat$ identifies itself with the full sub $\iun$-category of $\iPsh{\Delta[\Theta]}$ of $\M$-local objects:
$$\ocat \sim \iPsh{\Delta[\Theta]}_{\M}.$$
We recall that the sets of morphisms $\W$ and $\M$ are respectively defined in paragraphs \ref{para:definition of W} and \ref{para:defi of delta theta}.

\p
We denote by $\pi_0:\iPsh{\Theta}\to \Psh{\Theta}$ the functor sending an $\infty$-presheaf $X$ onto the presheaf
$$\pi_0X:a\mapsto\pi_0(X_a)$$
This functor admits a fully faithful right adjoint: $\N:\Psh{\Theta}\to \iPsh{\Theta}$. 
As $\pi_0$ preserves $\W$, it induces an adjoint pair: \sym{(pi@$\pi_0:\ocat\to \zocat$}\sym{n@$\N:\zocat\to \ocat$}
\[\begin{tikzcd}
	{\pi_0:\ocat} & {\zocat:\N}
	\arrow[""{name=0, anchor=center, inner sep=0}, shift left=2, from=1-1, to=1-2]
	\arrow[""{name=1, anchor=center, inner sep=0}, shift left=2, from=1-2, to=1-1]
	\arrow["\dashv"{anchor=center, rotate=-90}, draw=none, from=0, to=1]
\end{tikzcd}\]
where the right adjoint $\N$ is fully faithful.
Every $\zo$-category can then be seen as an $\io$-category and we will call \wcnotion{strict}{strict $\io$-category} the $\io$-categories lying in the image of this functor.

The inclusion $\Delta\to \Theta$ induces by extention by colimit a functor $\iPsh{\Delta}\to \iPsh{\Theta}$. Passing to the localization, this induces a fully faithful inclusion $\ncat{1}\to \ocat$.

The inclusion $\{[0]\}\to \Theta$ induces by extention by colimit a functor $\igrd \to \iPsh{\Theta}$. Passing to the localization, this induces a fully faithful inclusion $\igrd \to \ocat$. The $\io$-categories lying in the image of this functors will be also called \textit{$\infty$-groupoids}.

\p A \wcsnotion{$n$-cell}{cell@$n$-cell}{for $\io$-categories} of an $\io$-category is a morphism $\Db_n\to C$.
If $C$ is an $\io$-category, we denote by $C_n$ the value of $C$ on $\Db_n$. 

\begin{prop}
\label{prop:equivalences detected on globes}
Let $C,D$ be two $\io$-categories, and $f:C\to D$ any map. The morphism $f$ is an equivalence if and only if for any $n$, the induced morphism $f_n: C_n\to D_n$ is an equivalence. 
\end{prop}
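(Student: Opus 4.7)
The ``only if'' direction is immediate: since $\ocat$ is a full sub $\iun$-category of $\iPsh{\Theta}$, an equivalence $f: C \to D$ is pointwise an equivalence on $\Theta$, and $\Db_n \in \Theta$.

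For the converse, the plan is to leverage the fact that every object of $\Theta$ is a globular sum, and that the Segal-type morphisms in $\W$ encode precisely the iterated fiber-product decomposition of such sums. More concretely, to show that $f$ is an equivalence in $\ocat$ it suffices to prove that $f_a : C(a) \to D(a)$ is an equivalence of $\infty$-groupoids for every $a \in \Theta$. So first I fix a globular sum $a = [a_1,\ldots,a_n;n]$ with each $a_i \in \Theta_{n-1}$, and I use $\W$-locality of $C$ and $D$ to rewrite
\[
C(a) \simeq C([a_1,n]) \times_{C_{n-1}} \cdots \times_{C_{n-1}} C([a_n,n]),
\]
and similarly for $D(a)$, in a way that is natural in $f$.

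Then I induct on the height of the globular sum. The base case is $a = \Db_n$, which holds by hypothesis. For the inductive step, the factors $C([a_i,n])$ and $D([a_i,n])$ are themselves values on globular sums of lower complexity, so by induction $f_{[a_i,n]}$ is an equivalence; the connecting face $f_{n-1}: C_{n-1} \to D_{n-1}$ is an equivalence by hypothesis as well. Since equivalences of $\infty$-groupoids are stable under finite limits, the induced map $f_a$ is an equivalence.

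The only potentially subtle step is the first: making sure that the Segal decomposition one gets from $\W$-locality is indeed a decomposition over the globe $C_{n-1}$ and involves only ``thinner'' globular sums $[a_i,n]$, so that the induction terminates at globes. This, however, is exactly the content of the definition of $\W$ (paragraph \ref{para:definition of W}) and requires no new input. Everything else is stability of equivalences under finite limits and objectwise detection of equivalences in presheaf $\iun$-categories.
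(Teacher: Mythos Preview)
Your overall strategy is the same as the paper's: use the Segal/spine conditions encoded in $\W$ to write $C(a)$ as a finite limit of values on globes, then conclude by stability of equivalences under limits. The paper does this in a single line: since $C$ and $D$ are $\Wseg$-local, for any globular sum $a$ one has
\[
f_a : C_a \to D_a \;\sim\; \lim_{n\in \Sp_a}\big(f_n : C_n \to D_n\big),
\]
which is a limit of equivalences.

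Your inductive unwinding, however, has the indices wrong. In the wreath description $a=[\textbf a,n]=[a_1,\ldots,a_n;n]$, the integer $n$ is the \emph{length} (number of objects minus one), not the dimension, and the $a_i$ are arbitrary objects of $\Theta$, not of $\Theta_{n-1}$. The Segal decomposition coming from $\M$ is
\[
C(a)\;\simeq\; C([a_1,1])\times_{C_0}\cdots\times_{C_0}C([a_n,1]),
\]
with connecting object $C_0=C(\Db_0)$ and factors the suspensions $[a_i,1]$; your formula with $[a_i,n]$ and $C_{n-1}$ does not type-check. Moreover, $[a_i,1]$ is not a globe unless $a_i$ is, so the recursion does not terminate after one step; you must observe that the spine of $[a_i,1]$ is the suspension of the spine of $a_i$, and iterate. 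Once you fix these points (or simply induct on the number of globes in $\Sp_a$), your argument collapses to exactly the paper's one-line proof.
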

\begin{proof}
This is a necessary condition. For the converse, let $f$ be a morphism fulfilling this condition. To show that $f$ is an equivalence, we have to show that for any globular sum $a$, $f_a: C_a\to D_a$ is an equivalence. This is true as 
$$f_a:C_a\to D_a~\sim~ \lim_{n\in\Sp_a}{f_n:C_n\to D_n}.$$	
\end{proof}

\begin{lemma}
\label{lemma:equivalence if unique right lifting property against globes.}
A functor is an equivalence if it has the unique right lifting property against $\emptyset\to \Db_n$ for any $n\geq 0$. 
\end{lemma}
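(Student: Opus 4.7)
The plan is to unwind the right lifting property into a concrete statement about the cell spaces $C_n$ and $D_n$, and then invoke Proposition \ref{prop:equivalences detected on globes}.

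First, I would recall the definition of unique right lifting: saying that $f : C \to D$ has the unique right lifting property against $\emptyset \to \Db_n$ means precisely that the canonical map
\[
\Hom(\Db_n, C) \longrightarrow \Sq(\emptyset \to \Db_n,\, f) = \Hom(\emptyset, C) \times_{\Hom(\emptyset, D)} \Hom(\Db_n, D)
\]
is an equivalence of $\infty$-groupoids. Since $\emptyset$ is the initial object, both $\Hom(\emptyset, C)$ and $\Hom(\emptyset, D)$ are contractible, so the target of the above map is canonically equivalent to $\Hom(\Db_n, D) = D_n$, while the source is $C_n$. Hence the hypothesis translates to the statement that, for every $n \geq 0$, the induced map $f_n : C_n \to D_n$ is an equivalence.

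It then suffices to apply Proposition \ref{prop:equivalences detected on globes}, which asserts that a morphism of $\io$-categories is an equivalence as soon as it induces equivalences on the values at every $\Db_n$. This immediately gives the desired conclusion.

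There is no real obstacle here; the only subtlety is the identification of $\Sq(\emptyset \to \Db_n, f)$ with $D_n$, which simply uses the initiality of $\emptyset$ in $\ocat$ (equivalently, that the empty presheaf is initial in $\iPsh{\Theta}$, a property preserved under the reflective localization to $\W$-local objects).
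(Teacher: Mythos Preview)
Your proof is correct and follows essentially the same approach as the paper: unwind the unique right lifting property against $\emptyset\to\Db_n$ to see that $f_n:C_n\to D_n$ is an equivalence for every $n$, then apply Proposition~\ref{prop:equivalences detected on globes}. The only difference is that you spell out explicitly why $\Sq(\emptyset\to\Db_n,f)\simeq D_n$ via initiality of $\emptyset$, whereas the paper leaves this implicit.
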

\begin{proof}
This is a necessary condition. For the converse, let $f:C\to D$ be a morphism fulfilling this condition. By definition of left unique lifting property, it implies that the induced morphism
$f_n:C_n\to D_n$ is an equivalence for any $n\geq 0$. Using proposition \ref{prop:equivalences detected on globes}, $f$ is an equivalence.
 \end{proof}

\p Let $\iPsh{\Theta}_{\bullet,\bullet}$ be the $(\infty,1)$-category of $\infty$-presheaves on $\Theta$ with two distinguished points, i.e. of triples $(C,a,b)$ where $a$ and $b$ are elements of $C_0$.
The functor $[\uvar,1]:\Theta\to \iPsh{\Theta}_{\bullet,\bullet}$ that sends $a$ onto $([a,1],\{0\},\{1\})$ induces by extension by colimit an adjunction
\begin{equation}
\label{eq:suspesnion betweenpresheaves}
\begin{tikzcd}
	{[\uvar,1]:\iPsh{\Theta}} & {\iPsh{\Theta}_{\bullet,\bullet}:\hom_{\uvar}(\uvar,\uvar)}
	\arrow[""{name=0, anchor=center, inner sep=0}, shift left=2, from=1-1, to=1-2]
	\arrow[""{name=1, anchor=center, inner sep=0}, shift left=2, from=1-2, to=1-1]
	\arrow["\dashv"{anchor=center, rotate=-90}, draw=none, from=0, to=1]
\end{tikzcd}
\end{equation}
As the left adjoint preserves representables, the right adjoint commutes with colimit. It is then easy to check on representables that the unit of this adjunction is an equivalence. As a consequence, the left adjoint is fully faithful.

\begin{lemma}
\label{lemma:hom of the suspension prequel}
Let $C$ be an $\infty$-presheaves on $\Theta$. The canonical morphisms
$$C\to \hom_{[C,1]}(0,1)~~\hom_{[C,1]}(0,0)\to 1~~~ \hom_{[C,1]}(1,1) \sim 1~~~~\emptyset\to \hom_{[C,1]}(1,0)$$
are equivalences.
\end{lemma}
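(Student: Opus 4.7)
The first equivalence $C \to \hom_{[C,1]}(0,1)$ is immediate: it is the unit of the adjunction \eqref{eq:suspesnion betweenpresheaves}, and the discussion preceding the lemma already established that this unit is an equivalence (indeed this is what it means for $[\uvar,1]$ to be fully faithful).

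For the remaining three equivalences, my plan is to verify them first on representables $C = a \in \Theta$ and then extend to arbitrary $\infty$-presheaves by colimit considerations. For $a \in \Theta$ the suspension $[a,1]$ is again a globular sum in $\Theta$ with exactly two $0$-cells $\{0,1\}$, and whose only non-degenerate cells lie in the single "chamber" from $0$ to $1$. The strict combinatorics of globular sums then yield, for every $b \in \Theta$, a canonical decomposition
\[
\Hom_{\Theta}([b,1],[a,1]) \simeq \Hom_{\Theta}(b,a) \sqcup \{*\} \sqcup \{*\} \sqcup \emptyset
\]
indexed by the four possible images of the basepoints $(0,1) \mapsto (0,1), (0,0), (1,1), (1,0)$. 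This immediately gives $\hom_{[a,1]}(0,0) \simeq \hom_{[a,1]}(1,1) \simeq 1$ and $\hom_{[a,1]}(1,0) \simeq \emptyset$ in $\Psh{\Theta}$; by Lemma~\ref{lemma:colimit computed in set presheaves} the same identifications hold in $\iPsh{\Theta}$.

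To extend from representables to arbitrary $C$, I would write $C = \colim_{a \to C} a$ as the canonical colimit of its category of elements, and use that $[\uvar,1] : \iPsh{\Theta} \to \iPsh{\Theta}_{\bullet,\bullet}$ preserves colimits (as a left adjoint) to identify $([C,1],0,1)$ with $\colim_{a \to C}([a,1],0,1)$ in the bipointed category. Since the right adjoint $\hom$ commutes with colimits (as noted after the adjunction), the $(0,1)$-case recovers the unit equivalence. The main obstacle is that the bipointed structures $(0,0)$, $(1,1)$ and $(1,0)$ on $[C,1]$ are \emph{not} the canonical ones produced by $[\uvar,1]$, so the colimit-preservation argument does not immediately apply with fixed basepoints. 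I would handle this by checking one $b \in \Theta$ at a time: since $[C,1]_0 \simeq \{0,1\}$ is a discrete two-element set (this holds because the bipointed colimit identifies all the basepoints of the $([a,1],0,1)$ into two distinguished points), the $\infty$-groupoid $\Hom([b,1],[C,1])$ decomposes as the coproduct of the four fibres over $\{0,1\}^{\{0,1\}}$, each of which is exactly $\hom_{[C,1]}(x,y)(b)$. The fibres at $(0,0)$ and $(1,1)$ consist of maps factoring through the inclusion $\Db_0 \hookrightarrow [C,1]$ of the relevant basepoint, and such factorisations form a contractible space; the fibre at $(1,0)$ is empty since the underlying presheaf $[C,1]$ admits no "backwards" $1$-dimensional structure, as one verifies by reducing to the representable case via the colimit decomposition.
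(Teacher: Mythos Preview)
Your strategy is the same as the paper's: reduce to representables and invoke colimit preservation. The paper compresses this into one sentence; you are more explicit, and you correctly flag that for the bipointings $(0,0)$, $(1,1)$, $(1,0)$ the naive appeal to colimit preservation of $\hom\circ[\uvar,1]$ does not literally apply, since those are not the basepoints produced by $[\uvar,1]$.

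There is, however, a genuine gap in your treatment of the $(0,0)$ and $(1,1)$ fibres. You assert that any map $[b,1]\to[C,1]$ sending both endpoints to $0$ factors through $\{0\}\hookrightarrow[C,1]$, but this is precisely the statement $\hom_{[C,1]}(0,0)(b)\simeq *$ you are trying to establish; knowing that $[C,1]_0\simeq\{0,1\}$ is discrete does not by itself exclude non-trivial endomorphism cells at $0$. The claim needs an argument.

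The clean fix is to make your ``colimit decomposition'' idea (which you invoke only for $(1,0)$) do all the work. Because $[\uvar,1]$ takes values in the bipointed category, the underlying presheaf of $[C,1]$ is the colimit over $(\Theta_{/C})^{\triangleleft}$ (the left cone, with cone point sent to $1\amalg 1$) of the diagram $a\mapsto[a,1]$. Evaluation at the representable $[b,1]$ preserves this colimit, and since the endpoint map lands in the \emph{set} $\{0,1\}^2$, taking fibres over a point of $\{0,1\}^2$ also commutes with it. Over $(0,0)$ each term of the diagram --- including the cone term $\{0,1\}\to\{0,1\}^2$ --- has fibre a singleton, and since $(\Theta_{/C})^{\triangleleft}$ has an initial object it is weakly contractible, so the colimit of singletons is a singleton. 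Over $(1,0)$ every term is empty, so the colimit is empty; here your sketch was already adequate. This is presumably what both you and the paper have in mind, but the contractibility of $(\Theta_{/C})^{\triangleleft}$ is the step that makes the $(0,0)$ and $(1,1)$ cases go through and should be said explicitly.
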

\begin{proof}
As both $\hom$ and $[\uvar,1]$ preserve colimits, it is sufficient to check this property on representables, where it is an easy computation.
\end{proof}

\begin{prop}
\label{prop:supspension preserves cat}
The functor $[\uvar,1]:\iPsh{\Theta}\to \iPsh{\Theta}$ preserves $\io$-categories.
\end{prop}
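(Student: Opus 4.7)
The approach is to verify $\W$-locality of $[C,1]$ by analyzing mapping spaces $\Hom(-,[C,1])$ via the adjunction \eqref{eq:suspesnion betweenpresheaves} and the hom-computations of lemma \ref{lemma:hom of the suspension prequel}, reducing everything to the $\W$-locality of $C$.

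Concretely, by proposition \ref{prop:caracterisation of L and R with lifting property}, it suffices to show that $[C,1]\to 1$ has the unique right lifting property against every morphism of $\W$, and by proposition \ref{prop:cloture of L recap} it is enough to check this against a generating set for $\W$ (the Segal and completeness maps from paragraph \ref{para:definition of W}). Given a globular sum $a$, a map $a\to [C,1]$ is determined by the underlying function on $0$-cells $a_0\to([C,1])_0=\{0,1\}$ together with further data on the higher cells: by lemma \ref{lemma:hom of the suspension prequel}, a cell whose source and target $0$-cells are sent to $0$ and $1$ respectively contributes a map into $C$, while patterns $0\to 0$ and $1\to 1$ contribute trivially (since $\hom_{[C,1]}(0,0)\simeq 1\simeq \hom_{[C,1]}(1,1)$), and $1\to 0$ is impossible ($\hom_{[C,1]}(1,0)\simeq \emptyset$). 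This produces a decomposition of $\Hom(a,[C,1])$ as a coproduct, indexed by monotone colorings $a_0\to\{0,1\}$, of products of mapping spaces into $C$ for the resulting ``desuspended'' pieces.

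This decomposition is natural in $a$ and extends to arbitrary presheaves by cocontinuity of $[\uvar,1]$ (which holds because it is a left adjoint). Hence, for any generator $f:X\to Y$ of $\W$, the comparison $\Hom(Y,[C,1])\to\Hom(X,[C,1])$ splits as a coproduct of analogous comparisons for maps into $C$; each of these is an equivalence because $C$ is $\W$-local, so the same holds for $[C,1]$.

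The main obstacle is matching each generator of $\W$ with its desuspended counterpart: a Segal-type inclusion into $[b,n]$ (or more generally into a globular sum of positive dimension) must be shown to decompose, after applying $\Hom(-,[C,1])$ and stratifying by color pattern, into Segal-type comparisons of one lower dimension into $C$; and the completeness-type generators must reduce similarly, where one uses crucially that $\hom_{[C,1]}(1,0)=\emptyset$ to prevent spurious equivalences between the two basepoints $0$ and $1$ and thereby keep the completeness condition from generating extra data beyond what $C$ already provides.
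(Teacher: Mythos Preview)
Your approach is essentially the same as the paper's: both rest on decomposing $\Hom(a,[C,1])$ according to the induced map on $0$-cells, which the paper phrases as fibering $\Hom_{\iPsh{\Theta}}([\textbf{a},n],[C,1])$ over $\Hom_{\Delta}([n],[1])$. The paper establishes explicit cartesian squares over this base (constant maps give a point, the map $\alpha_k$ gives $\Hom(a_k,C)$), then passes to $\Sp_{[\textbf{a},n]}$ and to $\Sigma^nE^{eq}$ to verify the Segal and completeness conditions separately, treating $E^{eq}\to 1$ by hand via the projection $[C,1]\to[1]$.

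What you have, however, is a strategy rather than a proof, and two points deserve attention. First, the sentence ``extends to arbitrary presheaves by cocontinuity of $[\uvar,1]$'' is not the right justification: cocontinuity of $[\uvar,1]$ in $C$ tells you nothing about how $\Hom(X,[C,1])$ behaves in $X$, and for non-representable $X$ the coproduct decomposition you describe does not follow from cocontinuity alone (a limit of coproducts is not a coproduct in general). The reason it works for $X=\Sp_a$ or $X=\Sigma^nE^{eq}$ is structural: these objects have the same set of $0$-cells as their targets $a$ and $\Db_n$, so the indexing sets of the coproducts match and the comparison map is a coproduct of maps. The paper handles this by showing that the cartesian squares over $\Hom_\Delta([n],[1])$ persist after restricting along the spine or the suspended $E^{eq}$; you should make the analogous argument explicit.

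Second, your final paragraph announces the main obstacle (matching each generator with its desuspended counterpart) but does not discharge it. The paper carries this out: for the Segal generator one gets $\Hom(\Sp_{a_k},C)\to\Hom(a_k,C)$ in each nontrivial fiber, and for $\Sigma^nE^{eq}\to\Db_n$ with $n>0$ one gets $\Hom(\Sigma^{n-1}E^{eq},C)\to\Hom(\Db_{n-1},C)$. The case $n=0$ is genuinely different, and your parenthetical about $\hom_{[C,1]}(1,0)=\emptyset$ is the right ingredient but is not an argument; the paper's move is to observe that the composite $E^{eq}\to[C,1]\to[1]$ must be constant, so the map factors through a fiber of $[C,1]\to[1]$, which is a point. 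Spelling out these verifications would turn your outline into a proof.
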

\begin{proof}
By construction, for any pair of integers $k<n$, and any pair of globular sums $([\textbf{a},n],b)$, we have cartesian squares
\[\begin{tikzcd}
	1 & {\Hom_{\Theta}([\textbf{a},n],[b,1])} & {\Hom_{\Theta}(a_k,b)} & {\Hom_{\Theta}([\textbf{a},n],[b,1])} \\
	{\{\epsilon\}} & {\Hom_{\Delta}([n],[1])} & {\{\alpha_k\}} & {\Hom_{\Delta}([n],[1])}
	\arrow[""{name=0, anchor=center, inner sep=0}, from=2-1, to=2-2]
	\arrow[from=1-1, to=1-2]
	\arrow[from=1-2, to=2-2]
	\arrow[from=1-1, to=2-1]
	\arrow[from=1-4, to=2-4]
	\arrow[""{name=1, anchor=center, inner sep=0}, from=2-3, to=2-4]
	\arrow[from=1-3, to=2-3]
	\arrow[from=1-3, to=1-4]
	\arrow["\lrcorner"{anchor=center, pos=0.125}, draw=none, from=1-1, to=0]
	\arrow["\lrcorner"{anchor=center, pos=0.125}, draw=none, from=1-3, to=1]
\end{tikzcd}\]
where $\epsilon$ denote any constant functor with value $0$ or $1$, and $\alpha_k$ the morphism that sends $k$ on $0$ and $k+1$ on $1$.
Let $C$ be an $\io$-category.
As the $\iun$-category $\igrd$ is locally cartesian closed,  we have  cartesian squares
\begin{equation}
\label{eq:prop:supspension preserves cat}
\begin{tikzcd}[column sep =0.3cm]
	1 & {\Hom_{\iPsh{\Theta}}([\textbf{a},n],[C,1])} & {\Hom_{\iPsh{\Theta}}(a_k,C)} & {\Hom_{\iPsh{\Theta}}([\textbf{a},n],[C,1])} \\
	{\{\epsilon\}} & {\Hom_{\Delta}([n],[1])} & {\{\alpha_k\}} & {\Hom_{\Delta}([n],[1])}
	\arrow[""{name=0, anchor=center, inner sep=0}, from=2-1, to=2-2]
	\arrow[from=1-1, to=1-2]
	\arrow[from=1-2, to=2-2]
	\arrow[from=1-1, to=2-1]
	\arrow[from=1-4, to=2-4]
	\arrow[""{name=1, anchor=center, inner sep=0}, from=2-3, to=2-4]
	\arrow[from=1-3, to=2-3]
	\arrow[from=1-3, to=1-4]
	\arrow["\lrcorner"{anchor=center, pos=0.125}, draw=none, from=1-1, to=0]
	\arrow["\lrcorner"{anchor=center, pos=0.125}, draw=none, from=1-3, to=1]
\end{tikzcd}
\end{equation}
which induces cartesian squares
\[\begin{tikzcd}[column sep =0.3cm]
	1 & {\Hom_{\iPsh{\Theta}}(\Sp_{[\textbf{a},n]},[C,1])} & {\Hom_{\iPsh{\Theta}}(\Sp_{a_k},C)} & {\Hom_{\iPsh{\Theta}}(\Sp_{[\textbf{a},n]},[C,1])} \\
	{\{\epsilon\}} & {\Hom_{\Delta}([n],[1])} & {\{\alpha_k\}} & {\Hom_{\Delta}([n],[1])}
	\arrow[""{name=0, anchor=center, inner sep=0}, from=2-1, to=2-2]
	\arrow[from=1-1, to=1-2]
	\arrow[from=1-2, to=2-2]
	\arrow[from=1-1, to=2-1]
	\arrow[from=1-4, to=2-4]
	\arrow[""{name=1, anchor=center, inner sep=0}, from=2-3, to=2-4]
	\arrow[from=1-3, to=2-3]
	\arrow[from=1-3, to=1-4]
	\arrow["\lrcorner"{anchor=center, pos=0.125}, draw=none, from=1-1, to=0]
	\arrow["\lrcorner"{anchor=center, pos=0.125}, draw=none, from=1-3, to=1]
\end{tikzcd}\]
This directly implies that $[C,1]$ is $\Wseg$-local.

Furthermore, for any integer $n>0$, the cartesian squares \eqref{eq:prop:supspension preserves cat} induces cartesian squares
\[\begin{tikzcd}[column sep =0.2cm]
	1 & {\Hom_{\iPsh{\Theta}}(\Sigma^nE^{eq},[C,1])} & {\Hom_{\iPsh{\Theta}}(\Sigma^{n-1}E^{eq},C)} & {\Hom_{\iPsh{\Theta}}(\Sigma^nE^{eq},[C,1])} \\
	{\{\epsilon\}} & {\Hom_{\Delta}([1],[1])} & {\{\alpha_k\}} & {\Hom_{\Delta}([1],[1])}
	\arrow[""{name=0, anchor=center, inner sep=0}, from=2-1, to=2-2]
	\arrow[from=1-1, to=1-2]
	\arrow[from=1-2, to=2-2]
	\arrow[from=1-1, to=2-1]
	\arrow[from=1-4, to=2-4]
	\arrow[""{name=1, anchor=center, inner sep=0}, from=2-3, to=2-4]
	\arrow[from=1-3, to=2-3]
	\arrow[from=1-3, to=1-4]
	\arrow["\lrcorner"{anchor=center, pos=0.125}, draw=none, from=1-1, to=0]
	\arrow["\lrcorner"{anchor=center, pos=0.125}, draw=none, from=1-3, to=1]
\end{tikzcd}\]
which implies that $[C,1]$  is local with respect to $\Sigma^nE^{eq}\to \Sigma^{n}1$.

Eventually, suppose given a diagram of shape
\begin{equation}
\label{eq:proof of sigma preserves omega cat 3}
\begin{tikzcd}
	{E^{eq}} & {[C,1]} \\
	{1}
	\arrow[from=1-1, to=2-1]
	\arrow[from=1-1, to=1-2]
\end{tikzcd}
\end{equation}
The canonical morphism $E^{eq}\to [C,1]\xrightarrow{\pi} [1]$ then factors through $0$ or $1$. As the two fibers of $\pi$ are trivial,  the diagram \eqref{eq:proof of sigma preserves omega cat 3} admits a unique lift, which concludes the proof.
\end{proof}

\p
\label{para:wiskering} As $[\uvar,1]$ sends $\W$ to a subset of $\M$, the functor $\hom_{\uvar,\uvar}(\uvar)$ preserves $\io$-categories. Combined with the last proposition, this implies that 
the adjunction \eqref{eq:suspesnion betweenpresheaves} restricts to an adjunction:
\begin{equation}
\label{eq:suspesnion between category}
\begin{tikzcd}
	{[\uvar,1]:\ocat} & {\ocat_{\bullet,\bullet}:\hom_{\uvar}(\uvar,\uvar)}
	\arrow[""{name=0, anchor=center, inner sep=0}, shift left=2, from=1-1, to=1-2]
	\arrow[""{name=1, anchor=center, inner sep=0}, shift left=2, from=1-2, to=1-1]
	\arrow["\dashv"{anchor=center, rotate=-90}, draw=none, from=0, to=1]
\end{tikzcd}
\end{equation}
The left adjoint is the \wcsnotionsym{suspension functor}{((d60@$[\uvar,1]$}{suspension}{for $\io$-categories}\ssym{(hom@$\hom_{\uvar}(\uvar,\uvar)$}{for $\io$-categories}.
\begin{prop}
\label{prop:hom of the suspension}
Let $C$ be an $\io$-categories. We have natural equivalences
$$\hom_{[C,1]}(0,1)\sim C~~~~\hom_{[C,1]}(0,0)\sim \hom_{[C,1]}(1,1) \sim 1~~~~\hom_{[C,1]}(1,0)\sim \emptyset.$$
\end{prop}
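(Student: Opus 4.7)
The plan is to reduce this proposition directly to Lemma \ref{lemma:hom of the suspension prequel}, which is the analogous statement at the level of $\infty$-presheaves on $\Theta$. The main point is that every operation appearing in the statement preserves the full subcategory $\ocat \hookrightarrow \iPsh{\Theta}$, so the four equivalences produced in the presheaf lemma already live between $\io$-categories and therefore transport along the fully faithful inclusion.

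Concretely, I would proceed as follows. First, observe that for $C$ an $\io$-category, the object $[C,1]$ computed in $\iPsh{\Theta}$ agrees with the suspension computed in $\ocat$: this is exactly the content of Proposition \ref{prop:supspension preserves cat}, which shows that $[C,1]$ is already $\W$-local, so no localization is needed. Next, by paragraph \ref{para:wiskering}, the right adjoint $\hom_{\uvar}(\uvar,\uvar)$ sends $\io$-categories to $\io$-categories, so for any two $0$-cells $a,b$, the mapping $\infty$-presheaf $\hom_{[C,1]}(a,b)$ is itself an $\io$-category, and its value in $\ocat$ is computed as its value in $\iPsh{\Theta}$.

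It remains to identify $1$ and $\emptyset$. The terminal presheaf $1$ is trivially $\W$-local, so it is the terminal $\io$-category; and the initial presheaf $\emptyset$ is $\W$-local because every hom set from a nonempty presheaf into $\emptyset$ is empty, hence every map in $\W$ is sent by $\Hom(\uvar,\emptyset)$ to either the identity of $\emptyset$ or of a singleton, which are equivalences. Therefore $\emptyset$ is also the initial $\io$-category.

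Putting these three observations together, the four morphisms
\[C \to \hom_{[C,1]}(0,1),\quad \hom_{[C,1]}(0,0) \to 1,\quad \hom_{[C,1]}(1,1) \to 1,\quad \emptyset \to \hom_{[C,1]}(1,0)\]
are morphisms in $\ocat$ which become equivalences in $\iPsh{\Theta}$ by Lemma \ref{lemma:hom of the suspension prequel}; since $\ocat \hookrightarrow \iPsh{\Theta}$ is fully faithful, they are already equivalences in $\ocat$. There is no real obstacle here beyond bookkeeping: the only point that might require a moment of thought is the $\W$-locality of $\emptyset$, but this is immediate from the fact that the generators of $\W$ have nonempty domains and codomains.
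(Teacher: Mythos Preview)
Your proposal is correct and follows the same approach as the paper: both reduce directly to Lemma \ref{lemma:hom of the suspension prequel}. The paper's proof is a single sentence (``This is a direct consequence of lemma \ref{lemma:hom of the suspension prequel}''), while you have spelled out the bookkeeping that justifies the reduction---namely that $[C,1]$, the $\hom$-objects, $1$, and $\emptyset$ are all already $\W$-local, so the presheaf-level equivalences descend to $\ocat$.
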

\begin{proof}
This is a direct consequence of lemma \ref{lemma:hom of the suspension prequel}.
\end{proof}

\p
Suppose given an $\io$-category $C$ and a $1$-cells $f:x'\to x$. 
As $C$ is an $\io$-category, for any globular sum $a$, the morphism
$$\Hom([1]\vee[a,1],C)\to \Hom([1], C)\times_{\Hom([0],C)}\Hom([a,1],C)$$
is an equivalence. 
This induces a morphism
$$\Hom(a,\hom_C(x,y))\to \Hom([1]\vee[a,1],(C,x',y))\to \Hom(a,\hom_{C}(x',y))$$	
where the two distinguished points of $[1]\vee[a,1]$ are the extremal ones, and where the left-hand morphism is the restriction of the inverse of the previous morphism. By the Yoneda lemma, this corresponds to a morphism
$$f_!:\hom_C(x',y)\to \hom_C(x,y).$$
Conversely, a $1$-cell $g:y\to y'$ induces a morphism
$$g_!:\hom_C(x,y)\to \hom_C(x,y').$$

\p \label{para: spetial colimits}
We denote by $\iota$ the inclusion of $\ocat$ into $\iPsh{\Theta}$.
A functor $F:I\to \ocat$ has a \snotion{special colimit}{for $\io$-categories} if the canonical morphism 
\begin{equation}
\label{eq:special colimit}
\colim_{i:I}\iota F(i)\to \iota(\colim_{i:I}F(i))
\end{equation}
is an equivalence of presheaves. 

Similarly, we say that a functor $\psi: I\to \Arr(\ocat)$ has a \textit{special colimit} if the canonical morphism 
$$\colim_{i:I}\iota \psi(i)\to \iota(\colim_{i:I}\psi(i))$$
is an equivalence in the arrow $\iun$-category of $\iPsh{\Theta}$.

\begin{example}
Let $C$ be an $\io$-category. The canonical diagram $\Theta_{/C}\to \ocat$ has a special colimit, given by $C$.
\end{example}
\begin{prop}
\label{prop:special colimit}
Let $F,G:I\to \ocat$ be two functors, and $\psi:F\to G$ a natural transformation. If $\psi$ is cartesian, and $G$ has a special colimit, then $\psi$ and $F$ have special colimits. 
\end{prop}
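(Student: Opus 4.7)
The plan is to compute the colimit of $\iota F$ inside the $\infty$-topos $\iPsh{\Theta}$ and exploit descent there. Set $\bar{F}':=\colim_i\iota F(i)$ and $\bar{G}':=\colim_i\iota G(i)$, both taken in $\iPsh{\Theta}$, and write $\bar{F}:=\colim_i F(i)$, $\bar{G}:=\colim_i G(i)$ for the colimits computed in $\ocat$. The hypothesis that $G$ has a special colimit reads $\bar{G}'\simeq\iota\bar{G}\in\ocat$. The goal is to show $\bar{F}'$ is $\W$-local, whence $\bar{F}'\simeq\iota\bar{F}$ and $F$ has a special colimit; the statement for $\psi$ is then automatic, since an equivalence in $\Arr(\iPsh{\Theta})$ is detected by simultaneous equivalences on sources and targets.

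Since $\iota$ preserves limits, $\iota\psi$ remains cartesian in $\iPsh{\Theta}$, and descent in this $\infty$-topos forces each naturality square
\[\begin{tikzcd}
\iota F(i) & \iota G(i) \\
\bar{F}' & \bar{G}'
\arrow[from=1-1, to=1-2]
\arrow[from=1-1, to=2-1]
\arrow[from=1-2, to=2-2]
\arrow[from=2-1, to=2-2]
\end{tikzcd}\]
to be cartesian, so that $\iota F(i)\simeq\iota G(i)\times_{\bar{G}'}\bar{F}'$. Because $\bar{G}'\to 1$ is in $R_{\W}$ and $R_{\W}$ is stable under composition (Proposition \ref{prop:cloture of L recap}), to conclude $\bar{F}'\in\ocat$ it suffices to show that $\bar{F}'\to\bar{G}'$ lies in $R_{\W}$.

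To verify this I would consider the $(L_{\W},R_{\W})$-factorization $\bar{F}'\to X\to\bar{G}'$. Composing the $R_{\W}$-part with $\bar{G}'\to 1$ places $X\to 1$ in $R_{\W}$, so $X$ is $\W$-local and identifies with $\iota\Fb_{\W}\bar{F}'$; the task reduces to proving that the first factor $\bar{F}'\to X$ is an equivalence. Pulling the entire factorization back along the cocone map $\iota G(i)\to\bar{G}'$, the middle term becomes $\iota G(i)\times_{\bar{G}'}X$, which is $\W$-local as a pullback of $\W$-local objects; the composite pulls back to $\iota F(i)\to\iota G(i)$, which is a map between $\W$-local objects and hence lies in $R_{\W}$ by right cancellation (any $X\to Y$ with $X,Y\in\ocat$ is in $R_{\W}$, because $X\to 1$ and $Y\to 1$ are, and $R_{\W}$ has the right cancellation property); and the right-hand factor pulls back to an $R_{\W}$-map by pullback-stability of $R_{\W}$. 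The induced map $\iota F(i)\to\iota G(i)\times_{\bar{G}'}X$ between two $\W$-local objects is therefore forced to be an equivalence, so $\bar{F}'\to X$ restricts to an equivalence on each slice over $\iota G(i)\to\bar{G}'$, and descent in $\iPsh{\Theta}_{/\bar{G}'}$ upgrades this to an equivalence $\bar{F}'\simeq X$ globally.

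I expect the main technical obstacle to be the final descent step: establishing that an equivalence in the slice $\iPsh{\Theta}_{/\bar{G}'}$ can be detected after base change along the effective-epimorphism cover $\coprod_i\iota G(i)\to\bar{G}'$ supplied by the colimit cocone. Once $\bar{F}'$ is known to be $\W$-local, both $F$ and $\psi$ have special colimits, since the arrow $\colim_i\iota\psi(i)$ is precisely $\bar{F}'\to\bar{G}'$, which under the equivalences just produced is identified with $\iota\bar{\psi}\colon\iota\bar{F}\to\iota\bar{G}$.
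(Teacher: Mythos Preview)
Your argument has a genuine gap at the sentence ``The induced map $\iota F(i)\to\iota G(i)\times_{\bar{G}'}X$ between two $\W$-local objects is therefore forced to be an equivalence.'' Nothing you have established forces this. You have correctly argued that this map lies in $R_{\W}$ (both objects are $\W$-local, and $R_{\W}$ enjoys right cancellation), but a map in $R_{\W}$ between $\W$-local objects is just an arbitrary map in $\ocat$; it need not be invertible. To conclude it is an equivalence you would need it also to lie in $L_{\W}$, and for that you would need the pullback of the $L_{\W}$-map $\bar F'\to X$ to remain in $L_{\W}$. But $L_{\W}$ is not stable under base change, so this step fails. Unwinding, what you are really asking is that $\Fb_{\W}$ commute with the base change $\iota G(i)\times_{\bar G'}(-)$, and this is exactly the statement you are trying to prove.

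The paper's proof avoids this circularity by checking the unique right lifting property of $\bar F'\to\bar G'$ against $\W$ \emph{directly}, exploiting a feature of $\W$ that your argument never touches: every morphism in $\W$ has a \emph{representable} codomain $b$. Since representables are completely compact in $\iPsh{\Theta}$, any map $b\to\bar G'=\colim_i\iota G(i)$ factors through some $\iota G(j)$. Descent (which you also invoke) then identifies $\iota F(j)$ with the pullback $\iota G(j)\times_{\bar G'}\bar F'$, so the original lifting square factors through the square for $\iota F(j)\to\iota G(j)$; the latter has unique lifts because both objects are $\W$-local. This compactness input is what makes the argument go through, and it is precisely what is missing from your approach.
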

\begin{proof}
We have to show that $F$ has a special colimit, it will directly imply that $\psi$ also has one. The morphism \eqref{eq:special colimit} is always in $\widehat{\W}$. To conclude, one then has to show that $\colim_{i:I}\iota \psi(i)$ is $\W$-local. To this extend, it is enough to demonstrate that the canonical morphism 
$$\colim_{i:I}\iota \psi(i): \colim_{i:I}\iota F(i)\to \colim_{i:I}\iota G(i)$$ 
has the unique right lifting property against $\W$. We then consider a square
\begin{equation}
\label{eq:proof special colimit}
\begin{tikzcd}
	a & {\colim_{i:I}\iota F(i)} \\
	b & { \colim_{i:I}\iota G(i)}
	\arrow["{\colim_{i:I}\iota \psi(i)}", from=1-2, to=2-2]
	\arrow[from=2-1, to=2-2]
	\arrow[from=1-1, to=2-1]
	\arrow[from=1-1, to=1-2]
\end{tikzcd}
\end{equation}
where $f\in W$. As the domain of $f$ is representable, there always exists $j:I$, such that the bottom horizontal morphism factors through $G(j)$. As $\psi$ is cartesian, the square \eqref{eq:proof special colimit} factors in two squares, where the right one is cartesian. 
\[\begin{tikzcd}
	a & {F(i)} & {\colim_{i:I}\iota F(i)} \\
	b & {G(i)} & { \colim_{i:I}\iota G(i)}
	\arrow["{\colim_{i:I}\iota \psi(i)}", from=1-3, to=2-3]
	\arrow[from=1-1, to=2-1]
	\arrow["{\psi(i)}", from=1-2, to=2-2]
	\arrow[from=1-2, to=1-3]
	\arrow[from=2-2, to=2-3]
	\arrow["\lrcorner"{anchor=center, pos=0.125}, draw=none, from=1-2, to=2-3]
	\arrow[from=1-1, to=1-2]
	\arrow[from=2-1, to=2-2]
\end{tikzcd}\]
Lifts in the square \eqref{eq:proof special colimit} are then equivalent to lifts in the left square, which exist and are unique as $F(i)\to G(i)$ has the unique right lifting property against $\W$.
\end{proof}

\begin{prop}
\label{prop:example of a special colimit}
For any integer $n$, and globular sums $a$ and $b$, the equalizer diagram 
\[\begin{tikzcd}
	{\coprod_{k+l=n-1}[a,k]\vee[a\times b,1]\vee[a,l]} & {\coprod_{k+l=n}[a,k]\vee[ b,1]\vee[a,l]}
	\arrow[shift left=2, from=1-1, to=1-2]
	\arrow[shift right=2, from=1-1, to=1-2]
\end{tikzcd}\]
where the top diagram is induced by $[a\times b,1]\to [a,1]\vee[b,1]$ and to bottom one by $[a\times b,1]\to [b,1]\vee[a,1]$,
has a special colimit, which is $[a,n]\times [b,1]$.
\end{prop}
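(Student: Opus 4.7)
The plan is to compute the colimit of the given parallel pair directly in $\iPsh{\Theta}$, identify it with the product presheaf $[a,n]\times[b,1]$, and then argue that this presheaf is already an $\io$-category so that the colimit is in fact special.

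Since colimits in $\iPsh{\Theta}$ are computed pointwise in $\igrd$, and each globular sum appearing in the diagram is a $0$-truncated representable, evaluating at any globular sum $c$ reduces the problem to a set-theoretic coequalizer of hom-sets. The first main step is to identify this coequalizer with $\Hom_\Theta(c,[a,n])\times\Hom_\Theta(c,[b,1])$. The underlying idea is that a pair of maps $(f,g):c\to[a,n]\times[b,1]$ is equivalent to the data of a \emph{cut} in the sequence of objects of $c$, located where the projection $g$ crosses from the fiber $\{0\}$ to the fiber $\{1\}$ of $[b,1]$, together with the resulting factorization through the component $[a,k]\vee[b,1]\vee[a,l]$ at that cut. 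The relation imposed by the left-hand term of the parallel pair accounts precisely for the ambiguity when an object of $c$ maps diagonally and the cut could be placed at two adjacent positions; the two parallel maps $[a\times b,1]\rightrightarrows[a,1]\vee[b,1]$ and $[a\times b,1]\rightrightarrows[b,1]\vee[a,1]$ encode these two factorizations.

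Granting this identification, the presheaf colimit is $[a,n]\times[b,1]$. Since $[a,n]$ and $[b,1]$ are $\W$-local (being representables on $\Theta$) and $\ocat$ is closed under limits in $\iPsh{\Theta}$ (the inclusion being a right adjoint), this presheaf is itself an $\io$-category. Theorem~\ref{theo:adjunction between presheaves and local presheaves} then identifies the colimit in $\ocat$ with $\Fb_\W$ applied to the presheaf colimit, which here acts trivially; the colimit is therefore special and equal to $[a,n]\times[b,1]$.

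The main obstacle is the combinatorial identification at the level of hom-sets. An alternative that I would likely adopt is to apply Proposition~\ref{prop:special colimit} directly to the cartesian natural transformation $\psi:F\to G$ obtained by projecting $a,b\to[0]$: this reduces specialness of our diagram $F$ to specialness of $G$, which is the classical prism decomposition of $[n]\times[1]$ in $\ncat{1}$, an elegant Reedy colimit in $\Psh{\Theta}$ transferred to $\iPsh{\Theta}$ by Lemma~\ref{lemma:colimit computed in set presheaves}; the comparison with $[a,n]\times[b,1]$ is then given by the evident $(k,l)$-indexed cone.
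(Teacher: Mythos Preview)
Your first approach has a genuine gap at the sentence ``evaluating at any globular sum $c$ reduces the problem to a set-theoretic coequalizer of hom-sets.'' That the terms of the diagram are $0$-truncated does not force their coequalizer in $\igrd$ to be $0$-truncated: already the coequalizer of $id,id:\{*\}\rightrightarrows\{*\}$ in $\igrd$ is $S^1$. Some argument is needed to show that the colimit in $\iPsh{\Theta}$ agrees with the one in $\Psh{\Theta}$, and this is precisely what the paper supplies by invoking Lemma~\ref{lemma:colimit computed in set presheaves}. Once the coproducts are expanded, the parallel pair is a finite zig-zag
\[
[a,0]\vee[b,1]\vee[a,n]\ \leftarrow\ [a,0]\vee[a\times b,1]\vee[a,n{-}1]\ \hookrightarrow\ [a,1]\vee[b,1]\vee[a,n{-}1]\ \leftarrow\ \cdots
\]
in which the rightward maps, induced by $[a\times b,1]\hookrightarrow[a,1]\vee[b,1]$, are monomorphisms; item~(2) of that lemma then applies. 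After this reduction, your ``cut'' description is exactly the combinatorial identification in $\Psh{\Theta}$ that the paper leaves to the reader, and the conclusion via $\W$-locality of $[a,n]\times[b,1]$ is as you say.

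Your alternative via Proposition~\ref{prop:special colimit} is a genuinely different route and does establish specialness: the transformation to the $a=b=[0]$ diagram is cartesian, and the base diagram is the prism decomposition of $[n]\times[1]$, itself a zig-zag of monomorphisms handled by Lemma~\ref{lemma:colimit computed in set presheaves}. Note however that Proposition~\ref{prop:special colimit} only tells you that $F$ has \emph{some} special colimit; the ``evident cone'' to $[a,n]\times[b,1]$ still has to be shown to be an equivalence. The clean way to finish is by descent in the $\infty$-topos $\iPsh{\Theta}$: since $\psi:F\to G$ is cartesian with $\colim G=[n]\times[1]$, the colimit of $F$ is the object over $[n]\times[1]$ whose pullbacks along the components of $G$ recover $F$, and one checks directly that $[a,n]\times[b,1]\to[n]\times[1]$ has exactly these pullbacks. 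This bypasses the pointwise hom-set combinatorics entirely, whereas the paper keeps the computation in $\Psh{\Theta}$ and leaves that combinatorics as an exercise.
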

\begin{proof}
The lemma \ref{lemma:colimit computed in set presheaves} implies that the colimit of the previous diagram, computed in $\iPsh{\Theta}$ is strict. It is then enough to show that this colimit, computed in $\Psh{\Theta}$, is equivalent to $[a,n]\times [b,1]$. As this last object is $\W$-local, this will concludes the proof. The remaining combinatorial exercise is left to the reader. 
\end{proof}

\begin{prop}
\label{prop:example of a special colimit 2}
Any sequence of $\io$-categories has a special colimit. 
\end{prop}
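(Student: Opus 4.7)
The plan is to show that the colimit of $\iota F$ computed directly in $\iPsh{\Theta}$ is already $\W$-local; by the universal property of the localization this forces it to coincide with $\iota(\colim F)$, which is precisely the condition for $F$ to have a special colimit.

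Concretely, let $F: \lambda \to \ocat$ be a sequence indexed by some limit ordinal $\lambda$, and set $X := \colim_i \iota F(i)$ computed in $\iPsh{\Theta}$. Pick any $f: a \to b$ in $\W$. Inspection of the generators listed in paragraph \ref{para:definition of W} shows that both $a$ and $b$ are built from representables of $\Theta$ by finite colimits, so the functors $\Hom(a,-)$ and $\Hom(b,-) : \iPsh{\Theta} \to \igrd$ preserve filtered colimits. One therefore has a commutative square
\[ \Hom(b, X) \simeq \colim_i \Hom(b, F(i)) \longrightarrow \colim_i \Hom(a, F(i)) \simeq \Hom(a, X). \]
Each map $\Hom(b, F(i)) \to \Hom(a, F(i))$ is an equivalence since $F(i)$ is $\W$-local, and filtered colimits of equivalences of $\infty$-groupoids are equivalences; hence the composite is an equivalence. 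As $f$ was arbitrary, $X$ is $\W$-local, so $X\in\ocat$, and by uniqueness of colimits this identifies $X$ with $\iota(\colim_i F(i))$.

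The only nontrivial step is the compactness check: that every source and target of a generator of $\W$ is $\omega$-compact in $\iPsh{\Theta}$. This reduces to a combinatorial verification on the list in paragraph \ref{para:definition of W}, using that Segal spines and the small models of the equivalence witnesses $\Sigma^n E^{eq}$ are all finite colimits of representables. Once this is granted, the remainder of the argument is the formal interchange of filtered colimits with finite limits in $\igrd$.
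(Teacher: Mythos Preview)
Your argument is correct and essentially identical to the paper's: the paper simply observes that the domains and codomains of morphisms in $\W$ are $\omega$-small, so the filtered colimit computed in $\iPsh{\Theta}$ is already $\W$-local and hence coincides with the colimit in $\ocat$. You spell out the same compactness-plus-filtered-colimit mechanism in more detail; the only cosmetic difference is that the paper separately dismisses the finite case (where the colimit is the terminal value of the sequence), whereas you tacitly restrict to limit ordinals.
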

\begin{proof}
Suppose given such sequence.
If the sequence is finite, this is obviously true. Suppose now that the sequence is non finite. As codomains and domains of morphism of $\W$ are $\omega$-small, the colimit of the sequence, computed in $\iPsh{\Theta}$ is $\W$-local, which concludes the proof.
\end{proof}

\begin{lemma}
\label{lemma:[ ,1] preserves spcial limits}
The functor $[\uvar,1]:\ocat\to \ocat_{\bullet,\bullet}$
preserves special colimits.
\end{lemma}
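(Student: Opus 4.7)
The plan is to reduce the statement to the fact that suspension commutes with the inclusion $\iota$ and is a left adjoint at both levels. Write $[\uvar,1]^{\ocat}$ for the restriction to $\io$-categories and $[\uvar,1]$ for the presheaf-level functor. Since $[\uvar,1]$ preserves $\io$-categories (proposition \ref{prop:supspension preserves cat}), the square
\[
\iota \circ [\uvar,1]^{\ocat} \;\simeq\; [\uvar,1]\circ \iota
\]
commutes. Moreover, the presheaf-level suspension is a left adjoint by \eqref{eq:suspesnion betweenpresheaves}, hence preserves all colimits, while the restricted suspension is a left adjoint by \eqref{eq:suspesnion between category}, hence preserves colimits computed in $\ocat_{\bullet,\bullet}$.

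Given a functor $F:I\to \ocat$ with special colimit $C$, the goal is to show that $\iota\, \colim_i [F(i),1]^{\ocat}\to \colim_i \iota [F(i),1]^{\ocat}$ is an equivalence in $\iPsh{\Theta}_{\bullet,\bullet}$. The left adjoint $[\uvar,1]^{\ocat}$ sends the colimit to $[C,1]^{\ocat}$, so the left-hand side is $\iota [C,1]^{\ocat}\simeq [\iota C,1]$ by the commutative square. Using the hypothesis that $F$ has a special colimit, $\iota C\simeq \colim_i \iota F(i)$, hence
\[
[\iota C,1]\;\simeq\; [\colim_i \iota F(i),\,1]\;\simeq\;\colim_i [\iota F(i),1]\;\simeq\;\colim_i \iota[F(i),1]^{\ocat},
\]
where the middle step uses that the presheaf-level $[\uvar,1]$ preserves colimits and the last step reuses the commutative square. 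This chain of equivalences is exactly the required condition. Analogously, if $\psi:F\to G$ is a natural transformation of functors $I\to \ocat$ that has a special colimit, the same computation carried out in the arrow category shows that $[\psi,1]^{\ocat}$ also does.

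I do not anticipate any serious obstacle: the argument is essentially a two-line chase, and the two adjunctions together with proposition \ref{prop:supspension preserves cat} do all the work. The only minor point to check is that the adjunction \eqref{eq:suspesnion between category} really does compute colimits in $\ocat_{\bullet,\bullet}$ by first suspending and then (if necessary) localizing; since the above computation exhibits $[C,1]^{\ocat}$ as already being the $\infty$-presheaf $\colim_i \iota[F(i),1]^{\ocat}$ after being shown $\M$-local through the identification with $[\iota C,1]$, no further localization is required, and the equivalence is the identity.
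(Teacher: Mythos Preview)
Your proof is correct and takes the same approach as the paper. The paper's own proof is the single line ``This is a direct consequence of proposition \ref{prop:supspension preserves cat},'' and what you have written is precisely the unpacking of that sentence: the presheaf-level suspension is a left adjoint, the $\io$-categorical suspension is a left adjoint, and proposition \ref{prop:supspension preserves cat} makes the square $\iota\circ[\uvar,1]^{\ocat}\simeq[\uvar,1]\circ\iota$ commute, after which the chain of equivalences is forced. One cosmetic remark: the canonical comparison map in the definition of special colimit goes $\colim_i\iota F(i)\to\iota(\colim_i F(i))$, not the direction you wrote, but since you exhibit both sides as equivalent this is immaterial.
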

\begin{proof}
This is a direct consequence of proposition \ref{prop:supspension preserves cat}.
\end{proof}

\begin{lemma}
\label{lemma:[ ,1] vee [ ,1]preserves spcial limits}
We denote by $$
\begin{array}{cl}
 & [\uvar,1]\vee[1]:\ocat\to \ocat_{[0]\amalg[1]/}\\
 \mbox{(resp.} & [1]\vee[\uvar,1]:\ocat\to \ocat_{[1]\amalg[0]/})
\end{array}$$ the colimit preserving functor that sends an element $a$ of $\Theta$ onto the globular sum $[a,1]\vee[1]$ (resp. $[1]\vee[a,1]$).

The functors $[\uvar,1]\vee[\uvar,1]$  and $[1]\vee[\uvar,1]$ preserve special colimits.
\end{lemma}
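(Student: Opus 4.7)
The plan is to reduce by symmetry to proving that $G := [\uvar,1]\vee[1]$ preserves special colimits, and then to mimic the strategy of Lemma \ref{lemma:[ ,1] preserves spcial limits}. First I would establish an analogue of Proposition \ref{prop:supspension preserves cat}: the endofunctor $G:\iPsh{\Theta}\to \iPsh{\Theta}$ preserves $\io$-categories, i.e.\ for every $\io$-category $C$ the pushout $[C,1]\coprod_{[0]}[1]$ computed in $\iPsh{\Theta}$ is already $\W$-local.

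To prove this claim, I would imitate the Segal-type calculation appearing in the proof of Proposition \ref{prop:supspension preserves cat}. Any morphism $[\mathbf{a},n]\to [C,1]\vee[1]$ projects, via the canonical map $[C,1]\vee[1]\to [2]$ collapsing $C$ to $[1]$, to a morphism $[n]\to [2]$, and this projection partitions the mapping space into summands controlled by $\Hom(-,[C,1])$, by $\Hom(-,[1])$, or by the wedge formula at the shared $0$-cell. Using that $[C,1]$ is $\Wseg$-local (Proposition \ref{prop:supspension preserves cat}) and the obvious locality of $[1]$, one obtains the Segal pullback squares for $[C,1]\vee[1]$; locality against the equivalence extensions $\Sigma^n E^{eq}\to \Sigma^n 1$ follows from the same contractibility-of-fibers argument as in loc.\ cit., since the morphism $[C,1]\vee[1]\to [2]$ has contractible fibers over each non-degenerate $1$-cell.

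Once the claim is available, preservation of special colimits is formal. Given $F:I\to \ocat$ with special colimit $L$, one has
\[\iota\,\colim_I^{\ocat} G(F(i))=\iota\, G(L)=G(\iota L)=G(\colim_I^{\iPsh{\Theta}}\iota F(i))=\colim_I^{\iPsh{\Theta}} G(\iota F(i))=\colim_I^{\iPsh{\Theta}}\iota\, G(F(i)),\]
using in turn that $G$ preserves colimits in $\ocat$, the claim, the special colimit hypothesis for $F$, the fact that $G:\iPsh{\Theta}\to \iPsh{\Theta}$ preserves colimits (as a composition of colimit-preserving operations), and the claim once more. The case of $[1]\vee[\uvar,1]$ is handled identically.

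The main obstacle will be the claim itself. Although the strategy mirrors that of Proposition \ref{prop:supspension preserves cat}, the bookkeeping is heavier: one must simultaneously treat the three possible images of $[n]\to [2]$ (landing in $\{0,1\}$, in $\{1,2\}$, or crossing the wedge point) and verify that the resulting Segal pullback squares assemble compatibly across the three cases.
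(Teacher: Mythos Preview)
Your approach is exactly the one the paper takes: it states that one establishes a result analogous to Proposition \ref{prop:supspension preserves cat} (namely that $[\uvar,1]\vee[1]$ and $[1]\vee[\uvar,1]$, as endofunctors of $\iPsh{\Theta}$, preserve $\io$-categories), after which preservation of special colimits follows formally just as in Lemma \ref{lemma:[ ,1] preserves spcial limits}. The paper in fact omits the details of the analogue, so your sketch of the Segal computation via the projection $[C,1]\vee[1]\to[2]$ provides more than the paper does; the chain of identifications you write for the formal step is correct.
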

\begin{proof}
To prove this, we establish a result analogous to the one given in the proposition \ref{prop:supspension preserves cat}. We omit its proof because it is long but essentially identical.
\end{proof}

\begin{prop}
\label{prop:example of a special colimit3}
Suppose given two cartesian squares
\[\begin{tikzcd}
	{ B} & C & D \\
	{\{0\}} & {[1]} & {\{1\}}
	\arrow[from=1-2, to=2-2]
	\arrow[from=1-1, to=1-2]
	\arrow[from=1-3, to=2-3]
	\arrow[from=1-1, to=2-1]
	\arrow[from=1-3, to=1-2]
	\arrow[from=2-1, to=2-2]
	\arrow[from=2-3, to=2-2]
	\arrow["\lrcorner"{anchor=center, pos=0.125}, draw=none, from=1-1, to=2-2]
	\arrow["\lrcorner"{anchor=center, pos=0.125, rotate=-90}, draw=none, from=1-3, to=2-2]
\end{tikzcd}\]
The diagram 
\[\begin{tikzcd}
	{[1]\vee[B,1]} & {[ B,1]} & {[C,1]} & {[D,1]} & {[D,1]\vee[1]}
	\arrow["\triangledown", from=1-4, to=1-5]
	\arrow["\triangledown"', from=1-2, to=1-1]
	\arrow[from=1-2, to=1-3]
	\arrow[from=1-4, to=1-3]
\end{tikzcd}\]
has a special colimit.
\end{prop}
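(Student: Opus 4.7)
The plan is to mimic the strategy of Proposition \ref{prop:example of a special colimit} in two steps: first reduce the $\iPsh{\Theta}$-colimit to a set-valued colimit via Lemma \ref{lemma:colimit computed in set presheaves}, then identify this set-valued colimit with an explicit $\W$-local presheaf.

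For the first step, the diagram has the shape of a zig-zag treated by part (2) of Lemma \ref{lemma:colimit computed in set presheaves}, with sources $[B,1]$ and $[D,1]$ and sinks $[1]\vee[B,1]$, $[C,1]$, $[D,1]\vee[1]$. To apply the lemma it suffices to exhibit, from each source, one arrow that is a monomorphism. Since $B\to C$ and $D\to C$ are pullbacks of the monomorphisms $\{0\},\{1\}\hookrightarrow[1]$ along $C\to [1]$, they are monomorphisms, and because $[\uvar,1]$ is a left adjoint whose right adjoint preserves colimits (in particular, initial objects), it sends monomorphisms to monomorphisms; hence $[B,1]\hookrightarrow[C,1]$ and $[D,1]\hookrightarrow[C,1]$ are monomorphisms. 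Lemma \ref{lemma:colimit computed in set presheaves}(2) then implies that the canonical comparison
$$\colim_{i\in I}\iota F(i)\to \iota\bigl(\colim_{i\in I} F(i)\bigr)$$
computed in $\Psh{\Theta}$ agrees with the colimit in $\iPsh{\Theta}$, so it is enough to check that this set-valued colimit is already $\W$-local.

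For the second step, the set-valued colimit $X$ can be described concretely: a cell of $X$ over $a\in\Theta$ is either a cell of $[C,1]$, or a cell of $[1]\vee[B,1]$ not coming from $[B,1]$, or a cell of $[D,1]\vee[1]$ not coming from $[D,1]$, with the required identifications on boundaries. The geometric picture is that $X$ adjoins an extra $1$-cell on each side of $[C,1]$ while keeping the $0$- and $1$-boundaries of $C$ (namely $B$ and $D$) as the data carried at the two middle $0$-cells. One then verifies by direct inspection on globular sums that $X$ is $\Wseg$-local, local with respect to the $\Sigma^n E^{eq}\to \Sigma^n 1$ for $n\geq 1$, and that any morphism $E^{eq}\to X$ factors through a contractible fiber — essentially the same argument as in Proposition \ref{prop:supspension preserves cat} applied piecewise.

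The main obstacle is this combinatorial identification in Step 2: checking that the explicit set-valued pushout is $\W$-local requires unwinding the definition of $\W$ on this five-term diagram, and this is the analogue of the combinatorial exercise that was left to the reader in Proposition \ref{prop:example of a special colimit}. The verification is tedious but routine once one has reduced to the strict computation, which is why the first step — ensuring strictness of the colimit — is the technically crucial reduction.
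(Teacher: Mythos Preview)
Your argument has a genuine gap at Step~1. Lemma~\ref{lemma:colimit computed in set presheaves} concerns the inclusion $\iota:\Psh{\Theta}\to\iPsh{\Theta}$ of \emph{set-valued} presheaves into $\infty$-presheaves, and says that $\iota$ preserves certain colimits. For it to apply, the objects $[B,1]$, $[C,1]$, $[D,1]$, $[1]\vee[B,1]$, $[D,1]\vee[1]$ would all have to lie in $\Psh{\Theta}$, i.e.\ be strict. But the proposition places no such hypothesis on $B$, $C$, $D$: they are arbitrary $\io$-categories, so the objects of the diagram are genuine $\infty$-presheaves and the lemma simply does not apply. Your Step~2, which speaks of ``cells'' set-theoretically and a ``set-valued colimit'', inherits the same unwarranted strictness assumption. (As a side remark, the justification that $[\uvar,1]$ preserves monomorphisms via ``its right adjoint preserves colimits'' is not a valid inference either, though the conclusion is recoverable by other means.)

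The paper's proof circumvents this by a different mechanism. It first treats the single base case $B=\{0\}$, $C=[1]$, $D=\{1\}$, where all five objects are globular sums and your strategy \emph{does} work: the colimit in $\iPsh{\Theta}$ is strict by Lemma~\ref{lemma:colimit computed in set presheaves}, and one checks directly it is $\W$-local. For general $B,C,D$, the paper builds a natural transformation from the given five-term diagram to this base diagram and shows it is \emph{cartesian} termwise, using that $[\uvar,1]$, $[\uvar,1]\vee[1]$ and $[1]\vee[\uvar,1]$ preserve special colimits (Lemmas~\ref{lemma:[ ,1] preserves spcial limits} and~\ref{lemma:[ ,1] vee [ ,1]preserves spcial limits}) together with local cartesian closedness of $\iPsh{\Theta}$. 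Proposition~\ref{prop:special colimit} then transfers the special-colimit property from the base case to the general one. The key idea you are missing is precisely this reduction via a cartesian transformation to a strict base case, rather than attempting to treat the general diagram as if it were already strict.
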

\begin{proof}
Remark firsts that the colimit, computed in $\iPsh{\Theta}$, of the diagram
\[\begin{tikzcd}
	{[1]\vee[1]} & {[1]} & {[[1],1]} & {[1]} & {[1]\vee[1]}
	\arrow["\triangledown"', from=1-2, to=1-1]
	\arrow["\triangledown", from=1-4, to=1-5]
	\arrow["{[\{0\},1]}", from=1-2, to=1-3]
	\arrow["{[\{1\},1]}"', from=1-4, to=1-3]
\end{tikzcd}\]
is strict. We leave it to the reader to check that the previous diagram has a special colimit. 

Remark now that $\Theta$ is stable by pullback and $[\uvar,1]$ preserves cartesian squares in $\Theta$. 
The lemma \ref{lemma:[ ,1] preserves spcial limits} states that $[\uvar,1]$ preserves  special colimit, and as $\iPsh{\Theta}$ is locally cartesian closed, pullbacks also preserve them.
As every $\io$-category is a special colimit of representables, this implies that the squares
\[\begin{tikzcd}
	{[B,1]} & {[C,1]} & {[D,1]} \\
	{[1]} & {[[1],1]} & {[1]}
	\arrow[from=1-1, to=1-2]
	\arrow[from=1-3, to=1-2]
	\arrow["{[\{0\},1]}"', from=2-1, to=2-2]
	\arrow["{[\{1\},1]}", from=2-3, to=2-2]
	\arrow[from=1-1, to=2-1]
	\arrow[from=1-2, to=2-2]
	\arrow["\lrcorner"{anchor=center, pos=0.125}, draw=none, from=1-1, to=2-2]
	\arrow[from=1-3, to=2-3]
	\arrow["\lrcorner"{anchor=center, pos=0.125, rotate=-90}, draw=none, from=1-3, to=2-2]
\end{tikzcd}\]
are cartesian.
Furthermore, for any globular sum $b$, we have cartesian squares
\[\begin{tikzcd}
	{[b,1]} & {[b,1]\vee[1]} & {[b,1]} & {[1]\vee[b,1]} \\
	{[1]} & {[1]\vee[1]} & {[1]} & {[1]\vee[1]}
	\arrow["\triangledown"', from=2-3, to=2-4]
	\arrow[from=1-4, to=2-4]
	\arrow[from=1-3, to=2-3]
	\arrow[from=1-3, to=1-4]
	\arrow[from=1-2, to=2-2]
	\arrow["\lrcorner"{anchor=center, pos=0.125, rotate=-90}, draw=none, from=1-4, to=2-3]
	\arrow["\triangledown"', from=2-1, to=2-2]
	\arrow[from=1-1, to=2-1]
	\arrow["\lrcorner"{anchor=center, pos=0.125, rotate=-90}, draw=none, from=1-2, to=2-1]
	\arrow[from=1-1, to=1-2]
\end{tikzcd}\]
According to lemma \ref{lemma:[ ,1] vee [ ,1]preserves spcial limits}, $[\uvar,1]\vee[1]$ and $[1]\vee[\uvar,1]$ preserve special colimits. As every $\io$-category is a colimit of representables, this implies that the squares
\[\begin{tikzcd}
	{[B,1]} & {[1]\vee[B,1]} & {[D,1]} & {[D,1]\vee[1]} \\
	{[1]} & {[1]\vee[1]} & {[1]} & {[1]\vee[1]}
	\arrow[from=1-3, to=1-4]
	\arrow[from=1-1, to=1-2]
	\arrow["\triangledown"', from=2-1, to=2-2]
	\arrow["\triangledown"', from=2-3, to=2-4]
	\arrow[from=1-4, to=2-4]
	\arrow[from=1-3, to=2-3]
	\arrow[from=1-1, to=2-1]
	\arrow[from=1-2, to=2-2]
	\arrow["\lrcorner"{anchor=center, pos=0.125}, draw=none, from=1-1, to=2-2]
	\arrow["\lrcorner"{anchor=center, pos=0.125}, draw=none, from=1-3, to=2-4]
\end{tikzcd}\]
are cartesian.
The result then follows from proposition \ref{prop:special colimit}.
\end{proof}
\begin{prop}
\label{prop:example of a special colimit4}
Suppose given a cartesian square
\[\begin{tikzcd}
	{ B} & C \\
	{\{0\}} & {[1]}
	\arrow[from=1-2, to=2-2]
	\arrow[from=1-1, to=1-2]
	\arrow[from=1-1, to=2-1]
	\arrow[from=2-1, to=2-2]
	\arrow["\lrcorner"{anchor=center, pos=0.125}, draw=none, from=1-1, to=2-2]
\end{tikzcd}\]
The diagram 
\[\begin{tikzcd}
	{[1]\vee[B,1]} & {[ B,1]} & {[C,1]}
	\arrow["\triangledown"', from=1-2, to=1-1]
	\arrow[from=1-2, to=1-3]
\end{tikzcd}\]
has a special colimit.
\end{prop}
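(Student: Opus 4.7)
The plan is to mimic the proof of Proposition \ref{prop:example of a special colimit3}, keeping only the left-hand half of the diagram. The pattern is the following: identify a ``universal'' base diagram whose special colimit is established directly by strictness, then realise our diagram as the pullback of that base diagram along a cartesian natural transformation, and conclude via Proposition \ref{prop:special colimit}.

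First I would treat the base case $B = C = [0]$, i.e.\ check that
\[\begin{tikzcd}
	{[1]\vee[1]} & {[1]} & {[[1],1]}
	\arrow["\triangledown"', from=1-2, to=1-1]
	\arrow["{[\{0\},1]}", from=1-2, to=1-3]
\end{tikzcd}\]
has a special colimit. By Lemma \ref{lemma:colimit computed in set presheaves} the colimit in $\iPsh{\Theta}$ agrees with the one in $\Psh{\Theta}$, and a direct combinatorial inspection (left to the reader as in the previous proposition) shows that the resulting strict presheaf is already $\W$-local.

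Next I would verify that the two squares
\[\begin{tikzcd}
	{[B,1]} & {[C,1]} & {[B,1]} & {[1]\vee[B,1]} \\
	{[1]} & {[[1],1]} & {[1]} & {[1]\vee[1]}
	\arrow[from=1-1, to=1-2]
	\arrow[from=1-1, to=2-1]
	\arrow[from=1-2, to=2-2]
	\arrow["{[\{0\},1]}"', from=2-1, to=2-2]
	\arrow[from=1-3, to=1-4]
	\arrow[from=1-3, to=2-3]
	\arrow[from=1-4, to=2-4]
	\arrow["\triangledown"', from=2-3, to=2-4]
	\arrow["\lrcorner"{anchor=center, pos=0.125}, draw=none, from=1-1, to=2-2]
	\arrow["\lrcorner"{anchor=center, pos=0.125}, draw=none, from=1-3, to=2-4]
\end{tikzcd}\]
are cartesian. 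For the left square, $\Theta$ is stable under pullback and $[\uvar,1]$ preserves cartesian squares on representables; combined with Lemma \ref{lemma:[ ,1] preserves spcial limits} and the local Cartesian closedness of $\iPsh{\Theta}$, together with the fact that every $\io$-category is a special colimit of representables, this implies cartesianness exactly as in the proof of \ref{prop:example of a special colimit3}. The right square follows the same way using Lemma \ref{lemma:[ ,1] vee [ ,1]preserves spcial limits} in place of \ref{lemma:[ ,1] preserves spcial limits}, starting from the evidently cartesian square relating $[b,1]$ and $[1]\vee[b,1]$ for $b \in \Theta$.

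These two cartesian squares assemble into a cartesian natural transformation from the diagram
\[\begin{tikzcd}
	{[1]\vee[B,1]} & {[B,1]} & {[C,1]}
	\arrow["\triangledown"', from=1-2, to=1-1]
	\arrow[from=1-2, to=1-3]
\end{tikzcd}\]
to the base diagram above. Since the base diagram has a special colimit by the first step, Proposition \ref{prop:special colimit} applies and yields the claim.

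The only step requiring any real care is the verification that the left square is cartesian, as it relies on the special-colimit presentation of $\io$-categories. However, this argument is literally the one used in Proposition \ref{prop:example of a special colimit3}, so no new ideas are needed; the present statement is genuinely a special case of the techniques already deployed there.
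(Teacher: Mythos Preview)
Your proposal is correct and matches the paper's own approach exactly: the paper simply states that the proof is similar to that of Proposition~\ref{prop:example of a special colimit3}, and what you have written is precisely that truncated argument, keeping the left half of the base diagram and applying Proposition~\ref{prop:special colimit} to the resulting cartesian transformation.
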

\begin{proof}
The proof is similar to the previous one. 
\end{proof}

\p  We have an adjunction 
\begin{equation}
\label{eq:underived adjunction case n}
\begin{tikzcd}
	{ i_!:\iPsh{\Delta[\Theta_{n-1}]}} & {\iPsh{\Theta_n}:i^*}
	\arrow[shift left=2, from=1-1, to=1-2]
	\arrow[shift left=2, from=1-2, to=1-1]
\end{tikzcd}
\end{equation}
where the left adjoint is the left Kan extension of the functor $\Delta[\Theta_{n-1}]\xrightarrow{i} \Theta_{n}\to \iPsh{\Theta_{n}}$. We recall that the sets of morphisms $\W_n$ and $\M_n$ are respectively defined in paragraphs \ref{para:definition of W} and \ref{para:defi of delta theta}.
Remark that there is an obvious inclusion $i_!(\M_n)\subset \W_n$. The previous adjunction then induced a derived adjunction
\begin{equation}
\label{eq:derived adjunction case n}
\begin{tikzcd}
	{\Lb i_!:\Psh{\Delta[\Theta_{n-1}]}_{\M}} & {\Psh{\Theta_{n}}_{\W}:\Rb i^*}
	\arrow[shift left=2, from=1-1, to=1-2]
	\arrow[shift left=2, from=1-2, to=1-1]
\end{tikzcd}
\end{equation}

\begin{prop}
\label{prop:infini changing theta n}
The unit and counit of the adjunction \eqref{eq:underived adjunction case n} are respectively in $\widehat{\M}_n$ and $\widehat{\W}_n$. As a consequence, the adjunction \eqref{eq:derived adjunction case n} is an adjoint equivalence.
\end{prop}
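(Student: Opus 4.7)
The plan is to mimic, essentially verbatim, the proof of Proposition \ref{prop:infini changing theta}, with everything lowered to the $n$-truncated setting. Concretely, I would denote by $\iota:\Psh{\Theta_n}\to \iPsh{\Theta_n}$ and $\iota:\Psh{\Delta[\Theta_{n-1}]}\to \iPsh{\Delta[\Theta_{n-1}]}$ the canonical inclusions, and then use Lemma \ref{lemma:colimit computed in set presheaves} to control how $\iota$ interacts with colimits. That lemma tells us that $\iota$ preserves transfinite compositions, pushouts along monomorphisms, and more generally colimits of Reedy cofibrant diagrams indexed by elegant Reedy categories; combined with the definition of the smallest precocomplete class (paragraph \ref{para:precomplet}), this yields inclusions
\[
\iota(\overline{\W}_n)\subset \widehat{\W}_n\qquad\text{and}\qquad \iota(\overline{\M}_n)\subset \widehat{\M}_n.
\]

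Next, I would invoke the $n$-truncated version of theorem \ref{theo:unit and counit are in W}, which states that in the strict (set-valued) setting the unit and counit of the adjunction $i_!\dashv i^*$ between $\Psh{\Delta[\Theta_{n-1}]}$ and $\Psh{\Theta_n}$ are respectively in $\overline{\M}_n$ and $\overline{\W}_n$. Applying $\iota$ and using the two inclusions above immediately gives that the unit and counit of the underived adjunction \eqref{eq:underived adjunction case n} are pointwise in $\widehat{\M}_n$ and $\widehat{\W}_n$ respectively.

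For the second assertion, recall from Corollary \ref{cor:derived colimit preserving functor} (applied to both localizations) that the derived functors $\Lb i_!$ and $\Rb i^*$ of \eqref{eq:derived adjunction case n} are obtained by post-composing with the Bousfield-type reflections $\Fb_{\M_n}$ and $\Fb_{\W_n}$. Since these reflections invert precisely the morphisms in $\widehat{\M}_n$ (resp.\ $\widehat{\W}_n$), the unit $\mathrm{id}\to i^*i_!$ and counit $i_!i^*\to \mathrm{id}$ of the underived adjunction descend to equivalences in the localized categories. This is exactly the statement that the derived adjunction is an adjoint equivalence.

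The only genuine content therefore sits in the reference to theorem \ref{theo:unit and counit are in W} in its $n$-truncated incarnation; the main potential obstacle is checking that the relevant diagrams witnessing the unit and counit as iterated pushouts/transfinite compositions of generators of $\M_n$ and $\W_n$ are either monomorphism pushouts or Reedy cofibrant, so that Lemma \ref{lemma:colimit computed in set presheaves} legitimately moves them from $\Psh{-}$ to $\iPsh{-}$ without changing their colimit. Once that is in place, the rest is a formal consequence of the reflective localization formalism of Section \ref{section:Factorization system}.
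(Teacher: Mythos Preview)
Your proposal is correct and follows essentially the same approach as the paper: the paper's proof is a verbatim repetition of that of Proposition \ref{prop:infini changing theta}, invoking Lemma \ref{lemma:colimit computed in set presheaves} together with the definition of precocomplete classes to obtain $\iota(\overline{\W_{n}})\subset \widehat{\W_{n}}$ and $\iota(\overline{\M_{n}})\subset \widehat{\M_{n}}$, and then appealing directly to Theorem \ref{theo:unit and counit are in W}. Your additional paragraph spelling out why the derived adjunction becomes an equivalence is more explicit than the paper, but not different in substance.
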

\begin{proof}
We denote by $\iota:\Psh{\Theta_n}\to \iPsh{\Theta_n}$ and $\iota:\Psh{\Delta[\Theta_{n-1}]}\to \iPsh{\Delta[\Theta_{n-1}]}$ the two canonical inclusions. By the definition of the smallest precocomplete class (paragraph \ref{para:precomplet}) and according to lemma \ref{lemma:colimit computed in set presheaves}, we have inclusions $\iota(\overline{\W_{n}})\subset \widehat{\W_{n}}$ and $\iota(\overline{\M_{n}})\subset \widehat{\M_{n}}$. The result then directly follows from theorem \ref{theo:unit and counit are in W}.  
\end{proof}

\p \label{para:truncation and inteligent trucation}
 Let $n>0$ be an integer. An \wcnotion{$(\infty,n)$-category}{category3@$(\infty,n)$-category} is a $\W_n$-local $\infty$-presheaf $C\in \iPsh{\Theta_n}$. We then define \sym{((a50@$\ncat{n}$}
$$\ncat{n} := \iPsh{\Theta_n}_{\W_n}.$$
Remark that the $\iun$-category $\ncat{0}$ is equivalent to $\igrd$.
Proposition \ref{prop:infini changing theta n} implies that $\ncat{n}$ identifies itself with the full sub $\iun$-category of $\iPsh{\Delta[\Theta_{n-1}]}$ of $\M_n$-local objects:
$$\ncat{n} \sim \iPsh{\Delta[\Theta_{n-1}]}_{\M_n}.$$
The inclusion $i_n:\Theta_n\to \Theta$ fits in an adjunction
\[\begin{tikzcd}
	{\tau^i_n:\Theta} & {\Theta_n:i_n}
	\arrow[""{name=0, anchor=center, inner sep=0}, shift left=2, from=1-1, to=1-2]
	\arrow[""{name=1, anchor=center, inner sep=0}, shift left=2, from=1-2, to=1-1]
	\arrow["\dashv"{anchor=center, rotate=-90}, draw=none, from=0, to=1]
\end{tikzcd}\]
where the left adjoint sends $\Db_k$ on $\Db_{\min{(n,k)}}$.
By extension by colimits, this induces an adjoint pair 
\begin{equation}
\label{eq:inclusion of n cat pre}
\begin{tikzcd}
	{\tau^i_n:\iPsh{\Theta}} & {\iPsh{\Theta_n}:i_n.}
	\arrow[""{name=0, anchor=center, inner sep=0}, shift left=2, from=1-1, to=1-2]
	\arrow[""{name=1, anchor=center, inner sep=0}, shift left=2, from=1-2, to=1-1]
	\arrow["\dashv"{anchor=center, rotate=-90}, draw=none, from=0, to=1]
\end{tikzcd}
\end{equation}
where the two functors are colimit preserving.
As the image of every morphism of $\W$ by $\tau^i_n$ is in $\W_n$ or is an equivalence, and as the image of $\W_n$ by $i_n$ is included in $\W$, the previous adjunction induces by localization an adjunction
\begin{equation}
\label{eq:inclusion of n cat}
\begin{tikzcd}
	{\tau^i_n:\ocat} & {\ncat{n}:i_n}
	\arrow[""{name=0, anchor=center, inner sep=0}, shift left=2, from=1-1, to=1-2]
	\arrow[""{name=1, anchor=center, inner sep=0}, shift left=2, from=1-2, to=1-1]
	\arrow["\dashv"{anchor=center, rotate=-90}, draw=none, from=0, to=1]
\end{tikzcd}
\end{equation}
where the two adjoints are colimit preserving.
The left adjoint is called the \snotionsym{intelligent $n$-truncation}{(taui@$\tau^i_n$}{for $\io$-categories}.
\begin{prop}
\label{ref:infini n a full sub cat}
The functor $i_n: \ncat{n}\to \ocat$ is fully faithful.
\end{prop}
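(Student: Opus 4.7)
The plan is to first establish that the presheaf-level functor $i_n : \iPsh{\Theta_n} \to \iPsh{\Theta}$ is fully faithful, then descend to the full sub-$\iun$-categories $\ncat{n} \subset \iPsh{\Theta_n}$ and $\ocat \subset \iPsh{\Theta}$.

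For the first step, I would use that the inclusion $i_n : \Theta_n \hookrightarrow \Theta$ is already fully faithful as a functor between small categories, with left adjoint $\tau^i_n$. Passing to presheaves, the functor $i_n$ obtained by extension by colimits is naturally identified with precomposition by $\tau^i_n$ (using the adjunction on small categories together with the Yoneda lemma); in particular, $i_n$ is already right adjoint to $\tau^i_n$ at the level of presheaves. Fully faithfulness is then equivalent to the counit $\tau^i_n \circ i_n \to \mathrm{id}_{\iPsh{\Theta_n}}$ being an equivalence. Both sides preserve colimits, so it suffices to verify this on a representable $h_a$ with $a \in \Theta_n$, where it reduces to the canonical equivalence $\tau^i_n(i_n(a)) \simeq a$ coming from the counit of $\tau^i_n \dashv i_n$ at the level of $\Theta$, itself a consequence of $i_n$ being fully faithful on small categories.

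To descend to the localizations, I would verify that $i_n$ on presheaves sends $\W_n$-local objects to $\W$-local objects: for $X$ $\W_n$-local and $w \in \W$, the adjunction gives
\[\Hom(w, i_n X) \simeq \Hom(\tau^i_n(w), X),\]
which is an equivalence since $\tau^i_n(w)$ lies in $\W_n$ or is an equivalence (a fact already used to construct \eqref{eq:inclusion of n cat}). Consequently $i_n$ restricts to a functor $\ncat{n} \to \ocat$ which, by the universal property of the reflexive localization, coincides with the derived right adjoint in question. Since $\ncat{n} \hookrightarrow \iPsh{\Theta_n}$ and $\ocat \hookrightarrow \iPsh{\Theta}$ are full sub-$\iun$-categories, the restriction of a fully faithful functor between the ambient presheaf $\iun$-categories remains fully faithful, and the claim follows.

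The only nontrivial input is the identification of the presheaf-level $i_n$ with precomposition by $\tau^i_n$, which rests on the concrete adjunction $\tau^i_n \dashv i_n$ between the small categories; the remainder is essentially formal bookkeeping about reflexive localization.
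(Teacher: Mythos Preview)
Your proof is correct and follows essentially the same approach as the paper: both verify that the counit of the adjunction $\tau^i_n \dashv i_n$ is an equivalence by reducing to representables via colimit-preservation of both functors. The only cosmetic difference is that you carry this out at the presheaf level before restricting to the full subcategories $\ncat{n}$ and $\ocat$, whereas the paper works directly with the localized adjunction \eqref{eq:inclusion of n cat}.
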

\begin{proof}
We have to check that the unit of the adjunction \eqref{eq:inclusion of n cat} is an equivalence. As the two functors preserve colimits, we have to show that the restriction to $\Theta$ of the unit is an equivalence which is obvious.
\end{proof}
Being colimit preserving, the functor $i_n$ is also part of an adjunction
\begin{equation}
\begin{tikzcd}
	{i_n:\ncat{n}} & {\ocat:\tau_n}
	\arrow[""{name=0, anchor=center, inner sep=0}, shift left=2, from=1-2, to=1-1]
	\arrow[""{name=1, anchor=center, inner sep=0}, shift left=2, from=1-1, to=1-2]
	\arrow["\dashv"{anchor=center, rotate=-90}, draw=none, from=1, to=0]
\end{tikzcd}
\end{equation}
The right adjoint is called the \wcsnotionsym{$n$-truncation}{(tau@$\tau_n$}{truncation@$n$-truncation}{for $\io$-category}. 

We will identify objects of $\ncat{n}$ with their image in $\ocat$ and we will then also note by $\tau_n$ and $\tau^i_n$ the composites $i_n\tau^i_n$ and $i_n\tau^i_n$.

\begin{prop}
\label{prop:taun preserves special colimits}
The functor $\tau_n:\ocat\to \ocat$ preserves special colimits.
\end{prop}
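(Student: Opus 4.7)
The plan is to extend $\tau_n : \ocat \to \ocat$ to a colimit-preserving endofunctor of $\iPsh{\Theta}$ and transport the special colimit condition through this extension.

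First, I would identify the underlying presheaf-level avatar of $\tau_n$. Writing $\tau_n$ both for the right adjoint $\ocat \to \ncat{n}$ and for the composite $i_n \tau_n : \ocat \to \ocat$, for $C \in \ocat$ and $a \in \Theta$ the chain of adjunctions $\tau_n^i \dashv i_n \dashv \tau_n$ yields
\[
(\tau_n C)_a \simeq \Hom_{\ocat}(a, \tau_n C) \simeq \Hom_{\ncat{n}}(\tau_n^i a, \tau_n C) \simeq \Hom_{\ocat}(i_n\tau_n^i a, C) = C_{i_n \tau_n^i a}.
\]
Setting $T_n := (i_n \tau_n^i)^* : \iPsh{\Theta} \to \iPsh{\Theta}$ to be precomposition with $i_n \tau_n^i : \Theta \to \Theta$, this gives a natural equivalence $T_n \circ \iota \simeq \iota \circ \tau_n$. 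Since colimits of presheaves are computed pointwise, precomposition with any functor preserves them, so $T_n$ preserves all colimits.

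Now let $F : I \to \ocat$ have a special colimit, i.e.\ $\iota \colim_{i:I} F(i) \simeq \colim_{i:I} \iota F(i)$ in $\iPsh{\Theta}$. Applying $T_n$ and using that it commutes with colimits gives
\[
\colim_{i:I} \iota \tau_n F(i) \simeq T_n \colim_{i:I} \iota F(i) \simeq T_n \iota \colim_{i:I} F(i) \simeq \iota \tau_n \colim_{i:I} F(i).
\]
The rightmost term is $\W$-local, so $\colim_{i:I} \iota \tau_n F(i)$ is itself $\W$-local. The canonical comparison $\colim_{i:I} \iota \tau_n F(i) \to \iota \colim_{i:I} \tau_n F(i)$ is then the unit of the $\W$-localization at a $\W$-local object, hence an equivalence; this is precisely the condition that $\tau_n F$ has a special colimit.

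The key observation --- and the only non-routine point --- is that although $\tau_n$ is built from a right adjoint on $\ocat$, its avatar $T_n$ at the level of $\infty$-presheaves is a precomposition functor, which automatically preserves every colimit. As a by-product, combining the chain of equivalences above with full faithfulness of $\iota$ shows that $\tau_n$ in fact preserves the $\ocat$-colimit $\colim_{i:I} F(i)$ on the nose.
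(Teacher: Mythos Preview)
Your proof is correct and follows essentially the same idea as the paper's: both arguments identify $\tau_n$, at the level of $\iPsh{\Theta}$, with a precomposition functor and use that precomposition preserves colimits. The paper does this in two steps (first $\tau_n:\ocat\to\ncat{n}$ as $i_n^*$, then $i_n:\iPsh{\Theta_n}\to\iPsh{\Theta}$, noting the latter preserves colimits and $\W$-local objects), while you collapse them into the single precomposition $T_n=(i_n\tau^i_n)^*$; your version is a bit more explicit but otherwise the same argument.
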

\begin{proof}
As $i_n$ preserves representable objects, the functor $\tau_n:\ocat\to \ncat{n}$ preserves special colimits. As $i_n:\iPsh{\Theta_n}\to \iPsh{\Theta}$ preserves colimits and $\W$-local objects, this concludes the proof.
\end{proof}

\begin{prop}
\label{prop:inteligent trucatio and a particular colimit}
Let $C$ be an $\io$-category and $n$ an integer. The following canonical square is cartesian
\[\begin{tikzcd}
	C & {\tau_n^iC} \\
	{\tau_n^iC} & {\tau_n^iC}
	\arrow[from=1-1, to=2-1]
	\arrow[from=2-1, to=2-2]
	\arrow[from=1-2, to=2-2]
	\arrow[from=1-1, to=1-2]
\end{tikzcd}\]
\end{prop}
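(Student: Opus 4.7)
The plan is to reduce the cartesianness of the square to a cell-wise verification on each globe $\Db_k$ using \Cref{prop:equivalences detected on globes}. First I would observe that the inclusion $\iota:\ocat\to\iPsh{\Theta}$ is a right adjoint by \Cref{theo:adjunction between presheaves and local presheaves}, so it preserves limits, and in particular pullbacks. Consequently the pullback of the cospan in question, computed in $\ocat$, coincides with the objectwise pullback in $\iPsh{\Theta}$. Denoting this pullback by $P$, one has $P_k=(\tau_n^iC)_k\times_{(\tau_n^iC)_k}(\tau_n^iC)_k$ for each $k$, and it suffices to show that the comparison map $C\to P$ is an equivalence, which by \Cref{prop:equivalences detected on globes} is detected on the globes $\Db_k$.

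The verification then splits into two regimes. For $k\leq n$, I would use the explicit description of the intelligent truncation coming from the adjunction \eqref{eq:inclusion of n cat}: the functor $\tau_n^i:\ocat\to\ncat{n}$ is a localization whose associated set of inverted maps only concerns cells of dimension strictly greater than $n$, so the unit $C\to\tau_n^iC$ is an isomorphism on $k$-cells for $k\leq n$. The square evaluated on $\Db_k$ then consists entirely of identity maps, and is tautologically cartesian.

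For $k>n$, the analysis is more subtle and constitutes the main obstacle. Here I would exploit the fact that $\tau_n^iC$ is an $(\infty,n)$-category, together with the description of its $k$-cells as the reflection of $C_k$ obtained by universally inverting higher cells, to identify the fibers of $C_k\to (\tau_n^iC)_k$. The cartesian condition on $\Db_k$ then amounts to showing that the diagonal $(\tau_n^iC)_k\to (\tau_n^iC)_k\times_{(\tau_n^iC)_k}(\tau_n^iC)_k$ agrees canonically with the comparison induced by the unit, which one can extract from the idempotency of the reflection $\tau_n^i$ together with the universal property of the localized cells.

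\textbf{Anticipated obstacle.} The main difficulty is the higher-dimensional case $k>n$: one must be careful about how the $k$-cells of $\tau_n^iC$ are defined and how they relate to those of $C$ via the reflection. The $k\leq n$ case, by contrast, is essentially a bookkeeping exercise once the correct identification of $(\tau_n^iC)_k$ with $C_k$ is in hand.
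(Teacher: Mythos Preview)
Your proposal attempts to prove that the square is a pullback, but this is false in general: the pullback of $\tau_n^iC\to\tau_n^iC\leftarrow\tau_n^iC$ along identity maps is just $\tau_n^iC$, so cartesianness would force the unit $C\to\tau_n^iC$ to be an equivalence. The word ``cartesian'' in the statement is a typo for ``cocartesian''; this is confirmed both by the paper's own proof and by how the proposition is invoked later (for instance in the proof of Proposition~\ref{prop:cartesian square and times}, where it is used to conclude that the span $1\leftarrow\Db_{m-1}\to 1$ has pushout~$1$).

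Independently of the typo, your key intermediate claim---that the unit $C\to\tau_n^iC$ is an equivalence on $k$-cells for $k\le n$---is incorrect. The intelligent truncation is a localization, not a coskeletal truncation: for $C=[1]$ and $n=0$ one has $\tau_0^i[1]\sim 1$, so the induced map on $0$-cells collapses $\{0,1\}$ to a single point and is not an equivalence.

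The paper proves the intended (cocartesian) statement by passing to the presenting model category of marked simplicial sets. There $\tau_n^i$ is implemented by adding markings over the same underlying simplicial set, so $X\to\tau_n^iX$ is a cofibration and the square is a strict pushout; being a pushout along cofibrations, it is also a homotopy pushout, which gives the claim in $\ocat$.
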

\begin{proof}
For this results we use model categories. The theorem \ref{theo:lecorozo} implies that the $\iun$-category $\ocat$ is presented by the category of marked simplicial sets $\mSset$ endowed with the model structure for $\omega$-complicial sets given by proposition \ref{prop:model structure on marked simplicial set}, and the functor $\tau^i_n:\ocat\to \ocat$ corresponds to the left Quillen functor $\tau^i_n:\mSset\to \mSset$ given in paragraph \ref{para:inteligentr trucation for simplicial set}. Remark that in this model category, for any marked simplicial set $X$, the following square is cocartesian
\[\begin{tikzcd}
	X & {\tau_n^iX} \\
	{\tau_n^iX} & {\tau_n^iX}
	\arrow[from=1-1, to=2-1]
	\arrow[from=2-1, to=2-2]
	\arrow[from=1-2, to=2-2]
	\arrow[from=1-1, to=1-2]
\end{tikzcd}\]
As all the morphisms are cofibrations, this square is also homotopy cocartesian which concludes the proof.
\end{proof}

\p The family of truncation functor induces a sequence 
$$...\to \ncat{n+1}\xrightarrow{\tau_{n}} \ncat{n}\to...\to \ncat{1}\xrightarrow{\tau_{0}}\ncat{0}$$
which induces an adjunction
\begin{equation}
\label{eq:inductivity}
\begin{tikzcd}
	{\colim_{n:\Nb}:\lim_{n:\Nb}\ncat{n}} & {\ocat:(\tau_n)_{n:\Nb}}
	\arrow[""{name=0, anchor=center, inner sep=0}, shift left=2, from=1-1, to=1-2]
	\arrow[""{name=1, anchor=center, inner sep=0}, shift left=2, from=1-2, to=1-1]
	\arrow["\dashv"{anchor=center, rotate=-90}, draw=none, from=0, to=1]
\end{tikzcd}
\end{equation}
where the left adjoint sends a sequence $(C_n, C_n\sim \tau_nC_{n+1})_{n:\Nb}$ to the colimit of the induced sequence
$$i_0C_0\to i_1C_1\to ... \to i_nC_n\to ..., $$
and the right adjoint sends an $\io$-category $C$ to the sequence $(\tau_nC,\tau_nC\sim \tau_{n}\tau_{n+1}C)_{n:\Nb}$. Indeed, we have equivalence
$$
\begin{array}{rcl}
\Hom(\colim_{n:\Nb}i_n C_n,D)&\sim& \lim_{n:\Nb}\Hom(C_n,\tau_n D)
\\&\sim& \Hom( (C_n, C_n\sim \tau_nC_{n+1})_{n:\Nb},(\tau_n D,\tau_n D\sim \tau_{n}\tau_{n+1}D)_{n:\Nb})
\end{array}$$
natural in $(C_n, C_n\sim \tau_nC_{n+1})_{n:\Nb}$ and $D$.

\begin{prop}
\label{prop:infini omega a limit of infini n}
The adjunction \eqref{eq:inductivity} is an adjoint equivalence. As a consequence, we have an equivalence
$$\ocat\sim \lim_{n:\Nb}\ncat{n}.$$
\end{prop}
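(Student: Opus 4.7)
The plan is to verify that the unit and counit of the adjunction \eqref{eq:inductivity} are equivalences. Both arguments rest on two facts established above: sequences of $\io$-categories admit special colimits (proposition \ref{prop:example of a special colimit 2}), and $\tau_n$ preserves special colimits (proposition \ref{prop:taun preserves special colimits}). Together with proposition \ref{prop:equivalences detected on globes}, this will reduce both claims to an elementary stabilization argument on $k$-cells.

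For the counit at an $\io$-category $C$, the map to check is $\colim_n i_n \tau_n C \to C$. Since this is a sequential colimit of $\io$-categories, it is special, so the presheaf $(\colim_n i_n\tau_n C)$ is computed pointwise on $\Theta$. By proposition \ref{prop:equivalences detected on globes} it suffices to check the map on each globe $\Db_k$. A direct unfolding of the adjunction $i_n \dashv \tau_n$ gives $(\tau_n C)_{\Db_k} \sim \Hom_{\ocat}(i_n \Db_k, C) \sim C_{\Db_k}$ as soon as $k \leq n$, and the transition maps in the sequence are equivalences on $\Db_k$ for $n \geq k$. Hence the colimit on $\Db_k$ stabilizes at $C_{\Db_k}$, giving the desired equivalence.

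For the unit at a compatible sequence $(C_n, C_n \sim \tau_n C_{n+1})_{n:\Nb}$, I need to show that for each $m$ the canonical map $C_m \to \tau_m(\colim_n i_n C_n)$ is an equivalence in $\ncat{m}$. Applying proposition \ref{prop:taun preserves special colimits} to the special colimit $\colim_n i_n C_n$, I obtain a natural equivalence $\tau_m \colim_n i_n C_n \sim \colim_n \tau_m i_n C_n$. For $n \geq m$, iterating the compatibility $C_k \sim \tau_k C_{k+1}$ gives $\tau_m i_n C_n \sim C_m$ compatibly with the transition maps, so the colimit stabilizes at $C_m$.

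The main technical subtlety will be ensuring that the arguments above are genuinely natural: that the transition maps $i_n \tau_n C \to i_{n+1}\tau_{n+1} C$ (respectively $\tau_m i_n C_n \to \tau_m i_{n+1} C_{n+1}$) are indeed identities on $k$-cells for $n \geq k$ (respectively for $n \geq m$), not just pointwise equivalences. This follows from the naturality of the counit $i_n \tau_n \to \id$ together with the explicit description of $(\tau_n C)_{\Db_k}$ via the adjunction, which identifies each transition map on $\Db_k$ with the identity on $C_{\Db_k}$ once $n \geq k$. Once this is checked, both unit and counit are equivalences, and the adjunction is an equivalence; the final equivalence $\ocat \sim \lim_n \ncat{n}$ is then read off from the right adjoint.
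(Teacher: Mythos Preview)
Your argument is correct and uses the same ingredients as the paper's proof: special colimits of sequences (proposition \ref{prop:example of a special colimit 2}), preservation of special colimits by $\tau_n$ (proposition \ref{prop:taun preserves special colimits}), and detection of equivalences on globes (proposition \ref{prop:equivalences detected on globes}).

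The only difference is organizational. The paper proves that the unit is an equivalence (exactly as you do) and then, instead of analyzing the counit directly, observes that the right adjoint $(\tau_n)_{n:\Nb}$ is conservative: a morphism $f$ in $\ocat$ is an equivalence if and only if each $\tau_n f$ is, which is an immediate consequence of proposition \ref{prop:equivalences detected on globes}. Since a left adjoint with invertible unit and conservative right adjoint is automatically an equivalence, this bypasses the explicit check of the counit. Your direct verification of the counit on globes is of course valid and amounts to the same observation unpacked, but the conservativity formulation is slightly more economical and avoids the stabilization discussion you flag as the ``technical subtlety''.
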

\begin{proof}
According to proposition \ref{prop:example of a special colimit 2}, any sequence $(C_n)_{n:\Nb}:\lim_{n:\Nb}\ncat{n}$ has a special colimit.
Let $k$ be an integer. According to proposition \ref{prop:taun preserves special colimits}, this implies the equivalence
$$\tau_k(\colim_{n:\Nb}C_n) \sim \colim_{n:\Nb}(\tau_kC_n).$$
Furthermore, the sequence $(\tau_kC_n)_{n:\Nb}$ is constant after the rank $k$. We then have 
$$\tau_k\colim_{n:\Nb}C_n \sim \tau_k C_n.$$
This directly implies that the unit of the adjunction \eqref{eq:inductivity} is an equivalence. 

To conclude, one has to show that the right adjoint is conservative, i.e that a morphism $f$ is an equivalence if and only if for any $n$, $\tau_n f$ is an equivalence. This last statement is a direct consequence of proposition \ref{prop:equivalences detected on globes}.
\end{proof} 
\p
The following proposition states that the cartesian product preserves colimits in both variables. There exists then an internal hom functor that we denote by \wcnotation{$\uHom(\uvar,\uvar)$}{(hom@$\uHom(\uvar,\uvar)$}.

\begin{prop}
\label{prop:cartesian product preserves W}
The cartesian product in $\ocat$ preserves colimits in both variables.
\end{prop}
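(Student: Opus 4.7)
The plan is to exploit the fact that $\iPsh{\Theta}$, being an $\iun$-category of $\infty$-presheaves, is cartesian closed, so its cartesian product preserves colimits in both variables there. The task then reduces to transferring this property across the reflective localization $\Fb_{\W} : \iPsh{\Theta} \to \ocat$.

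More precisely, fix an $\io$-category $C$. The functor $C \times (-) : \iPsh{\Theta} \to \iPsh{\Theta}$ already preserves colimits. I would show that it sends morphisms of $\W$ into $\widehat{\W}$, so that the composite $\iPsh{\Theta} \xrightarrow{C \times -} \iPsh{\Theta} \xrightarrow{\Fb_{\W}} \ocat$ inverts $\W$. Corollary \ref{cor:derived colimit preserving functor} will then supply a colimit-preserving derived functor $\ocat \to \ocat$, and unwinding the construction identifies it with $C \times_{\ocat} (-)$. Symmetry of the cartesian product then gives the desired preservation in both variables.

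To establish that $C \times (-)$ preserves $\widehat{\W}$, I would first reduce to the case where $C$ is representable. Indeed, every $\io$-category is a special colimit of globular sums, and since the cartesian product in $\iPsh{\Theta}$ commutes with colimits, the morphism $C \times (A \to B)$ is a colimit in $\Arr(\iPsh{\Theta})$ of morphisms $c \times (A \to B)$ with $c$ ranging over representables; closure of $\widehat{\W}$ under colimits of arrows (Proposition \ref{prop:cloture of L recap}) then reduces the problem to $C = c$ a globular sum. A symmetric reduction restricts $A \to B$ to the generators of $\W$, namely the Segal maps $\Sp_{a} \to a$ and the equivalence-inclusions $\Sigma^{n} E^{eq} \to \Sigma^{n} 1$.

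For each such generator $i$ and each globular sum $c$, the job is to exhibit $c \times i$ as built by iterated colimits from morphisms already in $\widehat{\W}$. The key combinatorial input is Proposition \ref{prop:example of a special colimit}, which expresses $[a,n] \times [b,1]$ as a special colimit of globular sums compatible with the Segal decomposition, together with the variant decompositions of Propositions \ref{prop:example of a special colimit3} and \ref{prop:example of a special colimit4}. Using these and inducting on the globular height of $c$ (writing $c = [\mathbf{c}', m]$ and applying the formulas), one obtains a decomposition of $c \times i$ as a colimit of Segal maps and equivalence-inclusions, hence a morphism in $\widehat{\W}$. The main obstacle is precisely this combinatorial bookkeeping for arbitrary globular sums $c$; the special-colimit lemmas already assembled in the section were set up for exactly this purpose, so the remaining step is a laborious but straightforward induction rather than a conceptual difficulty.
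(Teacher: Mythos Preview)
Your global strategy matches the paper's exactly: use cartesian closedness of $\iPsh{\Theta}$, invoke Corollary~\ref{cor:derived colimit preserving functor}, and reduce to showing that $c \times i \in \widehat{\W}$ for every globular sum $c$ and every generator $i \in \W$. Where your proposal diverges is in the combinatorial execution of this last step, and that divergence matters.

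The paper does \emph{not} stay in $\iPsh{\Theta}$ and does not use Propositions~\ref{prop:example of a special colimit3} or~\ref{prop:example of a special colimit4}. Instead it transfers to $\iPsh{\Delta[\Theta]}$ via Proposition~\ref{prop:infini changing theta}, so that it suffices to show $f \times [b,m] \in \widehat{\M}$ for each generator $f \in \M_{k+1}$. Two dedicated lemmas then carry the weight: Lemma~\ref{lemma:product of representable in preshaves on Delta Theta} gives an explicit colimit decomposition of $[a,n]\times[b,m]$ in $\iPsh{\Delta[\Theta]}$, and Lemma~\ref{lemma:technical cartesian product preserves W} handles the base case in $\W_1$. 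The induction runs over the level~$k$ of the filtration $\W_k$, not over the height of the globular sum, and each of the three generator shapes of $\M_{k+1}$ (suspended $\W_k$-maps $[a,1]\to[c,1]$, spine inclusions $[a,\Sp_n]\to[a,n]$, and $E^{eq}\to[0]$) is handled by plugging into the product-decomposition lemma together with Lemma~\ref{lemma:the functor [] preserves classes}.

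Your appeal to Propositions~\ref{prop:example of a special colimit3} and~\ref{prop:example of a special colimit4} is the weak point: those concern specific wedge--suspension diagrams used later for the Gray cylinder, not cartesian products, and it is unclear how they would feed into the required induction. Proposition~\ref{prop:example of a special colimit} is closer in spirit, but it only treats $[a,n]\times[b,1]$; leveraging this alone into a proof that $c \times (\Sp_a \to a)$ and $c \times (\Sigma^n E^{eq} \to \Db_n)$ lie in $\widehat{\W}$ for arbitrary $c$ and $a$ would still require an induction at least as intricate as the paper's, and you have not indicated what the inductive step actually is. So the proposal is correct in outline, but the combinatorial heart---which the paper isolates cleanly by passing to $\Delta[\Theta]$---is not yet supplied.
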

We first need several lemmas:

\begin{lemma}
\label{lemma:product of representable in preshaves on Delta Theta}
Let $a$, $b$ be two globular sums, and $n,m$ two integer. The colimit in $\iPsh{\Delta[\Theta]}$ of the diagram 
\[\begin{tikzcd}
	{\coprod_{k\leq n}[a\times b,\{k\}\times [m]]} && {\coprod_{l\leq m}[a\times b,[n]\times \{l\}]} \\
	{\coprod_{k\leq n}[b,m]} & {[a\times b,[n]\times [m]]} & {\coprod_{l\leq m}[a,n]}
	\arrow[from=1-1, to=2-1]
	\arrow[from=1-1, to=2-2]
	\arrow[from=1-3, to=2-2]
	\arrow[from=1-3, to=2-3]
\end{tikzcd}\]
is $[a,n]\times [b,m]$.
\end{lemma}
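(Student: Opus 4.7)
The plan is to decompose the stated colimit as a sequence of two pushouts along monomorphisms, apply part (1) of Lemma \ref{lemma:colimit computed in set presheaves} to transport the computation from $\iPsh{\Delta[\Theta]}$ to the $1$-category $\Psh{\Delta[\Theta]}$ of set-valued presheaves, and then verify the resulting identity by a pointwise check on representables.

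For the reduction, I would set
$$P := [a\times b,[n]\times[m]] \coprod_{\coprod_{k\leq n}[a\times b,\{k\}\times[m]]} \coprod_{k\leq n}[b,m],$$
so that the colimit in question becomes the further pushout
$$P \coprod_{\coprod_{l\leq m}[a\times b,[n]\times\{l\}]} \coprod_{l\leq m}[a,n].$$
In each of these two pushouts, the arrow from the coproduct of "boundary pieces" into $[a\times b,[n]\times[m]]$ (respectively into $P$) is a monomorphism of set-valued presheaves, being induced by the honest inclusions $\{k\}\times[m]\hookrightarrow[n]\times[m]$ and $[n]\times\{l\}\hookrightarrow[n]\times[m]$. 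Two successive applications of part (1) of Lemma \ref{lemma:colimit computed in set presheaves} then show that the colimit computed in $\Psh{\Delta[\Theta]}$ still represents the colimit in $\iPsh{\Delta[\Theta]}$.

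For the combinatorial part, I would test against a representable $[c,p]\in\Delta[\Theta]$. By the Yoneda lemma and the fact that products are computed pointwise, a map $[c,p]\to[a,n]\times[b,m]$ amounts to a pair $(f,g)$ with $f:[c,p]\to[a,n]$ and $g:[c,p]\to[b,m]$, each unpacking into a compatible pair consisting of a morphism in $\Theta$ and a morphism in $\Delta$. Pairs whose two simplicial components are jointly non-constant correspond precisely to maps into $[a\times b,[n]\times[m]]$; pairs for which the $\Delta$-component of $f$ factors through a vertex $k\in[n]$ correspond to maps into the piece $[b,m]$ of the left coproduct at index $k$; and symmetrically for the right branch. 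The two pushouts then identify the three resulting classes along their common overlaps.

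The main obstacle is this last pointwise verification: one must check both that the classification of pairs $(f,g)$ is exhaustive and that the identifications in the two pushouts precisely match the overlaps between the "interior" piece $[a\times b,[n]\times[m]]$ and the two side families $\coprod_{k}[b,m]$ and $\coprod_{l}[a,n]$. This is routine but tedious combinatorics on morphisms in $\Delta[\Theta]$, in the same spirit as the exercise that the author defers to the reader in proposition \ref{prop:example of a special colimit}.
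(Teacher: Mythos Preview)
Your proposal is correct and follows essentially the same route as the paper: decompose the colimit into two successive pushouts along monomorphisms, invoke Lemma~\ref{lemma:colimit computed in set presheaves} to reduce to set-valued presheaves, and then check the identification combinatorially (which the paper leaves to the reader, as you correctly anticipated). One small point: your justification that the second map $\coprod_{l\leq m}[a\times b,[n]\times\{l\}]\to P$ is a monomorphism appeals to the inclusions $[n]\times\{l\}\hookrightarrow[n]\times[m]$, but that only gives a monomorphism into $[a\times b,[n]\times[m]]$, not directly into the quotient $P$; you still need to observe that the collapse performed in forming $P$ does not identify distinct points in the image of this map, which is straightforward since the overlap with each $\{k\}\times[m]$ is a single vertex.
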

\begin{proof}
The lemma \ref{lemma:colimit computed in set presheaves} implies that the object 
$$K:=\coprod_{k\leq n}[b,m]\coprod_{\coprod_{k\leq n}[a\times b,\{k\}\times [m]]}[a\times b,[n]\times [m]]$$
is strict. As the induced morphism 
$\coprod_{l\leq m}[a\times b,[n]\times \{l\}]\to K$, is a monomorphism, the lemma \textit{op cit} implies that the colimit of the diagram given in the statement is strict. We can then show the result in the category of set valued presheaves on $ \Delta[\Theta]$ and we leave this combinatorial exercise to the reader.
\end{proof}

\begin{lemma}
\label{lemma:technical cartesian product preserves W}
Let $f$ be a morphism of $\W_1$ and $n$ an integer. The morphism $f\times [n]$ is in $\widehat{\W_1}$.
\end{lemma}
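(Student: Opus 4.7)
The plan is to first reduce to a generating family of $\W_1$, then to reduce $n$ to $1$, and finally to analyze each remaining basic case using the special colimit decompositions of products of globular sums established earlier in the section.

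Since $(-)\times[n]$ preserves colimits in each variable (by cartesian closedness of $\iPsh{\Theta_1}$) and $\widehat{\W_1}$ is closed under colimits by construction, it suffices to verify the statement when $f$ lies in a fixed generating family of $\W_1$. A natural choice consists of the spine inclusions $\Sp_m\hookrightarrow[m]$ for $m\geq 2$ together with the completeness map $E^{eq}\to[0]$. Next, writing $[n]$ as the iterated pushout along $[0]$'s of $n$ copies of $[1]$, the map $f\times[n]$ is an iterated pushout in $\Arr(\iPsh{\Theta_1})$ of copies of $f\times[1]$ along $f\times[0]=f$; since $f\in\W_1\subseteq\widehat{\W_1}$ and $\widehat{\W_1}$ is closed under pushouts (cf.\ the remark following the definition of cocomplete $\infty$-groupoid of arrows), this reduces the lemma to the case $n=1$.

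For the spine generator $\Sp_m\hookrightarrow[m]$, the product $[m]\times[1]$ admits a special colimit decomposition, an instance of lemma \ref{lemma:product of representable in preshaves on Delta Theta} transported to $\iPsh{\Theta_1}$ with $a=b=[0]$, whose terms are globular sums formed by joining $[k]$'s and $[1]$'s along $[0]$'s. The same colimit diagram, restricted to the spine of $[m]$, computes $\Sp_m\times[1]$, and the induced comparison morphism splits termwise into maps that are (possibly lower-dimensional) spine inclusions. By induction on $m$ and closure of $\widehat{\W_1}$ under colimits, the map $\Sp_m\times[1]\to[m]\times[1]$ lies in $\widehat{\W_1}$. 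For the completeness generator $E^{eq}\to[0]$, an analogous analysis using the realization of $E^{eq}$ as a colimit of equivalence witnesses handles $E^{eq}\times[1]\to[1]$ in the same way.

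The main obstacle is the explicit combinatorial verification that the special colimit decomposition of $[m]\times[1]$ restricts cleanly along the spine inclusion and that each termwise comparison map lies in $\widehat{\W_1}$. This bookkeeping is tedious but routine once the shape of the special colimit is written out, and the lemma then follows from stability of $\widehat{\W_1}$ under the colimit operations invoked.
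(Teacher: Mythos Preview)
Your reduction to $n=1$ does not go through. In $\iPsh{\Theta_1}=\iPsh{\Delta}$ the iterated pushout of $n$ copies of $[1]$ along $[0]$ is the spine $\Sp_n$, \emph{not} $[n]$; the identification $\Sp_n\simeq[n]$ holds only after localizing at $\W_1$. Consequently the iterated pushout of copies of $f\times[1]$ along $f$ in $\Arr(\iPsh{\Delta})$ computes $f\times\Sp_n$, not $f\times[n]$. To pass from one to the other you would need $(\operatorname{dom} f)\times(\Sp_n\to[n])$ and $(\operatorname{cod} f)\times(\Sp_n\to[n])$ in $\widehat{\W_1}$ and then invoke left cancellation; but for $f=(\Sp_m\to[m])$ the second of these is $[m]\times(\Sp_n\to[n])$, which is the statement of the lemma with the roles of $m$ and $n$ exchanged, so the argument is circular. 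The subsequent analysis for $n=1$ is also too sketchy to assess: the claim that the special colimit decomposition of $[m]\times[1]$ restricts along the spine so that the comparison ``splits termwise into spine inclusions'' is not justified, and the $E^{eq}$ case is only gestured at.

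The paper's argument avoids all of this by working through the localization $\Fb_{\W_1}:\iPsh{\Delta}\to\ncat{1}$ rather than attempting a reduction on $n$. Since $\Fb_{\W_1}$ is a left adjoint it preserves colimits, and since each $[k]\times[n]$ is already an $(\infty,1)$-category (hence $\W_1$-local), one computes directly
\[
\Fb_{\W_1}(\Sp_m\times[n])\ \sim\ [1]\times[n]\amalg_{[0]\times[n]}\cdots\amalg_{[0]\times[n]}[1]\times[n]\ \sim\ [m]\times[n],
\]
the colimit now being taken in $\ncat{1}$ and the last equivalence using that the cartesian product in $\ncat{1}$ preserves colimits. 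Since the unit $\Sp_m\times[n]\to\Fb_{\W_1}(\Sp_m\times[n])$ lies in $\widehat{\W_1}$ by construction, the spine case follows with no induction and no termwise combinatorics; the case $E^{eq}\to[0]$ is handled the same way.
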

\begin{proof}
Suppose first that $f$ is of shape $\Sp_m\to [m]$. Remark first that for any $k$, $[k]\times [m]$ is $\W_1$-local as both $[k]$ and $[m]$ are. We then have $\Fb_{\W_1}([k]\times[m])\sim [k]\times [m]$.
As the fibrant replacement preserves colimits and as the cartesian product in $\iun$-categories preserves colimits, we have a sequence of equivalences in $\icat$:
$$
\begin{array}{rcl}
\Fb_{\W_1}(\Sp_m\times [n])&\sim& \Fb_{\W_1}([1]\times [n])\coprod_{ \Fb_{\W_1}([0]\times [n])}...\coprod_{ \Fb_{\W_1}([0]\times [n])}\Fb_{\W_1}([1]\times [n])\\
&\sim& [1]\times [n]\coprod_{ [0]\times [n]}...\coprod_{ [0]\times [n]} [1]\times [n]\\
&\sim & [m]\times [n]
\end{array}
$$
By construction, the morphism $\Sp_m\times [n]\to \Fb_{\W_1}(\Sp_m\times [n])$ is in $\widehat{\W_1}$. We proceed similarly for the case $f:=E^{eq}\to [0]$.
\end{proof}

\begin{proof}[Proof of proposition \ref{prop:cartesian product preserves W}]
As the cartesian product on $\iPsh{\Theta}$ preserves colimits in both variables, according to corollary \ref{cor:derived colimit preserving functor}, we then have to show that for any globular sum $a$, and any $f\in\W$, $f\times a$ is in $\widehat{\W}$.

We demonstrate by induction on $k$ that for any $f\in\W_k$ and any globular sum $a$, $f\times a$ is in $\W_k$. The case $k=0$ is trivial as $\W_0$ is the singleton $\{id_{[0]}\}$.

Suppose then the statement is true at this stage $k$. We recall that we denote $(i_!,i^*)$ the left and right adjoints between $\iPsh{\Delta[\Theta]}$ and $\iPsh{\Theta}$. As $i^*$ preserves cartesian product, proposition \ref{prop:infini changing theta} implies that it is enough to show that for any $f\in\M_{k+1}$ and any object $[b,n]$, $f\times [b,n]$ is in $\widehat{\M}$. 

Suppose first that $f$ is of shape $[a,1]\to [c,1]$ for $a\to c \in \W_k$. According to lemma \ref{lemma:product of representable in preshaves on Delta Theta}, the morphism $f\times[b,m]$ is the colimit in depth of the diagram 
\[\begin{tikzcd}[column sep =0.1cm]
	{\coprod_{k\leq 1}[a\times b,\{k\}\times [m]]} && {\coprod_{l\leq m}[a\times b,[1]\times \{l\}]} \\
	{\coprod_{k\leq 1}[b,m]} & {[a\times b,[1]\times [m]]} & {\coprod_{l\leq m}[a,1]} \\
	& {\coprod_{k\leq 1}[c\times b,\{k\}\times [m]]} && {\coprod_{l\leq m}[c\times b,[1]\times \{l\}]} \\
	& {\coprod_{k\leq 1}[b,m]} & {[c\times b,[1]\times [m]]} & {\coprod_{l\leq m}[c,1]}
	\arrow[from=1-1, to=2-1]
	\arrow[from=1-1, to=2-2]
	\arrow[from=1-3, to=2-2]
	\arrow[from=1-3, to=2-3]
	\arrow[from=2-1, to=4-2]
	\arrow[from=1-1, to=3-2]
	\arrow[from=2-2, to=4-3]
	\arrow[from=2-3, to=4-4]
	\arrow[from=1-3, to=3-4]
	\arrow[from=3-2, to=4-3]
	\arrow[from=3-4, to=4-3]
	\arrow[from=3-4, to=4-4]
	\arrow[from=3-2, to=4-2]
\end{tikzcd}\]
The lemma \ref{lemma:the functor [] preserves classes} and the induction hypothesis implies that all the depth morphisms are in $\widehat{M}$.
By stability by colimit, this implies that $f\times[b,m]$ belongs to $\widehat{\M}$.

Suppose now that $f$ is of shape $[a,\Sp_n]\to [a,n]$. According to lemma \ref{lemma:product of representable in preshaves on Delta Theta}, the morphism $f\times[b,m]$ is the colimit in depth of the diagram 
\[\begin{tikzcd}[column sep = 0.1cm]
	{\coprod_{k\leq n}[a\times b,\{k\}\times [m]]} && {\coprod_{l\leq m}[a\times b,\Sp_n\times \{l\}]} \\
	{\coprod_{k\leq n}[b,m]} & {[a\times b,\Sp_n\times [m]]} & {\coprod_{l\leq m}[a,\Sp_n]} \\
	& {\coprod_{k\leq n}[a\times b,\{k\}\times [m]]} && {\coprod_{l\leq m}[a\times b,[n]\times \{l\}]} \\
	& {\coprod_{k\leq n}[b,m]} & {[a\times b,[n]\times [m]]} & {\coprod_{l\leq m}[a,n]}
	\arrow[from=1-1, to=2-1]
	\arrow[from=1-1, to=2-2]
	\arrow[from=1-3, to=2-2]
	\arrow[from=1-3, to=2-3]
	\arrow[from=3-2, to=4-3]
	\arrow[from=2-3, to=4-4]
	\arrow[from=1-3, to=3-4]
	\arrow[from=3-4, to=4-4]
	\arrow[from=3-4, to=4-3]
	\arrow[from=3-2, to=4-2]
	\arrow[from=2-1, to=4-2]
	\arrow[from=1-1, to=3-2]
	\arrow[from=2-2, to=4-3]
\end{tikzcd}\]
The lemma \ref{lemma:technical cartesian product preserves W} implies that $\Sp_n\times [m]\to [n]\times [m]$ is in $\widehat{\W_1}$. Combined with lemma \ref{lemma:the functor [] preserves classes}, this implies that all the morphisms in depth are in $\widehat{\M}$. By stability by colimit, so is $f\times[b,m]$.

It remains to show the case $f= E^{eq}\to [0]$. According to lemma \ref{lemma:product of representable in preshaves on Delta Theta}, the morphism $f\times[b,m]$ is the horizontal colimit of the diagram
\[\begin{tikzcd}
	{\coprod_{k\leq m} E^{eq}} & {\coprod_{k\leq m} [b,E^{eq}\times \{k\}]} & { [b,E^{eq}\times [m]]} \\
	{\coprod_{k\leq m}[0]} & {\coprod_{k\leq m}[0]} & {[b,m]}
	\arrow[from=1-2, to=1-3]
	\arrow[from=1-2, to=1-1]
	\arrow[from=2-2, to=2-1]
	\arrow[from=2-2, to=2-3]
	\arrow[from=1-3, to=2-3]
	\arrow[from=1-2, to=2-2]
	\arrow[from=1-1, to=2-1]
\end{tikzcd}\]
The lemma \ref{lemma:technical cartesian product preserves W} implies that $ E^{eq}\times [m]\to [m]$ is in $\widehat{\W_1}$. Combined with lemma \ref{lemma:the functor [] preserves classes}, this implies that all the vertical morphisms are in $\widehat{\M}$. By stability by colimit, so is $f\times[b,m]$.
\end{proof}

\begin{cor}
\label{cor:if codomain a groupoid, then f is exponentiable}
Let $C$ be an $\io$-category, $S$ an $\infty$-groupoid, and $f:C\to S$ any morphism.
The functor $f^*:\ocat_{/S}\to \ocat_{/C}$ preserves colimits. 
\end{cor}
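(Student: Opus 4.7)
The strategy is to reduce the statement to the preceding proposition via a fiberwise decomposition over $S$. Since colimits in both $\ocat_{/S}$ and $\ocat_{/C}$ are computed in $\ocat$, it suffices to show that for any diagram $X_\bullet:I\to \ocat_{/S}$ with colimit $X$ in $\ocat$, the canonical comparison $\colim_i (C\times_S X_i)\to C\times_S X$ is an equivalence in $\ocat$.

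The key input is that $S$ is an $\infty$-groupoid. The inclusion $i_0:\igrd\hookrightarrow \ocat$ preserves colimits, being both a left adjoint (to $\tau_0$) and a right adjoint (to $\tau^i_0$); in particular $S\sim \colim_{s\in S} 1$ in $\ocat$. More generally, every object $g:X\to S$ in $\ocat_{/S}$ should admit a canonical fiberwise decomposition $X\sim \colim_{s\in S} X_s$, where $X_s:= X\times_S\{s\}$ is the fiber at $s$, reflecting the straightening equivalence $\ocat_{/S}\sim \mathrm{Fun}(S,\ocat)$ valid for any groupoid base. A consequence is that taking fibers at points of $S$ commutes with colimits in $\ocat_{/S}$: $X_s\sim \colim_i (X_i)_s$. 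Granting this, the fibers of $C\times_S X\to S$ are products of fibers, namely $C_s\times X_s$, and the fiberwise decomposition yields
\begin{align*}
C \times_S X & \sim \colim_{s \in S} (C_s \times X_s) \\
& \sim \colim_{s \in S} \bigl(C_s \times \colim_i (X_i)_s\bigr) \\
& \sim \colim_{s \in S} \colim_i \bigl(C_s \times (X_i)_s\bigr) \\
& \sim \colim_i \colim_{s \in S} \bigl(C_s \times (X_i)_s\bigr) \\
& \sim \colim_i (C \times_S X_i),
\end{align*}
where the third equivalence uses the preceding proposition (cartesian product preserves colimits in the second variable) and the fourth is the Fubini exchange for colimits.

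The main obstacle is establishing the fiberwise decomposition $X\sim \colim_{s\in S} X_s$, or equivalently the straightening $\ocat_{/S}\sim \mathrm{Fun}(S,\ocat)$ for $S$ an $\infty$-groupoid. This is a general consequence of $\ocat$ being presentable and $\igrd$ being the free cocompletion of the point under colimits; a direct verification could proceed by writing $S=\colim_{s\in S} 1$ and testing the canonical map $\colim_{s\in S} X_s\to X$ against representables via the Yoneda lemma, using that the inclusion $\igrd\hookrightarrow\ocat$ is cocontinuous.
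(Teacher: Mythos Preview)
Your argument hinges on the straightening equivalence $\ocat_{/S}\sim\mathrm{Fun}(S,\ocat)$ and the resulting fiberwise decomposition $X\sim\colim_{s\in S}X_s$. You acknowledge this as the main obstacle but do not prove it, and the sketched justifications do not close the gap. The appeal to presentability alone is not sufficient: descent over a groupoid base is not a formal consequence of being presentable. Worse, part of what you need is already an instance of the corollary itself: the step ``$X_s\sim\colim_i(X_i)_s$'' in your chain of equivalences says exactly that pullback along $\{s\}\to S$ preserves colimits, which is the case $C=\{s\}$ of what you are proving. The alternative Yoneda-testing sketch does not get off the ground either without first knowing that $s\mapsto X_s$ assembles into a diagram indexed by $S$, which is again part of the content of the straightening.

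The paper's argument avoids this by lifting to $\iPsh{\Theta}$, which is locally cartesian closed, so pullback along $f$ there preserves all colimits automatically. It then suffices to check that $f^*$ carries each generating map $i:a\to b$ of $\W$ into $\widehat{\W}$, so that the induced functor between the localizations preserves colimits. The key observation is that every globular sum $b$ has $\tau^i_0(b)\sim 1$, hence any map $b\to S$ into an $\infty$-groupoid factors through a single point $\{s\}\hookrightarrow S$; the pullback of $i$ along $f$ then identifies with $C_s\times i$ for the fiber $C_s$, and this lies in $\widehat{\W}$ by Proposition~\ref{prop:cartesian product preserves W}. This is direct and presupposes no descent statement in $\ocat$. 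Incidentally, that same observation about globular sums is precisely what one would use to establish the straightening for $\ocat$, so your route does not bypass the essential point --- it wraps it in an extra, unproven layer.
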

\begin{proof}
As $\iPsh{\Theta}$ is locally cartesian closed, we just have to verify that for any cartesian squares:
\[\begin{tikzcd}
	{C''} & {C'} & C \\
	a & b & S
	\arrow["i"', from=2-1, to=2-2]
	\arrow[from=1-3, to=2-3]
	\arrow["j", from=1-1, to=1-2]
	\arrow[from=1-2, to=1-3]
	\arrow[from=2-2, to=2-3]
	\arrow[from=1-1, to=2-1]
	\arrow[from=1-2, to=2-2]
	\arrow["\lrcorner"{anchor=center, pos=0.125}, draw=none, from=1-1, to=2-2]
	\arrow["\lrcorner"{anchor=center, pos=0.125}, draw=none, from=1-2, to=2-3]
\end{tikzcd}\]
if $i$ is in $\W$, then $j$ is in $\widehat{\W}$. Suppose given such cartesian squares. As $b$ is a globular form, $\tau^i_0(b)\sim 1$ and 
as $S$ is an $\infty$-groupoid, there exists an object $s$ of $S$ such that the morphism $b\to S$ factor through $\{s\}\to S$. If we denote by $C_s$ the fiber of $f$ in $\{s\}$, the morphisms $i$ and $j$ then fit in the following cartesian squares:
\[\begin{tikzcd}
	{C_s\times a} & {C_s\times b} & {C_s} & C \\
	a & b & {\{s\}} & S
	\arrow["i"', from=2-1, to=2-2]
	\arrow[from=1-3, to=2-3]
	\arrow["j", from=1-1, to=1-2]
	\arrow[from=1-2, to=1-3]
	\arrow[from=2-2, to=2-3]
	\arrow[from=1-1, to=2-1]
	\arrow[from=1-2, to=2-2]
	\arrow["\lrcorner"{anchor=center, pos=0.125}, draw=none, from=1-1, to=2-2]
	\arrow["\lrcorner"{anchor=center, pos=0.125}, draw=none, from=1-2, to=2-3]
	\arrow[from=2-3, to=2-4]
	\arrow[from=1-3, to=1-4]
	\arrow[from=1-4, to=2-4]
	\arrow["\lrcorner"{anchor=center, pos=0.125}, draw=none, from=1-3, to=2-4]
\end{tikzcd}\]
The proposition \ref{prop:cartesian product preserves W} implies that $j$ verifies the desired property, which concludes the proof.
\end{proof}

The following proposition implies that a natural transformation is an equivalence if and only if it is pointwise one. 
\begin{prop}
\label{prop:cartesian square and times}
For any $\io$-categories $X$ and $C$, the following natural square is cartesian:
\[\begin{tikzcd}
	{\tau_0\uHom(X,C)} & {\uHom(X,C)} \\
	{\uHom(\tau_0X,\tau_0C)} & {\uHom(\tau_0X,C)}
	\arrow[from=1-1, to=2-1]
	\arrow[from=1-2, to=2-2]
	\arrow[from=2-1, to=2-2]
	\arrow[from=1-1, to=1-2]
\end{tikzcd}\]
\end{prop}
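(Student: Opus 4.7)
The plan is to verify the square is cartesian by testing it against globes $\Db_n$ for every $n \geq 0$; this suffices by proposition \ref{prop:equivalences detected on globes}. Applying $\Hom_\ocat(\Db_n,-)$, using the internal hom adjunction and the adjunction $\tau^i_0 \dashv i_0$ together with the equality $\tau^i_0 \Db_n = \ast$, one identifies the four corners as $(\tau_0 \uHom(X,C))_n \simeq \Hom_\ocat(X,C)$ and $(\uHom(\tau_0 X, \tau_0 C))_n \simeq \Hom_\igrd(\tau_0 X, \tau_0 C)$ (both independent of $n$), together with $(\uHom(X,C))_n = \Hom(\Db_n \times X, C)$ and $(\uHom(\tau_0 X, C))_n = \Hom(\Db_n \times \tau_0 X, C)$. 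The claim reduces to showing that for every $n$ the canonical map
\[\Hom_\ocat(X,C)\longrightarrow\Hom(\Db_n\times X,C)\times_{\Hom(\Db_n\times\tau_0 X,C)}\Hom_\igrd(\tau_0 X,\tau_0 C),\quad F\mapsto (F\circ\pi_X,\tau_0 F),\]
is an equivalence of $\infty$-groupoids, where $\pi_X : \Db_n \times X \to X$ is the projection.

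Next I would reduce to the case of representable $X = a \in \Theta$. Both sides of the square are compatible with the presentation of $X$ as a special colimit of globular sums: the product $\Db_n \times -$ preserves such colimits by proposition \ref{prop:cartesian product preserves W}; the functor $\uHom(-,C)$ sends colimits to limits; $\tau_0$ preserves limits as a right adjoint; and the formalism of special colimits (proposition \ref{prop:special colimit}) ensures that the cartesian square reduction is compatible throughout. For $X = a$ a globular sum, one computes $\tau_0 a$ explicitly: it is the discrete set $\{s,t\}$ for $a = \Db_m$ with $m \geq 1$, or a single point if $m = 0$. One then proceeds case by case, ultimately reducing to the single-globe situation via the standard globular decomposition of $a$.

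The hard part will be the essential surjectivity step for representable $X = a$: given $G : \Db_n \times a \to C$ with $G|_{\Db_n \times \tau_0 a}$ factoring through $\tau_0 C$, one must recover an essentially unique $F : a \to C$ such that $F \circ \pi_a \simeq G$. The compatibility condition forces $G$ to be constant in the $\Db_n$-direction on the endpoint objects of $a$; the challenge is to propagate this constancy to all higher cells of $a$. One approach is to induct on the globular dimension of $a$, using the pushout decomposition of $a$ into its boundary and its top cell together with proposition \ref{prop:taun preserves special colimits}. An alternative is to work in the marked simplicial set model for $\ocat$ supplied by theorem \ref{theo:lecorozo}, where the rigidity statement becomes a cellular lifting property that can be verified directly, exploiting the contractibility of $\Db_n$ as a marked simplicial set.
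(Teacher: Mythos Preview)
Your overall setup---test on globes, reduce $X$ to representables---matches the paper's, but the proposal stalls precisely where the content lies.

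First, a small gap in the reduction: writing the bottom row as a limit over $a\in\Theta_{/X}$ requires $\tau_0 X \simeq \colim_a \tau_0 a$, i.e.\ that $\tau_0$ commutes with the special colimit presenting $X$. ``$\tau_0$ preserves limits as a right adjoint'' handles the top-left corner but not this; you need proposition~\ref{prop:taun preserves special colimits} (not \ref{prop:special colimit}) here.

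The real issue is the ``hard part''. You correctly identify that, after the reductions and for $X=\Db_k$, one must show: any $G:\Db_n\times\Db_k\to C$ whose restriction to $\Db_n\times\tau_0\Db_k$ is constant in the $\Db_n$-direction is itself constant in that direction. But you do not prove this; you only list possible strategies (induction on dimension, or an appeal to marked simplicial sets) without carrying either out. Neither suggestion is obviously workable as stated, and the marked-simplicial-set route would be a significant detour.

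The paper's move is the one you are missing: by Yoneda, the statement is equivalent to the square
\[
\begin{tikzcd}
(\tau_0\Db_k)\times\Db_n \ar[r]\ar[d] & \Db_k\times\Db_n \ar[d]\\
\tau_0\Db_k \ar[r] & \Db_k
\end{tikzcd}
\]
being \emph{cocartesian} in $\ocat$. This is then a concrete finite computation: using the explicit description of $\Db_k\times\Db_n$ as a pushout from proposition~\ref{prop:example of a special colimit}, the claim reduces to showing $[\Db_{k-1},1]\amalg_{[\Db_{k-1}\times\Db_{n-1},1]}[\Db_{k-1},1]\simeq[\Db_{k-1},1]$, which follows from proposition~\ref{prop:inteligent trucatio and a particular colimit} (the map $\Db_{n-1}\to 1$ is an effective epimorphism) together with the fact that $\Db_{k-1}\times(-)$ preserves colimits (proposition~\ref{prop:cartesian product preserves W}). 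Recasting the problem as a cocartesian square---rather than trying to ``propagate constancy'' cell by cell---is the step that turns the hard part into a short argument.
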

\begin{proof}
As $\uHom(\uvar,C)$ sends colimits to limits, we can suppose that $X$ is of shape $\Db_n$ for $n\geq 0$. Eventually, proposition \ref{prop:equivalences detected on globes} implies that pullbacks are detected on globes. We then have to show that for any integer $m$, the following square is cartesian:
$$
\begin{tikzcd}
	{\tau_0\uHom(\Db_n,C)} & {\Hom(\Db_n\times\Db_m,C)} \\
	{\Hom(\tau_0\Db_n\times\Db_m,\tau_0C)} & {\Hom((\tau_0\Db_n)\times\Db_m,C)}
	\arrow[from=1-1, to=2-1]
	\arrow[from=1-2, to=2-2]
	\arrow[from=2-1, to=2-2]
	\arrow[from=1-1, to=1-2]
\end{tikzcd}$$
To this extent, we claim that the following square is cocartesian in $\ocat$:
\begin{equation}
\label{eq:proof of cartesian}
\begin{tikzcd}
	{(\tau_0\Db_n)\times\Db_m} & {\Db_n\times\Db_m} \\
	{\tau_0\Db_n} & {\Db_n}
	\arrow[from=1-1, to=2-1]
	\arrow[from=2-1, to=2-2]
	\arrow[from=1-1, to=1-2]
	\arrow[from=1-2, to=2-2]
\end{tikzcd}
\end{equation}
Applying the functor $\uHom(\uvar,C)$ it will prove the desired property.
To show the cocartesianess of \eqref{eq:proof of cartesian}, remark that if either $n$ or $m$ is null, this is trivial. If not, proposition \ref{prop:example of a special colimit} states that $\Db_n\times\Db_m$ is the colimit of the span:
$$[\Db_{n-1},1]\vee[\Db_{m-1},1]\leftarrow [\Db_{n-1}\times \Db_{m-1},1]\to [\Db_{m-1},1]\vee[\Db_{n-1},1]$$
Using the two cartesian squares
\[\begin{tikzcd}
	{[\Db_{m-1},1]} & {[\Db_{m-1},1]\vee[\Db_{n-1},1]} & {[\Db_{m-1},1]} & {[\Db_{n-1},1]\vee[\Db_{m-1},1]} \\
	{[0]} & {[\Db_{n-1},1]} & {[0]} & {[\Db_{n-1},1]}
	\arrow[from=1-3, to=2-3]
	\arrow[from=1-3, to=1-4]
	\arrow[from=2-3, to=2-4]
	\arrow[from=1-4, to=2-4]
	\arrow[from=1-2, to=2-2]
	\arrow["\lrcorner"{anchor=center, pos=0.125, rotate=180}, draw=none, from=2-4, to=1-3]
	\arrow[from=1-1, to=2-1]
	\arrow[from=2-1, to=2-2]
	\arrow[from=1-1, to=1-2]
	\arrow["\lrcorner"{anchor=center, pos=0.125, rotate=180}, draw=none, from=2-2, to=1-1]
\end{tikzcd}\]
this implies that the pushout of the upper span of \eqref{eq:proof of cartesian} is then the colimit of the diagram:
\begin{equation}
\label{eq:proof of cartesian2}
[\Db_{n-1},1]\leftarrow [\Db_{n-1}\times \Db_{m-1},1]\to [\Db_{n-1},1]
\end{equation}
The proposition \ref{prop:inteligent trucatio and a particular colimit} states that the square
\[\begin{tikzcd}
	{ \Db_{m-1}} & 1 \\
	1 & 1
	\arrow[from=2-1, to=2-2]
	\arrow[from=1-1, to=1-2]
	\arrow[from=1-1, to=2-1]
	\arrow[from=1-2, to=2-2]
\end{tikzcd}\]
is cocartesian. Combined with proposition \ref{prop:cartesian product preserves W}, this implies that the square
\[\begin{tikzcd}
	{\Db_{n-1}\times \Db_{m-1}} & {\Db_{n-1}} \\
	{\Db_{n-1}} & {\Db_{n-1}}
	\arrow[from=2-1, to=2-2]
	\arrow[from=1-1, to=1-2]
	\arrow[from=1-1, to=2-1]
	\arrow[from=1-2, to=2-2]
\end{tikzcd}\]
is cocartesian. 
As a consequence, the colimit of the span \eqref{eq:proof of cartesian2}, and so of the upper span of \eqref{eq:proof of cartesian}, is $ [\Db_{n-1},1]\sim \Db_n$, which concludes the proof. 
\end{proof}

\p
\label{para:dualities non strict case}
In paragraph \ref{para:dualities strict case}, for any subset $S$ of $\Nb^*$, we have defined the duality $(\uvar)^S:\zocat\to \zocat$.
 These functors restrict to functors $\Theta\to \Theta$ that induce by extension by colimit functors \ssym{((b49@$(\uvar)^S$}{for $\io$-categories}
$$(\uvar)^S:\iPsh{\Theta}\to \iPsh{\Theta}$$
which are once again called \snotion{dualities}{for $\io$-categories}.
It is easy to see that this functor preserves $\io$-categories and then induces functors
$$(\uvar)^S:\ocat\to \ocat.$$

In particular, we have the \snotionsym{odd duality}{((b60@$(\uvar)^{op}$}{for $\io$-categories} $(\uvar)^{op}$, corresponding to the set of odd integer, the \snotionsym{even duality}{((b50@$(\uvar)^{co}$}{for $\io$-categories} $(\uvar)^{co}$, corresponding to the subset of non negative even integer, the \snotionsym{full duality}{((b80@$(\uvar)^{\circ}$}{for $\io$-categories} $(\uvar)^{\circ}$, corresponding to $\Nb^*$ and the \snotionsym{transposition}{((b70@$(\uvar)^t$}{for $\io$-categories} $(\uvar)^t$, corresponding to the singleton $\{1\}$. Eventually, we have equivalences
$$((\uvar)^{co})^{op}\sim (\uvar)^{\circ} \sim ((\uvar)^{op})^{co}.$$

\p A morphism $f:C\to D$ is an \notion{epimorphism} if it is in the smallest cocomplete $\infty$-groupoid of arrows of $\ocat$ that includes the codiagonal $\Db_n\coprod\Db_n\to \Db_n$ for any $n\geq 0$. A morphism is a \notion{monomorphism} if it has the unique right lifting property against epimorphisms.

 A morphism $i:C\to D$ is then a monomorphism if and only if for any $n$, $C_n\to D_n$ is a monomorphism.
The small object argument induces a factorization system:
\begin{equation}
\label{eq:epimonomorphism factorization}
C\to \im i\to D
\end{equation}
of any morphism $i:C\to D$, where the left map is an epimorphism, and the right one is a monomorphism. The object \wcnotation{$\im i$}{(im@$\im$} is called the \wcnotion{image of $i$}{image of a morphism}. We then have by construction the following result:

\begin{prop}
A morphism is an equivalence if and only if it is both a monomorphism and a epimorphism.
\end{prop}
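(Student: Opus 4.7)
The plan is to apply the general factorization system machinery from section \ref{section:Factorization system} to the pair (epimorphisms, monomorphisms) on $\ocat$. First, I would observe that the codiagonals $\Db_n\coprod \Db_n \to \Db_n$ have $\U$-small domain and codomain, so Proposition \ref{prop:factorization system from S} produces a factorization system $(L_S, R_S)$ where $L_S = \widehat{S}$ is by definition the class of epimorphisms. By Proposition \ref{prop:caracterisation of L and R with lifting property}, $R_S$ consists exactly of those morphisms having the unique right lifting property against all of $L_S$, that is, against all epimorphisms; this is the definition of monomorphism. Hence (epimorphisms, monomorphisms) is a factorization system, which is the factorization used in \eqref{eq:epimonomorphism factorization}.

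One direction is immediate: any factorization system contains equivalences in both its left and right classes, so an equivalence is always both an epimorphism and a monomorphism.

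For the converse, suppose $f : C \to D$ is both an epimorphism and a monomorphism, and factor $f = pi$ with $i : C \to E$ an epimorphism and $p : E \to D$ a monomorphism. Since $f$ is itself an epimorphism and $p$ a monomorphism, the square with $i$ on top and the identity of $D$ on the bottom admits a unique lift $h : D \to E$ with $hf = i$ and $ph = \id_D$. Symmetrically, since $i$ is an epimorphism and $f$ a monomorphism, the square with the identity of $C$ on top and $p$ on the bottom admits a unique lift $k : E \to C$ with $ki = \id_C$ and $fk = p$. Then $(kh)f = k(hf) = ki = \id_C$ and $f(kh) = (fk)h = ph = \id_D$, so $kh$ is a two-sided inverse of $f$, and $f$ is an equivalence.

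I expect no serious obstacle: the only subtle step is identifying the defined notion of monomorphism with the right class $R_S$ produced by the small object argument, which is handled cleanly by Proposition \ref{prop:caracterisation of L and R with lifting property}. The remainder is the standard retract-and-inverse argument in any factorization system.
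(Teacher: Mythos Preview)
Your proof is correct and matches the paper's approach: the paper simply says the result holds ``by construction,'' meaning it follows from the factorization system just established, and you have spelled out exactly this argument. As a minor simplification, for the converse you can skip the factorization $f=pi$ and lift directly in the square with $f$ on both vertical sides and identities on top and bottom (since $f$ is simultaneously in $L$ and $R$), obtaining a two-sided inverse in one step.
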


\begin{prop}
\label{prop:the image is stable under cartesian product}
The image is stable under the cartesian product.
\end{prop}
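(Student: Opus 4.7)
The plan is to establish that the epi-mono factorization is stable under cartesian product; the statement then follows by uniqueness of the factorization. Concretely, given morphisms $f:A\to B$ and $g:C\to D$ with factorizations $f=m_fe_f$ and $g=m_ge_g$, the equality $f\times g=(m_f\times m_g)\circ(e_f\times e_g)$ would provide an epi-mono factorization of $f\times g$, forcing $\im(f\times g)\sim\im f\times\im g$.

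The first thing to verify is that the product of two monomorphisms is a monomorphism. Since a morphism $f:C\to D$ is a monomorphism if and only if $f_n:C_n\to D_n$ is a monomorphism in $\igrd$ for every $n$ (as noted in the paragraph preceding the statement), and since by Yoneda $(C\times C')_n\sim C_n\times C'_n$, this reduces to the standard fact that products of monomorphisms of $\infty$-groupoids are monomorphisms.

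The main work is showing that the product of two epimorphisms is an epimorphism. Writing $e\times e'=(e\times\id)\circ(\id\times e')$, it is enough to check that $e\times\id_X$ is an epimorphism for every epimorphism $e$ and every $\io$-category $X$. Fix $X$ and consider the class $\mathcal{E}_X$ of morphisms $f$ such that $f\times\id_X$ is an epimorphism. By Proposition \ref{prop:cartesian product preserves W}, the functor $(-)\times X$ preserves colimits, so $\mathcal{E}_X$ is closed under colimits; it is also closed under composition and contains equivalences. Hence $\mathcal{E}_X$ is cocomplete, and to see that it contains all epimorphisms it suffices to verify that it contains the generating codiagonals $\Db_n\sqcup\Db_n\to\Db_n$. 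After taking product with $X$, these become the codiagonals $(\Db_n\times X)\sqcup(\Db_n\times X)\to\Db_n\times X$ of $\Db_n\times X$.

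The main obstacle is therefore showing that the codiagonal $Y\sqcup Y\to Y$ is an epimorphism for any $\io$-category $Y$, not merely for $Y=\Db_n$. This is handled by writing $Y$ as a colimit of globular sums (in turn colimits of globes) and observing that the formation of codiagonals commutes with this colimit: the codiagonal $Y\sqcup Y\to Y$ is the colimit of the codiagonals $\Db_n\sqcup\Db_n\to\Db_n$ indexed by $\Theta_{/Y}$, hence lies in the cocomplete class of epimorphisms. Combining this with Steps 1 and 2 gives the desired conclusion.
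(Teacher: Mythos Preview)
Your proof is correct and follows essentially the same approach as the paper: both reduce stability of epimorphisms under $(-)\times X$ to the generating codiagonals via colimit-preservation of the cartesian product, and then argue that the codiagonal of any object is an epimorphism because the class of such objects is closed under colimits and contains the globes. The only minor imprecision is that $\Theta_{/Y}$ indexes globular sums rather than globes, but since you note these are in turn colimits of globes, the argument goes through.
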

\begin{proof}
One has to show that both epimorphisms and monomorphisms are stable under the functor $\uvar\times A$ for $A$ any $\io$-category. For monomorphisms, it is a direct consequence of the fact that this notion has been defined with a right lifting property. For epimorphisms, as $\uvar\times A$ commutes with colimit, we can reduce to show that for any $n$, 
$$(\Db_n\coprod\Db_n)\times A \sim \Db_n\times A\coprod\Db_n\times A\to \Db_n \times A$$
is an epimorphism. 
However, the $\infty$-groupoid of object $B$ such that 
$B\coprod B\to B$ is an epimorphism is closed by colimits and contains globes. This $\infty$-groupoid then contains all the object and so in particular $\Db_n \times A$.
\end{proof}

\begin{lemma}
\label{lemma:id is an epi}
For any integer $n$, the projection $\Ib:\Db_{n+1}\to \Db_n$ is an epimorphism. 
\end{lemma}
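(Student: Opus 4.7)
The plan is to exhibit $\Ib : \Db_{n+1} \to \Db_n$ as a retraction and then to verify that every retraction in $\ocat$ is an epimorphism. In the category $\Theta$, the source-inclusion $\sigma : \Db_n \to \Db_{n+1}$ of the top $(n+1)$-cell satisfies the globular identity $\Ib \circ \sigma = \mathrm{id}_{\Db_n}$, and this equality passes to $\ocat$ via the Yoneda embedding. (Equivalently one could use the target-inclusion; either works.)

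To show that such a retraction is an epimorphism, I apply the epi-mono factorization system \eqref{eq:epimonomorphism factorization} to write $\Ib = m \circ e$ with $e : \Db_{n+1} \to \im \Ib$ an epimorphism and $m : \im \Ib \to \Db_n$ a monomorphism. The composite $s := e \circ \sigma$ then satisfies $m \circ s = \Ib \circ \sigma = \mathrm{id}_{\Db_n}$, so $s$ is a right-inverse of $m$. Since monomorphisms in $\ocat$ are precisely the levelwise monomorphisms of $\infty$-groupoids (as recalled in the paragraph immediately preceding the lemma), they are left-cancellable; from the identity $m \circ (s \circ m) = m = m \circ \mathrm{id}_{\im \Ib}$ one deduces $s \circ m = \mathrm{id}_{\im \Ib}$. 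Hence $m$ is an equivalence, $\Ib$ is equivalent to the epimorphism $e$, and is therefore itself an epimorphism.

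There is no real obstacle in this argument; the only point that might deserve a line of justification is that the globular section $\sigma$ and the identity $\Ib \circ \sigma = \mathrm{id}$ are available inside $\Theta$ itself, which is standard.
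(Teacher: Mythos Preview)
Your proof is correct and takes a genuinely different route from the paper's. You argue abstractly: $\Ib$ is a retraction (split by the source inclusion $i_n^-:\Db_n\to\Db_{n+1}$), and any retraction is an epimorphism because in its epi--mono factorization the monic factor acquires a section and hence is an equivalence. The paper instead works concretely with the generating codiagonals: it first shows $\partial\Db_{n+1}\to\Db_n$ is an epimorphism (via a pushout of the codiagonal $\partial\Db_n\amalg\partial\Db_n\to\partial\Db_n$ together with left cancellation from $\Db_n\amalg\Db_n\to\Db_n$), and then factors $\Ib$ as a composite of two pushouts of such maps.

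Your argument is shorter and more conceptual --- it really proves the general fact that every split epimorphism lies in the left class of the factorization system --- while the paper's argument has the virtue of explicitly exhibiting $\Ib$ inside the cocomplete closure of the codiagonals, staying close to the definition. Both are perfectly valid; your approach avoids any combinatorics of $\partial\Db_{n+1}$ and $\partial\Db_{n+2}$ at the cost of invoking the factorization system and the fact (which you correctly justify) that monomorphisms are left-cancellable.
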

\begin{proof}
Remark first that we have a cocartesian square:
\[\begin{tikzcd}
	{\partial\Db_n\coprod \partial\Db_n} & {\Db_n\coprod\Db_n} \\
	{\partial\Db_n} & {\partial\Db_{n+1}}
	\arrow[""{name=0, anchor=center, inner sep=0}, from=1-1, to=1-2]
	\arrow[from=1-1, to=2-1]
	\arrow[from=1-2, to=2-2]
	\arrow[from=2-1, to=2-2]
	\arrow["\lrcorner"{anchor=center, pos=0.125, rotate=180}, draw=none, from=2-2, to=0]
\end{tikzcd}\]
As the left hand morphism is an epimorphism, so is the right one. By stability by left cancellation, this implies that $\partial\Db_{n+1}\to \Db_n$ is an epimorphism.
Now, the map $\Ib$ can be factored as: 
\[\begin{tikzcd}
	{\partial\Db_{n+1}} & {\Db_n} \\
	{\Db_{n+1}} & {\Db_{n+1}\coprod_{\partial\Db_{n+1}}\Db_n} & {\Db_n} \\
	& {\partial\Db_{n+2}} & {\Db_{n+1}}
	\arrow[from=1-1, to=2-1]
	\arrow[""{name=0, anchor=center, inner sep=0}, from=1-1, to=1-2]
	\arrow[from=2-1, to=2-2]
	\arrow[from=1-2, to=2-2]
	\arrow[from=3-2, to=2-2]
	\arrow[from=2-2, to=2-3]
	\arrow[from=3-2, to=3-3]
	\arrow["\lrcorner"{anchor=center, pos=0.125, rotate=-90}, draw=none, from=2-3, to=3-2]
	\arrow[from=3-3, to=2-3]
	\arrow["\lrcorner"{anchor=center, pos=0.125, rotate=180}, draw=none, from=2-2, to=0]
\end{tikzcd}\]
which directly implies that $\Ib$ is an epimorphism. 
\end{proof}

\begin{prop}
\label{prop:intelignet truncation is poitwise an epi}
For any integer $n$, the canonical natural transformation $id\to \tau^i_n$ is pointwise an epimorphism. 
\end{prop}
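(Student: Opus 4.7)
The plan is to reduce the statement to the case of globes by a standard colimit argument, and then invoke the previous lemma about $\Ib$ being an epimorphism.

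First, observe that the intelligent truncation $\tau^i_n : \ocat \to \ocat$ preserves colimits (by paragraph \ref{para:truncation and inteligent trucation}), so the natural transformation $\eta : id \to \tau^i_n$ itself commutes with colimits, viewed as a functor into $\Arr(\ocat)$. Since epimorphisms form a cocomplete $\infty$-groupoid of arrows by definition, the full sub $\infty$-groupoid of $\ocat$ consisting of those $C$ for which $\eta_C$ is an epimorphism is closed under colimits. It therefore suffices to verify the claim on a class of $\io$-categories that generates $\ocat$ under colimits.

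Every $\io$-category is a special colimit of representables (the example following paragraph \ref{para: spetial colimits}), so I reduce to the case where $C$ is a globular sum $a \in \Theta$. Each such $a$ is in turn the colimit of its spine $\Sp_a$, which is a colimit of globes $\Db_k$. Thus I further reduce to the case $C = \Db_k$ for $k \geq 0$.

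For $k \leq n$, the object $\Db_k$ already lies in $\ncat{n}$, so $\eta_{\Db_k}$ is an equivalence and there is nothing to prove. For $k > n$, we have $\tau^i_n \Db_k \sim \Db_n$ by the description of the adjunction on representables, and the unit factors as the composition
\[
\Db_k \xrightarrow{\Ib} \Db_{k-1} \xrightarrow{\Ib} \cdots \xrightarrow{\Ib} \Db_n.
\]
Lemma \ref{lemma:id is an epi} ensures that each $\Ib$ is an epimorphism, and epimorphisms are closed under composition, so the composite is an epimorphism, finishing the proof. The only step that requires any care is the identification of $\eta_{\Db_k}$ with the iterated $\Ib$, which is immediate from the formula $\tau^i_n(\Db_k) = \Db_{\min(n,k)}$ given in paragraph \ref{para:truncation and inteligent trucation}.
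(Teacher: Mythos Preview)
Your proof is correct and follows the same approach as the paper; the paper's own proof simply reads ``This is a direct consequence of lemma \ref{lemma:id is an epi},'' and your argument spells out the implicit colimit reduction that makes this consequence direct. The two-step reduction to globular sums and then to globes is slightly roundabout (one could go straight to globes, since they already generate $\ocat$ under colimits), but this is a matter of taste rather than substance.
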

\begin{proof}
This is a direct consequence of lemma \ref{lemma:id is an epi}.
\end{proof}

\begin{prop}
\label{prop:canonical epi}
For any integer $n$, any $(\infty,n)$-category $C$, and any $\io$-category $D$, the canonical morphisms
$$\alpha:\coprod_{C_n}\Db_n\to C~~~~~\beta:\coprod_{(n,D_n)}\Db_n\to D$$
are epimorphisms.
\end{prop}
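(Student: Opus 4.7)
The plan is to prove both assertions by showing, at every level $k$, that the $k$-cells of the codomain are hit by $k$-cells of the domain. Apply the epi-mono factorization $f = \iota \circ p$ with $\iota$ a monomorphism and $p$ an epimorphism. By the pointwise characterization of monomorphisms in $\ocat$ stated earlier, each $\iota_k$ is an inclusion of connected components; hence if $f_k$, which factors through $\iota_k$, is surjective on $\pi_0$, then so is $\iota_k$, making $\iota_k$ an equivalence of $\infty$-groupoids. Proposition~\ref{prop:equivalences detected on globes} then upgrades this to $\iota$ being an equivalence, so $f \simeq p$ is an epimorphism. It therefore suffices to check that $\alpha_k$ and $\beta_k$ are $\pi_0$-surjective for every $k$.

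For $\beta$ this is immediate: any $k$-cell $x \in D_k$ is the image under $\beta_k$ of the element $(x, \operatorname{id}_{\Db_k})$ in the summand $D_k \times (\Db_k)_k$ of $(\coprod_{m, D_m} \Db_m)_k$. For $\alpha$ we use the hypothesis that $C$ is an $(\infty,n)$-category: the adjunction $\tau^i_n \dashv i_n$ of \eqref{eq:inclusion of n cat}, together with the formula $\tau^i_n\Db_k = \Db_{\min(n,k)}$, yields an equivalence $\Hom_\ocat(\Db_{\min(n,k)}, C) \simeq \Hom_\ocat(\Db_k, C) = C_k$ via precomposition with the canonical map $\Db_k \to \Db_{\min(n,k)}$. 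For $k \geq n$ this writes every $k$-cell of $C$ as $\bar y \circ \Ib$ with $\bar y \in C_n$ and $\Ib : \Db_k \to \Db_n$ the iterated unit, so $(\bar y, \Ib) \in C_n \times (\Db_n)_k$ is a preimage under $\alpha_k$. For $k < n$, the iterated unit $\Ib : \Db_n \to \Db_k$ admits a section $s : \Db_k \to \Db_n$ in $\Theta$ — any iteration of source inclusions works, by the globular identity giving $\Ib \circ s = \operatorname{id}$ — so every $k$-cell $y$ can be written as $y = (y \circ \Ib) \circ s$, with $y \circ \Ib : \Db_n \to C$ an $n$-cell; this provides the preimage $(y \circ \Ib, s)$ under $\alpha_k$.

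The main obstacle is the case $k < n$ for $\alpha$, which requires recognizing every low-dimensional cell of $C$ as a face of a suitable identity $n$-cell. This is resolved by exhibiting the explicit section of the iterated unit in $\Theta$; once granted, the rest of the argument is formal from the epi-mono factorization and the pointwise characterization of monomorphisms and equivalences.
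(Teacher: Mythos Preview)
Your proof is correct and follows essentially the same strategy as the paper: factor through the image and show that the monomorphism part is an equivalence by checking surjectivity on $k$-cells for all $k$, then invoke Proposition~\ref{prop:equivalences detected on globes}. The paper is terser—it only says that $\alpha$ having the right lifting property against $\emptyset\to\Db_k$ for $k\leq n$ ``is obviously true'' and leaves the reduction from $k>n$ to $k=n$ implicit in the $(\infty,n)$-category hypothesis—whereas you spell out both ranges explicitly via the adjunction $\tau^i_n\dashv i_n$ and the section of the iterated unit, which makes the argument clearer.
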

\begin{proof}
Let $I$ be the image of $\alpha$. We are willing to show that the canonical morphism $j:I\to C$ is an equivalence.
According to lemma \ref{lemma:equivalence if unique right lifting property against globes.}, and as $j$ is a monomorphism, we have to show that $j$ has the (non unique) right lifting property against $\emptyset\to \Db_k$ for any $k\leq n$. It is sufficient to show that $\alpha$ has the (non unique) right lifting property against $\emptyset\to \Db_k$ for any $k\leq n$, which is obviously true. 
We proceed similarly for $\beta$.
\end{proof}

\begin{prop}
\label{prop:truncation of epimorphism is pushout}
Let $i:A\to B$ be an epimorphism and $n$ an integer. The canonical square 
\[\begin{tikzcd}
	A & B \\
	{\tau^i_n(A)} & {\tau^i_n(B)}
	\arrow[from=1-1, to=2-1]
	\arrow["{\tau^i_n(i)}"', from=2-1, to=2-2]
	\arrow["i", from=1-1, to=1-2]
	\arrow[from=1-2, to=2-2]
\end{tikzcd}\]
is cocartesian. 
\end{prop}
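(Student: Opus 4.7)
The plan is to show that the class $\mathcal{F}$ of morphisms $i\colon A\to B$ in $\ocat$ for which the naturality square of $\eta\colon \mathrm{id}\to \tau^i_n$ at $i$ is cocartesian contains every epimorphism. Since the class of epimorphisms is by definition the smallest cocomplete $\infty$-groupoid of arrows containing the codiagonals $\nabla_k\colon \Db_k\coprod \Db_k\to \Db_k$, it suffices to check that $\mathcal{F}$ is cocomplete and that it contains every $\nabla_k$.

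Cocompleteness of $\mathcal{F}$ is formal. Equivalences lie in $\mathcal{F}$ because $\tau^i_n$ preserves them. Closure under composition follows by pasting: given composable $f, g\in \mathcal{F}$, the two vertically stacked cocartesian squares
\[
\begin{tikzcd}
A \arrow[r,"f"] \arrow[d] & B \arrow[r,"g"] \arrow[d] & C \arrow[d] \\
\tau^i_n A \arrow[r] & \tau^i_n B \arrow[r] & \tau^i_n C
\end{tikzcd}
\]
combine via the pasting lemma for pushouts into the cocartesian square for $gf$. Closure under colimits in $\mathrm{Arr}(\ocat)$ follows from the fact that the construction sending $i$ to its naturality square is colimit preserving (source, target and $\tau^i_n$ all preserve colimits), together with the fact that a colimit of cocartesian squares is cocartesian.

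For the codiagonals, the case $k\leq n$ is trivial since $\eta_{\Db_k}$ is then an equivalence. For $k>n$, the square specialises to
\[
\begin{tikzcd}
\Db_k\coprod \Db_k \arrow[r,"\nabla"] \arrow[d,"\eta\coprod \eta"'] & \Db_k \arrow[d,"\eta"] \\
\Db_n\coprod \Db_n \arrow[r,"\nabla"] & \Db_n
\end{tikzcd}
\]
and we must identify the pushout $P := \Db_k\coprod_{\Db_k\coprod \Db_k}(\Db_n\coprod \Db_n)$ with $\Db_n$. For every $\io$-category $X$, the universal property of $P$ gives
\[
\mathrm{Hom}(P,X)\simeq X_k\times_{X_k\times X_k}(X_n\times X_n),
\]
with $X_k\to X_k\times X_k$ the diagonal and $X_n\times X_n\to X_k\times X_k$ the map $(\eta^*,\eta^*)$. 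Applying Proposition \ref{prop:inteligent trucatio and a particular colimit} to $\Db_k$ identifies the canonical pushout $\Db_n\coprod_{\Db_k}\Db_n$ (with both legs equal to $\eta$) with $\Db_n$; dualising through the Yoneda embedding, the diagonal $X_n\to X_n\times_{X_k}X_n$ is an equivalence for every $X$. This collapses the above fibre product to $X_n$, producing an equivalence $\mathrm{Hom}(P,X)\simeq \mathrm{Hom}(\Db_n,X)$ natural in $X$, so $P\simeq \Db_n = \tau^i_n \Db_k$.

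The principal technical step is this identification of the pushout in the codiagonal case, whose content is exactly the effective-pushout property of the unit $\eta_{\Db_k}$ furnished by Proposition \ref{prop:inteligent trucatio and a particular colimit}; the remainder of the argument is purely formal manipulation of cocartesian squares.
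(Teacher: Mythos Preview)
Your proof is correct and follows essentially the same approach as the paper: reduce to the codiagonals $\Db_k\coprod\Db_k\to\Db_k$ by showing that the class of morphisms with cocartesian naturality square is cocomplete, then use Proposition~\ref{prop:inteligent trucatio and a particular colimit} to identify the relevant pushout with $\Db_n$. The paper phrases the key computation directly as $(\Db_n\coprod\Db_n)\coprod_{\Db_k\coprod\Db_k}\Db_k\simeq\Db_n\coprod_{\Db_k}\Db_n\simeq\Db_n$, while you pass through the Yoneda embedding, but this is only a cosmetic difference.
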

\begin{proof}
We can reduce to the case where $i$ is $\Db_k\coprod\Db_k\to \Db_k$. If $n\geq k$, it is directly true, and we then suppose $n<k$. In this case, the colimit of the span:
$$\Db_n\coprod \Db_n \leftarrow \Db_k\coprod\Db_k\to \Db_k$$
is $\Db_n\coprod_{\Db_k}\Db_n$. The proposition \ref{prop:inteligent trucatio and a particular colimit} implies that this pushout is $\Db_n$, which concludes the proof.
\end{proof}

\p
A functor $f:C\to D$ is \snotion{fully faithful}{for $\io$-categories} if for any pair of objects $a,b\in C$, the induced morphism 
$\hom_C(a,b)\to \hom_D(fa,fb)$ is an equivalence.

\begin{prop}
\label{prop:ff 1}
A functor is fully faithful if and only if it has the unique right lifting property against $\{0\}\coprod \{1\}\to \Db_n$ for $n>0$.
\end{prop}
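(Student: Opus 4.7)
The plan is to reduce the lifting condition for $f:C\to D$ to a family of lifting conditions on the hom-$\io$-categories, one for each pair of objects, and then invoke Lemma~\ref{lemma:equivalence if unique right lifting property against globes.} to identify this with the definition of fully faithful.

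First I would observe that for $n>0$ the globe satisfies $\Db_n\simeq [\Db_{n-1},1]$ canonically, with the two base points of $[\Db_{n-1},1]$ corresponding to $\{0\}$ and $\{1\}$ in $\Db_n$. Fix a square
\[\begin{tikzcd}
\{0\}\coprod\{1\} \ar[r,"{(a{,}b)}"] \ar[d] & C \ar[d,"f"] \\
\Db_n \ar[r] & D
\end{tikzcd}\]
i.e., two objects $a,b$ of $C$ together with a morphism $\Db_n\to D$ sending $0\mapsto fa$, $1\mapsto fb$. By the suspension/hom adjunction~\eqref{eq:suspesnion between category} and Proposition~\ref{prop:hom of the suspension}, such pointed maps $[\Db_{n-1},1]\to (D,fa,fb)$ correspond to maps $\Db_{n-1}\to \hom_D(fa,fb)$; similarly, lifts $\Db_n\to C$ through $(a,b)$ correspond to maps $\Db_{n-1}\to \hom_C(a,b)$. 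Writing $\Sq_{a,b}$ for the fiber of $\Sq(\{0\}\coprod\{1\}\to \Db_n,f)$ over a fixed pair $(a,b)$, this produces a natural equivalence of $\infty$-groupoids between the square-to-lift comparison map for $f$ and the map $\Hom(\Db_{n-1},\hom_C(a,b))\to \Hom(\Db_{n-1},\hom_D(fa,fb))$.

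Consequently, $f$ has the unique right lifting property against $\{0\}\coprod\{1\}\to \Db_n$ for every $n>0$ if and only if, for every pair of objects $a,b$ of $C$ and every $k\geq 0$, the induced map $\hom_C(a,b)\to \hom_D(fa,fb)$ has the unique right lifting property against $\emptyset\to \Db_k$. By Lemma~\ref{lemma:equivalence if unique right lifting property against globes.}, this holds precisely when each $\hom_C(a,b)\to \hom_D(fa,fb)$ is an equivalence, which is the definition of fully faithful.

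The only delicate point is making the identification of the square space $\Sq_{a,b}$ with $\Hom(\Db_{n-1},\hom_D(fa,fb))$ fully natural, but this is immediate from the fact that the suspension functor $[\uvar,1]:\ocat\to \ocat_{\bullet,\bullet}$ has $\hom_{(\uvar)}(\uvar,\uvar)$ as right adjoint (paragraph~\ref{para:wiskering}) and that $\Db_n=[\Db_{n-1},1]$ with its canonical two points corresponds to $\Db_{n-1}$ under this adjunction. No further combinatorics is needed.
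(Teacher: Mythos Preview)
Your proof is correct and follows essentially the same route as the paper. Both arguments reduce the lifting condition against $\{0\}\coprod\{1\}\to \Db_n$ to a lifting condition on the induced map $\hom_C(a,b)\to \hom_D(fa,fb)$ against $\emptyset\to \Db_{n-1}$ via the suspension/hom adjunction (the paper phrases this as $[\emptyset,1]=\{0\}\coprod\{1\}$ and $[\Db_{n-1},1]=\Db_n$), and then both invoke Lemma~\ref{lemma:equivalence if unique right lifting property against globes.} to conclude.
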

\begin{proof}
Let $f$ be a functor having the unique right lifting property against $\{0\}\coprod \{1\}\to \Db_n$ for $n>0$. As $[\emptyset,1] =\{0\}\coprod \{1\}$ and $[\Db_n,1] = \Db_{n+1}$, 
this is equivalent to asking for any pair of objects $c,d$ and for any integer $n$, that $f(c,d)$ has the unique right lifting property against $\emptyset\to \Db_n$, which in turn is equivalent to $f$ being fully faithful according to lemma \ref{lemma:equivalence if unique right lifting property against globes.}.
\end{proof}

\begin{prop}
\label{prop:ff 2}
Fully faithful functors are stable under limits.
\end{prop}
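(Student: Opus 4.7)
The plan is to reduce the statement to a general fact about unique right lifting properties via Proposition \ref{prop:ff 1}. That proposition characterizes fully faithful functors as precisely those having the unique right lifting property against the family $J = \{\{0\}\coprod\{1\}\to \Db_n \mid n>0\}$. Therefore it suffices to prove the following general assertion: for any fixed $\infty$-groupoid of morphisms $J$, the full sub $\infty$-groupoid of arrows of $\ocat$ having the unique right lifting property against $J$ is stable under limits in $\Arr(\ocat)$.

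To establish this, I would proceed as follows. Suppose $f:C\to D$ is the limit in $\Arr(\ocat)$ of a diagram $(f_j:C_j\to D_j)_{j:I}$ where each $f_j$ has the unique right lifting property against every $i:a\to b$ in $J$. Then $C\sim \lim_j C_j$ and $D\sim \lim_j D_j$. The representable functors $\Hom(a,\uvar)$ and $\Hom(b,\uvar)$ preserve limits, and pullbacks commute with limits, so
\[
\Sq(i,f)\;=\;\Hom(a,C)\times_{\Hom(a,D)}\Hom(b,D)\;\sim\; \lim_{j}\Sq(i,f_j),
\]
and similarly $\Hom(b,C)\sim \lim_j \Hom(b,C_j)$. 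The canonical comparison map $\Hom(b,C)\to \Sq(i,f)$ is then the limit of the maps $\Hom(b,C_j)\to \Sq(i,f_j)$, which are equivalences by hypothesis. As the limit of equivalences is an equivalence, $f$ has the unique right lifting property against $i$, and since this holds for every $i\in J$, $f$ is fully faithful.

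There is no real obstacle here: the argument is entirely formal and rests only on the facts that $\Hom$-functors preserve limits in each variable and that limits commute with limits. The only thing worth checking carefully is that the limit in $\Arr(\ocat)$ is computed via the limits $\lim_j C_j$ and $\lim_j D_j$ of the domains and codomains, which is immediate since $\Arr(\ocat)=\uHom([1],\ocat)$ and evaluation at the endpoints preserves limits.
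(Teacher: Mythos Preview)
Your argument is correct and follows the same approach as the paper: both reduce the statement, via Proposition~\ref{prop:ff 1}, to the stability of unique right lifting properties under limits. The paper states this stability as a fact without spelling out the details, while you provide the formal verification; there is nothing to add.
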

\begin{proof}
This is a consequence of the fact that fully faithful functors are characterized by unique right lifting properties.
\end{proof}

\begin{lemma}
\label{lemma:ff 2}
Let $p:C\to D$ be a fully faithful functor. The induced morphism $C_0\to D_0$ is a monomorphism.
\end{lemma}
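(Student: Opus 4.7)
My plan is to show that the diagonal $C_0\to C_0\times_{D_0}C_0$ is an equivalence of $\infty$-groupoids, which is the standard criterion for a map of $\infty$-groupoids to be a monomorphism. I will deduce this from the unique right lifting property characterization of fully faithfulness provided by Proposition \ref{prop:ff 1}.

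By that proposition, $p$ has the unique right lifting property against every morphism $\{0\}\coprod\{1\}\to \Db_n$ with $n>0$, and by Proposition \ref{prop:cloture of L recap} the class $L$ of morphisms against which $p$ has the unique right lifting property is closed under colimits, composition, retract, and left cancellation. The crucial step is to verify that $\{0\}\coprod\{1\}\to E^{eq}$ belongs to $L$. For this, I would first prove by induction on $n\geq 1$ that each cell inclusion $\partial\Db_n\to \Db_n$ lies in $L$: the base case is a generator, and the inductive step proceeds by building $\partial\Db_n$ from $\{0\}\coprod\{1\}$ by iterated pushouts of lower-dimensional cell inclusions, which shows $\{0\}\coprod\{1\}\to\partial\Db_n\in L$, and then applying left cancellation against $\{0\}\coprod\{1\}\to\Db_n\in L$. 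An analogous cell decomposition of $E^{eq}$ relative to $\{0\}\coprod\{1\}$ then yields $\{0\}\coprod\{1\}\to E^{eq}\in L$.

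Once this is in hand, the unique right lifting property against $\{0\}\coprod\{1\}\to E^{eq}$ translates into the assertion that the canonical morphism
$$\Hom(E^{eq},C)\longrightarrow\Hom(\{0\}\coprod\{1\},C)\times_{\Hom(\{0\}\coprod\{1\},D)}\Hom(E^{eq},D)$$
is an equivalence. Since $E^{eq}\to \Db_0$ belongs to $\W$ and $C,D$ are $\W$-local, the left-hand side identifies with $C_0$ and $\Hom(E^{eq},D)$ with $D_0$; combined with $\Hom(\{0\}\coprod\{1\},X)=X_0\times X_0$, the right-hand side becomes $(C_0\times C_0)\times_{D_0\times D_0}D_0\simeq C_0\times_{D_0}C_0$, and the map identifies with the diagonal, completing the argument.

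I expect the main obstacle to be the cell-decomposition step for $E^{eq}$, which depends on having a concrete presentation of the walking equivalence relative to $\{0\}\coprod\{1\}$ by iterated pushouts and transfinite composition of globular cells; while this is essentially routine once such a presentation is fixed, some bookkeeping is required to organize the attachments.
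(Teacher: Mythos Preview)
Your proposal is correct and follows the same strategy as the paper: reduce to showing that $p$ has the unique right lifting property against $\{0\}\coprod\{1\}\to E^{eq}$, then verify this map lies in the left class $L$ using the closure properties of Proposition~\ref{prop:cloture of L recap}. The paper's execution is more economical than yours: instead of proving $\partial\Db_n\to\Db_n\in L$ for all $n\geq 1$ and then appealing to a general cell decomposition, it applies left cancellation just once---from the generators $\{0\}\coprod\{1\}\to\Db_1$ and $\{0\}\coprod\{1\}\to\Db_2$ one obtains $\Db_2\to\Db_1\in L$---and then uses that $\{0\}\coprod\{1\}\to E^{eq}$ is a finite composite of pushouts along only $\{0\}\coprod\{1\}\to\Db_1$ and $\Db_2\to\Db_1$, which sidesteps the bookkeeping you flagged as the main obstacle.
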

\begin{proof}
To this extent, we have to show that $p:C\to D$ has the unique right lifting property against $1\coprod 1\to 1$. This is equivalent to show that $p$ has the unique right lifting property against $\iota: 1\coprod 1 \to E^{eq}$.

The proposition \ref{prop:ff 1} implies that $p$ as the unique right lifting property against $1\coprod 1\to \Db_1$ and $1\coprod 1\to \Db_2$
By left cancellation, this implies that  $p$ has the unique right lifting property against $\Db_2\to \Db_1$. As $\iota$ is a composition of pushouts along  $1\coprod 1\to \Db_1$ and  $\Db_2\to \Db_1$, this directly concludes the proof.
\end{proof}

\begin{prop}
\label{prop:fully faithful plus surjective on objet}
A morphism $f:C\to D$ is an equivalence if and only if it is fully faithful and induces a surjection on objects.
\end{prop}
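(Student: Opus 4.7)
The forward direction is immediate from the definitions, so the content is in the converse. The plan is to apply Lemma \ref{lemma:equivalence if unique right lifting property against globes.}, which reduces the problem to showing that $f$ has the unique right lifting property against $\emptyset \to \Db_n$ for every $n \geq 0$. I will factor $\emptyset \to \Db_n$ (for $n\geq 1$) as $\emptyset \to \{0\}\coprod\{1\} \to \Db_n$ and handle the two pieces separately, then invoke the fact that unique right lifting properties are stable under composition on the left.

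The first step is to show that $f_0 \colon C_0 \to D_0$ is an equivalence of $\infty$-groupoids. By Lemma \ref{lemma:ff 2}, fully faithfulness forces $f_0$ to be a monomorphism, hence $(-1)$-truncated; combined with the surjectivity hypothesis on $\pi_0$, this makes $f_0$ an equivalence. Equivalently, $f$ has the unique right lifting property against $\emptyset \to \{0\}\coprod\{1\}$ (and hence also against $\emptyset \to \Db_0$, settling the case $n=0$).

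The second step uses Proposition \ref{prop:ff 1}: since $f$ is fully faithful, it has the unique right lifting property against $\{0\}\coprod\{1\} \to \Db_n$ for every $n > 0$. A direct diagram chase shows that if a morphism has the unique right lifting property against $i\colon A\to B$ and against $j\colon B\to C$, then it has it against $j\circ i$ as well: the lift in the factored square is produced first against $i$ and then against $j$, and uniqueness at each stage propagates. Applying this to the factorization $\emptyset \to \{0\}\coprod\{1\}\to \Db_n$ gives the unique right lifting property of $f$ against $\emptyset \to \Db_n$ for every $n \geq 1$.

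Combining the two steps, $f$ has the unique right lifting property against $\emptyset\to\Db_n$ for every $n \geq 0$, and Lemma \ref{lemma:equivalence if unique right lifting property against globes.} concludes. There is no real obstacle here; the only subtle point is the composition stability of unique right lifting properties, but it is entirely formal once the factorization $\emptyset \to \{0\}\coprod\{1\}\to \Db_n$ is exploited.
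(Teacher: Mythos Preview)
Your proof is correct and follows essentially the same strategy as the paper's: both establish that $f_0$ is an equivalence using Lemma \ref{lemma:ff 2} together with the surjectivity hypothesis, and then combine this with Proposition \ref{prop:ff 1} to conclude. The only cosmetic difference is that you phrase everything in terms of unique right lifting properties and invoke Lemma \ref{lemma:equivalence if unique right lifting property against globes.}, whereas the paper speaks directly of the maps $f_n$ being equivalences and invokes Proposition \ref{prop:equivalences detected on globes}; these are equivalent formulations, and your version has the virtue of making explicit why the case $n>0$ really does require knowing $f_0$ is an equivalence first (via the composition-stability argument), a dependency the paper's wording leaves somewhat implicit.
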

\begin{proof}
This is necessary. Suppose that $f$ is fully faithful. According to 	\ref{prop:ff 1}, for any $n>0$, $f_n:C_n\to D_n$ is an equivalence. If $f$ induces a surjection on objects, lemma \ref{lemma:ff 2} implies that $f_0:C_0\to D_0$ is an equivalence. We can then apply proposition \ref{prop:equivalences detected on globes}.
\end{proof}

\subsection{Discrete Conduché functors}
\label{section:conduche}
\p
We denote \wcnotation{$\triangledown_{k,n}$}{(nabla@$\triangledown_{k,n}$} the unique globular morphism between $\Db_n$ and $\Db_n\coprod_{\Db_k}\Db_n$.
A morphism $f:C\to D$ between $\io$-categories is a \snotion{discrete Conduché functor}{for $\io$-categories} if it has the unique right lifting property against 
units $\Ib_{n+1}:\Db_{n+1}\to \Db_n$ for any integer $n$, and against compositions $\triangledown_{k,n}:\Db_n\to \Db_n\coprod_{\Db_k}\Db_n$ for any pair of integers $k\leq n$.
\begin{lemma}
\label{lemma:technicalconduche have the rlp aginst alebraic morphism}
The two following full sub $\infty$-groupoids of morphisms of $\ocat$ are equivalent: 
\begin{enumerate}
\item The smallest cocomplete full sub $\infty$-groupoid of morphisms containing the family of morphism $\{\Ib_{n+1}:\Db_{n+1}\to \Db_n,\}$ and the family $\{\triangledown_{k,n}:\Db_n\to \Db_n\coprod_{\Db_k}\Db_n\, ~k\leq n\}$.
\item The smallest cocomplete full sub $\infty$-groupoid of morphisms containing algebraic morphisms of $\Theta$ (this notion is defined in paragraph \ref{para:algebraic and globular}). 
\end{enumerate}
\end{lemma}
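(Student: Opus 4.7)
The plan is to prove the mutual inclusion of the two classes, which I will denote $\mathcal{U}$ (the cocomplete class generated by the units $\Ib_{n+1}$ and the compositions $\triangledown_{k,n}$) and $\mathcal{A}$ (the cocomplete class generated by algebraic morphisms of $\Theta$).

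The inclusion $\mathcal{U}\subset \mathcal{A}$ is the easier direction. It is enough to verify that each generator of $\mathcal{U}$ is itself algebraic. For the unit $\Ib_{n+1}:\Db_{n+1}\to \Db_n$, this follows directly from the definition recalled in paragraph \ref{para:algebraic and globular}: $\Ib_{n+1}$ is the canonical map that sends the unique top cell of $\Db_{n+1}$ to the identity on the top cell of $\Db_n$. For the composition $\triangledown_{k,n}:\Db_n\to \Db_n\coprod_{\Db_k}\Db_n$, one observes that $\Db_n\coprod_{\Db_k}\Db_n$ is itself a globular sum and $\triangledown_{k,n}$ identifies the formal composite of its two top cells with the top cell of $\Db_n$, hence it is algebraic.

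For the reverse inclusion $\mathcal{A}\subset \mathcal{U}$, the key step is to show that every algebraic morphism $f:a\to b$ of $\Theta$ lies in $\mathcal{U}$. I would argue by induction on the total dimension of $b$, and then by induction on the number of atomic globular factors appearing in its decomposition as a globular sum. The base case, where $b$ is a single globe $\Db_n$ and $a$ is a globular sum mapping algebraically to it, should reduce to a composite of unit morphisms $\Ib_{k+1}$ (collapsing higher cells) together with a composition morphism $\triangledown_{k,n}$ (assembling the factors of $a$). For the inductive step, when $b=b'\vee_{\Db_k} b''$, the compatibility of algebraic morphisms with globular decompositions allows one to factor $f$ through a pushout along the image of $a\to b'\vee_{\Db_k} b''$, reducing to two algebraic morphisms of strictly smaller combinatorial complexity; stability of $\mathcal{U}$ under pushouts (which holds since $\mathcal{U}$ is cocomplete) then concludes.

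The main obstacle will be the combinatorial bookkeeping in the inductive step: one must carefully factor an arbitrary algebraic morphism into a ``composition part'' expressible as an iterated pushout of morphisms $\triangledown_{k,n}$ followed by a ``degeneracy part'' expressible as an iterated pushout of morphisms $\Ib_{n+1}$. This kind of factorization is precisely the content of Guetta's analysis in \cite{Guetta_conduche}, and one should expect to invoke (or re-derive in this $\io$-categorical context) the decomposition of algebraic morphisms of $\Theta$ into these two elementary types.
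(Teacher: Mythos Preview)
Your easy direction ($\mathcal{U}\subset\mathcal{A}$) matches the paper's exactly.

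For the hard direction your scheme differs from the paper's, and the difference matters. You induct on the complexity of the \emph{codomain} $b$, taking the case ``$b$ is a single globe'' as a base case. But when $b=\Db_n$ the domain $a$ can still be an arbitrary globular sum (for instance $\Db_{n+1}\coprod_{\Db_n}\Db_{n+1}\to \Db_n$, or more generally any iterated wedge of higher globes collapsing onto $\Db_n$), and exhibiting such a map as lying in $\mathcal{U}$ already requires decomposing $a$ via its spine $\Sp_a$ and expressing the map as a colimit of generators in the arrow category. So your ``base case'' is not simpler than the general problem; it contains the essential difficulty.

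The paper instead inducts on $|a|+|b|$ and case-splits on the \emph{domain}. When $a=\Db_n$ is a globe, either $i$ factors through $\Ib_n$ (and one applies induction to the shorter remainder), or it factors through some $\triangledown_{k,n}$; in the latter case the algebraic/globular factorization system of proposition~\ref{prop:algebraic ortho to globular} is used to factor each leg of $\Db_n\coprod_{\Db_k}\Db_n\to b$ and exhibit the remaining map as a pushout in the arrow category of three strictly smaller algebraic morphisms. When $a$ is not a globe, one uses the same factorization system to build a functor $\Sp_a\to \Arr(\Theta)$ whose colimit is $i$, reducing to the globe case. The factorization system does precisely the ``combinatorial bookkeeping'' you anticipate needing from \cite{Guetta_conduche}, but organised around the domain rather than the codomain; this is what makes the induction go through cleanly.
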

\begin{proof}
For any pair of integers $k\leq n$, $\Ib_{n+1}$ and $\triangledown_{k,n}$ are algebraic morphisms. This directly induces the inclusion of the fist $\infty$-groupoid in the second one. To conclude, one has to show that every algebraic morphism $i:a\to b$ is contained in the first $\infty$-groupoid.

We proceed by induction on $|a|+|b|$. Suppose first that there exists $n$ such that $a=\Db_n$. In this case two cases have to be considered. Either $n>0$ and $i$ factors as $\Db_n\xrightarrow{\Ib_n} \Db_{n-1}\xrightarrow{j} b$. The result then follows by the induction hypothesis. Suppose now that $i$ does not factor though $\Ib_n$. In this case, there exists $k$ such that $i$ factors as $\Db_n\xrightarrow{\triangledown_{k,n}} \Db_n\coprod_{\Db_k}\Db_n\xrightarrow{j} b$. The unique factorization system between algebraic and globular morphisms given in proposition \ref{prop:algebraic ortho to globular} produces a diagram 
\[\begin{tikzcd}
	&& {\Db_n} &&& {b_2} \\
	& {\Db_k} &&& {b_1} \\
	{\Db_n} &&& {b_0} \\
	&& { \Db_n\coprod_{\Db_k}\Db_n } &&& b
	\arrow["j"{description}, from=4-3, to=4-6]
	\arrow[hook, from=3-1, to=4-3]
	\arrow[hook, from=1-3, to=4-3]
	\arrow["{j_2}"', from=1-3, to=1-6]
	\arrow["{j_0}", from=3-1, to=3-4]
	\arrow[hook, from=3-4, to=4-6]
	\arrow[hook, from=1-6, to=4-6]
	\arrow[hook, from=2-2, to=4-3]
	\arrow[hook, from=2-2, to=1-3]
	\arrow[hook', from=2-2, to=3-1]
	\arrow[hook', from=2-5, to=3-4]
	\arrow[hook, from=2-5, to=1-6]
	\arrow[hook, from=2-5, to=4-6]
	\arrow["{j_1}"{description}, from=2-2, to=2-5]
\end{tikzcd}\]
where arrows labeled by $\hookrightarrow$ are globular and the other ones are algebraic. Remark that we have a cocartesian square in $\iun$-category of arrows of $\ocat$:
\[\begin{tikzcd}
	{j_1} & {j_2} \\
	{j_0} & j
	\arrow[from=1-1, to=2-1]
	\arrow[from=1-1, to=1-2]
	\arrow[from=2-1, to=2-2]
	\arrow[from=1-2, to=2-2]
\end{tikzcd}\]
is cocartesian. As $j_0$, $j_1$ and $j_2$ are in the first $\infty$-groupoid by induction hypothesis, so is $j$. By stability by composition, the morphism $i$ is then in the first $\infty$-groupoid.

Suppose now that the domain of $i:a\to b$ is not a globe. Using once again the unique factorization system between algebraic and globular, we can construct a functor $\Sp_a\to \Arr(\Theta)$ whose value on $\Db_n\hookrightarrow a$ is given by the unique algebraic morphism $j$ fitting in a commutative square
\[\begin{tikzcd}
	{\Db_n} & {b'} \\
	a & b
	\arrow[hook, from=1-1, to=2-1]
	\arrow[hook, from=1-2, to=2-2]
	\arrow["i"', from=2-1, to=2-2]
	\arrow["j", from=1-1, to=1-2]
\end{tikzcd}\]
where arrows labeled by $\hookrightarrow$ are globular. By induction hypothesis, $j$ is in the first $\infty$-groupoid. The colimit of 
$\Sp_a\to \Arr(\Theta)\to \Arr(\ocat)$ is then in the first $\infty$-groupoid. As this colimit is $i$, this concludes the proof.
\end{proof}

\begin{prop}
\label{prop:conduche have the rlp aginst alebraic morphism}
A morphism $f:X\to Y$ is a discrete Conduché functor if and only if it as the unique right lifting property against algebraic morphism of $\Theta$ (this notion is defined in paragraph \ref{para:algebraic and globular}). 
\end{prop}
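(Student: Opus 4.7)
The plan is to deduce this proposition immediately from Lemma~\ref{lemma:technicalconduche have the rlp aginst alebraic morphism} together with the general observation that, for any fixed morphism $p$, the full sub $\infty$-groupoid of arrows $i$ against which $p$ has the unique right lifting property is closed under colimits, composition, and contains equivalences.

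The backward implication is trivial: every unit $\Ib_{n+1}:\Db_{n+1}\to\Db_n$ and every composition $\triangledown_{k,n}:\Db_n\to\Db_n\coprod_{\Db_k}\Db_n$ is by definition an algebraic morphism of $\Theta$, so if $f$ has URL against all algebraic morphisms then in particular it satisfies the defining condition of a discrete Conduché functor.

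For the forward implication, I would first record the auxiliary fact that the class $\mathcal{I}_f:=\{i : f \text{ has URL against } i\}$ is a cocomplete full sub $\infty$-groupoid of arrows of $\ocat$. This is proved exactly as the statement for the left class of a factorization system in Proposition~\ref{prop:cloture of L recap}: URL against $i$ is equivalent to $\Hom(c,b)\to\Sq(i,f)$ being an equivalence, and since $i\mapsto\Sq(i,f)$ and $c\mapsto \Hom(c,b)$ send colimits to limits, the property is preserved under colimits of $i$; stability under composition and the fact that equivalences belong to $\mathcal{I}_f$ are immediate from the definition.

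Assuming $f$ is a discrete Conduché functor, we have $\{\Ib_{n+1},\triangledown_{k,n}\}\subset \mathcal{I}_f$, so the cocompleteness of $\mathcal{I}_f$ forces the smallest cocomplete full sub $\infty$-groupoid containing these arrows to lie in $\mathcal{I}_f$. By Lemma~\ref{lemma:technicalconduche have the rlp aginst alebraic morphism}, this cocomplete closure already contains every algebraic morphism of $\Theta$, so $f$ has URL against all algebraic morphisms. The only substantive step is the auxiliary cocompleteness statement, and it is formal; the real work has already been done in the preceding lemma.
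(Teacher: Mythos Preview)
Your proposal is correct and follows essentially the same approach as the paper's proof: the paper simply states that, for a fixed $f$, the class of morphisms having the unique left lifting property against $f$ is cocomplete, and then invokes Lemma~\ref{lemma:technicalconduche have the rlp aginst alebraic morphism}. Your write-up is just a more explicit version of this, spelling out the backward implication and the reason for cocompleteness of $\mathcal{I}_f$.
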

\begin{proof}
Given a morphism $f$, the full sub $\infty$-groupoid of morphisms having the unique left lifting property against $f$ is cocomplete. The result is then a direct implication of lemma 
\ref{lemma:technicalconduche have the rlp aginst alebraic morphism}.
\end{proof}

\begin{example}
The proposition \ref{prop:algebraic ortho to globular} implies that a morphism $a\to b$ between globular sums is a discrete Conduché functor if and only if it is globular.
\end{example}

\begin{lemma}
\label{lemma:conduche technical}
Let $p:C\to a$ be discrete Conduché functor with $a$ a globular sum. We denote by $(\Theta_{/p})^{Cd}$ the full sub $\iun$-category of $\Theta_{/p}$ whose objects are triangles 
\[\begin{tikzcd}
	b & C \\
	& a
	\arrow[from=1-1, to=1-2]
	\arrow["p", from=1-2, to=2-2]
	\arrow[from=1-1, to=2-2]
\end{tikzcd}\]
where every arrow is a discrete Conduché functor.
The canonical inclusion of $\iun$-category $\iota:(\Theta_{/p})^{Cd}\to \Theta_{/p}$ is final.
\end{lemma}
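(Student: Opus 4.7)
The plan is to show finality by exhibiting, for each object $(f:b\to C)\in \Theta_{/p}$, an initial object in the comma $\infty$-category $(\Theta_{/p})^{Cd}\times_{\Theta_{/p}}(\Theta_{/p})_{f/}$ that parametrises factorizations $b\xrightarrow{g}b'\xrightarrow{h}C$ of $f$ with $h$ a discrete Conduch\'e functor. An initial object forces the slice to be weakly contractible, which is the criterion for finality.

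To construct the candidate, I would first apply the unique algebraic--globular factorization system of proposition \ref{prop:algebraic ortho to globular} to the morphism $pf:b\to a$ in $\Theta$, obtaining $b\xrightarrow{\alpha}b_a\xrightarrow{\gamma}a$ with $\alpha$ algebraic and $\gamma$ globular. Then, since $p$ has the unique right lifting property against algebraic morphisms by proposition \ref{prop:conduche have the rlp aginst alebraic morphism}, the square with top $f$, left $\alpha$, right $p$ and bottom $\gamma$ admits a unique lift $\tilde f:b_a\to C$ satisfying $\tilde f\alpha = f$ and $p\tilde f=\gamma$. This produces the factorization $b\xrightarrow{\alpha}b_a\xrightarrow{\tilde f}C$.

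Next, I would check that this factorization is actually an object of $(\Theta_{/p})^{Cd}$, i.e.\ that $\tilde f$ is itself Conduch\'e. Given an algebraic morphism $u:x\to y$ and a square against $\tilde f$, composing with $p$ gives a square against $p\tilde f=\gamma$, which is globular between globular sums and hence Conduch\'e (by the example following proposition \ref{prop:conduche have the rlp aginst alebraic morphism}); this produces a unique lift $h:y\to b_a$. That $h$ is also a lift of the original square against $\tilde f$ then follows from the URLP of $p$ applied to the composite square, since both $\tilde f h$ and the given bottom edge are lifts of the same square along $p$. I expect this two-out-of-three style lifting verification to be the main technical obstacle, though it is essentially formal.

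Finally, I would prove initiality. Given another factorization $b\xrightarrow{g}b'\xrightarrow{h}C$ with $h$ Conduch\'e and $ph$ globular, apply the algebraic--globular factorization to $g$ itself; since the composite $b\to b'\to a$ equals $pf$ and its tail is globular, uniqueness of the algebraic--globular factorization of $pf$ identifies the algebraic part with $\alpha$ and yields a canonical $\gamma':b_a\to b'$. The equalities $h\gamma'\alpha=hg=f$ and $ph\gamma'=\gamma$ show that $h\gamma'$ and $\tilde f$ are both lifts along $p$ of the same square, so the URLP of $p$ gives $h\gamma'=\tilde f$, making $\gamma'$ a morphism of factorizations. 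Uniqueness of $\gamma'$ is then inherited from uniqueness of the algebraic--globular factorization of $g$, which finishes the argument.
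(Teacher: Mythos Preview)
Your approach is essentially the paper's: both factor $pf$ as algebraic followed by globular and then lift uniquely along $p$ to produce the reflection of $f$ into $(\Theta_{/p})^{Cd}$. The paper packages this globally as a right deformation retract structure on $\iota$ (a functorial retraction $r$ together with a natural transformation $id\to\iota r$), while you verify initiality in each comma category; these are equivalent formulations of the same reflection.

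Two simplifications are worth noting. First, the paper observes that $\tilde f$ is Conduché directly by right cancellation for the right class of a factorization system (proposition~\ref{prop:cloture of L recap}): since $p$ and $p\tilde f=\gamma$ are both Conduché, so is $\tilde f$. Your hands-on lifting verification is correct but reproves this general fact in the particular case. Second, in your uniqueness step you invoke uniqueness of the algebraic--globular factorization of $g$, which requires knowing that any competing morphism $\gamma'':b_a\to b'$ in the comma category is globular; this again follows from right cancellation, since $(ph)\gamma''=p\tilde f=\gamma$ and $ph$ are both globular, but it deserves to be made explicit.
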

\begin{proof}
To prove this statement, we will endow $\iota$ with a structure of right deformation retract. We then first build a right inverse of $\iota$.
Any triangle 
\[\begin{tikzcd}
	b & C \\
	& a
	\arrow[from=1-1, to=1-2]
	\arrow["p", from=1-2, to=2-2]
	\arrow[from=1-1, to=2-2]
\end{tikzcd}\]
induces a diagram of shape
\[\begin{tikzcd}
	b & C \\
	{b'} & a
	\arrow[from=1-1, to=1-2]
	\arrow[from=1-1, to=2-1]
	\arrow[from=2-1, to=2-2]
	\arrow["l", from=2-1, to=1-2]
	\arrow["p", from=1-2, to=2-2]
\end{tikzcd}\]
where $b'$ is obtained in factorizing $b\to a$ in a algebraic morphism followed by a globular morphism, and $l$ comes from the unique right lifting property of $p$ against algebraic morphisms. By right cancellation, this implies that $l$ is a discrete Conduché functor.

 As these two operations are functorial, this defines a retraction $r: \Theta_{/p}\to (\Theta_{/p})^{Cd}$ sending the triangle spotted by $b,C$ and $a$ to the triangle spotted by $b',C$ and $a$. Moreover, this retraction comes along with a natural transformation $id\to r\iota$. As right deformation retracts are final, this concludes the proof.
\end{proof}

\begin{lemma}
\label{lemma:conduche preserves W}
Let $p:C\to D$ be a discrete Conduché functor. Then for any globular sums $a$, and any cartesian squares in $\iPsh{\Theta}$:
\[\begin{tikzcd}
	{C''} & {C'} & C \\
	{\Sp_a} & a & D
	\arrow["{p''}", from=1-1, to=2-1]
	\arrow["{p'}", from=1-2, to=2-2]
	\arrow["p", from=1-3, to=2-3]
	\arrow["j", from=1-1, to=1-2]
	\arrow[from=1-2, to=1-3]
	\arrow[from=2-1, to=2-2]
	\arrow[from=2-2, to=2-3]
	\arrow["\lrcorner"{anchor=center, pos=0.125}, draw=none, from=1-2, to=2-3]
	\arrow["\lrcorner"{anchor=center, pos=0.125}, draw=none, from=1-1, to=2-2]
\end{tikzcd}\]
the morphism $j$ is in $\widehat{\Wseg}$.
\end{lemma}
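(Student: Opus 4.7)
The plan is to reduce to the case where the target is a globular sum, present $C'$ as a colimit of representables indexed by triangles with globular legs via Lemma \ref{lemma:conduche technical}, and then exhibit $j$ as a colimit of spine inclusions between globular sums.

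First, since discrete Conduché functors are defined by a unique right lifting property, they are stable under pullback, so $p_a:C'\to a$ is itself a discrete Conduché functor. It is therefore enough to prove the statement in the case $D=a$, $C=C'$ and $p=p_a$, i.e.\ to show that for any discrete Conduché functor $q:C'\to a$ with $a$ a globular sum, the canonical map $\Sp_a\times_a C'\to C'$ lies in $\widehat{\Wseg}$.

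Second, apply Lemma \ref{lemma:conduche technical} to $p_a$: the inclusion $(\Theta_{/p_a})^{Cd}\hookrightarrow \Theta_{/C'}$ is final, so $C'$ is the colimit in $\iPsh{\Theta}$ of the tautological diagram sending a triangle $(b\to C'\to a)$ to the representable $b$. Because $\iPsh{\Theta}$ is locally cartesian closed, pullback along $\Sp_a\to a$ preserves colimits; hence $\Sp_a\times_a C'$ is the colimit of the diagram $b\mapsto \Sp_a\times_a b$, and $j$ is exhibited as the colimit in $\Arr(\iPsh{\Theta})$ of the family of maps $j_b:\Sp_a\times_a b\to b$ indexed by $(\Theta_{/p_a})^{Cd}$.

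Third, by the example following Proposition \ref{prop:conduche have the rlp aginst alebraic morphism}, any morphism $b\to a$ between globular sums which is a discrete Conduché functor is globular. A direct combinatorial inspection then shows that for such a globular map $b\to a$, the pullback $\Sp_a\times_a b$ is canonically identified with $\Sp_b$ (or a disjoint union of spines sitting over the spine edges of $a$), so $j_b$ belongs to $\widehat{\Wseg}$. Since $\widehat{\Wseg}$ is cocomplete by definition and $j$ is a colimit in $\Arr(\iPsh{\Theta})$ of morphisms in $\widehat{\Wseg}$, we conclude $j\in\widehat{\Wseg}$. The main obstacle is the third step: one must verify by hand that pulling back the spine inclusion along an arbitrary globular morphism between globular sums remains a Segal equivalence, which requires a careful case analysis based on the explicit form of globular morphisms in $\Theta$ and of the spine $\Sp_a$.
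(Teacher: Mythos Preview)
Your approach is essentially the same as the paper's: reduce to a discrete Conduch\'e functor over a globular sum, use Lemma~\ref{lemma:conduche technical} to write $C'$ as a colimit over $(\Theta_{/p'})^{Cd}$, use local cartesian closedness to commute the pullback past the colimit, and then treat the case of a globular morphism $b\to a$ between globular sums.

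The only difference is in the third step, where you hedge unnecessarily. For a globular morphism $b\to a$ the pullback $\Sp_a\times_a b$ is exactly $\Sp_b$, nothing more complicated; the paper simply asserts that the square
\[
\begin{tikzcd}
\Sp_b \ar[r]\ar[d] & b \ar[d] \\
\Sp_a \ar[r] & a
\end{tikzcd}
\]
is cartesian and stops there. So your ``main obstacle'' is lighter than you suggest: there is no disjoint-union case and no extensive case analysis, just this single pullback identification, after which $j_b$ is literally the spine inclusion $\Sp_b\to b\in\Wseg$.
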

\begin{proof}
By stability under pullback,
the morphism $p'$ is a discrete Conduché functor. 
Taking the notations of lemma \ref{lemma:conduche technical}, $p'$ is equivalent to $\colim_{(\Theta_{/p})^{Cd}}b\to a$ where this colimit is taken in $\iPsh{\Theta}_{/a}$. As $\iPsh{\Theta}$ is locally cartesian closed and as $\widehat{\W}$ is by definition closed by colimits, we can then reduce to the case where $p'$ is a discrete Conduché functor between globular sums, i.e a globular morphism $b\to a$.
In this case, the following canonical square is a pullback
\[\begin{tikzcd}
	{\Sp_b} & b \\
	{\Sp_a} & a
	\arrow[from=2-1, to=2-2]
	\arrow["{p'}", from=1-2, to=2-2]
	\arrow[from=1-1, to=2-1]
	\arrow[from=1-1, to=1-2]
	\arrow["\lrcorner"{anchor=center, pos=0.125}, draw=none, from=1-1, to=2-2]
\end{tikzcd}\]
and this concludes the proof.
\end{proof}

\begin{lemma}
\label{lemma:pulback of Wsat preresult}
Consider a cartesian square 
\[\begin{tikzcd}
	X & Y \\
	{\Sigma^{n} E^{eq}} & {\Db_{n}}
	\arrow[from=1-1, to=2-1]
	\arrow["j", from=1-1, to=1-2]
	\arrow[from=2-1, to=2-2]
	\arrow[from=1-2, to=2-2]
	\arrow["\lrcorner"{anchor=center, pos=0.125}, draw=none, from=1-1, to=2-2]
\end{tikzcd}\]
in $\iPsh{\Theta}$. The morphism $j$ is in $\widehat{\W}$.
\end{lemma}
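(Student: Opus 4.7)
The strategy is induction on $n \geq 0$, with a preliminary reduction to the case where $Y$ is a globular sum.

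\textbf{Reduction to globular sums.} Write $Y=\colim_{a\in\Theta_{/Y}}a$ as the canonical colimit of its slice. Because $\iPsh{\Theta}$ is locally cartesian closed, the pullback functor $(-)\times_{\Db_n}Y:\iPsh{\Theta}_{/\Db_n}\to\iPsh{\Theta}_{/Y}$ preserves colimits, so $j:X\to Y$ is the colimit, in the arrow $\iun$-category, of the morphisms $j_a:a\times_{\Db_n}\Sigma^nE^{eq}\to a$ indexed by $a\to\Db_n$ in $\Theta_{/Y}$. Since $\widehat{\W}$ is closed under colimits, it suffices to show each $j_a$ lies in $\widehat{\W}$ for every globular sum $a$ equipped with a morphism to $\Db_n$.

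\textbf{Base case $n=0$.} Here $\Sigma^0 E^{eq}\to\Db_0$ is $E^{eq}\to 1$, and pulling back along $Y\to 1$ yields $E^{eq}\times Y\to Y$. By proposition \ref{prop:cartesian product preserves W}, the cartesian product preserves colimits in both variables; in particular, tensoring the morphism $E^{eq}\to 1\in\W$ with any presheaf (written as a colimit of representables) produces a morphism of $\widehat{\W}$.

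\textbf{Inductive step.} Assume the result at stage $n-1$. Given $a\in\Theta$ and a morphism $a\to\Db_n$, factor it, using the (algebraic, globular) factorization system on $\Theta$, as $a\xrightarrow{\phi}a'\xrightarrow{\psi}\Db_n$. A globular morphism from a globular sum to a globe can only be an iterated face inclusion $\Db_m\hookrightarrow\Db_n$ with $m\le n$, or the identity. In the face-inclusion case, $\psi$ factors through $\partial\Db_n$, and since $\Sigma^nE^{eq}$ and $\Db_n$ coincide on their $(n-1)$-skeleta (both have boundary $\partial\Db_n$), we have $a'\times_{\Db_n}\Sigma^nE^{eq}\simeq a'$, whence $j_{a'}$ is an equivalence and $j_a$ is obtained by pulling this equivalence back along $\phi$, hence also an equivalence. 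In the identity case, the problem reduces to showing that for any algebraic morphism $\phi:a\to\Db_n$ the pullback $a\times_{\Db_n}\Sigma^nE^{eq}\to a$ is in $\widehat{\W}$.

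\textbf{The algebraic case via suspension.} Write $\Db_n=[\Db_{n-1},1]$ and $\Sigma^nE^{eq}=[\Sigma^{n-1}E^{eq},1]$, so that the generator $\Sigma^nE^{eq}\to\Db_n$ is the suspension of $\Sigma^{n-1}E^{eq}\to\Db_{n-1}$. Using propositions \ref{prop:example of a special colimit3} and \ref{prop:example of a special colimit4}, express $a$ as a special colimit of objects of the form $[1]$, $[1]\vee[1]$, and $[b,1]$ for $b\to\Db_{n-1}$ a morphism from a globular sum, compatibly with the map to $\Db_n=[\Db_{n-1},1]$. By locally cartesian closedness and closure of $\widehat{\W}$ under colimits, it suffices to analyze the pullback on each piece. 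The pieces of the form $[1]$ or $[1]\vee[1]$ map into the $0$-cells $\{0\},\{1\}$ of $\Db_n$, and by lemma \ref{lemma:hom of the suspension prequel} the pullback over these $0$-cells is trivial (so $j$ restricted there is an identity). On a piece $[b,1]\to[\Db_{n-1},1]$, the pullback computes as $[b\times_{\Db_{n-1}}\Sigma^{n-1}E^{eq},1]\to[b,1]$: this uses that suspension commutes with the relevant pullbacks, which is a consequence of the pullback descriptions in the proof of proposition \ref{prop:supspension preserves cat}. Applying the inductive hypothesis to $b\times_{\Db_{n-1}}\Sigma^{n-1}E^{eq}\to b$, this morphism is in $\widehat{\W}$, and by lemma \ref{lemma:[ ,1] preserves spcial limits} the suspension $[-,1]$ carries $\widehat{\W}$ into $\widehat{\W}$ (since it already sends the generators $\W$ into $\W$). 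Assembling these pieces through the special colimit concludes the inductive step.

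\textbf{Main obstacle.} The most delicate point is the last step: justifying that, after factoring $a\to\Db_n$ through the suspension structure, the pullback of $[\Sigma^{n-1}E^{eq},1]\to[\Db_{n-1},1]$ along a map $[b,1]\to[\Db_{n-1},1]$ is itself the suspension of the corresponding $(n-1)$-dimensional pullback. This requires the computation of pullbacks of suspended maps, analogous to the cartesian squares exhibited in the proof of proposition \ref{prop:supspension preserves cat}, and a careful use of the special colimits provided by propositions \ref{prop:example of a special colimit3}--\ref{prop:example of a special colimit4} so that the pullback functor commutes with the decomposition.
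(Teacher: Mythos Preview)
Your reduction to representables and the base case are fine, and the inductive strategy is sound. However, the ``algebraic case via suspension'' has a real gap. Propositions \ref{prop:example of a special colimit3} and \ref{prop:example of a special colimit4} do not furnish a decomposition of an arbitrary globular sum $a$ over $\Db_n$: they assert that certain very specific diagrams (built from cartesian squares over $[1]$) have special colimits, which is a different statement. Your claim that ``pieces of the form $[1]$ or $[1]\vee[1]$ map into the $0$-cells of $\Db_n$'' also cannot hold as written, since $[1]$ carries a nondegenerate $1$-cell. In fact the algebraic/globular factorization is an unnecessary detour: after reducing to $a\in\Theta$ with any map $a\to\Db_n$, one may use the spine decomposition $a\simeq\colim_{\Sp_a}\Db_k$ directly. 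Each composite $\Db_k\to\Db_n$ either factors through $\partial\Db_n$ (and then the pullback of $\Sigma^nE^{eq}$ is $\Db_k$ itself) or, when $n\ge 1$, is of the form $[\psi,1]$ for some $\psi:\Db_{k-1}\to\Db_{n-1}$; in the latter case the pullback is $[\Db_{k-1}\times_{\Db_{n-1}}\Sigma^{n-1}E^{eq},1]$, and one concludes by the induction hypothesis and the fact that $[\uvar,1]$ preserves colimits and sends generators of $\W$ to $\W$. This last identification is exactly your ``main obstacle'', and it is actually unproblematic: it follows from the naturality in $C$ of the cartesian squares \eqref{eq:prop:supspension preserves cat}, which describe $\Hom([\textbf{a},m],[C,1])$ fibre by fibre over $\Hom_\Delta([m],[1])$.

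The paper takes a different route for the inductive step, passing to $\iPsh{\Delta[\Theta]}$ via the adjunction $(i_!,i^*)$. There a representable is $[a,k]$ with $a\in\Theta$ and $k\in\Delta$, so a map $p:[a,k]\to[\Db_n,1]$ is simply a map $f:[k]\to[1]$ together with a single map $a\to\Db_n$. The pullback of $[\Sigma^nE^{eq},1]$ along $p$ is then computed explicitly (in set-valued presheaves) as a pushout built from objects $[\Sigma^nE^{eq}\times_{\Db_n}a,l]$ and $[a,l]$, and the induction hypothesis applies termwise to give a morphism of $\widehat{\M}$. One returns to $\iPsh{\Theta}$ using that $i^*$ preserves cartesian squares and that the counit $i_!i^*\to\mathrm{id}$ lies in $\widehat{\W}$ (proposition \ref{prop:infini changing theta}), together with left cancellation. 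This packaging sidesteps the need to analyse arbitrary maps $a\to\Db_n$ inside $\Theta$.
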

\begin{proof}
 If we are in the case $n=0$, this directly follows from the preservation of $\W$ by cartesian product, demonstrated in the proof of proposition \ref{prop:cartesian product preserves W}.
We now suppose the result is true at stage $n$, and we first show that for any square
\[\begin{tikzcd}
	X & Y \\
	{[\Sigma^{n} E^{eq},1]} & {[\Db_{n+1},1]}
	\arrow[from=1-1, to=2-1]
	\arrow["j", from=1-1, to=1-2]
	\arrow[from=2-1, to=2-2]
	\arrow["p", from=1-2, to=2-2]
	\arrow["\lrcorner"{anchor=center, pos=0.125}, draw=none, from=1-1, to=2-2]
\end{tikzcd}\]
in $\iPsh{\Delta[\Theta]}$, $j$ is in $\widehat{\M}$.
As $\iPsh{\Delta[\Theta]}$ is locally cartesian closed and $\widehat{\M}$ closed under colimits, one can suppose that $Y$ is of shape $[a,k]$ and we denote $f:[k]\to [1]$ the morphism induced by $p$. By stability under pullback, $X$ is then set-valued. Furthermore, we can then check in $\Psh{\Delta[\Theta]}$ that this presheaf fits in a cocartesian square:
\[\begin{tikzcd}
	{[\Sigma^nE^{eq}\times_{\Db_n} a,f^{-1}(0)]\coprod [\Sigma^nE^{eq}\times_{\Db_n}a,f^{-1}(1)]} & {[\Sigma^nE^{eq}\times_{\Db_n} a,k]} \\
	{[a,f^{-1}(0)]\coprod [a,f^{-1}(1)]} & X
	\arrow[from=1-1, to=2-1]
	\arrow[from=1-1, to=1-2]
	\arrow[from=2-1, to=2-2]
	\arrow[from=1-2, to=2-2]
\end{tikzcd}\]
By induction hypothesis $[\Sigma^nE^{eq}\times_{\Db_n} a,l]\to [a,l]$ is in $\widehat{\M}$ for any integer $l$. As $X\to [a,k]$ is the colimit in depth of the diagram
\[\begin{tikzcd}[column sep = 0.7cm]
	{[\Sigma^nE^{eq}\times_{\Db_n} a,f^{-1}(0)]} && { [\Sigma^nE^{eq}\times_{\Db_n}a,f^{-1}(1)]} \\
	{[a,f^{-1}(0)]} & {[\Sigma^nE^{eq}\times_{\Db_n} a,k]} & {[a,f^{-1}(1)]} \\
	& {[a,f^{-1}(0)]} && {[a,f^{-1}(1)]} \\
	& {[a,f^{-1}(0)]} & {[a,k]} & {[a,f^{-1}(1)]}
	\arrow[from=4-3, to=3-2]
	\arrow[from=3-2, to=4-2]
	\arrow[from=1-1, to=2-1]
	\arrow[from=1-1, to=2-2]
	\arrow[from=1-3, to=2-2]
	\arrow[from=1-3, to=2-3]
	\arrow[from=3-4, to=4-3]
	\arrow[from=3-4, to=4-4]
	\arrow[from=2-1, to=4-2]
	\arrow[from=1-1, to=3-2]
	\arrow[from=2-2, to=4-3]
	\arrow[from=2-3, to=4-4]
	\arrow[from=1-3, to=3-4]
\end{tikzcd}\]
this implies that this morphism is in $\widehat{\M}$.

We now return to $\infty$-presheaves on $\Theta$. We recall that we denote by $(i_!,i^*)$ the adjunction between $\iPsh{\Delta[\Theta]}$ and $\iPsh{\Theta}$. Suppose given a cartesian square:
\[\begin{tikzcd}
	X & Y \\
	{\Sigma^{n+1} E^{eq}} & {\Db_{n+1}}
	\arrow[from=1-1, to=2-1]
	\arrow["j", from=1-1, to=1-2]
	\arrow[from=2-1, to=2-2]
	\arrow[from=1-2, to=2-2]
	\arrow["\lrcorner"{anchor=center, pos=0.125}, draw=none, from=1-1, to=2-2]
\end{tikzcd}\]
This induces two squares
\[\begin{tikzcd}
	{i^*X} & {i^*Y} & {i_!i^*X} & {i_!i^*Y} \\
	{[\Sigma^{n}E^{eq},1]} & {[\Db_n,1]} & X & Y
	\arrow[from=2-3, to=2-4]
	\arrow[from=1-3, to=2-3]
	\arrow[from=1-4, to=2-4]
	\arrow["{i_!i^*j}", from=1-3, to=1-4]
	\arrow[from=1-1, to=2-1]
	\arrow[from=1-2, to=2-2]
	\arrow[from=2-1, to=2-2]
	\arrow["{i^*j}", from=1-1, to=1-2]
	\arrow["\lrcorner"{anchor=center, pos=0.125}, draw=none, from=1-1, to=2-2]
\end{tikzcd}\]
Where the cartesianess of the left square comes from the fact that $i^*$ preserves cartesian squares as it is a right adjoint. We just have demonstrated that $i^*j$ is in $\widehat{\M}$. Using proposition \ref{prop:infini changing theta}, and by left cancellation, the right square implies that $j$ is in $\widehat{W}$, which concludes the proof.
\end{proof}

\begin{prop}
\label{prop:pulback of Wsat}
Let $p:C\to D$ be a functor between $\io$-categories. Then for any globular sums $a$, and any cartesian squares in $\iPsh{\Theta}$:
\[\begin{tikzcd}
	{C''} & {C'} & C \\
	{\Sigma^nE^{eq}} & {\Db_n} & D
	\arrow["p", from=1-3, to=2-3]
	\arrow["j", from=1-1, to=1-2]
	\arrow[from=1-2, to=1-3]
	\arrow[from=2-1, to=2-2]
	\arrow[from=2-2, to=2-3]
	\arrow["\lrcorner"{anchor=center, pos=0.125}, draw=none, from=1-2, to=2-3]
	\arrow["\lrcorner"{anchor=center, pos=0.125}, draw=none, from=1-1, to=2-2]
	\arrow[from=1-1, to=2-1]
	\arrow[from=1-2, to=2-2]
\end{tikzcd}\]
the morphism $j$ is in $\widehat{\W}$.
\end{prop}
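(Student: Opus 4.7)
The plan is to reduce directly to lemma \ref{lemma:pulback of Wsat preresult}. By pasting the two cartesian squares in the statement, one obtains a single cartesian square
\[\begin{tikzcd}
C'' & C' \\
\Sigma^n E^{eq} & \Db_n
\arrow["j", from=1-1, to=1-2]
\arrow[from=1-1, to=2-1]
\arrow[from=1-2, to=2-2]
\arrow[from=2-1, to=2-2]
\end{tikzcd}\]
in $\iPsh{\Theta}$, exhibiting $j$ as the pullback of $\Sigma^n E^{eq} \to \Db_n$ along the structure map $C' \to \Db_n$ produced by the right-hand cartesian square. This is exactly the situation handled by lemma \ref{lemma:pulback of Wsat preresult}, which applies to \emph{any} presheaf $Y$ equipped with a morphism $Y \to \Db_n$.

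Taking $Y := C'$ in that lemma yields $j \in \widehat{\W}$, which is the desired conclusion. Since all the real content has been packed into the preceding lemma (an induction on $n$ carried out via the adjunction $(i_!, i^*)$ of proposition \ref{prop:infini changing theta} and the local cartesian closedness of $\iPsh{\Delta[\Theta]}$), there is no further obstacle here: the hypotheses that $p: C\to D$ is a functor between $\io$-categories and that the right-hand square is cartesian serve only to produce a sensible morphism $C'\to \Db_n$ to feed into the lemma. No additional local-cartesian-closedness or cocompleteness argument is needed beyond invoking stability of pullbacks under pasting.
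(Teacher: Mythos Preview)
Your proof is correct and follows exactly the same approach as the paper: the paper's proof is simply ``This is a direct consequence of lemma \ref{lemma:pulback of Wsat preresult},'' and you have just spelled out the one-line reduction (pasting the two cartesian squares and taking $Y := C'$) that makes this immediate.
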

\begin{proof}
This is a direct consequence of lemma \ref{lemma:pulback of Wsat preresult}.
\end{proof}
\begin{theorem}
\label{theo:pullback along conduche preserves colimits}
Let $f:C\to D$ be a discrete Conduché functor. The pullback functor $f^*:\ocat_{/D}\to \ocat_{/C}$ preserves colimits.
\end{theorem}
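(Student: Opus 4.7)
The plan is to deduce the statement from the framework of Example~\ref{exe:exe localization} applied to the pullback adjunction in $\iPsh{\Theta}$. At the level of presheaves, $f^{*}: \iPsh{\Theta}_{/D} \to \iPsh{\Theta}_{/C}$ is itself a left adjoint: it arises as the right adjoint of composition with $f$, but by local cartesian closedness of $\iPsh{\Theta}$ it admits a further right adjoint $f_{*}$, so $f^{*}$ preserves all colimits at the presheaf level. In order to obtain a colimit-preserving derived functor $\Lb f^{*}: \ocat_{/D} \to \ocat_{/C}$, the only remaining hypothesis to verify is the containment $f^{*}(\W_{/D}) \subset \widehat{\W_{/C}}$.

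Since $f^{*}$ preserves colimits and $\widehat{\W_{/C}}$ is cocomplete, I would reduce the containment to a generating family for $\W$, namely the Segal inclusions $\Sp_{a} \to a$ for globular sums $a$, and the completeness maps $\Sigma^{n} E^{eq} \to \Db_{n}$ for $n \geq 0$. For a map $a \to D$, the pullback of $\Sp_{a} \to a$ along $f$ is exactly the situation covered by Lemma~\ref{lemma:conduche preserves W}; this is the unique place where the discrete Conduché hypothesis on $f$ is genuinely used. For a map $\Db_{n} \to D$, the pullback of $\Sigma^{n} E^{eq} \to \Db_{n}$ is handled by Proposition~\ref{prop:pulback of Wsat}, which holds for arbitrary morphisms between $\io$-categories and does not require the Conduché assumption.

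Finally, I would identify the derived $\Lb f^{*}$ with the pullback functor $f^{*}: \ocat_{/D} \to \ocat_{/C}$ appearing in the statement. For $X \to D$ with $X$ an $\io$-category, the pullback $X \times_{D} C$ computed in $\iPsh{\Theta}$ is a limit of $\W$-local objects and is therefore $\W$-local, so $(X \times_{D} C \to C)$ is already $\W_{/C}$-local and its fibrant replacement is trivial. Thus $\Lb f^{*}$ agrees with $f^{*}$ on $\ocat_{/D}$, and being a left adjoint it preserves colimits. The crux of the argument is the reduction to a generating family; the two preceding lemmas carry all of the combinatorial content, and everything else is formal manipulation within the localization framework.
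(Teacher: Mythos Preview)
Your proposal is correct and follows essentially the same route as the paper: use local cartesian closedness of $\iPsh{\Theta}$ so that $f^{*}$ preserves colimits at the presheaf level, then verify that $f^{*}$ sends $\W_{/D}$ into $\widehat{\W_{/C}}$ via Lemma~\ref{lemma:conduche preserves W} (Segal maps) and Proposition~\ref{prop:pulback of Wsat} (completeness maps), and conclude by the localization machinery of Example~\ref{exe:exe localization} and Corollary~\ref{cor:derived colimit preserving functor}. Your final paragraph identifying $\Lb f^{*}$ with the honest pullback is a useful clarification that the paper leaves implicit.
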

\begin{proof}
As $\iPsh{\Theta}$ is locally cartesian closed, we can use the corollary \ref{cor:derived colimit preserving functor}. The hypotheses are provided by lemmas \ref{lemma:conduche preserves W} and proposition \ref{prop:pulback of Wsat}.
\end{proof}

\section{Gray Operations}
\subsection{Gray operations on $\io$-categories}

Theorem \ref{theo:lecorozo} states that the $(\infty,1)$-category $\ocat$ is represented by the model category of marked simplicial sets given in proposition \ref{prop:model structure on marked simplicial set} and the functor $\N:\zocat\to \ocat$ corresponds to the Street nerve $\N:\ocat\to \mSset$.

An important feature of this model category is that it admits a monoidal structure $\otimes$ given by the \snotionsym{Gray tensor product}{((d00@$\otimes$}{for $\io$-categories}. Furthermore, proposition \ref{prop:gray_product_is_a_left_Quillen_bifunctor} ensures that this operation commutes with colimits in both variables. 
The induced functor 
$$\uvar\otimes[1]:\ocat\to \ocat$$
is called the \snotionsym{Gray cylinder}{((d30@$\uvar\otimes[1]$}{for $\io$-categories}.
We will show later, in corollary \ref{cor:otimes et op}, that we have a natural diagram
\[\begin{tikzcd}
	{(C\otimes\{1\})^\circ} & {(C\otimes[1])^\circ} & {(C\otimes\{0\})^\circ} \\
	{C^\circ\otimes\{0\}} & {C^\circ\otimes[1]} & {C^\circ\otimes\{1\}}
	\arrow["\sim", from=1-2, to=2-2]
	\arrow["\sim", from=1-3, to=2-3]
	\arrow["\sim", from=1-1, to=2-1]
	\arrow[from=1-3, to=1-2]
	\arrow[from=1-1, to=1-2]
	\arrow[from=2-1, to=2-2]
	\arrow[from=2-3, to=2-2]
\end{tikzcd}\]

We denote by 
$$\begin{array}{rcl}
\ocat&\to&\ocat\\
C&\mapsto &C^{[1]}
\end{array}$$
the right adjoint of the Gray cylinder.\sym{(c@$C^{[1]}$}

Eventually, recall that we have a natural transformation $C\otimes [1]\to [C,1]$ whose restriction to $C\otimes\{0\}$ (resp. to $C\otimes\{1\}$) is constant on $\{0\}$ (resp. on $\{1\}$), and such that the following induced square is cocartesian:
\begin{equation}
\label{eq:liens entre Gray cylindre et suspension}
\begin{tikzcd}
	{C\otimes\{0,1\}} & {C\otimes [1]} \\
	{1\amalg 1} & {[C,1]}
	\arrow[from=1-1, to=2-1]
	\arrow[from=1-1, to=1-2]
	\arrow["\lrcorner"{anchor=center, pos=0.125, rotate=180}, draw=none, from=2-2, to=1-1]
	\arrow[from=2-1, to=2-2]
	\arrow[from=1-2, to=2-2]
\end{tikzcd}
\end{equation}

\p We define the \snotionsym{Gray cone}{((d40@$\uvar\star 1$}{for $\io$-categories} and the \snotion{Gray $\circ$-cone}{for $\io$-categories}\index[notation]{((d50@$1\overset{co}{\star}\_$!\textit{for $\io$-categories}}:
$$\begin{array}{ccccccc}
\ocat &\to&\ocat_{\bullet}&&\ocat &\to&\ocat_{\bullet}\\
C&\mapsto &C\star 1 & &C &\mapsto &1\costar C
\end{array}
$$
where $C\star 1$ and $1\costar C$ are defined as the following pushout: 
\[\begin{tikzcd}
	{C\otimes\{1\}} & {C\otimes [1]} & {C\otimes\{0\}} & {C\otimes [1]} \\
	1 & {C\star 1} & 1 & {1\costar C}
	\arrow[from=1-1, to=2-1]
	\arrow[from=1-1, to=1-2]
	\arrow[from=2-1, to=2-2]
	\arrow[from=1-2, to=2-2]
	\arrow["\lrcorner"{anchor=center, pos=0.125, rotate=180}, draw=none, from=2-2, to=1-1]
	\arrow[from=1-3, to=2-3]
	\arrow[from=2-3, to=2-4]
	\arrow[from=1-3, to=1-4]
	\arrow[from=1-4, to=2-4]
	\arrow["\lrcorner"{anchor=center, pos=0.125, rotate=180}, draw=none, from=2-4, to=1-3]
\end{tikzcd}\]
The corollary \ref{cor:otimes et op} will imply an
invertible natural transformation
$$ C\star 1\sim (1\costar C^{\circ})^\circ.$$

We will denote by 
$$\begin{array}{ccccccc}
\ocat_{\bullet} &\to& \ocat&&\ocat_{\bullet} &\to& \ocat\\
(C,c)&\mapsto &C_{/c} & &(C,c) &\mapsto &C_{c/}
\end{array}
$$
the right adjoints of the Gray cone and the Gray $\circ$-cone, respectively called the \wcsnotionsym{slice of $C$ over $c$}{(cc@$C_{/c}$}{slice over}{for $\io$-categories} and the \wcsnotionsym{slice of $C$ under $c$}{(cc@$C_{c/}$}{slice under}{for $\io$-categories}. 
The corollary \ref{cor:otimes et op} will imply an
invertible natural transformation
$$C_{/c}\sim (C^{\circ}_{c/})^\circ.$$
Given an $\io$-category $C$, and two objects $c,d$, we have by construction two cartesian squares:
\[\begin{tikzcd}
	{\hom_C(c,d)} & {C_{/d}} & {\hom_C(c,d)} & {C_{c/}} \\
	{\{c\}} & C & {\{d\}} & C
	\arrow[from=2-1, to=2-2]
	\arrow[from=1-1, to=2-1]
	\arrow[from=1-1, to=1-2]
	\arrow[from=1-2, to=2-2]
	\arrow[from=1-3, to=2-3]
	\arrow[from=1-3, to=1-4]
	\arrow[from=2-3, to=2-4]
	\arrow[from=1-4, to=2-4]
\end{tikzcd}\]

\p 
As explained in section \ref{section:Street nerve}, the functor $\pi_0$ induces canonical equivalences
$$\pi_0(C\otimes[1])\cong \pi_0(C)\otimes[1]~~~\pi_0(C\star 1)\cong \pi_0(C)\star 1~~~\pi_0(1\costar C)\cong 1\costar \pi_0(C)$$
natural in $C$.
We will show in theorem \ref{theo:strictness} that the nerve $\N:\zocat\to \ocat$ also preserves the Gray operations.
As a consequence, we obtain the following examples of Gray operations:

\begin{example}
The $\io$-category $\Db_1\otimes[1]$ corresponds to the polygraph
\[\begin{tikzcd}
	00 & 01 \\
	10 & 11
	\arrow[from=1-1, to=2-1]
	\arrow[from=2-1, to=2-2]
	\arrow[from=1-1, to=1-2]
	\arrow[from=1-2, to=2-2]
	\arrow[shorten <=4pt, shorten >=4pt, Rightarrow, from=1-2, to=2-1]
\end{tikzcd}\]
The $\io$-category $\Db_2\otimes[1]$ corresponds to the polygraph
\[\begin{tikzcd}
	00 & 01 & 00 & 01 \\
	10 & 11 & 10 & 11
	\arrow[from=1-1, to=1-2]
	\arrow[""{name=0, anchor=center, inner sep=0}, from=1-1, to=2-1]
	\arrow[from=2-1, to=2-2]
	\arrow[""{name=1, anchor=center, inner sep=0}, from=1-2, to=2-2]
	\arrow[shorten <=4pt, shorten >=4pt, Rightarrow, from=1-2, to=2-1]
	\arrow[""{name=2, anchor=center, inner sep=0}, from=1-3, to=2-3]
	\arrow[from=1-3, to=1-4]
	\arrow[""{name=3, anchor=center, inner sep=0}, from=1-4, to=2-4]
	\arrow[shorten <=4pt, shorten >=4pt, Rightarrow, from=1-4, to=2-3]
	\arrow[""{name=4, anchor=center, inner sep=0}, curve={height=30pt}, from=1-1, to=2-1]
	\arrow[from=2-3, to=2-4]
	\arrow[""{name=5, anchor=center, inner sep=0}, curve={height=-30pt}, from=1-4, to=2-4]
	\arrow["{ }"', shorten <=6pt, shorten >=6pt, Rightarrow, from=0, to=4]
	\arrow["{ }"', shorten <=6pt, shorten >=6pt, Rightarrow, from=5, to=3]
	\arrow[shift left=0.7, shorten <=6pt, shorten >=8pt, no head, from=1, to=2]
	\arrow[shift right=0.7, shorten <=6pt, shorten >=8pt, no head, from=1, to=2]
	\arrow[shorten <=6pt, shorten >=6pt, from=1, to=2]
\end{tikzcd}\]
\end{example}
\begin{example}
The $\io$-categories $\Db_1\star 1$ and $1\costar \Db_1$ correspond respectively to the polygraphs: 
\[\begin{tikzcd}
	0 &&&& 0 \\
	1 & \star && \star & 1
	\arrow[from=1-1, to=2-1]
	\arrow[from=2-1, to=2-2]
	\arrow[""{name=0, anchor=center, inner sep=0}, from=1-1, to=2-2]
	\arrow[""{name=1, anchor=center, inner sep=0}, from=1-5, to=2-5]
	\arrow[from=2-4, to=1-5]
	\arrow[""{name=2, anchor=center, inner sep=0}, from=2-4, to=2-5]
	\arrow[shorten <=2pt, Rightarrow, from=0, to=2-1]
	\arrow[shift right=2, shorten <=4pt, shorten >=4pt, Rightarrow, from=1, to=2]
\end{tikzcd}\]
The $\io$-categories $\Db_2\star 1$ and $1\costar \Db_2$ correspond respectively to the polygraphs: 
\[\begin{tikzcd}
	0 & {~} & 0 &&& 0 & {~} & 0 \\
	1 & \star & 1 & \star & \star & 1 & \star & 1
	\arrow[""{name=0, anchor=center, inner sep=0}, from=1-1, to=2-1]
	\arrow[from=2-1, to=2-2]
	\arrow[""{name=1, anchor=center, inner sep=0}, from=1-3, to=2-3]
	\arrow[""{name=2, anchor=center, inner sep=0}, curve={height=30pt}, from=1-1, to=2-1]
	\arrow[from=2-3, to=2-4]
	\arrow[""{name=3, anchor=center, inner sep=0}, from=1-1, to=2-2]
	\arrow[""{name=4, anchor=center, inner sep=0}, draw=none, from=1-2, to=2-2]
	\arrow[""{name=5, anchor=center, inner sep=0}, from=1-3, to=2-4]
	\arrow[from=1-6, to=2-5]
	\arrow[""{name=6, anchor=center, inner sep=0}, from=1-6, to=2-6]
	\arrow[""{name=7, anchor=center, inner sep=0}, from=2-5, to=2-6]
	\arrow[from=1-8, to=2-7]
	\arrow[""{name=8, anchor=center, inner sep=0}, from=1-8, to=2-8]
	\arrow[""{name=9, anchor=center, inner sep=0}, from=2-8, to=2-7]
	\arrow[""{name=10, anchor=center, inner sep=0}, curve={height=-30pt}, from=1-8, to=2-8]
	\arrow[""{name=11, anchor=center, inner sep=0}, draw=none, from=1-7, to=2-7]
	\arrow["{ }"', shorten <=6pt, shorten >=6pt, Rightarrow, from=0, to=2]
	\arrow[shorten <=2pt, shorten >=2pt, Rightarrow, from=3, to=2-1]
	\arrow[shift left=0.7, shorten <=6pt, shorten >=8pt, no head, from=4, to=1]
	\arrow[shift right=0.7, shorten <=6pt, shorten >=8pt, no head, from=4, to=1]
	\arrow[shorten <=6pt, shorten >=6pt, from=4, to=1]
	\arrow[shorten <=2pt, Rightarrow, from=5, to=2-3]
	\arrow[shorten <=6pt, shorten >=6pt, Rightarrow, from=10, to=8]
	\arrow[shift right=2, shorten <=4pt, shorten >=4pt, Rightarrow, from=8, to=9]
	\arrow[shift right=2, shorten <=4pt, shorten >=4pt, Rightarrow, from=6, to=7]
	\arrow[shift right=0.7, shorten <=6pt, shorten >=8pt, no head, from=6, to=11]
	\arrow[shorten <=6pt, shorten >=6pt, from=6, to=11]
	\arrow[shift left=0.7, shorten <=6pt, shorten >=8pt, no head, from=6, to=11]
\end{tikzcd}\]
\end{example}

\p 
\label{paragrap: equation fullfill by cylinder and join}
In section \ref{section:Suspension and Gray operation} are shown several equations fulfilled by the Gray cylinder, the Gray cone, and the Gray $\circ$-cone, that we recall here. For every $\io$-category $C$, there is a natural identification between $[C,1]\otimes [1]$ and the colimit of the following diagram
\begin{equation}
\label{eq:eq for cylinder}
\begin{tikzcd}
	{[1]\vee [ C,1]} & {[C\otimes\{0\},1]} & {[C\otimes [1],1]} & {[C\otimes\{1\},1]} & {[C,1]\vee[1]}
	\arrow[from=1-2, to=1-1]
	\arrow[from=1-2, to=1-3]
	\arrow[from=1-4, to=1-3]
	\arrow[from=1-4, to=1-5]
\end{tikzcd}
\end{equation}
 There is also a natural identification between
 $1\costar [C,1]$ and the colimit of the diagram
\begin{equation}
\label{eq:eq for Gray cone}
\begin{tikzcd}
	{[1]\vee [C,1]} & {[C,1]} & {[C\star 1,1]}
	\arrow[from=1-2, to=1-3]
	\arrow[from=1-2, to=1-1]
\end{tikzcd}
\end{equation}
and $[C,1] \star 1$ and the colimit of the diagram
\begin{equation}
\label{eq:eq for cojoin}
\begin{tikzcd}
	{[1\costar C,1]} & {[C,1]} & {[C,1]\vee[1]}
	\arrow[from=1-2, to=1-3]
	\arrow[from=1-2, to=1-1]
\end{tikzcd}
\end{equation}
In each of the three previous diagrams, morphisms $[C,1]\to [1]\vee[C,1]$ and $[C,1]\to [C,1]\vee[1]$ are the unique ones preserving extremal points.

\begin{remark}
It is worth noticing the great similarity of these equations with the one given in theorems \ref{theo:appendice formula for otimes} and \ref{theo:appendice formula for star}
\end{remark}

\p
Let $C$ be an $\io$-category and $K$ a $(\infty,1)$-category.
There is a canonical morphism $C\otimes K\to C\times K$. In a way, one can see $C\times K$ as an intelligent truncated version of the Gray tensor product $C\otimes K$. We will make this intuition precise by constructing a hierarchy of Gray tensor products with $(\infty,1)$-categories. 
For $k\in \Nb\cup\{\omega\}$, we define the functor
$$\begin{array}{ccl}
\ocat \times \ncat{1}&\to &\ocat\\
(C,K)&\mapsto &C\otimes_k K
\end{array}$$
where $C\otimes_kK$ fits in the cocartesian square
\[\begin{tikzcd}
	{\colim_{n\geq k}(\tau_nC)\otimes K} & {C\otimes K} \\
	{\colim_{n\geq k}\tau^i_n((\tau_nC)\otimes K)} & {C\otimes_{k} K}
	\arrow[from=1-1, to=2-1]
	\arrow[from=1-1, to=1-2]
	\arrow[from=1-2, to=2-2]
	\arrow[from=2-1, to=2-2]
	\arrow["\lrcorner"{anchor=center, pos=0.125, rotate=180}, draw=none, from=2-2, to=1-1]
\end{tikzcd}\]

The induced functors $\uvar\otimes_k[1]:\ocat\to\ocat$ are called the \wcnotionsym{$k$-Gray cylinder}{((d10@$\otimes_n$}{Gray cylindera@$n$-Gray cylinder}.
Formula \eqref{eq:eq for cylinder} implies that for every $\io$-category $C$,
there is a natural identification between $[C,1]\otimes_{k+1} [1]$ and the colimit of the following diagram
\begin{equation}
\label{eq:eq for k cylinder}
\begin{tikzcd}
	{[1]\vee [ C,1]} & {[C\otimes_k\{0\},1]} & {[C\otimes_k [1],1]} & {[C\otimes_k\{1\},1]} & {[C,1]\vee[1]}
	\arrow[from=1-2, to=1-1]
	\arrow[from=1-2, to=1-3]
	\arrow[from=1-4, to=1-3]
	\arrow[from=1-4, to=1-5]
\end{tikzcd}
\end{equation}
Remark that the endofunctor $\uvar\otimes_0[1]$ is the identity, 
the first assertion of lemma \ref{lemma:technique marked oicategoros} implies that the endofunctor $\uvar\otimes_1[1]$ is equivalent to $\uvar\times [1]$, and the endofunctor $\otimes_{\omega}[1]$ is just the normal Gray cylinder.

\begin{prop}
\label{prop:otimesk preserves colimits}
For any integer $k>0$, $\uvar\otimes_k[1]$ preserves colimits. 
\end{prop}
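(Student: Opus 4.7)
The plan is to proceed by induction on $k \geq 1$. For the base case $k=1$, the remark immediately preceding the statement observes that $\uvar\otimes_1[1]$ is naturally equivalent to $\uvar\times[1]$, so the conclusion follows directly from Proposition~\ref{prop:cartesian product preserves W}, which asserts that the cartesian product preserves colimits in both variables.

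For the inductive step, assume $\uvar\otimes_k[1]$ preserves colimits. Formula~\eqref{eq:eq for k cylinder} identifies, for every $\io$-category $C$, the object $[C,1]\otimes_{k+1}[1]$ with the colimit of
$$[1]\vee[C,1]\ \leftarrow\ [C\otimes_k\{0\},1]\ \to\ [C\otimes_k[1],1]\ \leftarrow\ [C\otimes_k\{1\},1]\ \to\ [C,1]\vee[1].$$
Each entry is cocontinuous in $C$: tensoring with a point $\{0\}$ or $\{1\}$ is the identity on $C$, tensoring with $[1]$ via $\otimes_k$ is cocontinuous by the induction hypothesis, the suspension $[-,1]$ is a left adjoint by \eqref{eq:suspesnion between category}, and the wedges $[1]\vee(-)$ and $(-)\vee[1]$ are pushouts. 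A colimit of cocontinuous functors is cocontinuous, so $C\mapsto [C,1]\otimes_{k+1}[1]$ preserves colimits.

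To propagate cocontinuity from suspensions to all of $\ocat$, one introduces the cocontinuous extension $G_{k+1}:\ocat\to\ocat$ of the restriction of $(\uvar)\otimes_{k+1}[1]$ to $\Theta$. On every nonzero globular sum, which is a wedge of suspensions $[a,1]$ with $a\in\Theta$, agreement with $(\uvar)\otimes_{k+1}[1]$ follows from the previous paragraph; on $\Db_0=1$, a direct unwinding of the defining pushout (using $\tau_n 1=1$ and $\tau^i_n[1]\simeq[1]$ for $n\geq 1$) yields $1\otimes_{k+1}[1]\simeq[1]=G_{k+1}(1)$. The comparison $G_{k+1}\Rightarrow(\uvar)\otimes_{k+1}[1]$, being an equivalence on representables, extends to an equivalence on $\ocat$ via the canonical presentation $C\simeq\colim_{\Theta_{/C}}a$ as a special colimit, together with Proposition~\ref{prop:special colimit} and the analogues Propositions~\ref{prop:example of a special colimit}--\ref{prop:example of a special colimit4} that control the interaction of the relevant colimits with the pushout formula.

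The main obstacle is precisely this last extension. The defining pushout for $\otimes_{k+1}$ involves the right-adjoint truncations $\tau_n$, which do not preserve colimits, so cocontinuity cannot be read off the formula directly; the suspension identity \eqref{eq:eq for k cylinder} sidesteps this by re-expressing $[-,1]\otimes_{k+1}[1]$ purely in terms of the cocontinuous operations $[-,1]$, $\vee$, and $\otimes_k[1]$, after which the density of $\Theta$ in $\ocat$ does the rest.
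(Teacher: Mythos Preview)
Your inductive step establishes that $C\mapsto [C,1]\otimes_{k+1}[1]$ is colimit preserving, which is correct and is essentially the same computation the paper performs. The gap is in the passage from this to cocontinuity on all of $\ocat$. Introducing the cocontinuous extension $G_{k+1}$ and comparing it to $(\uvar)\otimes_{k+1}[1]$ via density is circular: the comparison map $G_{k+1}(C)\to C\otimes_{k+1}[1]$ is an equivalence for general $C$ precisely when $(\uvar)\otimes_{k+1}[1]$ preserves the canonical colimit $C\simeq\colim_{\Theta_{/C}}a$, which is the statement you are trying to prove. Your remark that ``agreement on nonzero globular sums follows from the previous paragraph'' is also not justified: a wedge $[a_1,1]\vee\cdots\vee[a_n,1]$ is a pushout of suspensions along points, and knowing that $C\mapsto[C,1]\otimes_{k+1}[1]$ is cocontinuous says nothing about whether $\otimes_{k+1}[1]$ preserves \emph{that} pushout. (In fact agreement of $G_{k+1}$ with $\otimes_{k+1}[1]$ on $\Theta$ is tautological, since $G_{k+1}$ is defined as the Kan extension of the restriction.)

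The paper avoids this circularity by changing the ambient category. On $\iPsh{\Theta}$, the truncation $\tau_n$ is precomposition along $i_n:\Theta_n\hookrightarrow\Theta$ and therefore \emph{does} preserve colimits; so does $\tau^i_n$, and so does $\uvar\otimes[1]$. Hence the defining pushout for $\otimes_k[1]$ is visibly cocontinuous as an endofunctor of $\iPsh{\Theta}$. By corollary~\ref{cor:derived colimit preserving functor} it then suffices to check that this endofunctor sends $\W$ into $\widehat{\W}$. The Segal generators are handled because $\tau_n$ and $\tau^i_n$ preserve $\widehat{\Wseg}$; the remaining completeness generators $\Sigma^nE^{eq}\to\Db_n$ are handled by exactly the inductive computation you carried out, using formula~\eqref{eq:eq for k cylinder} to reduce $(\Sigma^nE^{eq})\otimes_{k+1}[1]\to\Db_n\otimes_{k+1}[1]$ to a colimit of morphisms already known to lie in $\widehat{\W}$. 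So your core inductive calculation is the right one; what is missing is the presheaf-level framing that makes the descent to $\ocat$ non-circular.
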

\begin{proof}
In order to simplify the notation, for a functor $F:\ocat\to \ocat$, the $\infty$-presheaves $\colim_{\Theta_{/\Sigma^nE^{eq}}}\iota F$, where $\iota$ in the inclusion $\ocat\to \iPsh{\Theta}$, will just be denoted by $F(\Sigma^nE^{eq})$.

As $\tau$ and $\tau^i$ preserves colimits in $\iPsh{\Theta}$ and $\widehat{\Wseg}$, and as $\uvar\otimes[1]$ preserves colimits, we just have to show that for any $n$, $(\Sigma^nE^{eq})\otimes_k[1]\to (\Sigma^n1)\otimes_k[1]$ is in $\widehat{\W}$. 

We then proceed by induction on $k$. The cases $k=0$ and $k=1$ are trivial as $\uvar\otimes_0[1]$ is the identity and $\uvar\otimes_1[1]$ is the tensor product with $[1]$.

Suppose the result is true at the stage $k$ for $k>1$. If $n=0$, remark that $E^{eq}\otimes_k[1]$ (resp. $ 1\otimes_k[1]$) is equivalent to $E^{eq}\otimes[1]$ (resp. $ 1\otimes[1]$) and the morphism is then in $\widehat{\W}$. Now, if $n>0$, formula \eqref{eq:eq for k cylinder} implies that $(\Sigma^nE^{eq})\otimes_k[1]\to (\Sigma^n1)\otimes_k[1]$ is the colimit in depth of the following diagram:
\[\begin{tikzcd}[column sep=0.2cm]
	{[ \Sigma^{n-1}E^{eq}\otimes_{k-1}\{0\},1]} && {[ \Sigma^{n-1}E^{eq}\otimes_{k-1}\{1\},1]} \\
	{[1]\vee [ \Sigma^{n-1}E^{eq},1]} & {[ \Sigma^{n-1}E^{eq}\otimes_{k-1}[1],1]} & {[ \Sigma^{n-1}E^{eq},1]\vee[1]} \\
	& {[ \Sigma^{n-1}1\otimes_{k-1}\{0\},1]} && {[ \Sigma^{n-1}1\otimes_{k-1}\{1\},1]} \\
	& {[1]\vee [ \Sigma^{n-1}1,1]} & {[ \Sigma^{n-1}1\otimes_{k-1}[1],1]} & {[ \Sigma^{n-1}1,1]\vee[1]}
	\arrow[from=1-1, to=2-1]
	\arrow[from=1-1, to=2-2]
	\arrow[from=1-3, to=2-2]
	\arrow[from=1-3, to=2-3]
	\arrow[from=3-2, to=4-2]
	\arrow[from=3-2, to=4-3]
	\arrow[from=3-4, to=4-3]
	\arrow[from=3-4, to=4-4]
	\arrow[from=1-1, to=3-2]
	\arrow[from=2-2, to=4-3]
	\arrow[from=1-3, to=3-4]
	\arrow[from=2-1, to=4-2]
	\arrow[from=2-3, to=4-4]
\end{tikzcd}\]
by induction hypothesis, and using lemma \ref{lemma:the functor [] preserves classes}, all the morphisms in depth are in $\widehat{\W}$, and so is their colimit.
\end{proof}
The functor $\uvar\otimes[1]_k$ then admits a right adjoint
$$(\uvar)^{[1]_k}:\ocat\to \ocat.$$

\p
We now describe a last operation that will play an essential role in the definition of lax colimit and lax limit. For any $C:\ocat$, we denote by \wcnotation{$m_C$}{(mc@$m_C$} the colimit preserving functor 
$\ocat\to\ocat$ whose value on a representable $[a,n]$ is $[a\times C,n]$. Remark that the assignation $C\mapsto m_C$ is natural in $C$ and that $m_1$ is the identity.
We define the colimit preserving functor: \ssym{((d20@$\ominus$}{for $\io$-categories}
$$\begin{array}{ccc}
\ocat\times\ocat &\to&\ocat\\
(X,Y)&\mapsto &X\ominus Y
\end{array}
$$
where for any $\io$-category $C$ and any element $[b,n]$ of $\Delta[\Theta]$, $X\ominus [b,n]$ is the following pushout: 
\[\begin{tikzcd}
	{\coprod\limits_{k\leq n}m_b(C\otimes\{k\})} & {m_b(C\otimes[n])} \\
	{\coprod\limits_{k\leq n}m_1(C\otimes\{k\})} & {C\ominus[b,n]}
	\arrow[from=1-1, to=2-1]
	\arrow[""{name=0, anchor=center, inner sep=0}, from=1-1, to=1-2]
	\arrow[from=1-2, to=2-2]
	\arrow[from=2-1, to=2-2]
	\arrow["\lrcorner"{anchor=center, pos=0.125, rotate=180}, draw=none, from=2-2, to=0]
\end{tikzcd}\]
By construction, the functor $\uvar\ominus \uvar$ commutes with colimits in both variables.
We also have the identification $C\ominus [1]:=C\otimes [1]$.

Eventually, formula \eqref{eq:eq for cylinder} induces a natural identification between $[C,1]\ominus[b,1]$ and the colimit of the following diagram
\begin{equation}
\label{eq:formula for the ominus}
\begin{tikzcd}[column sep = 0.3cm]
	{[b,1]\vee[C,1]} & {[C\otimes\{0\}\times b,1]} & {[(C\otimes[1])\times b),1]} & {[C\otimes\{1\}\times b,1]} & {[C,1]\vee[b,1]}
	\arrow[from=1-2, to=1-3]
	\arrow[from=1-4, to=1-3]
	\arrow[from=1-4, to=1-5]
	\arrow[from=1-2, to=1-1]
\end{tikzcd}
\end{equation}

\subsection{Gray deformation retract}
\label{subsection:Gray deformation retract}

\p
 A \wcnotion{left $k$-Gray deformation retract structure}{left or right $k$-Gray deformation retract structure} for a morphism $i:C\to D$ is the data of a \textit{retract}
 $r:D\to C$, a \textit{deformation} $\psi:D\otimes_k [1]\to D$, and equivalences
$$ri\sim id_C~~~~~\psi_{|D\otimes_k\{0\}}\sim ir~~~~~\psi_{|D\otimes_k\{1\}}\sim id_D~~~~~ \psi_{|C\otimes_k[1]}\sim i\cst_C
$$ 
A morphism $i:C\to D$ between $\io$-categories is a \wcnotion{left $k$-Gray deformation retract}{left or right $k$-Gray deformation retract} if it admits a left deformation retract structure. By abuse of language, such data will just be denoted by $(i,r,\psi)$.

We define dually the notion of \textit{right $k$-Gray deformation retract structure} and of \textit{right $k$-Gray deformation retract} in exchanging $0$ and $1$ in the previous definition.

\p
 A \textit{left $k$-Gray deformation retract structure} for a morphism $i:f\to g$ in the $\iun$-category of arrows of $\ocat$ is the data of a \textit{retract}
 $r:g\to f$, a \textit{deformation} $\psi:g\otimes_k [1]\to g$ and equivalences
$$ri\sim id_f~~~~~\psi_{|g\otimes_k\{0\}}\sim ir~~~~~\psi_{|g\otimes_k\{1\}}\sim id_D~~~~~ \psi_{|f\otimes_k[1]}\sim i\cst_C
$$ 
A morphism $i:C\to D$ between arrows of $\ocat$ is a \textit{left $k$-Gray deformation retract} if it admits a left deformation retract structure. By abuse of language, such data will just be denoted by $(i,r,\psi)$.

We define dually the notion of \textit{right $k$-Gray deformation retract structure} and of \textit{right $k$-Gray deformation retract} in exchanging $0$ and $1$ in the previous definition.

\begin{example}
\label{example:canonical example of left deformation retract unmarked}
Let $k\in \Nb\cup\{\omega\}$ and 
let $C$ be an $(\infty,k)$-category. We consider the morphism $i:C\otimes\{0\}\to C\otimes[1]$. We define $r:C\otimes[1]\xrightarrow{C\otimes\Ib} C\otimes\{0\}$. Eventually, we set 
$$\psi:C\otimes[1]\otimes[1]\to C\otimes([1]\times [1])\xrightarrow{C\otimes \phi}C\otimes[1]$$
where $\phi:[1]\times[1]$ is the morphism sending $(i,j)$ on the minimum of $i$ and $j$. 

As $C$ is an $(\infty,k)$-category, $\psi$ factors through $C\otimes[1]\to \tau^i_k(C\otimes[1])\sim C\otimes_k[1]$. We denote by $\phi:C\otimes_k[1]\to C\otimes\{0\}$ the induced morphism.
The triple $(i,r,\phi)$ is a left $k$-Gray deformation retract structure. Conversely, 
 $C\otimes\{1\}\to C\otimes[1]$ is a right deformation retract.

One can show similarly that $1\to 1\costar C$ is a left $k$-Gray deformation retract, and $1\to C\star 1$ is a right $k$-Gray deformation retract.
\end{example}

\p The $\infty$-groupoid of left and right Gray retracts enjoys many stability properties: 
\begin{prop}
\label{prop:left Gray deformation retract stable under pushout unmarked}
Let $(i_a,r_a,\psi_a)$ be a natural familly of left (resp. right) $k$-Gray deformation retract structures indexed by an $(\infty,1)$-category $A$.
The triple $(\colim_{A}i_a,\colim_{A}r_a,\colim_{A}\psi_a)$ is a left (resp. right) $k$-Gray deformation retract structure.
\end{prop}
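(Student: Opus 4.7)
The plan is to push the colimit inside every piece of the data defining a deformation retract structure, and then verify each of the four defining equivalences pointwise. The decisive input will be Proposition \ref{prop:otimesk preserves colimits}, which guarantees that the $k$-Gray cylinder $\uvar\otimes_k[1]$ preserves colimits (the edge cases $k=0$ and $k=\omega$ being covered by the remarks following that proposition, since $\uvar\otimes_0[1]$ is the identity and $\uvar\otimes_\omega[1]$ is the ordinary Gray cylinder).

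First I would set $C := \colim_A C_a$, $D := \colim_A D_a$, $i := \colim_A i_a : C \to D$, and $r := \colim_A r_a : D \to C$. Using the colimit-preservation of $\uvar\otimes_k[1]$, the canonical identification
$$\colim_A(D_a \otimes_k [1]) \sim D \otimes_k [1]$$
will let me regard $\psi := \colim_A \psi_a$ as a morphism $D \otimes_k [1] \to D$. The same colimit-preservation, applied to the natural transformations of colimit-preserving endofunctors induced by the inclusions $\{\epsilon\}\hookrightarrow[1]$ for $\epsilon\in\{0,1\}$ and by $i:C\to D$, will show that the restrictions $\psi_{|D\otimes_k\{\epsilon\}}$ and $\psi_{|C\otimes_k[1]}$ identify with the colimits of the corresponding restrictions of each $\psi_a$.

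Next I would observe that each of the four defining equivalences
$$r_a i_a \sim id_{C_a},\quad (\psi_a)_{|D_a\otimes_k\{0\}}\sim i_a r_a,\quad (\psi_a)_{|D_a\otimes_k\{1\}}\sim id_{D_a},\quad (\psi_a)_{|C_a\otimes_k[1]}\sim i_a \cst_{C_a}$$
is natural in $a\in A$, so that applying $\colim_A$ yields the corresponding equivalence for the triple $(i,r,\psi)$. The only mildly delicate point is the naturality of $\cst_{C_a}$ in $a$; I expect this to follow since the constant map can be written as the composition of the canonical projection $C_a\otimes_k[1]\to C_a\otimes_k\{0\}$ with the identification $C_a\otimes_k\{0\}\sim C_a$, both of which are evidently natural in $C_a$. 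The right version of the statement is entirely analogous, with the roles of $0$ and $1$ swapped.

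I do not expect a real obstacle here: the proposition is a formal consequence of the colimit-preservation of $\uvar\otimes_k[1]$ together with the fact that colimits in $\ocat$ commute with each other, both of which are already in place.
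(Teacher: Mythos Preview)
Your proposal is correct and follows exactly the paper's approach: the paper's proof is the one-line remark that this is an immediate consequence of the colimit-preservation of $\uvar\otimes_k[1]$, and you have simply spelled out in detail what that line means.
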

\begin{proof}
This is an immediate consequence of the fact that $\uvar\otimes_k[1]$ preserves colimits.
\end{proof}
\begin{prop}
\label{prop:stability under pullback unmarked}
Suppose that we have a diagram 
\[\begin{tikzcd}
	X & Y & Z \\
	X & {Y'} & {Z'}
	\arrow[from=1-1, to=2-1]
	\arrow[from=1-2, to=2-2]
	\arrow[from=1-3, to=2-3]
	\arrow["p", from=1-1, to=1-2]
	\arrow["q"', from=1-3, to=1-2]
	\arrow["{p'}"', from=2-1, to=2-2]
	\arrow["{q'}", from=2-3, to=2-2]
\end{tikzcd}\]
such that $p\to p'$ and $q\to q'$ are left (resp. right) $k$-Gray deformation retract. The induced square $q^*p\to (q')^*p'$ is a left (resp. right) $k$-Gray deformation retract.
\end{prop}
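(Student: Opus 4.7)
The plan is to assemble the desired retract and deformation on $q^*p \to (q')^*p'$ componentwise from the given data on $p \to p'$ and $q \to q'$, using the universal property of the pullback. I treat the left case; the right is formally dual.

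The morphism $p \to p'$ in the arrow $\iun$-category has domain component $id_X$ and codomain component some $Y \to Y'$; the $k$-Gray deformation retract data unpacks as a retraction $r^Y : Y' \to Y$ (satisfying $r^Y \circ p' = p$) together with deformations $\psi_p^X : X \otimes_k [1] \to X$ and $\psi^{Y'} : Y' \otimes_k [1] \to Y'$. The data on $q \to q'$ similarly yields $\gamma : Z' \to Z$ (retracting $Z \to Z'$), a retraction $Y' \to Y$, and deformations $\psi_q^Z$, $\psi_q^{Z'}$ and a $Y'$-deformation. The agreement of the $Y$-components between the two structures is the implicit compatibility in the statement, automatic in the examples of interest (see \ref{example:canonical example of left deformation retract unmarked}); I use common names $r^Y$ and $\psi^{Y'}$ for these.

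First I construct the retract $r : (q')^*p' \to q^*p$. Its target component is $\gamma : Z' \to Z$; its source component $X \times_{Y'} Z' \to X \times_Y Z$ is defined by the universal property of the pullback from the pair $(\pi_X,\ \gamma \circ \pi_{Z'})$, the required compatibility $p \circ \pi_X = q \circ \gamma \circ \pi_{Z'}$ being obtained by applying $r^Y$ to the defining identity $p' \circ \pi_X = q' \circ \pi_{Z'}$ of $X \times_{Y'} Z'$ and using $r^Y \circ p' = p$, $r^Y \circ q' = q \circ \gamma$. The identity $r \circ i = id_{q^*p}$ then holds by uniqueness in the pullback, since it does on each projection.

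Next I construct the deformation $\psi$. Its target components are $\psi_q^Z$ and $\psi_q^{Z'}$. For the source component at $X \times_{Y'} Z'$, the universal property applied to $\psi_p^X \circ (\pi_X \otimes_k [1])$ and $\psi_q^{Z'} \circ (\pi_{Z'} \otimes_k [1])$ yields a map $(X \times_{Y'} Z') \otimes_k [1] \to X \times_{Y'} Z'$, the required compatibility over $Y'$ following from naturality of $\otimes_k [1]$ and the common $\psi^{Y'}$. The same scheme at $X \times_Y Z$ provides the matching source deformation, and all four axioms of the $k$-Gray deformation retract structure lift by uniqueness from those for $\psi_p$ and $\psi_q$ verified componentwise. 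The main subtle point is that $\otimes_k [1]$ does not preserve pullbacks in general, so the source-level deformation is not itself obtained as a pullback of anything; it exists only via the universal property applied on the target side of the pullback, which is where the implicit compatibility of the two input structures on the shared arrow $Y \to Y'$ is essential.
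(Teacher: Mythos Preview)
Your approach---assemble the retract and deformation on the pullback componentwise via the universal property---is exactly what the paper intends by ``easy diagram chasing''. The one place where you hesitate, the compatibility of the two $Y'$-structures, is in fact a non-issue, and your argument goes through without assuming it.

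Here is why. Since the left vertical map is $id_X$, the commutativity of the left square reads $p' = j\circ p$, where $j:Y\to Y'$ is the middle vertical. Now take $r^Y$ to be the $Y$-component $r_q^Y$ of the retraction coming from the $q\to q'$ structure. The retraction axiom for $q\to q'$ gives $r_q^Y\circ j = id_Y$, and therefore
\[
r^Y\circ p' \;=\; r_q^Y\circ j\circ p \;=\; p,
\]
so the identity $r^Y\circ p' = p$ that you attribute to the $p$-structure already follows from the $q$-structure alone. The same trick handles the deformation: setting $\psi^{Y'}:=\psi_q^{Y'}$, the needed compatibility $p'\circ\cst_X = \psi^{Y'}\circ(p'\otimes_k[1])$ follows from the restriction axiom $\psi_q^{Y'}\circ(j\otimes_k[1]) = j\circ\cst_Y$ for the $q$-structure together with $p' = j\circ p$ and naturality of $\cst$. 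Thus no extra compatibility hypothesis is required; in this proof the hypothesis on $p\to p'$ is used only through the relation $p' = j\circ p$. With this clarification your proof is complete.
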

\begin{proof}
The proof is an easy diagram chasing.
\end{proof}
\begin{prop}
\label{prop:stability by composition unmarked}
If $p\to p'$ and $p'\to p''$ are two left (resp. right) $k$-Gray deformation retracts, so is $p\to p''$.
\end{prop}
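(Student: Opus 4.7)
The plan is to construct a composite structure explicitly and verify its defining equations by a concatenation argument. Given $(i,r,\psi)$ on $p\to p'$ and $(i',r',\psi')$ on $p'\to p''$, I would take the composite morphism $i'i:p\to p''$, endowed with retract $r'':=rr'$ and a composite deformation $\Psi$ obtained by gluing the two deformations along their matching endpoint $i'r'$. The retract identity $r''(i'i)=r(r'i')i=ri=\id_p$ is immediate.

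First I would observe that $\uvar\otimes_k \uvar:\ocat\times\ncat{1}\to\ocat$ preserves colimits in the second variable. This follows by inspection of the pushout that defines $\otimes_k$: each of the terms $C\otimes K$, $\colim_{n\geq k}(\tau_n C)\otimes K$ and $\colim_{n\geq k}\tau^i_n((\tau_nC)\otimes K)$ depends on $K$ through functors that commute with colimits (ordinary Gray tensor, $\tau_n$, $\tau^i_n$, and filtered colimits), and pushouts are themselves colimits. Consequently $p''\otimes_k[2]\simeq (p''\otimes_k[1])\coprod_{p''\otimes_k[0]}(p''\otimes_k[1])$, where the gluing identifies $p''\otimes_k\{1\}$ on the first factor with $p''\otimes_k\{0\}$ on the second.

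Using this decomposition, I would define $\Phi:p''\otimes_k[2]\to p''$ by prescribing on the first factor the morphism $\phi_1:=i'\circ \psi\circ (r'\otimes_k[1])$ (a deformation from $i'irr'$ to $i'r'$), and on the second factor the morphism $\psi'$ (a deformation from $i'r'$ to $\id_{p''}$). These agree on the shared copy of $p''$: the first restricts at $1$ to $i'\psi|_{p'\otimes\{1\}}r'=i'\cdot \id_{p'}\cdot r'=i'r'$, the second restricts at $0$ to $\psi'|_{p''\otimes\{0\}}=i'r'$. Then $\Psi$ is defined as the composite $p''\otimes_k[1]\xrightarrow{p''\otimes_k d}p''\otimes_k[2]\xrightarrow{\Phi}p''$, where $d:[1]\to[2]$ is the long edge sending $0\mapsto 0$, $1\mapsto 2$.

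The verification of the four equations of a left $k$-Gray deformation retract structure on $(i'i,rr',\Psi)$ is then a direct unwinding: the endpoint conditions $\Psi|_{\{0\}}=i'irr'$ and $\Psi|_{\{1\}}=\id_{p''}$ are read off from the endpoint conditions of $\phi_1$ and $\psi'$; the constancy condition $\Psi|_{p\otimes_k[1]}=i'i\cst_p$ follows because each of the two pieces of $\Phi$, when pulled back along $i'i\otimes_k[1]$, collapses to $i'i\pi_p$: for the first piece one uses $r'i'i=i$ and $\psi\circ(i\otimes_k[1])=i\cst_p$, and for the second piece one uses $\psi'\circ(i'\otimes_k[1])=i'\cst_{p'}$. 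The right $k$-Gray deformation retract case is completely dual, obtained by exchanging the roles of $0$ and $1$ and reversing the orientation of the concatenation along $[2]$. The only non-routine ingredient is the cocontinuity of $\otimes_k$ in the second variable, which I expect to be the main conceptual step; everything else is bookkeeping.
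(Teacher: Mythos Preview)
Your argument is correct and is precisely the kind of explicit construction the paper's one-line ``easy diagram chasing'' is gesturing at. The paper does not record the cocontinuity of $C\otimes_k(-)$ in the $\ncat{1}$-variable (only the cocontinuity of $(-)\otimes_k[1]$ in the first variable is stated), so your verification of that point from the defining pushout is a genuine, if minor, addition; once one has $p''\otimes_k[2]\simeq (p''\otimes_k[1])\amalg_{p''}(p''\otimes_k[1])$, the gluing of $i'\psi(r'\otimes_k[1])$ and $\psi'$ followed by restriction along the long edge is exactly the expected concatenation, and your endpoint and constancy checks are correct.
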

\begin{proof}
The proof is an easy diagram chasing.
\end{proof}

\p The two following propositions show how the shifting of dimension preserves Gray transformation retract.

\begin{prop}
\label{prop:Gray deformation retract and passage to hom unmarked}
Let $(i:C\to D,r,\psi)$ be a left (resp. right) $(k+1)$-Gray deformation structure. For any $x: C$ and $y:D$ (resp. $x: D$ and $y:C$), the morphism
$$\begin{array}{cc}
&\hom_C(x,ry)\xrightarrow{i} \hom_D(ix,iry)\xrightarrow{{\psi_y}_!} \hom_D(ix,y)\\
(resp. &\hom_C(rx,y)\xrightarrow{i} \hom_D(irx,iy)\xrightarrow{{\psi_x}_!} \hom_D(x,iy))
\end{array}
$$
is a right (resp. left) $k$-Gray deformation retract, whose retract is given by 
$$\begin{array}{cc}
&\hom_D(ix,y)\xrightarrow{r}\hom_C(x,ry)\\
(resp. &\hom_D(x,iy)\xrightarrow{r}\hom_C(rx,y))
\end{array}$$
\end{prop}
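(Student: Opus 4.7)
The plan is to use formula \eqref{eq:eq for k cylinder} expressing $[A,1]\otimes_{k+1}[1]$ as a colimit containing $[A\otimes_k[1],1]$ as its middle piece; this bridges the $(k+1)$-structure on $D$ to the required $k$-structure on hom-spaces. I focus on the left $(k+1)$-case (the right is dual via $(\uvar)^{op}$). Write $A:=\hom_D(ix,y)$, $A':=\hom_C(x,ry)$, and $\phi:=({\psi_y})_!\circ i$.

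To construct the deformation $\psi':A\otimes_k[1]\to A$, the suspension-hom adjunction \eqref{eq:suspesnion between category} turns the identity $A\to A$ into a canonical map $\mu:[A,1]\to D$ sending $0\mapsto ix$ and $1\mapsto y$. The composite
$$[A,1]\otimes_{k+1}[1]\xrightarrow{\mu\otimes[1]}D\otimes_{k+1}[1]\xrightarrow{\psi}D$$
restricted along the middle-piece inclusion $[A\otimes_k[1],1]\hookrightarrow[A,1]\otimes_{k+1}[1]$ of \eqref{eq:eq for k cylinder} is then adjoint to the desired $\psi'$; the endpoints of this restriction land at $ix$ and $y$ (the extremal objects of the full colimit), so the adjunction applies.

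Verifying the structure conditions is mostly a diagram chase in the colimit \eqref{eq:eq for k cylinder}. The maps $[A\otimes_k\{\epsilon\},1]\to[A\otimes_k[1],1]$ for $\epsilon\in\{0,1\}$ factor through the side pieces $[1]\vee[A,1]$ and $[A,1]\vee[1]$; substituting $\psi|_{D\otimes\{1\}}\sim id_D$, $\psi|_{D\otimes\{0\}}\sim ir$, and the contribution of $\psi_y$ on the endpoint-motion $[1]$ component, yields $\psi'|_{A\otimes_k\{0\}}\sim id_A$ and $\psi'|_{A\otimes_k\{1\}}\sim \phi r$. For the constancy $\psi'|_{A'\otimes_k[1]}\sim \phi\cdot\cst$, we use that $\phi$ factors through $1$-cells between objects in $iC$, so the hypothesis $\psi|_{C\otimes_{k+1}[1]}\sim i\cst_C$ translates directly into the required constancy of $\psi'$ along $A'$.

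The main obstacle is the retract identity $r\circ\phi\sim id_{A'}$. Direct computation gives $r(\psi_y\circ i\alpha)\sim r(\psi_y)\circ\alpha$ using $ri\sim id_C$, which reduces the question to proving $r(\psi_y)\sim id_{ry}$. Analyzing $r\psi:D\otimes_{k+1}[1]\to C$: both endpoint restrictions are equivalent to $r$ (since $rir\sim r$), and the restriction to $C\otimes[1]$ is equivalent to $\cst_C\circ r$ (combining $\psi|_{C\otimes[1]}\sim i\cst_C$ with $ri\sim id_C$). Coherent rectification within the $\iun$-categorical framework then promotes $r\psi$ to the constant deformation $\cst_r$, so that $r(\psi_y)\sim id_{ry}$, as required.
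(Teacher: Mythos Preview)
Your construction of the deformation is essentially the paper's, viewed through the other side of the suspension--hom adjunction: the paper curries $\psi$ to a map $D\to D^{[1]_{k+1}}$ and then passes to hom-objects via the limit description of $\hom$ in $D^{[1]_{k+1}}$ coming from \eqref{eq:eq for k cylinder}; you instead work with the suspension $[A,1]\to D$ and the colimit description directly. Both produce the same $\psi'$, and the paper likewise leaves the remaining axiom-checks to the reader.

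Where your argument has a genuine gap is the verification of the retract identity $r\circ\phi\sim id_{A'}$. You correctly reduce this to $r(\psi_y)\sim id_{ry}$, and then assert that because $r\psi$ agrees with $r$ on $D\otimes\{0\}$, on $D\otimes\{1\}$, and is constant along $C\otimes_{k+1}[1]$, ``coherent rectification'' forces $r\psi\sim\cst_r$. That inference is not valid: those three restrictions do not determine $r\psi$ on the remaining cells of $D\otimes_{k+1}[1]$, and there is no rectification principle of this shape. Concretely, take $k=0$: the claim becomes the second triangle identity $r\psi=id_r$ for the half-adjunction data $(i,r,\eta=id,\epsilon=\psi)$, whereas the deformation-retract axioms encode only the first triangle identity $\psi_i=id_i$. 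Naturality gives you only that each $r\psi_d$ is an idempotent endomorphism of $rd$ (from $\psi_d\circ ir\psi_d=\psi_d\circ\psi_{ird}=\psi_d$), not that it is the identity. The paper's curried presentation does not actually supply this step either---it is hidden in ``we leave the reader verify''---so if you want a complete argument you will need an independent reason for $r(\psi_y)\sim id_{ry}$ rather than an appeal to rectification.
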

\begin{proof}
By currying $\psi$, this induces a diagram
\[\begin{tikzcd}
	& C & D \\
	D & {D^{[1]_{k+1}}} \\
	&& D
	\arrow["r", curve={height=-6pt}, from=2-1, to=1-2]
	\arrow["i", from=1-2, to=1-3]
	\arrow["id"', curve={height=12pt}, from=2-1, to=3-3]
	\arrow[from=2-2, to=3-3]
	\arrow[from=2-2, to=1-3]
	\arrow["\psi"{description}, from=2-1, to=2-2]
\end{tikzcd}\]
For any pair of objects $(z,y)$ of $D$, according to formula \eqref{eq:eq for k cylinder}, this induces a diagram
\[\begin{tikzcd}
	& {\hom_D(z,y)} \\
	{\hom_D(z,y)} & {\hom_D(irz,iry)\times_{\hom_D(irz,y)}\hom_D(irz,y)^{[1]_k}\times_{\hom_D(irz,y)} \hom_D(z,y)} \\
	{\hom_C(rz,ry)} & {\hom_D(irz,iry)}
	\arrow["r", from=2-1, to=3-1]
	\arrow["i", from=3-1, to=3-2]
	\arrow["id", curve={height=-12pt}, from=2-1, to=1-2]
	\arrow[from=2-2, to=1-2]
	\arrow[from=2-2, to=3-2]
	\arrow["\psi"{description}, from=2-1, to=2-2]
\end{tikzcd}\]
If $z$ is of shape $ix$, the diagram becomes
\[\begin{tikzcd}
	& {\hom_D(ix,y)} & {\hom_D(ix,y)} \\
	{\hom_D(ix,y)} & {\hom_D(ix,iry)\times_{\hom_D(ix,y)}\hom_D(ix,y)^{[1]_k}} & {\hom_D(ix,y)^{[1]_k}} \\
	{\hom_C(x,ry)} & {\hom_D(ix,iry)} & {\hom_D(ix,y)}
	\arrow["r"', from=2-1, to=3-1]
	\arrow["id", curve={height=-12pt}, from=2-1, to=1-2]
	\arrow[from=2-2, to=1-2]
	\arrow["\psi"{description}, from=2-1, to=2-2]
	\arrow["i"', from=3-1, to=3-2]
	\arrow[from=2-2, to=3-2]
	\arrow[""{name=0, anchor=center, inner sep=0}, "{{\psi_y}_!}"', from=3-2, to=3-3]
	\arrow[from=2-2, to=2-3]
	\arrow["id", from=1-2, to=1-3]
	\arrow[from=2-3, to=3-3]
	\arrow[from=2-3, to=1-3]
	\arrow["\lrcorner"{anchor=center, pos=0.125}, draw=none, from=2-2, to=0]
\end{tikzcd}\]
By decurrying, this induces a morphism $\phi:\hom_D(ix,y)\otimes_k[1]\to \hom_D(ix,y)$. We leave the reader verify that the triple $({\psi_y}_!i,r,\phi)$ is a right $k$-Gray deformation retract structure.
We proceed similarly for the other case.
\end{proof}

\begin{prop}
\label{prop:Gray deformation retract and passage to hom v2 unmarked}
For any left (resp. right) $(k+1)$-Gray deformation retract between $p$ and $p'$:
\[\begin{tikzcd}
	C & D \\
	{C'} & {D'}
	\arrow["p"', from=1-1, to=2-1]
	\arrow["i", from=1-1, to=1-2]
	\arrow["{p'}", from=1-2, to=2-2]
	\arrow["{i'}"', from=2-1, to=2-2]
\end{tikzcd}\]
and for any pair of objects $x: C$ and $y:D$ (resp. $x: D$ and $y:C$), the outer square of the following diagram
\[\begin{tikzcd}
	{\hom_{C}(x,ry)} & {\hom_{D}(ix,iry)} & {\hom_{D}(ix,y)} \\
	{\hom_{C'}(px,pr'y)} & {\hom_{D'}(p'i'x,p'i'r'y)} & {\hom_{D'}(p'i'x,p'y)}
	\arrow["{i'}"', from=2-1, to=2-2]
	\arrow["{{\psi'_{p'y}}_!}"', from=2-2, to=2-3]
	\arrow[from=1-1, to=2-1]
	\arrow[from=1-3, to=2-3]
	\arrow["{{\psi_y}_!}", from=1-2, to=1-3]
	\arrow["i", from=1-1, to=1-2]
	\arrow[from=1-2, to=2-2]
\end{tikzcd}\]
(resp.
\[\begin{tikzcd}
	{\hom_{C}(rx,y)} & {\hom_{D}(irx,iy)} & {\hom_{D}(x,iy)} \\
	{\hom_{C'}(pr'x,py)} & {\hom_{D'}(p'i'r'x,p'i'y)} & {\hom_{D'}(p'x,p'i'y)\big)}
	\arrow["{i'}"', from=2-1, to=2-2]
	\arrow["{{\psi'_{p'x}}_!}"', from=2-2, to=2-3]
	\arrow[from=1-1, to=2-1]
	\arrow[from=1-3, to=2-3]
	\arrow["{{\psi_x}_!}", from=1-2, to=1-3]
	\arrow["i", from=1-1, to=1-2]
	\arrow[from=1-2, to=2-2]
\end{tikzcd}\]
is a left (resp. right) $(k+1)$-Gray deformation retract, whose retract is given by
\[\begin{tikzcd}
	{\hom_{D}(ix,y)} & {\hom_{C}(x,ry)} & {(resp.\hom_{D}(x,iy)} & {\hom_{C}(rx,y)} \\
	{\hom_{D'}(p'i'x,p'y)\big)} & {\hom_{C'}(px,pr'y)} & {\hom_{D'}(p'x,p'i'y)} & {\hom_{C'}(pr'x,py)\big)}
	\arrow[from=1-3, to=2-3]
	\arrow["r", from=1-3, to=1-4]
	\arrow["{r'}"', from=2-3, to=2-4]
	\arrow[from=1-4, to=2-4]
	\arrow["{r'}"', from=2-1, to=2-2]
	\arrow["r", from=1-1, to=1-2]
	\arrow[from=1-2, to=2-2]
	\arrow[from=1-1, to=2-1]
\end{tikzcd}\]
\end{prop}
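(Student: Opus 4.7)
The plan is to bootstrap from Proposition \ref{prop:Gray deformation retract and passage to hom unmarked}, which produces a deformation retract structure on a single hom from a deformation retract structure on a single arrow, to the relative situation by a naturality argument applied twice.

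First, I would unpack what a left $(k+1)$-Gray deformation retract between $p$ and $p'$ in $\Arr(\ocat)$ actually provides: a square of retracts $(r,r')$ compatible with $(i,i')$ and with the vertical maps $(p,p')$, together with deformations $\psi$ on $D$ and $\psi'$ on $D'$ which are themselves compatible in the sense that $p'\circ \psi \sim \psi'\circ(p\otimes_{k+1}[1])$, and all of the auxiliary equivalences defining a deformation retract structure fit into a morphism of such structures via $(p,p')$.

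Second, I would apply Proposition \ref{prop:Gray deformation retract and passage to hom unmarked} to $(i,r,\psi)$ at the pair $(x,y)$ to obtain a right $k$-Gray deformation retract structure on the top row $\hom_C(x,ry)\to \hom_D(ix,y)$, with retract $r$ and deformation $\phi$ obtained by currying $\psi$, taking a universal pullback, and decurrying. Applied to $(i',r',\psi')$ at $(px,p'y)$, the same proposition yields the analogous structure on the bottom row, with retract $r'$ and deformation $\phi'$. The retract compatibility between the two rows is immediate from the square $r'\circ p\sim p\circ r$ provided by the morphism of deformation retract data.

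The remaining point, and the main obstacle, is the compatibility of the two deformations $\phi$ and $\phi'$, which amounts to showing that the relevant square of deformations commutes. This follows from the fact that the construction of $\phi$ in the proof of Proposition \ref{prop:Gray deformation retract and passage to hom unmarked} proceeds entirely through functorial operations (currying, pullback, decurrying) and that the data $(p,p')$ defines a morphism between the inputs of these functorial operations; in particular $\uvar\otimes_{k+1}[1]$ preserves the maps involved by naturality. No new computation beyond diagram chasing is required, but the diagrams are large and the bookkeeping of the canonical identifications (especially those coming from formula \eqref{eq:eq for k cylinder} used in the decurrying step) must be carried out carefully. The right case is formally dual, obtained by exchanging $0$ and $1$ throughout.
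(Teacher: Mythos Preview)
Your proposal is correct and matches the paper's approach: the paper's proof is a one-liner stating that the result follows because the construction of the retraction and deformation in Proposition~\ref{prop:Gray deformation retract and passage to hom unmarked} was functorial, and your argument simply unpacks what this functoriality means at the level of the arrow category. You have correctly identified that the key point is the compatibility of $\phi$ and $\phi'$, which follows from the naturality of the operations used in the previous proof.
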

\begin{proof}
This comes from the fact that the construction of the retraction and the deformation in the previous proposition was functorial.
\end{proof}

\begin{prop}
\label{prop:suspension of left Gray deformation retract unmarked}
If $i$ is a left $k$-Gray deformation retract, $[i,1]$ is a right $(k+1)$-Gray deformation retract. Conversely, if $i$ is a right $k$-Gray deformation retract, $[i,1]$ is a left $(k+1)$-Gray deformation retract morphism.
\end{prop}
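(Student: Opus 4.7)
\smallskip

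The plan is to use the colimit decomposition of $[X,1]\otimes_{k+1}[1]$ given by formula \eqref{eq:eq for k cylinder} to lift the deformation retract structure by one categorical dimension. I treat the first assertion in detail; the converse is strictly dual, exchanging the two wedge pieces $[1]\vee[D,1]$ and $[D,1]\vee[1]$.

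Suppose $(i\colon C\to D,r,\psi)$ is a left $k$-Gray deformation retract structure. Set $r':=[r,1]$, so that $r'[i,1]\sim[ri,1]\sim id_{[C,1]}$. To construct $\psi'\colon [D,1]\otimes_{k+1}[1]\to [D,1]$, I invoke the universal property of \eqref{eq:eq for k cylinder} applied to $X = D$: it is enough to specify maps on the middle piece $[D\otimes_k[1],1]$ and on the two wedges, agreeing on the gluing pieces $[D\otimes_k\{j\},1]$. I define the middle piece map as $[\psi,1]$; the map on $[1]\vee[D,1]$ as sending the $[D,1]$-factor by $[ir,1]$ and the initial $[1]$ to the degenerate $1$-cell at the object $0\in[D,1]$; and the map on $[D,1]\vee[1]$ as sending the $[D,1]$-factor by $id_{[D,1]}$ and the trailing $[1]$ to the degenerate $1$-cell at the object $1\in[D,1]$. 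The two gluing compatibilities then follow at once from the hypotheses $\psi_{|D\otimes_k\{0\}}\sim ir$ and $\psi_{|D\otimes_k\{1\}}\sim id_D$ by applying $[\uvar,1]$.

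For the right $(k+1)$-Gray deformation retract conditions, one traces through \eqref{eq:eq for k cylinder}: the slice $[D,1]\otimes_{k+1}\{0\}$ sits as the $[D,1]$-factor of $[D,1]\vee[1]$, giving $\psi'_{|[D,1]\otimes_{k+1}\{0\}}\sim id_{[D,1]}$; dually, $[D,1]\otimes_{k+1}\{1\}$ is the $[D,1]$-factor of $[1]\vee[D,1]$, giving $\psi'_{|[D,1]\otimes_{k+1}\{1\}}\sim[ir,1]=[i,1]r'$. The remaining condition $\psi'_{|[C,1]\otimes_{k+1}[1]}\sim [i,1]\cst_{[C,1]}$ is checked piecewise on the colimit decomposition of $[C,1]\otimes_{k+1}[1]$: on the middle, $[\psi,1]\circ[i\otimes_k[1],1]=[\psi\circ(i\otimes_k[1]),1]\sim[i\cst_C,1]$ by hypothesis; on each wedge, the precomposition with $id\vee[i,1]$ respectively $[i,1]\vee id$ produces exactly the corresponding component of $[i,1]\cst_{[C,1]}$, using $ri\sim id_C$ to reduce $[ir,1]\circ[i,1]$ to $[i,1]$.

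The main potential obstacle is the explicit identification of the slices $[D,1]\otimes_{k+1}\{j\}$ inside \eqref{eq:eq for k cylinder}, but this is an unambiguous trace of where the $\{j\}$-edges of the square $[1]\otimes[1]$ sit in the upper and lower boundary paths; once settled, everything else is mechanical functoriality of $[\uvar,1]$ applied to the given relations $ri\sim id_C$, $\psi_{|D\otimes_k\{0\}}\sim ir$, $\psi_{|D\otimes_k\{1\}}\sim id_D$, and $\psi_{|C\otimes_k[1]}\sim i\cst_C$.
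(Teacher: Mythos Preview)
Your proof is correct and follows essentially the same approach as the paper: both build the deformation $\psi'$ by mapping out of the colimit decomposition \eqref{eq:eq for k cylinder}, sending the middle piece by $[\psi,1]$, the wedge $[1]\vee[D,1]$ via $[ir,1]$ on the $[D,1]$-factor and collapsing the extra $[1]$, and the wedge $[D,1]\vee[1]$ via the identity on the $[D,1]$-factor and collapsing the extra $[1]$. The paper presents this as a single diagram and simply asserts that $([i,1],[r,1],\psi')$ is a right $(k+1)$-Gray deformation retract, whereas you spell out the verification of each axiom; your identification of where $[D,1]\otimes_{k+1}\{0\}$ and $[D,1]\otimes_{k+1}\{1\}$ sit in the decomposition is the correct one.
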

\begin{proof}
Let $(i:C\to D,r,\phi)$ be a left $k$-Gray deformation retract structure. We define the morphism $\psi:[D,1]\otimes_{k+1}[1]\to [D,1]$ as the horizontal colimit of the following diagram:
\[\begin{tikzcd}
	{[1]^{}\vee[D,1]} & {[D\otimes_k\{0\},1]} & {[D\otimes_k[1],1]} & {[D\otimes_k\{1\},1]} & {[D,1]\vee[1]^{}} \\
	& {[C,1]} & {[D,1]} & {[D,1]}
	\arrow[from=1-4, to=1-5]
	\arrow["{[\phi,1]}"', from=1-3, to=2-3]
	\arrow[from=1-2, to=1-1]
	\arrow[from=1-2, to=1-3]
	\arrow[from=1-4, to=1-3]
	\arrow["{[i,1]}"', from=2-2, to=2-3]
	\arrow["{[r,1]}"', from=1-2, to=2-2]
	\arrow["{[id,1]}", from=2-4, to=2-3]
	\arrow["{[id,1]}", from=1-4, to=2-4]
	\arrow[from=1-1, to=2-2]
	\arrow[from=1-5, to=2-4]
\end{tikzcd}\]
Eventually, remark that the triple $([i,1],[r,1],\psi)$ is a right $(k+1)$-Gray deformation retract. The other assertion is demonstrated similarly.
\end{proof}

\begin{prop}
\label{prop:of left Gray deformation retract unmarked}
For any integer $n$,
if $n$ is even, $i_{n}^-:\Db_{n}\to \Db_{n+1}$ is a left $n$-Gray deformation retract and $i_{n}^+:\Db_{n}\to \Db_{n+1}$ is a right $n$-Gray deformation retract, and if $n$ is odd, $i_{n}^-$ is a right $n$-Gray deformation retract and $i_{n}^+$ is a left $n$-Gray deformation retract.
\end{prop}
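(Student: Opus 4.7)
The plan is to proceed by induction on $n$, combining the base case coming from Example \ref{example:canonical example of left deformation retract unmarked} with the suspension stability given by Proposition \ref{prop:suspension of left Gray deformation retract unmarked}. The alternation between ``left'' and ``right'' in the statement exactly matches the parity flip produced by each application of $[{-},1]$, so the induction is clean.

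For the base case $n=0$, I identify $\Db_0 = 1$ and $\Db_1 = [1] = 1\otimes[1]$, under which $i_0^-$ becomes $1\otimes\{0\}\to 1\otimes[1]$ and $i_0^+$ becomes $1\otimes\{1\}\to 1\otimes[1]$. Since $1$ is an $(\infty,0)$-category (indeed an $\infty$-groupoid), Example \ref{example:canonical example of left deformation retract unmarked} applied with $k=0$ directly produces a left $0$-Gray deformation retract structure on $i_0^-$ and a right $0$-Gray deformation retract structure on $i_0^+$, which is precisely what the statement claims for $n=0$ (even).

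For the inductive step, I would use the natural identification $\Db_{n+1}\simeq[\Db_n,1]$, under which the source/target inclusions correspond: writing this identification compatibly with extremal points, one has $i_{n+1}^{\pm} \simeq [i_n^{\pm},1]$. Assuming the statement at rank $n$, Proposition \ref{prop:suspension of left Gray deformation retract unmarked} asserts that $[-,1]$ sends a left $k$-Gray deformation retract to a right $(k+1)$-Gray deformation retract and vice versa. Hence if $n$ is even, so that $i_n^-$ is left $n$-Gray and $i_n^+$ is right $n$-Gray, then $i_{n+1}^-$ is right $(n+1)$-Gray and $i_{n+1}^+$ is left $(n+1)$-Gray, which is the statement at the odd rank $n+1$; the odd-to-even case is strictly symmetric.

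The main (and essentially only) obstacle is checking the identification $i_{n+1}^{\pm}\simeq[i_n^{\pm},1]$ coherently with the basepoint data used in the suspension adjunction \eqref{eq:suspesnion between category}; once that is in hand, Proposition \ref{prop:suspension of left Gray deformation retract unmarked} does all the work and no Gray-cylinder computation is needed beyond what already appears in the example. This identification reduces to a combinatorial check on representables in $\Theta$, namely that the globular inclusions $\Db_n\to\Db_{n+1}$ are compatible with the functor $[-,1]:\Theta\to\Theta$ by $\Db_{n+1} = [\Db_n,1]$, which is a direct consequence of the description of $\Theta$ via iterated suspension.
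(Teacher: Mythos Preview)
Your overall approach---induct via Proposition \ref{prop:suspension of left Gray deformation retract unmarked}, using the identification $i_{n+1}^\pm\simeq[i_n^\pm,1]$---is exactly the paper's. The only substantive issue is your base case: invoking Example \ref{example:canonical example of left deformation retract unmarked} with $k=0$ does not produce anything useful, because the text records that the endofunctor $\uvar\otimes_0[1]$ is the identity. Unwinding the definition, a left $0$-Gray deformation retract structure on $i:C\to D$ forces $ir\sim\psi\sim id_D$ (both endpoint restrictions of $\psi:D\otimes_0[1]=D\to D$ coincide with $\psi$ itself), so together with $ri\sim id_C$ this makes $i$ an equivalence. Since $i_0^-:\{0\}\to[1]$ is not an equivalence, it admits no left $0$-Gray deformation retract structure, and your base case fails as stated.

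The paper instead starts the induction from the assertion that $\{0\}\to[1]$ is a left $1$-Gray deformation retract and $\{1\}\to[1]$ a right one; this is what Example \ref{example:canonical example of left deformation retract unmarked} actually gives for $C=1$ taking $k=1$, since $\uvar\otimes_1[1]\simeq\uvar\times[1]$. Tracing the induction through Proposition \ref{prop:suspension of left Gray deformation retract unmarked} then yields that $i_n^\pm$ is an $(n{+}1)$-Gray deformation retract of the appropriate handedness, rather than an $n$-Gray one---so there is an off-by-one discrepancy between the statement as literally written and what the paper's own proof (and its subsequent applications) establish. The fix to your argument is simply to anchor the base case at $k=1$; everything else you wrote goes through unchanged.
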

\begin{proof}
It is obvious that $\{0\}\to [1]$ is a left $1$-Gray deformation retract and $\{1\}\to [1]$ is a right $1$-Gray deformation retract. A repeated application of \ref{prop:suspension of left Gray deformation retract unmarked} proves the assertion.
\end{proof}

\begin{prop}
\label{prop:when glob inclusion are left Gray deformation unmarked}
Let $a$ be a globular sum of dimension $(n+1)$. We denote by $s_n(a)$ and $t_n(a)$ the globular sum defined in paragraph \ref{para:definition of source et but}. 

If $n$ is even, $s_n(a)\to a$ is a left $n$-Gray deformation retract and $t_n(a)\to a$ is a right $n$-Gray deformation retract, and if $n$ is odd, $s_n(a)\to a$ is a right $n$-Gray deformation retract and $t_n(a)\to a$ is a left $n$-Gray deformation retract.
\end{prop}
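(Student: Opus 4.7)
The plan is to proceed by induction on $n$, reducing the case of an arbitrary globular sum to the representable case $a = \Db_{n+1}$, which is handled by Proposition \ref{prop:of left Gray deformation retract unmarked}. The two structural tools are Proposition \ref{prop:suspension of left Gray deformation retract unmarked} (to upgrade from dimension $n-1$ to dimension $n$ via the suspension functor $[\uvar,1]$) and Proposition \ref{prop:left Gray deformation retract stable under pushout unmarked} (to assemble wedge components).

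Concretely, I would write $a = [b_1, 1] \vee \cdots \vee [b_\ell, 1]$ with each $b_i$ a globular sum of dimension at most $n$, so that
\[
s_n(a) \;=\; s_n([b_1, 1]) \vee \cdots \vee s_n([b_\ell, 1]),
\]
where each factor $s_n([b_i, 1])$ is either $[b_i, 1]$ itself (when $\dim b_i < n$) or $[s_{n-1}(b_i), 1]$ (when $\dim b_i = n$). In the first case the inclusion is an identity, vacuously a Gray deformation retract of any flavour. In the second case I would invoke the inductive hypothesis on $b_i$: for $n$ even, $s_{n-1}(b_i) \to b_i$ is a right $(n-1)$-Gray deformation retract (since $n-1$ is odd), and for $n$ odd it is a left $(n-1)$-Gray deformation retract. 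Proposition \ref{prop:suspension of left Gray deformation retract unmarked} then flips the parity and shifts the level, producing a left (resp.\ right) $n$-Gray deformation retract $[s_{n-1}(b_i), 1] \to [b_i, 1]$, which is exactly the type demanded for $s_n(a) \to a$.

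To glue the factors into a single Gray deformation retract on $s_n(a) \to a$, I would note that the suspension-constructed deformation $\psi$ is, by inspection of the colimit diagram defining it in the proof of Proposition \ref{prop:suspension of left Gray deformation retract unmarked}, constant on the two endpoints $\{0\}, \{1\}$ of $[b_i, 1]$. Consequently the restrictions to the common wedge points agree and the individual structures form a natural family over the pushout diagram computing $a = \bigvee_i [b_i, 1]$; Proposition \ref{prop:left Gray deformation retract stable under pushout unmarked} then supplies the desired Gray deformation retract structure on $s_n(a) \to a$. The base case $n = 0$ treats $a = [k]$ as the iterated wedge of copies of $[1] = \Db_1$, each of whose source inclusion $\{0\} \to \Db_1$ is handled directly by Proposition \ref{prop:of left Gray deformation retract unmarked}, then glued by the same pushout stability. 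The claim about $t_n(a) \to a$ follows by the symmetric argument, interchanging $\{0\}$ with $\{1\}$ and ``left'' with ``right'' throughout.

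The main obstacle I anticipate is the parity bookkeeping around the suspension step, since one must keep track simultaneously of the parity convention in the statement (of $n$) and the parity flip $k \leftrightarrow k+1$ produced by Proposition \ref{prop:suspension of left Gray deformation retract unmarked}, to ensure the induction closes for both $s_n$ and $t_n$. One must also verify explicitly that the deformations coming out of the suspension collapse to constants on the wedge endpoints, so that the natural family of structures required by Proposition \ref{prop:left Gray deformation retract stable under pushout unmarked} genuinely exists. Both are diagrammatic verifications rather than conceptual difficulties, but they are where all the care is concentrated.
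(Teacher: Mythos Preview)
Your inductive approach via the wedge-suspension decomposition is a legitimate alternative to the paper's argument, and the inductive step for $n\geq 1$ is sound: since the endpoints $\{0\},\{1\}$ of each $[b_i,1]$ are $0$-cells and hence lie in $s_n([b_i,1])$, the retracts and deformations produced by Proposition~\ref{prop:suspension of left Gray deformation retract unmarked} are genuinely constant there, and the structures assemble over the wedge diagram as you describe.

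However, your base case $n=0$ does not go through as written. The wedge diagram for $a=[k]=[1]\vee\cdots\vee[1]$ has shape
\[
[1]\xleftarrow{\{1\}}[0]\xrightarrow{\{0\}}[1]\xleftarrow{\{1\}}[0]\to\cdots,
\]
and to realise $s_0(a)=\{0\}\to a$ as a colimit of Gray deformation retracts over this shape you would need each leg $[0]\to[1]$ in the natural transformation to be a \emph{left} $1$-Gray deformation retract. But the leg over the morphism $\{1\}:[0]\to[1]$ is forced by naturality to be $\{1\}:[0]\to[1]$ itself, which is a \emph{right} Gray deformation retract, not a left one. So the family is not of uniform type and Proposition~\ref{prop:left Gray deformation retract stable under pushout unmarked} does not apply. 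The underlying reason is that $s_0([1])=\{0\}$ does not contain the gluing endpoint $\{1\}$; this obstruction disappears only once $n\geq 1$.

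The fix is minor: handle $n=0$ directly (for instance, $\{0\}\to[k]$ is an ordinary left deformation retract via the order-preserving map $[k]\times[1]\to[k]$, $(i,j)\mapsto i\cdot j$). After that your induction runs.

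For comparison, the paper's proof avoids induction on $n$ altogether. It simply observes that for $a$ of dimension $n+1$, the inclusion $s_n(a)\to a$ is a finite composition of pushouts of $i_n^-:\Db_n\to\Db_{n+1}$ (attaching the $(n{+}1)$-cells one at a time along their source $n$-cell), and then invokes Propositions~\ref{prop:of left Gray deformation retract unmarked} and~\ref{prop:left Gray deformation retract stable under pushout unmarked} (together with closure under composition). This is shorter and uniform in $n$; your approach trades that for a more explicit recursive picture of the globular sum.
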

\begin{proof}
This is a direct consequence of proposition \ref{prop:of left Gray deformation retract unmarked} and \ref{prop:left Gray deformation retract stable under pushout unmarked} as $s_n(a)\to a$ is a composition of pushouts of $i_{n}^-:\Db_{n}\to (\Db_{n+1})_t$. The other assertion is proved similarly.
\end{proof}

\subsection{Gray operations and strict objects}

\label{section:on preservation of strict}
Recall that we have an adjunction
\[\begin{tikzcd}
	{\pi_0:\ocat} & {\zocat:\N}
	\arrow[""{name=0, anchor=center, inner sep=0}, shift left=2, from=1-1, to=1-2]
	\arrow[""{name=1, anchor=center, inner sep=0}, shift left=2, from=1-2, to=1-1]
	\arrow["\dashv"{anchor=center, rotate=-90}, draw=none, from=0, to=1]
\end{tikzcd}\]
An $\io$-category lying in the image of the nerve functor $\N$ is called strict. As explained in example 11 of \cite{Verity_weak_complicial_set_part2_nerve_of_complicial_Gray_categories}, $\pi_0$ preserves Gray tensor product, and so also the suspension, the Gray cone, and the Gray $\circ$-cone.

The strict categories play an important role as they allow us to make explicit calculations. In particular, it will be very useful to know which cocontinuous functors preserve them.
\begin{prop}
\label{prop:criter stricte easy}
An $\io$-category $C$ is strict if and only if $C_0$ is a set and for any pair of objects $x,y$, $\hom_C(x,y)$ is strict.
\end{prop}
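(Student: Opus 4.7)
My plan is to reduce strictness of $C$ to the condition that each $C_n$ is $0$-truncated, and then to decompose $C_n$ for $n\geq 1$ in terms of $C_0$ and the homs via the suspension adjunction.

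The first step will be to observe that an $\io$-category $C$ is strict if and only if $C_n$ is a set for every $n \geq 0$. This is because strictness amounts to the unit $C \to \N\pi_0 C$ being an equivalence, and proposition \ref{prop:equivalences detected on globes} reduces this to checking that $C_n \to \pi_0(C_n)$ is an equivalence for each $n$, i.e.\ to $C_n$ being discrete. With this reformulation in hand, the forward direction of the proposition is immediate: if $C$ is strict then $C_0$ is a set, and for each pair $x,y \in C_0$ the suspension adjunction \eqref{eq:suspesnion betweenpresheaves} together with proposition \ref{prop:hom of the suspension} identifies $\hom_C(x,y)_n$ with the summand of $C_{n+1}$ consisting of cells whose extremal vertices map to $x$ and $y$, hence a subset of the set $C_{n+1}$, so $\hom_C(x,y)$ is strict.

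For the converse, the key input I will use is the natural identification
$$C_n \ \sim\ \coprod_{(x,y)\,\in\, C_0\times C_0}\, \hom_C(x,y)_{n-1}$$
for every $n \geq 1$, obtained by writing $\Db_n \simeq [\Db_{n-1}, 1]$ and applying \eqref{eq:suspesnion betweenpresheaves}: a morphism $\Db_n \to C$ amounts to the data of the images $x, y \in C_0$ of the two extremal vertices of $[\Db_{n-1},1]$ together with a morphism $\Db_{n-1} \to \hom_C(x,y)$. If $C_0$ is a set then $C_0 \times C_0$ is also a set, and if each $\hom_C(x,y)$ is strict then each summand $\hom_C(x,y)_{n-1}$ is a set; hence $C_n$ is a set. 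Combined with the assumed discreteness of $C_0$, this makes every $C_n$ discrete and therefore $C$ strict.

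The only mild subtlety will be formulating the decomposition of $C_n$ cleanly over the $\infty$-groupoid of pairs of basepoints; but since $C_0$ is assumed to be a set, the indexing $\infty$-groupoid $C_0 \times C_0$ is itself a discrete set, so the coproduct is unambiguous and no further delicacy arises.
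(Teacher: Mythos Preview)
Your proof is correct and follows essentially the same approach as the paper: both decompose mapping spaces out of suspended objects in terms of $C_0$ and the homs, using that a set-indexed coproduct of sets is a set. The only cosmetic difference is that you first reduce to globes via proposition~\ref{prop:equivalences detected on globes} and then invoke the suspension adjunction, whereas the paper works directly with an arbitrary globular sum $[\textbf{b},n]$ and writes out the full Segal decomposition $\Hom([\textbf{b},n],C)\sim\coprod_{x_0,\dots,x_n\in C_0}\prod_i\Hom(b_i,\hom_C(x_{i-1},x_i))$.
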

\begin{proof}
By definition, an $\io$-category is strict if and only if, for any globular sum $[\textbf{b},n]$, $\Hom([\textbf{b},n],C)$ is a set. However, as $C$ is $\W$-local, we have an equivalence between $\Hom([\textbf{b},n],C)$ and 
$$\coprod_{x_0,x_1,...,x_n\in C_0}\Hom(b_1, \hom_C(x_0,x_1))\times...\times \Hom(b_n, \hom_C(x_{n-1},x_n))$$
As all the objects of the previous expression are set by hypothesis, and as the inclusion of set into $\infty$-groupoid is stable under coproduct and product, $\Hom([b,n],C)$ is a set.
\end{proof}

\begin{prop}
\label{prop:suspension preserves stricte}
If $C$ is a strict $\io$-category, so is $[C,1]$. 
\end{prop}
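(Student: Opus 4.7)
The plan is to apply the criterion furnished by Proposition \ref{prop:criter stricte easy}, which reduces strictness of an $\io$-category to the discreteness of its set of objects together with strictness of all its hom $\io$-categories. Thus it suffices to verify these two conditions for $[C,1]$ whenever $C$ is strict.

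First I would compute the $\infty$-groupoid of objects. By construction of the suspension, $[C,1]_0$ is obtained as the value of $[C,1]$ at $\Db_0=[0]$, which is simply $\{0,1\}$ and is therefore a set. This handles the first half of the criterion.

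Next I would compute the hom $\io$-categories. Proposition \ref{prop:hom of the suspension} provides the natural equivalences
\[
\hom_{[C,1]}(0,1)\sim C,\qquad \hom_{[C,1]}(0,0)\sim 1,\qquad \hom_{[C,1]}(1,1)\sim 1,\qquad \hom_{[C,1]}(1,0)\sim \emptyset.
\]
The right-hand sides are all strict $\io$-categories: $C$ by assumption, $1$ is a terminal strict object, and $\emptyset$ is likewise in the image of $\N$. Invoking Proposition \ref{prop:criter stricte easy} then yields the strictness of $[C,1]$.

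Since the argument rests entirely on the previously established hom computation and on the characterization of strictness, no genuine obstacle arises; the proof is essentially a one-line application of these two results.
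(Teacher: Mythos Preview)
Your argument is correct but takes a genuinely different route from the paper. The paper's proof is a single line: it observes the equivalence $[\N\uvar,1]\sim \N[\uvar,1]$ between suspending after applying the nerve and applying the nerve to the strict suspension, so that $[C,1]\sim [\N D,1]\sim \N[D,1]$ lies in the image of $\N$ whenever $C\sim \N D$.

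Your approach instead invokes the internal criterion of Proposition~\ref{prop:criter stricte easy} and the hom computation of Proposition~\ref{prop:hom of the suspension}. This is perfectly valid and arguably more informative, since it explains \emph{why} $[C,1]$ is strict in terms of its internal structure rather than by transporting along the nerve. The paper's argument is shorter and relies on the evident compatibility of the two suspension constructions; yours stays entirely within the $\io$-framework already developed. One small remark: your assertion that $[C,1]_0\sim\{0,1\}$ ``by construction'' is correct but slightly quick---the suspension is defined by extension by colimits in bipointed presheaves, so the claim is not completely tautological. It does however follow immediately from the cartesian squares established in the proof of Proposition~\ref{prop:supspension preserves cat} (taking $n=0$), so no gap arises.
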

\begin{proof}
There is an obvious equivalence $[\N\uvar,1]\sim \N[\uvar,1]$ which directly implies the result.
\end{proof}

\begin{lemma}
\label{lemma:gray operation on globes are strict}
For any $n$, $\Db_n\otimes[1]$, $\Db_n\star 1$ and $1\costar \Db_n$ are strict.
\end{lemma}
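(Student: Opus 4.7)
The plan is to proceed by induction on $n$, proving all three strictness assertions simultaneously. For the base case $n=0$, $\Db_0 \simeq 1$ is the unit of the Gray tensor product, so $\Db_0 \otimes [1] \simeq [1]$; and the defining pushouts for $1 \star 1$ and $1 \costar 1$ each have an identity leg, so both reduce to $[1]$. All three are representable, hence strict.

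For the inductive step with $n \geq 1$, I would write $\Db_n = [\Db_{n-1}, 1]$ and then exploit the three formulas from paragraph \ref{paragrap: equation fullfill by cylinder and join}. Specifically, formula \eqref{eq:eq for cylinder} expresses $\Db_n \otimes [1]$ as the colimit of a diagram whose nodes are $[1] \vee [\Db_{n-1},1]$, $[\Db_{n-1},1]$, $[\Db_{n-1}\otimes[1],1]$, $[\Db_{n-1},1]$, and $[\Db_{n-1},1] \vee [1]$. By induction $\Db_{n-1}\otimes[1]$ is strict, so by Proposition \ref{prop:suspension preserves stricte} its suspension is strict; the remaining nodes are manifestly strict. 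The cartesian-square hypothesis of Proposition \ref{prop:example of a special colimit3} is satisfied because $\Db_{n-1}\otimes\{0\}$ and $\Db_{n-1}\otimes\{1\}$ are by definition the pullbacks of $\Db_{n-1}\otimes[1]\to[1]$ along $\{0\},\{1\}\to[1]$. Hence the colimit is special, and Lemma \ref{lemma:colimit computed in set presheaves} reduces it to the analogous colimit in $\Psh{\Theta}$, producing a set-valued presheaf; specialness then identifies this with $\Db_n\otimes[1]$, which is therefore strict.

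For $\Db_n \star 1$ and $1 \costar \Db_n$, the analogous strategy applies using formulas \eqref{eq:eq for cojoin} and \eqref{eq:eq for Gray cone} respectively. In each case the diagram involves only three nodes, one of which is the suspension of $1 \costar \Db_{n-1}$ (resp.\ $\Db_{n-1} \star 1$), which is strict by induction, while the other two are $[\Db_{n-1},1]$ and a wedge with $[1]$. Proposition \ref{prop:example of a special colimit4} (or its evident dual, obtained by exchanging $0$ and $1$) supplies the special-colimit property, using that $\Db_{n-1}$ identifies with the fiber at $1$ of $1\costar\Db_{n-1}\to[1]$ (resp.\ at $0$ of $\Db_{n-1}\star 1\to[1]$). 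Combined with Lemma \ref{lemma:colimit computed in set presheaves}, this again forces the colimit to be set-valued and hence strict.

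The main obstacle will be bookkeeping around the cartesian-square and monomorphism hypotheses: to invoke Propositions \ref{prop:example of a special colimit3} and \ref{prop:example of a special colimit4} I must exhibit the relevant fibers, and to apply Lemma \ref{lemma:colimit computed in set presheaves}(2) I must check that the canonical maps $[\Db_{n-1},1]\to[\Db_{n-1}\otimes[1],1]$, $[\Db_{n-1},1]\to[1\costar\Db_{n-1},1]$, and inclusions into wedges are monomorphisms. These reduce to the statement that the component maps $\Db_{n-1}\otimes\{i\}\hookrightarrow\Db_{n-1}\otimes[1]$ and $\Db_{n-1}\hookrightarrow 1\costar\Db_{n-1}$, $\Db_{n-1}\hookrightarrow\Db_{n-1}\star 1$ are monomorphisms, which I expect to be routine from the pushout definitions of the Gray cone and $\circ$-cone together with the strictness already established by induction.
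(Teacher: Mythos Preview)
Your approach is essentially the same as the paper's: induction on $n$, use formulas \eqref{eq:eq for cylinder}, \eqref{eq:eq for Gray cone}, \eqref{eq:eq for cojoin} together with Propositions \ref{prop:example of a special colimit3}, \ref{prop:example of a special colimit4}, Proposition \ref{prop:suspension preserves stricte}, and Lemma \ref{lemma:colimit computed in set presheaves}. The paper only spells out the cylinder case and says ``we proceed similarly'' for the cones; you are more explicit and correctly note that the dual of Proposition \ref{prop:example of a special colimit4} is needed for $\Db_n\star 1$. One small correction: the cartesian squares exhibiting $\Db_{n-1}$ as the fibers of $\Db_{n-1}\otimes[1]\to[1]$ (and analogously for the cones) are not literally ``by definition''; the paper handles this by citing an external result (Proposition \ref{prop:cartesian squares}, established for strict $\omega$-categories elsewhere), and your own suggestion in the last paragraph---reduce to $\zocat$ using the strictness already known by induction---is the right way to close that gap.
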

\begin{proof}
We proceed by induction on $n$. The result is obviously true for $n=0$. Suppose it is true as the stage $n$. According to equation \eqref{eq:eq for cylinder}, $\Db_n\otimes[1]$ is the colimit of the following diagram
\begin{equation}
\label{eq:tensor of globuees is sitrict}
\begin{tikzcd}
	{[1]\vee \Db_n} & {\Db_n} & {[\Db_{n-1}\otimes [1],1]} & {\Db_n} & {\Db_n\vee[1]}
	\arrow[from=1-2, to=1-1]
	\arrow[from=1-2, to=1-3]
	\arrow[from=1-4, to=1-3]
	\arrow[from=1-4, to=1-5]
\end{tikzcd}
\end{equation}
The induction hypothesis and proposition \ref{prop:suspension preserves stricte} implies that all the objects are strict.
The proposition \ref{prop:cartesian squares} then implies that the diagram
\[\begin{tikzcd}
	{\Db_{n-1}} & {\Db_{n-1}\otimes[1]} & {\Db_{n-1}} \\
	{\{0\}} & {[1]} & {\{1\}}
	\arrow[from=1-1, to=2-1]
	\arrow[from=1-2, to=2-2]
	\arrow[from=1-3, to=2-3]
	\arrow[from=2-1, to=2-2]
	\arrow[from=1-1, to=1-2]
	\arrow[from=1-3, to=1-2]
	\arrow[from=2-3, to=2-2]
\end{tikzcd}\]
verifies the hypothesis of proposition \ref{prop:example of a special colimit3}.
The proposition \textit{op. cit.} then
states that the colimit of \eqref{eq:tensor of globuees is sitrict} is special, which implies, according to lemma \ref{lemma:colimit computed in set presheaves}, that  its colimit, which is $\Db_n\otimes[1]$,  is also strict.

We proceed similarly for the Gray cone and the Gray $\circ$-cone.
\end{proof}

We now recall the following fundamental result of strictification:
\begin{theorem}[Gagna, Ozornova, Rovelli]
\label{theo:join preserves stict VMG version}
For any globular sum $a$, $a\star 1$ and $1\costar a$ are stricts.
\end{theorem}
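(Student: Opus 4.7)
I would argue by simultaneous induction on the dimension of the globular sum $a$, establishing strictness of both $a\star 1$ and $1\costar a$. The base case $a = [0]$ is immediate since $[0]\star 1 \sim [1] \sim 1\costar[0]$.

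For the inductive step, I would exploit the fact that every globular sum of positive dimension admits a Segal-type decomposition as an iterated wedge of suspensions,
$$a \;\sim\; [b_1,1] \vee_{[0]} [b_2,1] \vee_{[0]} \cdots \vee_{[0]} [b_k,1],$$
with each $b_i$ a globular sum of strictly smaller dimension. The Gray cone $\uvar\star 1$, being defined as the pushout $(\uvar\otimes[1])\coprod_{\uvar\otimes\{1\}} 1$, preserves colimits in its first argument (since $\uvar\otimes[1]$ does); hence a direct computation yields a distributivity formula
$$(a' \vee_{[0]} a'')\star 1 \;\sim\; (a'\star 1)\coprod_{[1]}(a''\star 1).$$
This reduces the wedge case to the case of a single suspension $a = [b,1]$, modulo verifying that pushouts of strict objects along $[1]$ remain strict.

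For the suspension case, formula \eqref{eq:eq for cojoin} rewrites
$$[b,1]\star 1 \;\sim\; \colim\bigl([1\costar b,1] \leftarrow [b,1] \to [b,1]\vee[1]\bigr).$$
By the inductive hypothesis applied to $b$, the object $1\costar b$ is strict, hence $[1\costar b,1]$ is strict by proposition \ref{prop:suspension preserves stricte}. The remaining two terms are globular sums, hence strict. The case of $1\costar[b,1]$ is handled symmetrically via formula \eqref{eq:eq for Gray cone}.

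The principal obstacle, and the heart of the argument, is showing that the pushouts appearing above are \emph{special} colimits in the sense of paragraph \ref{para: spetial colimits}, so that they can be computed pointwise in $\Psh{\Theta}$ via lemma \ref{lemma:colimit computed in set presheaves} and thereby yield strict results. The spans involved are not Reedy-cofibrant in general (for instance $[b,1]\to[1\costar b,1]$ is rarely a monomorphism), so I would reduce speciality to the cartesianness of auxiliary squares in $\Theta$ along the lines of propositions \ref{prop:example of a special colimit3} and \ref{prop:example of a special colimit4}, the cartesianness itself being verified by direct combinatorial inspection.
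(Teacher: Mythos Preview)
Your approach is genuinely different from the paper's, and the obstacle you flag at the end turns out to be fatal for the wedge step.

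The paper does not attempt a self-contained inductive argument: the strictness of $a\star 1$ is simply quoted from theorem~5.2 of Gagna--Ozornova--Rovelli. The strictness of $1\costar a$ is then derived by duality: one has a natural comparison $1\costar a \to (a^\circ\star 1)^\circ$, and since both sides preserve colimits in $a$ and agree on globes by lemma~\ref{lemma:gray operation on globes are strict}, the comparison is an equivalence on all globular sums; as $a^\circ$ is again a globular sum and $(\uvar)^\circ$ preserves strictness, one concludes.

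Your suspension step is essentially the argument of lemma~\ref{lemma:gray operation on globes are strict} and would go through: with $1\costar b$ strict by induction, the cartesian square needed for (the mirror of) proposition~\ref{prop:example of a special colimit4} can be checked in $\zocat$ via proposition~\ref{prop:cartesian squares}, exactly as in the globe case.

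The wedge step, however, cannot be handled by the special-colimit machinery. Take $a' = a'' = \Db_1$: in the presheaf pushout $(\Db_1\star 1)\coprod_{[1]}(\Db_1\star 1)$, glued along the edge from the shared vertex to the cone point, there is no $1$-cell from $0$ to $2$, so the Segal condition fails and the colimit computed in $\iPsh{\Theta}$ is not $\W$-local. Thus this span does \emph{not} have a special colimit, and neither lemma~\ref{lemma:colimit computed in set presheaves} nor propositions~\ref{prop:example of a special colimit3}--\ref{prop:example of a special colimit4} (which concern suspended diagrams of a very specific shape) apply. The pushout \emph{is} strict---it is $[2]\star 1$---but the only tool in the paper that would show this directly, namely proposition~\ref{prop:strict stuff are stable under Gray cone}, itself relies on the theorem under discussion. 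This circularity is presumably why the paper defers to the external reference rather than attempting the induction you outline.
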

\begin{proof}
The fact that $a\star 1$ is strict is a particular case of theorem 5.2 of \cite{Gagna_Nerves_and_cones_of_free_loop_free_omega_categories}. For the second assertion, remark that we have a canonical comparison, natural in $a:\Theta$: 
$$1\costar a\to \N\pi_0(1\costar a)\sim \N \pi_0 (a^\circ \star 1)^\circ \sim (\N \pi_0 (a^\circ \star 1))^\circ\sim (a^\circ\star 1)^{\circ}$$
where the first equivalence is a consequence of \cite[proposition A.22]{Ara_Maltsiniotis_joint_et_tranche}, the second comes from the commutativity of $\pi_0$ and $\N$ with dualities, and the last one is the (already demonstrated) first assertion.
The subset of object of $\Theta$ making this comparison an equivalence is closed by colimits and, according to lemma \ref{lemma:gray operation on globes are strict}, contains globes. This subset then contains all the globular sums. As strict objects are stable by dualities, this concludes the proof of the second assertion.
\end{proof}

\begin{lemma}
\label{lemma:strictification2}
Let $\alpha$ be $-$ if $n$ is even (resp. odd) and $+$ if $n$ is odd (resp.even).
Consider a cartesian square
\begin{equation}
\label{eq:lemma:strictification2}
\begin{tikzcd}
	{C_0} & D \\
	{\Db_{n}} & {\Db_{n+1}}
	\arrow["p"', from=1-1, to=2-1]
	\arrow["{p'}", from=1-2, to=2-2]
	\arrow[from=1-1, to=1-2]
	\arrow["\lrcorner"{anchor=center, pos=0.125}, draw=none, from=1-1, to=2-2]
	\arrow["{i_{n}^\alpha}"', from=2-1, to=2-2]
\end{tikzcd}
\end{equation}
such that $p\to p'$ is a left $(n+1)$-Gray deformation retract (resp. a right $(n+1)$-Gray deformation retract). Let $C_1$ be the $\io$-category fitting in the pullback 
\begin{equation}
\label{eq:lemma:strictification3}
\begin{tikzcd}
	{C_1} & D \\
	{\Db_{n}} & {\Db_{n+1}}
	\arrow["p"', from=1-1, to=2-1]
	\arrow["{p'}", from=1-2, to=2-2]
	\arrow[from=1-1, to=1-2]
	\arrow["\lrcorner"{anchor=center, pos=0.125}, draw=none, from=1-1, to=2-2]
	\arrow["{i_{n}^{1-\alpha}}"', from=2-1, to=2-2]
\end{tikzcd}
\end{equation}
Then if $C_0$ and $C_1$ are strict, so is $D$.
\end{lemma}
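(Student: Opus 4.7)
The strategy is strong induction on $n$. By proposition \ref{prop:criter stricte easy}, strictness of $D$ reduces to showing that $D_0$ is a set and that every $\hom_D(x,y)$ is strict. For $D_0$: the two pullback squares realize $(C_0)_0$ and $(C_1)_0$ as jointly covering $D_0$ — as disjoint fibers when $n=0$, and as equal subsets when $n\geq 1$ (since $i_n^\pm$ is then the identity on objects) — and both are sets by strictness of $C_0$ and $C_1$.

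Fix $x,y\in D$ and set $a := p'(x)$, $b := p'(y)\in\{0,1\}$. Three subcases are immediate: if $a=1,b=0$, then $\hom_D(x,y)$ maps to $\hom_{\Db_{n+1}}(1,0) = \emptyset$, so is empty; if $a=b$, the pullback formula for hom-spaces collapses (because $\hom_{\Db_{n+1}}(a,a) = \hom_{\Db_n}(a,a) = 1$) to give $\hom_D(x,y) \cong \hom_{C_\epsilon}(x,y)$ for a suitable $\epsilon\in\{0,1\}$, which is strict by hypothesis. The remaining and interesting subcase is $a=0,b=1$, where $\hom_D(x,y)$ maps to $\hom_{\Db_{n+1}}(0,1) = \Db_n$. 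The pullback formula combined with the identification $\hom(i_n^\pm) = i_{n-1}^\pm$ at the level of the top hom-spaces yields
$$\hom_{C_0}(x,y) = \hom_D(x,y) \times_{\Db_n} \Db_{n-1}\text{ along } i_{n-1}^\alpha,$$
and analogously $\hom_{C_1}(x,y)$ along $i_{n-1}^{1-\alpha}$; both are strict. Proposition \ref{prop:Gray deformation retract and passage to hom v2 unmarked} transports the $(n+1)$-Gray deformation retract structure on $p\to p'$ to a Gray retract structure on the outer hom-square, swapping left with right. One checks that the parity condition relating $\alpha$ to the left/right designation is preserved in going from $n$ to $n-1$ (since both the parity of $n$ and the left/right designation flip simultaneously), so the inductive hypothesis applies to $\hom_D(x,y)\to\Db_n$ and concludes strictness.

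For the base case $n=0$ only the subcase $a=0,b=1$ requires separate attention. Here passage-to-hom produces a $0$-Gray deformation retract, which is simply an equivalence since $\uvar\otimes_0[1]$ is the identity functor; this yields $\hom_D(ix,y)\sim \hom_{C_0}(x,ry)$, forcing strictness from the hypothesis on $C_0$. The main obstacle in the argument is the careful bookkeeping through the passage-to-hom proposition of the parity condition on $\alpha$ and of the left/right designation of the Gray deformation retract, in order to verify that the hom-square genuinely realizes the hypotheses of the lemma at the next level down.
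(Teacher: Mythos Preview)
Your proof is correct and follows essentially the same approach as the paper: induction on $n$, reduction to strictness of hom-categories via proposition~\ref{prop:criter stricte easy}, and passage-to-hom to descend one level. One point deserves a brief remark: the cartesian square you obtain from the pullback formula and the square produced by proposition~\ref{prop:Gray deformation retract and passage to hom v2 unmarked} are \emph{a priori} different, since the latter has top row the composite $\hom_{C_0}(x,ry)\to\hom_D(ix,iry)\xrightarrow{(\psi_y)_!}\hom_D(ix,y)$ rather than simply $\hom(i)$. That these coincide for $n\geq 1$ is exactly because the deformation retract structure on $p\to p'$ is the identity at the level of objects (so $ry=y$ and $\psi_y=\mathrm{id}$); the paper makes this observation explicit, and it is what allows you to feed the cartesian hom-square back into the inductive hypothesis.
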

\begin{proof}
We denote by $(i,r,\phi)$ the deformation retract structure corresponding to $C_0\to D$. 
We show this result by induction, and let's start with the case $n=0$. 
This corresponds to the case where $C_0\to D$ fits in a pullback diagram.
\[\begin{tikzcd}
	{C_0} & D \\
	{\{0\}} & {[1]}
	\arrow[from=1-1, to=1-2]
	\arrow[from=2-1, to=2-2]
	\arrow[from=1-1, to=2-1]
	\arrow[from=1-2, to=2-2]
\end{tikzcd}\]
 Let $x,y$ be two objects of $D$. Suppose first that $x$ and $y$ are over the same object of $[1]$. In this case, $\hom_D(x,y)$ is equivalent to either $\hom_{C_0}(x,y)$ or $\hom_{C_1}(x,y)$ and is then strict. If $x$ is over $1$ and $y$ over $0$, the $\infty$-groupoid $\hom_D(x,y)$ is empty. If $x$ is over $0$ and $y$ is over $1$, $\hom_D(x,y)$ is equivalent to $\hom_{C_0}(x,ry)$ according to \ref{prop:Gray deformation retract and passage to hom unmarked} and is then strict by hypothesis. Eventually, $\tau_0(D)$ is equivalent to $\tau_0(C_1)$ and is then a set. According to \ref{prop:criter stricte easy}, this implies that $D$ is strict.

Suppose now the result is true at stage $(n-1)$.
Let $p'\to p$ be a square verifying the condition.  Remark that, at the level of objects, the inclusion $C_0\to D$, its retract, and its deformation, are the identity.

Let $x$ and $y$ be two objects of $D$. 
As before, the only interesting case is when $x$ is over $0$ and $y$ is over $1$. In this case,
applying $\hom(\uvar,\uvar)$ to the square \eqref{eq:lemma:strictification2}, we get a cartesian square
\[\begin{tikzcd}
	{\hom_{C_0}(x,y)} & {\hom_D(x,y)} \\
	{\Db_{n-1}} & {\Db_{n}}
	\arrow[from=1-1, to=1-2]
	\arrow["{i_{n-1}^\alpha}"', from=2-1, to=2-2]
	\arrow[from=1-1, to=2-1]
	\arrow[from=1-2, to=2-2]
\end{tikzcd}\]
which is a right $n$-Gray deformation retract according to proposition \ref{prop:Gray deformation retract and passage to hom unmarked}.
Applying $\hom(\uvar,\uvar)$ to the square \eqref{eq:lemma:strictification3}, we get a cartesian square
\[\begin{tikzcd}
	{\hom_{C_1}(x,y)} & {\hom_D(x,y)} \\
	{\Db_{n-1}} & {\Db_{n}}
	\arrow[from=1-1, to=1-2]
	\arrow["{i_{n-1}^{1-\alpha}}"', from=2-1, to=2-2]
	\arrow[from=1-1, to=2-1]
	\arrow[from=1-2, to=2-2]
\end{tikzcd}\]
As $C_1$ is strict, so is $\hom_{C_1}(x,y)$.
 We can then apply the induction hypothesis, which implies that $\hom_D(x,y)$ is strict. As $\tau_0 D$ is equivalent to $\tau_0 C_{0}$, it is a set. We can apply proposition \ref{prop:criter stricte easy} which implies that $D$ is strict. 
\end{proof}

\p 
For an integer $n>0$,
we define by induction 
\begin{enumerate}
\item[$-$]
a left $(n+1)$-Gray retract structure for the inclusion 
\begin{equation}
\label{eq:Gray retract structurure for Gray cone}
\Db_n\star\emptyset \cup \Db_{n-1}\star 1 \to \Db_n\star 1
\end{equation}
where the gluing is performed along $i_n^\alpha:\Db_{n-1}\star\emptyset\to \Db_n\star\emptyset$ with $\alpha$ being $+$ if $n$ is odd and $-$ if not, 
\item[$-$]
a right $(n+1)$-Gray retract structure for the inclusion
\begin{equation}
\label{eq:Gray retract structurure for circ Gray cone}
1\costar\Db_{n-1} \cup \emptyset\costar\Db_{n}\to 1 \costar \Db_n
\end{equation}
where the gluing is performed along $i_n^\alpha:\emptyset\costar \Db_{n-1}\to \emptyset\costar \Db_n$ with $\alpha$ being $-$ if $n$ is odd and $+$ if not. 
\end{enumerate}
If $n=1$, the first morphism corresponds to the inclusion
\[\begin{tikzcd}
	\bullet & {} & \bullet \\
	\bullet & \bullet & \bullet & \bullet
	\arrow[""{name=0, anchor=center, inner sep=0}, from=1-3, to=2-3]
	\arrow[""{name=1, anchor=center, inner sep=0}, from=2-3, to=2-4]
	\arrow[""{name=2, anchor=center, inner sep=0}, from=1-3, to=2-4]
	\arrow[from=1-1, to=2-1]
	\arrow[from=2-1, to=2-2]
	\arrow[""{name=3, anchor=center, inner sep=0}, draw=none, from=1-2, to=2-2]
	\arrow[shift right=2, shorten <=2pt, shorten >=2pt, Rightarrow, from=2, to=1]
	\arrow[shorten <=6pt, shorten >=6pt, maps to, from=3, to=0]
\end{tikzcd}\]
and the second one to the inclusion:
\[\begin{tikzcd}
	& \bullet & {} & \bullet \\
	\bullet & \bullet & \bullet & \bullet
	\arrow[""{name=0, anchor=center, inner sep=0}, from=1-2, to=2-2]
	\arrow[""{name=1, anchor=center, inner sep=0}, from=2-3, to=2-4]
	\arrow[""{name=2, anchor=center, inner sep=0}, from=2-3, to=1-4]
	\arrow[from=1-4, to=2-4]
	\arrow[from=2-1, to=1-2]
	\arrow[""{name=3, anchor=center, inner sep=0}, draw=none, from=1-3, to=2-3]
	\arrow[shift left=2, shorten <=2pt, shorten >=2pt, Rightarrow, from=2, to=1]
	\arrow[shorten <=6pt, shorten >=6pt, maps to, from=0, to=3]
\end{tikzcd}\]
The propositions \ref{prop:of left Gray deformation retract unmarked} and \ref{prop:left Gray deformation retract stable under pushout unmarked} imply that the first morphism is a left $2$-Gray deformation retract and the second one a right $2$-Gray deformation retract. 
Suppose now that these two morphisms are constructed at stage $n$.
The formula \eqref{eq:eq for Gray cone} implies that $\Db_{n+1}\star\emptyset \cup \Db_{n}\star 1 \to \Db_{n+1}\star 1$ fits in the cocartesian square
\[\begin{tikzcd}
	{[1\costar\Db_{n-1}\cup \emptyset\star\Db_n,1]} & {\Db_n\star\emptyset\cup \Db_{n-1}\star 1} \\
	{[1\costar\Db_n,1]} & { \Db_n\star 1}
	\arrow[from=2-1, to=2-2]
	\arrow[from=1-1, to=1-2]
	\arrow[from=1-1, to=2-1]
	\arrow[from=1-2, to=2-2]
\end{tikzcd}\]
The induction hypothesis and the propositions \ref{prop:suspension of left Gray deformation retract unmarked} and \ref{prop:left Gray deformation retract stable under pushout unmarked} endow this morphism with a left $(n+2)$-Gray retract structure. We constructs similarly the right $(n+2)$-Gray retract structure for the inclusion $1\costar\Db_{n-1} \cup \emptyset\costar\Db_{n}\to 1 \costar \Db_n$.
\begin{prop}
\label{prop:strict stuff are stable under Gray cone}
Let $C$ be a strict $\io$-category, $a$ a globular sum, and $f:a\to C$ any morphism. The $\io$-categories $C\coprod_a a\star 1$ and $1\costar a\coprod_a C$ are strict.
\end{prop}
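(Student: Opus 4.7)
I treat $C \coprod_a a \star 1$; the case $1 \costar a \coprod_a C$ follows either by the same argument (with $1 \costar a$ in place of $a \star 1$, strict by Theorem \ref{theo:join preserves stict VMG version}), or by applying the $\star$-case to $C^{\circ}$ and using the duality identification $1 \costar X \simeq (X^{\circ} \star 1)^{\circ}$ together with the fact that the dualities preserve strictness.

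The plan is to reduce the $\ocat$-pushout to a pushout of set-valued presheaves on $\Theta$ and then identify the result with the nerve of a strict $\omega$-category. The first observation is that the inclusion $a \hookrightarrow a \star 1$ is a monomorphism in $\Psh{\Theta}$: it realises $a$ as the fibre of the canonical projection $a \star 1 \to \Db_0 \star \Db_0 = \Db_1$ over $\{0\}$. By Theorem \ref{theo:join preserves stict VMG version}, $a \star 1$ is strict, so the whole span $C \leftarrow a \hookrightarrow a \star 1$ lies in the image of $\iota : \Psh{\Theta} \to \iPsh{\Theta}$, and its leg $a \to a \star 1$ is a monomorphism. Orienting the pushout square so that this monic leg is the left vertical arrow, the first item of Lemma \ref{lemma:colimit computed in set presheaves} identifies the $\iPsh{\Theta}$-pushout with the pushout computed in $\Psh{\Theta}$, so it suffices to show that this set-valued pushout is already $\W$-local.

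For this, I would exhibit $a \hookrightarrow a \star 1$ as a cellular (polygraphic) extension: $a \star 1$ is freely generated over the globular sum $a$ by the apex object $\star$ together with, for each generating cell $e$ of $a$, a join cell $e \star \star$ of dimension $\dim(e) + 1$. Filtering by dimension exhibits $a \hookrightarrow a \star 1$ as a finite iterated pushout of boundary inclusions $\partial \Db_n \hookrightarrow \Db_n$. Pushing this filtration out along $f : a \to C$ then displays $C \coprod_a a \star 1$ as a finite tower of cellular extensions of $C$. Each such extension, being a pushout of a strict $\omega$-category along $\partial \Db_n \hookrightarrow \Db_n$, is again a strict $\omega$-category whose nerve coincides with the corresponding set-valued presheaf pushout -- this is standard polygraphic content and at each step the first item of Lemma \ref{lemma:colimit computed in set presheaves} ensures compatibility between the $\Psh{\Theta}$- and $\iPsh{\Theta}$-pushouts.

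The main obstacle will be the polygraphic description of $a \star 1$ over $a$, and the verification that pushouts along $\partial \Db_n \hookrightarrow \Db_n$ in $\zocat$ are preserved by the nerve $\N : \zocat \to \Psh{\Theta}$. Both facts are classical in the Burroni--Métayer theory of polygraphs, but establishing them in the present notational setting will require some explicit bookkeeping to describe the new cells of $a \star 1$, and one may prefer to phrase the inductive step directly via the Gray deformation retract on $\Db_n \cup_{\Db_{n-1}} \Db_{n-1} \star 1 \to \Db_n \star 1$ constructed just before the theorem, pushed out along $\Db_n \to C$.
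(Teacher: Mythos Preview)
Your reduction to the $\Psh{\Theta}$-pushout via the monomorphism leg and Lemma \ref{lemma:colimit computed in set presheaves} is fine, but the core step afterwards fails. The assertion that a pushout of a strict $\omega$-category along $\partial\Db_n \hookrightarrow \Db_n$, computed in $\Psh{\Theta}$, is again $\W$-local (equivalently: that $\N$ preserves such cell attachments) is simply false. Take $C=[1]$ and attach a $1$-cell from $1$ to $0$. In the set-valued pushout one adds exactly one new $1$-cell $f$, so the value on $[2]$ has $6$ elements; but the Segal product $\Hom([1],-)\times_{\Hom([0],-)}\Hom([1],-)$ has $8$ elements, since the composable pairs $(e,f)$ and $(f,e)$ have no witness in $\Hom([2],-)$. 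Thus the $\Psh{\Theta}$-pushout is not $\Wseg$-local, and your filtration by boundary inclusions cannot proceed past the first step. Polygraph theory tells you the $\zocat$-pushout exists and is a cell extension, but it says nothing about its nerve agreeing with the presheaf pushout; in fact the $\zocat$-pushout acquires infinitely many new composite cells that the presheaf pushout does not see. So the ``classical'' fact you invoke is not available.

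A second, smaller issue: the duality $1\costar X\simeq (X^{\circ}\star 1)^{\circ}$ you propose for the second case is Corollary \ref{cor:otimes et op}, whose proof uses the present proposition; invoking it here would be circular. Your alternative ``same argument'' is the right move, and it is what the paper does.

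The paper's argument is genuinely different and avoids presheaf-level pushouts entirely. It runs by induction on the number of cells of $a$: one writes $b$ as $a\cup_{\Db_{n-1}}\Db_n$ along $i^{\alpha}_{n-1}$ with a retraction $r:b\to\Db_n$, exhibits two cartesian squares expressing $C\coprod_b b\star 1$ over $[\Db_n,1]$ with fibres $C\coprod_a a\star 1$ and $C\coprod_{a'}a'\star 1$ (strict by induction), and then observes that the first square is a pushout of the $(n{+}1)$-Gray deformation retract \eqref{eq:Gray retract structurure for Gray cone}. Lemma \ref{lemma:strictification2} then gives strictness of $C\coprod_b b\star 1$ directly. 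The deformation-retract idea you mention in your last sentence is exactly the engine of the paper's proof, but it has to be combined with Lemma \ref{lemma:strictification2} and the inductive cartesian-square decomposition, not with a presheaf-level cell filtration.
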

\begin{proof}
We will prove the result by induction on the number of non-identity cells of $a$. Remark that for any globular sum $b$, there exists a globular sum $a$, an integer $n$, and a cartesian square composed of globular morphism
\[\begin{tikzcd}
	{\Db_{n-1}} & a \\
	{\Db_{n}} & b
	\arrow["{i^\alpha_{n-1}}"', from=1-1, to=2-1]
	\arrow["l"', from=2-1, to=2-2]
	\arrow["\lrcorner"{anchor=center, pos=0.125, rotate=180}, draw=none, from=2-2, to=1-1]
	\arrow[from=1-1, to=1-2]
	\arrow[from=1-2, to=2-2]
\end{tikzcd}\]
with $\alpha=+$ if $n$ is odd, and $\alpha=-$ if $n$ is even, and such that $l$ admits a retract $r$. As $i^\alpha_{n-1}$ is globular, the pullback along this morphism preserves colimits according to theorem \ref{theo:pullback along conduche preserves colimits}. We then have a cartesian square:
\[\begin{tikzcd}
	a & b \\
	{\Db_{n-1}} & {\Db_{n}}
	\arrow["r", from=1-2, to=2-2]
	\arrow["{i^\alpha_{n-1}}"', from=2-1, to=2-2]
	\arrow[from=1-1, to=2-1]
	\arrow[from=1-1, to=1-2]
\end{tikzcd}\]
 We also define $a'$ as the pullback:
\[\begin{tikzcd}
	{a'} & b \\
	{\Db_{n-1}} & {\Db_{n}}
	\arrow["r", from=1-2, to=2-2]
	\arrow["{i^{-\alpha}_{n-1}}"', from=2-1, to=2-2]
	\arrow[from=1-1, to=2-1]
	\arrow[from=1-1, to=1-2]
\end{tikzcd}\]
and remark that $a'$ is a globular sum. Eventually, we fix a morphism $b\to C$. As $a$ and $a'$ are sub globular sum of $b$, the number of non-identity cells in each of them is strictly less than the one of $b$.
 We then suppose that for any strict $\io$-category $C$, and any morphism $b\to C$, the two induced $\io$-category 
 $C\coprod_a a\star 1$ and $C\coprod_{a'}a'\star 1$ are strict, and we are willing to show that $C\coprod_bb\star 1$ also is. 
We claim that the two following squares are cartesian
\[\begin{tikzcd}
	{b\coprod_{a}a\star1} & {b\star 1} & {b\coprod_{a'}a'\star1} & {b\star 1} \\
	{[\Db_{n-1},1]} & {[\Db_{n},1]} & {[\Db_{n-1},1]} & {[\Db_{n},1]}
	\arrow[from=1-1, to=2-1]
	\arrow[from=1-1, to=1-2]
	\arrow["{[i^\alpha_{n-1},1]}"', from=2-1, to=2-2]
	\arrow[from=1-2, to=2-2]
	\arrow[from=1-3, to=2-3]
	\arrow[from=1-3, to=1-4]
	\arrow["{[i^{-\alpha}_{n-1},1]}"', from=2-3, to=2-4]
	\arrow[from=1-4, to=2-4]
\end{tikzcd}\]
According to theorem \ref{theo:join preserves stict VMG version}, proposition \ref{prop:suspension preserves stricte}, and the induction hypothesis, all the objects of these squares are strict. We can show the cartesianess in $\zocat$, where it follows from lemma \ref{lemma: pullback and sum}. 
As morphism $[i^-_{n-1},1],[i^+_{n-1},1]$ are globular, the pullback functors $[i^-_{n-1},1]^*$, $[i^+_{n-1},1]^*$ preserve colimits according to theorem \ref{theo:pullback along conduche preserves colimits}. We then have two cartesian squares:
\begin{equation}
\label{eq:two square in the proof of strict}
\begin{tikzcd}
	{C\coprod_{a}a\star1} & {C\coprod_bb\star 1} & {C\coprod_{a'}a'\star1} & {C\coprod_bb\star 1} \\
	{[\Db_{n-1},1]} & {[\Db_{n},1]} & {[\Db_{n-1},1]} & {[\Db_{n},1]}
	\arrow[from=1-1, to=2-1]
	\arrow[from=1-1, to=1-2]
	\arrow["{[i^\alpha_{n-1},1]}"', from=2-1, to=2-2]
	\arrow[from=1-2, to=2-2]
	\arrow[from=1-3, to=2-3]
	\arrow[from=1-3, to=1-4]
	\arrow["{[i^{-\alpha}_{n-1},1]}"', from=2-3, to=2-4]
	\arrow[from=1-4, to=2-4]
\end{tikzcd}
\end{equation}
and by the induction hypothesis, the two top left objects are strict. 
Eventually, remark that we have a cocartesian square
\[\begin{tikzcd}
	{\Db_n\coprod_{\Db_{n-1}}\Db_{n-1}\star 1} & {\Db_{n}\star 1} \\
	{C\coprod_{a}a\star1} & {C\coprod_bb\star 1}
	\arrow[from=2-1, to=2-2]
	\arrow[from=1-1, to=2-1]
	\arrow[from=1-1, to=1-2]
	\arrow[from=1-2, to=2-2]
	\arrow["\lrcorner"{anchor=center, pos=0.125, rotate=180}, draw=none, from=2-2, to=1-1]
\end{tikzcd}\]
and the proposition \ref{prop:left Gray deformation retract stable under pushout unmarked} then implies that the left square of 
\eqref{eq:two square in the proof of strict} is a left $(n+1)$-Gray retract, and the lemma \ref{lemma:strictification2} implies that $C\coprod_bb\star 1$ is strict. This proves the first assertion. The second one is proved similarly.
\end{proof}

\p We now want to give an analogue of proposition \ref{prop:strict stuff are stable under Gray cone} for the Gray cylinder. In what follows, we will use the results of sections \ref{section:Colimit of left cartesian fibrations} and \ref{subsection:A criterion to be a left cartesian fibration} (more precisely the proposition \ref{prop:equivalence betwen slice and join strict word2}, the theorem \ref{theo:equivalence betwen slice and join} and the corollaries \ref{cor:cor of the past10}, \ref{cor:cor of the past3}).
We assure the reader that this is not a tautology, as the proofs of these results are not based on the following propositions and theorems

\begin{prop}
\label{prop:fibers of 1 star a}
Let $a$ be a globular sum. The two following canonical squares are cartesian
\[\begin{tikzcd}
	1 & {1\costar a} & 1 & {a\star 1} \\
	{\{0\}} & {[a,1]} & {\{1\}} & {[a,1]}
	\arrow[from=1-1, to=1-2]
	\arrow[from=2-1, to=2-2]
	\arrow[from=1-1, to=2-1]
	\arrow[from=1-2, to=2-2]
	\arrow[from=1-3, to=1-4]
	\arrow[from=2-3, to=2-4]
	\arrow[from=1-3, to=2-3]
	\arrow[from=1-4, to=2-4]
\end{tikzcd}\]
The five squares appearing in the following canonical diagram are both cartesian and cocartesian:
\[\begin{tikzcd}
	& {a\otimes\{0\}} & 1 \\
	{a\otimes\{1\}} & {a\otimes[1]} & {a\star 1} \\
	1 & {1\costar a} & {[a,1]}
	\arrow[from=2-3, to=3-3]
	\arrow[from=3-2, to=3-3]
	\arrow[from=2-2, to=3-2]
	\arrow[from=2-2, to=2-3]
	\arrow[from=1-2, to=1-3]
	\arrow[from=1-3, to=2-3]
	\arrow[from=1-2, to=2-2]
	\arrow[from=2-1, to=2-2]
	\arrow[from=3-1, to=3-2]
	\arrow[from=2-1, to=3-1]
\end{tikzcd}\]
\end{prop}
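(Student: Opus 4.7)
The plan is to reduce the statement to the case $a=\Db_n$ and then induct on $n$, exploiting formulas \eqref{eq:eq for cylinder}, \eqref{eq:eq for Gray cone}, and \eqref{eq:eq for cojoin}, which express the Gray operations applied to a suspension $[C,1]$ as colimits of Gray operations applied to $C$. Since every globular sum is obtained from globes by pushouts along globular morphisms, and since globular morphisms are discrete Conduché functors whose pullbacks preserve colimits by theorem \ref{theo:pullback along conduche preserves colimits}, both cartesian and cocartesian properties for such squares propagate from globes to arbitrary globular sums. This reduces us to the case $a=\Db_n$, where we induct on $n$; the base case $n=0$ is trivial since then all six objects appearing in the diagram live in $\ncat{1}$ and the claim collapses to an elementary verification.

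For the first assertion, the two fiber squares follow from unwinding the definitions: $1\costar a$ is obtained from $a\otimes[1]$ by collapsing $a\otimes\{0\}$ to a point, and $[a,1]$ further collapses $a\otimes\{1\}$ as well (equation \eqref{eq:liens entre Gray cylindre et suspension}). Hence the map $1\costar a\to[a,1]$ collapses only $a\otimes\{1\}$, and its fiber over $\{0\}\in[a,1]$ is precisely the already-collapsed point $1$. A rigorous argument is obtained by applying proposition \ref{prop:pulback of Wsat} to the composite $a\otimes[1]\to 1\costar a\to[a,1]$. The statement for $a\star 1$ is symmetric.

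For the five squares of the main diagram, cocartesianness is mostly tautological. The top-right and center-left squares are, up to the relabelling that identifies the apex of $a\star 1$ (resp.\ of $1\costar a$) with the position $(1,3)$ (resp.\ $(3,1)$), the defining pushouts of $a\star 1$ and $1\costar a$. The central lower-right square expresses $[a,1]\simeq(1\costar a)\sqcup_{a\otimes[1]}(a\star 1)$, which amounts to performing the double collapse from \eqref{eq:liens entre Gray cylindre et suspension} in either order. The two compound rectangles then follow by pasting of cocartesian squares.

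The hard part will be cartesianness of the central lower-right square, namely $a\otimes[1]\simeq(a\star 1)\times_{[a,1]}(1\costar a)$: intuitively, the two collapses at opposite ends of the cylinder together detect exactly the cells of $a\otimes[1]$. The plan here is to reduce to the strict case via theorem \ref{theo:join preserves stict VMG version} and lemma \ref{lemma:gray operation on globes are strict}, which guarantee that all four $\io$-categories in the square are strict $\omega$-categories when $a$ is a globular sum. In $\zocat$ the pullback can then be computed via proposition \ref{prop:criter stricte easy} using the explicit combinatorial descriptions of $a\otimes[1]$, $a\star 1$, $1\costar a$ and $[a,1]$, and the comparison map is verified to be an isomorphism on $n$-cells for each $n$. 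Cartesianness of the remaining small and compound squares then follows by pasting with the central square together with the fiber squares from the first part.
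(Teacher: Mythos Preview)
Your overall strategy---reduce to strict objects and compute there---matches the paper's, and the cocartesian part is indeed essentially definitional. But there is a genuine gap in your treatment of the central square $a\otimes[1]\simeq (a\star 1)\times_{[a,1]}(1\costar a)$, and it is precisely the point where the paper has to work hardest.

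You offer two routes, and neither closes. The first is the reduction to $a=\Db_n$ via theorem \ref{theo:pullback along conduche preserves colimits}. For this to propagate cartesianness from globes to a globular sum $a=\colim_{\Sp_a}\Db_{n_i}$, you would need the pullback along $1\costar a\to[a,1]$ (or along $a\star 1\to[a,1]$) to preserve colimits. But these maps are \emph{not} discrete Conduch\'e functors: already for $a=\Db_1$, the map $\Db_1\star 1\to[\Db_1,1]=\Db_2$ sends the non-identity $1$-cell of $\Db_1$ to an identity, so it fails the unique right lifting property against $\Ib_1:\Db_1\to\Db_0$. Hence theorem \ref{theo:pullback along conduche preserves colimits} does not apply, and the propagation step is unjustified. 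The second route---asserting that all four corners are strict for a general globular sum $a$ and computing in $\zocat$---is circular: lemma \ref{lemma:gray operation on globes are strict} only covers $\Db_n\otimes[1]$, and the strictness of $a\otimes[1]$ for arbitrary globular $a$ is proposition \ref{prop:strict stuff are stable under coproduc with cylinder}, which is deduced \emph{from} the present proposition.

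The paper resolves this by invoking forward references (corollaries \ref{cor:cor of the past10} and \ref{cor:cor of the past3}, drawn from the later theory of cartesian fibrations) which establish, by entirely different means, that pullback along $1\costar a\to[a,1]$ and along $a\star 1\to[a,1]$ does preserve colimits. With that in hand, the paper reduces to showing cartesianness of certain squares indexed by globular morphisms $\Db_n\to a$; these are then factored and checked in $\zocat$ via proposition \ref{prop:cartesian squares} and lemma \ref{lemma: pullback and sum}, using that the objects involved (including $C\coprod_a a\star 1$ from proposition \ref{prop:strict stuff are stable under Gray cone}) are strict. Your induction on $n$ using formulas \eqref{eq:eq for cylinder}--\eqref{eq:eq for cojoin} is not how either the paper or a repaired version of your argument would proceed; the formulas are not needed once strictness is in hand.

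A minor point: your invocation of proposition \ref{prop:pulback of Wsat} for the first pair of fiber squares is misplaced---that proposition concerns pulling back $\Sigma^nE^{eq}\to\Db_n$ and has no bearing here. The paper handles these squares exactly as you do elsewhere: all four corners are strict (by theorem \ref{theo:join preserves stict VMG version} and proposition \ref{prop:suspension preserves stricte}), so one checks cartesianness in $\zocat$ directly via proposition \ref{prop:cartesian squares}.
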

\begin{proof}
The five squares of the second diagram are cocartesian by construction. Furthermore, remark that all the objects appearing in the squares
\[\begin{tikzcd}
	a & {a\star 1} & a & {1\costar a} \\
	{\{0\}} & {[a,1]} & {\{1\}} & {[a,1]} \\
	{a\otimes\{1\}} & {a\star 1} & {a\otimes\{0\}} & {1\costar a} \\
	{\{1\}} & {[a,1]} & {\{0\}} & {[a,1]}
	\arrow[from=3-2, to=4-2]
	\arrow[from=3-1, to=4-1]
	\arrow[from=3-1, to=3-2]
	\arrow[from=4-1, to=4-2]
	\arrow[from=1-1, to=2-1]
	\arrow[from=1-2, to=2-2]
	\arrow[from=1-3, to=2-3]
	\arrow[from=1-4, to=2-4]
	\arrow[from=2-1, to=2-2]
	\arrow[from=2-3, to=2-4]
	\arrow[from=1-3, to=1-4]
	\arrow[from=1-1, to=1-2]
	\arrow[from=3-3, to=4-3]
	\arrow[from=3-4, to=4-4]
	\arrow[from=4-3, to=4-4]
	\arrow[from=3-3, to=3-4]
\end{tikzcd}\]
are strict according to theorem \ref{theo:join preserves stict VMG version} and proposition \ref{prop:suspension preserves stricte}. One can the show their cartesianess in $\zocat$, where it follows from proposition \ref{prop:cartesian squares}.

By stability by right cancellation of cartesian square, it remains to show that the square
\[\begin{tikzcd}
	{a\otimes[1]} & {1\costar a} \\
	{a\star 1} & {[a,1]}
	\arrow[from=1-1, to=2-1]
	\arrow[from=1-2, to=2-2]
	\arrow[from=2-1, to=2-2]
	\arrow[from=1-1, to=1-2]
\end{tikzcd}\]
is cartesian.
Using the fact that pullback along $1\costar a\to [a,1]$ preserves colimits as stated by corollary \ref{cor:cor of the past3}, it is sufficient to show that for any globular morphism $\Db_n\to a$, the outer square of the diagram
\[\begin{tikzcd}
	{\Db_n\otimes[1]\coprod_{\Db_n} a} & {a\otimes[1]} & {1\costar a} \\
	{\Db_n\star 1} & {a\star 1} & {[a,1]}
	\arrow[from=2-1, to=2-2]
	\arrow[from=1-2, to=2-2]
	\arrow[from=2-2, to=2-3]
	\arrow[from=1-1, to=2-1]
	\arrow[from=1-1, to=1-2]
	\arrow[from=1-2, to=1-3]
	\arrow[from=1-3, to=2-3]
\end{tikzcd}\]
is cartesian.
Remark that this outer square also factors as:
\[\begin{tikzcd}
	{\Db_n\otimes[1]\coprod_{\Db_n} a} & {1\costar \Db_n\coprod _{\Db_n}a} & {1\costar a} \\
	{\Db_n\star 1} & {[\Db_n,1]} & {[a,1]}
	\arrow[from=1-1, to=1-2]
	\arrow[from=1-1, to=2-1]
	\arrow[from=2-1, to=2-2]
	\arrow[from=1-2, to=2-2]
	\arrow[from=2-2, to=2-3]
	\arrow[from=1-2, to=1-3]
	\arrow[from=1-3, to=2-3]
\end{tikzcd}\]
The cartesianess of the left square is a consequence of the preservation of colimit of the pullback along the morphism $\Db_n\star 1\to [\Db_n,1]$, and of the cartesian square provided by proposition \ref{prop:cartesian squares}. We recall that we can indeed use the last proposition, as we already show in lemma \ref{lemma:gray operation on globes are strict} that $\Db_n\otimes[1]$, $1\costar \Db_n$ and $\Db_n\star 1$ are strict.

For the right hand square, all the objects are strict according to proposition \ref{prop:strict stuff are stable under Gray cone}. We can then show the cartesianess in $\zocat$, where it follows from lemma \ref{lemma: pullback and sum}.
\end{proof}

\begin{lemma}
\label{lemma:an other canonical square}
Let $C$ be an $\io$-category, $a$ a globular sum, and $a\to C$ any morphism.
The following canonical square is cartesian: 
\[\begin{tikzcd}
	{C\coprod_aa\otimes[1]} & {C\coprod_a a\star 1} \\
	{1\costar a} & {[a,1]}
	\arrow[from=2-1, to=2-2]
	\arrow[from=1-1, to=2-1]
	\arrow[from=1-1, to=1-2]
	\arrow[from=1-2, to=2-2]
\end{tikzcd}\]
\end{lemma}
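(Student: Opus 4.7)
The plan is to pull back the pushout defining $C\coprod_a a\star 1$ along the map $1\costar a\to [a,1]$, and identify the result with $C\coprod_a a\otimes[1]$.

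The first step is to invoke corollary \ref{cor:cor of the past3}, which states that pullback along $1\costar a\to [a,1]$ preserves colimits. Applying this to the defining pushout of $C\coprod_a a\star 1$ yields a description of the desired pullback as a pushout
$$(1\costar a)\times_{[a,1]}(C\coprod_a a\star 1) \;\simeq\; \Big((1\costar a)\times_{[a,1]}C\Big)\coprod_{(1\costar a)\times_{[a,1]} a}\Big((1\costar a)\times_{[a,1]}(a\star 1)\Big).$$

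Next, I will identify each of the three terms appearing on the right. The third one, $(1\costar a)\times_{[a,1]}(a\star 1)\simeq a\otimes[1]$, is precisely the middle (cartesian) square of the large diagram in proposition \ref{prop:fibers of 1 star a}. For the other two, I will observe that the structural maps $C\to [a,1]$ and $a\to[a,1]$ of the canonical square both factor through a single endpoint of $[a,1]$. Indeed, using the cocartesian square \eqref{eq:liens entre Gray cylindre et suspension}, the map $a\to a\star 1\to [a,1]$ factors through the copy of $1$ selecting one of the endpoints, and the compatibility with the pushout forces $C\to [a,1]$ to factor through the same endpoint. Then the corresponding cartesian square from proposition \ref{prop:fibers of 1 star a} identifies the fiber of $1\costar a\to [a,1]$ at that endpoint with $1$, which reduces $(1\costar a)\times_{[a,1]}C$ and $(1\costar a)\times_{[a,1]}a$ to $C$ and $a$, respectively.

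Substituting these three identifications into the pushout displayed above will produce $C\coprod_a a\otimes[1]$, which is the desired left-hand side. The main obstacle is bookkeeping: one must verify that each projection $(1\costar a)\times_{[a,1]}(-)\to 1\costar a$ coincides with the canonical map $C\to 1\costar a$, $a\to 1\costar a$, or $a\otimes[1]\to 1\costar a$ used to assemble $C\coprod_a a\otimes[1]$. This reduces to the observation that all three such maps land in, or factor through, the added point $\star\in 1\costar a$, a consequence of the explicit pushout description of $1\costar a$ as the collapse of $a\otimes\{0\}$ in $a\otimes[1]$. Once this diagram chase is carried out, the equivalence in the lemma follows.
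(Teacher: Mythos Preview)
Your proof is correct and follows essentially the same strategy as the paper: pull back the pushout $C\leftarrow a\to a\star 1$ along $1\costar a\to[a,1]$ using corollary \ref{cor:cor of the past3}, then identify the three resulting terms via proposition \ref{prop:fibers of 1 star a}. The paper is slightly more terse, packaging the fiber computation for the $C$ and $a$ terms as the single observation that for any $D$ mapping to $[a,1]$ through $\{0\}$ the square with $D$ in both top corners is cartesian; your version unwinds this explicitly. One minor wording issue: in the final paragraph you say ``all three such maps land in, or factor through, the added point $\star\in 1\costar a$'', but of course $a\otimes[1]\to 1\costar a$ is the full quotient map and does not factor through $\star$---only the restriction to $a\otimes\{0\}$ does, which is what is actually needed for the compatibility check.
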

\begin{proof}
For any $\io$-category $D$, the first square of proposition \ref{prop:fibers of 1 star a} implies that the following square is cartesian
\[\begin{tikzcd}
	{D\otimes\{0\}} & {D\otimes\{0\}} \\
	{1\costar a} & {[a,1]}
	\arrow[from=1-1, to=2-1]
	\arrow[from=1-2, to=2-2]
	\arrow[from=2-1, to=2-2]
	\arrow[from=1-1, to=1-2]
\end{tikzcd}\]
The statement then follows from proposition \textit{op cit} and the preservation of colimit of the pullback along the morphism $1\costar a\to [a,1]$ stated by corollary \ref{cor:cor of the past3}.
\end{proof}

\begin{prop}
\label{prop:strict stuff are stable under coproduc with cylinder}
Let $C$ be a strict $\io$-category, $a$ a globular sum, and $a\to C$ any morphism. The $\io$-category $C\coprod_a a\otimes [1]$ is strict. In particular $a\otimes[1]$ is strict.
\end{prop}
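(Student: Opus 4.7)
The plan is to invoke Lemma \ref{lemma:an other canonical square}, which exhibits $C\coprod_a a\otimes[1]$ as a pullback of three other objects, and then argue that each of those three other objects is already strict by the results established earlier. Concretely, Lemma \ref{lemma:an other canonical square} provides the cartesian square
\[\begin{tikzcd}
	{C\coprod_a a\otimes[1]} & {C\coprod_a a\star 1} \\
	{1\costar a} & {[a,1]}
	\arrow[from=1-1, to=1-2]
	\arrow[from=1-1, to=2-1]
	\arrow[from=1-2, to=2-2]
	\arrow[from=2-1, to=2-2]
\end{tikzcd}\]
so it suffices to show that the other three corners are strict, and then use the fact that strict $\io$-categories are closed under limits in $\ocat$.

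For the three corners: the top-right $C\coprod_a a\star 1$ is strict by Proposition \ref{prop:strict stuff are stable under Gray cone} applied to our hypothesis that $C$ is strict; the bottom-left $1\costar a$ is strict by Theorem \ref{theo:join preserves stict VMG version}; and the bottom-right $[a,1]$ is strict by Proposition \ref{prop:suspension preserves stricte} applied to the globular sum $a$ (which is strict as it lies in $\Theta$).

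For the closure under limits, I would use that the nerve $\N:\zocat\to\ocat$, being a right adjoint to $\pi_0$, preserves limits, and is fully faithful. Hence the pullback in $\ocat$ of three objects in the essential image of $\N$ lies again in the essential image of $\N$: explicitly, writing the three strict corners as $\N B$, $\N D$, $\N E$, the pullback $\N B\times_{\N E}\N D$ in $\ocat$ is equivalent to $\N(B\times_E D)$, which is strict. This forces $C\coprod_a a\otimes[1]$ to be strict.

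The main (and essentially only) obstacle is to be confident that Lemma \ref{lemma:an other canonical square}, Proposition \ref{prop:strict stuff are stable under Gray cone} and Theorem \ref{theo:join preserves stict VMG version} are available at this stage of the text, and that the bottom-right corner $[a,1]$ is treated as strict via Proposition \ref{prop:suspension preserves stricte}; everything else is formal. The final sentence of the proposition (``In particular $a\otimes[1]$ is strict'') follows by specializing to $C=a$ and taking the morphism $a\to C$ to be the identity, so that $C\coprod_a a\otimes[1]\simeq a\otimes[1]$, and $a$ is strict as a globular sum.
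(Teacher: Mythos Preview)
Your proof is correct and follows exactly the same strategy as the paper: invoke the cartesian square of Lemma \ref{lemma:an other canonical square}, verify the three other corners are strict via Propositions \ref{prop:suspension preserves stricte}, \ref{prop:strict stuff are stable under Gray cone} and Theorem \ref{theo:join preserves stict VMG version}, and conclude by closure of strict objects under pullback. Your explicit justification of this closure (via $\N$ being a fully faithful right adjoint) is a welcome addition that the paper leaves implicit.
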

\begin{proof}
According to propositions \ref{prop:suspension preserves stricte} and \ref{prop:strict stuff are stable under Gray cone}, the two lower objects and the upper right one of the cartesian square of lemma \ref{lemma:an other canonical square} are strict whenever $C$ is. As strict object are stable under pullback, this concludes the proof.
\end{proof}

\p We combine the proposition \ref{prop:strict stuff are stable under Gray cone} and \ref{prop:strict stuff are stable under coproduc with cylinder} in the following theorem:

\begin{theorem}
\label{prop:strict stuff are pushout}
Let $C$ be an $\io$-category, $a$ a globular sum, and $f:a\to C$ any morphism. The $\io$-categories $$1\costar a\coprod_a C~~~~C\coprod_a a\otimes[1]~~~~C\coprod_a a\star 1$$
are strict whenever $C$ is. In particular, $a\otimes[1]$, $a\star 1$ and $1\costar a$ are strict.
\end{theorem}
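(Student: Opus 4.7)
The plan is short, since the theorem is essentially a packaging of the two previous propositions in this subsection. First I would observe that the first and third assertions, concerning $1\costar a\coprod_a C$ and $C\coprod_a a\star 1$, are exactly the content of proposition \ref{prop:strict stuff are stable under Gray cone}. The middle assertion, concerning $C\coprod_a a\otimes[1]$, is exactly the content of proposition \ref{prop:strict stuff are stable under coproduc with cylinder}. So for the main statement, there is nothing to prove beyond citing these two results.

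For the ``in particular'' clause, I would specialize to the case $C := a$ and $f := \mathrm{id}_a$. The pushouts then collapse:
\[
1\costar a\coprod_a a\sim 1\costar a,\qquad a\coprod_a a\otimes[1]\sim a\otimes[1],\qquad a\coprod_a a\star 1\sim a\star 1.
\]
Since $a$ is a globular sum, it is in particular a strict $\io$-category (it is representable and representables lie in the image of the fully faithful nerve $\N:\zocat\to\ocat$), so the main statement applies and yields strictness of $1\costar a$, $a\otimes[1]$, and $a\star 1$.

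There is no real obstacle here; all the work has already been done in propositions \ref{prop:strict stuff are stable under Gray cone} and \ref{prop:strict stuff are stable under coproduc with cylinder}, whose proofs in turn rested on the induction built around the Gray retract structures \eqref{eq:Gray retract structurure for Gray cone} and \eqref{eq:Gray retract structurure for circ Gray cone}, on lemma \ref{lemma:strictification2}, and on theorem \ref{theo:join preserves stict VMG version}. The present theorem simply collects these results under a single, uniform statement and records the specialization to $C=a$.
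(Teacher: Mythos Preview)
Your proposal is correct and matches the paper's approach exactly: the theorem is presented there as a direct packaging of propositions \ref{prop:strict stuff are stable under Gray cone} and \ref{prop:strict stuff are stable under coproduc with cylinder}, with no separate proof given. Your specialization to $C=a$ for the ``in particular'' clause is fine; alternatively one can simply recall that $a\star 1$ and $1\costar a$ were already handled by theorem \ref{theo:join preserves stict VMG version} and $a\otimes[1]$ by proposition \ref{prop:strict stuff are stable under coproduc with cylinder}.
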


\begin{cor}
\label{cor: a otimes n is strict}
Let $a$ be a globular sum, and $K$ an order set, viewed as an $\iun$-category. The $\io$-category $a\otimes K$ is strict.
\end{cor}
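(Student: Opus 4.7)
The plan is to reduce to the case where $K = [n]$ is a finite linear order and then induct on $n$, using theorem~\ref{prop:strict stuff are pushout} as the key input.

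For $K = [n]$, I argue by induction. The case $n = 0$ is immediate since $a \otimes [0] \simeq a$ is already strict. For the inductive step, one has in the category of ordered sets the pushout decomposition $[n] \simeq [n-1] \coprod_{[0]} [1]$, with $[0] \to [n-1]$ picking out the terminal vertex and $[0] \to [1]$ picking out the initial one. Since the Gray tensor product preserves colimits in each variable, applying $a \otimes (-)$ produces
\[
a \otimes [n] \simeq (a \otimes [n-1]) \coprod_{a} (a \otimes [1]),
\]
where the gluing identifies $a \simeq a \otimes \{n-1\}$. Setting $C := a \otimes [n-1]$, which is strict by the induction hypothesis, an application of theorem~\ref{prop:strict stuff are pushout} to the map $a \hookrightarrow C$ then yields the strictness of $a \otimes [n] \simeq C \coprod_a (a \otimes [1])$.

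For a general ordered set $K$, write $K$ as a filtered colimit of its finite totally ordered sub-objects, each of which is isomorphic to some $[n_\alpha]$ (or, more generally for a poset, use the standard presentation $K \simeq \colim_{[n] \to K} [n]$ indexed by $\Delta_{/K}$). Since $a \otimes (-)$ preserves colimits, $a \otimes K$ is the corresponding colimit of the strict objects $a \otimes [n_\alpha]$. The remaining point, which constitutes the main obstacle, is to check that this colimit is special in the sense of paragraph~\ref{para: spetial colimits}, for then lemma~\ref{lemma:colimit computed in set presheaves} identifies its computation in $\iPsh{\Theta}$ with the levelwise colimit of sets. In the filtered setting the transition maps restrict to monotone maps of finite sets on each representable level, so the levelwise $\infty$-groupoid colimit is visibly still a set, yielding the desired strictness.
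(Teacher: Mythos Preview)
Your induction for $K=[n]$ is correct and is exactly what the paper does.

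For general $K$ you take a genuinely different route from the paper. The paper presents $K$ as the colimit of $\pi:\Delta^{\hookrightarrow}_{/K}\to\iPsh{\Delta}$ indexed by monomorphisms $[n]\hookrightarrow K$, and the bulk of its proof is devoted to showing that the natural transformation $a\otimes\pi\to\pi$ is \emph{cartesian}: for every monomorphism $i:[k]\hookrightarrow[l]$ one must check that $a\otimes[k]\to a\otimes[l]$ is the pullback of $i$ along $a\otimes[l]\to[l]$, which the paper reduces to showing that $a\otimes i$ is fully faithful and then verifies by an explicit Steiner-theoretic computation. Cartesianness feeds into proposition~\ref{prop:special colimit} to get the special colimit. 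Your filtered approach (for a total order $K$) sidesteps all of this and is considerably shorter.

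There is, however, a real gap in what you wrote. You correctly flag that the colimit being special is ``the main obstacle'', but then you do not actually verify it. Your final sentence only shows that the levelwise colimit in $\iPsh{\Theta}$ is set-valued (filtered colimits of sets in $\igrd$ remain sets); it does not show that this presheaf is $\W$-local, which is what ``special'' means here. The missing line is that the domains and codomains of the morphisms in $\W$ are $\omega$-small, so $\W$-locality is preserved under filtered colimits in $\iPsh{\Theta}$; this is precisely the argument of proposition~\ref{prop:example of a special colimit 2} for sequences, and it goes through verbatim for filtered diagrams. With that added, your proof for totally ordered $K$ is complete, and the reference to lemma~\ref{lemma:colimit computed in set presheaves} becomes unnecessary.

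Finally, your parenthetical suggestion to use $\Delta_{/K}$ would not be covered by the same argument, since $\Delta_{/K}$ is not filtered; pursuing that presentation forces you back into the paper's cartesianness verification.
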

\begin{proof}
If $K$ is $[n]$, an easy induction using proposition \ref{prop:strict stuff are stable under coproduc with cylinder} shows the result.
In the general case, remark that $K$ is the special colimit of the diagram $\pi:\Delta^{\hookrightarrow}_{/K}\to \iPsh{\Delta}$ where $\Delta^{\hookrightarrow}_{/K}$ is the category whose objects are monomorphisms $[n]\to K$ and arrows are monomorphisms between domains making the induced triangle commutative, while $\pi$ sends $[n]\to K$ to $[n]$.
We claim that the natural transformation 
$$a\otimes \pi\to \pi$$
is cartesian. Proposition \ref{prop:special colimit} then implies that $a\otimes\pi$ has a special colimit. Moreover, $a\otimes \pi$
fulfills the hypotheses of the third assertion of lemma \ref{lemma:colimit computed in set presheaves}. Its colimit is then strict, and this concludes the proof of the first assertion. 

To demonstrate the cartesianess of the natural transformation $a\otimes \pi\to \pi$, one has to show that for any monomorphism $i:[k]\to [l]$,
the induced square 
\[\begin{tikzcd}
	{a\otimes [k]} & {a\otimes[l]} \\
	{[k]} & {[l]}
	\arrow[from=1-1, to=2-1]
	\arrow[from=1-2, to=2-2]
	\arrow[from=2-1, to=2-2]
	\arrow[from=1-1, to=1-2]
\end{tikzcd}\]
is cartesian.

As $[k]\to [l]$ is fully faithful, so is $[k]\times_{[l]}a\otimes[l]\to a\otimes[l]$. If we manage to show that $a\otimes[k]\to a\otimes[l]$ is fully faithful, it will imply by right cancelation that $a\otimes [k]\to [l]\coprod_{[k]}a\otimes[l]$ is also fully faithful, and as this morphism is obviously surjective on objects it will conclude the proof.

We then have to show that for any integer $n>0$, any square of shape
\[\begin{tikzcd}
	{ \partial\Db_n} & { a\otimes [k]} \\
	{ \Db_n} & { a\otimes [l]}
	\arrow[from=1-2, to=2-2]
	\arrow["f"', from=2-1, to=2-2]
	\arrow["g", from=1-1, to=1-2]
	\arrow[from=1-1, to=2-1]
\end{tikzcd}\]
admits a unique lifting.
Suppose given such square.
Using the Steiner theory recalled in \ref{section:Steiner thery}, it is equivalent show that the induced square of augmented directed complexes:
\[\begin{tikzcd}
	{\lambda \partial\Db_n} & {\lambda a\otimes \lambda [k]} \\
	{\lambda \Db_n} & {\lambda a\otimes\lambda [l]}
	\arrow[from=1-2, to=2-2]
	\arrow["f"', from=2-1, to=2-2]
	\arrow["g", from=1-1, to=1-2]
	\arrow[from=1-1, to=2-1]
\end{tikzcd}\]
admits a unique lifting. 
We recall that the basis of $\lambda\Db_n$ is given by the graded set:
$$(B_{\lambda\Db_n})_k:= \left\{ 
\begin{array}{ll}
\{e_k^-,e_k^+\}&\mbox{ if $k<n$}\\
\{e_n\}&\mbox{ if $k=n$}\\
\emptyset&\mbox{ if $k>n$}\\
\end{array}\right.$$ 
and that the basis of $\lambda[n]$ also admits is given by the graded set
$$(B_{\lambda\Db_n})_k:= \left\{ 
\begin{array}{ll}
\{v_0,v_1,...,v_n\}&\mbox{ if $k=0$}\\
\{v_{0,1},v_{1,2}...,v_{n-1,n}\}&\mbox{ if $k=1$}\\
\emptyset&\mbox{ if k>1}\\
\end{array} \right.$$ 

 We will suppose that $n$ is odd as the proof for $n$ even is similar. As the right vertical morphism is an injection, we just have to show the existence of the lifting.

There exists a unique sequence $\{b_0,...,b_{l-1}\}$ of element of $(\lambda b)_{n-1}$
and a unique sequence $\{c_0,...,c_{l}\}$ of element of $(\lambda b)_{n}$ such that 
$$f(e_n)= b_0\otimes v_{0,1}+...+ b_{l-1}\otimes v_{l-1,l}+c_0\otimes v_0+...+ c_l\otimes v_l$$
The commutativity of the square then implies that the cell
$$ \partial b_0\otimes v_{0,1}+...+ \partial b_{l-1}\otimes v_{l-1,l}+ (\partial c_0-b_0)\otimes v_0+(\partial c_1+b_0-b_1)\otimes v_1...+ (\partial c_l + b_l)\otimes v_l$$
is in the image of $\lambda a\otimes\lambda i$. 
As a consequence, for any $j<k$, we have
$$ \left\{
\begin{array}{ll}
\partial b_0=\partial b_1=...= \partial b_{i(0)-1}\\
\partial b_{i(j)}=\partial b_{i(j)+1}... =\partial b_{i(j+1)-1} &\mbox{for $j<k$}\\
\partial b_{i(k)}=\partial b_{i(k)+1}=...=\partial b_{l-1}\\
\end{array}\right.$$
and 
$$
\left\{
\begin{array}{ll}
\partial c_0-b_0=0 &\mbox{if $0$ is not in the image of $i$}\\
\partial c_p+b_{p-1}-b_p=0& \mbox{if $p>0$ is not in the image of $i$}\\
\partial c_l+b_{l-1}=0& \mbox{if $l$ is not in the image of $i$}\\
\end{array}\right.$$
The first set of equations forces the equalities: 
$$ \left\{
\begin{array}{ll}
 b_0= b_1=...= b_{i(0)-1}\\
 b_{i(j)}= b_{i(j)+1}... = b_{i(j+1)-1} &\mbox{for $j<k$}\\
 b_{i(k)}= b_{i(k)+1}=...= b_{l-1}\\
\end{array}\right.$$
Combined with the second set of equations this implies that $c_p$ is null whenever $p$ is not in the image of $i$.
We then have 
$$f(e_n)=b_{i(0)}\otimes \lambda i(v_{0,1})+...+b_{i(k)}\otimes \lambda i(v_{k-1,k})
+c_{i(0)}\otimes \lambda i (v_0)+...+ c_i(k)\otimes \lambda i (v_k)$$

We then define the morphism $l$ as the unique morphism extending $g$ and that fulfills 
$$l_n(e_n):= b_{i(0)}\otimes v_{0,1}+...+b_{i(k)}\otimes v_{k-1,k}
+c_{i(0)}\otimes v_0+...+ c_i(k)\otimes v_k$$ 
This morphism is the wanted lift.
\end{proof}

\begin{cor}
\label{cor:otimes et op}
There is a natural diagram
\[\begin{tikzcd}
	{(C\otimes\{1\})^\circ} & {(C\otimes[1])^\circ} & {(C\otimes\{0\})^\circ} \\
	{C^\circ\otimes\{0\}} & {C^\circ\otimes[1]} & {C^\circ\otimes\{1\}}
	\arrow["\sim", from=1-2, to=2-2]
	\arrow["\sim", from=1-3, to=2-3]
	\arrow["\sim", from=1-1, to=2-1]
	\arrow[from=1-3, to=1-2]
	\arrow[from=1-1, to=1-2]
	\arrow[from=2-1, to=2-2]
	\arrow[from=2-3, to=2-2]
\end{tikzcd}\]
where all vertical arrows are equivalences. There is an invertible natural transformation
$$ C\star 1\sim (1\costar C^{\circ})^\circ.$$
\end{cor}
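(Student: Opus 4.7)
The plan is to reduce to the case of globular sums, where everything is strict by the preceding theorem, and then transport a known strict computation back to $\ocat$. Both endofunctors $C \mapsto (C\otimes[1])^\circ$ and $C \mapsto C^\circ\otimes[1]$ on $\ocat$ preserve colimits: the Gray cylinder does by hypothesis, and the odd duality $(\uvar)^\circ$ is an involution induced by an automorphism of $\Theta$ extended by colimits, hence also preserves colimits. Since every $\io$-category is a colimit of representables, it is enough to construct the natural diagram, together with the endpoint compatibilities, for $C = a$ a globular sum.

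For such $a$, Theorem~\ref{prop:strict stuff are pushout} gives that $a\otimes[1]$ is strict, so $(a\otimes[1])^\circ$ is strict; applying the same theorem to $a^\circ$ shows that $a^\circ\otimes[1]$ is strict as well. The nerve $\N:\zocat \to \ocat$ is fully faithful and commutes with both the Gray tensor product and the dualities (as stated at the start of the section, where it is recalled that $\pi_0$ preserves the Gray operations, hence so does $\N$ on strict objects). It therefore suffices to produce the natural equivalence and its endpoint compatibilities inside $\zocat$, i.e.\ for the strict Gray tensor product.

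In the strict setting, I would argue via Steiner's theory of augmented directed complexes (already invoked in the proof of Corollary~\ref{cor: a otimes n is strict}). The odd duality on a Steiner complex reverses the sign on odd-degree generators, and the strict Gray tensor product corresponds to the tensor of augmented directed complexes; a direct verification gives a natural isomorphism $\sigma_{A,B}\colon (A\otimes B)^\circ \xrightarrow{\sim} A^\circ \otimes B^\circ$, and applied to $B=[1]$ the identification $[1]^\circ \cong [1]$ via the swap $0\leftrightarrow 1$ yields an isomorphism $(A\otimes[1])^\circ \cong A^\circ\otimes[1]$ that exchanges the endpoint inclusions. Specialising to $A=a$ and applying $\N$ produces the displayed diagram.

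For the second statement, since $(\uvar)^\circ$ preserves colimits and the pushout defining $1\costar C^\circ$ has the shape $1 \leftarrow C^\circ\otimes\{0\}\to C^\circ\otimes[1]$, we compute
\[
(1\costar C^\circ)^\circ \;\simeq\; 1 \coprod_{(C^\circ\otimes\{0\})^\circ} (C^\circ\otimes[1])^\circ \;\simeq\; 1 \coprod_{C\otimes\{1\}} C\otimes[1] \;=\; C\star 1,
\]
where the middle equivalence uses the first part applied to $C^\circ$ together with $(C^\circ)^\circ \simeq C$. The main obstacle is the strict computation in the third paragraph: one must check carefully with Steiner signs that the natural candidate $\sigma_{a,[1]}$ is an isomorphism and that it interchanges $\{0\}$ with $\{1\}$; this is essentially combinatorial bookkeeping, but it carries the substantive content of the corollary.
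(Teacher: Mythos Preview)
Your argument is correct and follows essentially the same route as the paper: reduce by colimit preservation to globular sums, invoke Theorem~\ref{prop:strict stuff are pushout} to land in $\zocat$, and finish with a strict computation. The paper simply cites \cite[proposition~A.22]{Ara_Maltsiniotis_joint_et_tranche} for the strict identity $(A\otimes B)^\circ\cong A^\circ\otimes B^\circ$ (and its consequence for cones), whereas you sketch a direct Steiner-theoretic verification; these are the same content. Your pushout derivation of the cone statement from the cylinder statement is a clean way to organize the second half and is implicit in the paper's appeal to the same reference.

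Two minor points. First, $(\uvar)^\circ$ is the \emph{full} duality in this paper's conventions, not the odd one; the argument is unaffected since all dualities arise from automorphisms of $\Theta$ and hence preserve colimits and strict objects. Second, your sentence about $\N$ commuting with the Gray tensor should be phrased more carefully: what you actually use (and what suffices) is that for a globular sum $a$ the unit $a\otimes[1]\to \N\pi_0(a\otimes[1])$ is an equivalence, which is exactly the strictness statement; the general claim that $\N$ preserves Gray operations is Theorem~\ref{theo:strictness}, proved only later.
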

\begin{proof}
As these functors preserve colimits, we can define this equivalence on representables. As cylinders (resp. cone) (resp. $\circ$-cone) of representables are strict according to theorem \ref{prop:strict stuff are pushout}, and as $(\uvar)^\circ$ preserves strict objects, it is enough to show these equivalences in $\zocat$, where it follows from \cite[proposition A.22]{Ara_Maltsiniotis_joint_et_tranche}.
\end{proof}

\begin{cor}
\label{cor:ominus et op}
Let $A$ and $B$ two $\io$-categories. There is an equivalence  
$$(A\ominus B)^\circ \sim A^\circ\ominus B^\circ$$
natural in $A$ and $B$.
\end{cor}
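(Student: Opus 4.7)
My plan is to mimic the strategy of Corollary \ref{cor:otimes et op}: reduce the identity to a computation on representables via cocontinuity, verify that both sides are strict, and then finish via the known compatibility of the strict Gray operations with the $\circ$-duality (appealing to \cite{Ara_Maltsiniotis_joint_et_tranche} as before).

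First, since $(\uvar)^\circ$ is an equivalence and $\uvar \ominus \uvar$ preserves colimits in both variables, both functors $(A,B)\mapsto (A\ominus B)^\circ$ and $(A,B)\mapsto A^\circ\ominus B^\circ$ are bicocontinuous. As $\iPsh{\Theta}_\W=\ocat$ is generated under colimits by globular sums, it is enough to produce a natural equivalence $(a\ominus[b,n])^\circ \sim a^\circ\ominus[b,n]^\circ$ for globular sums $a$ and elements $[b,n]\in\Delta[\Theta]$, and then extend by the universal property.

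Next I would unfold the defining pushout of $a\ominus[b,n]$. Using $m_1=\mathrm{id}$ it reads
\[
a\ominus[b,n] \;\simeq\; \Big(\coprod_{k\leq n} a\Big) \coprod_{\coprod_{k\leq n} m_b(a\otimes\{k\})} m_b(a\otimes[n]).
\]
By Corollary \ref{cor: a otimes n is strict}, $a\otimes[n]$ is strict, and since $m_b$ is the cocontinuous extension of $[a',n']\mapsto[a'\times b,n']$ it sends globular sums to strict objects (products of strict objects are strict, and the suspension $[\uvar,n]$ preserves strictness by an iterate of Proposition \ref{prop:suspension preserves stricte}). Arguing as in Theorem \ref{prop:strict stuff are pushout} to transport strictness across the pushout (the left leg $\coprod_k m_b(a\otimes\{k\})\to m_b(a\otimes[n])$ assembling into a Gray deformation retract in the style of Section \ref{subsection:Gray deformation retract}), I would conclude that $a\ominus[b,n]$ is strict.

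Having both $a\ominus[b,n]$ and $a^\circ\ominus[b,n]^\circ$ strict, the target equivalence may now be checked in $\zocat$. Since $(\uvar)^\circ$ commutes with $\N$ and $\pi_0$, it suffices to verify on the strict image, where $\ominus$ is built from cartesian products, suspensions, and Gray cylinders. The compatibility $(C\otimes[n])^\circ\simeq C^\circ\otimes[n]^{op}$ on strict $\omega$-categories extends Corollary \ref{cor:otimes et op}, and $m_b$ interacts with duality via $(m_bX)^\circ\simeq m_{b^\circ}(X^\circ)$ (again checkable on representables, where it reduces to the fact that $[\uvar,n]$ and $\times$ commute with $\circ$ on $\zocat$). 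Combining these, the pushout diagram for $a\ominus[b,n]$ is sent by $(\uvar)^\circ$ to the pushout diagram for $a^\circ\ominus[b,n]^\circ$, yielding the required equivalence.

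The main obstacle will be the strictness step: the $\ominus$-pushout is not immediately covered by Theorem \ref{prop:strict stuff are pushout}, and the cleanest route is to exhibit a Gray deformation retract structure on $\coprod_k m_b(a\otimes\{k\})\to m_b(a\otimes[n])$ compatible with the results of Section \ref{subsection:Gray deformation retract}, and then invoke the strictness-preservation lemmas (in particular Lemma \ref{lemma:strictification2}) in the same spirit as the proof of Proposition \ref{prop:strict stuff are stable under coproduc with cylinder}.
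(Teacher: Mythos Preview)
Your reduction to representables and your identification of the two key ingredients---the equivalence $(m_b X)^\circ \simeq m_{b^\circ}(X^\circ)$ and the compatibility $(a\otimes[n])^\circ \simeq a^\circ\otimes[n]^{op}$---are exactly right, and in fact your final paragraph already contains the complete argument. The problem is the detour you take in the middle: you propose to first establish that $a\ominus[b,n]$ is strict, pass to $\zocat$, and compare pushouts there. This is unnecessary. The functor $(\uvar)^\circ$ is an equivalence of $\ocat$, so it already preserves the defining pushout
\[
a\ominus[b,n] \;\simeq\; \Big(\coprod_{k\leq n} a\Big) \amalg_{\coprod_k m_b(a)} m_b(a\otimes[n])
\]
in $\ocat$; one can dualize the diagram directly without any strictness hypothesis on the colimit. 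The paper does precisely this: it invokes strictness only for the single term $a\otimes[n]$ (via Corollary~\ref{cor: a otimes n is strict}), so that \cite[Proposition~A.22]{Ara_Maltsiniotis_joint_et_tranche} yields $(a\otimes[n])^\circ \simeq a^\circ\otimes[n]$, and then combines this with $(m_b X)^\circ \simeq m_{b^\circ}(X^\circ)$ to identify the dualized pushout with the defining pushout for $a^\circ\ominus[b,n]^\circ$.

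Consequently the ``main obstacle'' you flag---strictness of the full $\ominus$-pushout, together with the Gray deformation retract argument you sketch for it---never needs to be faced. That sketch is also not obviously adequate as stated: Lemma~\ref{lemma:strictification2} is tailored to morphisms living over $i_n^\alpha:\Db_n\to\Db_{n+1}$, and it is not clear how to fit $\coprod_k m_b(a)\to m_b(a\otimes[n])$ into that framework without substantial additional work. Simply delete the strictness-of-$\ominus$ paragraph and keep your last one; that is the paper's proof.
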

\begin{proof}
It is sufficient to construct the equivalence when $A$ is a globular sum $a$ and $B$ is of shape $[b,n]$. 
Remark first that the corollary \ref{cor: a otimes n is strict} implies that $(a\otimes[n])^\circ$ and $a^\circ\otimes[n]^\circ$ are strict objects. The proposition A.22 of \cite{Ara_Maltsiniotis_joint_et_tranche}  then implies that these two objects are isomorphic.  The results then directly follows from the definition of the operation $\ominus$ and from the equivalence $(m_b(\uvar))^\circ\sim m_{b^\circ}((\uvar)^\circ)$.
\end{proof}

\begin{cor}
\label{cor:characterisaiont of Gray operation}
Let $F$ be an endofunctor of $\ocat$ such that the induced functor $\ocat\to \ocat_{F(\emptyset)/}$ is colimit preserving, and $\psi$ is an invertible natural transformation between $\Gb^{+}\to \ocat\xrightarrow{F}\ocat$ and $\Gb^{+}\to \ocat\xrightarrow{H}\ocat$ where $\Gb^{+}$ is obtained from $\Gb$ by adding an initial element $\{\emptyset\}$, and $H$ is either the Gray cylinder, the Gray cone, the Gray $\circ$-cone or an iterated suspension.

Then, the natural transformation $\psi$ can be extended to an invertible natural transformation between $F$ and $H$.
\end{cor}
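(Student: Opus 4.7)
The plan is to extend $\psi$ in two stages: from $\Gb^+$ to $\Theta$ via the spine colimit presentation of globular sums, and then from $\Theta$ to $\ocat$ by density. First, the equivalence $\psi_\emptyset:F(\emptyset)\sim H(\emptyset)$ lets me view both $F$ and $H$ as colimit-preserving functors $\ocat\to \ocat_{F(\emptyset)/}$. For $F$ this is the hypothesis; for $H$ I would verify it case by case: the Gray cylinder has $\emptyset\otimes[1]\sim \emptyset$ and is colimit preserving on $\ocat$ itself; the Gray cone and Gray $\circ$-cone have $H(\emptyset)\sim 1$ and are pointed colimit preserving by their defining pushouts; an iterated suspension has $H(\emptyset)\sim 1\amalg 1$ (or, for deeper suspensions, the corresponding boundary) and is pointed colimit preserving by iterating the suspension pushout \eqref{eq:liens entre Gray cylindre et suspension}.

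Second, for each globular sum $a$, the spine morphism $\Sp_a\to a$ lies in $\W$, so $a\sim \colim \Sp_a$ in $\ocat$; moreover, the diagram $\Sp_a$ takes values in globes with only globular transition maps, so it factors through $\Gb^+$. Colimit preservation of $F$ and $H$ then gives $F(a)\sim \colim_{\Sp_a} F|_{\Gb^+}$ and $H(a)\sim \colim_{\Sp_a} H|_{\Gb^+}$, and I define $\psi_a:=\colim_{\Sp_a}\psi$, which is an equivalence because equivalences are stable under colimits. Once the assignment $a\mapsto\psi_a$ is shown to be natural across all of $\Theta$, the density of $\Theta$ in $\ocat$ (every $C$ is the special colimit $\colim_{\Theta_{/C}}$) yields the final extension $\psi_C:=\colim_{\Theta_{/C}}\psi_a$, with components automatically equivalences.

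The main obstacle is establishing naturality of $a\mapsto\psi_a$ on the \emph{algebraic} $\Theta$-morphisms such as $\Ib_{n+1}:\Db_{n+1}\to \Db_n$ and $\triangledown_{k,n}:\Db_n\to \Db_n\coprod_{\Db_k}\Db_n$: these do not belong to $\Gb^+$, so naturality for them is not part of the given data $\psi$. For globular morphisms between globular sums, naturality reduces immediately to naturality of $\psi$ on $\Gb^+$ via the compatibility of spines. For the algebraic ones, my plan is to use the algebraic-globular factorization system of proposition \ref{prop:algebraic ortho to globular} to reduce to the two generating families $\Ib_{n+1}$ and $\triangledown_{k,n}$, and then force the commuting squares by colimit preservation of $F$ and $H$ applied to $\Db_n\coprod_{\Db_k}\Db_n$ (whose cells are globes connected by globular morphisms, hence covered by $\psi$) together with the universal property identifying $\Ib_{n+1}$ and $\triangledown_{k,n}$ on both sides. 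Once this cell-level compatibility is in hand, the remaining extensions through the two special-colimit formulas are formal.
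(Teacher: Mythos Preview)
Your two-stage extension plan is the right shape, but the first stage has a real gap. The issue is exactly where you flag it: promoting the family $\{\psi_a\}_{a\in\Theta}$ to an honest natural transformation on $\Theta$ requires controlling what $F$ and $H$ do to the \emph{algebraic} morphisms $\Ib_{n+1}$ and $\triangledown_{k,n}$, and neither colimit preservation nor any universal property determines $F(\Ib_{n+1})$ or $F(\triangledown_{k,n})$ from $F|_{\Gb^+}$. The morphism $\triangledown_{k,n}:\Db_n\to\Db_n\coprod_{\Db_k}\Db_n$ is not a structure map of the pushout; it is a section of the fold map, and there is no reason $F(\triangledown_{k,n})$ should match $H(\triangledown_{k,n})$ under $\psi$. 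Worse, even if you could verify the square for each generating algebraic morphism, in an $(\infty,1)$-category naturality is structure, not a property: a family of commuting squares on generators does not assemble into a natural transformation without higher coherence data you have no way to supply.

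The paper's proof avoids this entirely by passing through the strict $1$-category $\zocat$. It uses the unit $F_{|\Theta^+}\to \N\pi_0 F_{|\Theta^+}$, invokes a strict uniqueness result (theorem \ref{theo:appendince unicity of operation}) to obtain an invertible natural transformation $\pi_0 F_{|\Theta^+}\to \pi_0 H_{|\Theta^+}$ in the $1$-categorical world, and then uses the strictness of $H$ on globular sums (propositions \ref{prop:suspension preserves stricte}, \ref{prop:strict stuff are stable under Gray cone}, \ref{prop:strict stuff are stable under coproduc with cylinder}) to identify $\N\pi_0 H_{|\Theta^+}\simeq H_{|\Theta^+}$. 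The point is that in $\zocat$ naturality \emph{is} a property checkable on generators, so the strict theorem can be proved by elementary means; the composite $F_{|\Theta^+}\to \N\pi_0 F_{|\Theta^+}\to \N\pi_0 H_{|\Theta^+}\simeq H_{|\Theta^+}$ is then a genuine natural transformation, and your second stage (extend by colimits, check equivalences on globes) goes through. Your argument would be repaired by replacing the attempted direct construction on $\Theta$ with this strictification detour.
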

\begin{proof}
We denote by $\Theta^+$ the category obtained from $\Theta$ by adding an initial element $\emptyset$. 
Remark first that the theorem \ref{theo:appendince unicity of operation} implies that we have an invertible natural transformation 
$$\pi_0 \circ F_{|\Theta^+}\to \pi_0 \circ H_{|\Theta^+}.$$
The propositions \ref{prop:strict stuff are stable under Gray cone}, \ref{prop:strict stuff are stable under coproduc with cylinder} and \ref{prop:suspension preserves stricte} imply that the canonical morphism 
$$H_{|\Theta^+}\to \N\circ \pi_0\circ 	H_{|\Theta^+}$$ is an equivalence. The two previous morphisms then induce a comparison:
$$F_{|\Theta^+}\to \N\circ \pi_0 \circ F_{|\Theta^+}\to H_{|\Theta^+}$$
By extension by colimits, this produces a natural transformation $\phi:F\to H$ extending $\psi$. The full sub $\infty$-groupoid of objects $C$ such that $\phi_C:F(C)\to H(C)$ is an equivalence is closed by colimits, contains globes, and so is the maximal sub 	$\infty$-groupoid.
\end{proof}
The previous corollary implies that the equations \eqref{eq:eq for cylinder}, \eqref{eq:eq for Gray cone} and \eqref{eq:eq for cojoin} characterize respectively the Gray cylinder, the Gray cone, and the Gray $\circ$-cone. 

\begin{cor}
\label{cor:crushing of Gray tensor is identitye}
The colimit preserving endofunctor $F:\ocat\to \ocat$, sending $[a,n]$ to the colimit of the span
$$\coprod_{k\leq n}\{k\}\leftarrow \coprod_{k\leq n}a\otimes\{k\}\to a\otimes[n]$$
is equivalent to the identity.
\end{cor}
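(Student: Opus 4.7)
The plan is to apply corollary \ref{cor:characterisaiont of Gray operation} with $H$ the identity endofunctor of $\ocat$. Although the identity is not literally among the operations listed there, the proof of that corollary applies verbatim here: its only essential inputs are colimit preservation of $H$, strictness of $H|_{\Theta^+}$, and the unicity at the level of $\pi_0$ provided by theorem \ref{theo:appendince unicity of operation}. All three hold trivially for $H = \mathrm{id}$, since globular sums are already strict objects of $\ocat$.

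The hypothesis of \ref{cor:characterisaiont of Gray operation} on $F$ is automatic: $F$ is colimit-preserving by construction, and $F(\emptyset) = \emptyset$ since $F$ sends the initial object to the initial object. Hence the induced functor $\ocat \to \ocat_{F(\emptyset)/} = \ocat$ preserves colimits.

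The key step is to exhibit an invertible natural transformation $\psi \colon F|_{\Gb^+} \simeq \mathrm{id}|_{\Gb^+}$. For $\Db_0$ the identification is trivial. For $n \geq 1$, use the inductive description $\Db_n = [\Db_{n-1}, 1]$; the definition of $F$ then gives
\[
F(\Db_n) \;=\; (\{0\} \sqcup \{1\}) \coprod_{\Db_{n-1} \otimes \{0\} \sqcup \Db_{n-1} \otimes \{1\}} \big(\Db_{n-1} \otimes [1]\big),
\]
which is precisely the cocartesian square \eqref{eq:liens entre Gray cylindre et suspension} defining $[\Db_{n-1}, 1] = \Db_n$. The universal property of the pushout yields a canonical equivalence $F(\Db_n) \simeq \Db_n$, and naturality in the globular face maps $\Db_{n-1} \to \Db_n$ and in the unique map $\emptyset \to \Db_n$ follows from the functoriality of the Gray cylinder $\uvar \otimes [1]$, of the suspension $[\uvar, 1]$, and of the square \eqref{eq:liens entre Gray cylindre et suspension} in its input.

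The expected main obstacle is the coherent passage from this natural equivalence on the small subcategory $\Gb^+$ to a natural equivalence of functors on all of $\ocat$. This is exactly what the final paragraph of the proof of \ref{cor:characterisaiont of Gray operation} handles: one extends $\psi$ to $\Theta^+$ via the strictification $H|_{\Theta^+} \simeq \N \circ \pi_0 \circ H|_{\Theta^+}$ together with the unicity at the strict level, and then extends by colimit preservation to all of $\ocat$. Since the full subcategory on which the resulting natural transformation $F \to \mathrm{id}$ is an equivalence is closed under colimits and contains globes, it exhausts $\ocat$, yielding $F \simeq \mathrm{id}$.
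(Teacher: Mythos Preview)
Your proof is correct and follows essentially the same route as the paper. Two minor remarks: first, the paper observes that the identity \emph{is} the $0$-iterated suspension, so it is literally among the operations $H$ covered by corollary \ref{cor:characterisaiont of Gray operation}---your detour through re-running its proof is unnecessary. Second, for the identification $F|_{\Gb^+}\simeq\mathrm{id}|_{\Gb^+}$ the paper cites proposition \ref{prop:fibers of 1 star a}, whose cocartesian squares assemble precisely to the square \eqref{eq:liens entre Gray cylindre et suspension} you invoke directly; the content is the same.
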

\begin{proof}
The proposition \ref{prop:fibers of 1 star a} implies that the restriction of $F$ to globes is equivalent to the restriction of the identity to globes. As the identity is the $0$-iterated suspension, we can apply corollary \ref{cor:characterisaiont of Gray operation}.
\end{proof}

The last corollary implies that for any $\io$-category $C$ and any globular sum $a$, the simplicial $\infty$-groupoid
$$\begin{array}{rcl}
\Delta^{op}&\to &\igrd\\
~[n]~&\mapsto &\Hom([a,n],C)
\end{array} $$
is a $\iun$-category.

\begin{theorem}
\label{theo:formula between pullback of slice and tensor}
Let $C$ be an $\io$-category. The two following canonical squares are cartesian:
\[\begin{tikzcd}
	1 & {1\costar C} & 1 & {C\star 1} \\
	{\{0\}} & {[C,1]} & {\{1\}} & {[C,1]}
	\arrow[from=1-1, to=1-2]
	\arrow[from=2-1, to=2-2]
	\arrow[from=1-1, to=2-1]
	\arrow[from=1-2, to=2-2]
	\arrow[from=1-3, to=1-4]
	\arrow[from=2-3, to=2-4]
	\arrow[from=1-3, to=2-3]
	\arrow[from=1-4, to=2-4]
\end{tikzcd}\]
The five squares appearing in the following canonical diagram are both cartesian and cocartesian:
\[\begin{tikzcd}
	& {C\otimes\{0\}} & 1 \\
	{C\otimes\{1\}} & {C\otimes[1]} & {C\star 1} \\
	1 & {1\costar C} & {[C,1]}
	\arrow[from=2-3, to=3-3]
	\arrow[from=3-2, to=3-3]
	\arrow[from=2-2, to=3-2]
	\arrow[from=2-2, to=2-3]
	\arrow[from=1-2, to=1-3]
	\arrow[from=1-3, to=2-3]
	\arrow[from=1-2, to=2-2]
	\arrow[from=2-1, to=2-2]
	\arrow[from=3-1, to=3-2]
	\arrow[from=2-1, to=3-1]
\end{tikzcd}\]
\end{theorem}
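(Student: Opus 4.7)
The strategy is to reduce to proposition \ref{prop:fibers of 1 star a}, which establishes the identical statement for globular sums, via a colimit argument: writing $C = \colim_{a \in \Theta_{/C}} a$ as a special colimit over the category of globular elements and exploiting the colimit-preservation properties of the Gray operations together with proposition \ref{prop:special colimit}.

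Cocartesianness of the five squares follows directly from the definitions: the bottom-left and top-right squares are the defining pushouts of $1\costar C$ and $C\star 1$; the bottom-right square follows by pasting with the cocartesian square \eqref{eq:liens entre Gray cylindre et suspension} defining $[C,1]$; the remaining two larger squares (each composed of two adjacent 2-by-2 pieces) then follow by standard pasting of cocartesian squares.

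For cartesianness, proposition \ref{prop:fibers of 1 star a} provides at each $a \in \Theta_{/C}$ the two cartesian squares and the cartesian 3-by-3 diagram. The vertices of the 3-by-3 for $C$ are obtained as special colimits of the vertex diagrams indexed by $\Theta_{/C}$, and the natural transformations between these diagrams are cartesian at each $a$. The cited corollaries \ref{cor:cor of the past10} and \ref{cor:cor of the past3} assert that pullback along $a\star 1 \to [a,1]$ and $1\costar a \to [a,1]$ preserves colimits in $\ocat$. Combined with proposition \ref{prop:special colimit} (which ensures the relevant colimits are special, so computed in $\iPsh{\Theta}$ where pullbacks are fiberwise), this transfers the cartesian property from each $a$ to $C$: each fiber of $1\costar C \to [C,1]$ and of $C\star 1 \to [C,1]$ is computed as a special colimit of the corresponding fibers for globular sums. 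An alternative route is to apply lemma \ref{lemma:an other canonical square} pointwise in $\Theta_{/C}$ and take colimits to directly obtain the squares appearing in the 3-by-3 for $C$.

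The main technical step is verifying that the colimit computation interacts compatibly with the pointed structure of the Gray operations: the cone points of $1\costar C$ and $C\star 1$ and the distinguished objects of $[C,1]$ must be identified consistently across the globular sums in $\Theta_{/C}$, so that the colimit of the cartesian squares reproduces the expected fibers (namely $1$) rather than the classifying space of the indexing category. This is ensured by the specific form of the corollaries cited, which handle precisely the pullback-preservation needed to make the reduction to globular sums go through.
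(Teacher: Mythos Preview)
Your outline has the right ingredients but a real gap in how you pass from globular sums to $C$. The core problem is that ``take the colimit over $\Theta_{/C}$ of the cartesian squares from proposition \ref{prop:fibers of 1 star a}'' does not by itself yield cartesian squares: colimits of pullback squares are not pullback squares in general, and proposition \ref{prop:special colimit} does not address this---it only tells you when a colimit is special, not when a colimit of cartesian squares is cartesian. Your appeal to ``pullbacks in $\iPsh{\Theta}$ are fiberwise'' does not help either, since the base $[a,1]$ is varying with $a$.

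The paper's proof avoids this by working over a \emph{fixed} base $[C,1]$. For the fibre squares it does not take a colimit at all: it chooses a single $a\to C$ and invokes corollary \ref{cor:cor of the past10}, which gives directly that
\[
\begin{tikzcd}
C\coprod_a a\star 1 \ar[r]\ar[d] & C\star 1 \ar[d]\\
{[a,1]} \ar[r] & {[C,1]}
\end{tikzcd}
\]
is cartesian (this is a substantive fibration-theoretic statement, not merely colimit preservation along $a\star 1\to [a,1]$ as you describe it). Pasting this with the globular-sum case yields the fibres of $C\star 1\to[C,1]$ over $\{0\}$ and $\{1\}$ immediately. For the middle square the paper does use a colimit argument, but again over the fixed base: it first shows each
\[
\begin{tikzcd}
C\coprod_a a\otimes[1] \ar[r]\ar[d] & C\star 1 \ar[d]\\
1\costar a \ar[r] & {[C,1]}
\end{tikzcd}
\]
is cartesian (via lemma \ref{lemma:an other canonical square} composed with the square above), and then uses corollary \ref{cor:cor of the past3}---that pullback along $C\star 1\to[C,1]$ preserves colimits---to pass to the colimit over $a$. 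The point is that the right-hand column is constant in $a$, so this is genuinely a statement about one pullback functor commuting with one colimit, which the corollary supplies. Your version, with both columns varying, would need a different justification that you have not provided.
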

\begin{proof}
The five squares of the second diagram are cocartesian by construction.

If $C$ is empty, all the considered squares are cartesian. We can then suppose that there exists a globular sum $a$, and a morphism $a\to C$. We claim that the two following squares are cartesian.
\[\begin{tikzcd}
	{C\coprod 1} & {C\coprod_a a\star 1} & {C\star 1} \\
	{\{0\}\coprod \{1\}} & {[a,1]} & {[C,1]}
	\arrow[from=1-1, to=2-1]
	\arrow[from=1-3, to=2-3]
	\arrow[from=1-1, to=1-2]
	\arrow[from=1-2, to=1-3]
	\arrow[from=2-1, to=2-2]
	\arrow[from=2-2, to=2-3]
	\arrow[from=1-2, to=2-2]
\end{tikzcd}\]
The cartesianess of the left square is a consequence of proposition \ref{prop:fibers of 1 star a} and of the fact that 
$\{0\}\to [a,1]$ and $\{1\}\to [a,1]$ are discrete Conduché functors and so pullback along them preserves colimits. The cartesianess of the right square is a consequence of the preservation of Gray operations by the full duality stated in corollary \ref{cor:otimes et op}, and of the cartesian square provided by corollary \ref{cor:cor of the past10}.
The two following squares are then cartesian:
\[\begin{tikzcd}
	1 & C\star1 & C & C\star1 \\
	{\{1\}} & {[C,1]} & {\{0\}} & {[C,1]}
	\arrow[from=1-2, to=2-2]
	\arrow[from=1-3, to=2-3]
	\arrow[from=2-3, to=2-4]
	\arrow[from=1-4, to=2-4]
	\arrow[from=1-3, to=1-4]
	\arrow[from=1-1, to=1-2]
	\arrow[from=1-1, to=2-1]
	\arrow[from=2-1, to=2-2]
\end{tikzcd}\]
As the duality $(\uvar)^{\circ}$ preserves limits, and combined with corollary \ref{cor:otimes et op}, this implies that the two following squares are also cartesian:
\[\begin{tikzcd}
	1 & {1\costar C} & C & {1\costar C} \\
	{\{0\}} & {[C,1]} & {\{1\}} & {[C,1]}
	\arrow[from=1-2, to=2-2]
	\arrow[from=1-3, to=2-3]
	\arrow[from=2-3, to=2-4]
	\arrow[from=1-4, to=2-4]
	\arrow[from=1-3, to=1-4]
	\arrow[from=1-1, to=1-2]
	\arrow[from=1-1, to=2-1]
	\arrow[from=2-1, to=2-2]
\end{tikzcd}\]

By stability by right cancellation of cartesian square, it remains to show that the square
\[\begin{tikzcd}
	{C\otimes[1]} & {C\star 1} \\
	{1\costar C} & {[C,1]}
	\arrow[from=1-1, to=1-2]
	\arrow[from=2-1, to=2-2]
	\arrow[from=1-2, to=2-2]
	\arrow[from=1-1, to=2-1]
\end{tikzcd}\]
is cartesian. Consider the two following squares
\[\begin{tikzcd}
	{C\coprod_a a\otimes[1]} & {C\coprod_aa\star 1} & {C\star 1} \\
	{1\costar a} & {[a,1]} & {[C,1]}
	\arrow[from=1-1, to=2-1]
	\arrow[from=1-3, to=2-3]
	\arrow[from=1-2, to=2-2]
	\arrow[from=2-2, to=2-3]
	\arrow[from=2-1, to=2-2]
	\arrow[from=1-1, to=1-2]
	\arrow[from=1-2, to=1-3]
\end{tikzcd}\]	
We already demonstrate that the right one is cartesian and the lemma \ref{lemma:an other canonical square} states that the left one is also cartesian. The outer square is then cartesian.

Using that pulling back along $C\star 1\to [C,1]$ preserves colimits as shown in corollary \ref{cor:cor of the past3}, and the fact that $1\costar C$ (resp. $C\otimes[1]$) is the colimit of all the $1\costar a$ (resp. $a\otimes[1]$) for $a$ ranging over the morphisms $a\to C$, this concludes the proof.
\end{proof}

\begin{theorem}
\label{theo:strictness}
If $C$ is strict, so are $C\star 1$, $1\costar C$ and $C\otimes [1]$.
\end{theorem}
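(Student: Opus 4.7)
The plan is to bootstrap theorem \ref{prop:strict stuff are pushout} from the globular-sum base case to an arbitrary strict $C$. First, corollary \ref{cor:otimes et op} provides a natural equivalence $(C\star 1)^\circ \sim 1\costar C^\circ$, and the full duality $(\uvar)^\circ$ preserves strict objects (it is induced from the corresponding duality on $\zocat$); the costar and star statements are therefore equivalent, so I would concentrate on $C\star 1$ and $C\otimes[1]$.

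Since $C$ is strict, viewing $\pi_0 C$ as a set-valued presheaf on the elegant Reedy category $\Theta$, lemma \ref{lemma:colimit computed in set presheaves}(4) writes $C$ as the special colimit $C \sim \colim_{a:\Theta_{/C}} a$ in $\ocat$. Applying the colimit-preserving functor $\uvar\otimes[1]$ and distributing a pushout over the colimit yields
\[
C\otimes[1] \;\sim\; \colim_{a\to C}\bigl( C\coprod_a (a\otimes[1]) \bigr),
\]
and analogously $C\star 1 \sim \colim_{a\to C}(C\coprod_a(a\star 1))$. By theorem \ref{prop:strict stuff are pushout}, each object appearing in these diagrams is strict; it remains only to see that the colimit itself is strict. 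I would deduce this from lemma \ref{lemma:colimit computed in set presheaves}, by checking that the transition maps $C\coprod_{a'}(a'\otimes[1]) \to C\coprod_a(a\otimes[1])$ induced by sub-globular-sum inclusions $a'\hookrightarrow a$ are monomorphisms, so that the indexing diagram is Reedy cofibrant and the colimit computed in $\iPsh{\Theta}$ agrees with the one computed in $\Psh{\Theta}$ (which is manifestly strict). Once $C\otimes[1]$ is known to be strict, theorem \ref{theo:formula between pullback of slice and tensor} offers an alternative route for the join cases through the cocartesian description $C\star 1 = 1\coprod_{C\otimes\{0\}}C\otimes[1]$ and its analogue for $1\costar C$; or dually through the cartesian description $C\otimes[1] \sim C\star 1\times_{[C,1]}1\costar C$, which would even exhibit $C\otimes[1]$ as a limit of strict objects since $[C,1]$ is strict by proposition \ref{prop:suspension preserves stricte} and $\N$, being a right adjoint, preserves limits.

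The main obstacle is making the Reedy-cofibrancy argument rigorous, since $\Theta_{/C}$ is not itself a Reedy category. The natural workaround is to replace the full diagram by a skeletal filtration of $C$ provided by the elegant Reedy structure on $\Theta$: write $C \sim \colim_n C_n$ where each $C_n\to C_{n+1}$ is obtained as a pushout of a coproduct of globular-sum boundary inclusions, then iterate theorem \ref{prop:strict stuff are pushout} cell-by-cell (each stage $C_n\otimes[1]$ being built from $C_{n-1}\otimes[1]$ by pushouts of the form controlled by the previous theorem) and invoke the preservation of transfinite compositions from lemma \ref{lemma:colimit computed in set presheaves}(3) to conclude that the transfinite colimit remains strict.
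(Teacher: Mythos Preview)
Your colimit formula $C\otimes[1]\sim\operatorname*{colim}_{a\to C}(C\coprod_a a\otimes[1])$ is not correct. Distributing the pushout over the colimit gives
\[
\operatorname*{colim}_{a\to C}(C\coprod_a a\otimes[1])\;\sim\;\bigl(\operatorname*{colim}_{\Theta_{/C}} C\bigr)\coprod_{C} C\otimes[1],
\]
and the colimit of the \emph{constant} diagram at $C$ over $\Theta_{/C}$ is $C$ only when $\Theta_{/C}$ is weakly contractible. It need not be: for $C=[0]\amalg[0]$ one has $\Theta_{/C}\simeq\Theta\amalg\Theta$, with classifying space two points. In that example a direct computation shows your colimit has six objects while $C\otimes[1]=[1]\amalg[1]$ has four. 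The same defect affects the star version.

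The skeletal-filtration workaround does not close the gap either. The filtration builds $C_{n}$ from $C_{n-1}$ by pushouts along boundary inclusions $\partial a\hookrightarrow a$, so $C_n\otimes[1]$ is a pushout of $C_{n-1}\otimes[1]$ along $\partial a\otimes[1]\to a\otimes[1]$. Theorem~\ref{prop:strict stuff are pushout}, however, only controls pushouts along $a\otimes\{0\}\to a\otimes[1]$ with $a$ a globular sum; it says nothing about pushing out along $\partial a\otimes[1]$, and $\partial a$ is not a globular sum. So the inductive step does not reduce to that theorem.

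The paper takes an entirely different route and avoids colimits. Using the slice--join equivalence (theorem~\ref{theo:equivalence betwen slice and join}), one identifies $1\costar C$ with the slice $[C,1]_{0/}$. Since $[C,1]$ is strict by proposition~\ref{prop:suspension preserves stricte} and slices of strict objects are strict (because $\pi_0$ commutes with $\uvar\star 1$, so $\N$ commutes with slicing), $1\costar C$ is strict. Then $C\star 1\sim(1\costar C^{\circ})^{\circ}$ is strict by duality, and finally $C\otimes[1]$ is strict as the pullback $C\star 1\times_{[C,1]}1\costar C$ from theorem~\ref{theo:formula between pullback of slice and tensor}---precisely the cartesian description you mention, but now with the strictness of the cones established first.
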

\begin{proof}
Forgetting the marking, the theorem \ref{theo:equivalence betwen slice and join} implies that $1\costar C$ is equivalent to $[C,1]_{0/}$ which is strict as $[C,1]$ is according to proposition \ref{prop:suspension preserves stricte}. The second assertion comes from the fact that the full duality preserves $\zo$-categories and that $1\costar C^{\circ} \sim (C\star 1)^{\circ}$.

The theorem \ref{theo:formula between pullback of slice and tensor} implies that we have a cartesian square
\[\begin{tikzcd}
	{C\otimes [1]} & {1\costar C} \\
	{C\star 1} & {[C,1]}
	\arrow[from=1-1, to=2-1]
	\arrow[from=1-1, to=1-2]
	\arrow[from=1-2, to=2-2]
	\arrow[from=2-1, to=2-2]
\end{tikzcd}\]
 As strict objects are stable under pullbacks, this concludes the proof.
\end{proof}

%
%

%
%
%
%
%
%
%
%
%
%

\chapter{The $\iun$-category of marked $\io$-categories}
\label{chapter:chapter the 1 category of marked categories}

\minitoc
\vspace{2cm}

This chapter is dedicated to the study of \textit{marked $\io$-categories}, which are pairs $(C,tC)$, where $C$ is an $\io$-category and $tC:=(tC_n)_{n>0}$ is a sequence of full sub $\infty$-groupoids of $C_n$ that include identities and are stable under composition and whiskering with (possibly unmarked) cells of lower dimensions. There are two canonical ways to mark an $\io$-category $C$. In the first, denoted by $C^\flat$, we mark as little as possible. In the second, denoted by $C^\sharp$, we mark everything.

The first section of the chapter defines these objects and establishes analogs of many results from section \ref{chapter:Basica construciton} to this new framework. In particular, the \textit{marked Gray cylinder} $\uvar\otimes [1]^\sharp$ is defined. If $A$ is an $\io$-category, the underlying $\io$-category of $A^\sharp\otimes[1]^\sharp$ is $A\times [1]$, and the underlying $\io$-category of $A^\flat\otimes[1]^\sharp$ is $A\otimes[1]$. By varying the marking, and at the level of underlying $\io$-categories, we "continuously" move from the cartesian product with the directed interval to the Gray tensor product with the directed interval.

The motivation for introducing markings comes from the notion of \textit{left (and right) cartesian fibrations}. A left cartesian fibration is a morphism between marked $\io$-categories such that only the marked cells of the codomain have cartesian lifting, and the marked cells of the domain correspond exactly to such cartesian lifting. For example, a left cartesian fibration $X\to A^\sharp$ is just a "usual" left cartesian fibration where we have marked the cartesian lifts of the domain, and every morphism $C^\flat \to D^\flat$ is a left cartesian fibration. This shows that marking plays a very different role here than in the case of marked simplicial sets, where it was there to represent (weak) invertibility. For example, if we had wanted to carry out this work in a complicial-like model category, we would have had to consider bimarked simplicial sets.

After defining and enumerating the stability properties enjoyed by this class of left (and right) cartesian fibration, we give several characterizations of this notion in theorem \ref{theo:other characterisation of left caresian fibration}. 

The more general subclass of left cartesian fibrations that still behaves well is the class of \textit{classified left cartesian fibrations}. 
This corresponds to left cartesian fibrations $X\to A$ such that there exists a cartesian square:
\[\begin{tikzcd}
	X & Y \\
	A & {A^\sharp}
	\arrow[from=1-1, to=2-1]
	\arrow[from=2-1, to=2-2]
	\arrow[from=1-1, to=1-2]
	\arrow[from=1-2, to=2-2]
	\arrow["\lrcorner"{anchor=center, pos=0.125}, draw=none, from=1-1, to=2-2]
\end{tikzcd}\]
 where the right vertical morphism is a left cartesian fibration and $A^\sharp$ is obtained from $A$ by marking all cells. In the second section, we prove the following fundamental result:

\begin{itheorem}[\ref{theo:pullback along un marked cartesian fibration}]
Let $p:X\to A$ be a classified left cartesian fibration. Then the functor $p^*:\ocatm_{/A}\to \ocatm_{/X}$ preserves colimits.
\end{itheorem}

The third subsection is devoted to the proof of the following theorem
\begin{itheorem}[\ref{theo:left cart stable by colimit}]
Let $A$ be an $\io$-category and $F:I\to \ocatm_{/A^\sharp}$ be a diagram that is pointwise a left cartesian fibration. The induced morphism 
$\colim_IF$ is a left cartesian fibration over $A^\sharp$.
\end{itheorem}

In the fourth subsection we study \textit{smooth} and \textit{proper} morphisms and we obtain the following expected result:
\begin{iprop}[\ref{prop:quillent theorem A}]
For a morphism $X\to A^\sharp$, and an object $a$ of $A$, we denote by $X_{/a}$ the marked $\io$-category fitting in the following cartesian squares. 
\[\begin{tikzcd}
	{X_{a/}} & X \\
	{A^\sharp_{a/}} & {A^\sharp}
	\arrow[from=2-1, to=2-2]
	\arrow[from=1-2, to=2-2]
	\arrow[from=1-1, to=1-2]
	\arrow[from=1-1, to=2-1]
	\arrow["\lrcorner"{anchor=center, pos=0.125}, draw=none, from=1-1, to=2-2]
\end{tikzcd}\]
We denote by $\bot:\ocatm\to \ocat$ the functor sending a marked $\io$-category to its localization by marked cells.
\begin{enumerate}
\item Let $E$, $F$ be two elements of $\ocatm_{/A^\sharp}$ corresponding to morphisms $X\to A^\sharp$, $Y\to A^\sharp$, and
 $\phi:E\to F$ a morphism between them. We denote by $\Fb E$ and $\Fb F$ the left cartesian fiborant replacement of $E$ and $F$. 
 
The induced morphism $\Fb\phi:\Fb E\to \Fb F$ is an equivalence if and only if for any object $a$ of $A$, the induced morphism 
$$\bot X_{/a}\to \bot Y_{/a}$$ 
is an equivalence of $\io$-categories.
\item A morphism $X\to A^\sharp$ is initial if and only if for any object $a$ of $A$, $\bot X_{/a}$ is the terminal $\io$-category.
\end{enumerate}
\end{iprop}

Finally, in the last subsection, for a marked $\io$-category $I$, we define and study a (huge) $\io$-category $\uLCartc(I)$ that has classified left cartesian fibrations as objects and morphisms between classified left cartesian fibrations as arrows.

\paragraph{Cardinality hypothesis.}
We fix during this chapter two Grothendieck universes $\V\in\Wcard$, such that $\omega\in \U$. When nothing is specified, this corresponds to the implicit choice of the cardinality $\V$.
We then denote by $\Set$ the $\Wcard$-small $1$-category of $\V$-small sets, $\igrd$ the $\Wcard$-small $\iun$-category of $\V$-small $\infty$-groupoids and $\icat$ the $\Wcard$-small $\iun$-category of $\V$-small $\iun$-categories.

\section{Marked $\io$-categories}
\subsection{Definition of marked $\io$-categories}
\p
A \notion{marked $\zo$-category} is a pair $(C,tC)$ where $C$ is an $\zo$-category and $tC:=(tC_n)_{n>0}$ is a sequence of subsets of $C_n$, containing identities, stable by composition and by whiskering with (possibly unmarked) cells of lower dimension.
A $n$-cell $a:\Db_n\to (C,tC)$ is \wcsnotion{marked}{marked $n$-cell}{for marked $\zo$-categories} if it belongs to $tC_n$.

A \notion{marked morphism} $f:(C,tC)\to (D,tT)$ is the data of a morphism on the underlying $\zo$-categories such that $f(tC_n)\subset tD_n$.
The category of marked $\zo$-categories is denoted by \wcnotation{$\zocatm$}{((a40@$\zocatm$}.

\p There are two canonical ways to mark an $\zo$-category. For $C\in \zocat$, we define $C^\sharp := (C,(C_n)_{n>0})$ and $C^\flat := (C,(\Ib(C_{n-1})_{n>0}))$. The first one corresponds to the case where all cells are marked, and the second one where only the identities are marked. These two functors fit in the following adjoint triple:
\ssym{((b10@$(\uvar)^\sharp$}{for (marked) $\zo$-categories}\ssym{((b20@$(\uvar)^\flat$}{for (marked) $\zo$-categories}\ssym{((b30@$(\uvar)^\natural$}{for (marked) $\zo$-categories}
\[\begin{tikzcd}
	{(\uvar)^{\flat}: \zocat} & {\zocatm:(\uvar)^\natural} & {(\uvar)^\natural:\zocatm} & { \zocat	:(\uvar)^\sharp}
	\arrow[""{name=0, anchor=center, inner sep=0}, shift left=2, from=1-1, to=1-2]
	\arrow[""{name=1, anchor=center, inner sep=0}, shift left=2, from=1-2, to=1-1]
	\arrow[""{name=2, anchor=center, inner sep=0}, shift left=2, from=1-3, to=1-4]
	\arrow[""{name=3, anchor=center, inner sep=0}, shift left=2, from=1-4, to=1-3]
	\arrow["\dashv"{anchor=center, rotate=-90}, draw=none, from=0, to=1]
	\arrow["\dashv"{anchor=center, rotate=-90}, draw=none, from=2, to=3]
\end{tikzcd}\]
where $(\uvar)^\natural$ is the obvious forgetfull functor.
To simplify notations, for a marked $\zo$-category $C$, the marked $\io$-categories $(C^\natural)^\flat$ and $(C^\natural)^\sharp$ will be simply denoted by $C^\flat$ and $C^\sharp$.

\begin{example}
For $n$ an integer, we denote by $(\Db_n)_t$ the marked $\zo$-category whose underlying $\zo$-category is $\Db_n$ and whose only non-trivial marked cell is the top dimensional one.
\end{example}

\begin{definition}
We define the category \wcnotation{$t\Theta$}{(tTheta@$t\Theta$} as the full subcategory of $\zocatm$ whose objects are of shape $a^\flat$ for $a$ a globular sum, or \wcnotation{$(\Db_n)_t$}{(dn@$(\Db_n)_t$} for an integer $n\in\Nb$. Remark that this subcategory is dense in $\zocatm$.
\end{definition}

\p We define the $\iun$-category of \wcnotion{stratified $\infty$-presheaves on $\Theta$}{stratified $\infty$-presheaf on $\Theta$}, noted by \wcnotation{$\tiPsh{\Theta}$}{(tPsh@$\tiPsh{\Theta}$}, as the full sub $\iun$-category of $\iPsh{t\Theta}$ whose objects correspond to $\infty$-presheaves $X$ such that the induced morphism
$X((\Db_n)_t)\to X(\Db_n)$
is a monomorphism. 

\begin{prop}
\label{prop:marked presheaves are locally cartesian closed}
The $\iun$-category $\tiPsh{\Theta}$ is locally cartesian closed. 
\end{prop}
\begin{proof}
The $\iun$-category $\tiPsh{\Theta}$ is the localization of the $\iun$-category $\iPsh{t\Theta}$ along the set of map $\widehat{I}$ with $$I:=\{(\Db_n)_t\coprod_{\Db_n}(\Db_n)_t\to(\Db_n)_t\}_n.$$
As $\iPsh{t\Theta}$ is locally cartesian closed, we have to show that for any integer $n>0$ and any cartesian square in $\iPsh{t\Theta}$:
\[\begin{tikzcd}
	{X'} & X \\
	{(\Db_n)_t\coprod_{\Db_n}(\Db_n)_t} & {(\Db_n)_t}
	\arrow["{ }", from=1-2, to=2-2]
	\arrow[from=1-1, to=2-1]
	\arrow["{ }", from=1-1, to=1-2]
	\arrow[from=2-1, to=2-2]
	\arrow["\lrcorner"{anchor=center, pos=0.125}, draw=none, from=1-1, to=2-2]
\end{tikzcd}\]
the top horizontal morphism is in $\widehat{I}$. Using once again the locally cartesian closeness of $\iPsh{t\Theta}$, it is sufficient to show that for any integer $n>0$ and for any morphism 
$j:b\to (\Db_n)_t$ between elements of $t\Theta$, the morphism $i$ appearing in the following cartesian square of $\iPsh{t\Theta}$ is an equivalence or is in $I$:
\[\begin{tikzcd}
	B & b \\
	{(\Db_n)_t\coprod_{\Db_n}(\Db_n)_t} & {(\Db_n)_t}
	\arrow["j", from=1-2, to=2-2]
	\arrow[from=1-1, to=2-1]
	\arrow["i", from=1-1, to=1-2]
	\arrow[from=2-1, to=2-2]
\end{tikzcd}\]
Two cases have to be considered. If $j$ is the identity this is trivially true. If $j$ is any other morphism, it factors through $\Db_n\to (\Db_n)_t$, 	and the following square is cartesian
\[\begin{tikzcd}
	b & b \\
	{\Db_n} & {(\Db_n)_t}
	\arrow["j", from=1-2, to=2-2]
	\arrow[from=1-1, to=2-1]
	\arrow["id", from=1-1, to=1-2]
	\arrow[from=2-1, to=2-2]
\end{tikzcd}\]
 This implies that $B$ is equivalent to $b\coprod_bb\sim b$, and $i$ is then the identity.
 \end{proof}

\p \label{para:defi of music}
For a stratified $\infty$-presheaf $X$ on $\Theta$, we denote by $tX_n$ the $\infty$-groupoid $X((\Db_n)_t)$.
A stratified $\infty$-presheaves on $\Theta$ is then the data of a pair $(X,tX)$ such that $X\in \iPsh{\Theta}$ and $tX:=(tX_n)_{n>0}$ is a sequence of $\infty$-groupoid such that for any $n>0$, $tX_n$ is a full sub $\infty$-groupoid of $X_n$ including all units.

For $X\in \iPsh{\Theta}$, we define $X^\sharp := (X,(X_n)_{n>0})$ and $X^\flat := (X,(\Ib (X_{n-1})_{n>0})$ and we have an adjoint triple 
\ssym{((b10@$(\uvar)^\sharp$}{for (marked) $\io$-categories}\ssym{((b20@$(\uvar)^\flat$}{for (marked) $\io$-categories}\ssym{((b30@$(\uvar)^\natural$}{for (marked) $\io$-categories}
\[\begin{tikzcd}
	{(\uvar)^{\flat}: \iPsh{\Theta}} & {\tiPsh{\Theta}:(\uvar)^\natural} & {(\uvar)^\natural:\tiPsh{\Theta}} & { \Psh{\Theta}:(\uvar)^\sharp}
	\arrow[""{name=0, anchor=center, inner sep=0}, shift left=2, from=1-1, to=1-2]
	\arrow[""{name=1, anchor=center, inner sep=0}, shift left=2, from=1-2, to=1-1]
	\arrow[""{name=2, anchor=center, inner sep=0}, shift left=2, from=1-3, to=1-4]
	\arrow[""{name=3, anchor=center, inner sep=0}, shift left=2, from=1-4, to=1-3]
	\arrow["\dashv"{anchor=center, rotate=-90}, draw=none, from=0, to=1]
	\arrow["\dashv"{anchor=center, rotate=-90}, draw=none, from=2, to=3]
\end{tikzcd}\]
where $(\uvar)^\natural$ is the obvious forgetful functor.

\p We define the category \wcnotation{$t\Delta[t\Theta]$}{(tDeltaTheta@$t\Delta[t\Theta]$} as the pullback
\[\begin{tikzcd}
	{t\Delta[t\Theta]} & t\Theta \\
	{\Delta[\Theta]} & \Theta
	\arrow["{(\uvar)^\natural}", from=1-2, to=2-2]
	\arrow[from=2-1, to=2-2]
	\arrow[from=1-1, to=2-1]
	\arrow["\lrcorner"{anchor=center, pos=0.125}, draw=none, from=1-1, to=2-2]
	\arrow[from=1-1, to=1-2]
\end{tikzcd}\]
The objects of $t\Delta[t\Theta]$ then are of shape $[1]^\sharp$ or $[a,n]$ with $a\in t\Theta$ and $n\in \Delta$.
The $(\infty,1)$-category of \wcnotion{stratified presheaves on $\Delta[\Theta]$}{stratified $\infty$-presheaf on $\Delta[\Theta]$}, denoted by $\tiPsh{\Delta[\Theta]}$, is the full sub $\iun$-category of $\iPsh{t\Delta[t\Theta]}$ whose objects correspond to $\infty$-presheaves $X$ such that the induced morphism
$X((\Db_n)_t)\to X(\Db_n)$
is a monomorphism. 

\begin{prop}
\label{prop:marked presheaves are locally cartesian closed2}
The $\iun$-category $\tiPsh{\Delta[\Theta]}$ is locally cartesian closed. 
\end{prop}
\begin{proof}
The proof is almost identical to the one of proposition \ref{prop:marked presheaves are locally cartesian closed}
\end{proof}

\p For a stratified $\infty$-presheaf $X$ on $\Delta[\Theta]$, we denote by $tX_1$ the   $\infty$-groupoid $X([1]^\sharp)$, and for any $n>1$, we denote by $tX_n$ the $\infty$-groupoid $X((\Db_n)_t)$.

A stratified $\infty$-presheaf on $\Delta[\Theta]$ is then the data of a pair $(X,tX)$ such that $X\in \iPsh{\Delta[\Theta]}$
and $tX:=(tX_n)_{n>0}$ is a sequence of $\infty$-groupoid such that for any $n>0$, $tX_n$ is a full sub $\infty$-groupoid of $X_n$ including all units.

 For $X\in \iPsh{\Delta[\Theta]}$, we define once again $X^\sharp := (X,( X_n)_{n>0})$ and $X^\flat := (X, (\Ib (X_{n-1}))_{n>0})$ and we still have an adjoint triple
\[\begin{tikzcd}[column sep=0.5cm]
	{(\uvar)^\natural\iPsh{\Delta[\Theta]}} & {\tiPsh{\Delta[\Theta]}:(\uvar)^\natural} & {(\uvar)^\natural:\tiPsh{\Delta[\Theta]}} & {\iPsh{\Delta[\Theta]}:(\uvar)^\sharp}
	\arrow[""{name=0, anchor=center, inner sep=0}, shift left=2, from=1-1, to=1-2]
	\arrow[""{name=1, anchor=center, inner sep=0}, shift left=2, from=1-2, to=1-1]
	\arrow[""{name=2, anchor=center, inner sep=0}, shift left=2, from=1-3, to=1-4]
	\arrow[""{name=3, anchor=center, inner sep=0}, shift left=2, from=1-4, to=1-3]
	\arrow["\dashv"{anchor=center, rotate=-90}, draw=none, from=0, to=1]
	\arrow["\dashv"{anchor=center, rotate=-90}, draw=none, from=2, to=3]
\end{tikzcd}\]
where $(\uvar)^\natural$ is the obvious forgetfull functor.

\p
We once again have an adjunction:
\[\begin{tikzcd}
	{i_!:\tiPsh{\Delta[\Theta]}} & {\tiPsh{\Theta}:i^*}
	\arrow[shift left=2, from=1-1, to=1-2]
	\arrow[shift left=2, from=1-2, to=1-1]
\end{tikzcd}\]
induced by the canonical inclusion $t\Delta[t\Theta]\to t\Theta$.
For an integer $n$, we define the functor \sym{((b40@$(\uvar)^{\sharp_n}$}$(\uvar)^{\sharp_n}:\iPsh{\Theta}\to \tiPsh{\Theta}$ and $(\uvar)^{\sharp_n}:\iPsh{\Delta[\Theta]}\to \tiPsh{\Delta[\Theta]}$ sending a $\infty$-presheaf $X$ onto $ (X, (X^n_k)_{k>0})$ where $X^n_k:= \Ib(X_{k-1})$ if $k<n$, and $X^n_k:=X_k$ if not. We eventually set \sym{(tw@$\Wm$}\sym{(tm@$\Mm$}
$$\Wm:= \coprod_{n}(\Wseg)^{\sharp_n}\coprod (\Wsat)^\flat~~~~~\Mm:= \coprod_{n}(\Mseg)^{\sharp_n}\coprod(\Msat)^\flat$$
As $i_!(\Mm)$ is contained in $\Wm$, the previous adjunction induces a derived one:
\begin{equation}
\label{eq:derived marked adjunction theta and delta theta}
\begin{tikzcd}
	{\Lb i_!:\tiPsh{\Delta[\Theta]}_{\Mm}} & {\tiPsh{\Theta}_{\Wm}:i^*\Rb}
	\arrow[""{name=0, anchor=center, inner sep=0}, shift left=2, from=1-1, to=1-2]
	\arrow[""{name=1, anchor=center, inner sep=0}, shift left=2, from=1-2, to=1-1]
	\arrow["\dashv"{anchor=center, rotate=-90}, draw=none, from=0, to=1]
\end{tikzcd}
\end{equation}

\begin{prop}
\label{prop:derived marked adjunction theta and delta theta}
The derived adjunction \eqref{eq:derived marked adjunction theta and delta theta}
is an adjoint equivalence.
\end{prop}
\begin{proof}
It is enough to show that for any element $a:t\Delta[t\Theta]$ and any $b:t\Theta$, $a\to i^*i_!a$ and $i_!i^*b\to b$ are respectively in $\widehat{\Mm}$ and $\widehat{\Wm}$. If $a$ is of shape $[b,n]^\flat$, this is a direct consequence of proposition \ref{prop:infini changing theta}, and if $a$ is $(\Db_n)_t$ the unit is the identity. We proceed similarly for $i_!i^*b\to b$.
\end{proof}
 The inclusion $t\Theta\to \zocatm$ induces an adjunction
\[\begin{tikzcd}
	{\tPsh{\Theta}} & \zocatm
	\arrow[""{name=0, anchor=center, inner sep=0}, shift left=2, from=1-1, to=1-2]
	\arrow[""{name=1, anchor=center, inner sep=0}, shift left=2, from=1-2, to=1-1]
	\arrow["\dashv"{anchor=center, rotate=-90}, draw=none, from=0, to=1]
\end{tikzcd}\]
and we can easily check that this induces an equivalence between $\zocatm$ and the sub-category of $\tPsh{\Theta}$ of $\Wm$-local objects.
Together with proposition \ref{prop:derived marked adjunction theta and delta theta}, this induces equivalences
$$\tPsh{\Theta}_{\Mm} \cong \tPsh{\Delta[\Theta]}_{\Wm}\cong \zocatm$$

\p A \notion{marked $\io$-category} is a $\Wm$-local stratified $\infty$-presheaves on $\Theta$. We denote by \wcnotation{$\ocatm$}{((a70@$\ocatm$} the $\iun$-category of marked $\io$-categories.
Unfolding the definition, a marked $\io$-category is a pair $(C,tC)$ where $C$ is an $\io$-category and $tC:=(tC_n)_{n>0}$ is a sequence of full sub $\infty$-groupoids of $C_n$, containing identities, stable by composition and by whiskering with cells of lower dimension.
A $n$-cell $a:\Db_n\to (C,tC)$ is \wcsnotion{marked}{marked $n$-cell}{for marked $\io$-categories} if it belongs to the image of $tC_n$.

There are two obvious ways to mark a $\io$-category. For $C\in \ocat$, we define $C^\sharp := (C,(C_n)_{n>0})$ and $C^\flat := (C,(\Ib(C_{n-1})_{n>0}))$. The first one corresponds to the case where all cells are marked, and the second one where only the identities are marked. These two functors fit in the following adjoint triple:
\[\begin{tikzcd}[column sep=0.7cm]
	{(\uvar)^{\flat}: \ocat} & {\ocatm:(\uvar)^\natural} & {(\uvar)^\natural:\ocatm} & { \ocat	:(\uvar)^\sharp}
	\arrow[""{name=0, anchor=center, inner sep=0}, shift left=2, from=1-1, to=1-2]
	\arrow[""{name=1, anchor=center, inner sep=0}, shift left=2, from=1-2, to=1-1]
	\arrow[""{name=2, anchor=center, inner sep=0}, shift left=2, from=1-3, to=1-4]
	\arrow[""{name=3, anchor=center, inner sep=0}, shift left=2, from=1-4, to=1-3]
	\arrow["\dashv"{anchor=center, rotate=-90}, draw=none, from=0, to=1]
	\arrow["\dashv"{anchor=center, rotate=-90}, draw=none, from=2, to=3]
\end{tikzcd}\]
where $(\uvar)^\natural$ is the obvious forgetful functor.	
To simplify notations, for a marked $\io$-category $C$, the marked $\io$-categories $(C^\natural)^\flat$ and $(C^\natural)^\sharp$ will be simply denoted by $C^\flat$ and $C^\sharp$.

\p Following paragraph \ref{para:dualities non strict case}, for any subset $S$ of $\Nb^*$, we define the duality\ssym{((b49@$(\uvar)^S$}{for marked $\io$-categories}
$$(\uvar)^S:\ocatm\to \ocatm$$
whose value on $(C,tC)$ is $(C^S,tC)$.
In particular, we have the \snotionsym{odd duality}{((b60@$(\uvar)^{op}$}{for marked $\io$-categories} $(\uvar)^{op}$, corresponding to the set of odd integer, the \snotionsym{even duality}{((b50@$(\uvar)^{co}$}{for marked $\io$-categories} $(\uvar)^{co}$, corresponding to the subset of non negative even integer, the \snotionsym{full duality}{((b80@$(\uvar)^{\circ}$}{for marked $\io$-categories} $(\uvar)^{\circ}$, corresponding to $\Nb^*$ and the \snotionsym{transposition}{((b70@$(\uvar)^t$}{for marked $\io$-categories} $(\uvar)^t$, corresponding to the singleton $\{1\}$. Eventually, we have equivalences
$$((\uvar)^{co})^{op}\sim (\uvar)^{\circ} \sim ((\uvar)^{op})^{co}.$$

\p Given a functor $F:I\to \ocatm$, the colimit of $F$ is given by the marked $\io$-category $(C,tC)$ with 
$$C:=\colim_{I}F^\natural$$
and for any $n$, $(tC)_n$ is the image of the morphism 
$$\colim_I tF_n\to (\colim_{I}F)^\natural_n.$$
The case of the limit is easier as we have 
$$\lim_{I}F := (\lim_{I}F^\natural,(\lim_{I}(tF_n)_{n>0}).$$
In particular, if $(C,tC)$ and $(D,tD)$ are two marked $\io$-categories, we have
$$(C,tC)\times (D,tD):= (C\times D, (tC_n\times tD_n)_{n>0}).$$
\begin{prop}
\label{prop:cartesian product preserves W marked version}
The cartesian product in $\ocatm$ preserves colimits in both variables.
\end{prop}
\begin{proof}
Let $F:I\to \ocatm$ be a diagram and $C$ a marked $\io$-category. The underlying $\io$-categories of $\colim_I (F\times C)$ and $(\colim_IF)\times C$ are the same as the cartesian product preserves colimits in $\ocat$. The equivalence of the two markings 
 is a direct consequence of the fact that the cartesian product in $\igrd$ preserves both colimits and the formation of image.
\end{proof}
This demonstrates the existence of an internal hom functor that we denote once again by $\uHom(\uvar,\uvar)$.

\p We denote again $\pi_0:\tiPsh{\Theta}\to \tPsh{\Theta}$ colimit preserving sending a stratified $\infty$-presheaf $X$ to the stratified presheaf $a\mapsto \pi_0(X_a)$. As this functor preserves $\Wm$, it induces an adjoint pair:
\sym{(pi@$\pi_0:\ocatm\to \zocatm$}\sym{n@$\N:\zocatm\to \ocatm$}
\[\begin{tikzcd}
	{\pi_0:\ocat} & {\zocat:\N}
	\arrow[""{name=0, anchor=center, inner sep=0}, shift left=2, from=1-1, to=1-2]
	\arrow[""{name=1, anchor=center, inner sep=0}, shift left=2, from=1-2, to=1-1]
	\arrow["\dashv"{anchor=center, rotate=-90}, draw=none, from=0, to=1]
\end{tikzcd}\]
where the right adjoint $\N$ is fully faithful.
A marked $\io$-category lying in the image of the nerve is called \wcnotion{strict}{strict marked $\io$-category}.
Remark eventually that the following square is cartesian
\[\begin{tikzcd}
	\zocatm & \ocatm \\
	\zocat & \ocat
	\arrow["\N", from=1-1, to=1-2]
	\arrow["{(\uvar)^\natural}", from=1-2, to=2-2]
	\arrow["{(\uvar)^\natural}"', from=1-1, to=2-1]
	\arrow["\N"', from=2-1, to=2-2]
\end{tikzcd}\]
A marked $\io$-category is then strict if and only if it's underlying $\io$-category is.

\p The \wcsnotionsym{marked suspension}{((d60@$[\uvar,1]$}{suspension}{for marked $\io$-categories} is the colimit preserving functor $$[\uvar,1]:\ocatm\to \ocatm_{\bullet,\bullet}$$ sending $a^\flat$ onto $[a,1]^\flat$ and $(\Db_n)_t$ to $([\Db_n,1])_t$. 
It then admits a right adjoint: \ssym{(hom@$\hom_{\uvar}(\uvar,\uvar)$}{for marked $\io$-categories}
$$\begin{array}{lll}
\ocatm_{\bullet,\bullet}&\to& \ocatm\\
(C,a,b)&\mapsto &\hom_C(a,b)
\end{array}
$$

With the same computation than the one of paragraph \ref{para:wiskering}, we show that for a marked $\io$-category $C$, any $1$-cell $f:x\to x'$ induces for any object $y$, a morphism
$$f_!:\hom_C(x',y)\to \hom_C(x,y).$$
Conversely, a $1$-cell $g:y\to y'$ induces for any object $x$ a morphism
$$g_!:\hom_C(x,y)\to \hom_C(x,y')$$

\p In section \ref{section:iocategories}, we define the notion of fully faithful morphism of $\io$-categories. There is an equivalent notion for marked $\io$-categories:
\begin{definition}
A morphism $f:C\to D$ is \snotion{fully faithful}{for marked $\io$-categories} if for any pair of objects $x,y$, the morphism of marked $\io$-categories $\hom_C(x,y)\to \hom_D(fx,fy)$ is an equivalence, and if a $1$-cell $v$ is marked whenever $f(v)$ is.
\end{definition}

 We now give some adaptation of the result on fully faithful functors to the case of marked $\io$-categories without proofs, as they are obvious modifications to this new framework.

\begin{prop}
\label{prop:ff 1 marked case}
A morphism is fully faithful if and only if it has the unique right lifting property against $\emptyset\to \Db_n$ and $\Db_n\to (\Db_n)_t$ for $n>0$.
\end{prop}

\begin{prop}
\label{prop:ff 2 marked case}
Fully faithful morphisms are stable under limits.
\end{prop}

\begin{prop}
\label{prop:fully faithful plus surjective on objet marked case}
A morphism $f:C\to D$ is an equivalence if and only if it is fully faithful and surjective on objects.
\end{prop}

\p A morphism $f:C\to D$ between marked $\io$-categories is a \snotion{discrete Conduché functor}{for marked $\io$-categories} if for any triplet of integers $k< n\leq m$, $f$ has the unique right lifting property against $$\Ib_{m+1}:\Db_{m+1}^\flat\to \Db_{m}^\flat ~~\mbox{ and }~~\triangledown^{\sharp_n}_{k,m}:\Db_{m}^{\sharp_n}\to \Db_{m}^{\sharp_n}\coprod_{ \Db_{k}^\flat} \Db_{m}^{\sharp_n}.$$

\begin{example}
If $f$ is a discrete Conduché functor between marked $\io$-categories, $f^\sharp$ is a discrete Conduché functor. Conversely, if $g$ is a discrete Conduché functor between $\io$-categories, so are $g^\sharp$, $g^\flat$ and $g^{\sharp_n}$ for any integer $n$.
\end{example}

\p
A \notion{marked globular sum} is a marked $\io$-category whose underlying $\io$-category is a globular sum and such that for any pair of integers $k\leq n$, and any pair of $k$-composable $n$-cells $(x,y)$, $x\circ_k y$ is marked if and only if $x$ and $y$ are marked.

A morphism $i:a\to b$ between marked globular sum is \wcsnotion{globular}{globular morphism}{for marked $\zo$-categories} if the morphism $i^\natural$ is globular.

The proposition \ref{prop:algebraic ortho to globular} implies that a morphism $a\to b$ between marked globular sums is a discrete Conduché functor if and only if it is globular.

\begin{lemma}
\label{lemma:pullback by conduch marked preserves colimitpre}
Let $p:C\to D^\flat$ be a discrete Conduché functor between marked $\io$-categories. The canonical morphism $(C^\natural)^\flat\to C$ is an equivalence. 
\end{lemma}
\begin{proof}
Suppose given a marked $n$-cell $v:\Db_n\to C^\natural$. As the marking on $C$ is trivial, this induces a commutative square
\[\begin{tikzcd}
	{\Db_n} & { C^\natural} \\
	{\Db_{n-1}} & D
	\arrow[from=1-1, to=2-1]
	\arrow["v", from=1-1, to=1-2]
	\arrow["{p^\natural}", from=1-2, to=2-2]
	\arrow[from=2-1, to=2-2]
	\arrow["l"{description}, dashed, from=2-1, to=1-2]
\end{tikzcd}\]
that admits a lift $l$ as $p^\natural$ is a discrete Conduché functor, which concludes the proof.
\end{proof}

\begin{prop}
\label{prop:pullback by conduch marked preserves colimit}
Let $p:C\to D$ be a discrete Conduché functor between marked $\io$-categories. The pullback functor $p^*$ preserves colimits.
\end{prop}
\begin{proof}
As $\tiPsh{\Theta}$ is locally cartesian closed, one has to show that for any pair of cartesian squares
\[\begin{tikzcd}
	{C''} & {C'} & C \\
	{D''} & {D'} & D
	\arrow["p", from=1-3, to=2-3]
	\arrow["i"', from=2-1, to=2-2]
	\arrow["j", from=1-1, to=1-2]
	\arrow[from=1-2, to=1-3]
	\arrow[from=2-2, to=2-3]
	\arrow[from=1-2, to=2-2]
	\arrow[from=1-1, to=2-1]
	\arrow["\lrcorner"{anchor=center, pos=0.125}, draw=none, from=1-1, to=2-2]
	\arrow["\lrcorner"{anchor=center, pos=0.125}, draw=none, from=1-2, to=2-3]
\end{tikzcd}\]
if $i$ is $\Wm$, then $j$ is in $\widehat{\Wm}$. Suppose first that $i$ is in $\Wsat^\flat$. According of the lemma \ref{lemma:pullback by conduch marked preserves colimitpre} the $\io$-categories $C'$ and $C''$ are  of shape $(E)^\flat$ and $(E')^\flat$ for $E$ and $E'$ two $\io$-categories. The proposition \ref{prop:pulback of Wsat} then implies that $i$ is in $\widehat{\W^\flat}\subset \widehat{\Wm}$. If $i$ is in $(\Wseg)^{\sharp_n}$ the proof is an easy adaptation of the one of lemma \ref{lemma:conduche preserves W}.
\end{proof}

\p 
 We now give some adaptation of the result on special colimits stated in paragraph \ref{para: spetial colimits} to the case of marked $\io$-categories without proofs, as they are easy modifications.

We denote by $\iota$ the inclusion of $\ocatm$ into $\tiPsh{\Theta}$.
A functor $F:I\to \ocatm$ has a \snotion{special colimit}{for marked $\io$-categories} if the canonical morphism 
\begin{equation}
\label{eq:special colimit marked case}
\colim_{i:I}\iota F(i)\to \iota(\colim_{i:I}F(i))
\end{equation}
is an equivalence of stratified presheaves. 

Similarly, we say that a functor $\psi: I\to \Arr(\ocatm)$ has a \textit{special colimit} if the canonical morphism 
$$\colim_{i:I}\iota \psi(i)\to \iota(\colim_{i:I}\psi(i))$$
is an equivalence in the arrow $\iun$-category of $\tiPsh{\Theta}$.

\begin{example}
Let $C$ be a marked $\io$-category. The canonical diagram $t\Theta_{/C}\to \ocat$ has a special colimit, given by $C$.
\end{example}
\begin{prop}
\label{prop:special colimit marked case}
Let $F,G:I\to \ocatm$ be two functors, and $\psi:F\to G$ a natural transformation. If $\psi$ is cartesian, and $G$ has a special colimit, then $\psi$ and $F$ have special colimits. 
\end{prop}

\begin{prop}
\label{prop:example of a special colimit marked case}
For any integer $n$, and element $a\in t\Theta$ and $b\in \Theta$, the equalizer diagram 
\[\begin{tikzcd}
	{\coprod_{k+l=n-1}[a,k]\vee[a\times b^\sharp,1]\vee[a,l]} & {\coprod_{k+l=n}[a,k]\vee[ b,1]^\sharp\vee[a,l]}
	\arrow[shift left=2, from=1-1, to=1-2]
	\arrow[shift right=2, from=1-1, to=1-2]
\end{tikzcd}\]
where the top diagram is induced by $[a\times b^\sharp,1]\to [a,1]\vee[b,1]^\sharp$ and to bottom one by $[a\times b^\sharp,1]\to [b,1]^\sharp\vee[a,1]$,
has a special colimit, which is $[a,n]\times [b,1]^\sharp$.
\end{prop}

\begin{prop}
\label{prop:example of a special colimit 2 marked case}
Any sequence of marked $\io$-categories has a special colimit. 
\end{prop}

\begin{prop}
\label{prop:example of a special colimit4 marked case}
Suppose given a cartesian square
\[\begin{tikzcd}
	{ B} & C \\
	{\{0\}} & {[1]^\sharp}
	\arrow[from=1-2, to=2-2]
	\arrow[from=1-1, to=1-2]
	\arrow[from=1-1, to=2-1]
	\arrow[from=2-1, to=2-2]
	\arrow["\lrcorner"{anchor=center, pos=0.125}, draw=none, from=1-1, to=2-2]
\end{tikzcd}\]
The diagram 
\[\begin{tikzcd}
	{[1]^\sharp\vee[B,1]} & {[B,1]} & {[C,1]}
	\arrow["\triangledown"', from=1-2, to=1-1]
	\arrow[from=1-2, to=1-3]
\end{tikzcd}\]
has a special colimit.
\end{prop}

\begin{prop}
\label{prop:example of a special colimit3 marked case}
Suppose given two cartesian squares
\[\begin{tikzcd}
	{ B} & C & D \\
	{\{0\}} & {[1]^\sharp} & {\{1\}}
	\arrow[from=1-2, to=2-2]
	\arrow[from=1-1, to=1-2]
	\arrow[from=1-3, to=2-3]
	\arrow[from=1-1, to=2-1]
	\arrow[from=1-3, to=1-2]
	\arrow[from=2-1, to=2-2]
	\arrow[from=2-3, to=2-2]
	\arrow["\lrcorner"{anchor=center, pos=0.125}, draw=none, from=1-1, to=2-2]
	\arrow["\lrcorner"{anchor=center, pos=0.125, rotate=-90}, draw=none, from=1-3, to=2-2]
\end{tikzcd}\]
The diagram 
\[\begin{tikzcd}
	{[1]^\sharp\vee[B,1]} & {[B,1]} & {[C,1]} & {[D,1]} & {[D,1]\vee[1]^\sharp}
	\arrow["\triangledown", from=1-4, to=1-5]
	\arrow["\triangledown"', from=1-2, to=1-1]
	\arrow[from=1-2, to=1-3]
	\arrow[from=1-4, to=1-3]
\end{tikzcd}\]
has a special colimit.
\end{prop}

\subsection{Gray tensor product of marked $\io$-categories}
We define the \wcsnotion{marked Gray tensor product}{Gray tensor product}{for marked $\io$-categories}
$$\uvar\otimes (\uvar)^\sharp:\ocatm\times \icat \to \ocatm$$
sending a marked $\io$-category $C$ and a $\iun$-category $K$ to the marked $\io$-category $C\otimes K^\sharp$, such that $(C\otimes K^\sharp)^\natural$ fits in the cocartesian square
\[\begin{tikzcd}
	{\coprod_{ tC}\Db_n\otimes K} & {C^\natural\otimes K} \\
	{\coprod_{ tC}\tau^i_n(\Db_n\otimes K)} & {(C\otimes K^\sharp)^\natural}
	\arrow[from=1-1, to=1-2]
	\arrow[from=1-1, to=2-1]
	\arrow[from=2-1, to=2-2]
	\arrow[from=1-2, to=2-2]
	\arrow["\lrcorner"{anchor=center, pos=0.125, rotate=180}, draw=none, from=2-2, to=1-1]
\end{tikzcd}\]
and such that $t(C\otimes K^\sharp)_n$ consists of $n$-cells lying in the image of the morphism 
$$\tau_{n-1}C\otimes K\coprod (tC)_n\otimes K_0\to (C\otimes K^\sharp)^\natural.$$

\begin{prop}
\label{prop:otimes marked preserves colimits}
The functor $\uvar\otimes(\uvar)^\sharp:\ocatm\times \icat\to \ocatm$ preserves colimits. 
\end{prop}
\begin{proof}
By construction, we have two cocartesian squares:
\[\begin{tikzcd}
	{\coprod_{\colim tF }\Db_n\otimes K} & {\colim (F^\natural\otimes K)} \\
	{\coprod_{\colim tF }\tau^i_{n}(\Db_n\otimes K)} & {\colim (F\otimes K^\sharp)^\natural} \\
	{\coprod_{t(\colim F)}\Db_n\otimes K} & {(\colim F^\natural)\otimes K} \\
	{\coprod_{ t(\colim F)}\tau^i_{n}(\Db_n\otimes K)} & {((\colim F)\otimes K^\sharp)^\natural}
	\arrow[from=1-1, to=2-1]
	\arrow[from=3-1, to=4-1]
	\arrow[from=1-2, to=2-2]
	\arrow[from=3-2, to=4-2]
	\arrow[""{name=0, anchor=center, inner sep=0}, from=3-1, to=3-2]
	\arrow[""{name=1, anchor=center, inner sep=0}, from=1-1, to=1-2]
	\arrow[from=2-1, to=2-2]
	\arrow[from=4-1, to=4-2]
	\arrow["\lrcorner"{anchor=center, pos=0.125, rotate=180}, draw=none, from=2-2, to=1]
	\arrow["\lrcorner"{anchor=center, pos=0.125, rotate=180}, draw=none, from=4-2, to=0]
\end{tikzcd}\]
By the preservation of colimit by the Gray tensor product for $\io$-categories and by the functor $(\uvar)^\natural$, we have an equivalence 
$$\colim (F^\natural\otimes K)\sim (\colim F^\natural)\otimes K$$
However, the canonical morphism $\colim tF \to t(\colim F)$ is an epimorphism, and according to proposition \ref{prop:truncation of epimorphism is pushout}, the following canonical square
is cocartesian
\[\begin{tikzcd}
	{\coprod_{\colim tF }\Db_n\otimes K} & {\coprod_{t(\colim F)}\Db_n\otimes K} \\
	{\coprod_{\colim tF }\tau^i_{n}(\Db_n\otimes K)} & {\coprod_{ t(\colim F)}\tau^i_{n}(\Db_n\otimes K)}
	\arrow[from=1-1, to=2-1]
	\arrow[from=1-1, to=1-2]
	\arrow[from=2-1, to=2-2]
	\arrow[from=1-2, to=2-2]
\end{tikzcd}\]
Combined with the first two cocartesian squares, this implies that that $\colim (F\otimes K^\sharp)^\natural$ and $((\colim F)\otimes K^\sharp)^\natural$ are equivalent.

According to proposition \ref{prop:intelignet truncation is poitwise an epi} and by construction, the morphisms
 $$\colim (\tau^i_n F^\natural\otimes K)\to \tau^i_n(\colim F^\natural\otimes K)~~\mbox{ and }~~\colim (tF\otimes K_0) \to t(\colim F\otimes K_0)$$ are epimorphisms. The marked $\io$-categories $\colim (F\otimes K^\sharp)$ and $(\colim F)\otimes K^\sharp$
 then have the same marked cells.
\end{proof}

\begin{prop}
\label{prop:associativity of Gray amput}
Let $C$ be a $\io$-category, $D$ a marked $\io$-category and $K,L$ two $(\infty,1)$-categories. 
\begin{enumerate}
\item The underlying $\io$-category of $C^\flat\otimes K^\sharp$ is $C\otimes K$.
\item The canonical morphism $C^\sharp\otimes K^\sharp\to C^\sharp\times K^\sharp$ is an equivalence.
\end{enumerate}
\end{prop}
\begin{proof}
The first assertion is obvious.

Let $a$ be a globular sum and $[k]$ an object of $\Delta$.
We claim that the following two squares are cocartesian:
\[\begin{tikzcd}
	{\coprod_n\coprod\limits_{\Db_n\to a}\Db_n\otimes [k]} & {\coprod_{n}\tau_n a\otimes [k]} & {a\otimes[k]} \\
	{\coprod_n\coprod\limits_{\Db_n\to a}\tau^i_{n}(\Db_n\otimes [k])} & {\coprod_{n}\tau^i_{n}(\tau_n a\otimes [k])} & {(a^\sharp\times [k]^\sharp)^\natural}
	\arrow[from=1-1, to=2-1]
	\arrow[from=1-2, to=2-2]
	\arrow[from=2-1, to=2-2]
	\arrow[from=1-1, to=1-2]
	\arrow[from=1-3, to=2-3]
	\arrow[from=2-2, to=2-3]
	\arrow[from=1-2, to=1-3]
\end{tikzcd}\]
The cocartesianess of the left square is a consequence of propositions \ref{prop:truncation of epimorphism is pushout} and \ref{prop:canonical epi}. The outer square is cocartesian by definition, and by left cancellation, this implies the cocartesianess of the right square.
The lemma \ref{lemma:technique marked oicategoros} then implies that the underlying category of $a^\sharp\otimes [k]^\sharp$ is $a\times[k]$. As every cell of $a^\sharp\otimes[k]^\sharp$ is marked, this concludes the proof of the second assertion.
\end{proof}

\begin{prop}
\label{prop:associativity of Gray2}
Let $D$ be an $\io$-category, $C$ a marked $\io$-category and $K$ an $\iun$-category.
The canonical morphism
$(D^\sharp\times C)\otimes K^\sharp\to D^\sharp\times (C\otimes K^\sharp)$ is an equivalence.
\end{prop}
\begin{proof}
As $\times$ and $\otimes$ preserve colimits, we can reduce to the case where $D$ is an element of $\Theta$, $C$ of $t\Theta$ and $K$ of $\Delta$, and we proceed by induction on the dimension of $D$. Remark first that if $D$ is $[0]$, the result is obvious, and if it is $(\Db_1)_t$, the result follows from the second assertion of proposition \ref{prop:associativity of Gray amput}. Suppose then the result is true at the stage $n$. Using once again the fact that $\times$ and $\otimes$ preserve colimits, we can reduce to the case where
 $D^\sharp$ is $[a,1]^\sharp$, $C$ is $[b,1]$ with $b$ an element of $\Theta_t$ of dimension $n$, and $K^\sharp$ is $[1]^\sharp$.
 
 The formula given in proposition \ref{prop:example of a special colimit marked case} implies that $([a,1]^\sharp\times[b,1])\otimes[1]^\sharp$ is the colimit of the sequence: 
\begin{equation}
\label{eq:prop:associativity of Gray2}
\begin{tikzcd}
	{([a,1]^\sharp\vee [b,1])\otimes[1]^\sharp} & {[a^\sharp\times b,1]\otimes[1]^\sharp} & {([b,1]\vee [a,1]^\sharp)\otimes[1]^\sharp}
	\arrow[from=1-2, to=1-1]
	\arrow[from=1-2, to=1-3]
\end{tikzcd}
\end{equation}
The marked $\io$-category $([a,1]^\sharp\vee [b,1])\otimes[1]^\sharp$ is then the colimit of the diagram 
\[\begin{tikzcd}
	{[a,1]^\sharp\times [1]^\sharp} & {[1]^\sharp} & {[b,1]\otimes[1]^\sharp}
	\arrow[from=1-2, to=1-1]
	\arrow[from=1-2, to=1-3]
\end{tikzcd}\]
and using the formulas \eqref{eq:eq for cylinder marked version} and \ref{prop:example of a special colimit marked case}, $([a,1]^\sharp\vee [b,1])\otimes[1]^\sharp$ is the colimit of the diagram
\[\begin{tikzcd}
	{[1]^\sharp\vee[a,1]^\sharp\vee[b,1]} & {[a,1]^\sharp\vee[b,1]} & {[a,1]^\sharp\vee[1]^\sharp\vee[b,1]} \\
	&& {[a,1]^\sharp\vee[b\otimes\{0\},1]} \\
	&& {[a,1]^\sharp\vee[b\otimes[1]^\sharp,1]} \\
	&& {[a,1]^\sharp\vee[b\otimes\{1\},1]} \\
	&& {[a,1]^\sharp\vee[b,1]\vee[1]^\sharp}
	\arrow[from=4-3, to=5-3]
	\arrow[from=4-3, to=3-3]
	\arrow[from=2-3, to=3-3]
	\arrow[from=2-3, to=1-3]
	\arrow[from=1-2, to=1-3]
	\arrow[from=1-2, to=1-1]
\end{tikzcd}\]
Similarly, $([b,1]\vee [a,1]^\sharp)\otimes[1]^\sharp$ is the colimit of the diagram
\[\begin{tikzcd}
	{[1]^\sharp\vee[b,1]\vee[a,1]^\sharp} \\
	{[b\otimes\{0\},1]\vee[a,1]^\sharp} \\
	{[b\otimes[1]^\sharp,1]\vee[a,1]^\sharp} \\
	{[b\otimes\{1\},1]\vee[a,1]^\sharp} \\
	{[b,1]\vee[1]^\sharp\vee[a,1]^\sharp} & {[b,1]\vee[a,1]} & {[b,1]\vee[a,1]^\sharp\vee[1]^\sharp}
	\arrow[from=2-1, to=1-1]
	\arrow[from=2-1, to=3-1]
	\arrow[from=4-1, to=3-1]
	\arrow[from=4-1, to=5-1]
	\arrow[from=5-2, to=5-1]
	\arrow[from=5-2, to=5-3]
\end{tikzcd}\]
Eventually, the formulas \eqref{eq:eq for cylinder marked version} and the induction hypothesis imply that $[a^\sharp\times b,1]\otimes[1]^\sharp$ is the colimit of the diagram
\[\begin{tikzcd}
	{[1]^\sharp\vee[a^\sharp\times b,1]} \\
	& {[a^\sharp\times b\otimes\{0\},1]} \\
	& {[a^\sharp\times (b\otimes[1]^\sharp),1]} \\
	& {[a^\sharp\times b\otimes\{1\},1]} \\
	&& {[a^\sharp\times b,1]\vee[1]^\sharp}
	\arrow[from=2-2, to=1-1]
	\arrow[from=2-2, to=3-2]
	\arrow[from=4-2, to=3-2]
	\arrow[from=4-2, to=5-3]
\end{tikzcd}\]
As all these colimits are special and composed of monomorphisms, the objects $([a,1]^\sharp\vee [b,1])\otimes[1]^\sharp$, $([b,1]^\sharp\vee [a,1]^\sharp)\otimes[1]^\sharp$ and $[a^\sharp\times b,1]\otimes[1]^\sharp$ are strict. As the colimit \eqref{eq:prop:associativity of Gray2} is also special, $([a,1]^\sharp\times[b,1])\otimes[1]^\sharp$ is strict.

All put together, $([a,1]^\sharp\times[b,1])\otimes[1]^\sharp$ is the colimit of the diagram
\[\begin{tikzcd}[column sep =0.1cm]
	{[1]^\sharp\vee[b,1]\vee[a,1]^\sharp} & {[1]^\sharp\vee[a^\sharp\times b,1]} & {[1]^\sharp\vee[a,1]^\sharp\vee[b,1]} & {[a,1]^\sharp\vee[b,1]} & {[a,1]^\sharp\vee[1]^\sharp\vee[b,1]} \\
	{[b\otimes\{0\},1]\vee[a,1]^\sharp} && {[a^\sharp\times b\otimes\{0\},1]} && {[a,1]^\sharp\vee[b\otimes\{0\},1]} \\
	{[b\otimes[1]^\sharp,1]\vee[a,1]^\sharp} && {[a^\sharp\times (b\otimes[1]^\sharp),1]} && {[a,1]^\sharp\vee[b\otimes[1]^\sharp,1]} \\
	{[b\otimes\{1\},1]\vee[a,1]^\sharp} && {[a^\sharp\times b\otimes\{1\},1]} && {[a,1]^\sharp\vee[b\otimes\{1\},1]} \\
	{[b,1]\vee[1]^\sharp\vee[a,1]^\sharp} & {[b,1]\vee[a,1]} & {[b,1]\vee[a,1]^\sharp\vee[1]^\sharp} & {[a^\sharp\times b,1]\vee[1]^\sharp} & {[a,1]^\sharp\vee[b,1]\vee[1]^\sharp}
	\arrow[from=4-5, to=5-5]
	\arrow[from=4-5, to=3-5]
	\arrow[from=2-5, to=3-5]
	\arrow[from=2-5, to=1-5]
	\arrow[from=5-4, to=5-5]
	\arrow[from=5-4, to=5-3]
	\arrow[from=2-3, to=2-5]
	\arrow[from=2-3, to=1-4]
	\arrow[from=1-4, to=1-5]
	\arrow[from=4-3, to=5-4]
	\arrow[from=4-3, to=3-3]
	\arrow[from=2-3, to=3-3]
	\arrow[from=3-3, to=3-5]
	\arrow[from=4-3, to=4-5]
	\arrow[from=1-4, to=1-3]
	\arrow[from=2-3, to=2-1]
	\arrow[from=3-3, to=3-1]
	\arrow[from=4-3, to=4-1]
	\arrow[from=4-1, to=3-1]
	\arrow[from=2-1, to=3-1]
	\arrow[from=1-2, to=1-3]
	\arrow[from=5-2, to=5-3]
	\arrow[from=5-2, to=5-1]
	\arrow[from=1-2, to=1-1]
	\arrow[from=4-1, to=5-1]
	\arrow[from=2-1, to=1-1]
	\arrow[from=2-3, to=1-2]
	\arrow[from=4-3, to=5-2]
\end{tikzcd}\]

Now, using the formula given in proposition \ref{prop:example of a special colimit marked case}, and taking the colimit line by line of the previous diagram, $([a,1]^\sharp\times[b,1])\otimes[1]^\sharp$ is the colimit of the diagram 
\[\begin{tikzcd}
	{[a,1]^\sharp\times([1]^\sharp\vee[b,1])} \\
	{[a,1]^\sharp\times[b\otimes\{0\},1]} \\
	{[a,1]^\sharp\times[b\otimes[1]^\sharp,1]} \\
	{[a,1]^\sharp\times[b\otimes\{1\},1]} \\
	{[a,1]^\sharp\times([b,1]\vee[1]^\sharp)}
	\arrow[from=2-1, to=1-1]
	\arrow[from=2-1, to=3-1]
	\arrow[from=4-1, to=3-1]
	\arrow[from=4-1, to=5-1]
\end{tikzcd}\]
Using for the last times formula \eqref{eq:eq for cylinder marked version}, $([a,1]^\sharp\times[b,1])\otimes[1]^\sharp$ is equivalent to $[a,1]^\sharp\times([b,1]\otimes[1]^\sharp)$.
\end{proof}

\begin{prop}
 \label{prop:associativity of Gray amput2}
Let $D$ be a marked $\io$-category and $K,L$ two $(\infty,1)$-categories. 
There is a natural equivalence
$(D\otimes K^\sharp)\otimes L^\sharp\to D\otimes(K\times L)^\sharp$.
\end{prop}
\begin{proof}
Suppose first that $D$ is of shape $C^\flat$.
The proposition \ref{prop:canonical epi} implies that $\coprod_{t(C^\flat\otimes K^\sharp)^\natural}\Db_n\to (C^\flat\otimes K^\sharp)^\natural$ and $(\coprod_n \tau_{n-1}C\otimes K) \to (C^\flat\otimes K^\sharp)^\natural$ have the same image. The proposition \ref{prop:truncation of epimorphism is pushout}, then implies that
the underlying $\io$-category of $(C^\flat\otimes K^\sharp)\otimes L^\sharp$ fits in the cocartesian square
\[\begin{tikzcd}
	{\coprod_n \tau_{n-1}C\otimes K\otimes L} & {C\otimes K\otimes L} \\
	{\coprod_n \tau^i_n(\tau_{n-1}C\otimes K\otimes L)} & {((C^\flat\otimes K^\sharp)\otimes L^\sharp)^\natural}
	\arrow[from=1-1, to=2-1]
	\arrow[from=1-1, to=1-2]
	\arrow[from=1-2, to=2-2]
	\arrow[from=2-1, to=2-2]
\end{tikzcd}\]
The second assertion of lemma \ref{lemma:technique marked oicategoros} then implies that $((C^\flat\otimes K^\sharp)\otimes L^\sharp)^\natural$ is equivalent to $C^\flat\otimes (K\times L)$.
For a general marked $\io$-category $D$, the underlying $\io$-category of $(D^\flat\otimes K^\sharp)\otimes L^\sharp$ then fits by construction in the cocartesian square
\[\begin{tikzcd}
	{\coprod_n\coprod_{tD_n}\Db_n\otimes K_0\otimes L\amalg\Db_n\otimes K\otimes L} & {D^\flat\otimes (K\times L)} \\
	{\coprod_n\coprod_{tD_n}\tau^i_n(\Db_n\otimes K_0\otimes L)\amalg\tau^i_n(\Db_n\otimes K)\otimes L} & {((D\otimes K^\sharp)\otimes L^\sharp)^\natural}
	\arrow[from=1-1, to=1-2]
	\arrow[from=1-1, to=2-1]
	\arrow[from=2-1, to=2-2]
	\arrow[from=1-2, to=2-2]
\end{tikzcd}\]

Furthermore, the underlying $\io$-category of $D\otimes(K\times L)^\sharp$ fits in the cocartesian square
\[\begin{tikzcd}
	{\coprod_n\coprod_{tC_n}\Db_n\otimes (K\times L)} & {D^\flat\otimes (K\times L)} \\
	{\coprod_n\coprod_{tC_n}\tau^i_n(\Db_n\otimes (K\times L))} & {(D\otimes(K\times L)^\sharp)^\natural}
	\arrow[from=1-1, to=1-2]
	\arrow[from=2-1, to=2-2]
	\arrow[from=1-2, to=2-2]
	\arrow[from=1-1, to=2-1]
\end{tikzcd}\]
Using the canonical morphism $\tau^i_n(\Db_n\otimes K)\otimes L\to  \tau^i_n (\Db_n\otimes K \otimes L)\to \tau^i_n(\Db_n\otimes (K\times L))$, we have a canonical morphism 
$$((D^\flat\otimes K^\sharp)\otimes L^\sharp)^\natural \to (D\otimes(K\times L)^\sharp)^\natural.$$
As all these functors preserves colimits, the full sub $\infty$-groupoid of elements $(D,K,L)$ of $\ocatm\times \icat\times \icat$ such that this comparison is an equivalence and preserves and detects marking is closed by colimits. It is then sufficient to show that it includes $([1]^\sharp,[1],[1])$ and $([a,1],[1],[1])$ for $a\in t\Theta$.
We can then proceed as in the proof of proposition \ref{prop:associativity of Gray2}, making these two objects explicit thanks to the equations given in paragraph \ref{paragrap: equation fullfill by cylinder and join marked version}. As the proof takes up a lot of space and is very similar to that of proposition \textit{op cit}, we leave it to the reader.
\end{proof}

\subsection{Gray operations on marked $\io$-categories}
\p
The Gray tensor product for marked $\io$-category restricts to a functor
$$\uvar\otimes[1]^\sharp:\ocatm \to\ocatm$$ called the \wcsnotion{marked Gray cylinder}{Gray cylinder}{for marked $\io$-categories}\sym{((d30@$\uvar\otimes[1]^\sharp$}.
We will denote by 
$$\begin{array}{rcl}
\ocatm&\to&\ocatm\\
C&\mapsto &C^{[1]^\sharp}
\end{array}$$
its right adjoint.\sym{(c@$C^{[1]^\sharp}$}
The equation \eqref{eq:liens entre Gray cylindre et suspension}, establishing a link between the suspension and the Gray cylinder implies that the following diagram is cocartesian for any $C:\ocat$:
\begin{equation}
\label{eq:liens entre Gray cylindre et suspension version marque}
\begin{tikzcd}
	{C^\flat\otimes\{0,1\}} & {C^\flat\otimes [1]^\sharp} \\
	{1\amalg 1} & {[C,1]^\sharp}
	\arrow[from=1-1, to=2-1]
	\arrow[from=1-1, to=1-2]
	\arrow["\lrcorner"{anchor=center, pos=0.125, rotate=180}, draw=none, from=2-2, to=1-1]
	\arrow[from=2-1, to=2-2]
	\arrow[from=1-2, to=2-2]
\end{tikzcd}
\end{equation}

\begin{prop}
\label{prop:otimes et op marked version}
There is diagram
\[\begin{tikzcd}
	{(C\otimes\{1\})^\circ} & {(C\otimes[1]^\sharp)^\circ} & {(C\otimes\{0\})^\circ} \\
	{C^\circ\otimes\{0\}} & {C^\circ\otimes[1]^\sharp} & {C^\circ\otimes\{1\}}
	\arrow["\sim", from=1-2, to=2-2]
	\arrow["\sim", from=1-3, to=2-3]
	\arrow["\sim", from=1-1, to=2-1]
	\arrow[from=1-3, to=1-2]
	\arrow[from=1-1, to=1-2]
	\arrow[from=2-1, to=2-2]
	\arrow[from=2-3, to=2-2]
\end{tikzcd}\]
natural in $C:\ocatm$,
where all vertical arrows are equivalences. There is an invertible natural transformation
$$ C\star 1\sim (1\costar C^{\circ})^\circ.$$
\end{prop}
\begin{proof}
The corollary \ref{cor:otimes et op} provides an invertible transformation 
$$(C^\natural\otimes [1])^\circ\sim (C^\natural)^\circ\otimes[1]$$
The first assertion then follows from the definition of the Gray tensor product for marked $\io$-categories.
The second assertion is a consequence of the definition of the marked Gray cone and $\circ$-cone.
\end{proof}

\begin{example}
In all the following diagrams, marked cells are represented by crossed-out arrows.

The object $\Db_1^\flat\otimes[1]^\sharp$ corresponds to the diagram
\[\begin{tikzcd}
	00 & 01 \\
	10 & 11
	\arrow[from=1-1, to=2-1]
	\arrow["{/}"{marking}, from=2-1, to=2-2]
	\arrow["{/}"{marking}, from=1-1, to=1-2]
	\arrow[from=1-2, to=2-2]
	\arrow["{/}"{marking}, shorten <=4pt, shorten >=4pt, Rightarrow, from=1-2, to=2-1]
\end{tikzcd}\]
the object $(\Db_1)^\sharp\otimes[1]^\sharp$ corresponds to the diagram
\[\begin{tikzcd}
	00 & 01 \\
	10 & 11
	\arrow["{/}"{marking}, from=1-1, to=2-1]
	\arrow["{/}"{marking}, from=2-1, to=2-2]
	\arrow["{/}"{marking}, from=1-1, to=1-2]
	\arrow["{/}"{marking}, from=1-2, to=2-2]
\end{tikzcd}\]
the object $\Db_2^\flat\otimes[1]^\sharp$ corresponds to the diagram
\[\begin{tikzcd}
	00 & 01 & 00 & 01 \\
	10 & 11 & 10 & 11
	\arrow["{/}"{marking}, from=1-1, to=1-2]
	\arrow[""{name=0, anchor=center, inner sep=0}, from=1-1, to=2-1]
	\arrow["{/}"{marking}, from=2-1, to=2-2]
	\arrow[""{name=1, anchor=center, inner sep=0}, from=1-2, to=2-2]
	\arrow["{/}"{marking}, shorten <=4pt, shorten >=4pt, Rightarrow, from=1-2, to=2-1]
	\arrow[""{name=2, anchor=center, inner sep=0}, from=1-3, to=2-3]
	\arrow["{/}"{marking}, from=1-3, to=1-4]
	\arrow[""{name=3, anchor=center, inner sep=0}, from=1-4, to=2-4]
	\arrow["{/}"{marking}, shorten <=4pt, shorten >=4pt, Rightarrow, from=1-4, to=2-3]
	\arrow[""{name=4, anchor=center, inner sep=0}, curve={height=30pt}, from=1-1, to=2-1]
	\arrow["{/}"{marking}, from=2-3, to=2-4]
	\arrow[""{name=5, anchor=center, inner sep=0}, curve={height=-30pt}, from=1-4, to=2-4]
	\arrow["{ }"', shorten <=6pt, shorten >=6pt, Rightarrow, from=0, to=4]
	\arrow["{ }"', shorten <=6pt, shorten >=6pt, Rightarrow, from=5, to=3]
	\arrow[shift left=0.7, shorten <=6pt, shorten >=8pt, no head, from=1, to=2]
	\arrow[shift right=0.7, shorten <=6pt, shorten >=8pt, no head, from=1, to=2]
	\arrow["{/}"{marking}, shorten <=6pt, shorten >=6pt, from=1, to=2]
\end{tikzcd}\]
and the object $(\Db_2)_t\otimes[1]^\sharp$ corresponds to the diagram
\[\begin{tikzcd}
	00 & 01 & 00 & 01 \\
	10 & 11 & 10 & 11
	\arrow["{/}"{marking}, from=1-1, to=1-2]
	\arrow[""{name=0, anchor=center, inner sep=0}, from=1-1, to=2-1]
	\arrow["{/}"{marking}, from=2-1, to=2-2]
	\arrow[""{name=1, anchor=center, inner sep=0}, from=1-2, to=2-2]
	\arrow["{/}"{marking}, shorten <=4pt, shorten >=4pt, Rightarrow, from=1-2, to=2-1]
	\arrow[""{name=2, anchor=center, inner sep=0}, from=1-3, to=2-3]
	\arrow["{/}"{marking}, from=1-3, to=1-4]
	\arrow[""{name=3, anchor=center, inner sep=0}, from=1-4, to=2-4]
	\arrow["{/}"{marking}, shorten <=4pt, shorten >=4pt, Rightarrow, from=1-4, to=2-3]
	\arrow[""{name=4, anchor=center, inner sep=0}, curve={height=30pt}, from=1-1, to=2-1]
	\arrow["{/}"{marking}, from=2-3, to=2-4]
	\arrow[""{name=5, anchor=center, inner sep=0}, curve={height=-30pt}, from=1-4, to=2-4]
	\arrow["{=}"{marking}, draw=none, from=1, to=2]
	\arrow["{ /}"{marking}, shorten <=6pt, shorten >=6pt, Rightarrow, from=5, to=3]
	\arrow["{ /}"{marking}, shorten <=6pt, shorten >=6pt, Rightarrow, from=0, to=4]
\end{tikzcd}\]
\end{example}

\p 
\label{para:slice and joint}
We also define the functors
$$\uvar\star 1:\ocatm\to\ocatm~~~~~~1\costar \uvar:\ocatm\to \ocatm,$$
respectively called the \wcsnotionsym{marked Gray cone}{((d40@$\uvar\star 1$}{Gray cone}{for marked $\io$-categories} and the \wcsnotion{marked Gray $\circ$-cone}{Gray $\circ$-cone}{for marked $\io$-categories}\index[notation]{((d50@$1\overset{co}{\star}\_$!\textit{for marked $\io$-categories}}, where for any marked $\io$-category $C$, $C\star 1$ and $1\costar C$, fit in the following cocartesian square
\[\begin{tikzcd}
	{C\otimes\{1\}} & {C\otimes [1]^\sharp} & {C\otimes\{0\}} & {C\otimes [1]^\sharp} \\
	1 & {C\star 1} & 1 & {1\costar C}
	\arrow[from=1-1, to=1-2]
	\arrow[from=1-3, to=1-4]
	\arrow[from=1-4, to=2-4]
	\arrow[from=1-3, to=2-3]
	\arrow[from=2-3, to=2-4]
	\arrow[from=1-2, to=2-2]
	\arrow[from=1-1, to=2-1]
	\arrow[from=2-1, to=2-2]
	\arrow["\lrcorner"{anchor=center, pos=0.125, rotate=180}, draw=none, from=2-2, to=1-1]
	\arrow["\lrcorner"{anchor=center, pos=0.125, rotate=180}, draw=none, from=2-4, to=1-3]
\end{tikzcd}\]
These two functors preserve colimit.
The proposition \ref{prop:otimes et op marked version} induces an
invertible natural transformation
$$ C\star 1\sim (1\costar C^{\circ})^\circ.$$
\begin{example}
In all the following diagrams, marked cells are represented by crossed-out arrows.

The objects $\Db_1^\flat\star 1$ and $1\costar \Db_1^\flat$ correspond respectively the diagrams
\[\begin{tikzcd}
	0 &&&& 0 \\
	1 & \star && \star & 1
	\arrow[from=1-1, to=2-1]
	\arrow["{/}"{marking}, from=2-1, to=2-2]
	\arrow[""{name=0, anchor=center, inner sep=0}, "{/}"{marking}, from=1-1, to=2-2]
	\arrow[""{name=1, anchor=center, inner sep=0}, from=1-5, to=2-5]
	\arrow["{/}"{marking}, from=2-4, to=1-5]
	\arrow[""{name=2, anchor=center, inner sep=0}, "{/}"{marking}, from=2-4, to=2-5]
	\arrow["{/}"{marking}, shift right=2, shorten <=4pt, shorten >=4pt, Rightarrow, from=1, to=2]
	\arrow["{/}"{marking}, shorten <=2pt, Rightarrow, from=0, to=2-1]
\end{tikzcd}\]
the objects $(\Db_1)_t\star 1$ and $1\costar (\Db_1)_t$ correspond respectively the diagrams
\[\begin{tikzcd}
	0 &&&& 0 \\
	1 & \star && \star & 1
	\arrow["{/}"{marking}, from=1-1, to=2-1]
	\arrow["{/}"{marking}, from=2-1, to=2-2]
	\arrow["{/}"{marking}, from=1-1, to=2-2]
	\arrow["{/}"{marking}, from=1-5, to=2-5]
	\arrow["{/}"{marking}, from=2-4, to=1-5]
	\arrow["{/}"{marking}, from=2-4, to=2-5]
\end{tikzcd}\]
the objects $\Db_2^\flat\star 1$ and $1\costar \Db_2^\flat$ correspond respectively the diagrams
\[\begin{tikzcd}
	0 & {} & 0 &&& 0 & {} & 0 \\
	1 & \star & 1 & \star & \star & 1 & \star & 1
	\arrow[""{name=0, anchor=center, inner sep=0}, from=1-1, to=2-1]
	\arrow["{/}"{marking}, from=2-1, to=2-2]
	\arrow[""{name=1, anchor=center, inner sep=0}, from=1-3, to=2-3]
	\arrow[""{name=2, anchor=center, inner sep=0}, curve={height=30pt}, from=1-1, to=2-1]
	\arrow["{/}"{marking}, from=2-3, to=2-4]
	\arrow[""{name=3, anchor=center, inner sep=0}, "{/}"{marking}, from=1-1, to=2-2]
	\arrow[""{name=4, anchor=center, inner sep=0}, draw=none, from=1-2, to=2-2]
	\arrow[""{name=5, anchor=center, inner sep=0}, "{/}"{marking}, from=1-3, to=2-4]
	\arrow["{/}"{marking}, from=1-6, to=2-5]
	\arrow[""{name=6, anchor=center, inner sep=0}, from=1-6, to=2-6]
	\arrow[""{name=7, anchor=center, inner sep=0}, "{/}"{marking}, from=2-5, to=2-6]
	\arrow["{/}"{marking}, from=1-8, to=2-7]
	\arrow[""{name=8, anchor=center, inner sep=0}, from=1-8, to=2-8]
	\arrow[""{name=9, anchor=center, inner sep=0}, "{/}"{marking}, from=2-8, to=2-7]
	\arrow[""{name=10, anchor=center, inner sep=0}, curve={height=-30pt}, from=1-8, to=2-8]
	\arrow[""{name=11, anchor=center, inner sep=0}, draw=none, from=1-7, to=2-7]
	\arrow["{ }"', shorten <=6pt, shorten >=6pt, Rightarrow, from=0, to=2]
	\arrow["{/}"{marking}, shorten <=2pt, shorten >=2pt, Rightarrow, from=3, to=2-1]
	\arrow[shift left=0.7, shorten <=6pt, shorten >=8pt, no head, from=4, to=1]
	\arrow[shift right=0.7, shorten <=6pt, shorten >=8pt, no head, from=4, to=1]
	\arrow["{/}"{marking}, shorten <=6pt, shorten >=6pt, from=4, to=1]
	\arrow["{/}"{marking}, shorten <=2pt, Rightarrow, from=5, to=2-3]
	\arrow[shorten <=6pt, shorten >=6pt, Rightarrow, from=10, to=8]
	\arrow["{/}"{marking}, shift right=2, shorten <=4pt, shorten >=4pt, Rightarrow, from=8, to=9]
	\arrow["{/}"{marking}, shift right=2, shorten <=4pt, shorten >=4pt, Rightarrow, from=6, to=7]
	\arrow[shift right=0.7, shorten <=6pt, shorten >=8pt, no head, from=6, to=11]
	\arrow["{/}"{marking}, shorten <=6pt, shorten >=6pt, from=6, to=11]
	\arrow[shift left=0.7, shorten <=6pt, shorten >=8pt, no head, from=6, to=11]
\end{tikzcd}\]
and the objects $(\Db_2)_t\star 1$ and $1\costar (\Db_2)_t$ correspond respectively the diagrams
\[\begin{tikzcd}
	0 & {} & 0 &&& 0 & {} & 0 \\
	1 & \star & 1 & \star & \star & 1 & \star & 1
	\arrow[""{name=0, anchor=center, inner sep=0}, from=1-1, to=2-1]
	\arrow["{/}"{marking}, from=2-1, to=2-2]
	\arrow[""{name=1, anchor=center, inner sep=0}, from=1-3, to=2-3]
	\arrow[""{name=2, anchor=center, inner sep=0}, curve={height=30pt}, from=1-1, to=2-1]
	\arrow["{/}"{marking}, from=2-3, to=2-4]
	\arrow[""{name=3, anchor=center, inner sep=0}, "{/}"{marking}, from=1-1, to=2-2]
	\arrow[""{name=4, anchor=center, inner sep=0}, draw=none, from=1-2, to=2-2]
	\arrow[""{name=5, anchor=center, inner sep=0}, "{/}"{marking}, from=1-3, to=2-4]
	\arrow["{/}"{marking}, from=1-6, to=2-5]
	\arrow[""{name=6, anchor=center, inner sep=0}, from=1-6, to=2-6]
	\arrow[""{name=7, anchor=center, inner sep=0}, "{/}"{marking}, from=2-5, to=2-6]
	\arrow["{/}"{marking}, from=1-8, to=2-7]
	\arrow[""{name=8, anchor=center, inner sep=0}, from=1-8, to=2-8]
	\arrow[""{name=9, anchor=center, inner sep=0}, "{/}"{marking}, from=2-8, to=2-7]
	\arrow[""{name=10, anchor=center, inner sep=0}, curve={height=-30pt}, from=1-8, to=2-8]
	\arrow[""{name=11, anchor=center, inner sep=0}, draw=none, from=1-7, to=2-7]
	\arrow["{/}"{marking}, shorten <=6pt, shorten >=6pt, Rightarrow, from=0, to=2]
	\arrow["{/}"{marking}, shorten <=2pt, shorten >=2pt, Rightarrow, from=3, to=2-1]
	\arrow["{/}"{marking}, shorten <=2pt, Rightarrow, from=5, to=2-3]
	\arrow["{/}"{marking}, shorten <=6pt, shorten >=6pt, Rightarrow, from=10, to=8]
	\arrow["{/}"{marking}, shift right=2, shorten <=4pt, shorten >=4pt, Rightarrow, from=8, to=9]
	\arrow["{/}"{marking}, shift right=2, shorten <=4pt, shorten >=4pt, Rightarrow, from=6, to=7]
	\arrow["{=}"{description}, draw=none, from=4, to=1]
	\arrow["{=}"{marking}, draw=none, from=6, to=11]
\end{tikzcd}\]
\end{example}

We will also denote by 
$$\begin{array}{ccccccc}
\ocatm_{\bullet} &\to&\ocatm&&\ocatm_{\bullet} &\to&\ocatm\\
(C,c)&\mapsto &C_{/c} & &(C,c) &\mapsto &C_{c/}
\end{array}
$$
the right adjoints of Gray cone and of the Gray $\circ$-cone, respectively called the \wcsnotionsym{slice of $C$ over $c$}{(cc@$C_{c/}$}{slice over}{for marked $\io$-categories} and the \wcsnotionsym{slice of $C$ under $c$}{(cc@$C_{/c}$}{slice under}{for marked $\io$-categories}.
The proposition \ref{prop:otimes et op marked version} induces an  invertible natural transformation:
$$C_{/c}\sim (C^{\circ}_{c/})^\circ.$$
Given an $\io$-category $C$, and $c,d$ two objects, the cocartesian square \eqref{eq:liens entre Gray cylindre et suspension version marque} induces two cartesian squares:
\begin{equation}
\label{eq:fiber of marked splices}
\begin{tikzcd}
	{\hom_C(c,d)^\flat} & {C^\sharp_{/d}} & {\hom_C(c,d)^\flat} & {C^\sharp_{c/}} \\
	{\{c\}} & {C^\sharp} & {\{d\}} & {C^\sharp}
	\arrow[from=1-1, to=2-1]
	\arrow[from=2-3, to=2-4]
	\arrow[from=1-4, to=2-4]
	\arrow[from=1-3, to=2-3]
	\arrow[from=1-2, to=2-2]
	\arrow[from=2-1, to=2-2]
	\arrow[from=1-1, to=1-2]
	\arrow["\lrcorner"{anchor=center, pos=0.125}, draw=none, from=1-1, to=2-2]
	\arrow["\lrcorner"{anchor=center, pos=0.125}, draw=none, from=1-3, to=2-4]
	\arrow[from=1-3, to=1-4]
\end{tikzcd}
\end{equation}

\p 
\label{paragrap: equation fullfill by cylinder and join marked version}
The equation given in paragraph \ref{paragrap: equation fullfill by cylinder and join}
induces similar ones for the marked version of these operations. For every marked $\io$-category $C$, there are a natural identification between $[C,1]\otimes [1]^\sharp$ and the colimit of the following diagram
\begin{equation}
\label{eq:eq for cylinder marked version}
\begin{tikzcd}
	{[1]^\sharp\vee [ C,1]} & {[C\otimes\{0\},1]} & {[C\otimes [1]^\sharp,1]} & {[C\otimes\{1\},1]} & {[C,1]\vee[1]^\sharp}
	\arrow[from=1-2, to=1-1]
	\arrow[from=1-2, to=1-3]
	\arrow[from=1-4, to=1-3]
	\arrow[from=1-4, to=1-5]
\end{tikzcd}
\end{equation}
There is also a natural identification between
 $1\costar [C,1]$ and the colimit of the diagram
\begin{equation}
\label{eq:eq for Gray cone marked version}
\begin{tikzcd}
	 {[1]^\sharp\vee [C,1]} & {[C,1]} & {[C\star 1,1]} 
	\arrow[from=1-2, to=1-1]
	\arrow[from=1-2, to=1-3]
\end{tikzcd}
\end{equation}
and between $[C,1] \star 1$ and the colimit of the diagram
\begin{equation}
\label{eq:eq for cojoin marked version}
\begin{tikzcd}
	 {[1\costar C,1]}& {[C,1]} & {[C,1]\vee[1]^\sharp} 
	\arrow[from=1-2, to=1-1]
	\arrow[from=1-2, to=1-3]
\end{tikzcd}
\end{equation}

\p
 For any $C:\ocat$, we denote by \wcnotation{$m_{C^\sharp}$}{(mc@$m_{C^\sharp}$} the colimit preserving functor 
$\ocatm\to\ocatm$ whose value on $[a,n]^\flat$ is $[a\times C^\sharp,n]$, on $[1]^\sharp$ is $[C,1]^\sharp$, and on $[(\Db_n)_t,1]$ is $
[(\Db_n)_t\times C^\sharp,1]$.
Remark that the assignation $C\mapsto m_{C^\sharp}$ is natural in $C$ and that $m_1$ is the identity.
We define the colimit preserving functor:
\ssym{((d20@$\ominus$}{for marked $\io$-categories}
$$\begin{array}{ccc}
\ocatm\times\ocatm &\to& \ocatm\\
(X,Y)&\mapsto &X\ominus Y^\sharp
\end{array}
$$
where for any marked $\io$-category $C$ and element $[b,n]$ of $\Delta[\Theta]$, $C\ominus [b,n]^\sharp$ is the following pushout: 
\begin{equation}
\label{eq: def of ominus marked}
\begin{tikzcd}
	{\coprod\limits_{k\leq n}m_{b^\sharp}(C\otimes\{k\})} & {m_{b^\sharp}(C\otimes[n]^\sharp)} \\
	{\coprod\limits_{k\leq n}m_1(C\otimes\{k\})} & {C\ominus[b,n]^\sharp}
	\arrow[from=1-1, to=2-1]
	\arrow[""{name=0, anchor=center, inner sep=0}, from=1-1, to=1-2]
	\arrow[from=1-2, to=2-2]
	\arrow[from=2-1, to=2-2]
	\arrow["\lrcorner"{anchor=center, pos=0.125, rotate=180}, draw=none, from=2-2, to=0]
\end{tikzcd}
\end{equation}
By construction, we then have $C\ominus [1]^\sharp:=C\otimes [1]^\sharp$.
The equation \eqref{eq:formula for the ominus} implies that for every marked $\io$-category $C$, 
there is a natural identification between $[C,1]\ominus[b,1]^\sharp$ and the colimit of the following diagram
\begin{equation}
\label{eq:formula for the ominus marked case}
\begin{tikzcd}[column sep = 0.3cm]
	{[b,1]^\sharp\vee[C,1]} & {[C\otimes\{0\}\times b^\sharp,1]} & {[(C\otimes[1]^\sharp)\times b^\sharp),1]} & {[C\otimes\{1\}\times b^\sharp,1]} & {[C,1]\vee[b,1]^\sharp}
	\arrow[from=1-2, to=1-3]
	\arrow[from=1-4, to=1-3]
	\arrow[from=1-4, to=1-5]
	\arrow[from=1-2, to=1-1]
\end{tikzcd}
\end{equation}

\begin{prop}
\label{prop:ominus and opmarked}
There is an equivalence 
$$(C\ominus B^\sharp)^\circ\sim C^\circ\ominus (B^\circ)^\sharp$$
natural in $C$ and $B$.
\end{prop}
\begin{proof}
It it sufficient to construct this equivalence when $B$ is of shape $[b,n]$.
The corollary \ref{cor:ominus et op} induces an equivalence
$$(C^\natural\otimes[n])^\circ\sim (C^\circ)^\natural\otimes [n]^\circ.$$
By the construction of the Gray tensor product of marked $\io$-categories, we have an equivalence 
$$(C\otimes[n]^\sharp)^\circ\sim C^\circ\otimes ([n]^\circ)^\sharp.$$
 The results then directly follows from the definition of the operation $\ominus$ and from the equivalence $(m_{b^\sharp}(\uvar))^\circ\sim m_{(b^\sharp)^\circ}((\uvar)^\circ)$.
\end{proof}

\begin{prop}
\label{prop:associativity of ominus}
Let $C$ be a $\io$-category, $D$ a marked $\io$-category and $[b,n]$ a globular sum.
\begin{enumerate}
\item The underlying $\io$-category of $C^\flat\ominus [b,n]^\sharp$ is $C\ominus [b,n]$.
\item The canonical morphism $C^\sharp\ominus [b,n]^\sharp\to C^\sharp\times [b,n]^\sharp$ is an equivalence.
\item The canonical morphism
$(C^\sharp\times D)\ominus [b,n]^\sharp\to C^\sharp\times (D\ominus K^\sharp)$ is an equivalence.
\end{enumerate}
\end{prop}
\begin{proof}
This is a consequence of propositions \ref{prop:cartesian square and times}, \ref{prop:associativity of Gray amput}, \ref{prop:associativity of Gray2} and \ref{prop:associativity of Gray amput2} and of the construction of $\ominus$.
\end{proof}

\p We now give some strictification results.

\begin{lemma}
\label{lemma:a otimes 1 is strict}
Let $C$ be a marked $\io$-category.
The canonical squares
\[\begin{tikzcd}
	C & {C\otimes[1]^\sharp} & C \\
	{\{0\}} & {[1]^\sharp} & {\{1\}}
	\arrow[from=1-1, to=2-1]
	\arrow[from=2-1, to=2-2]
	\arrow[from=2-3, to=2-2]
	\arrow[from=1-2, to=2-2]
	\arrow[from=1-3, to=1-2]
	\arrow[from=1-1, to=1-2]
	\arrow[from=1-3, to=2-3]
\end{tikzcd}\]
are cartesian. 
\end{lemma}
\begin{proof}
As the morphisms $\{\epsilon\}\to [1]$ for $\epsilon\leq 1$ are discrete Conduché functors, pullback along them preserves colimits, and we can then reduce to the case where $C$ is of the shape $[1]^\sharp$ or $[a,1]$ with  $a$ is an element of $t\Theta$. 
The case $C:=[1]^\sharp$ is obvious as we have $[1]^\sharp\otimes[1]^\sharp\sim [1]^\sharp\times[1]^\sharp$ according to  the first assertion of proposition \ref{prop:associativity of Gray amput}. We then focus on the case $C:=[a,1]$.

We claim that for any marked $\io$-category $D$, the square
\begin{equation}
\label{eq:lemma:a otimes 1 is strict}
\begin{tikzcd}
	{\{\epsilon\}} & {[D,1]} \\
	{\{\epsilon\}} & {[1]^\sharp}
	\arrow[from=1-1, to=2-1]
	\arrow[from=2-1, to=2-2]
	\arrow[from=1-1, to=1-2]
	\arrow[from=1-2, to=2-2]
\end{tikzcd}
\end{equation}
is cartesian. To show this, as  the morphisms $\{\epsilon\}\to [1]$, are discrete Conduché functors one can reduce to the case where $D$ is a globular sum, where it is obvious.

We now return to the proof of the assertion.  
Using the equation \eqref{eq:eq for cylinder marked version}, the morphism $[a,1]\otimes[1]^\sharp$ is the horizontal colimit of the following diagram:
\[\begin{tikzcd}
	{[1]^\sharp\vee[a,1]} & {[a\otimes\{0\},1]} & {[a\otimes[1]^\sharp,1]} & {[a\otimes\{1\},1]} & {[a,1]\vee[1]^\sharp} \\
	{[1]^\sharp} & {[1]^\sharp} & {[1]^\sharp} & {[1]^\sharp} & {[1]^\sharp}
	\arrow[from=1-2, to=1-1]
	\arrow[from=1-4, to=1-5]
	\arrow[from=1-4, to=1-3]
	\arrow[from=1-2, to=1-3]
	\arrow[from=2-2, to=2-1]
	\arrow[from=2-2, to=2-3]
	\arrow[from=2-4, to=2-3]
	\arrow[from=2-4, to=2-5]
	\arrow["{s^0}", from=1-5, to=2-5]
	\arrow[from=1-4, to=2-4]
	\arrow[from=1-2, to=2-2]
	\arrow[from=1-3, to=2-3]
	\arrow["{s^1}"', from=1-1, to=2-1]
\end{tikzcd}\]

The results is then a direct application of the cartesian square \eqref{eq:lemma:a otimes 1 is strict} and of the fact  that pullbacks along morphisms $\{\epsilon\}\to [1]$ for $\epsilon\leq 1$ preserves colimits.
\end{proof}

\begin{prop}
\label{prop:tensor of glboer are strics}
For any object $a$ of $t\Theta$, the marked $\io$-categories $a\otimes [1]^\sharp$, $a\star 1$ and $1\costar a$ are strict. 
\end{prop}
\begin{proof}
We will show only the the strictness of the object $a\otimes[1]^\sharp$, as the proofs for $a\star 1$ and $1\costar a$ are similar.

Suppose first that $a$ is of shape $b^\flat$.
The first assertion of proposition \ref{prop:associativity of Gray amput} implies that the underlying $\io$-categories of $b^\flat\otimes [1]^\sharp$ is $b\otimes [1]$ which is strict according to proposition \ref{prop:strict stuff are pushout}.

To conclude, we have to show that for any integer $n$, $(\Db_n)_t\otimes[1]^\sharp$ is strict. We proceed by induction.
Suppose first that $a$ is $(\Db_1)_t$. The second assertion of proposition \ref{prop:associativity of Gray amput} implies that 
$(\Db_1)_t \otimes[1]^\sharp$ is $([1]\times[1])^\sharp$ which is a strict object. 

Suppose now that $(\Db_n)_t\otimes[1]^\sharp$ is strict. 
The equation \eqref{eq:eq for cylinder marked version} stipulates that $(\Db_{n+1})_t\otimes[1]^\sharp$ is the colimit of the diagram.
$$\begin{tikzcd}[column sep = 0.2cm]
	{[1]^\sharp\vee [(\Db_n)_t,1]} & {[(\Db_n)_t\otimes\{0\},1]} & {[(\Db_n)_t\otimes [1]^\sharp,1]} & {[(\Db_n)_t\otimes\{1\},1]} & {[\Db_n)_t,1]\vee[1]^\sharp}
	\arrow[from=1-2, to=1-1]
	\arrow[from=1-2, to=1-3]
	\arrow[from=1-4, to=1-3]
	\arrow[from=1-4, to=1-5]
\end{tikzcd}$$
The induction hypothesis and the proposition \ref{prop:suspension preserves stricte} implies that all the objects are strict. According to proposition \ref{prop:example of a special colimit3 marked case}, whose hypotheses are provided by lemma \ref{lemma:a otimes 1 is strict}, this diagram admits a special colimit. As all the morphisms are monomorphism, this implies that $(\Db_{n+1})_t\otimes[1]^\sharp$ is strict, which concludes the proof.
\end{proof}

\begin{prop}
\label{prop:some strict marked io categories2}
If $C$ is a marked $\io$-category, $a$ a globular sum and $a^\flat\to C$ any morphism, the $\io$-categories $C\coprod_{a^\flat} a^\flat\otimes [1]^\sharp$, $C\coprod_{a^\flat} \star 1$ and $1\costar a^\flat \coprod_aC$ are strict.
\end{prop}
\begin{proof}
Using the first assertion of proposition \ref{prop:associativity of Gray amput}, the underlying $\io$-categories of $C\coprod_{a^\flat} a^\flat\otimes [1]^\sharp$, $C\coprod_{a^\flat}a^\flat \star 1$ and $1\costar a^\flat \coprod_aC$ are respectively $C^\natural\coprod_{a} a\otimes [1]$, $C^\natural\coprod_{a} a\star 1$ and $1\costar a \coprod_aC^\natural$, which are strict objects according to propositions \ref{prop:strict stuff are stable under Gray cone} and \ref{prop:strict stuff are stable under coproduc with cylinder}.
\end{proof}

\begin{theorem}
\label{theo:strictness marked}
If $C$ is strict $\io$-category, the marked $\io$-categories $C^\flat\star 1$, $1\costar C^\flat$ and $C^\flat\otimes [1]^\sharp$ are strict.
\end{theorem}
\begin{proof}
The first assertion of proposition \ref{prop:associativity of Gray amput} implies that the underlying $\io$-categories of these marked $\io$-categories respectively are $C\star 1$, $1\costar C$ and $C\otimes [1]$. As these objects are strict according to theorem \ref{theo:strictness}, this concludes the proof.
\end{proof}

\begin{prop}
\label{prop:crushing of Gray tensor is identitye marked case}
The colimit preserving endofunctor $F:\ocat\to \ocatm$, sending $[a,n]$ to the colimit of the span
$$\coprod_{k\leq n}\{k\}\leftarrow \coprod_{k\leq n}a^\flat\otimes\{k\}\to a^\flat\otimes[n]^\sharp$$
is equivalent to the functor $(\uvar)^\sharp:\ocat\to \ocatm$.
\end{prop}
\begin{proof}
This is a direct consequence of the first assertion of proposition \ref{prop:associativity of Gray amput}, of corollary \ref{cor:crushing of Gray tensor is identitye} and of the definition of the marking of the Gray tensor product for marked $\io$-categories.
\end{proof}
The last proposition implies that for any marked $\io$-category $C$ and any globular sum $a$, the simplicial $\infty$-groupoid
$$\begin{array}{rcl}
\Delta^{op}&\to &\igrd\\
~[n]~&\mapsto &\Hom([a,n]^\sharp,C)
\end{array} $$
is a $\iun$-category.

\begin{theorem}
\label{theo:formula between pullback of slice and tensor marked case}
Let $C$ be an $\io$-category. The two following canonical squares are cartesian:
\[\begin{tikzcd}
	1 & {1\costar C^\flat} & 1 & {C^\flat\star 1} \\
	{\{0\}} & {[C,1]^\sharp} & {\{1\}} & {[C,1]^\sharp}
	\arrow[from=1-1, to=1-2]
	\arrow[from=2-1, to=2-2]
	\arrow[from=1-1, to=2-1]
	\arrow[from=1-2, to=2-2]
	\arrow[from=1-3, to=1-4]
	\arrow[from=2-3, to=2-4]
	\arrow[from=1-3, to=2-3]
	\arrow[from=1-4, to=2-4]
\end{tikzcd}\]
The five squares appearing in the following canonical diagram are both cartesian and cocartesian:
\[\begin{tikzcd}
	& {C^\flat\otimes\{0\}} & 1 \\
	{C^\flat\otimes\{1\}} & {C^\flat\otimes[1]^\sharp} & {C^\flat\star 1} \\
	1 & {1\costar C^\flat} & {[C,1]^\sharp}
	\arrow[from=2-3, to=3-3]
	\arrow[from=3-2, to=3-3]
	\arrow[from=2-2, to=3-2]
	\arrow[from=2-2, to=2-3]
	\arrow[from=1-2, to=1-3]
	\arrow[from=1-3, to=2-3]
	\arrow[from=1-2, to=2-2]
	\arrow[from=2-1, to=2-2]
	\arrow[from=3-1, to=3-2]
	\arrow[from=2-1, to=3-1]
\end{tikzcd}\]
\end{theorem}
\begin{proof}
This is a direct consequence of the first assertion of proposition \ref{prop:associativity of Gray amput}, of theorem \ref{theo:formula between pullback of slice and tensor} and of the definition of the marking of the Gray tensor product for marked $\io$-categories.
\end{proof}

\subsection{Marked Gray deformation retract}
We provide analogous results for section \ref{subsection:Gray deformation retract}, with proofs that are entirely similar and, therefore, omitted.

\p A \wcnotion{left Gray deformation retract structure}{left or right Gray deformation retract structure} for a morphism $i:C\to D$ between marked $\io$-categories is the data of a \textit{retract}
 $r:D\to C$, a \textit{deformation} $\psi:D\otimes [1]^\sharp\to D$, and equivalences
$$ri\sim id_C~~~~~\psi_{|D\otimes\{0\}}\sim ir~~~~~\psi_{|D\otimes\{1\}}\sim id_D~~~~~ \psi_{|C\otimes[1]^\sharp}\sim i\cst_C
$$ 
A morphism $i:C\to D$ between marked $\io$-categories is a \wcnotion{left Gray deformation retract}{left or right Gray deformation retract} if it admits a left deformation retract structure. By abuse of language, such data will just be denoted by $(i,r,\psi)$.

We define dually the notion of \textit{right Gray deformation retract structure} and of \textit{right Gray deformation retract} in exchanging $0$ and $1$ in the previous definition.

We define similarly the notion of \notion{left or right deformation retract} by replacing $\otimes$ by $\times$.

\p
 A \textit{left Gray deformation retract structure for a morphism $i:f\to g$} in the $\iun$-category of arrows of $\ocatm$ is the data of a \textit{retract}
 $r:g\to f$, a \textit{deformation} $\psi:g\otimes [1]^\sharp\to g$ and equivalences
$$ri\sim id_f~~~~~\psi_{|g\otimes\{0\}}\sim ir~~~~~\psi_{|g\otimes\{1\}}\sim id_D~~~~~ \psi_{|f\otimes[1]^\sharp}\sim i\cst_C
$$ 
A morphism $i:C\to D$ between two arrows of $\ocatm$ is a \textit{left Gray deformation retract} if it admits a left deformation retract structure. By abuse of language, such data will just be denoted by $(i,r,\psi)$.

We define dually the notion of \textit{right Gray deformation retract structure} and of \textit{right Gray deformation retract} in exchanging $0$ and $1$ in the previous definition.

We define similarly the notion of \notion{left and right deformation retract} by replacing $\otimes$ by $\times$.

\begin{example}
\label{example:canonical example of left deformation retract}
Let $C$ be a marked $\io$-category. The morphism $C\otimes\{0\}\to C\otimes[1]^\sharp$ is a left Gray deformation retract.
Indeed, the retract is given by $C\otimes\Ib:C\otimes[1]^\sharp\to C\otimes\{0\}$, and the natural transformation is induced by
$$(C\otimes[1]^\sharp)\otimes[1]^\sharp\sim C\otimes([1]\times [1])^\sharp\xrightarrow{C\otimes\psi^\sharp} C\otimes[1]^\sharp$$
where the first equivalence is the one of proposition \ref{prop:associativity of Gray amput2}, and $\psi:[1]\times[1]\to [1]$ is the unique morphism sending $(\epsilon,\epsilon')$ to $\epsilon\wedge \epsilon'$.

Similarly, the morphism $C\otimes\{1\}\to C\otimes[1]^\sharp$ is a right deformation retract.
\end{example}

\p Left and right Gray retracts enjoy many stability properties: 
\begin{prop}
\label{prop:left Gray deformation retract stable under pushout}
Let $(i_a,r_a,\psi_a)$ be a natural family of left (resp. right) Gray deformation retract structures indexed by an $(\infty,1)$-category $A$.
The triple $(\colim_{A}i_a,\colim_{A}r_a,\colim_{A}\psi_a)$ is a left (resp. right) $k$-Gray deformation retract structure.
\end{prop}

\begin{prop}
\label{prop:stability under pullback}
Suppose given a diagram
\[\begin{tikzcd}
	X & Y & Z \\
	X & {Y'} & {Z'}
	\arrow[from=1-1, to=2-1]
	\arrow[from=1-2, to=2-2]
	\arrow[from=1-3, to=2-3]
	\arrow["p", from=1-1, to=1-2]
	\arrow["q"', from=1-3, to=1-2]
	\arrow["{p'}"', from=2-1, to=2-2]
	\arrow["{q'}", from=2-3, to=2-2]
\end{tikzcd}\]
such that $p\to p'$ and $q\to q'$ are left (resp. right) Gray deformation retract. The induced square $q^*p\to (q')^*p'$ is a left (resp. right) $k$-Gray deformation retract.
\end{prop}

\begin{prop}
\label{prop:stability by composition }
If $p\to p'$ and $p'\to p''$ are two left (resp. right) Gray deformation retracts, so is $p\to p''$.
\end{prop}

\begin{prop}
\label{prop:Gray deformation retract and passage to hom}
Let $(i:C\to D,r,\psi)$ be a left (resp. right) Gray deformation structure. For any $x: C$ and $y:D$ (resp. $x: D$ and $y:C$), the morphism
$$\begin{array}{cc}
&\hom_C(x,ry)\xrightarrow{i} \hom_D(ix,iry)\xrightarrow{{\psi_y}_!} \hom_D(ix,y)\\
(resp. &\hom_C(rx,y)\xrightarrow{i} \hom_D(irx,iy)\xrightarrow{{\psi_x}_!} \hom_D(x,iy))
\end{array}
$$
is a right (resp. left) Gray deformation retract, whose retract is given by 
$$\begin{array}{cc}
&\hom_D(ix,y)\xrightarrow{r}\hom_C(x,ry)\\
(resp. &\hom_D(x,iy)\xrightarrow{r}\hom_C(rx,y))
\end{array}$$

If $(i:C\to D,r,\psi)$ is a left (resp. right) deformation structure, for any $x: C$ and $y:D$ (resp. $x: D$ and $y:C$), the two morphisms above are inverses one of each other.
\end{prop}

\begin{prop}
\label{prop:Gray deformation retract and passage to hom v2}
For any left (resp. right) Gray deformation retracts between $p$ and $p'$:
\[\begin{tikzcd}
	C & D \\
	{C'} & {D'}
	\arrow["p"', from=1-1, to=2-1]
	\arrow["i", from=1-1, to=1-2]
	\arrow["{p'}", from=1-2, to=2-2]
	\arrow["{i'}"', from=2-1, to=2-2]
\end{tikzcd}\]
and for any pair of objects $x: C$ and $y:D$ (resp. $x: D$ and $y:C$), the outer square of the following diagram
\[\begin{tikzcd}
	{\hom_{C}(x,ry)} & {\hom_{D}(ix,iry)} & {\hom_{D}(ix,y)} \\
	{\hom_{C'}(px,pr'y)} & {\hom_{D'}(p'i'x,p'i'r'y)} & {\hom_{D'}(p'i'x,p'y)}
	\arrow["{i'}"', from=2-1, to=2-2]
	\arrow["{{\psi'_{p'y}}_!}"', from=2-2, to=2-3]
	\arrow[from=1-1, to=2-1]
	\arrow[from=1-3, to=2-3]
	\arrow["{{\psi_y}_!}", from=1-2, to=1-3]
	\arrow["i", from=1-1, to=1-2]
	\arrow[from=1-2, to=2-2]
\end{tikzcd}\]
(resp.
\[\begin{tikzcd}
	{\hom_{C}(rx,y)} & {\hom_{D}(irx,iy)} & {\hom_{D}(x,iy)} \\
	{\hom_{C'}(pr'x,py)} & {\hom_{D'}(p'i'r'x,p'i'y)} & {\hom_{D'}(p'x,p'i'y)\big)}
	\arrow["{i'}"', from=2-1, to=2-2]
	\arrow["{{\psi'_{p'x}}_!}"', from=2-2, to=2-3]
	\arrow[from=1-1, to=2-1]
	\arrow[from=1-3, to=2-3]
	\arrow["{{\psi_x}_!}", from=1-2, to=1-3]
	\arrow["i", from=1-1, to=1-2]
	\arrow[from=1-2, to=2-2]
\end{tikzcd}\]
is a left (resp. right) Gray deformation retract, whose retract is given by
\[\begin{tikzcd}
	{\hom_{D}(ix,y)} & {\hom_{C}(x,ry)} & {(resp.\hom_{D}(x,iy)} & {\hom_{C}(rx,y)} \\
	{\hom_{D'}(p'i'x,p'y)\big)} & {\hom_{C'}(px,pr'y)} & {\hom_{D'}(p'x,p'i'y)} & {\hom_{C'}(pr'x,py)\big)}
	\arrow[from=1-3, to=2-3]
	\arrow["r", from=1-3, to=1-4]
	\arrow["{r'}"', from=2-3, to=2-4]
	\arrow[from=1-4, to=2-4]
	\arrow["{r'}"', from=2-1, to=2-2]
	\arrow["r", from=1-1, to=1-2]
	\arrow[from=1-2, to=2-2]
	\arrow[from=1-1, to=2-1]
\end{tikzcd}\]
If $p\to p'$ is a left (resp. right) deformation structure, for any $x: C$ and $y:D$ (resp. $x: D$ and $y:C$), the two morphisms above are inverses one of each other.
\end{prop}

\begin{prop}
\label{prop:suspension of left Gray deformation retract}
If $i$ is a left Gray deformation retract, $[i,1]$ is a right Gray deformation retract. Conversely, if $i$ is a right Gray deformation retract, $[i,1]$ is a left Gray deformation retract morphism.
\end{prop}

\begin{prop}
\label{prop:when glob inclusion are left Gray deformation}
Let $a$ be a globular sum of dimension $(n+1)$. We denote by $s_n(a)$ and $t_n(a)$ the globular sum defined in \ref{para:definition of source et but}. If $n$ is even, $s_n(a)^\flat\to a^{\sharp_n}$ is a left Gray deformation retract, and $t_n(a)^\flat\to a^{\sharp_n}$ is a right Gray deformation retract. Dually, if $n$ is odd, $t_n(a)^\flat\to a^{\sharp_n}$ is a left Gray deformation retract, and $s_n(a)^\flat\to a^{\sharp_n}$ is a right Gray deformation retract.
\end{prop}

\begin{prop}
\label{prop:exemple of right deformation retract}
Let $i:C\to D$ be a left Gray deformation retract and $A$ a marked $\io$-category.
The morphism $A\times i$ is a left Gray deformation retract. 
\end{prop}
\begin{proof}
Let $r$ and $\psi$ be retracts and deformation of $i$.
We define $\psi_A$ as the composite
$$(A\times D)\otimes[1]^\sharp\to A\times (D\otimes[1]^\sharp)\xrightarrow{A\times \psi} A\times D$$
Remark that the triple $(A\times i,A\times r,\psi_A)$ is a left Gray deformation retract structure.
\end{proof}

\begin{prop}
\label{prop:retraction criter}
Let $(i:[C,1]\to D,r,\phi)$ be a left deformation retract structure. The following natural square is cartesian:
\[\begin{tikzcd}
	D & {\uHom([1]^\sharp,D)} \\
	{[C,1]} & D
	\arrow["i"', from=2-1, to=2-2]
	\arrow["r"', from=1-1, to=2-1]
	\arrow["{(i^-_0)_!}", from=1-2, to=2-2]
	\arrow["\phi", from=1-1, to=1-2]
\end{tikzcd}\]
\end{prop}
\begin{proof}
We set $P:=[C,1]\times_{D}\uHom([1]^\sharp,D)$ and $\psi:D\to P$ the induced morphism.
The proposition \ref{prop:example of a special colimit marked case} implies that $\hom_{ P}(\psi(x),\psi(y))$ is the limit of the diagram:
\[\begin{tikzcd}
	{\hom_{[C,1]}(rx,ry)} & {\hom_{D}(irx,iry)} & {\hom_{D}(irx,y)} & {\hom_{D}(x,y)}
	\arrow["i", from=1-1, to=1-2]
	\arrow["{{\phi_y}_!}", from=1-2, to=1-3]
	\arrow["{{\phi_x}_!}"', from=1-4, to=1-3]
\end{tikzcd}\]
The proposition \ref{prop:Gray deformation retract and passage to hom} then implies that the canonical morphism
$$\hom_D( x,y)\to \hom_{ P}(\psi(x),\psi(y))$$
is an equivalence.

The morphism $\psi$ is then fully faithful. According to proposition \ref{prop:fully faithful plus surjective on objet marked case}, it remains to show that it induces a surjection on objects. For this, let $v:x\to y$ be an element of $P$. As the only marked $1$-cells in $[C,1]$ are equivalences, $r(v)$ is an equivalence. The morphism 
$$[1]^\sharp\times[1]^\sharp\xrightarrow{v\times [1]^\sharp} D\times [1]^\sharp\xrightarrow{\phi} D$$
 induces a square in $D$ of shape
\[\begin{tikzcd}
	irx & x \\
	iry & y
	\arrow["{\phi(y)}"', from=2-1, to=2-2]
	\arrow["v", from=1-2, to=2-2]
	\arrow["\sim", from=1-1, to=1-2]
	\arrow["\sim", from=1-1, to=2-1]
	\arrow["{ir(v)}"', draw=none, from=1-1, to=2-1]
\end{tikzcd}\]
where all the arrows labeled by $\sim$ are equivalences. This implies that $v\sim \phi(y)$ and the morphism $\psi$ is then surjective on objects. This concludes the proof.
\end{proof}

\section{Cartesian fibrations}
\subsection{Left and right cartesian fibrations}
\label{subsection Left and right cartesian fibration}

\p We denote by \wcnotation{$\I$}{(i@$\I$} the set of morphisms of shape $X\otimes \{0\}\to X\otimes [1]^\sharp$ for $X$ being either $\Db_n^\flat$ or $(\Db_n)_t$. A morphism is \wcnotion{initial}{initial morphism} if it is in $\widehat{\I}$. Conversely, we denote by \wcnotation{$\F$}{(f@$\F$} the set of morphisms of shape $X\otimes \{1\}\to X\otimes [1]^\sharp$ for $X$ being either $\Db_n^\flat$ or $(\Db_n)_t$. A morphism is \wcnotion{final}{final morphism} if it is in $\widehat{\F}$.

Initial and final morphisms are stable under colimits, retract, composition and  left cancellation according to the result of section \ref{section:Factorization system}.  

The proposition \ref{prop:otimes et op marked version} implies that the full duality $(\uvar)^\circ$ sends final (resp. initial) morphisms to initial (resp. final) morphisms.

\begin{example}
\label{exe:the easiest example of initial and finla morphism}
By stability of initial and final morphisms by colimits, for any marked $\io$-category $C$, $C\otimes\{0\}\to C\otimes[1]^\sharp$ is initial, and $C\otimes\{1\}\to C\otimes[1]^\sharp$ is final.
\end{example}

\begin{prop}
\label{prop:left Gray deformation retract are initial}
Left Gray deformation retracts (resp. left deformation retract) are initial and right Gray deformation retracts (resp. right deformation retract) are final. 
\end{prop}
\begin{proof} 
Let $i:C\to D$ be a left Gray deformation retract. The diagram
\[\begin{tikzcd}
	C & {D\otimes\{0\}} & C \\
	{D\otimes\{1\}} & {D\otimes [1]^\sharp} & D
	\arrow["i"', from=1-1, to=2-1]
	\arrow[from=2-1, to=2-2]
	\arrow[from=1-2, to=2-2]
	\arrow["\psi"', from=2-2, to=2-3]
	\arrow["i", from=1-1, to=1-2]
	\arrow["r", from=1-2, to=1-3]
	\arrow["i", from=1-3, to=2-3]
\end{tikzcd}\]
expresses $i$ as a retract of $D\otimes \{0\}\to D\otimes [1]^\sharp$, which is an initial morphism according to example \ref{exe:the easiest example of initial and finla morphism}. The morphism  $i$ is then initial. 

As left deformation retracts are left Gray deformation retracts, they are initial.
The case of right (Gray) deformation retracts follows by duality.
\end{proof}

\begin{cor}
\label{cor:when glob inclusion are final and initial}
Let $a$ be a globular sum of dimension $(n+1)$. We denote by $s_n(a)$ and $t_n(a)$ the globular sum defined in \ref{para:definition of source et but}. If $n$ is even, $s_n(a)^\flat\to a^{\sharp_n}$ is initial, and $t_n(a)^\flat\to a^{\sharp_n}$ is final. Dually, if $n$ is odd, $t_n(a)^\flat\to a^{\sharp_n}$ is initial, and $s_n(a)^\flat\to a^{\sharp_n}$ is final
\end{cor}
\begin{proof}
This is a direct consequence of propositions \ref{prop:when glob inclusion are left Gray deformation} and \ref{prop:left Gray deformation retract are initial}.
\end{proof}

\begin{prop}
\label{prop:trivialization are initial}
For any $n$, the morphism $\Ib_n:(\Db_{n+1})_t\to \Db_n^\flat$ is both initial and final.
\end{prop}
\begin{proof}
According to lemma \ref{cor:when glob inclusion are final and initial} there exists $\alpha\in\{-,+\}$ such that 
 $i_{n}^\alpha:(\Db_n)^\flat\to (\Db_{n+1})_t$ is initial.
 As $\Ib_n$ is a retraction of this morphism, and as initial morphisms are closed under left cancellation according to proposition \ref{prop:closed under colimit imply saturated}, $\Ib_n$ is initial. The second case follows by duality.
\end{proof}
These morphisms will be called the \wcnotion{marked trivializations}{marked trivialization}.

\begin{prop}
\label{prop:cotimes 1 to c is a trivialization}
Let $C$ be a marked $\io$-category.
The morphism $C\otimes[1]^\sharp\to C$ is in the smallest cocomplete $\infty$-groupoid of morphism containing the marked trivialization. In particular, this morphism is both initial and final. 
\end{prop}
\begin{proof}
We denote $K$ the smallest cocomplete $\infty$-groupoid of morphisms containing the marked trivializations.
As the $\infty$-groupoid of objects $C$ fulfilling the wanted property is closed by colimits, it is sufficient to demonstrate the result for $C$ being either $\Db_n^\flat$ or $(\Db_{n+1})_t$ for $n$ an integer. We will then proceed by induction. Suppose first that $C$ is $\Db_0^\flat$ or $(\Db_1)_t$. The first case is trivial, for the second one, remark that $(\Db_1)_t\otimes[1]^\sharp\sim [1]^\sharp\times[1]^\sharp\to [1]^\sharp$ is the horizontal colimit of the diagram
\[\begin{tikzcd}
	{[2]^\sharp} & {[1]^\sharp} & {[2]^\sharp} \\
	{[1]^\sharp} & {[1]^\sharp} & {[1]^\sharp}
	\arrow["{s^0}"', from=1-1, to=2-1]
	\arrow[from=1-2, to=2-2]
	\arrow["{s^1}", from=1-3, to=2-3]
	\arrow[from=1-2, to=1-1]
	\arrow[from=1-2, to=1-3]
	\arrow[from=2-2, to=2-1]
	\arrow[from=2-2, to=2-3]
\end{tikzcd}\]
and is then in $K$. Suppose now the result is true at the stage $(n-1)$. Let $C$ be $\Db_n^\flat$ (resp.$(\Db_{n+1})_t$). We set $D:=\Db_{n-1}^\flat$ (resp. $D:=(\Db_{n})_t$). We then have $C\sim [D,1]$. The equation \eqref{eq:eq for cylinder marked version} implies that $C\otimes[1]^\sharp\to C$ is the horizontal colimit of the diagram:
\[\begin{tikzcd}
	{[1]^\sharp\vee[D,1]} & {[D\otimes\{0\},1]} & {[D\otimes[1]^\sharp,1]} & {[D\otimes\{1\},1]} & {[D,1]\vee[1]^\sharp} \\
	{[D,1]} & {[D,1]} & {[D,1]} & {[D,1]} & {[D,1]}
	\arrow[from=2-2, to=2-1]
	\arrow[from=2-2, to=2-3]
	\arrow[from=2-4, to=2-3]
	\arrow[from=2-4, to=2-5]
	\arrow[from=1-1, to=2-1]
	\arrow[from=1-3, to=2-3]
	\arrow[from=1-4, to=2-4]
	\arrow[from=1-5, to=2-5]
	\arrow[from=1-2, to=1-1]
	\arrow[from=1-2, to=1-3]
	\arrow[from=1-4, to=1-3]
	\arrow[from=1-4, to=1-5]
	\arrow[from=1-2, to=2-2]
\end{tikzcd}\]
The leftest and rightest morphisms obviously are in $K$.
As marked trivializations are stable by suspension, the induction hypothesis implies that the middle vertical morphisms of the previous diagram are in $K$, which concludes the proof.
\end{proof}
\begin{prop}
\label{prop:cotimes 1 to ctimes 1 is a trivialization}
Let $C$ be a marked $\io$-category.
The morphism $C\otimes[1]^\sharp\to C\times [1]^\sharp$ is in the smallest cocomplete $\infty$-groupoid of morphism containing the marked trivializations. In particular, this morphism is both initial and final. 
\end{prop}
\begin{proof}
We denote $K$ the smallest cocomplete $\infty$-groupoid of morphisms containing the marked trivializations.
As the $\infty$-groupoid of objects $C$ fulfilling the wanted property is closed by colimits, it is sufficient to demonstrate the result for $C$ being either $\Db_n^\flat$ or $(\Db_{n+1})_t$ for $n$ an integer. If $C$ is either $(\Db_0)^\flat$ or $(\Db_1)_t$ the considered morphism is the identity. We then suppose that $n>0$. Let $C$ be $\Db_n^\flat$ (resp.$(\Db_{n+1})_t$). We set $D:=\Db_{n-1}^\flat$ (resp. $D:=(\Db_{n})_t$). We then have $C\sim [D,1]$. The equation \eqref{eq:eq for cylinder marked version} and the equation given in \ref{prop:example of a special colimit marked case} imply that $C\otimes[1]^\sharp\to C\times[1]^\sharp$ is the horizontal colimit of the diagram:
\[\begin{tikzcd}
	{[1]^\sharp\vee[D,1]} & {[D\otimes\{0\},1]} & {[D\otimes[1]^\sharp,1]} & {[D\otimes\{1\},1]} & {[D,1]\vee[1]^\sharp} \\
	{[1]^\sharp\vee[D,1]} & {[D,1]} & {[D,1]} & {[D,1]} & {[D,1]\vee[1]^\sharp}
	\arrow[from=2-2, to=2-1]
	\arrow[from=2-2, to=2-3]
	\arrow[from=2-4, to=2-3]
	\arrow[from=2-4, to=2-5]
	\arrow[from=1-1, to=2-1]
	\arrow[from=1-3, to=2-3]
	\arrow[from=1-4, to=2-4]
	\arrow[from=1-5, to=2-5]
	\arrow[from=1-2, to=1-1]
	\arrow[from=1-2, to=1-3]
	\arrow[from=1-4, to=1-3]
	\arrow[from=1-4, to=1-5]
	\arrow[from=1-2, to=2-2]
\end{tikzcd}\]
The proposition \ref{prop:cotimes 1 to c is a trivialization} then states that the middle vertical morphisms of the previous diagram are in $K$, which concludes the proof.
\end{proof}

\begin{prop}
\label{prop:suspension of initial}
If $i$ is an initial morphism, $[i,1]$ is a final morphism. Conversely, if $i$ is a final morphism, $[i,1]$ is an initial morphism.
\end{prop}
\begin{proof}
As the suspension preserves colimits, we can restrict to the case where $i$ is of shape $C\otimes\{0\}\to C\otimes[1]^\sharp$, and this is then a consequence of propositions \ref{prop:suspension of left Gray deformation retract} and \ref{prop:left Gray deformation retract are initial}.
\end{proof}

\begin{prop}
\label{prop:initial stable under product}
For any marked $\io$-category $K$, the functor $K\times\uvar:\ocatm\to \ocatm$ preserves initial and final morphisms. 
\end{prop}
\begin{proof}
The functor $K\times\uvar$ preserves colimits and this is then enough to show that 
it preserves left and right Gray deformation retracts, which is a consequence of proposition \ref{prop:exemple of right deformation retract}.
\end{proof}

\p
A \wcnotion{left cartesian fibration}{left or right cartesian fibration} is a morphism $f:C\to D$ between marked $\io$-categories having the unique right lifting property against initial morphisms.
A \textit{right cartesian fibration} is a morphism $f:C\to D$ between marked $\io$-categories having the unique right lifting property against final morphisms.

Left and right cartesian fibrations are stable under limits, retract, composition and  right cancellation according to the result of section \ref{section:Factorization system}.

The proposition \ref{prop:otimes et op marked version} implies that the full duality $(\uvar)^\circ$ sends left (resp. right) cartesian fibrations to right (resp. left) cartesian fibrations.

The construction \ref{cons:small object argument} produces a unique factorization system between initial (resp final) morphisms and left (resp. right) cartesian fibrations. If $f:A\to B$ is any morphism, we will denote by $\Fb f: A'\to B$ the left cartesian fibration obtained via this factorization system. 

\begin{prop}
\label{prop:left fib over flat}
If $f:C\to D^\flat$ is a left cartesian fibration, then the canonical morphism $(C^\natural)^\flat\to C$ is an equivalence. Conversely, any morphism $C^\flat \to D^\flat$ is a left cartesian fibration.
\end{prop}
\begin{proof}
The first assertion is a consequence of the fact that marked trivializations are initial. The second assertion is a direct consequence of proposition \ref{prop:cotimes 1 to c is a trivialization}.
\end{proof}

\begin{prop}
\label{prop:cartesian fibration between arrow}
Let $p:X\to C$ be a morphism, and $x,y$ two objects of $X$. Then, if $p$ is a right (resp. left) cartesian fibration, the induced morphism $p:\hom_X(x,y)\to \hom_C(x,y)$ is a left (resp. right) cartesian fibration.
\end{prop}
\begin{proof}
This is a direct consequence of proposition \ref{prop:suspension of initial}.
\end{proof}

\begin{prop}
\label{prop:left Gray transfomration stable under pullback along cartesian fibration}
Consider a cocartesian square
\[\begin{tikzcd}
	{X''} & {X'} & X \\
	{Y''} & {Y'} & Y
	\arrow["p"', from=1-3, to=2-3]
	\arrow["{p''}"', from=1-1, to=2-1]
	\arrow["{p'}"', from=1-2, to=2-2]
	\arrow["j", from=1-1, to=1-2]
	\arrow[from=1-2, to=1-3]
	\arrow["i"', from=2-1, to=2-2]
	\arrow[from=2-2, to=2-3]
	\arrow["\lrcorner"{anchor=center, pos=0.125}, draw=none, from=1-1, to=2-2]
	\arrow["\lrcorner"{anchor=center, pos=0.125}, draw=none, from=1-2, to=2-3]
\end{tikzcd}\]
If $p$ is a left (resp. right) cartesian fibration and $i$ is a right (resp. left) Gray deformation retract, then $p''\to p'$ is a right (resp. left) Gray deformation retract. Moreover, this left (resp. right) Gray deformation retract structure is functorial in $p$.

Similarly, if $p$ is a left (resp. right) cartesian fibration and $i$ is a right (resp. left) deformation retract, then $p''\to p'$ is a right (resp. left) deformation retract.  This left (resp. right) deformation retract structure is functorial in $p$. 
\end{prop}
\begin{proof}
We suppose that $p$ is a right cartesian fibration. By stability under pullbacks, so is $p'$.
Let $(i:C\to D,r,\phi)$ be a left Gray deformation retract structure.
We define the morphism $\psi$ as the lift of the following commutative square:
\[\begin{tikzcd}
	{X''\otimes[1]^\sharp\cup X'\otimes\{0\}} && {X'} \\
	{X'\otimes[1]^\sharp} & {Y''\otimes[1]^\sharp} & {Y'}
	\arrow[from=1-1, to=2-1]
	\arrow["{p'}", from=1-3, to=2-3]
	\arrow["{(X''\otimes\Ib)\cup id}", from=1-1, to=1-3]
	\arrow[from=2-1, to=2-2]
	\arrow[from=2-2, to=2-3]
	\arrow["\psi"{description}, dotted, from=2-1, to=1-3]
\end{tikzcd}\]
Remark that the restriction of $\psi$ to $X'\otimes\{1\}$ factors through $X''$ and then defines a retract $s:Y\to X$ of $j$. This provides a right Gray deformation structure for $p\to p''$. We proceed similarly for the dual case.

The functoriality of the Gray deformation retract structure comes from the fact that only functorial operations were used. Indeed, pullbacks, pushouts and the Gray tensor product are functorial. The formation of the lift $\psi$ is also functorial according to proposition \ref{prop:fonctorialite des relevement}.

To verify the second claim, one may utilize the same proof, exchanging $\otimes$ with $\times$.
\end{proof}

\begin{cor}
\label{cor:morphism between is an equivalence when equivalence on fiber}
Let $p:X\to B^\sharp$ and $q:Y\to B^\sharp$ be two left cartesian fibrations and $\phi:p\to q$ a morphism over $ B^\sharp$. The morphism $\phi$ is an equivalence if and only if, for any object $b$ of $B$, the induced morphism $\{b\}^*\phi :\{b\}^*X\to \{b\}^*Y$ is an equivalence.
\end{cor}
\begin{proof}
As $\tiPsh{\Theta}$ is locally cartesian closed, pullback commutes with special colimits, and as every $\io$-category is the special colimit of its $k$-truncation for $k\in \Nb$ according to proposition \ref{prop:example of a special colimit 2 marked case} , one can suppose that $B$ is a marked $(\infty,k)$-category for $k<\omega$, and we then proceed by induction on $k$. 
Suppose then the result is true for $(\infty,k)$-categories and that $B$ is an $(\infty,k+1)$-category. Remark first that $\phi$ induces an equivalence between $\tau_0(X)$ and $\tau_0(Y)$. 

Let $x$ and $y$ be two objects of $X$ and
 $v:[1]^\sharp\to B^\sharp$ be a cell whose source is $px$ and target $py$. This induces cartesian squares
\[\begin{tikzcd}
	{X_1} & {X_v} & X \\
	{Y_1} & {Y_v} & Y \\
	{\{1\}} & {[1]^\sharp} & {B^\sharp}
	\arrow["{\phi_1}", from=1-1, to=2-1]
	\arrow["\phi", from=1-3, to=2-3]
	\arrow["{\phi_v}", from=1-2, to=2-2]
	\arrow[from=2-3, to=3-3]
	\arrow[from=2-2, to=3-2]
	\arrow[from=2-1, to=3-1]
	\arrow[from=3-1, to=3-2]
	\arrow["v"', from=3-2, to=3-3]
	\arrow[from=2-1, to=2-2]
	\arrow[from=1-1, to=1-2]
	\arrow[from=2-2, to=2-3]
	\arrow[from=1-2, to=1-3]
	\arrow["\lrcorner"{anchor=center, pos=0.125}, draw=none, from=1-1, to=2-2]
	\arrow["\lrcorner"{anchor=center, pos=0.125}, draw=none, from=1-2, to=2-3]
	\arrow["\lrcorner"{anchor=center, pos=0.125}, draw=none, from=2-2, to=3-3]
	\arrow["\lrcorner"{anchor=center, pos=0.125}, draw=none, from=2-1, to=3-2]
\end{tikzcd}\]
By hypothesis, $\phi_1$ is an equivalence.
According to proposition \ref{prop:left Gray transfomration stable under pullback along cartesian fibration}, $\phi_1\to \phi_v$ is a right deformation retract, and according to proposition \ref{prop:Gray deformation retract and passage to hom}, this induces a cartesian square
\[\begin{tikzcd}
	{\hom_{X_1}(x,ry)} & {\hom_{X_v}(x,y)} \\
	{\hom_{Y_1}(\psi x,r\psi y)} & {\hom_{Y_v}(\psi x,\psi y)}
	\arrow[from=1-2, to=2-2]
	\arrow[from=1-1, to=2-1]
	\arrow[from=1-1, to=1-2]
	\arrow[from=2-1, to=2-2]
	\arrow["\lrcorner"{anchor=center, pos=0.125}, draw=none, from=1-1, to=2-2]
\end{tikzcd}\]
where horizontal morphisms are equivalences. By hypothesis, the left vertical one is an equivalence, and then, by two out of three, so is the right vertical one. 

We then have, for any $1$-cell $v$, the following cartesian squares
\[\begin{tikzcd}
	{\hom_{X_v}(x,y)} & {\hom_{X}(x,y)} \\
	{\hom_{Y_v}(\psi x,\psi y)} & {\hom_{Y}(\psi x,\psi y)} \\
	{\{v\}} & {\hom_{B}(px,py)^\sharp}
	\arrow["\sim"', from=1-1, to=2-1]
	\arrow[from=3-1, to=3-2]
	\arrow[from=2-2, to=3-2]
	\arrow[from=1-2, to=2-2]
	\arrow[from=2-1, to=2-2]
	\arrow[from=2-1, to=3-1]
	\arrow[from=1-1, to=1-2]
\end{tikzcd}\]
where the arrow labeled  by $\sim$ is an equivalence. As $\hom_{B}(px,py)^\sharp$ is an $(\infty,k)$-category, the induction hypothesis implies that $\hom_{X}(x,y)\to \hom_{Y}(\psi x,\psi y)$  is an equivalence. The morphism $\phi$ is then fully faithful, and as we already know that it is essentially surjective, this concludes the proof.
\end{proof}

\p We have by construction a factorization system in initial morphism followed by left cartesian fibration, and another one in final morphism followed by right cartesian fibration. We are willing to find an explicit expression for such factorization in some easy cases. We then fix $i:C^\flat \to D$ with $D$ being any marked $\io$-category.

If $C^\flat\to D$ is a functor between marked $\io$-categories, we define $D_{/C^{\flat}}$ and $D_{C^{\flat}/}$ as the following pullbacks 
\[\begin{tikzcd}
	{D_{C^{\flat}/}} & {D^{[1]^\sharp}} && {D_{/C^{\flat}}} & {D^{[1]^\sharp}} \\
	{C^{\flat}} & D && {C^{\flat}} & D
	\arrow["{(i_0^-)_!}", from=1-2, to=2-2]
	\arrow[from=2-1, to=2-2]
	\arrow[from=1-1, to=2-1]
	\arrow[from=1-1, to=1-2]
	\arrow["\lrcorner"{anchor=center, pos=0.125}, draw=none, from=1-1, to=2-2]
	\arrow["{(i_1^-)_!}", from=1-5, to=2-5]
	\arrow[from=1-4, to=2-4]
	\arrow[from=2-4, to=2-5]
	\arrow[from=1-4, to=1-5]
	\arrow["\lrcorner"{anchor=center, pos=0.125}, draw=none, from=1-4, to=2-5]
\end{tikzcd}\]
If $C$ is the terminal $\io$-category, this notation is compatible with the one of the slice over and under introduced in paragraph \ref{para:slice and joint}.

\begin{lemma}
\label{lemma:explicit factoryzation 1}
The morphism $i:C^\flat\to D_{/C^{\flat}}$ appearing in the following diagram
\[\begin{tikzcd}
	& D \\
	{C^{\flat}} & {D_{C^{\flat}/}} & {D^{[1]^\sharp}} \\
	& {C^{\flat}} & D
	\arrow["{(i_0^-)_!}", from=2-3, to=3-3]
	\arrow[from=3-2, to=3-3]
	\arrow[from=2-2, to=3-2]
	\arrow[from=2-2, to=2-3]
	\arrow["\lrcorner"{anchor=center, pos=0.125}, draw=none, from=2-2, to=3-3]
	\arrow[curve={height=-12pt}, from=2-1, to=1-2]
	\arrow[curve={height=-12pt}, from=1-2, to=2-3]
	\arrow["id"', from=2-1, to=3-2]
	\arrow["i", dashed, from=2-1, to=2-2]
\end{tikzcd}\]
is initial.
\end{lemma}
\begin{proof}
Using proposition \ref{prop:associativity of Gray amput2}, we have a natural transformation
$$(\uvar\otimes[1]^\sharp)\otimes[1]^\sharp \sim \uvar\otimes([1]^\sharp\times[1]^\sharp)
\xrightarrow{\uvar\otimes\psi} \uvar\otimes[1]^\sharp$$
where $\psi$ sends $(\epsilon,\epsilon')$ on $\max(\epsilon,\epsilon')$. This induces a natural transformation $D^{[1]^\sharp}\to (D^{[1]^\sharp})^{[1]^\sharp}$, corresponding by adjunction to transformation $\phi:D^{[1]^\sharp}\otimes[1]^\sharp\to D^{[1]^\sharp}$. We set $r:D_{C^{\flat}/}\to C^\flat$ as the canonical projection. Eventually, remark that $(i,r,\phi)$ is a left Gray deformation retract. According to proposition \ref{prop:left Gray deformation retract are initial}, this concludes the proof.
\end{proof}

\begin{lemma}
\label{lemma:explicit factoryzation 2}
The composite $q:D_{C^{\flat}/}\to D^{[1]^\sharp}\xrightarrow{(i_0^+)_!} D$ is a left cartesian fibration.
\end{lemma}
\begin{proof}
Consider a commutative diagram
\begin{equation}
\label{eq:lemma:explicit factoryzation 2}
\begin{tikzcd}
	{K\otimes\{0\}} & {D_{C^{\flat}/}} \\
	{K\otimes[1]^\sharp} & D
	\arrow[from=1-1, to=2-1]
	\arrow[from=1-1, to=1-2]
	\arrow[from=2-1, to=2-2]
	\arrow[from=1-2, to=2-2]
\end{tikzcd}
\end{equation}
The $\infty$-groupoid of lifts of this previous diagram is equivalent to the $\infty$-groupoid of pairs consisting of a commutative triangle 
\[\begin{tikzcd}
	{K\otimes\{0\}\otimes\{0\}} \\
	{K\otimes[1]^\sharp\otimes\{0\}} & D
	\arrow[from=1-1, to=2-1]
	\arrow["f", from=1-1, to=2-2]
	\arrow[dashed, from=2-1, to=2-2]
\end{tikzcd}\]
where $f$ is induced by $K\otimes\{0\}\to D_{C^{\flat}/}$,
 and a lift in the induced diagram
\[\begin{tikzcd}
	{K\otimes\{0\}\otimes[1]^\sharp\cup K\otimes[1]^\sharp\otimes\{1\} \cup K\otimes[1]^\sharp\otimes\{0\}} & D \\
	{K\otimes[1]^\sharp\otimes[1]^\sharp} & 1
	\arrow[from=1-1, to=1-2]
	\arrow[from=1-1, to=2-1]
	\arrow[from=2-1, to=2-2]
	\arrow[from=1-2, to=2-2]
	\arrow[dashed, from=2-1, to=1-2]
\end{tikzcd}\]
According to proposition \ref{prop:cotimes 1 to c is a trivialization}, the morphism $K\otimes[1]^\sharp\otimes\{0\}\to C^\flat$ factors through a morphism $K\to C^\flat$, and is then uniquely determined by $f:K\otimes\{0\}\otimes\{0\}\to C^\flat$,  and proposition \ref{prop:associativity of Gray amput2} provides a natural equivalence between $(K\otimes[1]^\sharp)\otimes[1]^\sharp$ and $K\otimes([1]^\sharp\times [1]^\sharp)$. The $\infty$-groupoid of lifts of the diagram \eqref{eq:lemma:explicit factoryzation 2} is then equivalent  to the  $\infty$-groupoid of lifts of  the left square of the following diagram
\[\begin{tikzcd}[column sep =0.7cm]
	{K\otimes\{0\}\otimes[1]^\sharp\cup K\otimes[1]^\sharp\otimes\{1\} \cup K\otimes[1]^\sharp\otimes\{0\}} & {K\otimes[1]^\sharp\cup K\otimes[1]^\sharp} & D \\
	{K\otimes[1]^\sharp\otimes[1]^\sharp} & {K\otimes[2]^\sharp} & 1
	\arrow[from=1-1, to=2-1]
	\arrow[from=1-3, to=2-3]
	\arrow[from=2-1, to=2-2]
	\arrow[""{name=0, anchor=center, inner sep=0}, from=1-1, to=1-2]
	\arrow[from=1-2, to=2-2]
	\arrow[from=1-2, to=1-3]
	\arrow[from=2-2, to=2-3]
	\arrow[dashed, from=2-2, to=1-3]
	\arrow["\lrcorner"{anchor=center, pos=0.125, rotate=180}, draw=none, from=2-2, to=0]
\end{tikzcd}\]
As $K\otimes[1]^\sharp\coprod_{K\otimes[0]}K\otimes[1]^\sharp\to K\otimes[2]^\sharp$ is an equivalence, this   $\infty$-groupoid is contractible.
\end{proof}

\begin{prop}
\label{prop:explicit factoryzation}
The factorisation of $p:C^\flat \to D$
in an initial morphism followed by a left cartesian fibration is
$$C^\flat \xrightarrow{i} D_{C^\flat/}\xrightarrow{q} D,$$
and its factorization in a final morphism and a right cartesian fibration is 
$$C^\flat \xrightarrow{i} D_{/C^\flat}\xrightarrow{q} D.$$
\end{prop}
\begin{proof}
This is a direct application of lemma \ref{lemma:explicit factoryzation 1} and \ref{lemma:explicit factoryzation 1} and of their dual version.
\end{proof}
The more important example of the previous proposition is the case $C:=\{a\}$. In this case, the corresponding left cartesian fibration is the slice of $D$ under $a$
$$D_{a/}\to D$$
 and the corresponding right cartesian fibration is the slice of $D$ over $a$
$$D_{/a}\to D.$$
\p 
Let $p:X\to Y$ be a morphism between $\io$-categories. A marked $1$-cell $v:x\to x'$ is \wcnotion{left cancellable}{left or right cancellable $1$-cell} if for any $y$, the following natural square is cartesian:
\[\begin{tikzcd}
	{\hom_X(x',y)} & {\hom_X(x,y)} \\
	{\hom_Y(px',py)} & {\hom_Y(px,py)}
	\arrow["{v_!}", from=1-1, to=1-2]
	\arrow["{p(v)_!}"', from=2-1, to=2-2]
	\arrow[from=1-2, to=2-2]
	\arrow[from=1-1, to=2-1]
\end{tikzcd}\]

Conversely, a $1$-cell $v:y\to y'$ is \textit{right cancellable} if for any $x$, the following natural square is cartesian:
\[\begin{tikzcd}
	{\hom_X(x,y)} & {\hom_X(x,y')} \\
	{\hom_Y(px,py)} & {\hom_Y(px,py')}
	\arrow["{v_!}", from=1-1, to=1-2]
	\arrow["{p(v)_!}"', from=2-1, to=2-2]
	\arrow[from=1-2, to=2-2]
	\arrow[from=1-1, to=2-1]
\end{tikzcd}\]

\begin{lemma}
\label{lemma:technical lemma on cancellable cell 1}
Let $p$ be a morphism. 
The following conditions are equivalent:
\begin{enumerate}
\item $p$ has the unique right lifting property against $\{0\}\to [1]^\sharp$ and marked $1$-cells are left cancellable.
\item $p$ has the unique right lifting property against $[a,1]\xrightarrow{\triangledown} [1]^\sharp\vee[a,1]$ for any object $a$ of $t\Theta$.
\item $p$ has the unique right lifting property against $[a,1]\xrightarrow{\triangledown} [1]^\sharp\vee[a,1]$ and $[1]^\sharp\xrightarrow{\triangledown}[1]^\sharp\vee[1]^\sharp$ for any object $a$ of $t\Theta$.
\end{enumerate}
Conversely, the following are equivalent:
\begin{enumerate}
\item[(1)'] $p$ has the unique right lifting property against $\{1\}\to [1]^\sharp$ and marked $1$-cells are right cancellable.
\item[(2)'] $p$ has the unique right lifting property against $[a,1]\xrightarrow{\triangledown} [a,1]\vee[1]^\sharp$ for any object $a$ of $t\Theta$.
\item[(3)'] $p$ has the unique right lifting property against $[a,1]\xrightarrow{\triangledown}[a,1]\vee[1]^\sharp$ and $[1]^\sharp\xrightarrow{\triangledown}[1]^\sharp\vee[1]^\sharp$ for any object $a$ of $t\Theta$.
\end{enumerate}
\end{lemma}
\begin{proof}
 The fact that $1$-cells are left cancellable is equivalent to asking that $i$ has the unique right lifting property against 
$$[a,1]\amalg_{\{0\}} [1]^\sharp\to [1]^\sharp\vee[a,1]$$
 for any object $a$ of $t\Theta$.
 Suppose that $p$ fulfills $(1)$.
As the class of morphisms having the unique right lifting property against $p$ are closed under composition and by left cancellation according to \ref{prop:closed under colimit imply saturated}, this implies that $p$ has the unique right lifting property against 
$$[a,1]\xrightarrow{\triangledown} [1]^\sharp\vee[a,1]$$
and then that $(1)\Rightarrow (2)$. 

Suppose now that $p$ fulfills $(2)$. Remark that we have a retract
\[\begin{tikzcd}
	{\{0\}} & {[1]} & {\{0\}} \\
	{[1]^\sharp} & {[1]^\sharp\vee[1]} & {[1]^\sharp}
	\arrow[from=1-1, to=2-1]
	\arrow[hook, from=2-1, to=2-2]
	\arrow["id\vee\Ib"', from=2-2, to=2-3]
	\arrow[from=1-1, to=1-2]
	\arrow["\triangledown"', from=1-2, to=2-2]
	\arrow["\Ib", from=1-2, to=1-3]
	\arrow[from=1-3, to=2-3]
\end{tikzcd}\]
and as the class of morphisms having the unique right lifting property against $p$ is closed under retracts, this implies that $p$ has the unique right lifting property against $\{0\}\to [1]^\sharp$. By stability by left cancellation, $p$ has the unique right lifting property against 
$$[a,1]\amalg_{\{0\}} [1]^\sharp\to [1]^\sharp\vee[a,1].$$
As remarked above, this implies that $1$-cells are left cancellable. We then have $(1)\Leftrightarrow (2)$.

There is an obvious implication $(3)\Rightarrow (2)$. For the converse, remark that the
class of morphisms having the unique right lifting property against $p$ is closed under colimits and then contains $\{0\}\to [1]^\sharp\vee[1]^\sharp$, and so by left cancellation, it includes $ [1]^\sharp\xrightarrow{\triangledown}[1]^\sharp\vee[1]^\sharp$.
The proof of the equivalence of $(1)'$, $(2)'$ and $(3)'$ is symetrical.
\end{proof}

\begin{lemma}
\label{lemma:technical lemma on cancellable cell 3}
Let $p:X\to Y$ be a morphism having the unique right lifting property against marked trivializations, such that for any element $a$ of $t\Theta$, and any cartesian squares: 	
\[\begin{tikzcd}
	{X''} & {X'} & X \\
	{[a,1]} & {[1]^\sharp\vee[a,1]} & Y
	\arrow["p"', from=1-3, to=2-3]
	\arrow["{p''}"', from=1-1, to=2-1]
	\arrow["{p'}"', from=1-2, to=2-2]
	\arrow["k", from=1-1, to=1-2]
	\arrow[from=1-2, to=1-3]
	\arrow[hook, from=2-1, to=2-2]
	\arrow[from=2-2, to=2-3]
	\arrow["\lrcorner"{anchor=center, pos=0.125}, draw=none, from=1-1, to=2-2]
	\arrow["\lrcorner"{anchor=center, pos=0.125}, draw=none, from=1-2, to=2-3]
\end{tikzcd}\]
the square $p''\to p'$ is a right deformation retract.
Then, $p$ has the unique right lifting property against $[a,1]\xrightarrow{\triangledown} [1]^\sharp\vee[a,1]$ for any object $a$ of $t\Theta$. 
\end{lemma}
\begin{proof}
Suppose given a square
\[\begin{tikzcd}
	{[a,1]} & X \\
	{[1]^\sharp\vee[a,1]} & Y
	\arrow["p"', from=1-2, to=2-2]
	\arrow["\triangledown"', from=1-1, to=2-1]
	\arrow[from=1-1, to=1-2]
	\arrow["g"', from=2-1, to=2-2]
\end{tikzcd}\]
and let $p'$ and $p''$ be the morphisms appearing in the following cartesian squares:
\[\begin{tikzcd}
	{X''} & {X'} & X \\
	{[a,1]} & {[1]^\sharp\vee[a,1]} & Y
	\arrow["p"', from=1-3, to=2-3]
	\arrow["{p''}"', from=1-1, to=2-1]
	\arrow["{p'}"', from=1-2, to=2-2]
	\arrow["k", from=1-1, to=1-2]
	\arrow[from=1-2, to=1-3]
	\arrow[hook, from=2-1, to=2-2]
	\arrow["g"', from=2-2, to=2-3]
	\arrow["\lrcorner"{anchor=center, pos=0.125}, draw=none, from=1-1, to=2-2]
	\arrow["\lrcorner"{anchor=center, pos=0.125}, draw=none, from=1-2, to=2-3]
\end{tikzcd}\]
To show the proposition, one has to demonstrate that the induced diagram
\[\begin{tikzcd}
	{[a,1]} & {X'} \\
	{[1]^\sharp\vee[a,1]} & {[1]^\sharp\vee[a,1]}
	\arrow["\triangledown"', from=1-1, to=2-1]
	\arrow["{p'}"', from=1-2, to=2-2]
	\arrow["j", from=1-1, to=1-2]
	\arrow["id"', from=2-1, to=2-2]
	\arrow["\lrcorner"{anchor=center, pos=0.125}, draw=none, from=1-1, to=2-2]
\end{tikzcd}\]
admits a unique lifting. We denote by $x_0$ and $x_2$ the image of the object of $[a,1]$ via the morphism $j$, and $(k:X''\to X',r,\phi)$ the left deformation retract existing by hypothesis. According to the dual version of proposition \ref{prop:retraction criter}, the unique marked $1$-cell in $X'$ over $[1]^\sharp\hookrightarrow [1]^\sharp\vee[a,1]$ with $x_0$ for source is $\phi(x_0):x_0\to r(x_0)$.
The $\infty$-groupoid of lifts of this diagram is then equivalent to the $\infty$-groupoid of lifts of the following diagram
\[\begin{tikzcd}
	\emptyset & {\hom_{X'}(rx_0,x_2)} \\
	a & {\hom_{X'}(x_0,x_2)}
	\arrow["{{\phi_{x_0}}_!}", from=1-2, to=2-2]
	\arrow[from=2-1, to=2-2]
	\arrow[from=1-1, to=2-1]
	\arrow[from=1-1, to=1-2]
\end{tikzcd}\]
However, the right vertical morphism is an isomorphism according to proposition \ref{prop:Gray deformation retract and passage to hom} which concludes the proof.
\end{proof}
\p Keeping in mind the last lemma, we define \wcnotation{$\I_{g}$}{(ig@$\I_g$} and \wcnotation{$\F_{g}$}{(fg@$\F_g$} as the smallest sets of morphisms of $\zocatm$ fullfilling these conditions:
\begin{enumerate}
\item for any $a\in \Theta^t$, $[a,1]\hookrightarrow[1]^\sharp\vee[a,1]$ is in $\F_g$ and $[a,1]\hookrightarrow[a,1]\vee[1]^\sharp$ is in $\I_g$
\item for any $i$ in $\F_g$, $[i,1]$ is in $\I_g$, for any $j$ in $\I_g$, $[i,1]$ is in $\F_g$,
\end{enumerate}
Propositions \ref{prop:left Gray deformation retract stable under pushout} and \ref{prop:suspension of left Gray deformation retract} then imply that morphisms of $\I_g$ are left Gray deformation retracts and morphisms of $\F_g$ are right Gray deformation retracts.

\p 
We extend by induction the definition of right and left cancellable to cells of any dimension as follows: a $n$-cell $v$ is \wcnotion{left or right cancellable}{left cancellable $n$-cell} (resp. \textit{right cancellable}) if the corresponding $(n-1)$-cell of $\hom_X(x,y)$ is left cancellable (resp. right cancellable) for the morphism $\hom_X(x,y)\to \hom_Y(px,py)$, where $x$ and $y$ denote the $0$-sources and $0$-but of $v$.

\begin{lemma}
\label{lemma:technical lemma on cancellable cell 2}
Let $p':X'\to Y'$ be a morphism such that $p$ has the unique right lifting property against marked trivializations and suppose that we have a left Gray deformation retract $p'\to p$. We denote by $(r:Y'\to Y,i,\phi)$ the left deformation retract structure induced on the codomain, and suppose that the deformation $\phi:Y\otimes[1]^\sharp\to Y$ factors through $\psi:Y\times[1]^\sharp\to Y$. Then, the square $p'\to p$ is a left deformation retract.
\end{lemma}
\begin{proof}
Proposition \ref{prop:cotimes 1 to ctimes 1 is a trivialization} states that $Y\otimes[1]^\sharp\to Y\times [1]^\sharp$ is a colimit of marked trivializations. There is then a lift in the following diagram:
\[\begin{tikzcd}
	{X\otimes[1]^\sharp} && X \\
	{X\times[1]^\sharp} & {Y\times[1]^\sharp} & Y
	\arrow[from=1-1, to=2-1]
	\arrow[from=2-1, to=2-2]
	\arrow[from=1-3, to=2-3]
	\arrow["{\phi'}", from=1-1, to=1-3]
	\arrow["\psi"', from=2-2, to=2-3]
	\arrow["{\psi'}"{description}, dotted, from=2-1, to=1-3]
\end{tikzcd}\]
where $\phi'$ is the deformation induced on domains.
This endows $p'\to p$ with a structure of left deformation retract, where the retraction is the same, and the deformation is given by $(\psi',\psi)$.
\end{proof}

\begin{theorem}
\label{theo:other characterisation of left caresian fibration}
Consider the following shape of diagram
\begin{equation}
\label{eq:prop:other characterisation of left caresian fibration}
\begin{tikzcd}
	{X''} & {X'} & X \\
	{Y''} & {Y'} & Y
	\arrow["p"', from=1-3, to=2-3]
	\arrow["{p''}"', from=1-1, to=2-1]
	\arrow["{p'}"', from=1-2, to=2-2]
	\arrow[from=1-1, to=1-2]
	\arrow[from=1-2, to=1-3]
	\arrow["i"', from=2-1, to=2-2]
	\arrow[from=2-2, to=2-3]
	\arrow["\lrcorner"{anchor=center, pos=0.125}, draw=none, from=1-1, to=2-2]
	\arrow["\lrcorner"{anchor=center, pos=0.125}, draw=none, from=1-2, to=2-3]
\end{tikzcd}
\end{equation}
The following are equivalent:
\begin{enumerate}
\item The morphism $p$ is a left cartesian fibration.
\item $p$ has the unique right lifting property against marked trivialization, and for any diagram of shape \eqref{eq:prop:other characterisation of left caresian fibration},
if $i$ is a right Gray deformation retract, so is $p''\to p'$. 
\item $p$ has the unique right lifting property against marked trivialization and, for any diagram of shape \eqref{eq:prop:other characterisation of left caresian fibration},
if $i$ is in $\F_g$, the square $p''\to p'$ is a right Gray deformation retract.
\item For any even integer $n$, $p$ has the unique right lifting property against $i_n^+:\Db_{n}\to (\Db_{n+1})_t$ and marked $n$-cells are right cancellable; for any odd integer $p$ has the unique right lifting property against $i_n^-:\Db_{n}\to (\Db_{n+1})_t$ and marked $n$-cells are left cancellable.
\item $p$ as the unique right lifting property against $\{0\}\to [1]^\sharp$, marked $1$-cells are left cancellable, and
for any pair of objects $(x,y)$ of $X$, $\hom_X(x,y)\to \hom_Y(px,py)$ is a right cartesian fibration.
\end{enumerate} 
Conversely, the following are equivalent:
\begin{enumerate}
\item[(1)'] The morphism $p$ is a right cartesian fibration.
\item[(2)'] $p$ has the unique right lifting property against marked trivialization and for any diagram of shape \eqref{eq:prop:other characterisation of left caresian fibration},
if $i$ is a left Gray deformation retract, so is $p''\to p'$.
\item[(3)'] $p$ has the unique right lifting property against marked trivialization, and for any diagram of shape \eqref{eq:prop:other characterisation of left caresian fibration},
if $i$ is in $\I_g$, the square $p''\to p'$ is a left Gray deformation retract.
\item[(4)'] For any even integer $n$, $p$ has the unique right lifting property against $i_n^-:\Db_{n}\to (\Db_{n+1})_t$ and marked $n$-cells are left cancellable; for any odd integer $p$ has the unique right lifting property against $i_n^+:\Db_{n}\to (\Db_{n+1})_t$ and marked $n$-cells are right cancellable.
\item[(5)'] $p$ as the unique right lifting property against $\{1\}\to [1]^\sharp$, marked $1$-cells are right cancellable, and
for any pair of objects $(x,y)$ of $X$, $\hom_X(x,y)\to \hom_Y(px,py)$ is a left cartesian fibration.
\end{enumerate} 

\end{theorem}
\begin{proof}
The implication from $(1)$ to $(2)$ and $(1)'$ to $(2)'$ is the content of proposition \ref{prop:left Gray transfomration stable under pullback along cartesian fibration}.

The implication from $(2)$ to $(3)$ and $(2)'$ to $(3)'$ comes from the fact that $\I_g$ (resp. $\F_g$) consists of right (resp. left) Gray deformation retracts.

Suppose now that $p$ fulfills condition $(3)$. Lemma \ref{lemma:technical lemma on cancellable cell 2} implies that if $i$ is of shape $[a,1]\hookrightarrow [1]^\sharp\vee[a,1]$ for $a:t\Theta$, $p''\to p'$ is a right deformation retract. Lemma 
\ref{lemma:technical lemma on cancellable cell 3} and \ref{lemma:technical lemma on cancellable cell 1} then imply that $p$ has the unique right lifting property against $\{0\}\to [1]^\sharp$ and marked $1$-cells are left cancellable.

We are now willing to show that for any pair of objects $(x,y)$, $\hom_X(x,y)\to \hom_Y(px,py)$ fulfills condition $(3)'$, and an obvious induction will complete the proof of $(3)\Rightarrow (4)$.
We then consider $x,y$ two objects of $X$, $i:b\to a$ in $\I_g$ and any morphism $a\to \hom_Y(px,py)$. The previous data induces a pullback square
\[\begin{tikzcd}
	{X''} & {X'} & X \\
	{[b,1]} & {[a,1]} & Y
	\arrow["p"', from=1-3, to=2-3]
	\arrow["{p''}"', from=1-1, to=2-1]
	\arrow["{p'}"', from=1-2, to=2-2]
	\arrow[from=1-1, to=1-2]
	\arrow[from=1-2, to=1-3]
	\arrow["{[i,1]}"', from=2-1, to=2-2]
	\arrow[from=2-2, to=2-3]
	\arrow["\lrcorner"{anchor=center, pos=0.125}, draw=none, from=1-1, to=2-2]
	\arrow["\lrcorner"{anchor=center, pos=0.125}, draw=none, from=1-2, to=2-3]
\end{tikzcd}\]
where the bottom right morphism sends $\{0\}$ to $px$ and $\{1\}$ to $py$.
By construction, $[i,1]$ is in $\F_g$, and so by assumption, the morphism $p'\to p''$ is a right Gray deformation retract. Applying the functor $\hom_{\uvar}(\uvar,\uvar)$ we get the following pullback diagram:
\[\begin{tikzcd}
	{\hom_{X''}(x,y)} & {\hom_{X'}(x,y)} & {\hom_{X}(x,y)} \\
	b & a & {\hom_{Y}(px,py)}
	\arrow["{\tilde{p}}"', from=1-3, to=2-3]
	\arrow["{\tilde{p}''}"', from=1-1, to=2-1]
	\arrow["{\tilde{p}'}"', from=1-2, to=2-2]
	\arrow[from=1-1, to=1-2]
	\arrow[from=1-2, to=1-3]
	\arrow["i"', from=2-1, to=2-2]
	\arrow[from=2-2, to=2-3]
	\arrow["\lrcorner"{anchor=center, pos=0.125}, draw=none, from=1-1, to=2-2]
	\arrow["\lrcorner"{anchor=center, pos=0.125}, draw=none, from=1-2, to=2-3]
\end{tikzcd}\]
and the dual version of proposition \ref{prop:Gray deformation retract and passage to hom v2} implies that $\tilde{p}''\to \tilde{p}'$ is a left Gray deformation retract. As this is true for any $i:b\to a$ in $\I_g$, for any object of $X$, and any $a\to \hom_Y(px,py)$, this implies that $\hom_X(x,y)\to \hom_Y(px,py)$ fulfills condition $(3)'$. As mentioned above, an obvious induction induces $(3)\Rightarrow (4)$. We show similarly $(3)'\Rightarrow (4)'$.

Now let's show $(4)\Rightarrow (1)$ and $(4)'\Rightarrow (1)'$. We show by induction on $n$ that for any 
element $a$ of $t\Gb_n:=\{\Db_k\}_{0\leq k\leq n}\cup \{(\Db_k)_t\}_{1\leq k\leq n}$, if $p$ fulfills $(4)$ (resp. $(4)'$) $p$ has the unique right lifting property against
 $a\otimes\{0\}\to a\otimes[1]^\sharp$ (against $a\otimes\{1\}\to a\otimes[1]^\sharp$).
 
Suppose then that this is true at the stage $n$, and suppose that $p$ fulfills $(4)$. 
Let $a$ be an object of $t\Gb_n$. 	Remark that according to the equation \eqref{eq:eq for cylinder marked version}, $[a,1]\otimes\{0\}\to [a,1]\otimes[1]^\sharp$ fits in the sequence of pushouts
\[\begin{tikzcd}
	{[0]} & {[a,1]\otimes \{0\}} \\
	{[1]^\sharp} & {[a,1]\vee[1]^\sharp} & {[a\otimes\{1\},1]} \\
	{[a,1]} & {[a,1]\vee[1]^\sharp\cup[a\otimes[1]^\sharp,1]} & {[a\otimes[1]^\sharp,1]} \\
	{[1]^\sharp\vee[a,1]} & {[a,1]\otimes[1]^\sharp}
	\arrow["{i_0^-}"', from=1-1, to=2-1]
	\arrow[""{name=0, anchor=center, inner sep=0}, "{i_0^+}", from=1-1, to=1-2]
	\arrow[from=2-1, to=2-2]
	\arrow[hook, from=1-2, to=2-2]
	\arrow[""{name=1, anchor=center, inner sep=0}, from=2-3, to=2-2]
	\arrow[from=3-3, to=3-2]
	\arrow[from=2-2, to=3-2]
	\arrow[from=2-3, to=3-3]
	\arrow[""{name=2, anchor=center, inner sep=0}, from=3-1, to=3-2]
	\arrow[from=4-1, to=4-2]
	\arrow[from=3-2, to=4-2]
	\arrow["\triangledown"', from=3-1, to=4-1]
	\arrow["\lrcorner"{anchor=center, pos=0.125, rotate=180}, draw=none, from=2-2, to=0]
	\arrow["\lrcorner"{anchor=center, pos=0.125, rotate=90}, draw=none, from=3-2, to=1]
	\arrow["\lrcorner"{anchor=center, pos=0.125, rotate=180}, draw=none, from=4-2, to=2]
\end{tikzcd}\]
By induction hypothesis, for any pair of objects $(x,y)$ of $X$, $\hom_X(x,y)\to \hom_Y(px,py)$ has the unique right lifting property against $a\otimes\{1\}\to a\otimes[1]^\sharp$ for $a\in t\Gb_n$. Furthermore, lemma \ref{lemma:technical lemma on cancellable cell 1} implies that $p$ has the unique right lifting property against $\triangledown:[a,1]\to [1]^\sharp\vee[a,1]$. The morphism $p$ then has the unique right lifting property against $[a\otimes\{1\},1]\to [a\otimes[1]^\sharp,1]$ for $a\in t\Gb_n$. The class of morphisms having the unique right lifting property against $p$ being closed under colimits, this implies that it includes $[a,1]\otimes\{0\}\to [a,1]\otimes[1]^\sharp$. To conclude, one has to show that $p$ has the unique right lifting property against $[1]^\sharp\times\{0\}\to [1]^\sharp\times [1]^\sharp$. Remark that according to proposition \ref{prop:example of a special colimit marked case}, $[1]^\sharp\times\{0\}\to [1]^\sharp\times[1]^\sharp$ fits in the sequence of pushouts:
\[\begin{tikzcd}
	{[0]} & {[1]^\sharp\times\{0\}} \\
	{[1]^\sharp} & {[1]^\sharp\vee[1]^\sharp} & {[1]^\sharp} \\
	& {[1]^\sharp\times[1]^\sharp} & {[1]^\sharp\vee[1]^\sharp}
	\arrow["{i_0^-}"', from=1-1, to=2-1]
	\arrow["{i_0^+}", from=1-1, to=1-2]
	\arrow["\triangledown"', from=2-3, to=2-2]
	\arrow["\triangledown", from=2-3, to=3-3]
	\arrow[hook, from=2-1, to=2-2]
	\arrow[from=2-2, to=3-2]
	\arrow[from=3-3, to=3-2]
	\arrow[hook, from=1-2, to=2-2]
	\arrow["\lrcorner"{anchor=center, pos=0.125, rotate=180}, draw=none, from=2-2, to=1-1]
	\arrow["\lrcorner"{anchor=center, pos=0.125, rotate=90}, draw=none, from=3-2, to=2-3]
\end{tikzcd}\]
According to lemma \ref{lemma:technical lemma on cancellable cell 1}, $p$ has the unique right lifting property against $\triangledown:[1]^\sharp\to [1]^\sharp\vee[1]^\sharp$ and so also against $[1]^\sharp\times\{0\}\to [1]^\sharp\times [1]^\sharp$. This concludes the proof of the implication $(4)\Rightarrow (1)$. We show similarly $(4)'\Rightarrow (1)'$.

Eventually, the equivalences $(1)\Rightarrow (5)$ and $(1)'\Rightarrow (5)'$ are a consequence of proposition \ref{prop:cartesian fibration between arrow} and of the implications $(1)\Rightarrow (4)$ and $(1)'\Rightarrow (4)'$. The implications $(5)\Rightarrow (4)$ and $(5)'\Rightarrow (4)'$ are a consequence of the implications  $(1)'\Rightarrow (4)'$ and $(1)\Rightarrow (4)$ applied to the morphisms $\hom_X(x,y)\to \hom_Y(px,py)$ for all objects $x,y$.
\end{proof}

\begin{cor}
\label{cor:on the fact that fib are define against representable}
A morphism $p:X\to A^\sharp$ is a left cartesian fibration if and only if for any globular sum $b$ and morphism $j:b\to A$, $j^*p$ is a left cartesian fibration over $b^\sharp$.
\end{cor} 
\begin{proof}
This is a direct consequence of the equivalence between conditions $(1)$ and $(3)$ of theorem \ref{theo:other characterisation of left caresian fibration}, and the fact that the codomains of marked trivializations and the codomains of morphisms of $\F_g$ are marked globular sums. 
\end{proof}

\subsection{Cartesian fibration are exponentiable}
\label{subsection:A criterion to be a left cartesian fibration}

We recall that a {marked globular sum} is a marked $\io$-category whose underlying $\io$-category is a globular sum and such that for any pair of integers $k\leq n$, and any pair of $k$-composable $n$-cells $(x,y)$, $x\circ_k y$ is marked if and only if $x$ and $y$ are marked.

 A morphism $i:a\to b$ between marked globular sums is {globular} if the morphism $i^\natural$ is {globular}.

 A globular morphism $i$ between marked globular sums is then a discrete Conduché functor, which implies according to proposition \ref{prop:pullback by conduch marked preserves colimit} that the functor $i^*:\ocatm_{/b}\to \ocatm_{/a}$ preserves colimits.

\p Let $b$ be a globular sum and $f:X\to b^\sharp$ a morphism. We say that $f$ is \wcnotion{$b$-exponentiable}{expo@$b$-exponentiable} if 
the canonical morphism $$\colim_{i:\Sp_b^\sharp} {i}^*f\to f$$ is an equivalence. 

\begin{prop}
\label{prop:exponantiable stable under colim}
Let $F:I\to \ocatm_{/b^\sharp}$ be a functor which is pointwise $b$-exponentiable. The morphism $\colim_I F$ is $b$-exponentiable 
\end{prop}
\begin{proof}
Remark that all morphisms $\Db_n^\sharp\to b^\sharp$ in $\Sp^\sharp_b$ are globular, and so are discrete Conduché functors. We then have a sequence of equivalences
$$\colim_{i:\Sp_b^\sharp} {i}^*\colim_I F\sim \colim_I\colim_{i:\Sp_b^\sharp} {i}^*F\sim \colim_I F.$$
\end{proof}

\begin{prop}
\label{prop:how to create exponentiable}
Let $a$ be a globular sum, and $f:X\to a^\sharp$ be a morphism. The induced morphism $\colim_{i:\Sp_{a}^\sharp}i^*f$ is $a$-exponentiable.
\end{prop}
\begin{proof}
As marked globular morphisms are marked discrete Conduché functors, for any $j:\Db_n^\sharp\to a^\sharp\in \Sp_a$, $j^*\colim_{i:\Sp_{a}^\sharp}i^*f$ is equivalent to $j^*f$. We then have a sequence of equivalences
$$ \colim_{j:\Sp_{a}^\sharp}j^* \colim_{i:\Sp_{a}^\sharp}i^*f \sim \colim_{j:\Sp_{a}^\sharp}j^*f .$$
\end{proof}

\begin{prop}
\label{prop:exponantiable stable under pullback}
Let $f:X\to b^\sharp$ be exponentiable in $b$ and $j:a^\sharp\to b^\sharp$ a globular morphism. The morphism $j^*f:X\to a^\sharp$ is exponentiable in $a$.
\end{prop}
\begin{proof}
The morphism $j:a^\sharp\to b^\sharp$ is a marked discrete Conduché functor, so $j^*$ preserves colimits according to proposition \ref{prop:pullback by conduch marked preserves colimit}. 
We then have a sequence of equivalences 
$$j^*f\sim j^* \colim_{i:\Sp_b} i^*f \sim \colim_{i:\Sp_b} (ji)^*f\sim \colim_{k:\Sp_a} k^*f .$$
\end{proof}

\begin{lemma}
\label{lemma:technical lemma exponentiability}
Let $i:c\to d$ be in $\F_g$, $b$ a globular sum, and $f:d\to b^\sharp$ any morphism. 
Then, there exists a commutative square
\[\begin{tikzcd}
	{c'} & {d'} & {b^\sharp} \\
	c & d
	\arrow["h", from=2-1, to=1-1]
	\arrow["g", from=2-2, to=1-2]
	\arrow["{i'}", from=1-1, to=1-2]
	\arrow["i"', from=2-1, to=2-2]
	\arrow[from=1-2, to=1-3]
	\arrow["f"', from=2-2, to=1-3]
\end{tikzcd}\]
\begin{enumerate}
\item $d\to d'$ is a finite composition of pushouts of morphism of shape $i_n^\alpha:\Db_n\to (\Db_{n+1})_t$ with $n$ an integer and $\alpha:=+$ if $n$ is even, and $-$ if not.
\item $d'\to b^\sharp$ is globular.
\item $h\to g$ is a right Gray deformation retract.
\end{enumerate}
\end{lemma}
\begin{proof}
We obtain $(d')^\natural$ by factorizing $f^\natural$ into an algebraic morphism $g^\natural$ followed by a globular morphism. The marking $d'$ is the smaller one that makes $g$ a morphism of marked $\zo$-categories. By construction, $c\to d$ fits in a cocartesian square
\[\begin{tikzcd}
	{\Db_n^\flat} & c \\
	{ (\Db_{n+1})_t} & d
	\arrow[from=1-2, to=2-2]
	\arrow["{i^\alpha_n}"', from=1-1, to=2-1]
	\arrow[from=2-1, to=2-2]
	\arrow[from=1-1, to=1-2]
	\arrow["\lrcorner"{anchor=center, pos=0.125, rotate=180}, draw=none, from=2-2, to=1-1]
\end{tikzcd}\]
where all morphisms are globular, and where $\alpha$ is $+$ if $n$ is even, and $-$ if not. As the procedure is similar for any $n$, we will suppose that $n=0$, and $d$ is then equivalent to $[1]^\sharp\vee[a,1]$ for $a\in t\Theta$. The fact that $g$ is algebraic implies that there exists a marked globular sum $c'$ and an integer $k$, such that $d'$ is of shape $[k]^\sharp\vee c'$ and such that $gi$ factors through $c'$. These data verify the desired condition.
\end{proof}

\begin{prop}
\label{prop:criterion to be left cartesian firbation}
Let $p:X\to b^\sharp$ be a morphism exponentiable in $b$. Consider also the following shape of diagram
\begin{equation}
\label{eq:prop:criterion to be left cartesian firbation}
\begin{tikzcd}
	{X''} & {X'} & X \\
	C & {C'} & {b^\sharp}
	\arrow["p"', from=1-3, to=2-3]
	\arrow["i"', from=2-1, to=2-2]
	\arrow["j"', from=2-2, to=2-3]
	\arrow["{p'}"', from=1-2, to=2-2]
	\arrow["{p''}"', from=1-1, to=2-1]
	\arrow[from=1-2, to=1-3]
	\arrow[from=1-1, to=1-2]
	\arrow["\lrcorner"{anchor=center, pos=0.125}, draw=none, from=1-1, to=2-2]
	\arrow["\lrcorner"{anchor=center, pos=0.125}, draw=none, from=1-2, to=2-3]
\end{tikzcd}
\end{equation}
The following are equivalent.
\begin{enumerate}
\item For any globular morphism $i:[a,1]^\sharp\to b^\sharp$, $i^*p$ is a left cartesian fibration.
\item For any diagram of shape \eqref{eq:prop:criterion to be left cartesian firbation}, if $i$ is  $i_n^\alpha:\Db_n\to (\Db_{n+1})_t$ with $n$ an integer and $\alpha:=+$ if $n$ is even and $-$ if not, and $j$ is globular, then $p''\to p'$ is a right Gray deformation retract.
\item For any diagram of shape \eqref{eq:prop:criterion to be left cartesian firbation}, if $i$ is a finite composition of pushouts of morphism of shape $i_n^\alpha:\Db_n\to (\Db_{n+1})_t$ with $n$ an integer and $\alpha:=+$ if $n$ is even and $-$ if not, and $j$ is globular, then $p''\to p'$ is a right Gray deformation retract.
\item For any diagram of shape \eqref{eq:prop:criterion to be left cartesian firbation}, if $i$ is in $ \F_g$, then $p''\to p'$ is a right Gray deformation retract.
\item The morphism $p$ is a left cartesian fibration.
\end{enumerate}
\end{prop}
\begin{proof}
The implication $(1)\Rightarrow (2)$
comes from theorem \ref{theo:other characterisation of left caresian fibration} as morphisms of shape $ i_n^\alpha$ are right Gray deformation retracts according to proposition \ref{prop:when glob inclusion are left Gray deformation}, and as every globular morphism $\Db_{n+1}\to b$ factors through a globular morphism $[a,1]\to b$.

We suppose that the second condition is fulfilled. As left Gray deformation retracts are stable under composition according to proposition \ref{prop:stability by composition }, we can restrict to the case where $i':c\to d$ fits in a cocartesian square
\[\begin{tikzcd}
	{\Db_n^\flat} & c \\
	{ (\Db_{n+1})_t} & d
	\arrow[from=1-2, to=2-2]
	\arrow["{i^\alpha_n}"', from=1-1, to=2-1]
	\arrow[from=2-1, to=2-2]
	\arrow[from=1-1, to=1-2]
	\arrow["\lrcorner"{anchor=center, pos=0.125, rotate=180}, draw=none, from=2-2, to=1-1]
\end{tikzcd}\]
where all morphisms are globular, and where $\alpha$ is $+$ if $n$ is even, and $-$ if not.
Let $p_0$ and $p_1$ be the morphism fitting in cocartesian squares
\[\begin{tikzcd}
	{X_0} & {X_1} & X \\
	{\Db_{n}^\flat} & {(\Db_{n+1})_t} & { b^\sharp}
	\arrow["p"', from=1-3, to=2-3]
	\arrow[from=2-2, to=2-3]
	\arrow["{p_1}"', from=1-2, to=2-2]
	\arrow[from=1-2, to=1-3]
	\arrow["\lrcorner"{anchor=center, pos=0.125}, draw=none, from=1-2, to=2-3]
	\arrow["{ i_n^\alpha}"', from=2-1, to=2-2]
	\arrow["{p_0}"', from=1-1, to=2-1]
	\arrow[from=1-1, to=1-2]
\end{tikzcd}\]
This defines a diagram in the $(\infty,1)$-category of arrows of $\ocatm$:
\[\begin{tikzcd}
	{p_0} & {p_0} & {p''} \\
	{p_1} & {p_0} & {p''}
	\arrow[from=2-2, to=2-1]
	\arrow[from=1-1, to=2-1]
	\arrow[from=1-2, to=1-1]
	\arrow[from=1-2, to=1-3]
	\arrow[from=1-2, to=2-2]
	\arrow[from=2-2, to=2-3]
	\arrow[from=1-3, to=2-3]
\end{tikzcd}\]
As the proposition \ref{prop:exponantiable stable under pullback} implies that $p'$ is $d$-exponentiable, the morphism $p''\to p'$ is the horizontal colimit of the previous diagram.  According to proposition \ref{prop:left Gray transfomration stable under pullback along cartesian fibration}, $p_0\to p_1$ is a left Gray deformation retract, and proposition \ref{prop:left Gray deformation retract stable under pushout} implies that $p''\to p'$ also is a left Gray deformation retract. This proves $(2)\Rightarrow (3)$.

Suppose now that condition $(3)$ is fulfilled and let $i$ be in $\F_g$. Consider the diagram
\[\begin{tikzcd}
	{c'} & {d'} & {b^\sharp} \\
	c & d
	\arrow["h", from=2-1, to=1-1]
	\arrow["g", from=2-2, to=1-2]
	\arrow["{i'}", from=1-1, to=1-2]
	\arrow["i"', from=2-1, to=2-2]
	\arrow[from=1-2, to=1-3]
	\arrow["f"', from=2-2, to=1-3]
\end{tikzcd}\]
induced by lemma \ref{lemma:technical lemma exponentiability}. 
We denote by $\tilde{p}''$ and $\tilde{p}'$ the morphisms fitting in the following cartesian squares.
\[\begin{tikzcd}
	{\tilde{X}''} & {\tilde{X}'} & X \\
	{ c'} & { d'} & { b^\sharp}
	\arrow["{ i'}"', from=2-1, to=2-2]
	\arrow[from=2-2, to=2-3]
	\arrow["p"', from=1-3, to=2-3]
	\arrow["{\tilde{p}'}"', from=1-2, to=2-2]
	\arrow["{\tilde{p}''}"', from=1-1, to=2-1]
	\arrow[from=1-1, to=1-2]
	\arrow[from=1-2, to=1-3]
	\arrow["\lrcorner"{anchor=center, pos=0.125}, draw=none, from=1-2, to=2-3]
	\arrow["\lrcorner"{anchor=center, pos=0.125}, draw=none, from=1-1, to=2-2]
\end{tikzcd}\]
As $p$ fulfills $(3)$, $\tilde{p}''\to \tilde{p}'$ is a right Gray deformation retract. By construction, 
the square $h\to g$ also is a right Gray deformation retract. 
As $p''$ and $p'$ are respectively the pullback of $\tilde{p}''$ along $h$ and the pullback of $\tilde{p}'$ along $g$, the dual version of \ref{prop:stability under pullback} implies that $p''\to p'$ is a right Gray deformation retract.

The implication $(4)\Rightarrow (5)$ is induced by theorem \ref{theo:other characterisation of left caresian fibration}. Eventually, the implication $(5)\Rightarrow (1)$ is a consequence of the preservation of left cartesian fibration under pullback.
\end{proof}

\begin{cor}
\label{cor:fibration over representable are expenitalbe}
A fibration $p$ over $a^\sharp$ is $a$-exponentiable. 
\end{cor}
\begin{proof}
We define $q:=\colim_{i:\Sp_a^\sharp}i^*p$. This morphism comes with a canonical comparison $q\to p$. According to proposition \ref{prop:how to create exponentiable}, $q$ is $a$-exponentiable.  For any globular morphism $j:[b,1]^\sharp\to a$, we have $j^*q\sim j^*p$ as $j$ is a discrete Conduché functor. In particular, $j^*q$ is a left cartesian fibration and  $q$ then verifies the first condition of proposition \ref{prop:criterion to be left cartesian firbation}. This implies that $q$ is a left cartesian fibration.

As all morphisms $j:1\to a^\sharp$ are marked globular, and so are discrete Conduché functors, 
there are equivalences
$$j^*\colim_{i:\Sp_a^\sharp}i^*p\sim j^*p$$
and the morphism $q\to p$ induces an equivalence on fiber. This morphisms is then an equivalence according to corollary \ref{cor:morphism between is an equivalence when equivalence on fiber}.
\end{proof}

\begin{lemma}
\label{lemma:pulback of Wsat}
Let $f:A\to B^\sharp$ be a left cartesian fibration, $n$ an integer, and consider a diagram of $\tiPsh{\Theta}$ of shape
\[\begin{tikzcd}
	{A''} & {A'} & A \\
	{(\Sigma^nE^{eq})^\flat} & {\Db_n^\flat} & {B^\sharp}
	\arrow["f", from=1-3, to=2-3]
	\arrow["{f'}", from=1-2, to=2-2]
	\arrow["{f''}", from=1-1, to=2-1]
	\arrow["i"', from=2-1, to=2-2]
	\arrow["j", from=1-1, to=1-2]
	\arrow[from=1-2, to=1-3]
	\arrow[from=2-2, to=2-3]
	\arrow["\lrcorner"{anchor=center, pos=0.125}, draw=none, from=1-2, to=2-3]
	\arrow["\lrcorner"{anchor=center, pos=0.125}, draw=none, from=1-1, to=2-2]
\end{tikzcd}\]
Then $j$ is in $\widehat{\Wm}$.
\end{lemma}
\begin{proof}
As $f'$ and $f''$ are left cartesian fibrations, the only marked cell in $A'$ and $A''$ are the identities according to proposition \ref{prop:left fib over flat}. We can then suppose that the left square lies in $\ocat$, and then apply proposition \ref{prop:pulback of Wsat}.
\end{proof}

\begin{lemma}
\label{lemma:pullback along markkin}
Let $b$ be a globular sum, and $n$ an integer. For any cartesian squares in $\iPsh{\Theta}$,
\[\begin{tikzcd}
	{A''} & {A'} & {b^{\sharp_n}} \\
	{B''} & {B'} & {b^{\sharp}}
	\arrow[from=1-3, to=2-3]
	\arrow["{ }", from=1-2, to=2-2]
	\arrow[from=1-1, to=2-1]
	\arrow["i"', from=2-1, to=2-2]
	\arrow["j", from=1-1, to=1-2]
	\arrow[from=1-2, to=1-3]
	\arrow[from=2-2, to=2-3]
	\arrow["\lrcorner"{anchor=center, pos=0.125}, draw=none, from=1-2, to=2-3]
	\arrow["\lrcorner"{anchor=center, pos=0.125}, draw=none, from=1-1, to=2-2]
\end{tikzcd}\]
if $i$ is in $\widehat{\Wm}$, so is $j$.
\end{lemma}
\begin{proof}
As $\tiPsh{\Theta}$ is cartesian closed, one can suppose that $i$ is in $\W$. In this case the diagram can be seen as a diagram in $\Psh{\Theta}$. The proof is an easy verification of all the possible cases.
\end{proof}

\begin{prop}
\label{prop:W stable under pullback}
For any cartesian square of $\tiPsh{\Theta}$,
\[\begin{tikzcd}
	{A''} & {A'} & A \\
	{B''} & {B'} & {B^\sharp}
	\arrow["f", from=1-3, to=2-3]
	\arrow[from=1-2, to=2-2]
	\arrow[from=1-1, to=2-1]
	\arrow["i"', from=2-1, to=2-2]
	\arrow["j", from=1-1, to=1-2]
	\arrow[from=1-2, to=1-3]
	\arrow[from=2-2, to=2-3]
	\arrow["\lrcorner"{anchor=center, pos=0.125}, draw=none, from=1-2, to=2-3]
	\arrow["\lrcorner"{anchor=center, pos=0.125}, draw=none, from=1-1, to=2-2]
\end{tikzcd}\]
where $f$ is a left cartesian fibration, if $i$ is in $\widehat{\Wm}$, so is $j$.
\end{prop}
\begin{proof}
As $\tiPsh{\Theta}$ is cartesian closed, one can suppose that $i$ is in $\W$. Several cases have to be considered.
If $i$ is of shape $(\Sigma^nE^{eq})^\flat\to \Db_n^\flat$, this is lemma \ref{lemma:pulback of Wsat}. 
Suppose now that $i$ is of shape $\Sp_b^{\sharp_n}\to b^{\sharp_n}$. This induces a diagram
\[\begin{tikzcd}
	{A''} && {A'} && A \\
	& {A''''} && {A'''} \\
	{\Sp_b^{\sharp_n}} && {b^{\sharp_n}} && {B^\sharp} \\
	& {\Sp_b^{\sharp}} && {b^{\sharp}}
	\arrow[""{name=0, anchor=center, inner sep=0}, "f", from=1-5, to=3-5]
	\arrow[from=1-3, to=3-3]
	\arrow[from=1-1, to=3-1]
	\arrow["i"'{pos=0.6}, from=3-1, to=3-3]
	\arrow["j", from=1-1, to=1-3]
	\arrow[from=1-3, to=1-5]
	\arrow[from=3-3, to=3-5]
	\arrow["\lrcorner"{anchor=center, pos=0.125}, draw=none, from=1-1, to=3-3]
	\arrow[from=3-1, to=4-2]
	\arrow[from=3-3, to=4-4]
	\arrow["{i'}"', from=4-2, to=4-4]
	\arrow[from=4-4, to=3-5]
	\arrow[from=2-2, to=4-2]
	\arrow["{j'}"{pos=0.4}, from=2-2, to=2-4]
	\arrow[from=2-4, to=4-4]
	\arrow[from=2-4, to=1-5]
	\arrow[from=1-1, to=2-2]
	\arrow[from=1-3, to=2-4]
	\arrow["\lrcorner"{anchor=center, pos=0.125}, draw=none, from=2-4, to=3-5]
	\arrow["\lrcorner"{anchor=center, pos=0.125}, draw=none, from=1-3, to=0]
\end{tikzcd}\]
where all squares are cartesian. Corollary \ref{cor:fibration over representable are expenitalbe} implies that $j'$ is in $\widehat{\W}$, and according to lemma \ref{lemma:pullback along markkin}, so is $j$.
\end{proof}

 A left cartesian fibration $A\to B$ is \wcnotion{classified}{classified left cartesian fibration} if there exists a cocartesian square: 
\[\begin{tikzcd}
	A & {A'} \\
	B & {B^\sharp}
	\arrow[from=1-1, to=1-2]
	\arrow[from=1-2, to=2-2]
	\arrow[from=1-1, to=2-1]
	\arrow[from=2-1, to=2-2]
	\arrow["\lrcorner"{anchor=center, pos=0.125}, draw=none, from=1-1, to=2-2]
\end{tikzcd}\]

\begin{theorem}
\label{theo:pullback along un marked cartesian fibration}
Let $p:A\to B$ be a classified left cartesian fibration. The functor $p^*:\ocatm_{/B}\to \ocatm_{/A}$ preserves colimits. 
\end{theorem}
\begin{proof}
As $\tPsh{\Theta}$ is locally cartesian closed, it is enough to show that the functor $p^*:\tiPsh{\Theta}_{/B}\to \tiPsh{\Delta[\Theta]}_{/A}$ sends $\Wm$ onto $\widehat{\Wm}$.
As morphisms fulfilling this property are stable under pullback, one can suppose that $p$ is of shape $B\to A^\sharp$, then applies proposition \ref{prop:W stable under pullback}.
\end{proof}

\begin{cor}
\label{cor:fib over a colimit}
Let $B$ be the colimit of a diagram $F:I\to \ocat$, and
 $p:X\to \colim_i B_i$ a left cartesian fibration. The canonical morphism
 $$ \colim_{i:B_i\to B}i^*p\to p$$
 is an equivalence.
\end{cor}
\begin{proof}
This morphism corresponds to the square
\[\begin{tikzcd}
	{\colim_{i:I}p^*B_i} & X \\
	{\colim_{i:I}B_i} & {B^\sharp}
	\arrow[from=2-1, to=2-2]
	\arrow[from=1-1, to=2-1]
	\arrow[from=1-1, to=1-2]
	\arrow["p", from=1-2, to=2-2]
\end{tikzcd}\]
The lower horizontal morphism is an equivalence by hypothesis, and the upper one is an equivalence as $p^*$ preserves colimits.
\end{proof}

\subsection{Colimits of cartesian fibrations}
\label{section:Colimit of left cartesian fibrations}
Through this section, we will identify any marked $\io$-category $C$ with the canonical induced morphism $C\to1$. If $f:X\to Y$ is a morphism, $f\times C$ then corresponds to the canonical morphism $X\times C\to Y$.

\begin{lemma}
\label{lemma: colimit of fib over b}
Let $b$ be a globular sum and $F:I\to \ocatm_{/b^\sharp}$ be a diagram that is pointwise a left cartesian fibration. The induced morphism 
$\colim_IF$ is a left cartesian fibration over $b^\sharp$. 
\end{lemma}
\begin{proof}
We denote $G:I\to \ocatm$ the diagram induced by $F$ by taking the domain.
Remark first that proposition \ref{prop:exponantiable stable under colim} implies that $\colim_IF$ is $b$-exponentiable. 
Let $n$ be an integer. Suppose given cartesian squares
\[\begin{tikzcd}
	{Y'} & Y & {\colim_IX} \\
	{\Db_n^\flat} & {(\Db_{n+1})_t} & {b^\sharp}
	\arrow["{\colim_IF}", from=1-3, to=2-3]
	\arrow["{i_n^\alpha}"', from=2-1, to=2-2]
	\arrow[from=1-2, to=2-2]
	\arrow[from=1-2, to=1-3]
	\arrow["f", from=1-1, to=1-2]
	\arrow[from=1-1, to=2-1]
	\arrow["j"', from=2-2, to=2-3]
	\arrow["\lrcorner"{anchor=center, pos=0.125}, draw=none, from=1-1, to=2-2]
	\arrow["\lrcorner"{anchor=center, pos=0.125}, draw=none, from=1-2, to=2-3]
\end{tikzcd}\]
where $\alpha$ is $+$ is $n$ is even and $-$ if not and with $j$ globular. According to proposition \ref{prop:criterion to be left cartesian firbation}, we have to show that $f$ is a right Gray deformation retract to conclude. As $F$ is pointwise a left cartesian fibration, proposition \ref{prop:left Gray transfomration stable under pullback along cartesian fibration} implies that for any $i:I$, the morphism $f(i)$ appearing in the cartesian squares:
\[\begin{tikzcd}
	{Y'} & Y & {X(i)} \\
	{\Db_n^\flat} & {(\Db_{n+1})_t} & {b^\sharp}
	\arrow["{F(i)}", from=1-3, to=2-3]
	\arrow["{i_n^\alpha}"', from=2-1, to=2-2]
	\arrow[from=1-2, to=2-2]
	\arrow[from=1-2, to=1-3]
	\arrow["{f(i)}", from=1-1, to=1-2]
	\arrow[from=1-1, to=2-1]
	\arrow["j"', from=2-2, to=2-3]
	\arrow["\lrcorner"{anchor=center, pos=0.125}, draw=none, from=1-1, to=2-2]
	\arrow["\lrcorner"{anchor=center, pos=0.125}, draw=none, from=1-2, to=2-3]
\end{tikzcd}\]
is a right Gray deformation retract, and that the corresponding Gray deformation retract structure is functorial in $i:I$.
 As $j$ and $ji_n^\alpha$ are marked globular, they are discrete Conduché functors, and so exponentiable according to proposition \ref{prop:pullback by conduch marked preserves colimit}. The following canonical morphism
 $$\colim_I f(i)\to f$$
 is then an equivalence. As right Gray deformation retract structures are stable by colimits, this concludes the proof.
\end{proof}

\begin{lemma}
\label{lemma: colimit of fib over b2}
Let $A$ be an $\io$-category and $F:I\to \ocatm_{/A^\sharp}$ be a diagram that is pointwise a left cartesian fibration. Let $i:a^\sharp\to b^\sharp$ be a morphism between globular sums and $i:b^\sharp\to A^\sharp$ any morphism.
The canonical comparison $$\colim_I (ji)^*F\to i^*\colim_I j^*F$$
is an equivalence.
\end{lemma}
\begin{proof}
Lemma \ref{lemma: colimit of fib over b} implies that the two morphisms are left cartesian fibrations. As equivalences between these morphisms are detected on fibers, we can suppose that $a$ is $[0]$. In this case, the morphism $i$ is a discrete Conduché functor, and is then exponentiable according to proposition \ref{prop:pullback by conduch marked preserves colimit}. This directly concludes the proof.
\end{proof}

\begin{theorem}
\label{theo:left cart stable by colimit}
Let $A$ be an $\io$-category and $F:I\to \ocatm_{/A^\sharp}$ be a diagram that is pointwise a left cartesian fibration. The induced morphism 
$\colim_IF$ is a left cartesian fibration over $A^\sharp$.
\end{theorem}
\begin{proof}
Consider the functor $\psi:\Theta_{/A}\to \Arr(\ocatm)$ whose value on $j:b\to A$ is $\colim_I j^*F$.
As $F$ is pointwise a left cartesian fibration, the corollary \ref{cor:fib over a colimit} induces equivalences
$$\colim_{\Theta_{/A}}\psi:= \colim_{j:b\to A}\colim_I j^*F\sim \colim_I \colim_{j:b\to A}j^*F\sim \colim_I F$$

 The functor $\psi$  is cartesian according to lemma \ref{lemma: colimit of fib over b2}, and as $\codom \psi$ as a special colimit (given by $A^\sharp$), so has $\psi$ according to proposition \ref{prop:special colimit marked case}. In particular, this implies that for any $j:b\to A$, the following canonical morphism
$$\colim_I j^* F=: \psi(j)\to j^*\colim_{\Theta_{/A}}\psi\sim j^* \colim_I F$$
is an equivalence. As the left object is a left cartesian fibration according to lemma \ref{lemma: colimit of fib over b}, so is the right one.
As this is true for any $j:b\to A$, the corollary \ref{cor:on the fact that fib are define against representable} implies that $ \colim_I F$ is a left cartesian fibration.
\end{proof}

\begin{cor}
\label{cor:inclusion of lcatt into the slice preserves colimits}
Let $A$ be an $\io$-category. The inclusion $\LCart(A^\sharp)\to \ocatm_{/A^\sharp}$ preserves both colimits and limits.
\end{cor}
\begin{proof}
The preservation of limits is a consequence of the fact that that this inclusion is a right adjoint. The preservation of colimits is a direct consequence of the theorem \ref{theo:left cart stable by colimit}.
\end{proof}

\p We now use the last theorem to provide an alternative explicit expression of the left cartesian fibration $\Fb h^0_{[C,1]}$. We obtain this in the theorem \ref{theo:equivalence betwen slice and join}.

\begin{prop}
\label{prop:appendice version equivalence betwen slice and join strict word}
Let $C$ be an $\zo$-category with an atomic and loop free basis. The canonical projection $\gamma:1\costar C^\flat \to [C,1]^\sharp$ is a left cartesian fibration.
\end{prop}
\begin{proof}
Let $C$ be such $\zo$-category.
The corollary \ref{cor:otimes et op}, the theorem \ref{theo:join preserves stict VMG version} and the proposition \ref{prop:suspension preserves stricte} imply that both the domain and the codomain of $\gamma$ are strict. We can then show the result in $\zocatm$.
By construction, the basis of $1\costar \lambda C$ is given by the graduated set: 
$$(B_{1\costar \lambda C})_n:=
\left\{
\begin{array}{ll}
\{\emptyset \costar c,c\in (B_{C})_0\}\cup \{\emptyset \costar c,c \in (B_C)_0\}&\mbox{if $n=0$}\\
\{1 \costar c,c\in (B_{C})_{n-1}\}\cup \{\emptyset \costar c,c\in (B_C)_n\} &\mbox{if $n>0$}\\
\end{array}\right.
$$
where $B_C$ is the basis of $C$. The derivative is induced by: 
$$\partial (1\costar c):= 1\costar \partial c + (-1)^{|c|}\emptyset\otimes c~~~~~~~~~~\partial(\emptyset\star c):= \emptyset\costar \partial c$$
where we set the convention $\partial c:=0$ if $|c|=0$.
Let $n$ be an integer and $x$ an element of $(1\costar \lambda C)_n$. The induced morphism $\Db_n\to 1\costar C^\flat$ is marked if and only if there is no element of shape $\emptyset\star c$ in the support of $x$.

For an integer $n>0$, we define $s_n: (\Sigma \lambda C)_n\to (1\costar \lambda C)_n$ as the unique group morphism fulfilling $$s_n(\Sigma c):= 1\costar c$$ for $c$ any element of $\lambda C_{n-1}$. Remark that for any non negative integer $n$, and any element $d$ of $(1\costar \lambda C)_n$, $s_n(d)$ is contained in $d$. However, the family of morphism $\{s_n\}_{n\in \Nb}$ does not commute with the derivative. Let $n$ be an integer and $x$ an element of $(1\costar \lambda C)_n$. The induced morphism $\Db_n\to 1\costar C^\flat$ is therefore marked if and only if $x$ is equal to $s_n\gamma_n(x)$.

Eventually,
we recall that $(\Db_n)_t\otimes[1]^\sharp$ is the colimit of the diagram:
\[\begin{tikzcd}
	{(\Db_n)_t\otimes\{0\}\coprod (\Db_n)_t\otimes\{1\}} & {\Db_n^\flat\otimes\{0\}\coprod \Db_n^\flat\otimes\{1\}} & {\tau^ i_ n(\Db_n^\flat\otimes[1]^\sharp)}
	\arrow[from=1-2, to=1-1]
	\arrow[from=1-2, to=1-3]
\end{tikzcd}\]
We then have to show that for any integer $n$, any diagram of shape 
\[\begin{tikzcd}
	{\lambda\Db_n\otimes\{0\}\cup \lambda\partial\Db_n\otimes[1]} & {1\costar \lambda C} \\
	{\lambda\Db_n\otimes[1]} & {\Sigma \lambda C}
	\arrow[from=1-1, to=2-1]
	\arrow["f"', from=2-1, to=2-2]
	\arrow["g", from=1-1, to=1-2]
	\arrow[from=1-2, to=2-2]
\end{tikzcd}\]
with $f(e_n\otimes[1])$ and $f(e^\alpha_k\otimes[1])$ for $\alpha\in\{-,+\}$ and $k<n$ correponding to a marked cell, admits a unique lifting $l$ with the following extra condition: if $n>0$, if $f(e_n\otimes[1])$ is null and if $g(e_n\otimes\{0\})$ corresponds to a marked cell, then $l(e_n\otimes[1])$ is null and $l(e_n\otimes\{1\})$ corresponds to a marked cell.

Suppose first that $n=0$. We set $l_0:\lambda (\Db_0\otimes[1])_0\to (1\costar \lambda C)_0$ as the unique group morphism extending $g_0$ and such that 
$$l_0(e_0\otimes\{1\}):= \partial s_1(f_1(e_0\otimes[1])+ g_0(e_0\otimes\{1\}).$$
We also define $l_1:\lambda (\Db_0\otimes[1])_1\to (1\costar \lambda C)_1$ as the group morphism characterized by: 
$$l_1(e_0\otimes[1]):= s_1(f_1(e_0\otimes[1])).$$
For $k>1$, we set $l_k:\lambda (\Db_0\otimes[1])_k\to (1\costar \lambda C)_k$ as the constant morphism on $0$.
We directly deduce the equality $\partial l= l \partial$.
We then have defined the desired lifting, which is obviously the unique one possible.

Suppose now that $n>0$. We set $l_k:=g_k:\lambda (\Db_n\otimes[1])_k\to (1\costar \lambda C)_k$ for $k<n$ and $l_n:\lambda (\Db_n\otimes[1])_n\to (1\costar \lambda C)_n$ as the unique group morphism extending $g_n$ and such that 
$$l_n(e_n\otimes\{1\}) := (-1)^\alpha \partial s_{n+1}( f(e_n\otimes[1])) - (-1)^\alpha s_{n}( f((\partial e_n)\otimes[1])) + g_n(e_n\otimes\{0\})$$
where $\alpha$ is $+$ if $n$ is even and $-$ if not.
We define $l_{n+1}:\lambda (\Db_n\otimes[1])_{n+1}\to (1\costar \lambda C)_{n+1}$ as the group morphism characterized by: 
$$l_{n+1}(e_n\otimes[1]):= s_{n+1}(f_{n+1}(e_n\otimes[1])).$$
Eventually, for $k>n$, we set $l_k:\lambda (\Db_n\otimes[1])_k\to (1\costar \lambda C)_k$ as the constant morphism on $0$.

For an integer $k<n$ and $\alpha\in\{-,+\}$, as the $(k+1)$-cell corresponding to $g_{k+1}(e_k^\alpha\otimes [1])$ is marked, we have an equality
$$g_{k+1}(e_k^\alpha\otimes [1]) = s_{k+1}f_{k+1}(e_k^\alpha\otimes [1]).$$
This then implies the equalities
$$\begin{array}{rcl}
\partial(l_{n+1}(e_n\otimes[1])) &=& l_{n+1}(\partial (e_n\otimes[1]))\\
\partial(l_{n}(e_n\otimes \{1\}))&=& g_{n-1}(\partial e_n\otimes \{1\})
\end{array}$$
As it was the only non trivial case, we have $l\partial = \partial l.$
We then have defined the desired lifting, which is obviously the unique one possible. Moreover, if we suppose that $f(e_n\otimes[1])$ is null and $g(e_n\otimes\{0\})$ corresponds to a marked cell, this implies that 
$ s_{n+1}( f(e_n\otimes[1])) =0$ and that the $g_n(e_n\otimes\{0\})$ is in the image of $s_n$. The object $f(e_n\otimes[1])$ also is in the image of $s_n$ and so corresponds to a marked cell.
\end{proof}

\begin{lemma}
\label{lemma:equivalence betwen slice and join strict word}
There is a unique morphism $1\costar C^\flat\to [C,1]^\sharp_{0/}$ fitting in a square
\[\begin{tikzcd}
	1 & {[C,1]^\sharp_{0/}} \\
	{1\costar C^\flat} & {[C,1]^\sharp}
	\arrow[from=1-1, to=2-1]
	\arrow[from=1-1, to=1-2]
	\arrow[from=1-2, to=2-2]
	\arrow[from=2-1, to=2-2]
	\arrow[dotted, from=2-1, to=1-2]
\end{tikzcd}\]
This morphism is an equivalence whenever $C$ is a globular sum.
\end{lemma}
\begin{proof}
We have by construction a cocartesian square
\[\begin{tikzcd}
	{C^\flat\otimes\{0\}} & {C^\flat\otimes[1]^\sharp} \\
	1 & {1\costar C^\flat}
	\arrow[from=1-1, to=2-1]
	\arrow[from=1-1, to=1-2]
	\arrow[from=2-1, to=2-2]
	\arrow[from=1-2, to=2-2]
	\arrow["\lrcorner"{anchor=center, pos=0.125, rotate=180}, draw=none, from=2-2, to=1-1]
\end{tikzcd}\]
which implies that $1\to 1\costar C^\flat$ is initial. This directly implies the first assertion.
We now prove the second assertion. We suppose that $C$ is a globular sum $a$. The $\io$-categories $1\costar a^\flat$ is strict according to proposition \ref{prop:tensor of glboer are strics}. Proposition \ref{prop:appendice version equivalence betwen slice and join strict word} states that the canonical morphism $1\costar a^\flat \to [a,1]^\sharp$ is a left cartesian fibration. As the comparison map is initial by left cancellation, this concludes the proof.	 
\end{proof}

\begin{prop}
\label{prop:equivalence betwen slice and join strict word2}
Let $b$ be a globular form and $j:b\to C$ a morphism between $\io$-categories. The following diagram is cartesian
\[\begin{tikzcd}
	{1\costar b^\flat\coprod_{b^\flat}C^\flat} & {[C,1]^\sharp_{0/}} \\
	{[b,1]^\sharp} & {[C,1]^\sharp}
	\arrow["{[j,1]^\sharp}"', from=2-1, to=2-2]
	\arrow[from=1-1, to=1-2]
	\arrow[from=1-1, to=2-1]
	\arrow[from=1-2, to=2-2]
\end{tikzcd}\]
\end{prop}
\begin{proof}
The lemma \ref{lemma:equivalence betwen slice and join strict word} implies that the morphism $1\costar b^\flat\to [b,1]^\sharp$ is equivalent to $\Fb h_0^{[b,1]}$.
We then have to check that the canonical morphism 
\begin{equation}
\label{eq:in a tecnical lemma}
\Fb h_0^{[b,1]}\coprod_{b^\flat}C^\flat\to [j,1]^*\Fb h_0^{[C,1]}
\end{equation}
is an equivalence. According to theorem \ref{theo:left cart stable by colimit}, the two objects are left cartesian fibrations, and we then have to check that this morphism induce equivalences on fibers. Remark furthermore that the two morphisms $\{0\}\to [b,1]^\sharp$ and $\{1\}\to [b,1]^\sharp$ are discrete Conduché functors and then exponentiable according to proposition \ref{prop:pullback by conduch marked preserves colimit}. The fibers on $0$ and $1$ of the morphism \eqref{eq:in a tecnical lemma} then corresponds to the equivalences
$$1\coprod_{\emptyset}\emptyset\sim 1~~~~\mbox{ and }~~~~ b\coprod_bC\sim C.$$
\end{proof}

\begin{theorem}
\label{theo:equivalence betwen slice and join}
Let $C$ be a $\io$-category. The left cartesian fibration $\Fb h^0_{[C,1]}$ is equivalent to the projection 
$1\costar C^\flat\to [C,1]^\sharp$.
\end{theorem}
\begin{proof}
Let $i:[b,1]^\sharp\to [C,1]^\sharp$ be any morphism. The proposition
\ref{prop:equivalence betwen slice and join strict word2} states that the following square is cartesian:
\[\begin{tikzcd}
	{1\costar b^\flat\coprod_{b^\flat}C^\flat} & {[C,1]^\sharp_{0/}} \\
	{[b,1]^\sharp} & {[C,1]^\sharp}
	\arrow[from=2-1, to=2-2]
	\arrow[from=1-1, to=2-1]
	\arrow[from=1-1, to=1-2]
	\arrow[from=1-2, to=2-2]
\end{tikzcd}\]
Eventually, remark that we have an equivalence 
$$\colim_{b\to C}[b,1]\sim [C,1].$$
The theorem \ref{theo:pullback along un marked cartesian fibration} then induces equivalences
$$[C,1]^\sharp_{0/}\sim \colim_{i:b\to C}1\costar b^\flat\coprod_{b^\flat}C^\flat \sim 1\costar C^\flat\coprod_{C^\flat}C^\flat\sim 1\costar C^\flat$$
over $[C,1]^\sharp$. This concludes the proof.
\end{proof}

\begin{cor}
\label{cor:cor of the past10}
Let $b$ be a globular form and $j:b\to C$ any morphism. The following square is cartesian:
\[\begin{tikzcd}
	{1\costar b\coprod_b C} & {1\costar C} \\
	{[b,1]} & {[C,1]}
	\arrow[from=1-2, to=2-2]
	\arrow[from=1-1, to=2-1]
	\arrow[from=2-1, to=2-2]
	\arrow[from=1-1, to=1-2]
\end{tikzcd}\]
\end{cor}
\begin{proof}
We apply the functor $(\uvar)^\natural$ to the cartesian square given in proposition \ref{prop:equivalence betwen slice and join strict word2} and the equivalence given in theorem \ref{theo:equivalence betwen slice and join}.
\end{proof}

\begin{cor}
\label{cor:cor of the past3}
Let $C$ be an $\io$-category. We denote by $\gamma:C\star 1\to [C,1]$ and $\gamma':1\costar C\to [C,1]$ the two canonical projections. The functors $\gamma^*:\ocat_{/[C,1]}\to \ocat_{/C\star 1}$ and $\gamma^*:\ocat_{/[C,1]}\to \ocat_{/1\costar C}$ preserve colimits. 
\end{cor}
\begin{proof}
We have a cocartesian square
\[\begin{tikzcd}
	{(1\costar C)^\flat} & {1\costar C^\flat} \\
	{[C,1]^\flat} & {[C,1]^\sharp}
	\arrow["{\gamma^\flat}"', from=1-1, to=2-1]
	\arrow[from=1-2, to=2-2]
	\arrow[from=2-1, to=2-2]
	\arrow[from=1-1, to=1-2]
	\arrow["\lrcorner"{anchor=center, pos=0.125}, draw=none, from=1-1, to=2-2]
\end{tikzcd}\]
The theorem \ref{theo:equivalence betwen slice and join} implies that the right hand morphism is a left cartesian fibration, and $\gamma^\flat$ is then a classified left cartesian fibration. The result is then a direct consequence of theorem \ref{theo:pullback along un marked cartesian fibration}.
The other assertion follows by duality.
\end{proof}

\subsection{Smooth and proper morphisms}
\p 
For a marked $\io$-category $C$, we denote by \textit{$\LCart(C)$} \sym{(lcart@$\LCart(\uvar)$}\sym{(rcart@$\RCart(\uvar)$} (resp. $\RCart(C)$) the full sub $\iun$-category of $\ocatm_{/C}$ whose objects are left cartesian fibrations. We can equivalently define $\LCart(C)$ as the localization of $\ocatm_{/C}$ along $\widehat{\I_{/C}}$. For $E$, $F$ two objects of $\LCart(C)$ corresponding respectively to two left cartesian fibrations
$p:X\to C$ and $q:X\to C$, we denote by \wcnotation{$\Map(E,F)$}{(map@$\Map(\uvar,\uvar)$} the $\io$-category fitting in the cocartesian square:
\[\begin{tikzcd}
	{\Map(E,F)} & {\uHom(X,Y)} \\
	{\{p\}} & {\uHom(X,C)}
	\arrow["{q_!}", from=1-2, to=2-2]
	\arrow[from=2-1, to=2-2]
	\arrow[from=1-1, to=1-2]
	\arrow[from=1-1, to=2-1]
	\arrow["\lrcorner"{anchor=center, pos=0.125}, draw=none, from=1-1, to=2-2]
\end{tikzcd}\]

\p We recall that a left cartesian fibration $X\to C$ is \textit{classified} when there exists a cartesian square: 
\[\begin{tikzcd}
	X & {X'} \\
	C & {C^\sharp}
	\arrow[from=1-1, to=1-2]
	\arrow[from=1-2, to=2-2]
	\arrow[from=1-1, to=2-1]
	\arrow[from=2-1, to=2-2]
	\arrow["\lrcorner"{anchor=center, pos=0.125}, draw=none, from=1-1, to=2-2]
\end{tikzcd}\]
We denote by \wcnotation{$\LCartc(C)$}{(lcart@$\LCartc(\uvar)$} the full sub $\iun$-category of $\LCart(C)$ whose objects are classified left cartesian fibrations.

\p Remark that every morphism $f:C\to D$ induces an adjunction
\[\begin{tikzcd}
	{f_!:\ocat_{/C}} & {\ocat_{/D}:f^*}
	\arrow[shift left=2, from=1-1, to=1-2]
	\arrow[shift left=2, from=1-2, to=1-1]
\end{tikzcd}\]
where the left adjoint $f_!$ is the composition and the right one is the pullback.
This induces an adjunction at the level of localized $\iun$-category:
\[\begin{tikzcd}
	{\Lb f_!:\LCart(C)} & {\LCart(D):\Rb f^*=f^*}
	\arrow[shift left=2, from=1-1, to=1-2]
	\arrow[shift left=2, from=1-2, to=1-1]
\end{tikzcd}\]
 
\p A morphism $f:C\to D$ is \wcnotion{smooth}{smooth morphism} if $f^*:\ocatm_{/D}\to \ocatm_{/C}$ preserves colimits, and for every cartesian square of the form
\begin{equation}
\label{eq:smooth diagram}
\begin{tikzcd}
	{C''} & {C'} & C \\
	{D''} & {D'} & D
	\arrow["{v'}", from=1-1, to=1-2]
	\arrow[from=1-2, to=1-3]
	\arrow["f", from=1-3, to=2-3]
	\arrow[from=2-2, to=2-3]
	\arrow["v"', from=2-1, to=2-2]
	\arrow[from=1-2, to=2-2]
	\arrow[from=1-1, to=2-1]
	\arrow["\lrcorner"{anchor=center, pos=0.125}, draw=none, from=1-1, to=2-2]
	\arrow["\lrcorner"{anchor=center, pos=0.125}, draw=none, from=1-2, to=2-3]
\end{tikzcd}
\end{equation}
if $v$ is inital, so is $v'$.
When $f$ is smooth, the functor $f^*$ admits a left adjoint
\[\begin{tikzcd}
	{f^*:\ocatm_{/D}} & {\ocatm_{/C}:f_*}
	\arrow[""{name=0, anchor=center, inner sep=0}, shift left=2, from=1-2, to=1-1]
	\arrow[""{name=1, anchor=center, inner sep=0}, shift left=2, from=1-1, to=1-2]
	\arrow["\dashv"{anchor=center, rotate=-90}, draw=none, from=1, to=0]
\end{tikzcd}\]
and as $f^*$ preserves initial morphisms, this induces a derived adjunction:
\[\begin{tikzcd}
	{\Lb f^*:\LCart(D)} & {\LCart(C):\Rb f_*}
	\arrow[""{name=0, anchor=center, inner sep=0}, shift left=2, from=1-2, to=1-1]
	\arrow[""{name=1, anchor=center, inner sep=0}, shift left=2, from=1-1, to=1-2]
	\arrow["\dashv"{anchor=center, rotate=-90}, draw=none, from=1, to=0]
\end{tikzcd}\]
where $\Rb f_*$ is just the restriction of $f_*$.

\begin{prop}
\label{prop:projection are smooth}
Let $I, J$ be two marked $\io$-categories. The projection $I\times J\to I$ is smooth. 
\end{prop}
\begin{proof}
This is a direct consequence of the fact that cartesian product preserves colimits and initial morphisms.
\end{proof}
\begin{prop}
\label{prop:left cartesian fibration are smooth}
Classified right cartesian fibrations are smooth.
\end{prop}
\begin{proof}
The theorem \ref{theo:pullback along un marked cartesian fibration} states that $f^*$ preserves colimits. Suppose given a diagram of shape \eqref{eq:smooth diagram}. As initial morphisms are the smallest cocomplete class containing morphism $I$, and as $f^*$ preserves colimits, one can suppose that $v$ belongs to $I$, and then is a left Gray deformation retract. To conclude, one applies proposition
\ref{prop:left Gray transfomration stable under pullback along cartesian fibration}.
\end{proof}

\p A morphism $f:C\to D$ is \wcnotion{proper}{proper morphism} if $f^*:\ocatm_{/D}\to \ocatm_{/C}$ preserves colimits and for every cartesian square of the form
\begin{equation}
\label{eq:proper diagram}
\begin{tikzcd}
	{C''} & {C'} & C \\
	{D''} & {D'} & D
	\arrow["{v'}", from=1-1, to=1-2]
	\arrow[from=1-2, to=1-3]
	\arrow["f", from=1-3, to=2-3]
	\arrow[from=2-2, to=2-3]
	\arrow["v"', from=2-1, to=2-2]
	\arrow[from=1-2, to=2-2]
	\arrow[from=1-1, to=2-1]
	\arrow["\lrcorner"{anchor=center, pos=0.125}, draw=none, from=1-1, to=2-2]
	\arrow["\lrcorner"{anchor=center, pos=0.125}, draw=none, from=1-2, to=2-3]
\end{tikzcd}
\end{equation}
if $v$ is final, so is $v'$.
A morphism $f$ is then proper if and only if $f^{\circ}$ is smooth. Propositions \ref{prop:projection are smooth} and \ref{prop:left cartesian fibration are smooth} then imply that projections and classified right cartesian fibrations are proper.

\p We denote by $\bot:\ocatm\to \ocat$ the left Kan extension of the functor $t\Theta\to \ocat$ that sends $a^\flat$ on $a$ and $(\Db_{n+1})_t$ on $\Db_n$. Roughly speaking, $\bot$ sends a marked $\io$-category to it's localization by marked cells. By abuse of notation, we also denote\sym{((g3@$\bot$} $\bot: 
\Arr(\ocatm)\to \ocat$, the composite functor 
$$\Arr(\ocatm)\xrightarrow{\dom}\ocatm\xrightarrow{\bot} \ocat$$
This functor preserves colimits and sends initial and final morphisms to equivalences. For any object $E$ of $\LCart(A)$ and for any morphism $i:A\to B$, we then have a canonical equivalence 
\begin{equation}
\label{eq:bot kill pull}
\bot \Lb i_! E\sim \bot E.
\end{equation}

Let $A$ be an $\io$-category and $a:1\to A^\sharp$ an object of $A$. 
According to proposition \ref{prop:explicit factoryzation}, the factorisation of $a:1\to A^\sharp$ in a final morphism followed by a right cartesian fibration is given by the canonical inclusion $\{a\}\to A^\sharp_{a/}$ and the canonical projection $\pi_a:A^\sharp_{a/}\to A^\sharp$.
Let $E$ be an object of $\LCart(A^\sharp)$ corresponding to a left cartesian fibration $p:X\to A^\sharp$.
We then have a diagram
\[\begin{tikzcd}
	{X_a} & {X_{/a}} & X \\
	{\{a\}} & {A^\sharp_{a/}} & {A^\sharp}
	\arrow[from=1-1, to=2-1]
	\arrow[from=2-1, to=2-2]
	\arrow[from=1-2, to=1-3]
	\arrow["p", from=1-3, to=2-3]
	\arrow[from=1-2, to=2-2]
	\arrow["{\pi_a}"', from=2-2, to=2-3]
	\arrow["i", from=1-1, to=1-2]
	\arrow["\lrcorner"{anchor=center, pos=0.125}, draw=none, from=1-1, to=2-2]
	\arrow["\lrcorner"{anchor=center, pos=0.125}, draw=none, from=1-2, to=2-3]
\end{tikzcd}\]
and the morphism $i$ is final as $p$ is proper. As $\bot$ sends final morphisms to equivalences, we then have an invertible natural transformation: 
\begin{equation}
\label{eq:explicit derived fiber}
\Rb a^*E\sim \bot \Rb a^*E\sim \bot \Rb \pi_a^*E
\end{equation}

\begin{prop}
\label{prop:fiber preserves colimits}
The functor $\Rb a^*:\LCart(A^\sharp)\to \LCart(1)\sim \ocat$ preserves colimits. 
\end{prop}
\begin{proof}
As $\pi_a$ is a right cartesian fibration, it is smooth and $\Rb \pi_a^*$ then preserves colimits. The functor $\bot$ also preserves them. The result then follows from the equivalence \eqref{eq:explicit derived fiber}.
\end{proof}

\p Let $E$ be an object of $\ocatm_{/A^\sharp}$ corresponding to a morphism $X\to A^\sharp$. We denote $\tilde{X}\to A^\sharp$ the left fibrant replacement of $E$. We then have a diagram
\[\begin{tikzcd}
	{X_{a/}} & {\tilde{X}_{a/}} & {A^\sharp_{a/}} \\
	X & {\tilde{X}} & {A^\sharp}
	\arrow[from=1-1, to=1-2]
	\arrow[from=1-2, to=1-3]
	\arrow["{\Fb E}"', from=2-2, to=2-3]
	\arrow[from=2-1, to=2-2]
	\arrow["{\pi_a}", from=1-3, to=2-3]
	\arrow[from=1-2, to=2-2]
	\arrow[from=1-1, to=2-1]
	\arrow["\lrcorner"{anchor=center, pos=0.125}, draw=none, from=1-1, to=2-2]
	\arrow["\lrcorner"{anchor=center, pos=0.125}, draw=none, from=1-2, to=2-3]
\end{tikzcd}\] 
 As $\pi_a$ is smooth, the canonical morphism 
$X_{a/}\to \tilde{X}_{a/}$ is initial. Combined with \eqref{eq:explicit derived fiber}, this induces an equivalence:
\begin{equation}
\label{eq:explicit derived fiber2}
\Rb a^*(\Fb E)\sim \bot X_{/a}
\end{equation}
\begin{prop}
\label{prop:quillent theorem A}
For a morphism $X\to A^\sharp$, and an object $a$ of $A$, we denote by $X_{/a}$ the marked $\io$-category fitting in the following cartesian square: 
\[\begin{tikzcd}
	{X_{a/}} & X \\
	{A^\sharp_{a/}} & {A^\sharp}
	\arrow[from=2-1, to=2-2]
	\arrow[from=1-2, to=2-2]
	\arrow[from=1-1, to=1-2]
	\arrow[from=1-1, to=2-1]
	\arrow["\lrcorner"{anchor=center, pos=0.125}, draw=none, from=1-1, to=2-2]
\end{tikzcd}\]
We denote by $\bot:\ocatm\to \ocat$ the functor sending a marked $\io$-category to its localization by marked cells.
\begin{enumerate}
\item Let $E$, $F$ be two elements of $\ocatm_{/A^\sharp}$ corresponding to morphisms $X\to A^\sharp$, $Y\to A^\sharp$, and
 $\phi:E\to F$ a morphism between them. The induced morphism $\Fb\phi:\Fb E\to \Fb F$ is an equivalence if and only if for any object $a$ of $A$, the induced morphism 
$$\bot X_{/a}\to \bot Y_{/a}$$ 
is an equivalence of $\io$-categories.
\item A morphism $X\to A^\sharp$ is initial if and only if for any object $a$ of $A$, $\bot X_{/a}$ is the terminal $\io$-category.
\end{enumerate}
\end{prop}
\begin{proof}
The first assertion is a direct consequence of the equation \eqref{eq:explicit derived fiber2} and of the fact that equivalences between left cartesian fibrations are detected on fibers.

A morphism $p:X\to A$ is initial if and only if $\Fb p$ is equivalent to the identity of $A^\sharp$, and according to the first assertion, if and only if for any object $a$ of $A$, the canonical morphism $\bot X_{a/}\to \bot A^\sharp_{a/}$ is an equivalence. However, the canonical morphism $\{a\}\to A_{/a}^\sharp$ is final, and $\bot A^\sharp_{a/}$ is then the terminal $\io$-category. This concludes the proof of the second assertion.
\end{proof}

\p 
Suppose given a commutative square of marked $\io$-categories: 
\begin{equation}
\label{eq:BC data}
\begin{tikzcd}
	A & C \\
	{B^\sharp} & {D^\sharp}
	\arrow["j", from=1-1, to=1-2]
	\arrow["u", from=1-2, to=2-2]
	\arrow["v"', from=1-1, to=2-1]
	\arrow["i"', from=2-1, to=2-2]
\end{tikzcd}
\end{equation}
This induces a square 
\begin{equation}
\label{eq:BC lax commutative square}
\begin{tikzcd}
	{\LCartc(C)} & {\LCartc(A)} \\
	{\LCart(D^\sharp)} & {\LCart(B^\sharp)}
	\arrow["{\Rb j^*}", from=1-1, to=1-2]
	\arrow["{\Lb  u_!}"', from=1-1, to=2-1]
	\arrow["{\Lb v_!}", from=1-2, to=2-2]
	\arrow["{\Rb i^*}"', from=2-1, to=2-2]
	\arrow[shorten <=8pt, shorten >=8pt, Rightarrow, from=1-2, to=2-1]
\end{tikzcd}
\end{equation}
that commutes up to a natural transformation 
\begin{equation}
\label{eq:BC nat}
\begin{array}{rcl}
\Lb v_!\circ \Rb j^*&\to &\Lb v_!\circ \Rb j^* \circ \Rb u^* \circ \Lb u_!\\
&\sim & \Lb v_!\circ \Rb v^* \circ \Rb i^* \circ \Lb u_!\\
&\to& \Rb i^* \circ \Lb u_!
\end{array}
\end{equation}
A square \eqref{eq:BC data} verifies the \notion{Beck-Chevaley condition} if this natural transformation \eqref{eq:BC nat} is an equivalence. This square verifies the \notion{weak Beck-Chevaley condition} if the natural transformation once composed with $\bot$ becomes an equivalence.

\begin{prop}
\label{prop:base change}
If the square \eqref{eq:BC data} is cartesian and $i$ is smooth, then it verifies the Beck-Chevaley condition.
\end{prop}
\begin{proof}
By construction, $\Lb v_!\circ \Rb j^*$ sends an object $E$ of $\LCartc(C)$ onto the fibrant replacement of $ v_!j^* E$. 
As $i$ is smooth, $\Rb i^* \circ \Lb u_!$ sends an object $E$ of $\LCart(C)$ onto the fibrant replacement of $i^*u_! E$. As pullbacks are stable under composition, we have $i^*u_!\sim v_!j^*$.
\end{proof}

\begin{lemma}
\label{lemma:smoth technical 1}
A square \eqref{eq:BC data} where both $j$ and $i$ are final verifies the weak Beck-Chevaley condition.
\end{lemma}
\begin{proof}
As  $\bot$ sends initial and final morphisms to equivalences, for any $E: \LCartc(A)$ and any $F:\LCartc(C)$, we have equivalences
$$\bot \Lb v_! E \sim \bot E~~~\mbox{ and }~~~\bot \Lb v_! F \sim \bot F.$$
Moreover, as classified left cartesian fibrations are proper, for any $G:\LCartc(C)$ and $H:\LCart(D^\sharp)$,  we have equivalences
$$\bot \Lb j^*G \sim \bot G~~~\mbox{ and }~~~\bot \Lb i^* H \sim \bot H.$$
This implies  the result.
\end{proof}

\begin{lemma}
\label{lemma:smoth technical 2}
Suppose given a cartesian square 
\[\begin{tikzcd}
	A & C \\
	{B^\sharp} & {D^\sharp}
	\arrow["j", from=1-1, to=1-2]
	\arrow["u", from=1-2, to=2-2]
	\arrow["v"', from=1-1, to=2-1]
	\arrow["i"', from=2-1, to=2-2]
\end{tikzcd}\]
such that for any object $b$ of $B^\sharp$, the outer square of the induced diagram
\[\begin{tikzcd}
	{A_{b/}} & A & C \\
	{B^\sharp_{/b}} & {B^\sharp} & {D^\sharp}
	\arrow["j", from=1-2, to=1-3]
	\arrow["u", from=1-3, to=2-3]
	\arrow["v"', from=1-2, to=2-2]
	\arrow["i"', from=2-2, to=2-3]
	\arrow["{v'}"', from=1-1, to=2-1]
	\arrow["{\pi_b}"', from=2-1, to=2-2]
	\arrow["{\pi_b'}", from=1-1, to=1-2]
	\arrow["\lrcorner"{anchor=center, pos=0.125}, draw=none, from=1-1, to=2-2]
	\arrow["\lrcorner"{anchor=center, pos=0.125}, draw=none, from=1-2, to=2-3]
\end{tikzcd}\]
verifies the weak Beck Chevaley condition. Then the right hand square verifies the Beck Chevaley condition.
\end{lemma}
\begin{proof}
Let $E$ be an element of $\LCart(C)$. Using the hypothesis, the fact that $\pi_a$ is a right cartesian fibration, and so smooth,, we have a sequence of equivalences: 
$$\begin{array}{rcll}
\bot \Rb \pi_b^*  \Lb v_! \Rb j^*E&\sim &\bot \Lb v'_! \Rb {\pi'_b}^*  \Rb j^*E&(\ref{prop:base change})\\
&\sim & \bot \Rb \pi_b^*  \Rb i  \Lb u_! E&\mbox{(hypothesis)}
\end{array}$$
Using the equivalence \eqref{eq:explicit derived fiber}, this implies that for any element $b$ of $B$, we have an equivalence 
$$ \Rb b^*  \Lb v_! \Rb j^*E\to \Rb b^*  \Rb i  \Lb u_!E$$
which concludes the proof as equivalences between left cartesian fibrations are detected fiberwise.
\end{proof}

\begin{prop}
\label{prop:BC condition}
Let $i:I\to A^\sharp$ and $j:C^\sharp\to D^\sharp$ be two morphisms. The square 
\[\begin{tikzcd}
	{C^\sharp\times I} & {D^\sharp\times I} \\
	{C^\sharp\times A^\sharp} & {D^\sharp\times A^\sharp}
	\arrow[from=1-1, to=2-1]
	\arrow[from=2-1, to=2-2]
	\arrow[from=1-1, to=1-2]
	\arrow[from=1-2, to=2-2]
\end{tikzcd}\]
verifies the Beck-Chevaley condition.
\end{prop}
\begin{proof}
According to lemma \ref{lemma:smoth technical 2}, one has to show that for any pair $(a,c)$ where $a$ is an object of $A^\sharp$ and $c$ of $C^\sharp$, the induced cartesian square
\[\begin{tikzcd}
	{C^\sharp_{c/}\times I_{a/}} & {D^\sharp\times I} \\
	{C^\sharp_{c/}\times A_{a/}^\sharp} & {D^\sharp\times A^\sharp}
	\arrow[from=1-2, to=2-2]
	\arrow[from=1-1, to=2-1]
	\arrow[from=1-1, to=1-2]
	\arrow[from=2-1, to=2-2]
\end{tikzcd}\]
verifies the weak Beck-Chevaley condition. Remark that this square factors as two cartesian squares:
\[\begin{tikzcd}
	{C^\sharp_{c/}\times I_{a/}} & {D^\sharp_{j(c)/}\times I_{a/}} & {D^\sharp\times I} \\
	{C^\sharp_{c/}\times A_{a/}^\sharp} & {D^\sharp_{j(c)/}\times A_{a/}^\sharp} & {D^\sharp\times A^\sharp}
	\arrow[from=1-3, to=2-3]
	\arrow[from=1-1, to=2-1]
	\arrow[from=2-1, to=2-2]
	\arrow[from=2-2, to=2-3]
	\arrow[from=1-1, to=1-2]
	\arrow[from=1-2, to=1-3]
	\arrow[from=1-2, to=2-2]
\end{tikzcd}\]
The two morphisms $\{c\}\to C^\sharp_{c/}$ and $\{c\}\to D^\sharp_{j(c)/}$ are initial, and by stability by left cancellation, so is $C^\sharp_{c/}\to D^\sharp_{j(c)/}$. By stability by cartesian product, the two horizontal morphisms of the left square are initial. Lemma \ref{lemma:smoth technical 1} then implies that the left square verifies the weak Beck-Chevaley condition. According to proposition \ref{prop:base change}, the right square fulfills the Beck-Chevaley condition, and so \textit{a fortiori}, the weak one. The outer square then verified the weak Beck-Chevaley condition, which concludes the proof.
\end{proof}

\p 
Suppose given a commutative square of marked $\io$-categories:
\begin{equation}
\label{eq:BC data}
\begin{tikzcd}
	A & {C^\sharp} \\
	B & {D^\sharp}
	\arrow["j", from=1-1, to=1-2]
	\arrow["u", from=1-2, to=2-2]
	\arrow["v"', from=1-1, to=2-1]
	\arrow["i"', from=2-1, to=2-2]
\end{tikzcd}
\end{equation}
where  $j$ and $i$ are smooth. This induces a square
\begin{equation}
\label{eq:BC lax commutative square2}
\begin{tikzcd}
	{\LCartc(B)} & {\LCart(D^\sharp)} \\
	{\LCartc(A)} & {\LCart(C^\sharp)}
	\arrow["{\Rb j_*}"', from=2-1, to=2-2]
	\arrow["{\Lb  u^*}", from=1-2, to=2-2]
	\arrow["{\Lb v^*}"', from=1-1, to=2-1]
	\arrow["{\Rb i_*}", from=1-1, to=1-2]
	\arrow[shorten <=8pt, shorten >=8pt, Rightarrow, from=1-2, to=2-1]
\end{tikzcd}
\end{equation}
that commutes up to a natural transformation 
\begin{equation}
\label{eq:BC nat2}
\begin{array}{rcl}
\Lb u^*\circ \Rb i_*&\to & \Rb j_*\circ\Lb j^* \circ \Lb u^*\circ \Rb i_*\\
&\sim &\Rb j_*\circ\Lb v^*\circ\Lb i^*  \circ \Rb i_*\\
&\to &\Rb j_*\circ\Lb v^*
\end{array}
\end{equation}
A square \eqref{eq:BC data} verifies the \notion{opposed Beck-Chevaley condition} if $i$ and $j$ are smooth and  the natural transformation \eqref{eq:BC nat2} is an equivalence.

\begin{prop}
\label{prop:base change2}
If the square \eqref{eq:BC nat2} is cartesian,  and $i$ and $j$ are smooth, then it verifies the opposed Beck-Chevaley condition.
\end{prop}
\begin{proof}
By adjunction, it is sufficient to show that the induced natural transformation
$$\Lb v_!\circ \Rb j^*\to \Rb i^* \circ \Lb u_!:\LCart(C^\sharp)\to \LCart(B)$$
is an equivalence. By construction, $\Lb v_!\circ \Rb j^*$ sends an object $E$ of $\LCart(C^\sharp)$ onto the fibrant replacement of $ v_!j^* E$. 
As $i$ is smooth, $\Rb i^* \circ \Lb u_!$ sends an object $E$ of $\LCart(C^\sharp)$ onto the fibrant replacement of $i^*u_! E$. As pullbacks are stable under composition, we have $i^*u_!\sim v_!j^*$.
\end{proof}

\begin{prop}
\label{prop:BC condition 2}
Let $i:I\to A^\sharp$ be a smooth morphism and $j:C^\sharp\to D^\sharp$ any morphism. The square
\[\begin{tikzcd}
	{ C^\sharp\times I} & { C^\sharp\times A^\sharp} \\
	{D^\sharp\times I} & { D^\sharp\times A^\sharp}
	\arrow[from=1-1, to=2-1]
	\arrow[from=1-2, to=2-2]
	\arrow[from=1-1, to=1-2]
	\arrow[from=2-1, to=2-2]
\end{tikzcd}\]
verifies the opposed Beck-Chevaley condition.
\end{prop}
\begin{proof}
As $id_{C^\sharp}\times i$ and $id_{D^\sharp}\times i$ are pullbacks of $i$, they are smooth. The result  is then follows from proposition \ref{prop:base change2}.
\end{proof}

\subsection{The $\Wcard$-small $\io$-category of $\V$-small left cartesian fibrations}

\p Let $I$ be a marked $\io$-category, and $a$ a globular sum. We recall that the pullback along the canonical projection $\pi_a:I\times a^\flat\to I$ induces an adjunction
\[\begin{tikzcd}
	{{\pi_a}_!:\ocat_{/I\times a^\flat}} & {\ocatm_{/I}:{\pi_a}^*}
	\arrow[""{name=0, anchor=center, inner sep=0}, shift left=2, from=1-1, to=1-2]
	\arrow[""{name=1, anchor=center, inner sep=0}, shift left=2, from=1-2, to=1-1]
	\arrow["\dashv"{anchor=center, rotate=-90}, draw=none, from=0, to=1]
\end{tikzcd}\]
\begin{lemma}
\label{lemma:to show fully faithfullness1}
Let $E$ and $F$ be two objects of $\ocatm_{/I}$ and $\psi:\pi_{[a,1]}^*E\to \pi_{[a,1]}^*F$ an equivalence.
The exists a unique commutative diagram of shape
\[\begin{tikzcd}
	{(\pi_{[a,1]})_!\pi_{[a,1]}^*E} & {(\pi_{[a,1]})_!\pi_{[a,1]}^*F} \\
	E & F
	\arrow["{(\pi_{[a,1]})_!\psi}", from=1-1, to=1-2]
	\arrow["\epsilon", from=1-2, to=2-2]
	\arrow["\epsilon"', from=1-1, to=2-1]
	\arrow["\phi"', dashed, from=2-1, to=2-2]
\end{tikzcd}\]
Moreover, the arrow $\phi$ is an equivalence.
\end{lemma}
\begin{proof}
Unfolding the definition, we have to show the existence and unicity of commutative diagrams of shape
\begin{equation}
\label{eq:square to show fully faithfullness}
\begin{tikzcd}
	{X\times [a,1]^\flat} & {Y\times [a,1]^\flat} \\
	X & Y
	\arrow["{\dom \psi}", from=1-1, to=1-2]
	\arrow[from=1-2, to=2-2]
	\arrow[from=1-1, to=2-1]
	\arrow["\phi"', dashed, from=2-1, to=2-2]
\end{tikzcd}
\end{equation}
where the two vertical morphisms are the projection and 
where $X$ and $Y$ correspond respectively to the domain of $E$ and $F$. As $\dom\psi$ is a morphism over $I\times [a,1]^\flat$, we already have a commutative diagram of shape:
\[\begin{tikzcd}
	{X\times [a,1]^\flat} & {Y\times [a,1]^\flat} \\
	{[a,1]^\flat} & {[a,1]^\flat}
	\arrow["{\dom \psi}", from=1-1, to=1-2]
	\arrow[from=1-2, to=2-2]
	\arrow[from=1-1, to=2-1]
	\arrow["id"', from=2-1, to=2-2]
\end{tikzcd}\]
By the universal property of cartesian product, this directly implies that if a square of shape \eqref{eq:square to show fully faithfullness} exists, it has to be unique, and that the morphism $\phi$ will be an equivalence. It then remains to show the existence.

Let $\psi'$ be an inverse of $\psi$. We denote $\tilde{\psi}:X\times [a,1]^\flat\to Y$ and $\tilde{\psi}':Y\times [a,1]^\flat\to X$ the morphisms induce by the adjunction from $\psi$ and $\psi'$. For $\epsilon\in\{0,1\}$, we denote by $\psi_{\epsilon}:X\times \{\epsilon\} \to Y$ and $\psi'_{\epsilon}:Y\times \{\epsilon\} \to X$ the induced morphisms. In particular $\psi_{\epsilon}$ and $\psi'_{\epsilon}$ are inverse one of the other.

 By construction, we have a commutative diagram
\[\begin{tikzcd}
	{X\times [a,1]^\flat\times [a,1]^\flat} & {Y\times[a,1]^\flat} \\
	{X\times[a,1]^\flat} & X
	\arrow["{\tilde{\psi}\times[a,1]^\flat}", from=1-1, to=1-2]
	\arrow["{\tilde{\psi}'}", from=1-2, to=2-2]
	\arrow["{X\times \triangledown}", from=2-1, to=1-1]
	\arrow["\pi"', from=2-1, to=2-2]
\end{tikzcd}\]
where $\triangledown$ is the diagonal and $\psi$ the canonical projection. This corresponds to a commutative diagram in the $\iun$-category $[n]\mapsto \Hom(X\times [a,n]^\flat,X)$:
\[\begin{tikzcd}
	{id_X} & {id_X} \\
	{id_X} & {id_X}
	\arrow["{\tilde\psi'*\psi_0}"', from=1-1, to=2-1]
	\arrow["{\tilde\psi'*\psi_1}", from=1-2, to=2-2]
	\arrow["{\psi_1'*\tilde\psi}"', from=2-1, to=2-2]
	\arrow["{\psi'_0*\tilde\psi}", from=1-1, to=1-2]
	\arrow["{id_{id_X}}"{description}, from=1-1, to=2-2]
\end{tikzcd}\]
Remark that in the $\iun$-category $[n]\mapsto \Hom(X\times [a,n]^\flat,Y)$, we have equivalences
$$\tilde\psi\sim\psi_0' *\psi_0*\psi~~~\mbox{ and }\tilde\psi\sim\psi_1' *\psi_1*\psi$$
and the previous diagram then induces two commutative triangles
\[\begin{tikzcd}
	{\psi_1} &&& {\psi_0} & {\psi_1} \\
	{\psi_0} & {\psi_1} &&& {\psi_0}
	\arrow["{id_{\psi_0}}"', from=1-4, to=2-5]
	\arrow["\tilde\psi", from=1-4, to=1-5]
	\arrow["{\psi_0*\tilde\psi'*\psi_1}", from=1-5, to=2-5]
	\arrow["\tilde\psi"', from=2-1, to=2-2]
	\arrow["{\psi_1*\tilde\psi'*\psi_0}"', from=1-1, to=2-1]
	\arrow["{id_{\psi_1}}", from=1-1, to=2-2]
\end{tikzcd}\]
View as a $1$-cell of $[n]\mapsto \Hom(X\times [a,n]^\flat,Y)$, $\tilde{\psi}$ is then an equivalence. This implies the existence of a lifts in the following diagram
\[\begin{tikzcd}
	{[a,1]^\flat} & {\uHom(X,Y)} \\
	1
	\arrow["\tilde\psi", from=1-1, to=1-2]
	\arrow[from=1-1, to=2-1]
	\arrow["\phi"', dashed, from=2-1, to=1-2]
\end{tikzcd}\]
which induces the wanted square:
\[\begin{tikzcd}
	{X\times[a,1]^\flat} & {Y\times[a,1]^\flat} \\
	X & X
	\arrow[from=1-1, to=2-1]
	\arrow["\phi"', dashed, from=2-1, to=2-2]
	\arrow[from=1-2, to=2-2]
	\arrow["{\tilde{\psi}}", from=1-1, to=2-2]
	\arrow[from=1-1, to=1-2]
\end{tikzcd}\]
\end{proof}

\begin{lemma}
\label{lemma:to show fully faithfullness2}
Let $I$ be a marked $\io$-category and $a$ a globular form. 
The canonical morphisms of $\infty$-groupoids:
$$\pi_{[a,1]}^*:\tau_0\ocatm_{/I}\to \tau_0\ocatm_{/I\times [a,1]^\flat}$$
$$\pi_{[a,1]}^*:\tau_0 \Arr(\ocatm_{/I})\to \tau_0\Arr(\ocatm_{/I\times [a,1]^\flat})$$
are fully faithful.
\end{lemma}
\begin{proof}
Let $E$ and $F$ be two objects of $\ocatm_{/I}$. The morphism 
$$\Hom_{\tau_0\ocatm_{/I}}(E,F) \to \Hom_{\tau_0\ocatm_{/I\times[a,1]^\flat}}(\pi_{[a,1]}^*E,\pi_{[a,1]}^*F) $$ has an inverse that sends $\psi:\pi_{[a,1]}^*E\to \pi_{[a,1]}^*F$ onto the morphism $\phi:E\to F$ appearing in the commutative square provided by lemma \ref{lemma:to show fully faithfullness1}.

The second assertion is demonstrated similarly.
\end{proof}

\begin{prop}
\label{prp:to show fully faithfullness3}
Let $I$ be a marked $\io$-category and $a$ a globular form. We denote by $\pi_a:I\times a^\flat\to I$ the canonical projection.
The canonical morphisms of $\infty$-groupoids:
$$\Rb{\pi_a}^*:\tau_0\LCartc(I)\to \tau_0\LCartc(I\times a^\flat)$$
$$\Rb{\pi_a}^*:\tau_0 \Arr(\LCartc(I))\to \tau_0\Arr(\LCartc(I\times a^\flat))$$
are fully faithful.
\end{prop}
\begin{proof}
Let $[\textbf{b},n]:= a$. Considere first the adjunction:
\[\begin{tikzcd}
	{\LCartc(I\times [b_0,1]^\flat)\times_{\LCartc(I)}...\times_{\LCartc(I)}\LCartc(I\times [b_{n-1},1]^\flat)} \\
	{\LCartc(I^\flat\times [\textbf{b},n])}
	\arrow[""{name=0, anchor=center, inner sep=0}, shift left=2, from=2-1, to=1-1]
	\arrow[""{name=1, anchor=center, inner sep=0}, "{\colim_I}", shift left=2, from=1-1, to=2-1]
	\arrow["\dashv"{anchor=center, rotate=-180}, draw=none, from=1, to=0]
\end{tikzcd}\]
The corollary \ref{cor:fib over a colimit} implies that the counit of this adjunction is an equivalence.
This implies that the right adjoint
$$\LCartc(I^\flat\times [\textbf{b},n])\to \LCartc(I\times [b_0,1]^\flat)\times_{\LCartc(I)}...\times_{\LCartc(I)}\LCartc(I\times [b_{n-1},1]^\flat)$$ is fully faithful. 
By right cancellation and using the fact that fully faithful functors are stable by limits, it is sufficient to show that for any $k<n$, 
$$\Rb{\pi_{[b_i,1]}}^*:\tau_0\LCartc(I)\to \tau_0\LCartc(I\times [b_k,1]^\flat)$$
is fully faithful. 
Moreover, for any such $k$, we have a commutative square
\[\begin{tikzcd}
	{\tau_0\LCartc(I)} & {\tau_0\LCartc(I\times [b_k,1]^\flat)} \\
	{\tau_0\ocatm_{/I}} & {\tau_0\ocatm_{/I\times [b_k,1]^\flat}}
	\arrow["{\Rb{\pi_{[b_k,1]}}^*}", from=1-1, to=1-2]
	\arrow[from=1-1, to=2-1]
	\arrow[from=1-2, to=2-2]
	\arrow["{{\pi_{[b_k,1]}}^*}"', from=2-1, to=2-2]
\end{tikzcd}\]
whose vertical morphisms are fully faithful by construction. The results the follows from lemma \ref{lemma:to show fully faithfullness2} by right cancellation.

The second assertion is demonstrated similarly.

\end{proof}

\p For an $\io$-category $A$ and a globular sum $a$, we define $\LCart(A^\sharp;a)$ as the full sub $\iun$-category of $\LCartc(A^\sharp\times a^\flat)$ whose objects are of shape $E\times id_a^\flat$ for $E$ an object of $\LCart(A^\sharp)$. The proposition \ref{prp:to show fully faithfullness3} implies that the canonical morphism 
$$\tau_0\LCart(A^\sharp)\to \tau_0\LCart(A^\sharp;a)$$
is an equivalence of $\infty$-groupoid. We define \wcnotation{$\uLCart(A^\sharp)$}{(lcart@$\uLCart(\uvar)$} as the $\Wcard$-small $\io$-category whose value on $[a,n]$ is given by:
$$\uLCart(A^\sharp)([a,n]):=\Hom([n],\LCart(A^\sharp;a)).$$
For a marked $\io$-category $I$ and a globular sum $a$, we define similarly $\LCartc(I;a)$ as the full sub $\iun$-category of $\LCartc(I\times a^\flat)$ whose objects are of shape $E\times id_a^\flat$ for $E$ an object of $\LCartc(I)$. The proposition \ref{prp:to show fully faithfullness3} implies that the canonical morphism 
$$\tau_0\LCartc(I)\to \tau_0\LCartc(I;a)$$
is an equivalence of $\infty$-groupoid. We define \wcnotation{$\uLCartc(I)$}{(lcartc@$\uLCartc(\uvar)$} as the $\Wcard$-small $\io$-category whose value on $[a,n]$ is given by:
$$\uLCartc(I)([a,n]):=\Hom([n],\LCartc(I;a)).$$
These two definitions are compatible as we have an equivalence between $\uLCartc(A^\sharp)$ and $\uLCart(A^\sharp)$.

\p Let $E$ and $F$ be two objects of $\uLCartc(I)$, and $a$ a globular sum. Remark that a morphism $[a,1]\to \uLCartc(I)$ corresponds to a morphism $E\times id_a\to F\times id_a$, and so to a morphism $X\times a\to Y$ over $I$ where $X$ and $Y$ are respectively the domain of $E$ and $F$. We then have an equivalence: 
\begin{equation}
\hom_{\uLCart(I)}(E,F)\sim \Map_I(E,F). 
\end{equation}
This then implies that $\LCartc(I)$ is locally $\V$-small.

\p Let $i:I\to J$ be a morphism between marked $\io$-category, $a$ a globular sum, and $p$ a classified left cartesian fibration over 
$a^\flat\times J$. Remark that we have a canonical equivalence $$\Rb (i\times id_{a^\flat})^*(p\times id_{a^\flat})\sim (\Rb i^*p)\times id_{a^\flat}$$ natural in $a:\Theta^{op}$. The functor $\Rb (i\times id_{a^\flat})^*$ then restricts to a functor 
$$(i_a)^*:\LCartc(J;a)\to \LCartc(I;a)$$
natural in $a:\Theta^{op}$, and then to a morphism of $\io$-categories:
\begin{equation}
\label{eq:i pullback}
i^*:\uLCartc(J)\to \uLCartc(I)
\end{equation}
\index[notation]{(f5@$f^*:	\uLCartc(J)\to \uLCartc(I)$}

\p 
Let $i:I\to A^\sharp$ be a morphism between marked $\io$-categories. We are now willing to construct a morphism $i_!:\uLCartc(I)\to \uLCart(A^\sharp)$ which corresponds to $\Lb i_!:\LCartc(I)\to \LCart(A^\sharp)$ on the maximal sub $\iun$-category.

We denote by $E_0$ and $E_1$ the $\iun$-categories fitting in the cartesian square: 
\[\begin{tikzcd}
	{E_0} & \Theta & {E_1} & \Theta \\
	{\Arr^{fib}(\ocatm)} & \ocatm & {\Arr^{fib}(\ocatm)} & \ocatm
	\arrow["\codom"', from=2-1, to=2-2]
	\arrow["{\psi_0}", from=1-2, to=2-2]
	\arrow[from=1-1, to=2-1]
	\arrow[from=1-1, to=1-2]
	\arrow["\lrcorner"{anchor=center, pos=0.125}, draw=none, from=1-1, to=2-2]
	\arrow["{\psi_1}", from=1-4, to=2-4]
	\arrow[from=1-3, to=2-3]
	\arrow["\codom"', from=2-3, to=2-4]
	\arrow[from=1-3, to=1-4]
	\arrow["\lrcorner"{anchor=center, pos=0.125}, draw=none, from=1-3, to=2-4]
\end{tikzcd}\]
where $\Arr^{fib}(\ocatm)$ is the full sub $\iun$-category of $\Arr(\ocatm)$ whose objects are classified left cartesian fibrations, and where $\psi_0$ and $\psi_1$ send respectively $a$ on $I\times a^\flat$ and $A^\sharp\times a^\flat$. 
The morphism $i$ induces an adjunction
\begin{equation}
\label{eq:adj i pull}
\begin{tikzcd}
	{i_!:E_0} & {E_1:i^*}
	\arrow[""{name=0, anchor=center, inner sep=0}, shift left=2, from=1-1, to=1-2]
	\arrow[""{name=1, anchor=center, inner sep=0}, shift left=2, from=1-2, to=1-1]
	\arrow["\dashv"{anchor=center, rotate=-90}, draw=none, from=0, to=1]
\end{tikzcd}
\end{equation}
where the left adjoint sends a left cartesian fibration $p$ over $I\times a^\flat$ to $\Lb (i\times id_a)_!p$ and the right adjoint sends a left cartesian fibration $q$ over $A^\sharp\times a^\flat$ to $\Rb (i\times id_a)^* q$.
\begin{lemma}
\label{lemma:technical lemma i pull}
Let $p$ be a left cartesian fibration over $I^\sharp$. We have an equivalence $$\Lb (i\times id_{a^\flat})_!(p\times id_{a^\flat})\sim (\Lb i_! p)\times id_{a^\flat}.$$
Let $q$ be a left cartesian fibration over $A^\sharp$. We have an equivalence $$\Rb (i\times id_{a^\flat})^*(q\times id_{a^\flat})\sim (\Rb i^* q)\times id_{a^\flat}.$$
\end{lemma}
\begin{proof}
The first assertion is straightforward as the cartesian product with $a^\flat$ preserves initial morphisms and left cartesian fibrations. The second assertion is obvious.
\end{proof}
We define $\tilde{E_0}$ and $\tilde{E_1}$ as the full sub $\iun$-categories of $E_0$ and $E_1$ whose objects are respectively of shape $p\times id_a$ and $q\times id_a$ for $p$ and $q$ classified left cartesian fibrations over $I$ and $A^\sharp$.
The last lemma implies that \eqref{eq:adj i pull} restricts to an adjunction
\begin{equation}
\label{eq:adj i pull2}
\begin{tikzcd}
	{i_!:\tilde{E_0}} & {\tilde{E_1}:i^*}
	\arrow[""{name=0, anchor=center, inner sep=0}, shift left=2, from=1-1, to=1-2]
	\arrow[""{name=1, anchor=center, inner sep=0}, shift left=2, from=1-2, to=1-1]
	\arrow["\dashv"{anchor=center, rotate=-90}, draw=none, from=0, to=1]
\end{tikzcd}
\end{equation}

\begin{lemma}
\label{lemma:technical lemma i pull2} $~$
\begin{enumerate}
\item
Let $q\to q'$ be a morphism in $\tilde{E_0}$ corresponding to a cartesian square. The induced morphism $i_!(q)\to i_!(q')$ also corresponds to a cartesian square. 
\item
Let $q\to q'$ be a morphism in $\tilde{E_1}$ corresponding to a cartesian square. The induced morphism $i^*(q)\to i^*(q')$ also corresponds to a cartesian square. 
\end{enumerate}
\end{lemma}
\begin{proof}
Cartesian morphisms in $\tilde{E_0}$ corresponds to cartesian squares
\[\begin{tikzcd}
	{X\times a^{\flat}} & {X\times b^{\flat}} \\
	{I\times a^{\flat}} & {I\times b^{\flat}}
	\arrow["{p\times id_a}"', from=1-1, to=2-1]
	\arrow[from=2-1, to=2-2]
	\arrow["{p\times id_b}", from=1-2, to=2-2]
	\arrow[from=1-1, to=1-2]
\end{tikzcd}\]
and cartesian morphisms in $\tilde{E_1}$ corresponds to cartesian squares
\[\begin{tikzcd}
	{Y\times a^{\flat}} & {Y\times b^{\flat}} \\
	{A^\sharp\times a^{\flat}} & {A^\sharp\times b^{\flat}}
	\arrow["{q\times id_a}"', from=1-1, to=2-1]
	\arrow[from=2-1, to=2-2]
	\arrow["{q\times id_b}", from=1-2, to=2-2]
	\arrow[from=1-1, to=1-2]
\end{tikzcd}\]
The results directly follows from lemma \ref{lemma:technical lemma i pull}.
\end{proof}
The canonical projection $\tilde{E_0}\to \Theta$ and $\tilde{E_1}\to \Theta$ are Grothendieck fibrations in $\iun$-categories. The cartesian lifting is given by cartesian squares. Moreover, their Grothendieck deconstructions correspond respectively to 
$a\mapsto \LCartc(I;a)$ and $a\mapsto \LCart(A^\sharp;b)$. As both $i_!$ and $i^*$ preserve cartesian lifting according to lemma \ref{lemma:technical lemma i pull2}, they induce by Grothendieck deconstruction a family of adjunction
\begin{equation}
\label{eq:adj i pull3}
\begin{tikzcd}
	{(i_a)_!:\LCartc(I;a)} & {\LCart(A^\sharp;a):(i_a)^*}
	\arrow[""{name=0, anchor=center, inner sep=0}, shift left=2, from=1-1, to=1-2]
	\arrow[""{name=1, anchor=center, inner sep=0}, shift left=2, from=1-2, to=1-1]
	\arrow["\dashv"{anchor=center, rotate=-90}, draw=none, from=0, to=1]
\end{tikzcd}
\end{equation}
natural in $a:\Theta^{op}$. The family of functors $(i_a)_!$ then induces a morphism of $\io$-category\index[notation]{(f4@$f_{\mbox{$\exclam$}}:\uLCartc(I)\to \uLCart(A^\sharp)$}
\begin{equation}
\label{eq:i pull}
i_!:\uLCartc(I)\to \uLCart(A^\sharp)
\end{equation}
which corresponds to $\Lb i_!:\LCartc(I)\to \LCart(A^\sharp)$ on the maximal sub $\iun$-category.
The unit and counit of adjunction \eqref{eq:adj i pull3} induce morphisms
\begin{equation}
\label{eq:i pull unit an counit}
\mu:id\to i^*i_!~~~~ \epsilon:i_!i^*\to id
\end{equation}
and equivalences
$(\epsilon\circ_0 i_!)\circ_1(i_!\circ_0 \mu) \sim id_{i_!}$ and $(i^*\circ_0 \epsilon)\circ_1 (\mu \circ_0 i^* )\sim id_{i^*}$.

\p Let $j:C^\sharp\to D^\sharp$ be a morphism between $\io$-categories. We claim that the commutative square 
\[\begin{tikzcd}
	{\uLCart(D^\sharp\times A^\sharp)} & {\uLCartc(D^\sharp\times I)} \\
	{\uLCart(C^\sharp\times A^\sharp)} & {\uLCartc(C^\sharp\times I)}
	\arrow["{(j\times id_{I})^*}", from=1-2, to=2-2]
	\arrow["{( id_{D^\sharp}\times i)^*}", from=1-1, to=1-2]
	\arrow["{( id_{C^\sharp}\times i)^*}"', from=2-1, to=2-2]
	\arrow["{(j\times id_{A^\sharp})^*}"', from=1-1, to=2-1]
\end{tikzcd}\]
induces a commutative square
\begin{equation}
\label{eq:commutative pull push}
\begin{tikzcd}
	{\uLCartc(D^\sharp\times I)} & {\uLCartc(C^\sharp\times I)} \\
	{\uLCart(D^\sharp\times A^\sharp)} & {\uLCart(C^\sharp\times A^\sharp)}
	\arrow["{( id_{D^\sharp}\times i)_!}"', from=1-1, to=2-1]
	\arrow["{(j\times id_{I})^*}", from=1-1, to=1-2]
	\arrow["{( id_{C^\sharp}\times i)_!}", from=1-2, to=2-2]
	\arrow["{(j\times id_{A^\sharp})^*}"', from=2-1, to=2-2]
\end{tikzcd}
\end{equation}
\textit{A priori}, the natural transformations \eqref{eq:i pull unit an counit} implies that this square commutes up the natural transformation:
$$
\begin{array}{rcl}
( id_{C^\sharp}\times i)_!\circ (j\times id_{I})^*&\to &( id_{C^\sharp}\times i)_! \circ (j\times id_{I})^* \circ ( id_{D^\sharp}\times i)^*\circ ( id_{D^\sharp}\times i)_!\\
&\sim &( id_{C^\sharp}\times i)_!\circ ( id_{C^\sharp}\times i)^*\circ (j\times id_{A^\sharp})^*\circ ( id_{D^\sharp}\times i)_!\\
&\to&(j\times id_{A^\sharp})^*\circ ( id_{D^\sharp}\times i)_!
\end{array}
$$
Proposition \ref{prop:BC condition} implies that this natural transformation is pointwise an equivalence, and so is globally an equivalence.

\p We now suppose that the morphism $i:I\to A^\sharp$ is smooth, and we are willing to construct a morphism $i_*:\uLCart(A^\sharp)\to \uLCart(I)$ which corresponds to $\Rb i_*:\LCartc(I)\to \LCart(A^\sharp)$ on the sub maximal $\iun$-categories.. 

As smooth morphisms are stable by pullback, the maps $i\times id_b^\flat$ are smooth for any $b:\Theta$. The morphism $i^*:E_0\to E_1$ then preserves colimits and fits into an adjunction
\begin{equation}
\label{eq:adj i pullstar}
\begin{tikzcd}
	{i^*:E_1} & {E_0:i_*}
	\arrow[""{name=0, anchor=center, inner sep=0}, shift left=2, from=1-1, to=1-2]
	\arrow[""{name=1, anchor=center, inner sep=0}, shift left=2, from=1-2, to=1-1]
	\arrow["\dashv"{anchor=center, rotate=-90}, draw=none, from=0, to=1]
\end{tikzcd}
\end{equation}
where the left adjoint sends a left cartesian fibration $p$ over $A^\sharp\times a^\flat$ to $ (i\times id_a)^*p$ and the right adjoint sends a left cartesian fibration $q$ over $I\times a^\flat$ to $\Rb (i\times id_a)_* q$.
\begin{lemma}
\label{lemma:technical lemma i pullstar}
Let $p$ be a left cartesian fibration over $I$. We have an equivalence $$\Rb (i\times id_{a^\flat})_*(p\times id_{a^\flat})\sim (\Rb i_* p)\times id_{a^\flat}.$$
\end{lemma}
\begin{proof}
The morphism $p\times id_{a^\flat}$ is the limit of the cospan
$$p\to id_I\leftarrow id_I\times id_{a^\flat}$$
The result is then a direct consequence of the fact that $\Rb i_*$ preserves limits as it is a right adjoint.
\end{proof}

We recall that $\tilde{E_0}$ and $\tilde{E_1}$ are defined as the full sub $\iun$-categories of $E_0$ and $E_1$ whose objects are respectively of shape $p\times id_a$ and $q\times id_a$ for $p$ and $q$ classified left cartesian fibrations over $I$ and $A^\sharp$.
The lemma \ref{lemma:technical lemma i pullstar} and the second assertion of lemma \ref{lemma:technical lemma i pull} imply that \eqref{eq:adj i pullstar} restricts to an adjunction
\begin{equation}
\label{eq:adj i pull2star}
\begin{tikzcd}
	{i^*:\tilde{E_1}} & {\tilde{E_0}:i_*}
	\arrow[""{name=0, anchor=center, inner sep=0}, shift left=2, from=1-1, to=1-2]
	\arrow[""{name=1, anchor=center, inner sep=0}, shift left=2, from=1-2, to=1-1]
	\arrow["\dashv"{anchor=center, rotate=-90}, draw=none, from=0, to=1]
\end{tikzcd}
\end{equation}

\begin{lemma}
\label{lemma:technical lemma i pull2star} 
Let $q\to q'$ be a morphism in $\tilde{E_0}$ corresponding to a cartesian square. The induced morphism $i_*(q)\to i_*(q')$ also corresponds to a cartesian square. 
\end{lemma}
\begin{proof}
The proof is similar to that of the lemma \ref{lemma:technical lemma i pull2}, using lemma \ref{lemma:technical lemma i pullstar} instead of  lemma \ref{lemma:technical lemma i pull}.
\end{proof}

The lemmas \ref{lemma:technical lemma i pull2} and \ref{lemma:technical lemma i pull2star} imply that the two adjoints of \eqref{eq:adj i pull2star} preserve the cartesian cells of the Grothendieck fibrations $\tilde{E_0}\to \Theta$ and $\tilde{E_1}\to \Theta$. These two adjoints then induce by
 Grothendieck deconstruction a family of adjunction
\begin{equation}
\label{eq:adj i pull3star}
\begin{tikzcd}
	{(i_a)^*:\LCart(A^\sharp;a)} & {\LCartc(I;a):(i_a)_*}
	\arrow[""{name=0, anchor=center, inner sep=0}, shift left=2, from=1-1, to=1-2]
	\arrow[""{name=1, anchor=center, inner sep=0}, shift left=2, from=1-2, to=1-1]
	\arrow["\dashv"{anchor=center, rotate=-90}, draw=none, from=0, to=1]
\end{tikzcd}
\end{equation}
natural in $a:\Theta^{op}$. The family of functors $(i_a)_*$ then induces a morphism of $\io$-categories\index[notation]{(f6@$f_*:\uLCartc(I)\to \uLCart(A^\sharp)$}
\begin{equation}
\label{eq:i push op}
i_*:\uLCartc(I)\to \uLCart(A^\sharp)
\end{equation}
which is equivalent to $\Rb i_*:\LCartc(I)\to \LCart(A^\sharp)$ on the sub maximal $\iun$-categories.
The unit and counit of adjunction \eqref{eq:adj i pull3star} induce natural transformation
\begin{equation}
\label{eq:i pull unit an counit op}
\mu: id\to i_*i^*~~~~ \epsilon:i^*i_*\to id
\end{equation}
and equivalences
$(\epsilon\circ_0 i^*)\circ_1(i^*\circ_0 \mu) \sim id_{i^*}$ and $(i_*\circ_0 \epsilon)\circ_1 (\mu \circ_0 i_* )\sim id_{i_*}$.

\p Let $j:C^\sharp\to D^\sharp$ be a morphism between $\io$-categories. We claim that the commutative square 
\[\begin{tikzcd}
	{\uLCart(D^\sharp\times A^\sharp)} & {\uLCartc(D^\sharp\times I)} \\
	{\uLCart(C^\sharp\times A^\sharp)} & {\uLCartc(C^\sharp\times I)}
	\arrow["{(j\times id_{I})^*}", from=1-2, to=2-2]
	\arrow["{( id_{D^\sharp}\times i)^*}", from=1-1, to=1-2]
	\arrow["{( id_{C^\sharp}\times i)^*}"', from=2-1, to=2-2]
	\arrow["{(j\times id_{A^\sharp})^*}"', from=1-1, to=2-1]
\end{tikzcd}\]
induces a commutative square
\begin{equation}
\label{eq:commutative pull push op}
\begin{tikzcd}
	{\uLCartc(D^\sharp\times I)} & {\uLCart(D^\sharp\times A^\sharp)} \\
	{\uLCartc(C^\sharp\times I)} & {\uLCart(C^\sharp\times A^\sharp)}
	\arrow["{( id_{D^\sharp}\times i)_*}", from=1-1, to=1-2]
	\arrow["{(j\times id_{I})^*}"', from=1-1, to=2-1]
	\arrow["{( id_{C^\sharp}\times i)_*}"', from=2-1, to=2-2]
	\arrow["{(j\times id_{A^\sharp})^*}", from=1-2, to=2-2]
\end{tikzcd}
\end{equation}

\textit{A priori}, the natural transformations \eqref{eq:i pull unit an counit op} implies that this square commutes up the natural transformation:
$$
\begin{array}{rcl}
(j\times id_{A^\sharp})^*\circ (id_{D^\sharp}\times i)_*&\to & (id_{C^\sharp}\times i)_*\circ(id_{C^\sharp}\times i)^*\circ(j\times id_{A^\sharp})^*\circ (id_{D^\sharp}\times i)_*\\
&\sim &(id_{C^\sharp}\times i)_*\circ(j\times id_I)^*\circ(id_{D^\sharp}\times i)^*\circ (id_{D^\sharp}\times i)_*\\
&\to &(id_{C^\sharp}\times i)_*\circ(j\times id_I)^*
\end{array}
$$
Proposition \ref{prop:BC condition 2} implies that this natural transformation is pointwise an equivalence, and so is globally an equivalence.

\chapter{The $\io$-category of small $\io$-categories}
\label{chapter:The io-category of small io-categories}

\minitoc
\vspace{2cm}
This chapter aims to establish analogs of the fundamental categorical constructions to the $\io$ case. In the first section, we define the $\io$-category of small $\io$-categories $\uni$ (paragraph \ref{para:defi of uni}), and we prove a first incarnation of the Grothendieck construction:
\begin{icor}[\ref{cor: Grt equivalence}]
Let $\uni$ be the $\io$-category of small $\io$-categories, and $A$ an $\io$-category. There is an equivalence
$$\int_A:\Hom(A,\uni)\to \tau_0 \LCart(A^\sharp).$$
where $\tau_0 \LCart(A^\sharp)$ is the $\infty$-groupoid of left cartesian fibrations over $A^\sharp$ with small fibers.
\end{icor}
Given a functor $f:A\to \uni$, the left cartesian fibration $\int_Af$ is a colimit (computed in $\ocatm_{/A^\sharp}$) of
a simplicial object whose value on $n$ is of shape
$$\coprod_{x_0,...,x_n:A_0}X(x_0)^\flat\times\hom_A(x_0,...,x_n)^\flat\times A^\sharp_{x_n/}\to A^\sharp$$
This formula is similar to the one given in \cite{Gepner_Lax_colimits_and_free_fibration}
 for $\iun$-categories, and to the one given in \cite{Warren_the_strict_omega_groupoid_interpretation_of_type_theory} for strict $\omega$-categories.

We also prove a univalence result:

\begin{icor}[\ref{cor:univalence}]
Let $I$ be a marked $\io$-category. We denote by $I^\sharp$ the marked $\io$-category obtained from $I$	 by marking all cells and $\iota:I\to I^\sharp$ the induced morphism. There is a natural correspondence between \begin{enumerate}
\item functors
$f:I\otimes [1]^\sharp\to \uni^\sharp,$

\item pairs of small left cartesian fibration $X\to I^\sharp$, $Y\to I^\sharp$ together with diagrams 
\[\begin{tikzcd}
	& {\iota^*X} && X \\
	{\iota^*Y} && Y \\
	& I && {I^\sharp}
	\arrow[""{name=0, anchor=center, inner sep=0}, "\iota"', from=3-2, to=3-4]
	\arrow[from=2-1, to=3-2]
	\arrow[from=2-3, to=3-4]
	\arrow[from=2-1, to=2-3]
	\arrow[from=1-2, to=3-2]
	\arrow[from=1-4, to=3-4]
	\arrow[from=1-2, to=1-4]
	\arrow["\phi"{description}, from=1-2, to=2-1]
	\arrow["\lrcorner"{anchor=center, pos=0.125}, draw=none, from=1-2, to=0]
	\arrow["\lrcorner"{anchor=center, pos=0.125}, draw=none, from=2-1, to=0]
\end{tikzcd}\]
\end{enumerate}
\end{icor}

Recall that if $I$ is of shape $B^\sharp$, then the underlying $\io$-category of $B^\sharp\otimes[1]^\sharp$ is $B\times [1]$, and if $I$ is of shape $B^\flat$, the underlying $\io$-category of $B^\flat\otimes[1]^\sharp$ is $B\otimes[1]$. On the other hand, if $I$ is $B^\sharp$, $\iota$ is the identity, and $\phi$ then preserves all cartesian liftings, and if $I$ is $B^\flat$, $\phi$ doesn't need to preserve cartesian liftings.

By varying the marking, we can continuously move from the cartesian product with the interval to the Gray product with the interval on one side, and on the other side, we can continuously move from morphisms between left cartesian fibrations that preserve the marking to the ones that do not preserve it \textit{a priori}.

Eventually, we also get an $\io$-functorial Grothendieck construction, expressed by the following corollary:

\begin{icor}[\ref{cor:lcar et hom}]
Let $A$ be a $\U$-small $\io$-category.
Let $\uLCart(A^\sharp)$ be the $\io$-category of small left cartesian fibrations over $A^\sharp$. 
There is an equivalence
$$\uHom(A,\uni)\sim \uLCart(A^\sharp)$$
natural in $A$.
\end{icor}

In the second section of this chapter, for a locally small $\io$-category $C$, we construct the Yoneda embedding, which is a functor $y:C\to \widehat{C}$ where $\widehat{C}:=\uHom(C^t,\uni)$. We prove the Yoneda lemma:
\begin{itheorem}[\ref{theo:Yoneda ff}]
The Yoneda embedding is fully faithful.
\end{itheorem}
\begin{itheorem}[\ref{theo:Yoneda lemma}]
Let $C$ be an $\io$-category. There is an equivalence between the functor
$$\hom_{\w{C}}(y_{\uvar},\uvar):C^t\times \w{C}\to \uni$$ and
the functor 
$$ev:C^t\times \w{C}\to \uni .$$
\end{itheorem}
In the last three sections, we use these results to study and demonstrate the basic properties of adjunctions, lax (co)limits, and left Kan extensions.

We begin by studying adjunctions, and we establish the following expected result.
\begin{itheorem}[\ref{theo:two adjunction definition}]
Let $u:C\to D$ and $v:D\to C$ be two functors between locally $\U$-small $\io$-categories. 
The two following are equivalent. 
\begin{enumerate}
\item The pair $(u,v)$ admits an adjoint structure.
\item Their exists a pair of natural transformations $\mu: id_C \to vu$ and $\epsilon:uv\to id_D$ together with equivalences $(\epsilon\circ_0 u)\circ_1(u\circ_0 \mu) \sim id_{u}$ and $(v\circ_0 \epsilon)\circ_1 (\mu \circ_0 v )\sim id_{v}$.
\end{enumerate}
\end{itheorem}

In the next subsection, given a morphism $f:I\to C^\sharp$ between marked $\io$-categories, we define the notion of lax colimit and lax limit for the functor $f$. If $f$ admits such a lax colimit, for any $1$-cell $i:a\to b$ in $I$, we have a triangle
\[\begin{tikzcd}
	{} & {F(b)} \\
	{F(a)} & {\laxcolim_IF}
	\arrow["{F(i)}", curve={height=-30pt}, from=2-1, to=1-2]
	\arrow[from=2-1, to=2-2]
	\arrow[shorten <=8pt, shorten >=8pt, Rightarrow, from=1-2, to=2-1]
	\arrow[draw=none, from=1-1, to=2-1]
	\arrow[from=1-2, to=2-2]
\end{tikzcd}\]
If $i$ is marked, the preceding $2$-cell is an equivalence. 
For any $2$-cell $u:i\to j$, we have a diagram
\[\begin{tikzcd}
	& {F(b)} & {} & {F(b)} \\
	{F(a)} & {\laxcolim_IF} & {F(a)} & {\laxcolim_IF}
	\arrow[""{name=0, anchor=center, inner sep=0}, "{F(i)}"{description}, from=2-1, to=1-2]
	\arrow[""{name=1, anchor=center, inner sep=0}, from=2-1, to=2-2]
	\arrow[from=1-2, to=2-2]
	\arrow[""{name=2, anchor=center, inner sep=0}, from=1-2, to=2-2]
	\arrow[""{name=3, anchor=center, inner sep=0}, "{F(j)}", curve={height=-30pt}, from=2-1, to=1-2]
	\arrow["{F(j)}", curve={height=-30pt}, from=2-3, to=1-4]
	\arrow[from=2-3, to=2-4]
	\arrow[from=1-4, to=2-4]
	\arrow[shorten <=8pt, shorten >=8pt, Rightarrow, from=1-4, to=2-3]
	\arrow[""{name=4, anchor=center, inner sep=0}, draw=none, from=1-3, to=2-3]
	\arrow[shift right=2, shorten <=12pt, shorten >=12pt, Rightarrow, from=2, to=1]
	\arrow[shorten <=4pt, shorten >=4pt, Rightarrow, from=3, to=0]
	\arrow[shift left=0.7, shorten <=14pt, shorten >=16pt, no head, from=2, to=4]
	\arrow[shorten <=14pt, shorten >=14pt, from=2, to=4]
	\arrow[shift right=0.7, shorten <=14pt, shorten >=16pt, no head, from=2, to=4]
\end{tikzcd}\]
If $u$ is marked, the $3$-cell is an equivalence. We can continue these diagrams in higher dimensions and we have
similar assertions for lax limits.
The marking therefore allows us to play on the "lax character" of the universal property that the lax colimit must verify.

After providing several characterizations of lax colimits and limits, we prove the following result:
\begin{itheorem}[\ref{theo:presheaevs colimi of representable}]
Let $C$ be a $\U$-small $\io$-category. Let $f$ be an object of $\w{C}$. We define $C^\sharp_{/f}$ as the following pullback
\[\begin{tikzcd}
	{C^\sharp_{/f}} & {\w{C}^\sharp_{/f}} \\
	{C^\sharp} & {\w{C}^\sharp}
	\arrow[from=1-1, to=2-1]
	\arrow[from=1-1, to=1-2]
	\arrow[from=1-2, to=2-2]
	\arrow["{y^\sharp}"', from=2-1, to=2-2]
\end{tikzcd}\]
The colimit of the functor 
$\pi:C^\sharp_{/f}\to C^\sharp\xrightarrow{y^\sharp} \w{C}^\sharp$ is $f$.
\end{itheorem}

We conclude this chapter by studying Kan extensions.

\paragraph{Cardinality hypothesis.}
We fix during this chapter three Grothendieck universes $\U \in \V\in\Wcard$, such that $\omega\in \U$. 
All defined notions depend on a choice of cardinality. When nothing is specified, this corresponds to the implicit choice of the cardinality $\V$.
We denote by $\Set$ the $\Wcard$-small $1$-category of $\V$-small sets, $\igrd$ the $\Wcard$-small $\iun$-category of $\V$-small $\infty$-groupoids and $\icat$ the $\Wcard$-small $\iun$-category of $\V$-small $\iun$-categories. 

\section{Univalence}
\label{section:Univalence}
\subsection{Internal category}
\p For $X$ an object of $\iPsh{\Theta}$ and $K$ a simplicial $\infty$-groupoid, we define the simplicial object $\langle X, K\rangle$ of $\ocat$ whose value on $n$ is given by \index[notation]{((g20@$\langle a,n\rangle$}
$$\langle X,K\rangle_n := X\times K_n$$
If $K$ is the representable $[n]$, this object is simply denoted by $\langle X,n\rangle$.
We also define the following set of morphism of $\iPsh{\Delta\times \Theta}$:\sym{(t@$\T$}
$$\T:= \{\langle a,f\rangle,~ a\in \Theta, f\in \mbox{$\W_1$}\} \cup \{\langle g,n\rangle,~ g\in \W, [n]\in \Delta\}$$

\p A \wcnotion{$\ioun$-category}{category5@$\ioun$-category} is a $\T$-local $\infty$-presheaf $C\in \iPsh{\Theta\times \Delta}$. We then naturally define \sym{((a80@$\ouncat$}
$$\ouncat := \iPsh{\Theta\times \Delta}_{\T}.$$
Unfolding the definition, an $\ioun$-category is a simplicial object $C:\Delta^{op}\to \ocat$
such that the induced morphisms
$$C_0\to\lim_{[k]\to E^{eq}}C_k~~~~\mbox{ and }~~~C_n\to C_1\times_{C_0}\times...\times_{C_0}C_1~n\in \Nb$$
are equivalences. 
Remark that we have a cartesian square
\[\begin{tikzcd}
	\ouncat & {\Fun(\Theta^{op},\icat)} \\
	{\ocat\times \ocat} & {\Fun(\Theta^{op},\igrd)\times \Fun(\Theta^{op},\igrd)}
	\arrow[from=1-2, to=2-2]
	\arrow[""{name=0, anchor=center, inner sep=0}, from=2-1, to=2-2]
	\arrow[from=1-1, to=2-1]
	\arrow[from=1-1, to=1-2]
	\arrow["\lrcorner"{anchor=center, pos=0.125}, draw=none, from=1-1, to=0]
\end{tikzcd}\]
where the lower horizontal morphism is induced by the canonical inclusion of $\io$-category onto $\infty$-presheaves on $\Theta$, and the right vertical one is induced by the functor that maps an $\iun$-category to the pair consisting of the $\infty$-groupoid of objects and the $\infty$-groupoid of arrows.

\p
A morphism $p:X\to A$ between two $\infty$-presheaves on $\Theta\times \Delta$ is a \notion{left fibration} if it has the unique right lifting property against the set of morphism \sym{(j@$\J$}
$$\J:=\{\langle a,\{0\}\rangle \to \langle a,n\rangle~,a\in\Theta, [n]\in\Delta\}\cup \{\langle g,0\rangle,~ g\in\W\}$$
Unfolding the notation, this is equivalent to asking that $X_0\to A_0$ is $\W$-local, and that the natural square 
\[\begin{tikzcd}
	{X_n} & {X_{\{0\}}} \\
	{A_n} & {A_{\{0\}}}
	\arrow[from=1-1, to=1-2]
	\arrow[from=1-1, to=2-1]
	\arrow[from=2-1, to=2-2]
	\arrow[from=1-2, to=2-2]
\end{tikzcd}\]
is cartesian. 
\begin{prop}
\label{prop:if left fib the fib}
We have an inclusion $T\subset \widehat{J}$.
\end{prop}
\begin{proof}
Let $a$ be an object of $\Theta$.
The  $\infty$-groupoid of morphisms $i$ of $\iPsh{\Delta}$ such that $\langle a,i\rangle$ is in $\widehat{J}$ contains by definition $\{0\}\to [n]$, and is closed by colimits and left cancelation. This $\infty$-groupoid then contains all initial morphism between $\infty$-presheaves on $\Delta$. As morphisms of $\W_1$ are initial, $\widehat{J}$ includes morphisms of shape $\langle a, f\rangle$ for  $a\in \Theta$ and $f\in \W_1$.

Let $g:a\to b$ be a morphism of $\W$ and $n$ an integer. We have a commutative square
\[\begin{tikzcd}
	{\langle a,\{0\}\rangle} & {\langle a,n\rangle} \\
	{\langle b,\{0\}\rangle} & {\langle b,n\rangle}
	\arrow["{\langle g,\{0\}\rangle}"', from=1-1, to=2-1]
	\arrow["{\langle g,n\rangle}", from=1-2, to=2-2]
	\arrow[from=1-1, to=1-2]
	\arrow[from=2-1, to=2-2]
\end{tikzcd}\]
The two horizontal morphisms are in $\widehat{J}$. By left cancellation, this implies that  $\langle g,n\rangle$ is in $ \widehat{J}$ which concludes the proof.
\end{proof}
If $X\to A$ is a left fibration, with $A$ a $\ioun$-category, the last proposition implies that $X$ is also a $\ioun$-category. We denote by \wcnotation{$\Lfib(A)$}{(lfib@$\Lfib(\uvar)$} the full sub $\iun$-category of $\ouncat_{/A}$ whose objects are left fibrations.

\begin{prop}
\label{prop:lfib and W}
There is a canonical equivalence: 
$$\Lfib(\langle a,C \rangle)\sim \Fun(C,\ocat_{/a})$$
natural in $a:\Theta^{op}$ and $C:\icat^{op}$.
\end{prop}
\begin{proof}
Let $a$ be an object of $\Theta^{op}$ and $C$ an $\iun$-category. We have a canonical equivalence 
$$\iPsh{\Theta\times \Delta}_{/\langle a , C\rangle}\sim \iPsh{\Theta_{/a}\times \Delta_{/C}}\sim \Fun(\Theta_{/a}^{op},\iPsh{\Delta}_{/C})$$
The previous equivalence induces an equivalence
$$(\iPsh{\Theta\times \Delta}_{/\langle a,C\rangle})_{\{\langle b,\{0\}\rangle \to \langle b,[n]\rangle\}_{/\langle a , C\rangle}} \sim \Fun(\Theta_{/a}^{op}, (\iPsh{\Delta}_{/C})_{\I^0_{/C}})$$
where $\I^0_{/C}$ corresponds to the $\infty$-groupoid of morphisms of $\iPsh{\Delta}_{/C}$ of shape
\[\begin{tikzcd}
	& {[n]} \\
	{\{0\}} & C
	\arrow[from=2-1, to=2-2]
	\arrow[from=2-1, to=1-2]
	\arrow[from=1-2, to=2-2]
\end{tikzcd}\]
for $n$ any integer.
The $\iun$-category $(\iPsh{\Delta}_{/C})_{\I^0_{/C}}$ is equivalent to the $\iun$-category of Grothendieck $\V$-small opfibrations fibered in $\infty$-groupoid over $C$, which is itself equivalent to $\Fun(C,\igrd)$ according to the Grothendieck construction. 
We then have an equivalence 
\begin{equation}
\label{eq:lfib and W}
(\iPsh{\Theta\times \Delta}_{/\langle a,C\rangle})_{\{\langle b,\{0\}\rangle \to \langle b,[n]\rangle\}_{/\langle a , C\rangle}} \sim \Fun (\Theta_{/a}^{op}, \Fun(C,\igrd))\sim \Fun (C, \iPsh{\Theta}_{/a})
\end{equation}
By definition, $\Lfib(\langle a,C\rangle)$ is the fully faithful sub $\iun$-category of the left hand $\iun$-category corresponding to objects that are local with respect to the image of set of morphism
 $\{\langle g,0\rangle, g\in \W\}_{/\langle a,C\rangle}$ by the localization functor 
 $$(\iPsh{\Theta\times \Delta}_{/\langle a,C\rangle})\to (\iPsh{\Theta\times \Delta}_{/\langle a,C\rangle})_{\{\langle b,\{0\}\rangle \to \langle b,[n]\rangle\}_{/\langle a , C\rangle}}.$$
Such $\infty$-presheaves corresponds via the equivalence \eqref{eq:lfib and W} to functors $C\to \iPsh{\Theta}_{/a}$ that are pointwise $\W_{/a}$-local. As $\W_{/a}$-local $\infty$-presheaves on $\Theta_{/a}$ corresponds to elements of $\ocat_{/a}$, we have an equivalence
$$\Lfib(\langle a,C\rangle)\sim \Fun(C,\ocat_{/a}).$$
\end{proof}

\p
A morphism $f:A\to B$ between two $\infty$-presheaves on $\Theta\times \Delta$ induces an adjunction
\begin{equation}
\label{eq:adj between left fibration}
\begin{tikzcd}
	{f_!:\ouncat{/A}} & {\ouncat_{/B}:f^*}
	\arrow[shift left=2, from=1-1, to=1-2]
	\arrow[shift left=2, from=1-2, to=1-1]
\end{tikzcd}
\end{equation}
where $f_!$ is the composition and $f^*$ is the pullback.
As $\Lfib(A)$ is the localization of $\ouncat_{/A}$ along the class of morphisms $\widehat{\J_{/A}}$,
the previous adjunction induces a derived adjunction:
\begin{equation}
\label{eq:derived adj between left fibration}
\begin{tikzcd}
	{\Lb f_!:\Lfib(A)} & {\Lfib(B):\Rb f^*}
	\arrow[shift left=2, from=1-1, to=1-2]
	\arrow[shift left=2, from=1-2, to=1-1]
\end{tikzcd}
\end{equation}
where $\Lb f_!$ sends $E$ onto $\Fb f_!E$ and $\Rb f^*$ is just the restriction of $f^*$ to $\Lfib(B)$.

\p
We denote by $\pi_!:\Fun(\Delta^{op},\iPsh{\Theta})\to \iPsh{\Delta[\Theta]}$ the functor induced by extention by colimits by the canonical morphism $\pi:\Delta\times \Theta\to \Delta[\Theta]$. We also define $\Noiun:\iPsh{\Delta[\Theta]}\to \Fun(\Delta^{op},\iPsh{\Theta})$ as the right adjoint of $\pi_!$. As $\pi_!$ preserves representable, \wcnotation{$\Noiun$}{(noiun@$\Noiun$} preserves colimits. Remark that the image of $T$ by $\pi_!$ is contained in $\widehat{\M}$, and $\Noiun$ induces then by restriction a functor
$$\Noiun:\ocat\to \ouncat.$$
If $C$ is an $\io$-category, $\Noiun C$ corresponds to the simplicial object in $\ocat$:
\[\begin{tikzcd}
	\cdots & {\coprod_{x_0,x_1,x_2:\tau_0C}\hom_C(x_0,x_1,x_2)} & {\coprod_{x_0,x_1:\tau_0C}\hom_C(x_0,x_1)} & {\coprod_{x_0:\tau_0C}1}
	\arrow[from=1-2, to=1-3]
	\arrow[shift left=2, from=1-3, to=1-4]
	\arrow[shift left=4, from=1-2, to=1-3]
	\arrow[shift right=2, from=1-3, to=1-2]
	\arrow[shift right=4, from=1-2, to=1-3]
	\arrow[shift left=2, from=1-3, to=1-2]
	\arrow[from=1-4, to=1-3]
	\arrow[shift right=2, from=1-3, to=1-4]
\end{tikzcd}\]
If $p:X\to \Noiun C$ is a left fibration, and $x$ an object of $C$, we will denote by $X(x)$ the fiber of $p_0:X_0\to \Noiun C$ on $x$, and $E(x)$ the canonical morphism $X(x)\to 1$. Unfolding the definitions, and using corollary \ref{cor:if codomain a groupoid, then f is exponentiable}, we then have for any integer $n$ a canonical equivalence:
$$X_n \sim \coprod_{x_0,...,x_n}X(x_0)\times \hom_C(x_0,...,x_n)$$

\begin{prop}
\label{prop:equivalence beetwen left fibration}
Let $C$ be an $\io$-category, and $E$, $F$ two objects of $\Lfib(\Noiun C)$ corresponding to morphisms $X\to \Noiun C$, $Y\to \Noiun C$. Let $\phi:E\to F$ be a morphism.
The following are equivalent:
\begin{enumerate}
\item $\phi$ is an equivalence,
\item for any object $x$ of $C$, the induced morphism $\Rb x^*\phi:\Rb x^*E\to \Rb x^*E$ is an equivalence,
\item for any object $x$ of $C$, the induced morphism $\phi(x):X(x)\to Y(x)$ is an equivalence,
\end{enumerate}
\end{prop}
\begin{proof}
The implication $(1)\Rightarrow (2)$ is direct.
The implication $(2)\Rightarrow (3)$ comes from the fact that for any object $x$ of $C$, the value on $0$ of the simplicial object $\Rb x^*E$ (resp. $\Rb x^*F$) is $X(x)\to 1$ (resp. $Y(x)\to 1$). 

Suppose now that $\phi$ fulfills the last condition. As $\Noiun C$ is $C_0\sim \coprod_{C_0}1$, we have equivalences 
$$X_0\sim \coprod_{x:C_0} X(x)~~~~~Y_0\sim \coprod_{x:C_0} Y(x).$$
The morphism $\phi_0:X_0\to Y_0$ is then an equivalence. Eventually, as $E$ and $F$ are left fibrations, we have 
$$X_n\sim X_{\{0\}}\times_{ (\Noiun C)_{\{0\}} }(\Noiun C)_n\sim Y_{\{0\}}\times_{ (\Noiun C)_{\{0\}} }(\Noiun C)_n\sim Y_n.$$
This implies $(3)\Rightarrow (1)$, which concludes the proof. 
\end{proof}

\begin{prop}
\label{prop:lfib and W 2}
There is an equivalence natural in $C:\ocatm^{op}$ 
between $\Lfib(\Noiun [C,1])$ and the $\iun$-category whose objects are arrows  of shape
$$X(0)\times C\to X(1)$$
and morphisms are natural transformations such that the induced morphism
$X(0)\times C\to Y(0)\times C$
is of shape $f\times id_C$.

For a left fibration $E$ corresponding to a morphism $X\to [C,1]$, this arrow is the one appearing in the diagram:
\[\begin{tikzcd}[sep =0.3cm]
	& {X_1} && {X_0} \\
	{X(0)^\flat\times C^\flat} && {X(1)^\flat} \\
	& {\Noiun([C,1])_1} && {\Noiun([C,1])_{\{1\}}} \\
	{(C^\flat,0,1)} && {\{1\}}
	\arrow[from=4-1, to=3-2]
	\arrow[from=3-2, to=3-4]
	\arrow[from=2-1, to=1-2]
	\arrow[from=1-2, to=1-4]
	\arrow[from=2-1, to=4-1]
	\arrow[from=1-2, to=3-2]
	\arrow[from=1-4, to=3-4]
	\arrow[from=4-1, to=4-3]
	\arrow[from=2-3, to=4-3]
	\arrow[from=4-3, to=3-4]
	\arrow[from=2-3, to=1-4]
	\arrow[from=2-1, to=2-3]
\end{tikzcd}\]
where the left and the right squares are cartesian.
\end{prop}
\begin{proof}
Left fibrations are detected on pullback along representable. The functor $\Lfib(\uvar)$ then sends colimits of $\iPsh{\Theta\times \Delta}$ to limits.
Remark that we have a cocartesian square
\[\begin{tikzcd}
	{\coprod_{k\leq1}\langle C,\{k\}\rangle} & {\langle C,1\rangle} \\
	{\coprod_{k\leq1}\langle [0],\{k\}\rangle} & {\Noiun[C,1]}
	\arrow[from=1-1, to=1-2]
	\arrow[from=1-2, to=2-2]
	\arrow[from=1-1, to=2-1]
	\arrow[from=2-1, to=2-2]
	\arrow["\lrcorner"{anchor=center, pos=0.125, rotate=180}, draw=none, from=2-2, to=1-1]
\end{tikzcd}\]
According to proposition \ref{prop:lfib and W}, and as $\Lfib(\uvar)$ send colimits to limits, $\Lfib(\Noiun [C,1])$ fits in the cartesian square
\[\begin{tikzcd}
	{\Lfib(\Noiun [C,1])} & {\Fun([1],\ocat_{/C})} \\
	{\ocat\times \ocat} & {\ocat_{/C}\times \ocat_{/C}}
	\arrow[""{name=0, anchor=center, inner sep=0}, from=2-1, to=2-2]
	\arrow[from=1-1, to=2-1]
	\arrow[from=1-2, to=2-2]
	\arrow[from=1-1, to=1-2]
	\arrow["\lrcorner"{anchor=center, pos=0.125}, draw=none, from=1-1, to=0]
\end{tikzcd}\]
Using the adjunction 
\[\begin{tikzcd}
	{\dom:\ocat_{/C}} & {\ocat:\uvar\times C}
	\arrow[""{name=0, anchor=center, inner sep=0}, shift left=2, from=1-1, to=1-2]
	\arrow[""{name=1, anchor=center, inner sep=0}, shift left=2, from=1-2, to=1-1]
	\arrow["\dashv"{anchor=center, rotate=-90}, draw=none, from=0, to=1]
\end{tikzcd}\]
the $\iun$-category $\Lfib(\Noiun [C,1])$ fits in the cartesian square
\[\begin{tikzcd}
	{\Lfib(\Noiun [C,1])} & {\Fun([1],\ocat)} \\
	{\ocat\times \ocat} & {\ocat\times \ocat}
	\arrow["{(\uvar\times C,id)}"', from=2-1, to=2-2]
	\arrow[from=1-1, to=2-1]
	\arrow[from=1-2, to=2-2]
	\arrow[from=1-1, to=1-2]
	\arrow["\lrcorner"{anchor=center, pos=0.125}, draw=none, from=1-1, to=2-2]
\end{tikzcd}\]
The first assertion then follows from the last cartesian square and the proposition \ref{prp:to show fully faithfullness3} applied to $I:=1.$
 The second is obtained by walking through the equivalences used in the proof of proposition \ref{prop:lfib and W}.
\end{proof}

\begin{prop}
\label{prop:lfib and W 3}
There is an equivalence natural in $C:\ocatm^{op}$ between $\Lfib(([C,1]\otimes[1]^\sharp)^\natural)$ and the $\iun$-category whose objects are diagrams of shape
\[\begin{tikzcd}
	{X(0,0)\times C^\natural\otimes\{0\}} & {X(0,1)\times C^\natural} \\
	& {X(0,0)\times (C\otimes[1]^\sharp)^\natural} & {X(1,1)} \\
	{X(0,0)\times C^\natural\otimes\{1\}} & {X(1,0)}
	\arrow[from=1-1, to=1-2]
	\arrow[from=1-2, to=2-3]
	\arrow[from=3-2, to=2-3]
	\arrow[from=1-1, to=2-2]
	\arrow[from=2-2, to=2-3]
	\arrow[from=3-1, to=3-2]
	\arrow[from=3-1, to=2-2]
\end{tikzcd}\]
such that $X(0,0)\times C^\natural\otimes\{0\}\to X(0,1)\times C^\natural$ is of shape $f\times id_{C^\natural}$. Morphisms are natural transformations such that the induced morphisms 
$X(0,1)\times C^\natural\to Y(0,1)\times C^\natural$ and $X(0,0)\times (C\otimes[1]^\sharp)^\natural\to Y(0,0)\times (C\otimes[1]^\sharp)^\natural$
are of shape $g\times C^\natural$ and $h\times (C\otimes[1]^\sharp)^\natural$.
\end{prop}
\begin{proof}
The equation \eqref{eq:eq for cylinder marked version} implies that $([C,1]\otimes[1]^\sharp)^\natural$ is the colimit of the diagram
\[\begin{tikzcd}
	{[1]\vee[C,1]^\natural} & {[C\otimes^\natural\{0\},1]} & {[C\otimes[1]^\sharp,1]^\natural} & {[C^\natural\otimes\{1\},1]} & {[C,1]^\natural\vee[1]}
	\arrow[from=1-4, to=1-3]
	\arrow[from=1-4, to=1-5]
	\arrow[from=1-3, to=1-2]
	\arrow[from=1-1, to=1-2]
\end{tikzcd}\]
According to proposition \ref{prop:example of a special colimit3 marked case} and lemma \ref{lemma:a otimes 1 is strict}, this colimit is special, and the  $\iun$-category $\Noiun ([C,1]\otimes[1]^\sharp)^\natural$ is then colimit, computed in $\Psh{\Theta\times\Delta}$, of the diagram
\[\begin{tikzcd}[column sep =0.2cm]
	{\langle C^\natural,1\rangle\coprod\langle C^\natural,\{2\}\rangle} & {\langle C^\natural\otimes\{0\},1\rangle} && {\langle C^\natural\otimes\{1\},1\rangle} & {\langle C^\natural,\{0\}\rangle\coprod \langle C^\natural,1\rangle} \\
	{\langle [0],1\rangle\coprod \langle [0],1\rangle} & {\langle C^\natural,2\rangle} & {\langle (C\otimes[1]^\sharp)^\natural,1\rangle} & {\langle C^\natural,2\rangle} & {\langle [0],1\rangle\coprod \langle [0],1\rangle}
	\arrow[from=1-5, to=2-4]
	\arrow[from=1-1, to=2-2]
	\arrow[from=1-2, to=2-2]
	\arrow[from=1-2, to=2-3]
	\arrow[from=1-4, to=2-3]
	\arrow[from=1-4, to=2-4]
	\arrow[from=1-1, to=2-1]
	\arrow[from=1-5, to=2-5]
\end{tikzcd}\]
We then deduce the result from the proposition \ref{prop:lfib and W} in the same way as in the previous proof.
\end{proof}

\begin{prop}
\label{prop:Lfib commue with colimit}
Let $F:I\to \ocat$ be a $\Wcard$-small diagram. The canonical functor
$$\Lfib(\Noiun \colim_IF)\to \lim_I\Lfib(\Noiun F)$$
is an equivalence, where $\colim_IF$ denotes the colimit taken in $\ocat$.
\end{prop}
\begin{proof}
Let $C$ be an object of $\iPsh{\Theta}$.
As left fibrations are detected by unique right lifting property against morphisms whose codomains are of shape $\langle a,n\rangle$, a morphism $p:X\to \Noiun C$ is a left fibration if and only if for any $i:[a,n]\to C$, $(\Noiun i)^*p$ is a left fibration. 
The functor 
$$\begin{array}{ccl}
\Psh{\Delta[\Theta]}^{op}&\to &\icat_{\Wcard}\\
X&\mapsto & \Lfib(\Noiun X)
\end{array}$$
then sends colimits to limits, where $\icat_{\Wcard}$ denotes the (huge) $\iun$-category of $\Wcard$-small $\iun$-categories. To conclude the proof, we then have to show that it sends any morphism $f\in\M$ to an equivalence. If $f$ is of shape $[g,1]$ for $g\in\W$, this directly follows from proposition \ref{prop:lfib and W 2}. Suppose now that $f$ is $[a,\Sp_n]\to [a,n]$. Remark that we have a cocartesian square:
\[\begin{tikzcd}
	{\langle a, \Sp_n\rangle} & {\Noiun ([a,\Sp_n])} \\
	{\langle a,n\rangle} & {\Noiun ([a,n])}
	\arrow[from=1-1, to=2-1]
	\arrow[from=2-1, to=2-2]
	\arrow[from=1-1, to=1-2]
	\arrow[from=1-2, to=2-2]
	\arrow["\lrcorner"{anchor=center, pos=0.125, rotate=180}, draw=none, from=2-2, to=1-1]
\end{tikzcd}\]
The morphism $\Lfib(\Noiun [a,\Sp_n])\to \Lfib(\Noiun [a,n])$ then fits in the cartesian square: 
\[\begin{tikzcd}
	{\Lfib(\Noiun [a,n])} & {\Lfib(\langle a,n\rangle)} \\
	{\Lfib(\Noiun [a,\Sp_n])} & {\Lfib(\langle a, \Sp_n\rangle)}
	\arrow[from=1-2, to=2-2]
	\arrow[from=1-1, to=1-2]
	\arrow[""{name=0, anchor=center, inner sep=0}, from=2-1, to=2-2]
	\arrow[from=1-1, to=2-1]
	\arrow["\lrcorner"{anchor=center, pos=0.125}, draw=none, from=1-1, to=0]
\end{tikzcd}\]
According to proposition \ref{prop:lfib and W}, we have equivalences
$$\Lfib(\langle a ,\Sp_n\rangle)\sim \lim_{[k]\to\Sp_n}\Fun([k],\ocat_{/a})\sim \Fun([n],\ocat_{/a})\sim \Lfib(\langle a ,n\rangle)$$
It remains the case $f:=E^{eq}\to 1$. We have equivalences $\Noiun E^{eq}\sim \langle [0],E^{eq}\rangle$ and $\Noiun 1\sim 1$ .
The proposition \ref{prop:lfib and W} induces equivalences
$$ \Lfib( \langle [0],E^{eq}\rangle) \sim \lim_{[k]\to E^{eq}}\Fun([k],\ocat)\sim \Fun(1,\ocat)$$
which concludes the proof.
\end{proof}

\p 
\label{para:defi of uni}
Let $A$ be an $\ioun$-category. An object $E:\ouncat_{/A}$ is \wcnotion{$\U$-small}{small object@$\U$-small object of $\ouncat_{/A}$} if for any morphism $i:\langle b,n\rangle\to A$, the space of morphism between $i$ and $E$ is $\U$-small. Remark that an object $F$ of $\Lfib(\Noiun A)$ corresponding to a left fibration $X\to \Noiun A$ is $\U$-small if an only if for any object $a$ of $A$, $X(a)$ is $\U$-small .
Eventually, we define $\Lfib_{\U}( A)$ as the full sub $\iun$-category of $\Lfib( A)$ whose objects correspond to $\U$-small left fibrations. In particular, $\Lfib_{\U}( A)$ is a $\V$-small $\iun$-category unlike $\Lfib( A)$ which is a $\Wcard$-small $\iun$-category. Moreover, the proposition \ref{prop:Lfib commue with colimit} implies that the functor 
$$C:\ocat\mapsto \tau_0\mbox{$\Lfib_{\U}$}(\Noiun C)$$
sends colimits to limits. We then define $\uni$ as the $\io$-category that represents this object:
\begin{equation}
\label{eq:defi of uni}
\begin{array}{rcll}
\uni:&\Theta^{op} &\to &\igrd\\
& a&\mapsto & \tau_0\mbox{$\Lfib_{\U}$}(\Noiun a)
\end{array}
\end{equation}

 We then have by definition an equivalence 
\begin{equation}
\Hom(C,\uni)\sim \tau_0 \mbox{$\Lfib_{\U}$}(\Noiun C).
\end{equation}
As the functor $\Noiun$ preserves product, for any $\io$-category $D$, 
we also have a canonical equivalence
\begin{equation}
\Hom(C,\uHom(D,\uni))\sim \tau_0(\mbox{$\Lfib_{\U}$}(\Noiun C\times \Noiun D)).
\end{equation}
Eventually, by construction, the $\infty$-groupoid of objects of $\uni$ corresponds to the $\infty$-groupoid of $\U$-small $\io$-categories, and according to proposition \ref{prop:lfib and W 2}, we have an equivalence 
\begin{equation}
\label{eq:hom of uni}
\hom_{\uni}(C,D)\sim \uHom(C,D).
\end{equation}
The $\io$-category $\uni$ seems to be a decent candidate for the $\io$-category of $\U$-small $\io$-categories.

\p \label{par:dualities fo omega}
Let $S$ be a subset of $\Nb^*$. We define the subset $\Sigma S=\{i+1,i\in S\}$. 
Remark that for any $n$, we have \ssym{((b49@$(\uvar)^S$}{for $\uni$}
$$(\Noiun C)_n^S\sim (\Noiun C^{\Sigma S})_n$$
We then set the functor 
$$(\uvar)^S:\uni\to (\uni)^{\Sigma S}$$
sending a $\U$-small left fibration $X\to \Noiun C$ to the left fibration $n\mapsto (X_n^S\to (\Noiun C^{\Sigma S})_n^S)$. These functors are called \snotion{dualities}{for $\uni$}.
In particular, we have the \snotionsym{odd duality}{((b60@$(\uvar)^{op}$}{for $\uni$} $(\uvar)^{op}:\uni\to \uni^{co}$, corresponding to the set of odd integer, the \snotionsym{even duality}{((b50@$(\uvar)^{co}$}{for $\uni$} $(\uvar)^{co}:\uni\to (\uni^{t })^{op}$, corresponding to the subset of non negative even integer, the \snotionsym{full duality}{((b80@$(\uvar)^{\circ}$}{for $\uni$} $(\uvar)^{\circ}:\uni \to \uni^{t\circ}$, corresponding to $\Nb^*$ and the \snotionsym{transposition}{((b70@$(\uvar)^t$}{for $\uni$} $(\uvar)^t:\uni \to \uni^{\Sigma t}$, corresponding to the singleton $\{1\}$. Eventually, we have equivalences
$$((\uvar)^{co})^{op}\sim (\uvar)^{\circ} \sim ((\uvar)^{op})^{co}.$$

\subsection{Grothendieck construction}
\begin{notation*}
Through this section, we will identify any marked $\io$-categories $C$ with the canonical induced morphism $C\to1$. If $f:X\to Y$ is a morphism, $f\times C$ then corresponds to the canonical morphism $X\times C\to Y$.
\end{notation*}
\p Let $A$ be an $\io$-category and $a$ an object of $A$, we denote by \wcnotation{$h_a^A$}{(h@$h_{a}^{A}$} the morphism $1\to A^\sharp$ induces by $a$. At the end of section \ref{subsection Left and right cartesian fibration}, we have remarked that the left fibrant replacement of $h_a^A$, that we denoted by \wcnotation{$\Fb h^A_a$}{(fh@$\Fb h_{a}^{A}$}, is the fibration $A^\sharp_{a/}\to A^\sharp$. Equation \eqref{eq:fiber of marked splices} induces, for any object $b$ of $A^\sharp$, a cartesian square
\begin{equation}
\label{eq:fiber of slice}
\begin{tikzcd}
	{\hom_A(a,b)^\flat} & {A^{\sharp}_{a/}} \\
	{\{b\}} & {A^{\sharp}}
	\arrow[from=2-1, to=2-2]
	\arrow[from=1-1, to=2-1]
	\arrow[from=1-1, to=1-2]
	\arrow["{\Fb h_a^A}", from=1-2, to=2-2]
\end{tikzcd}
\end{equation}
which induces a canonical morphism $h^A_b\times \hom_A(a,b)^\flat\to \Fb h^A_a$, and consequently, a morphism $\Fb h^A_b\times \hom_A(a,b)^\flat\to \Fb h^A_a$.

The case of $A:=[C,1]$ will be of particular interest. The morphism $\Fb h^{[C,1]}_{1}$ is just $ h^{[C,1]}_{1}$ and theorem \ref{theo:equivalence betwen slice and join} implies that $\Fb h^{[C,1]}_{0}$ is the canonical morphism $1\costar C^\flat\to [C,1]^\sharp$. In this last case, the square \eqref{eq:fiber of slice} corresponds to the square
\[\begin{tikzcd}
	{C^\flat} & {1\costar C^\flat} \\
	{\{1\}} & {[C,1]^\sharp}
	\arrow[from=2-1, to=2-2]
	\arrow[from=1-1, to=2-1]
	\arrow[from=1-1, to=1-2]
	\arrow["{\Fb h_0^{[C,1]}}", from=1-2, to=2-2]
\end{tikzcd}\]
induces by the one of theorem \ref{theo:formula between pullback of slice and tensor marked case}.
When nothing is specified, the morphism $C^\flat \to \Fb h_0^{[C,1]}$ will always corresponds to this square.

\p\sym{(fh@$\Fb h^C_{\cdot}$}\sym{(cpoint@$C_{\cdot/}$}
Let $C$ be an $\io$-category. We define the simplicial marked $\io$-category $C_{\cdot/}$ and the simplicial arrow of marked $\io$-categories
 $\Fb h^C_{\cdot}$ whose value on an integer $n$ is given by the following pullback 
\[\begin{tikzcd}
	{(C_{\cdot/})_n} & {(C^{\sharp})^{[n+1]^\sharp}} \\
	{(\Noiun C)_n^\flat\times C^{\sharp}} & {(C^\sharp)^{[n]^\sharp}\times (C^\sharp)^{\{n+1\}}}
	\arrow[""{name=0, anchor=center, inner sep=0}, from=2-1, to=2-2]
	\arrow[from=1-2, to=2-2]
	\arrow["{(\Fb h_{\cdot})_n}"', from=1-1, to=2-1]
	\arrow[from=1-1, to=1-2]
	\arrow["\lrcorner"{anchor=center, pos=0.125}, draw=none, from=1-1, to=0]
\end{tikzcd}\]
 and where the functoriality in $n$ is induced by the universal property of pullback.
Unfolding the definition, on all integer $n$, the canonical morphism $(C_{\cdot/})_n\to C^\sharp$ corresponds to the morphism 
$$ \coprod\limits_{x_0,...,x_n:C_0} \hom_C^\flat(x_0,...,x_n)\times \Fb h_{x_n}^C$$
and is then a left cartesian fibration according to theorem \ref{theo:left cart stable by colimit}.

\p 
\label{para:definition of integral de grot}
Let $E$ be an object of $\ouncat_{/\Noiun C}$ corresponding to an arrow $X \to\Noiun C$. The \wcnotion{Grothendieck construction}{grothendieck construction@Grothendieck construction} of $E$, is the object of $\ocatm_{/C^\sharp}$ defined by the formula
$$\int_CE:=\colim_n (X^\flat \times_{(\Noiun C)^\flat } \Fb h_{\cdot})_n.$$
As the Grothendieck construction is by definition a colimit of left cartesian fibrations, the theorem \ref{theo:left cart stable by colimit} implies that it is also a left cartesian fibration. The Grothendieck construction then defines a functor
$$\int_{C}: \ouncat_{/\Noiun C}\to \LCart(C^\sharp).$$
Unfolding the definition, if $E$ is a left fibration, $\int_C E$ is the colimit of a simplicial diagram whose value on $n$ is:
$$
\coprod\limits_{x_0,...,x_n:C_0}X(x_0)\times \hom_C^\flat(x_0,...,x_n)\times \Fb h_{x_n}^C$$

\begin{example}
\label{exe:of int}
Let $E$ be an object of $\Lfib(\Noiun [a,1])$ corresponding to a morphism $X\to \Noiun ([a,1])$. According to proposition \ref{prop:lfib and W 2}, this object corresponds to a morphism $X(0)\times a\to X(1)$. The arrow $\int_{[a,1]}E$ corresponds to the colimit of the following diagram:
\[\begin{tikzcd}
	{E(0)^\flat\times\Fb h^{[a,1]}_{0}} & {E(0)^\flat\times a^\flat} & {E(1)^\flat}
	\arrow[from=1-2, to=1-1]
	\arrow[from=1-2, to=1-3]
\end{tikzcd}\]
The domain of this arrow is then the colimit of the following diagram:
\[\begin{tikzcd}
	{X(0)^\flat\times[a,1]^\sharp_{0/}} & {X(0)^\flat\times a^\flat} & {X(1)^\flat}
	\arrow[from=1-2, to=1-1]
	\arrow[from=1-2, to=1-3]
\end{tikzcd}\]
\end{example}

\begin{lemma}
\label{lemma:intpreserces initial}
The functor $\int_C:\ouncat_{/\Noiun C}\to \LCart(C^\sharp)$ preserves colimits. Moreover, it sends morphisms of $\J$ to equivalences. 
\end{lemma}
\begin{proof}
According to corollary \ref{cor:inclusion of lcatt into the slice preserves colimits}, it is sufficient to show that the composite 
$$\ouncat_{/\Noiun C}\xrightarrow{ \int_C} \LCart(C^\sharp)\xrightarrow{\dom}\ocatm$$
preserves colimits.

To this extend, we consider the functor
$$\alpha: \iPsh{\Theta\times \Delta}_{/\Noiun C}\to \iPsh{t\Theta\times \Delta}$$
sending an object $E$ of $\Lfib(\Noiun C)$ corresponding to a morphism $X\to (\Noiun C)$ to 
$X\times_{(\Noiun C)^\flat } C_{\cdot/}$, 
and the functor 
$$\beta:\iPsh{t\Theta\times \Delta}\to \ocatm$$
that is the left Kan extension of the functor $t\Theta\times \Delta\to t\Theta\to \mPsh{\Theta}$. As $\iPsh{\Theta\times \Delta}$ is locally cartesian closed, $\alpha$ preserves colimits.
The composite $\beta\circ\alpha$ then preserves colimits. Moreover, we have a commutative diagram
\[\begin{tikzcd}
	{\iPsh{\Theta\times \Delta}_{/\Noiun C}} & \ocatm \\
	{\ouncat_{/\Noiun C}} & {\LCart(C^\sharp)}
	\arrow["\beta\circ\alpha", from=1-1, to=1-2]
	\arrow["\Fb"', from=1-1, to=2-1]
	\arrow["{\int_C}"', from=2-1, to=2-2]
	\arrow["\dom"', from=2-2, to=1-2]
\end{tikzcd}\]
According to proposition \ref{prop:if left fib the fib}, one then has to show that $\beta\circ\alpha$ sends any morphism of $\J$ to an equivalence to conclude. Indeed, it will implies that $\beta\circ \alpha$ lifts to a colimit preserving functor $$\Db(\beta\circ \alpha):\ouncat_{/\Noiun C}\to \ocatm,$$ and the previous square implies that this morphism is equivalent to $\dom \int_C$.

Suppose given two cartesian squares
\[\begin{tikzcd}
	X & {X'} & { C_{\cdot/}} \\
	{\langle a, \{0\}\rangle} & {\langle a, [n]\rangle} & {(\Noiun C)^\flat}
	\arrow["f"', from=2-1, to=2-2]
	\arrow[from=1-3, to=2-3]
	\arrow[from=1-1, to=2-1]
	\arrow[from=1-2, to=2-2]
	\arrow[from=2-2, to=2-3]
	\arrow[from=1-2, to=1-3]
	\arrow["g", from=1-1, to=1-2]
	\arrow["\lrcorner"{anchor=center, pos=0.125}, draw=none, from=1-1, to=2-2]
	\arrow["\lrcorner"{anchor=center, pos=0.125}, draw=none, from=1-2, to=2-3]
\end{tikzcd}\]
By currying, we see these objects as functors $t\Theta^{op}\to \iPsh{\Delta}$. The right vertical morphism is then pointwise a right fibration of $\iun$-categories fibered in $\infty$-groupoids, as it corresponds, for a fixed $a:t\Theta$ and $n:\Delta$, to the morphism of $\infty$-groupoid:
$$\coprod_{x_0,...,x_n:C_0}\Hom(a,\hom_C(x_0,...,x_n)^\flat)\times\Hom(a,C^\sharp_{x_n/})\to \coprod_{x_0,...,x_n:C_0}\Hom(a,\hom_C(x_0,...,x_n)^\flat).$$

As the morphism $f$ is pointwise initial, so is $g$. 
As $\beta$ sends pointwise initial morphisms to equivalence, this implies that $\beta\alpha (f):= \beta(g)$ is an equivalence. 

Suppose now given two cartesian squares
\[\begin{tikzcd}
	X & {X'} & { C_{\cdot/}} \\
	{\langle a, 0\rangle} & {\langle b, 0\rangle} & {(\Noiun C)^\flat}
	\arrow["{\langle f,0\rangle}"', from=2-1, to=2-2]
	\arrow[from=1-3, to=2-3]
	\arrow[from=1-1, to=2-1]
	\arrow[from=1-2, to=2-2]
	\arrow[from=2-2, to=2-3]
	\arrow[from=1-2, to=1-3]
	\arrow["g", from=1-1, to=1-2]
	\arrow["\lrcorner"{anchor=center, pos=0.125}, draw=none, from=1-1, to=2-2]
	\arrow["\lrcorner"{anchor=center, pos=0.125}, draw=none, from=1-2, to=2-3]
\end{tikzcd}\]
with $f\in \W$. By currying, we see these objects as functors $\Delta\to\iPsh{t\Theta}$. The right vertical morphism is then pointwise a right cartesian fibration. As the morphism $\langle f,0\rangle$ is pointwise in $\widehat{\Wm}$, so is $g$. The morphism $\colim_n g_n$ is then in $\widehat{\Wm}$ and $\beta\alpha (f):= \beta(g)$ is an equivalence.
\end{proof}

\p We will denote also by 
$$\int_C:\Lfib(\Noiun C)\to \LCart(C^\sharp)$$ 
the restriction of the Grothendieck construction. 
This will not cause any confusion as from now on we will only consider the 
Grothendieck construction of left fibration.
 The lemma \ref{lemma:intpreserces initial} then implies that this functor is colimit preserving, and it is then part of an adjunction \index[notation]{(partial@$\partial_C$}
\begin{equation}
\label{eq:underived GR constuction}
\begin{tikzcd}
	{\int_C:\Lfib(\Noiun C)} & { \LCart(C^\sharp):\partial_C}
	\arrow[""{name=0, anchor=center, inner sep=0}, shift left=2, from=1-1, to=1-2]
	\arrow[""{name=1, anchor=center, inner sep=0}, shift left=2, from=1-2, to=1-1]
	\arrow["\dashv"{anchor=center, rotate=-90}, draw=none, from=0, to=1]
\end{tikzcd}
\end{equation}

\begin{lemma}
\label{lemma:partial fiber}
Let $i:C^\sharp\to D^\sharp$ be a morphism. The natural transformation $$\partial_{C}\circ\Rb i^*\to \Rb (\Noiun{i})^*\circ\partial_D$$ is an equivalence.
\end{lemma}
\begin{proof}
As equivalences between left fibrations are detected on fibers, one can suppose that $C$ is the terminal $\io$-category. Let $c$ denote the object of $D$ corresponding to $i$.
Let $E$ be an object of $\Lfib(\Noiun1)$, corresponding to a morphism $A\to 1$. According to lemma \ref{lemma:intpreserces initial}, we then have equivalences
$$\begin{array}{rclr}
\Lb i_! \int_1 E &\sim & \Lb i_! ( A^\flat\times h_1^1)\\
&\sim & A^\flat\times \Fb h_c^D\\
	&=: &\int_D {\Noiun{i}}_!E\\
	&\sim &\int_D \Lb(\Noiun{i})_!E& (\ref{lemma:intpreserces initial})\\
\end{array}$$
The canonical morphism $\Lb i_!\circ \int_1 \to \int_D \circ \Lb{(\Noiun{i})}_!$ is then an equivalence, which implies by adjunction that $\partial_{1}\circ\Rb_i^*\to \Rb (\Noiun{i})^*\circ\partial_D$ also is.
\end{proof}
\p Let $C$ be an $\io$-category and $c$ an object of $C^\sharp$.
We define $(\Noiun C)_{/c}$ as the simplicial object in $\ocat$ whose value on $(a,n)$ fits in the cocartesian square 
\[\begin{tikzcd}
	{((\Noiun C)_{/c})_{(a,n)}} & {(\Noiun C)_{(a,n+1)}} \\
	{\{c\}} & {(\Noiun C)_{(a,\{n+1\})}}
	\arrow[from=2-1, to=2-2]
	\arrow[from=1-1, to=2-1]
	\arrow[from=1-2, to=2-2]
	\arrow[from=1-1, to=1-2]
	\arrow["\lrcorner"{anchor=center, pos=0.125, rotate=45}, draw=none, from=1-1, to=2-2]
\end{tikzcd}\]
Unfolding the definition, $(\Noiun C)_{/c}$ is the simplicial diagram whose value on $n$ is
$$\coprod_{x_0,...,x_n}\hom_C(x_0,...,x_n,c)$$
\begin{lemma}
\label{lemma:fiber of F h .}
There is an equivalence
$$((\Noiun C)_{/c})^\flat \sim c^* \Fb h_{\cdot}.$$
\end{lemma}
\begin{proof}
A morphism $\langle a,n\rangle\to (c^* \Fb h_{\cdot})^\natural$ is the data of a commutative square
\[\begin{tikzcd}
	{\coprod_{k\leq n+1} a^\flat\otimes\{k\}} & {a^\flat\otimes[n+1]^\sharp} \\
	{\coprod_{k\leq n+1} \{k\}} & {C^\sharp}
	\arrow[from=1-2, to=2-2]
	\arrow[from=2-1, to=2-2]
	\arrow[from=1-1, to=2-1]
	\arrow[from=1-1, to=1-2]
\end{tikzcd}\]
which is, according to proposition \ref{prop:crushing of Gray tensor is identitye marked case}, equivalent to a morphism
$$[a,n+1]^\sharp\to C^\sharp$$
and so to a morphism $\langle a,n\rangle\to (\Noiun C)_{c/}$. As $c^*\Fb h_{\cdot}$ has a trivial marking, this shows the desired equivalence.
\end{proof}
\begin{lemma}
\label{lemma:int fiber 1}
Let $p:X\to \Noiun C$ be a left fibration, and $c$ an object of $C$. 
The canonical morphism 
$$X(c)\to \colim_n (X\times_{\Noiun C} (\Noiun C)_{/c})_n$$
is an equivalence.
\end{lemma}
\begin{proof}
We will show a slightly stronger statement, which is that the morphism
$$X(c)\to \colim_n (X\times_{(\Noiun C)} (\Noiun C)_{/c})_n$$
is an equivalence when the colimit is taken in $\infty$-presheaves on $\Theta$.
As the colimit in presheaves commutes with evaluation, one has to show that for any globular sum $a$, the canonical morphism of $\infty$-groupoids
$$\Hom(a,X(c))\to \colim_n (\Hom(a,X_n)\times_{\Hom(a,(\Noiun C)_n)}\Hom(a, (\Noiun C)_{/c})_n)$$
is an equivalence. Remark that the simplicial $\infty$-groupoid $ \Hom(a, ((\Noiun C)_{/c})_\bullet)$ is equivalent to the simplicial $\infty$-groupoid $(\Hom(a,\Noiun C)_\bullet)_{/c}$.
If we denote also by $\Hom(a,X(c))$ the constant simplicial $\infty$-groupoid $n\mapsto \Hom(a,X(c))$, we have a cartesian square
\[\begin{tikzcd}[column sep =0.3cm]
	{\Hom(a,X(c))} & { \Hom(a,X_\bullet)\times_{\Hom(a,(\Noiun C)_\bullet)}\Hom(a, (\Noiun C)_\bullet)_{/c}} & { \Hom(a,X_\bullet)} \\
	{\{c\}} & {\Hom(a, (\Noiun C)_\bullet)_{/c}} & {\Hom(a, (\Noiun C)_\bullet)}
	\arrow[from=1-3, to=2-3]
	\arrow[from=1-1, to=1-2]
	\arrow[from=1-2, to=2-2]
	\arrow[from=1-1, to=2-1]
	\arrow[""{name=0, anchor=center, inner sep=0}, from=2-1, to=2-2]
	\arrow[""{name=1, anchor=center, inner sep=0}, from=2-2, to=2-3]
	\arrow[from=1-2, to=1-3]
	\arrow["\lrcorner"{anchor=center, pos=0.125}, draw=none, from=1-2, to=1]
	\arrow["\lrcorner"{anchor=center, pos=0.125}, draw=none, from=1-1, to=0]
\end{tikzcd}\]
Moreover, the left vertical morphism is a left fibration of $\iun$-category fibered in $\infty$-groupoid.
As pullbacks along left fibrations preserve final morphisms,
the morphism
$$\Hom(a,X(c))\to \Hom(a,X_\bullet)\times_{\Hom(a,(\Noiun C)_\bullet)}\Hom(a, (\Noiun C)_\bullet)_{/c}$$
is final. Taking the colimit, this implies the result.
\end{proof}

\begin{lemma}
\label{lemma:int fiber 2}
Let $i:C^\sharp\to D^\sharp$ be a morphism. The natural transformation 
$$\int_D\circ \Rb(\Noiun i)^*\to \Rb i^* \circ\int_C$$
is an equivalence.
\end{lemma}
\begin{proof}
 As equivalences between left cartesian fibrations are detected on fibers, one can suppose that $C$ is the terminal $\io$-category. Let $c$ denote the object of $D$ corresponding to $i$ and let $E$ be an object of $\Lfib(\Noiun C)$, corresponding to a left fibration $X\to \Noiun C$. 
 By construction, $\int_CE$ is a colimit of left cartesian fibrations. However, as proposition \ref{prop:fiber preserves colimits} states that $\Rb i^*$ commutes with colimit, we have 
$$\begin{array}{rclc}
\Rb i^*\int_CE&\sim &\colim_n X_n^\flat\times_{(\Noiun C)_n^\flat}\Rb i^*\Fb h^C_{\cdot}\\
&\sim &\colim_{n}(X\times_{\Noiun C} (\Noiun C)_{/c})^\flat_n&(\ref{lemma:fiber of F h .})
\end{array}$$
Moreover, remark that $\int_1 \Rb (\Noiun i)^* E$ is equivalent to $X(c)$, and the canonical morphism
$\int_D \Rb(\Noiun i)^*E\to \Rb i^* \int_CE$ is then the image by $(\uvar)^\flat$ of the equivalence given by lemma \ref{lemma:int fiber 1}.
\end{proof}

\begin{prop}
\label{prop: derived int and partial are natural}
The functors $\int_C$ and $\partial_C$ are natural in $C:\ocat^{op}$.
\end{prop}
\begin{proof}
We denote by $\Arr^{fib}(\ocatm)$ (resp. $\Arr^{fib}(\ouncat)$) the full sub $\iun$-category of $\Arr(\ocatm)$ (resp. $\Arr(\ouncat)$) whose objects are $\U$-small left cartesian fibrations (resp. $\U$-small left fibrations). 
We also set $\ocat\times_{\ocatm}\Arr^{fib}(\ocatm)$ and $\ocat\times_{\ouncat}\Arr^{fib}(\ouncat)$ as the pullbacks:
\[\begin{tikzcd}
	{\ocat\times_{\ocatm}\Arr^{fib}(\ocatm)} & {\Arr^{fib}(\ocatm)} \\
	\ocat & \ocatm \\
	{\ocat\times_{\ouncat}\Arr^{fib}(\ouncat)} & {\Arr^{fib}(\ouncat)} \\
	\ocat & \ouncat
	\arrow[""{name=0, anchor=center, inner sep=0}, "{(\uvar)^{\sharp}}"', from=2-1, to=2-2]
	\arrow["\codom", from=1-2, to=2-2]
	\arrow[from=1-1, to=2-1]
	\arrow[from=1-1, to=1-2]
	\arrow[""{name=1, anchor=center, inner sep=0}, "\Noiun"', from=4-1, to=4-2]
	\arrow[from=3-1, to=4-1]
	\arrow["\codom", from=3-2, to=4-2]
	\arrow[from=3-1, to=3-2]
	\arrow["\lrcorner"{anchor=center, pos=0.125}, draw=none, from=1-1, to=0]
	\arrow["\lrcorner"{anchor=center, pos=0.125}, draw=none, from=3-1, to=1]
\end{tikzcd}\]
The two left vertical morphism inherit from the right vertical morphisms of a structure of Grothendieck fibrations fibered in $\iun$-categories, where cartesian liftings are given by morphisms between arrows corresponding to cartesian squares.

As the assignation $C\mapsto \Fb h_{\cdot}^C$ can be promoted in a functor $\ocat\to \Arr(\Fun(\Delta,\ocatm))$
the functors $\int_C$ and $\partial_C$ are the restrictions of two functors $\int$ and $\partial$ fitting in commutative triangles:
\[\begin{tikzcd}
	& {\ocat\times_{\ocatm}\Arr^{fib}(\ocatm)} \\
	{\ocat\times_{\ouncat}\Arr^{fib}(\ouncat)} & \ocat \\
	& {\ocat\times_{\ouncat}\Arr^{fib}(\ouncat)} \\
	{\ocat\times_{\ocatm}\Arr^{fib}(\ocatm)} & \ocat
	\arrow[from=1-2, to=2-2]
	\arrow[from=3-2, to=4-2]
	\arrow[from=2-1, to=2-2]
	\arrow["\int", from=2-1, to=1-2]
	\arrow["\partial", from=4-1, to=3-2]
	\arrow[from=4-1, to=4-2]
\end{tikzcd}\]
Lemmas \ref{lemma:partial fiber} and \ref{lemma:int fiber 2} imply that these two functors preserve cartesian arrows, and the Grothendieck deconstruction then implies the desired result.
\end{proof}

\begin{theorem}
\label{theo:gr construction}
For any $\io$-category $C$, the adjunction 
$$\begin{tikzcd}
	{\int_C:\Lfib(\Noiun C)} & { \LCart(C^\sharp):\partial_C}
	\arrow[""{name=0, anchor=center, inner sep=0}, shift left=2, from=1-1, to=1-2]
	\arrow[""{name=1, anchor=center, inner sep=0}, shift left=2, from=1-2, to=1-1]
	\arrow["\dashv"{anchor=center, rotate=-90}, draw=none, from=0, to=1]
\end{tikzcd}$$
defined in \eqref{eq:underived GR constuction}, is an adjoint equivalence.
\end{theorem}
\begin{proof}
As equivalences between left fibrations and between left cartesian fibrations are detected on fibers, and as the two functors are natural in $C$, it is sufficient to show the result for $C$ being the terminal $\io$-category. In this case remark that $\Lfib(\Noiun1)\sim \LCart(1)$ and that both $\int_1$ and $\partial_1$ are the identities. 
\end{proof}

\begin{cor}
\label{cor:fib over a colimit2}
Let $F:I\to \ocatm$ be a $\Wcard$-small diagram. The canonical functor
$$\LCartc(\colim_IF) \to \lim_I \LCartc(F)$$
is an equivalence.
\end{cor}
\begin{proof}
This functor fits in an adjunction:
\[\begin{tikzcd}
	{\colim_I:\lim_I\LCartc(F)} & {\LCartc(\colim_I F)}
	\arrow[""{name=0, anchor=center, inner sep=0}, shift left=2, from=1-2, to=1-1]
	\arrow[""{name=1, anchor=center, inner sep=0}, shift left=2, from=1-1, to=1-2]
	\arrow["\dashv"{anchor=center, rotate=-90}, draw=none, from=1, to=0]
\end{tikzcd}\]
The corollary \ref{cor:fib over a colimit} implies that the counit of this adjunction is an equivalence. To conclude, we have to show that the right adjoint is essentially surjective.
By definition, the morphism $\tau_0\LCart(I^\sharp)\to \tau_0\LCartc(I)$ is an equivalence.
According to theorem \ref{theo:gr construction}, on the $\infty$-groupoid of objects, the right adjoint corresponds to the equivalence
$$\tau_0\Lfib(\Noiun\colim_I F^\sharp) \to \lim_I \tau_0\Lfib(\Noiun F^\sharp)$$
given in proposition \ref{prop:Lfib commue with colimit}.
\end{proof}

\begin{cor}
\label{cor:antecedant of slice}
Let $C$ be an $\io$-category and $c$ be an object of $c$. The left fibration $\partial_C \Fb h_c$ is the morphism of simplicial objects:
\[\begin{tikzcd}[column sep =0.5cm]
	\cdots & {\coprod_{x_0,x_1,x_2:C_0}\hom_C(y,x_0,x_1,x_2)} & {\coprod_{x_0,x_1:C_0}\hom_C(y,x_0,x_1)} & {\coprod_{x_0:C_0}\hom_C(y,x_0)} \\
	\cdots & {\coprod_{x_0,x_1,x_2:C_0}\hom_C(x_0,x_1,x_2)} & {\coprod_{x_0,x_1:C_0}\hom_C(x_0,x_1)} & {\coprod_{x_0:C_0}1}
	\arrow[shift right=4, from=2-2, to=2-3]
	\arrow[shift left=4, from=2-2, to=2-3]
	\arrow[from=2-2, to=2-3]
	\arrow[shift left=2, from=2-3, to=2-2]
	\arrow[shift right=2, from=2-3, to=2-2]
	\arrow[shift left=2, from=2-3, to=2-4]
	\arrow[shift right=2, from=2-3, to=2-4]
	\arrow[from=2-4, to=2-3]
	\arrow[from=1-3, to=2-3]
	\arrow[from=1-4, to=2-4]
	\arrow[shift left=2, from=1-3, to=1-4]
	\arrow[from=1-4, to=1-3]
	\arrow[shift right=2, from=1-3, to=1-4]
	\arrow[shift right=4, from=1-2, to=1-3]
	\arrow[from=1-2, to=1-3]
	\arrow[shift left=4, from=1-2, to=1-3]
	\arrow[shift right=2, from=1-3, to=1-2]
	\arrow[shift left=2, from=1-3, to=1-2]
	\arrow[from=1-2, to=2-2]
\end{tikzcd}\]
\end{cor}
\begin{proof}
We denote by $E:=X\to \Noiun C$ this left fibration.
According to theorem \ref{theo:gr construction}, we can equivalently show that the Grothendieck integral of $E$ is the morphism $C^{\sharp}_{c/}\to C$. 
Remark that we have by construction a family of cartesian squares
\[\begin{tikzcd}
	{X_n\times_{(\Noiun C)_n} (C_{\cdot/})_n} & {(C^\sharp)^{[1+n+1]^\sharp}} & {(C^\sharp)^{[1]^\sharp}} \\
	{\{c\}\times (\Noiun C)_n\times C^\sharp} & {C^\sharp \times (C^\sharp)^{[n]^\sharp}\times C^\sharp} & {C^\sharp\times C^\sharp}
	\arrow[from=1-1, to=2-1]
	\arrow[from=1-1, to=1-2]
	\arrow[""{name=0, anchor=center, inner sep=0}, from=2-1, to=2-2]
	\arrow[from=1-2, to=2-2]
	\arrow["{(C^{\sharp})^{h_n}}", from=1-2, to=1-3]
	\arrow[from=2-2, to=2-3]
	\arrow[from=1-3, to=2-3]
	\arrow["\lrcorner"{anchor=center, pos=0.125}, draw=none, from=1-2, to=2-3]
	\arrow["\lrcorner"{anchor=center, pos=0.125}, draw=none, from=1-1, to=0]
\end{tikzcd}\]
natural in $n$, where $h_n$ is the simplicial morphism preserving the extremal points. The outer square factors in two cartesian squares:
\[\begin{tikzcd}
	{X_n\times_{(\Noiun C)_n} (C_{\cdot/})_n} & {C^{\sharp}_{c/}} & {(C^\sharp)^{[1]^\sharp}} \\
	{\{c\}\times (\Noiun C)_n\times C^\sharp} & {\{c\}\times C^\sharp} & {C^\sharp\times C^\sharp}
	\arrow[from=1-1, to=2-1]
	\arrow[from=1-1, to=1-2]
	\arrow[from=2-1, to=2-2]
	\arrow[from=1-2, to=2-2]
	\arrow["\lrcorner"{anchor=center, pos=0.125}, draw=none, from=1-1, to=2-2]
	\arrow[from=2-2, to=2-3]
	\arrow[from=1-3, to=2-3]
	\arrow[from=1-2, to=1-3]
	\arrow["\lrcorner"{anchor=center, pos=0.125}, draw=none, from=1-2, to=2-3]
	\arrow["\lrcorner"{anchor=center, pos=0.125}, draw=none, from=1-1, to=2-2]
\end{tikzcd}\]
This provides a canonical morphism 
$$\int_{C} E := \colim_n ( X_n\times_{(\Noiun C)_n} (\Fb h_{\cdot})_n)\to \Fb h_c^C$$
To conclude, one has to show that it is an equivalence, and for this, to check that this is the case on fibers, where it directly follows from the naturality of the integral given in proposition \ref{prop: derived int and partial are natural}.
\end{proof}

\begin{cor}
\label{cor:explicit partial}
Let $E$ be an object of $\ocat_{/[b,1]^\sharp}$ corresponding to a morphism $p:X\to [b,1]^\sharp$. 
Consider the induced cartesian squares:
\[\begin{tikzcd}
	{ X_{0}\times b^\flat} && { X_{/1}} \\
	& { X_{0}} && X \\
	{b^\flat} && {[b,1]^\sharp_{/1}} \\
	& {\{0\}} && {[b,1]^\sharp}
	\arrow[from=1-1, to=2-2]
	\arrow["g", from=1-1, to=1-3]
	\arrow["f", from=1-3, to=2-4]
	\arrow[from=3-3, to=4-4]
	\arrow[from=4-2, to=4-4]
	\arrow[from=3-1, to=3-3]
	\arrow[from=3-1, to=4-2]
	\arrow[from=1-3, to=3-3]
	\arrow[from=2-4, to=4-4]
	\arrow[from=1-1, to=3-1]
	\arrow[from=2-2, to=2-4]
	\arrow[from=2-2, to=4-2]
\end{tikzcd}\]
The span associated to $\partial_{[b,1]}\Fb E$ via the equivalence of proposition \ref{prop:lfib and W 2} is 
\begin{equation}
\label{eq:cor:explicit parital}
\bot X_{0}\leftarrow(\bot X_{0})\times b\xrightarrow{\bot g} \bot X_{/1}.
\end{equation}
\end{cor}
\begin{proof}
We denote $\tilde{X}\to [b,1]^\sharp$ the morphism associated to $\Fb E$. As, 
As $[b,1]^\sharp_{/1}\to [b,1]^\sharp$ and $\{0\}\to [b,1]^\sharp$ are right cartesian fibrations, they are smooth, and the canonical morphisms
$$X_{/1}\to \tilde{X}_{/1}~~~~~~~\mbox{ and }~~~~~~~X_{0}\to \tilde{X}_{0}$$
are initial.
As $\bot$ sends initial morphisms to equivalences, the induced morphisms
$$\bot X_{/1}\to \bot \tilde{X}_{/1}~~~~~~~\mbox{ and }~~~~~~~\bot X_{0}\to \bot \tilde{X}_{0}$$
are equivalences. We can then suppose that $E$ corresponds to a left cartesian fibration.

As $\{1\} \to [b,1]^{\sharp}$ is a right Gray deformation retract, so is the inclusion $X_1\to X_{/1}$ according to proposition \ref{prop:left Gray transfomration stable under pullback along cartesian fibration}. The right Gray deformation retract structure induces a diagram:
\[\begin{tikzcd}
	{X_{/1}\otimes\{0\}} \\
	& {X_{/1}\otimes[1]^\sharp} & {X_{/1}} \\
	{X_{/1}\otimes\{1\}} & {X_1\otimes\{1\}}
	\arrow["id", curve={height=-18pt}, from=1-1, to=2-3]
	\arrow[from=3-1, to=2-2]
	\arrow[from=3-2, to=2-3]
	\arrow["r"', from=3-1, to=3-2]
	\arrow[from=1-1, to=2-2]
	\arrow["\phi"{description}, from=2-2, to=2-3]
\end{tikzcd}\]
By post composing with $g:X_0\otimes b^\flat \to X_{/1}$ and post composing $f:X_{/1}\to X$, we get a diagram:
\[\begin{tikzcd}
	{( X_0\times b^\flat)\otimes\{0\}} & {X_0} \\
	& {( X_0\times b^\flat)\otimes[1]^\sharp} & X \\
	{( X_0\times b^\flat)\otimes\{1\}} & {X_1\otimes\{1\}}
	\arrow[from=1-1, to=2-2]
	\arrow[from=3-1, to=2-2]
	\arrow[from=2-2, to=2-3]
	\arrow[from=1-1, to=1-2]
	\arrow["rg"', from=3-1, to=3-2]
	\arrow[from=3-2, to=2-3]
	\arrow[from=1-2, to=2-3]
\end{tikzcd}\]
Remark furthermore that the following diagram:
\[\begin{tikzcd}
	{( X_0\times b^\flat)\otimes\{0\}} & {X_0\times\{0\}} & {X_0\times \Fb h^{[b,1]}_{0/}} \\
	{( X_0\times b^\flat)\otimes[1]^\sharp} && {[b,1]^\sharp}
	\arrow["l"{description}, dashed, from=2-1, to=1-3]
	\arrow[from=1-1, to=2-1]
	\arrow[from=1-3, to=2-3]
	\arrow[from=2-1, to=2-3]
	\arrow[from=1-1, to=1-2]
	\arrow[from=1-2, to=1-3]
\end{tikzcd}\]
admits a lift $l$. Indeed, the left vertical morphism is initial, and the right vertical one is a left cartesian fibration. 
All put together, we get a diagram 
\[\begin{tikzcd}
	{X_0\times b^\flat} & {X_0\times \Fb h^{[b,1]}_{0/}} \\
	{X_{1}} & E
	\arrow[from=1-2, to=2-2]
	\arrow[from=2-1, to=2-2]
	\arrow["rg"', from=1-1, to=2-1]
	\arrow[from=1-1, to=1-2]
\end{tikzcd}\]
where the upper horizontal morphism is induced by the restriction of $l$ to $(X_0\times b^\flat)\otimes\{1\}$.
As $X_1\to X_{/1}$ is initial, we have $\bot X_{/1}\sim \bot X_1$ and $\bot r$ is an equivalence. We denote by $F$ the left fibration associated to \eqref{eq:cor:explicit parital}. The previous square then corresponds to a morphism 
$$\int_{[b,1]}F\to E$$
Using the naturality of $\int_{[b,1]}$, one can see that this morphism induces an equivalence on fibers, and is then an equivalence. Applying $\partial_{[b,1]}$ and using theorem \ref{theo:gr construction}, this concludes the proof.
\end{proof}

\p A left cartesian fibration is \wcnotion{$\U$-small}{small left@$\U$-small left cartesian fibration} if its fibers are $\U$-small $\io$-categories. For an $\io$-category $A$, we denote by $\LCart_{\U}(A^\sharp)$ the full sub $\iun$-category of $\LCart(A^\sharp)$ whose objects correspond to $\U$-small left cartesian fibrations over $A^\sharp$.
\begin{cor}
\label{cor: Grt equivalence}
Let $\uni$ be the $\V$-small $\io$-category of $\U$-small $\io$-categories and $A$ a $\V$-small $\io$-category. There is an equivalence
$$\int_A:\Hom(A,\uni)\to \tau_0 \LCart_{\U}(A^\sharp)$$
natural in $A:\ocat^{op}$.
\end{cor}
\begin{proof}
This is a direct consequence of the theorem \ref{theo:gr construction} and the definition of $\uni$.
\end{proof}
\begin{cor}
\label{cor: universal fibration}
The left cartesian fibration $\int_{\uni}id$ is the universal left cartesian fibration with $\U$-small fibers, i.e for any left cartesian fibration $X\to A^\sharp$ with $\U$-small fibers, there exists a unique morphism $X\to \uni$ and a unique cartesian square:
\[\begin{tikzcd}
	X & {\dom\int_{\uni}id} \\
	{A^\sharp} & {\uni^\sharp}
	\arrow[from=1-1, to=2-1]
	\arrow[from=2-1, to=2-2]
	\arrow["{\int_{\uni}id}", from=1-2, to=2-2]
	\arrow[from=1-1, to=1-2]
	\arrow["\lrcorner"{anchor=center, pos=0.125}, draw=none, from=1-1, to=2-2]
\end{tikzcd}\]
\end{cor}
\begin{proof}
This is a direct consequence of the corollary \ref{cor: Grt equivalence} and the functoriality of the Grothendieck construction given in proposition \ref{prop: derived int and partial are natural}.
\end{proof}

\subsection{Univalence}
\begin{notation*}
Through this section, we will identify any marked $\io$-category $C$ with the canonical induced morphism $C\to1$. If $f:X\to Y$ is a morphism, $f\times C$ then corresponds to the canonical morphism $X\times C\to Y$.
\end{notation*}
\p For the remaining of this section, we fix a marked $\io$-category $I$.
Remark that $\Fb h^{[n]}_k$ corresponds to the inclusion $(d_{0}^\sharp)^k:[n-k]^\sharp\to [n]^\sharp$.
 We define the functor \index[notation]{(intt@${{\oint}_{n,I}}$}
$$\oint_{n,I}: \Fun([n],\ocatm_{/I})\to \ocatm_{/I\otimes[n]^\sharp}$$
whose value on a morphism $E:[n]\to \ocatm_{/I}$ corresponding to a sequence $E_0\to ....\to E_n$, is
$$\oint_{n,I}E:=\colim_{m} \coprod_{i_0\leq... \leq i_m\leq n} E_{i_0}\otimes \Fb h_{i_m}^{[n]}.$$
As this functor is colimit preserving, it induces an adjunction \index[notation]{(partiall@$\ringpartial_{n,I}$}
\begin{equation}
\label{eq:Gr adj lax 1}
\begin{tikzcd}
	{\oint_{n,I}:\Fun([n],\ocatm_{/I})} & {\ocatm_{/I\otimes[n]^\sharp}:\ringpartial_{n,I}}
	\arrow[""{name=0, anchor=center, inner sep=0}, shift left=2, from=1-1, to=1-2]
	\arrow[""{name=1, anchor=center, inner sep=0}, shift left=2, from=1-2, to=1-1]
	\arrow["\dashv"{anchor=center, rotate=-90}, draw=none, from=0, to=1]
\end{tikzcd}
\end{equation}

\begin{lemma}
\label{lemma:oint preserves init}
The functor $\oint_{n,I}$ sends a natural transformation that is pointwise initial to an initial morphism.
\end{lemma}
\begin{proof}
As initial morphisms are closed under colimits, we have to show that for any integer $k$, and any morphism $E\to F$ of $\ocatm_{/I}$ corresponding to a sequence $X\xrightarrow{i} Y\to I$, the induced morphism $X\otimes [n-k]^\sharp\to Y\otimes [n-k]^\sharp$ over $I\otimes[n]^\sharp$ is initial whenever $i$ is. For this, remark that there is a square
\[\begin{tikzcd}
	{X\otimes\{0\}} & {X\otimes[n-k]^\sharp} \\
	{Y\otimes\{0\}} & {Y\otimes[n-k]^\sharp}
	\arrow["i", from=1-1, to=2-1]
	\arrow[from=2-1, to=2-2]
	\arrow[from=1-1, to=1-2]
	\arrow[from=1-2, to=2-2]
\end{tikzcd}\]
where the two horizontal morphisms are initial.
By stability by composition and left cancellation of initial morphism, this implies the result.
\end{proof}

\p According to the last lemma, the adjunction \eqref{eq:Gr adj lax 1} induces a derived adjunction
\begin{equation}
\label{eq:Gr adj lax 2}
\begin{tikzcd}
	{\Lb \oint_{n,I}:\Fun([n],\LCart(I))} & {\LCart(I\otimes[n]^\sharp):\Rb \ringpartial_{n,I}}
	\arrow[""{name=0, anchor=center, inner sep=0}, shift left=2, from=1-1, to=1-2]
	\arrow[""{name=1, anchor=center, inner sep=0}, shift left=2, from=1-2, to=1-1]
	\arrow["\dashv"{anchor=center, rotate=-90}, draw=none, from=0, to=1]
\end{tikzcd}
\end{equation}
where $\Rb \ringpartial_{n,I}$ is just the restriction of $\ringpartial_{n,I}$ to $\LCart(I\otimes[n]^\sharp)$.
\begin{lemma}
\label{lemma:ringpartial fiber}
Let $i:[n]^\sharp \to [m]^\sharp$ and $j:I\to J$ be two morphisms. Let $E$ be an object of $\LCart(I\otimes [m]^\sharp)$. The natural transformation
$$\ringpartial_{n,I} (j\otimes i)^*E\to j^*\circ \ringpartial_{m,J} E\circ i^\natural $$
 is an equivalence.
\end{lemma}
\begin{proof}
As invertible natural transformations are detected pointwise, one can suppose that $n=0$, and let $k$ be the image of $[0]$ by $i$.
Let $E_0\to E_1\to.. \to E_m $ be the sequence of morphisms of $\LCart(J)$ corresponding to $\ringpartial_{m,J} E$.

The object $ j^*\circ \ringpartial_{m,J} E\circ i^\natural$ is then equivalent to $ j^* E_k$ by definition. 
As $\ringpartial_{0,I}$ is the identity, we have to show that the canonical morphism $(j\otimes \{k\})^*E\to j^*E_k$ is an equivalence. Remark that for any
 $F$ of $\ocatm_{/I}$, we have by adjunction a commutative square: 
\[\begin{tikzcd}
	{\Hom(F,(j\otimes \{k\})^*E)} & {\Hom(F,\Lb j^* E_k)} \\
	{\Hom((j\otimes \{k\})_!F,E)} & {\Hom((j_!F)\otimes h_k^{[n]},E)}
	\arrow["\sim", from=1-1, to=2-1]
	\arrow[from=2-1, to=2-2]
	\arrow[from=1-1, to=1-2]
	\arrow["\sim", from=1-2, to=2-2]
\end{tikzcd}\]
where the two vertical morphisms are equivalences. As $((j\otimes \{k\})_!F\sim (j_!F)\otimes h_k^{[n]}$, the lower morphism is an equivalence, and so is the top one. This implies the desired result.
\end{proof}

\p In the following lemmas and proposition, we focus on the case where $I$ is of the form $A^\sharp$, where everything happens more simply.
\begin{lemma}
\label{lemme:oint a sharp is natural1}
Let $j:A\to B$ be a morphism between $\io$-categories and $i:[n]\to [m]$ a morphism of $\Delta$. Let $E$ be an object of $\Fun([n],\LCart(A^\sharp))$.
The canonical morphism 
$$\Lb\oint_{n,A^\sharp}( \Rb j^*\circ E\circ i)\to \Rb(j\times i^\sharp)^* \Lb \oint_{m,B^\sharp} E$$
is an equivalence.
\end{lemma}
\begin{proof}
As equivalences in $\Fun([m],\LCart(B^\sharp))$ are detected on points, an equivalences on $\LCart(B^\sharp\times [m]^\sharp)$ are detected on fibers, we can suppose that $n=0$, $A=1$, and we denote by $k$ the image of $i$ and $a$ the image of $B$. As $\Lb\oint_{0,1}$ is the identity, one has to show that the canonical morphism 
\begin{equation}
\label{eq:equationoint a sharp is natural1}
\Rb a^* E_k\to \Rb(a\times \{k\})^* \Lb \oint_{m,B^\sharp} E
\end{equation}
 is an equivalence.

Moreover, for any $l\leq n$, the proposition \ref{prop:cotimes 1 to ctimes 1 is a trivialization} implies that the canonical morphism $\Fb(E_l\otimes \Fb h_l^{[n]})\to E_l\times \Fb h_l^{[n]}$ is an equivalence, as this two left cartesian fibrations are replacement of $E_l\otimes h_l^{[n]}\sim E_l\times h_l^{[n]}$. According to proposition \ref{prop:fiber preserves colimits}, $\Rb(a\times \{k\}^\sharp)^*$ preserves colimits, we then have 
$$ \Rb(a\times \{k\})^* \Lb \oint_{m,B^\sharp} E\sim \colim_m\coprod_{i_0\leq ...\leq i_m\leq k}\Rb a^*E_{i_0}\sim \colim_{i:[k]}\Rb a^*E_i\sim \Rb a^*E_k.$$
The morphism \eqref{eq:equationoint a sharp is natural1} is then an equivalence, which concludes the proof.
\end{proof}

\begin{prop}
\label{prop:ring partial is natural}
The functor $\Rb\ringpartial_{n,I}$ is natural in $n:\Delta^{op}$ and $I:\ocatm^{op}$. The functor $\oint_{n,A^\sharp}$ is natural in $n:\Delta^{op}$ and $A:\ocat^{op}$.
\end{prop}
\begin{proof}
The proof is similar to the one of proposition \ref{prop: derived int and partial are natural}, using lemma \ref{lemma:ringpartial fiber} and lemma \ref{lemme:oint a sharp is natural1} instead of lemma \ref{lemma:partial fiber} and lemma \ref{lemma:int fiber 2}. 
 \end{proof}

\begin{prop}
\label{prop:lax gr construction particular case}
For any $\io$-category $A$ and any integer $n$, the adjunction 
\[\begin{tikzcd}
	{\Lb \oint_{n,A^\sharp}:\Fun([n],\LCart(A^\sharp))} & {\LCart((A\times[n])^\sharp):\Rb \ringpartial_{n,A^\sharp}}
	\arrow[""{name=0, anchor=center, inner sep=0}, shift left=2, from=1-1, to=1-2]
	\arrow[""{name=1, anchor=center, inner sep=0}, shift left=2, from=1-2, to=1-1]
	\arrow["\dashv"{anchor=center, rotate=-90}, draw=none, from=0, to=1]
\end{tikzcd}\]
 is an adjoint equivalence.
\end{prop}
\begin{proof}
As in both case equivalences are detected on fibers, and as these functors are natural in $A$ and $n$, one can show the result for $A$ being the terminal $\io$-category and $n=0$. In this case remark that these two functors are the identities. 
\end{proof}

\p
We set \wcnotation{$\Fun^c([n],\LCart(I))$}{(func@$\Fun^c([\uvar],\uvar)$} as the pullback
\[\begin{tikzcd}
	{\Fun^c([n],\LCart(I))} & {\Fun([n],\LCart(I))} \\
	{\prod_{k\leq n}\LCart(I^\sharp)} & {\prod_{k\leq n}\Fun(\{k\},\LCart(I))}
	\arrow[from=1-1, to=2-1]
	\arrow[""{name=0, anchor=center, inner sep=0}, from=2-1, to=2-2]
	\arrow[from=1-2, to=2-2]
	\arrow[from=1-1, to=1-2]
	\arrow["\lrcorner"{anchor=center, pos=0.125}, draw=none, from=1-1, to=0]
\end{tikzcd}\]
where $I^\sharp$ stand for $(I^\natural)^\sharp$. An object of this $\iun$-category is then a sequence in $\LCart(I)$:
\[\begin{tikzcd}
	{F_0} & {...} & {F_ n}
	\arrow[from=1-1, to=1-2]
	\arrow[from=1-2, to=1-3]
\end{tikzcd}\]
such that for any integer $i\leq n$, $F_i$ is classified. 
A $1$-cell of this $\iun$-category is a sequence of square in $\LCart(I)$:
\[\begin{tikzcd}
	{F_0} & {...} & {F_ n} \\
	{G_0} & {...} & {G_n}
	\arrow[from=1-1, to=1-2]
	\arrow[from=1-2, to=1-3]
	\arrow[from=1-2, to=2-2]
	\arrow[from=1-1, to=2-1]
	\arrow[from=1-3, to=2-3]
	\arrow[from=2-1, to=2-2]
	\arrow[from=2-2, to=2-3]
\end{tikzcd}\]
such that for any $k\leq n$, the morphism $F_k\to G_k$ comes from a morphism beetwen the corresponding objects of $\LCart(I^\sharp)$.

\begin{prop}
\label{prop:Fun preserve colimies}
Let $F:I\to \ocatm$ be a $\Wcard$-small diagram. The canonical functor
$$\Fun^c([n],\LCart(\colim_I F))\to \lim_I\Fun^c([n],\LCart(F))$$
is an equivalence.
\end{prop}
\begin{proof}
This morphism fits in an adjunction:
\[\begin{tikzcd}
	{\colim_I:\lim_I\Fun^c([n],\LCart(F))} & {\Fun^c([n],\LCart(\colim_I F))}
	\arrow[""{name=0, anchor=center, inner sep=0}, shift left=2, from=1-2, to=1-1]
	\arrow[""{name=1, anchor=center, inner sep=0}, shift left=2, from=1-1, to=1-2]
	\arrow["\dashv"{anchor=center, rotate=-90}, draw=none, from=1, to=0]
\end{tikzcd}\]
The corollary \ref{cor:fib over a colimit} implies that the counit of this adjunction is an equivalence. To conclude, we have to show that the right adjoint is essentially surjective. On objects, this adjunction corresponds to the canonical equivalence 
$$\lim_I\Hom([n],\LCartc(F))\sim \Hom([n],\LCartc(\colim_IF))$$
induced by corollary \ref{cor:fib over a colimit2}
\end{proof}

\p As $\Rb\ringpartial_{0,I}$ is the identity, lemma \ref{lemma:ringpartial fiber} implies that the functor
$$\LCart((I\otimes[n]^\sharp)^\sharp)\to \LCart(I\otimes[n]^\sharp) \xrightarrow{\Rb\ringpartial_{n,I}} \Fun([n],\LCart(I))$$
 factors through a functor \index[notation]{(partialll@$\ringpartial_{n,I}^c$}
 \begin{equation}
 \label{eq:def of right partial classified}
\ringpartial_{n,I}^c:\LCart((I\otimes[n]^\sharp)^\sharp)\to \Fun^c([n],\LCart(I))
\end{equation}
We are now willing to show that this functor is an equivalence, and to this extent, we will construct an inverse.

\p We fix an object $a$ of $t\Theta$. We define $[a,1]^\sharp:=([a,1]^\natural)^\sharp$
 and $\iota$ the canonical inclusion $[a,1]\to [a,1]^\sharp$.
 
We directly have an equivalence 
$$\Lb\iota_!\Rb\iota^*\Fb h_{1}^{[a^\natural,1]}\sim \Fb h_{1}^{[a^\natural,1]}$$
The next lemma provides an explicit expression for $\Lb\iota_!\Rb\iota^*\Fb h_{0}^{[a^\natural,1]}$.

\begin{lemma}
\label{lemma:replacement of unmarked slice}
Let $a$ be an object of $t\Theta$. We have an equivalence
$$\Lb\iota_!\Rb\iota^*\Fb h_{0}^{[a^\natural,1]}\sim \Fb h_{0}^{[a^\natural,1]}\coprod_{a^\flat\otimes\{0\}}(a\otimes[1]^\sharp)^\flat.$$
Moreover the morphism 
$\Lb\iota_!(a^\flat \to \Fb h_{0}^{[a^\natural,1]})$ corresponds to the inclusion 
$$(a\otimes\{0\})^\flat \to (a\otimes[1]^\sharp)^\flat \to \Fb h_{0}^{[a^\natural,1]}\coprod_{a^\flat\otimes\{0\}}(a\otimes[1]^\sharp)^\flat.$$
\end{lemma}
\begin{proof}
The theorem \ref{theo:equivalence betwen slice and join} implies that 
$\iota_!\Rb\iota^*\Fb h_{0}^{[b,1]}$ and $\iota_!\Rb\iota^*\Fb h_{0}^{[(\Db_n)_t,1]}$ are respectively equivalent to 
$$(1\costar b)^\flat\to [b,1]^\sharp~~~~ \mbox{and}~~~~ (1\costar \Db_n)^{\sharp_{n+1}}\to [\Db_n,1]^\sharp$$
The theorem \ref{theo:formula between pullback of slice and tensor marked case} induces cartesian diagrams
\[\begin{tikzcd}[sep =0.1cm]
	{b^\flat\otimes\{0\}} && {(b\otimes[1])^\flat} && {\Db_n^\flat\otimes\{0\}} && {(\Db_n\otimes[1])^{\sharp_{n+1}}} \\
	& 1 && {(1\costar b)^\flat} && 1 && {(1\costar \Db_n)^{\sharp_{n+1}}} \\
	{b^\flat} && {b^\flat\star 1} && {\Db_n^\flat} && {\Db_n^\flat\star 1} \\
	& {\{0\}} && {[b,1]^\sharp} && {\{0\}} && {[\Db_n,1]^\sharp}
	\arrow[from=4-2, to=4-4]
	\arrow[from=2-4, to=4-4]
	\arrow[from=1-3, to=3-3]
	\arrow[from=1-1, to=3-1]
	\arrow[from=2-2, to=4-2]
	\arrow[from=3-1, to=4-2]
	\arrow[from=3-3, to=4-4]
	\arrow[from=1-3, to=2-4]
	\arrow[from=1-1, to=2-2]
	\arrow[from=1-1, to=1-3]
	\arrow[from=2-2, to=2-4]
	\arrow[from=3-1, to=3-3]
	\arrow[from=1-5, to=2-6]
	\arrow[from=1-5, to=1-7]
	\arrow[from=1-7, to=2-8]
	\arrow[from=3-7, to=4-8]
	\arrow[from=3-5, to=4-6]
	\arrow[from=1-5, to=3-5]
	\arrow[from=2-6, to=4-6]
	\arrow[from=1-7, to=3-7]
	\arrow[from=2-8, to=4-8]
	\arrow[from=3-5, to=3-7]
	\arrow[from=4-6, to=4-8]
	\arrow[from=2-6, to=2-8]
\end{tikzcd}\]
Remark furthermore that we have an equivalence
$$\bot(\Db_n\otimes[1])^{\sharp_{n+1}}\sim \tau^i_{n}(\Db_n\otimes[1])=: ((\Db_n)_t\otimes[1]^\sharp)^\natural.$$
Applying the full duality to theorem \ref{theo:equivalence betwen slice and join} and using the corollary \ref{cor:explicit partial}, this proves the first assertion.

The second assertion follows from the naturality in $E$ of the construction given in corollary \ref{cor:explicit partial} and from the squares
\[\begin{tikzcd}
	{b^\flat\otimes\{1\}} & {b^\flat} & {\Db_n^\flat\otimes\{1\}} & {\Db_n^\flat} \\
	{(b\otimes[1])^\flat} & {(1\costar b)^\flat} & {(\Db_n\otimes[1])^{\sharp_{n+1}}} & {(1\costar \Db_n)^{\sharp_{n+1}}} \\
	{b^\flat\star 1} & {[b,1]^\sharp} & {\Db_n^\flat\star 1} & {[\Db_n,1]^\sharp}
	\arrow[from=2-1, to=3-1]
	\arrow[from=2-1, to=2-2]
	\arrow[from=3-1, to=3-2]
	\arrow[from=2-2, to=3-2]
	\arrow[from=3-3, to=3-4]
	\arrow[from=2-4, to=3-4]
	\arrow[from=2-3, to=3-3]
	\arrow[from=2-3, to=2-4]
	\arrow[from=1-1, to=2-1]
	\arrow[from=1-1, to=1-2]
	\arrow[from=1-2, to=2-2]
	\arrow[from=1-4, to=2-4]
	\arrow[from=1-3, to=2-3]
	\arrow[from=1-3, to=1-4]
\end{tikzcd}\]
that are cartesian according to theorem \ref{theo:formula between pullback of slice and tensor marked case}.
\end{proof}

\p We fix an object $a$ of $t\Theta$. Let $E$ be an object of $\LCart([a,1]^\sharp)$. According to theorem \ref{theo:gr construction}, there exists a morphism $X(0)\times a^\natural \to X(1)$ such that $E$ corresponds to the colimit
$$X(0)^\flat\times \Fb h_0^{[a^\natural,1]}\coprod_{X(0)^\flat \times a^\flat}X(1)^\flat $$
We claim that $\Lb\iota_!\Rb \iota^*E$ is the left cartesian fibration
\begin{equation}
X(0)^\flat\times (\Fb h_0^{[a^\natural,1]}\coprod_{ a^\flat} (a\otimes[1]^\sharp)^\flat) \coprod_{X(0)^\flat\times (a\otimes\{1\})^\flat}X(1)^\flat 
\label{eq:explicit iota excalmation}
\end{equation}
Indeed, the lemma \ref{lemma:replacement of unmarked slice} provides an initial morphism from $\iota_!\Rb \iota^*E$ to this object, and the theorem \ref{theo:left cart stable by colimit} implies that this object is a left cartesian fibration.

\begin{lemma}
\label{lemma:characterisation of natural transoformation}
Let $\psi: \iota_!\Rb \iota^*\to \Lb\iota_!\Rb \iota^*$ be a natural transformation, endowed with a family of natural commutative squares:
\[\begin{tikzcd}
	{ \iota_!\Rb \iota^*(B^\flat\times E)} & {\Lb\iota_!\Rb \iota^*(B^\flat\times E)} \\
	{B^\flat \times\iota_!\Rb \iota^*E} & {B^\flat \times\iota_!\Rb \iota^*E}
	\arrow["{\psi_{B^\flat\times E}}", from=1-1, to=1-2]
	\arrow[from=1-1, to=2-1]
	\arrow["{B^\flat\times\psi_{E}}"', from=2-1, to=2-2]
	\arrow[from=1-2, to=2-2]
\end{tikzcd}\]
where we identify marked $\io$-categories with their canonical morphims to the terminal marked $\io$-category. The natural transformation $\psi$ is then the one obtained by the functorial factorization in initial morphisms followed by left cartesian fibrations. 
\end{lemma}
\begin{proof}
The natural transformation $\psi$ induces a natural transformation $\Db \psi:\Lb\iota_!\Rb \iota^*\to \Lb\iota_!\Rb \iota^*$ and we have to check that this last natural transformation is the identity. The explicit Grothendieck construction states that $E$ is a colimit of left cartesian fibration of shape $B^\flat \times \Fb h^{[a^\natural,1]}_{\epsilon}$ for $\epsilon\in \{0,1\}$. The hypothesis implies that we just have to show that $\Db \psi_{\Fb h^{[a^\natural,1]}_{0}}$ and $\Db \psi_{\Fb h^{[a^\natural,1]}_{1}}$ are equivalences, and we will check this on fibers.

Using the explicit expression of $\Lb\iota_!\Rb \iota$ given in \eqref{eq:explicit iota excalmation}, we have equivalences
$$\{0\}^*\Lb\iota_!\Rb \iota \Fb h_0^{[a^\natural,1]} \sim 1~~~~~~~~
\{0\}^*\Lb\iota_!\Rb \iota \Fb h_1^{[a^\natural,1]} \sim \emptyset~~~~~~~~
\{1\}^*\Lb\iota_!\Rb \iota \Fb h_0^{[a^\natural,1]} \sim 1$$
which directly implies that $\{0\}^*\Db \psi_{\Fb h_0^{[a^\natural,1]}}$, $\{0\}^*\Db \psi_{\Fb h_1^{[a^\natural,1]}}$ and $\{1\}^*\Db \psi_{\Fb h_1^{[a^\natural,1]}}$ are equivalences. The only case remaining is $\{1\}^*\Db \psi_{\Fb h_0^{[a^\natural,1]}}$. This morphism corresponds to an endomorphism of $(a\otimes[1]^\sharp)^\natural$, which is a strict object according to \ref{prop:tensor of glboer are strics}. By right cancellation, the morphism induced by the domain of $\Db\psi_{\Fb h_0^{[a^\natural,1]}}$ is a left cartesian fibration. There exists then a lift in the following diagram
\begin{equation}
\label{eq:square in proof of replement ofneofoeijfoepaj}
\begin{tikzcd}
	{\{0\}} & {[a,1]^{\sharp}_{0/}\coprod_{a^\flat\otimes\{0\}}(a\otimes[1]^\sharp)^\flat} \\
	{[a,1]^{\sharp}_{0/}} & {[a,1]^{\sharp}_{0/}\coprod_{a^\flat\otimes\{0\}}(a\otimes[1]^\sharp)^\flat}
	\arrow["{\dom\Db\psi_{\Fb h_0^{[a^\natural,1]}}}", from=1-2, to=2-2]
	\arrow[from=1-1, to=2-1]
	\arrow["\iota"', from=2-1, to=2-2]
	\arrow[from=1-1, to=1-2]
	\arrow["l"{description}, from=2-1, to=1-2]
\end{tikzcd}
\end{equation}
where $\iota$ is the canonical inclusion. As $l$ and $\iota$ are lifts in the following diagram:
\[\begin{tikzcd}
	{\{0\}} & {[a,1]^{\sharp}_{0/}\coprod_{a^\flat\otimes\{0\}}(a\otimes[1]^\sharp)^\flat} \\
	{[a,1]^{\sharp}_{0/}} & {[a,1]^\sharp}
	\arrow[from=1-2, to=2-2]
	\arrow[from=1-1, to=2-1]
	\arrow[from=2-1, to=2-2]
	\arrow[from=1-1, to=1-2]
	\arrow[from=2-1, to=1-2]
\end{tikzcd}\]
they are equivalent. Taking the fiber on $\{1\}$ of the cartesian square \eqref{eq:square in proof of replement ofneofoeijfoepaj}, this induces a commutative triangle:
\[\begin{tikzcd}
	{(a\otimes\{0\})^\natural} & {(a\otimes[1]^\sharp)^\natural} \\
	& {(a\otimes[1]^\sharp)^\natural}
	\arrow[from=1-1, to=1-2]
	\arrow["{\{1\}^*\Db \psi_{\Fb h_0^{[a^\natural,1]}}}", from=1-2, to=2-2]
	\arrow[from=1-1, to=2-2]
\end{tikzcd}\]
Eventually, the naturality induces a commutative squares. 
\[\begin{tikzcd}
	{(a\otimes[1]^\sharp)^\natural} & {(a\otimes\{1\})^\natural} \\
	{(a\otimes[1]^\sharp)^\natural} & {(a\otimes\{1\})^\natural}
	\arrow["{\{1\}^*\Db\psi_{\Fb h_0^{[a^\natural,1]}}}"', from=1-1, to=2-1]
	\arrow[from=1-1, to=1-2]
	\arrow[from=2-1, to=2-2]
	\arrow["{\{1\}^*\Db\psi_{a^\flat\times \Fb h_1^{[a^\natural,1]}}\sim id}", from=1-2, to=2-2]
\end{tikzcd}\]
The restriction of the morphism $\Db\psi_{\Fb h_0^{[a^\natural,1]}}$ to $a\otimes\{0\}$ and $a\otimes\{1\}$ is therefore the identity.
Using Steiner theory, we can easily show that it forces $\Db\psi_{\Fb h_0^{[a^\natural,1]}}$ to also be the identity.
\end{proof}

\p We fix an object $F$ of $\LCart([a,1]^\sharp)$, and a morphism $\phi:\Rb \iota^* E\to \Rb \iota^* F$. By adjunction, this corresponds to a morphism $\tilde{\phi}: \iota_! \Rb\iota^*E\to F$, and as $F$ corresponds to a left cartesian fibration, this induces a morphism $\Db\tilde{\phi}:\Lb\iota_! \Rb\iota^*E\to F$. Using once again theorem \ref{theo:gr construction}, this induces a morphism 
$\partial_{[a^\natural,1]} \Lb\iota_! \Rb\iota^*E\to \partial_{[a^\natural,1]} F$, that corresponds, according to the explicit expression of $\Lb\iota_!\Rb \iota$ given in \eqref{eq:explicit iota excalmation}, to a commutative square
\[\begin{tikzcd}
	{X(0)\times a^\natural} & {Y(0)\times a^\natural} \\
	{X(0)\times (a\otimes[1]^\sharp)^\natural \coprod_{X(0)\times a^\natural}X(1)} & {Y(1)}
	\arrow["{\Db \tilde{\phi}(1)}"', from=2-1, to=2-2]
	\arrow["{\Db \tilde{\phi}(0)\times a^\natural}", from=1-1, to=1-2]
	\arrow[from=1-2, to=2-2]
	\arrow[from=1-1, to=2-1]
\end{tikzcd}\]
where $Y(0)\times a^\flat \to Y(1)$ corresponds to $\partial_{[a^\natural,1]} F$.
This is equivalent to a diagram
\begin{equation}
\label{eq:lax technical big diagram}
\begin{tikzcd}
	{X(0)\times a^\natural} && {Y(0)\times a^\natural} \\
	& {X(0)\times (a\otimes [1]^\sharp)^\natural} && {Y(1)} \\
	{X(0)\times a^\natural} && {X(1)}
	\arrow[from=3-3, to=2-4]
	\arrow[from=1-3, to=2-4]
	\arrow[from=3-1, to=3-3]
	\arrow["{\Db \tilde{\phi}(0)\times a^\natural}", from=1-1, to=1-3]
	\arrow[from=1-1, to=2-2]
	\arrow[from=3-1, to=2-2]
	\arrow[from=2-2, to=2-4]
\end{tikzcd}
\end{equation}
According to proposition \ref{prop:lfib and W 3}, this corresponds to an object $\xi(\phi)$ of $\Lfib(\Noiun([a,1]\otimes[1]^\sharp)^\natural))$ endowed with two equivalences:
$$\partial_{[a^\natural,1]}E\sim \Noiun([a,1]\otimes\{0\})^*\xi(\phi)~~~~~~~
\partial_{[a^\natural,1]} F\sim \Noiun([a,1]\otimes\{1\})^*\xi(\phi)$$
Using the naturality of $\int_C$ demonstrated in proposition \ref{prop: derived int and partial are natural}, these equivalences induce equivalences:
\begin{equation}
\label{eq:fiber of xi}
E\sim ([a,1]\otimes\{0\})^*\int_{([a,1]\otimes[1]^\sharp)^\natural}\xi(\phi)~~~~~~~
 F\sim ([a,1]\otimes\{1\})^*\int_{([a,1]\otimes[1]^\sharp)^\natural}\xi(\phi)
\end{equation}
All the operations we performed were functorial and admitted inverses. 
We then have constructed an equivalence
\begin{equation}
\label{eq:inverse of ring partial}
\int_{([a,1]\otimes[1]^\sharp)^\natural}\xi:\Fun^c([1],\LCart([a,1]))\to \LCart(([a,1]\otimes[1]^\sharp)^\sharp)
\end{equation}
 
\begin{lemma}
\label{lemma:lax Gr construction technical}
There is a unique commutative square of shape
\begin{equation}
\label{eq:lemma:lax Gr construction technical}
\begin{tikzcd}
	{\iota_!\iota^*E\otimes \{0\}} & {E\otimes\{0\}} \\
	{\iota_!\iota^*E\otimes id_{[1]^\sharp}} & {\int_{([a,1]\otimes[1]^\sharp)^\natural}\xi(\phi)} \\
	{\iota_!\iota^*E\otimes\{1\}} & {F\otimes \{1\}}
	\arrow[from=1-2, to=2-2]
	\arrow[from=3-2, to=2-2]
	\arrow[from=1-1, to=2-1]
	\arrow[from=3-1, to=2-1]
	\arrow[from=3-1, to=3-2]
	\arrow[from=2-1, to=2-2]
	\arrow[from=1-1, to=1-2]
\end{tikzcd}
\end{equation}
where the upper horizontal morphism is induced by the unit of the adjunction $(\iota_!,\iota^*)$. Moreover, the bottom horizontal morphism is $\tilde\phi$.
\end{lemma}
\begin{proof}
The unicity and existence of the middle horizontal morphism come from the initiality of the morphism $\iota_!\iota^*E\otimes \{0\}\to \iota_!\iota^*E\otimes [1]^\sharp$. The unicity and existence of the lower horizontal morphism is a consequence of the equation \eqref{eq:fiber of xi}. 
As the diagram \eqref{eq:lax technical big diagram} factors as
\[\begin{tikzcd}
	& {Y(0)\times a^\natural} \\
	{X(0)\times a^\natural} & {X(0)\times a^\natural} & {Y(1)} \\
	& {X(0)\times (a\otimes [1]^\sharp)^\natural} & {X(0)\times (a\otimes [1]^\sharp)^\natural\coprod_{X(0)\times a^\natural}X(1)} \\
	{X(0)\times a^\natural} & {X(1)}
	\arrow[from=1-2, to=2-3]
	\arrow[from=4-1, to=4-2]
	\arrow[from=2-1, to=3-2]
	\arrow[from=4-1, to=3-2]
	\arrow["{\Db \tilde{\phi}(0)\times a^\natural}", from=2-1, to=2-2]
	\arrow[from=3-2, to=3-3]
	\arrow[from=4-2, to=3-3]
	\arrow[from=2-2, to=1-2]
	\arrow[from=2-2, to=3-3]
	\arrow[from=3-3, to=2-3]
\end{tikzcd}\]
the downer square of the diagram of \eqref{eq:lemma:lax Gr construction technical} factors as
\[\begin{tikzcd}
	{\iota_!\iota^*E\otimes [1]^\sharp} & {\int_{([a,1]\otimes[1]^\sharp)^\natural}\xi(\mu_E)} & {\int_{([a,1]\otimes[1]^\sharp)^\natural}\xi(\phi)} \\
	{\iota_!\iota^*E\otimes\{1\}} & {\Lb \iota_! \iota^*E\otimes\{1\}} & {F\otimes \{1\}}
	\arrow[from=2-3, to=1-3]
	\arrow[from=2-1, to=1-1]
	\arrow[from=2-1, to=2-2]
	\arrow[from=1-1, to=1-2]
	\arrow[from=2-2, to=1-2]
	\arrow[from=1-2, to=1-3]
	\arrow["{\Db \tilde{\phi}}"', from=2-2, to=2-3]
\end{tikzcd}\]
where $\mu_E$ denotes the canonical morphism $\iota_!\iota^*E\to \Lb \iota_! \iota^*E$. To conclude, one has to show that the lower left horizontal morphism is $\mu_E$. As these constructions are natural, and commute with the cartesian product with $B^\flat\to 1$ for $B$ an $\io$-category, the lemma \ref{lemma:characterisation of natural transoformation} implies the desired result.
\end{proof}

\begin{lemma}
\label{lemma:ring partial eq for a 1}
The functor $\ringpartial^c_{1,[a,1]}$ defined in \eqref{eq:def of right partial classified} in is an equivalence.
\end{lemma}
\begin{proof}
The lemma \ref{lemma:lax Gr construction technical} induces a diagram
\[\begin{tikzcd}
	{\iota^*E\otimes\Fb h^{[1]}_1} & {\iota^*E\otimes\Fb h^{[1]}_0} \\
	{\iota^*F\otimes\Fb h^{[1]}_1} & {(\iota\otimes id_{[1]})^*\int_{([a,1]\otimes[1]^\sharp)^\natural}\xi(\phi)}
	\arrow[from=2-1, to=2-2]
	\arrow[from=1-1, to=2-1]
	\arrow[from=1-1, to=1-2]
	\arrow[from=1-2, to=2-2]
\end{tikzcd}\]
which corresponds to a natural transformation 
$$\oint_{1,[a,1]} \phi \to (\iota\otimes id_{[1]})^* \int_{([a,1]\otimes[1]^\sharp)^\natural}\xi(\phi)~~~\leftrightsquigarrow~~~ \phi\to \ringpartial^c_{1,[a,1]}\int_{([a,1]\otimes[1]^\sharp)^\natural}\xi(\phi)$$
Eventually, remark that proposition \ref{prop:ring partial is natural} and the equivalences \eqref{eq:fiber of xi} imply that this natural transformation is pointwise an equivalence. 
The functor \eqref{eq:inverse of ring partial} is then a left inverse of $\ringpartial^c_{1,[a,1]}$. As it is an equivalence, so is $\ringpartial^c_{1,[a,1]}$.
\end{proof}

\begin{prop}
\label{prop:ring partial eq for I n}
For any marked $\io$-category $I$, and integer $n$, the morphism 
$$\ringpartial^c_{n,I}:\LCart((I\otimes[n]^\sharp)^\sharp) \to \Fun^c([n],\LCart(I))$$
defined in \eqref{eq:def of right partial classified} is an equivalence. 
\end{prop}
\begin{proof}
Corollary \ref{cor:fib over a colimit2}, and propositions \ref{prop:otimes marked preserves colimits} and \ref{prop:Fun preserve colimies} imply that the two functors on $\Delta^{op}\times \ocatm^{op}$:
$$\begin{array}{rcl}
(n,I)&\mapsto & \LCartc(I\otimes[n]^\sharp)\\
(n,I)&\mapsto &\Fun^c([n],\LCartc(I))
\end{array}$$
send colimits to limits. We can then reduce to the case where $I$ is an element of $t\Theta$ and $n=1$. 
If $I$ is $[1]^\sharp$, remark that $\ringpartial^c_{n,[1]^\sharp}$ is equivalent to $\ringpartial_{n,[1]^\sharp}$ which is an equivalence according to proposition \ref{prop:lax gr construction particular case}.
If $I$ is of shape $[a,1]$ for $a$ in $t\Theta$, this is the content of lemma \ref{lemma:ring partial eq for a 1}.
\end{proof}

\p We recall that a left cartesian fibration is $\U$-small if its fibers are $\U$-small $\io$-categories. For an $\io$-category $A$, we denote by $\LCart_{\U}(A^\sharp)$ the full sub $\iun$-category of $\LCart_{\U}(A^\sharp)$ whose objects correspond to $\U$-small left cartesian fibrations over $A^\sharp$. For a marked $\io$-category $I$, we define similarly $\LCartc_{\U}(I)$ as the full sub $\iun$-category of $\LCartc_{\U}(I)$ whose objects correspond to $\U$-small classified left cartesian fibrations over $I$.

\begin{cor}
\label{cor:univalence}
Let $\uni$ be the $\V$-small $\io$-category of $\U$-small $\io$-categories.
Let $n$ be an integer and $I$ be a $\V$-small marked $\io$-category. We denote by $I^\sharp$ the marked $\io$-category obtained from $I$ by marking all cells, and $\iota:I\to I^\sharp$ the induced morphism. There is an equivalence, natural in $[n]:\Delta^{op}$ and $I:\ocatm^{op}$, between functors
$$f:I\otimes[n]^\sharp\to \uni^\sharp$$
and sequences
$$\iota^*\int_{I^\natural}f_0\to ... \to \iota^*\int_{I^\natural}f_n$$
where for any $k\leq n$, $f_k$ is the functor $I^\natural\to \uni$ induced by $I\otimes\{k\}\to I\otimes[n]^\sharp\to \uni^\sharp$.
\end{cor}
\begin{proof}
This is a direct application of the equivalence 
$$\tau_0\LCart((I\otimes[n]^\sharp)^\sharp) \to \Hom([n],\LCartc(I))$$
induced by proposition \ref{prop:ring partial eq for I n}.
\end{proof}

\begin{cor}
\label{cor:univalence tranche}
Let $I$ be a $\V$-small marked $\io$-category and $c$ an object of $\uni$. We denote by $I^\sharp$ the marked $\io$-category obtained from $I$ by marking all cells, and $\iota:I\to I^\sharp$ the induced morphism. There is an equivalence, natural in $I:\ocatm^{op}$, between functors
$$f:I\to \uni^\sharp_{c/}$$
and arrows:
$$I\times \int_1c\to \iota^* \int_{I^\natural}\tilde{f}$$
where $\tilde{f}$ is the induced functor $I^\natural\to \uni_{c/}\to\uni $.
\end{cor}
\begin{proof}
By construction, we have a cocartesian square.
\[\begin{tikzcd}
	{I\otimes\{0\}} & {I\otimes[1]^\sharp} \\
	1 & {1\costar I}
	\arrow[from=1-1, to=2-1]
	\arrow[from=2-1, to=2-2]
	\arrow[from=1-1, to=1-2]
	\arrow[from=1-2, to=2-2]
	\arrow["\lrcorner"{anchor=center, pos=0.125, rotate=180}, draw=none, from=2-2, to=1-1]
\end{tikzcd}\]
As $\tau_0\LCart(\uvar)$ sends colimits to limits, this is a consequence of the last corollary.
\end{proof}

\begin{cor}
\label{cor:parametric univalence}
Let $n$ be an integer, $I$ a $\V$-small marked $\io$-category, and $A$ an $\io$-category. We denote by $I^\sharp$ the marked $\io$-category obtained from $I$ by marking all cells, and $\iota:I\to I^\sharp$ the induced morphism. There is an equivalence, natural in $[n]:\Delta^{op}$ and $I:\ocatm^{op}$, between functors
$$f:I\otimes[n]^\sharp\to \uHom(A,\uni)$$
and sequences
$$(\iota\times A^\sharp)^*\int_{I^\natural\times A}f_0\to ... \to (\iota\times A^\sharp)^*\int_{I^\natural\times A}f_n$$
where for any $k\leq n$, $f_k$ is the functor $I^\natural\times A\to \uni$ induced by $(I\otimes\{k\})\times A^\sharp\to (I\otimes[n]^\sharp)\times A^\sharp\to \uni^\sharp$.
\end{cor}
\begin{proof}
This is a direct application of the last corollary and the equivalence $(I\otimes[n]^\sharp)\times A^\sharp\sim (I\times A^\sharp)\otimes[n]^\sharp$ given in proposition \ref{prop:associativity of Gray2}.
\end{proof}

\begin{cor}
\label{cor:parametric univalence tranche}
Let $I$ be a $\V$-small marked $\io$-category, $A$ an $\io$-category, and $g$ an object of $\uHom(A,\uni)$. We denote by $I^\sharp$ the marked $\io$-category obtained from $I$ by marking all cells, and $\iota:I\to I^\sharp$ the induced morphism. There is an equivalence, natural in $I:\ocatm^{op}$, between functors
$$f:I\to \uHom(A,\uni)^\sharp_{g/}$$
and arrows:
$$I\times \int_Ag\to (\iota\times A^\sharp)^* \int_{I^\natural\times A}\tilde{f}$$
where $\tilde{f}:I^\natural\times A\to \uni$ is the functor corresponding to $I^\natural\to\uHom(A,\uni)_{g/}\to \uHom(A,\uni)$.
\end{cor}
\begin{proof}
We once again have a cocartesian square
\[\begin{tikzcd}
	{I\otimes\{0\}} & {I\otimes[1]^\sharp} \\
	1 & {1\costar I}
	\arrow[from=1-1, to=2-1]
	\arrow[from=2-1, to=2-2]
	\arrow[from=1-1, to=1-2]
	\arrow[from=1-2, to=2-2]
	\arrow["\lrcorner"{anchor=center, pos=0.125, rotate=180}, draw=none, from=2-2, to=1-1]
\end{tikzcd}\]
As $\tau_0\LCart(\uvar)$ sends colimits to limits, this is a consequence of the last corollary and the equivalence $(I\otimes[1]^\sharp)\times A^\sharp\sim (I\times A^\sharp)\otimes[1]^\sharp$ given in proposition \ref{prop:associativity of Gray2}.
\end{proof}

\subsection{$\io$-Functorial Grothendieck construction}

\p For $I$ a marked $\io$-category and $A$ an $\io$-category, we define the $\io$-category \wcnotation{$\gHom(I,A)$}{(hom@$\gHom(\uvar,\uvar)$}, whose value on a globular sum $a$, is given by 
$$\Hom(a,\gHom(I,A)):=\Hom(I\ominus a^\sharp,A^\sharp)$$

The section is devoted to the proof of the following theorem:
\begin{theorem}
\label{theo:lcartc et ghom}
Let $I$ be a $\U$-small marked $\io$-category.
Let $\uni$ be the $\V$-small $\io$-category of $\U$-small $\io$-categories, and $\uLCartc_{\U}(I)$ the $\V$-small $\io$-category of $\U$-small left cartesian fibrations. 
There is an equivalence
$$\gHom(I,\uni)\sim \uLCartc_{\U}(I)$$
natural in $I$.
On the maximal sub $\infty$-groupoid, this equivalence corresponds to the Grothendieck construction of theorem \ref{theo:gr construction}.
\end{theorem}

\begin{cor}
\label{cor:lcar et hom}
Let $A$ be a $\U$-small $\io$-category.
Let $\uLCartc_{\U}(A^\sharp)$ be the $\V$-small $\io$-category of $\U$-small left cartesian fibrations. 
There is an equivalence
$$\uHom(A,\uni)\sim \uLCart_{\U}(A^\sharp)$$
natural in $A$.
On the maximal sub $\infty$-groupoid, this equivalence corresponds to the Grothendieck construction of theorem \ref{theo:gr construction}.
\end{cor}
\begin{proof}
This is a consequence of the equivalences $\uLCart(A^\sharp)\sim \uLCartc(A^\sharp)$, of the previous theorem and of the equivalence 
$\uHom(A,\uni)\sim \gHom(A^\sharp,\uni)$ induced by the second assertion of proposition \ref{prop:associativity of ominus}.
\end{proof}

\p 
\label{par: i pull and push beetwe io category of morphism}
The previous results provide equivalences \index[notation]{(f8@$f^*:\gHom(I,\uni)\to \gHom(J,\uni)$}
$$ \gHom(I,\uni)\sim \uLCartc(I) ~~~~\mbox{and}~~~~\uHom(A,\omega)\sim \uLCart(A^\sharp)$$
By construction, for any morphism $f:I\to J$ between marked $\omega$-categories, we have a morphism 
$$f^*:\gHom(J,\uni)\to \uHom(I,\uni)$$
Suppose now that the codomain of $f$ is of shape $A^\sharp$.
 The morphism \eqref{eq:i pull} induces a morphism \index[notation]{(f7@$f_{\mbox{$\exclam$}}:\gHom(I,\uni)\to \uHom(A,\uni)$}
$$f_!:\gHom(I,\uni)\to \uHom(A,\uni)$$ and \eqref{eq:i pull unit an counit} induces natural transformations:
$$
\mu:id\to f^*f_!~~~~ \epsilon:f_!f^*\to id
$$
coming along with equivalences:
$(\epsilon\circ_0 f_!)\circ_1(f_!\circ_0 \mu) \sim id_{f_!}$ and $(f^*\circ_0 \epsilon)\circ_1 (\mu \circ_0 f^* )\sim id_{f^*}$.
When $f$ is proper, the morphism \eqref{eq:i push op} induces a morphism \index[notation]{(f9@$f_*:\gHom(I,\uni)\to \uHom(A,\uni)$}
$$f_*:\gHom(I,\uni)\to \uHom(A,\uni)$$
 and \eqref{eq:i pull unit an counit op} induces natural transformations:
 $$
\mu: id\to f_*f^*~~~~ \epsilon:f^*f_*\to id
$$
coming along with equivalences:
$(\epsilon\circ_0 f^*)\circ_1(f^*\circ_0 \mu) \sim id_{f^*}$ and $(f_*\circ_0 \epsilon)\circ_1 (\mu \circ_0 f_* )\sim id_{f_*}$.
Moreover, for every morphism $j:C\to D^\sharp$, \eqref{eq:commutative pull push} 
induces a canonical commutative square
\[\begin{tikzcd}
	{\gHom(D^\sharp\times I,\uni)} & {\uHom(D\times A ,\uni)} \\
	{\gHom(C^\sharp\times I,\uni)} & {\uHom(C\times A,\uni)}
	\arrow["{( id_{D^\sharp}\times f)_!}", from=1-1, to=1-2]
	\arrow["{(j\times id_{I})^*}"', from=1-1, to=2-1]
	\arrow["{( id_{C^\sharp}\times f)_!}"', from=2-1, to=2-2]
	\arrow["{(j\times id_{A^\sharp})^*}", from=1-2, to=2-2]
\end{tikzcd}\]
and when $f$ is proper, \eqref{eq:commutative pull push op} induces a canonical commutative square
\[\begin{tikzcd}
	{\gHom(D^\sharp\times I,\uni)} & {\uHom(D\times A ,\uni)} \\
	{\gHom(C^\sharp\times I,\uni)} & {\uHom(C\times A,\uni)}
	\arrow["{( id_{D^\sharp}\times f)_*}", from=1-1, to=1-2]
	\arrow["{(j\times id_{I})^*}"', from=1-1, to=2-1]
	\arrow["{( id_{C^\sharp}\times f)_*}"', from=2-1, to=2-2]
	\arrow["{(j\times id_{A^\sharp})^*}", from=1-2, to=2-2]
\end{tikzcd}\]

\p 	
We now turn our attention back to the proof of the theorem \ref{theo:lcartc et ghom}.
\begin{lemma}
\label{lemma:lax univalence 0}
Let $I$ be a marked $\io$-category and $b^\flat$ a globular sum. We denote by $\pi_b:I\times b^\flat\to I$ the canonical projection.
There is an equivalence of $\iun$-categories:
$$\LCart(I\times b^\flat)\sim \LCart(I)_{/\pi_b}$$
\end{lemma}
\begin{proof}
Remark first that we have an equivalence 
$$(\ocatm_{/I})_{/\pi_b}\sim \ocatm_{/I\times b}$$
Now suppose given a triangle 
\[\begin{tikzcd}
	& {I\times b^\flat} \\
	X & I
	\arrow[from=2-1, to=1-2]
	\arrow["{\pi_b}", from=1-2, to=2-2]
	\arrow[from=2-1, to=2-2]
\end{tikzcd}\]
As left cartesian fibrations are stable by composition and right cancellation, and as $\pi_b$ is a left cartesian fibration,  the diagonal morphism is a left cartesian fibration if and only if the horizontal morphism is. 

The $\iun$-categories $\LCart(I)_{/\pi_b}$ and  $\LCart(I\times b^\flat)$ then identity with the same full sub $\iun$-category of $(\ocatm_{/I})_{/\pi_b}\sim \ocatm_{/I\times b}$.
\end{proof}

\begin{lemma}
\label{lemma:lax univalence 1}
There is a family of cartesian squares
\[\begin{tikzcd}
	{\tau_0\LCart([a\times b,n]^\sharp)} & {\tau_0\LCart([a,n]^\sharp\times b^\flat)} \\
	{\prod_{k\leq n}\tau_0\LCart(\{k\})} & {\prod_{k\leq n}\tau_0\LCart(\{k\}\times b^\flat)}
	\arrow[from=2-1, to=2-2]
	\arrow[from=1-2, to=2-2]
	\arrow[from=1-1, to=2-1]
	\arrow[from=1-1, to=1-2]
\end{tikzcd}\]
natural in $a,b$ and $n$.
\end{lemma}
\begin{proof}
Remark first that the proposition
\ref{prop:crushing of Gray tensor is identitye marked case} provides cocartesian squares:
\[\begin{tikzcd}
	{\coprod_{k\leq n}(a^\flat\times b^\flat)\otimes\{k\}} & {(a^\flat\times b^\flat)\otimes[n]^\sharp} & {\coprod_{k\leq n}a^\flat\otimes\{k\}} & {a^\flat\otimes[n]^\sharp} \\
	{\coprod_{k\leq n}\{k\}} & {[a\times b,n]^\sharp} & {\coprod_{k\leq n}\{k\}} & {[a,n]^\sharp}
	\arrow[from=1-1, to=2-1]
	\arrow[from=2-1, to=2-2]
	\arrow[from=1-1, to=1-2]
	\arrow[from=1-2, to=2-2]
	\arrow["\lrcorner"{anchor=center, pos=0.125, rotate=180}, draw=none, from=2-2, to=1-1]
	\arrow[from=2-3, to=2-4]
	\arrow[from=1-4, to=2-4]
	\arrow[from=1-3, to=2-3]
	\arrow[from=1-3, to=1-4]
	\arrow["\lrcorner"{anchor=center, pos=0.125, rotate=180}, draw=none, from=2-4, to=1-3]
\end{tikzcd}\]
According to the corollary \ref{cor:fib over a colimit2}, and proposition \ref{prop:ring partial eq for I n}, and as $\Rb (\pi_{\uvar})_!:\LCart(1)\to \LCart(\uvar^\flat)$ factors through $\LCartc(\uvar^\flat)$, 
this induces cartesian squares:
\begin{equation}
\label{eq:lemma:lax univalence 2}
\begin{tikzcd}[column sep = 0.3cm]
	{\LCart([a\times b,n]^\sharp)} & {\Fun([n],\LCart(a^\flat\times b^{\flat}))} & {\LCart([a,n]^\sharp)} & {\Fun([n],\LCart( a^{\flat}))} \\
	{\prod_{k\leq n}\LCart(1)} & {\prod_{k\leq n}\LCart(a^\flat\times b^{\flat})} & {\prod_{k\leq n}\LCart(1)} & {\prod_{k\leq n}\LCart(a^\flat)}
	\arrow[from=1-3, to=2-3]
	\arrow[""{name=0, anchor=center, inner sep=0}, from=2-3, to=2-4]
	\arrow[from=1-3, to=1-4]
	\arrow[from=1-4, to=2-4]
	\arrow[from=1-1, to=1-2]
	\arrow[from=1-1, to=2-1]
	\arrow[""{name=1, anchor=center, inner sep=0}, from=2-1, to=2-2]
	\arrow[from=1-2, to=2-2]
	\arrow["\lrcorner"{anchor=center, pos=0.125}, draw=none, from=1-1, to=1]
	\arrow["\lrcorner"{anchor=center, pos=0.125}, draw=none, from=1-3, to=0]
\end{tikzcd}
\end{equation}
For a marked $\io$-category $I$, we denote $\pi_b:I\times b\to I $ the canonical projection. As the $\iun$-categorical slice and the maximal full sub $\infty$-groupoid preserve cartesian squares,
the second cartesian square induces a cartesian square
\[\begin{tikzcd}
	{\tau_0(\LCart( [a,n]^\sharp)_{/\pi_b})} & {\Hom([n],\LCart( a^{\flat})_{/\pi_b})} \\
	{\prod_{k\leq n}\tau_0(\LCart(1)_{/b^\flat})} & {\prod_{k\leq n}\tau_0(\LCart( a^{\flat})_{/\pi_b})}
	\arrow[from=1-1, to=2-1]
	\arrow[""{name=0, anchor=center, inner sep=0}, from=2-1, to=2-2]
	\arrow[from=1-1, to=1-2]
	\arrow[from=1-2, to=2-2]
	\arrow["\lrcorner"{anchor=center, pos=0.125}, draw=none, from=1-1, to=0]
\end{tikzcd}\]
and according to  lemma \ref{lemma:lax univalence 0}, this corresponds to  a cartesian square
\[\begin{tikzcd}
	{\tau_0\LCart([a,n]^\sharp\times b^\flat)} & {\Hom([n],\LCart( a^{\flat}\times b^\flat))} \\
	{\prod_{k\leq n}\tau_0\LCart(  b^\flat)} & {\prod_{k\leq n}\tau_0\LCart( a^{\flat}\times b^\flat)}
	\arrow[from=1-1, to=2-1]
	\arrow[""{name=0, anchor=center, inner sep=0}, from=2-1, to=2-2]
	\arrow[from=1-1, to=1-2]
	\arrow[from=1-2, to=2-2]
	\arrow["\lrcorner"{anchor=center, pos=0.125}, draw=none, from=1-1, to=0]
\end{tikzcd}\]
Combined with the first cartesian square of \eqref{eq:lemma:lax univalence 2}, this induces a commutative diagram
\[\begin{tikzcd}
	{\tau_0\LCart([a\times b, n]^\sharp)} & {\tau_0\LCart([a,n]^\sharp\times b^\flat)} & {\Hom([n],\LCart( a^{\flat}\times b^\flat))} \\
	{\prod_{k\leq n}\tau_0\LCart(  1)} & {\prod_{k\leq n}\tau_0\LCart(  b^\flat)} & {\prod_{k\leq n}\tau_0\LCart( a^{\flat}\times b^\flat)}
	\arrow[from=1-2, to=2-2]
	\arrow[""{name=0, anchor=center, inner sep=0}, from=2-2, to=2-3]
	\arrow[from=1-2, to=1-3]
	\arrow[from=1-3, to=2-3]
	\arrow[from=2-1, to=2-2]
	\arrow[from=1-1, to=2-1]
	\arrow[from=1-1, to=1-2]
	\arrow["\lrcorner"{anchor=center, pos=0.125}, draw=none, from=1-2, to=0]
\end{tikzcd}\]
where the right and the outer square are cartesian. By right cancellation, the left square is cartesian which concludes the proof.

\end{proof}

\begin{lemma}
\label{lemma:lax univalence 2.5}
Let $b$ be a globular sum and let  $F:I\to \ocat$ be a $\Wcard$-small diagram. 
The canonical morphism
$$\LCart(\colim_IF^\sharp\times b^\flat) \to \lim_I \LCart(F^\sharp\times b^\flat)$$
is an equivalence.
\end{lemma}
\begin{proof}
The corollary \ref{cor:fib over a colimit2} implies that the canonical morphism
$$\LCart(\colim_IF^\sharp) \to \lim_I \LCart(F^\sharp)$$
is an equivalence.
We recall that for any $\io$-category $A$, we denote by $\pi_b:A^\sharp\times b^\flat\to A^\sharp$ the canonical projection. As the $\iun$-categorical slice preserves limits, the previous equivalence induces an equivalence
$$\LCart(\colim_IF^\sharp)_{/\pi_b} \to \lim_I \LCart(F^\sharp)_{/\pi_b}.$$
The results then follows from lemma \ref{lemma:lax univalence 0}.
\end{proof}

\begin{lemma}
\label{lemma:lax univalence 2}
There is a family of cartesian squares
\[\begin{tikzcd}
	{\tau_0\LCart((I\ominus[b,n]^\sharp)^\sharp)} & {\tau_0\LCart((I\otimes[n]^\sharp)^\sharp\times b^\flat)} \\
	{\prod_{k\leq n}\tau_0\LCart(I^\sharp\otimes\{k\})} & {\prod_{k\leq n}\tau_0\LCart((I^\sharp\otimes\{k\})\times b^\flat)}
	\arrow[from=2-1, to=2-2]
	\arrow[from=1-2, to=2-2]
	\arrow[from=1-1, to=2-1]
	\arrow[from=1-1, to=1-2]
\end{tikzcd}\]
natural in $I,b$ and $n$.
\end{lemma}
\begin{proof}
By definition, $(I\ominus [b,n]^\sharp)^\sharp$ fits in the following cartesian square:
\[\begin{tikzcd}
	{\colim_{[a,m]\to \amalg_kI^\natural\otimes \{k\}}[a\times b,m]^\sharp} & {\colim_{[a,m]\to \amalg_kI^\natural\otimes \{k\}}[a,m]^\sharp} \\
	{\colim_{[a,m]\to (I\otimes[n]^\sharp)^\natural}[a\times b,m]^\sharp} & {(I\ominus [b,n]^\sharp)^\sharp}
	\arrow[from=1-1, to=2-1]
	\arrow[from=2-1, to=2-2]
	\arrow[from=1-2, to=2-2]
	\arrow[""{name=0, anchor=center, inner sep=0}, from=1-1, to=1-2]
	\arrow["\lrcorner"{anchor=center, pos=0.125, rotate=180}, draw=none, from=2-2, to=0]
\end{tikzcd}\]
Combined with corollary \ref{cor:fib over a colimit2}, this implies that the $\infty$-groupoid $\tau_0\LCart((I\ominus [b,n]^\sharp)^\sharp)$ fits in the cartesian square:
\[\begin{tikzcd}
	{\tau_0\LCartc((I\ominus [b,n]^\sharp)^\sharp)} & {\lim_{[a,m]\to \amalg_kI^\natural\otimes \{k\}}\tau_0\LCart([a,m]^\sharp)} \\
	{\lim_{[a,m]\to (I\otimes[n]^\sharp)^\natural}\tau_0\LCart([a\times b,m]^\sharp)} & {\lim_{[a,m]\to \amalg_kI^\natural\otimes \{k\}}\tau_0\LCart([a\times b,m]^\sharp)}
	\arrow[""{name=0, anchor=center, inner sep=0}, from=2-1, to=2-2]
	\arrow[from=1-1, to=2-1]
	\arrow[from=1-1, to=1-2]
	\arrow[from=1-2, to=2-2]
	\arrow["\lrcorner"{anchor=center, pos=0.125}, draw=none, from=1-1, to=0]
\end{tikzcd}\]
Applying lemma \ref{lemma:lax univalence 1}, and the fact that any morphism $\{l\}\to [a,m]\to (I\otimes[n]^\sharp)^\natural$ uniquely factors through $\coprod_{k}I^\natural \otimes\{k\}$,
we get a cartesian square
\[\begin{tikzcd}
	{\tau_0\LCartc((I\ominus [b,n]^\sharp)^\sharp)} & {\lim_{[a,m]\to \amalg_kI^\natural\otimes \{k\}}\tau_0\LCart([a,m]^\sharp)} \\
	{\lim_{[a,m]\to (I\otimes[n]^\sharp)^\natural}\tau_0\LCart([a,m]^\sharp\times b^\flat)} & {\lim_{[a,m]\to \amalg_kI^\natural\otimes \{k\}}\tau_0\LCart([a,m]^\sharp\times b^\flat)}
	\arrow[""{name=0, anchor=center, inner sep=0}, from=2-1, to=2-2]
	\arrow[from=1-1, to=2-1]
	\arrow[from=1-1, to=1-2]
	\arrow[from=1-2, to=2-2]
	\arrow["\lrcorner"{anchor=center, pos=0.125}, draw=none, from=1-1, to=0]
\end{tikzcd}\]
Eventually, the lemma \ref{lemma:lax univalence 2.5} induces equivalences
$$\begin{array}{rll}
\lim_{[a,m]\to (I\otimes[n]^\sharp)^\natural}\tau_0\LCart([a,m]^\sharp\times b^\flat)&\sim&\tau_0 \LCart((I\otimes[n]^\sharp)^\sharp\times b^\flat)\\
\lim_{[a,m]\to I^\natural}\tau_0\LCart([a,m]^\sharp\times b^\flat)&\sim&\tau_0 \LCart(I^\sharp\times b^\flat)\\
\lim_{[a,m]\to I^\natural}\tau_0\LCart([a,m]^\sharp)&\sim &\tau_0\LCart(I^\sharp)
\end{array}
$$
This concludes the proof.
\end{proof}

\begin{lemma}
\label{lemma:lax univalence 3}
There is a family of cartesian squares
\[\begin{tikzcd}
	{\Hom([n], \LCartc(I;b))} & {\tau_0\LCart((I\otimes[n]^\sharp)^\sharp\times b^\flat)} \\
	{\prod_{k\leq n}\tau_0\LCart(I^\sharp\otimes\{k\})} & {\prod_{k\leq n}\tau_0\LCart((I^\sharp\otimes\{k\})\times b^\flat)}
	\arrow[from=2-1, to=2-2]
	\arrow[from=1-2, to=2-2]
	\arrow[from=1-1, to=2-1]
	\arrow[from=1-1, to=1-2]
\end{tikzcd}\]
natural in $I,b$ and $n$.
\end{lemma}
\begin{proof}
By the construction of $\LCartc(I;b)$, we have a cartesian square 
\[\begin{tikzcd}
	{\Hom([n], \LCartc(I;b))} & {\Hom([n], \LCart(I\times b^\flat))} \\
	{\prod_{k\leq n} \tau_0\LCartc(I)} & {\prod_{k\leq n} \tau_0\LCart(I\times b^\flat)}
	\arrow[""{name=0, anchor=center, inner sep=0}, from=2-1, to=2-2]
	\arrow[from=1-2, to=2-2]
	\arrow[from=1-1, to=2-1]
	\arrow[from=1-1, to=1-2]
	\arrow["\lrcorner"{anchor=center, pos=0.125}, draw=none, from=1-1, to=0]
\end{tikzcd}\]
According to lemma \ref{lemma:lax univalence 0}, this induces a cartesian square 
\[\begin{tikzcd}
	{\Hom([n], \LCartc(I;b))} & {\Hom([n], \LCart(I)_{/\pi_b})} \\
	{\prod_{k\leq n} \tau_0\LCartc(I)} & {\prod_{k\leq n}\tau_0 (\LCart(I)_{/\pi_b})}
	\arrow[from=1-1, to=2-1]
	\arrow[from=1-1, to=1-2]
	\arrow[""{name=0, anchor=center, inner sep=0}, from=2-1, to=2-2]
	\arrow[from=1-2, to=2-2]
	\arrow["\lrcorner"{anchor=center, pos=0.125}, draw=none, from=1-1, to=0]
\end{tikzcd}\]
As the functor $\LCartc(I)\to \LCart(I)_{/\pi_b}$ factors through $\LCartc(I)_{/\pi_b}$, the proposition 
\ref{prop:ring partial eq for I n} induces a cartesian square
\[\begin{tikzcd}
	{\Hom([n], \LCartc(I;b))} & {\tau_0(\LCart((I\otimes[n]^\sharp)^\sharp)_{/\pi_b})} \\
	{\prod_{k\leq n}\tau_0\LCart(I^\sharp\otimes\{k\})} & {\prod_{k\leq n}\tau_0(\LCart(I^\sharp\otimes\{k\})_{/\pi_b})}
	\arrow[""{name=0, anchor=center, inner sep=0}, from=2-1, to=2-2]
	\arrow[from=1-2, to=2-2]
	\arrow[from=1-1, to=2-1]
	\arrow[from=1-1, to=1-2]
	\arrow["\lrcorner"{anchor=center, pos=0.125}, draw=none, from=1-1, to=0]
\end{tikzcd}\]
Eventually, a last application of lemma \ref{lemma:lax univalence 0} concludes the proof.
\end{proof}

\begin{lemma}
\label{lemma:lax univalence 4}
There is an equivalence 
$$\tau_0(\LCart((I\ominus [b,n]^\sharp)^\sharp) \sim \Hom([n],\LCartc(I;b))$$
natural in $I:\ocatm^{op}$, $b:\Theta^{op}$ and $[n]:\Delta^{op}$.
\end{lemma}
\begin{proof}
This is a direct consequence of lemmas \ref{lemma:lax univalence 2} and \ref{lemma:lax univalence 3}.
\end{proof}

\begin{proof}[Proof of theorem \ref{theo:lcartc et ghom}]
Lemma \ref{lemma:lax univalence 4} provides an natural equivalence 
$$\tau_0(\LCart((I\ominus [b,n]^\sharp)^\sharp) \sim \Hom([n],\LCartc(I;b))$$
that preserves smallness.
\end{proof}

\section{Yoneda lemma and applications}
\subsection{Yoneda lemma}
\p An $\io$-category $C$ is \wcnotion{locally $\U$-small}{locally $\U$-small $\io$-category} if for any pair of objects $x$ and $y$, $\hom_C(x,y)$ is $\U$-small. 

\begin{example}
\label{exe:Hom uni is lsm}
For all $\U$-small $\io$-category $A$, the corollary \ref{cor:lcar et hom} provides an equivalence
$$\hom_{\uHom(A,\uni)}(f,g)\sim \Map(\int_Af,\int_Ag)$$
As $\int_Af$ and $\int_Ag$ are $\U$-small left cartesian fibrations over a $\U$-small basis, their codomains are $\U$-small and $\Map(\int_Af,\int_Ag)$ is then $\U$-small. The $\io$-category $\uHom(A,\uni)$ is then locally $\U$-small.
\end{example}
We can generalize this example as follow:
\begin{prop}
\label{prop:when Hom A B is locally small}
Let $A$ be a $\U$-small $\io$-category, and $C$ is a locally $\U$-small $\io$-category. The $\io$-category $\uHom(A,C)$ is locally $\U$-small. 
\end{prop}
\begin{proof}
We have to check that for any globular sum $b$, the morphism 
$$\Hom(A\times [b,1],C)\to \Hom(A\times (\{0\}\amalg\{1\}),C)$$
has $\U$-small fibers. As $A$, seen as an $\infty$-presheaves on $\Theta$, is a $\U$-small colimit of representables, we can reduce to the case where $A\in \Theta$. As $C$ is local with respect to Segal extensions, and as the cartesian product conserves them, we can reduce to the case where $A$ is of shape $[a,1]$ for $a$ a globular sum. We now fix a morphism $f:[a,1]\times (\{0\}\amalg\{1\})\to C$. Eventually, using the canonical equivalence between $[a,1]\times [b,1]$ and the colimit of the span
$$[a,1]\vee[b,1]\leftarrow [a\times b,1]\to [b,1]\vee[a,1],$$
the $\infty$-groupoid $\Hom([a,1]\times [b,1],C)_f$ fits in a cartesian square:
\[\begin{tikzcd}
	{\Hom([a,1]\times [b,1],C)_f} & {\Hom(b,\hom(f(0,0),f(0,1)))} \\
	{\Hom(b,\hom(f(1,0),f(1,1)))} & {\Hom(a\times b,\hom(f(0,0),f(1,1)))}
	\arrow[from=2-1, to=2-2]
	\arrow[from=1-1, to=2-1]
	\arrow[from=1-1, to=1-2]
	\arrow[from=1-2, to=2-2]
\end{tikzcd}\]
As all these objects are $\U$-small by assumption, this concludes the proof.
\end{proof}

\p Let $C$ be an $\io$-category $C$. We define the simplicial object $S(\Noiun C)$ by the formula
$$S(\Noiun C)_n:= \coprod_{x_0,...,x_n:A_0} \coprod_{y_0,...,y_n:A_0}\hom_C(x_n,...,x_0,y_0,...,y_n)$$
This object comes along with a canonical projection 
\begin{equation}
\label{eq:definition of the hom0}
S(\Noiun C)\to \Noiun{C^t}\times \Noiun C.
\end{equation}
which obviously is a left fibration. As this construction if functorial, it induces a functor:
$$\begin{array}{rcl}
\ocat &\to &\Arr(\ouncat)\\
C&\mapsto & (S(\Noiun C)\to \Noiun{C^t}\times \Noiun C)
\end{array}$$

\p Through this section, we fix a locally $\U$-small $\io$-category $C$. 
The left fibration \eqref{eq:definition of the hom0} is then $\U$-small, and by definition of $\uni$, this induces a morphism
\begin{equation}
\label{eq:definition of the hom}
\hom_C(\uvar,\uvar):C^t\times C\to \uni
\end{equation}
Using the canonical equivalence
$$\Fb h^{C^t\times C}_{(x,y)}\sim \Fb h^{C^t}_{x}\times \Fb h^{C}_{y}$$
the corresponding left cartesian fibration is then the colimit of a simplicial object whose value on $n$ is given by:
$$\coprod_{x_0,...,x_n}\coprod_{y_0,...,y_n} \Fb h^{C^t}_{x_n}\times \hom_{C}(x_n,...,x_0,y_0,...,y_n)^\flat\times \Fb h^{C}_{y_n}$$
\p We define the \wcnotion{$\io$-category of $\io$-presheaves on $C$}{presheaves@$\io$-presheaves} \sym{(c@$\w{C}$}:$$\w{C}:=\uHom(C^t,\uni ).$$ This $\io$-category is locally $\U$-small according to proposition \ref{prop:when Hom A B is locally small}. The \notion{Yoneda embedding}\sym{(y@$y_{\uvar}$} $y: C\to \w{C}$ is the functor induced by the hom functor \eqref{eq:definition of the hom} by currying.

An $\io$-presheaves is \wcnotion{representable}{representable $\io$-presheaves} if it is in the image of $y$.

\p We recall that for a subset $S$ of $\Nb^*$, and an object $X$ of $\ouncat$, we denote by $X^S$ the simplicial object $n\mapsto X_n^S$. We also set $\Sigma S:=\{i+1,i\in S\}$. We then have equivalences
$$(\Noiun C)^S\sim \Noiun (C^{\Sigma C}) ~~~\mbox{and}~~~ S(\Noiun C))^S\sim S(\Noiun (C^{\Sigma C}))$$
For an object $X$ of $\ouncat$, we denote by $X_{op}$ the simplicial object $n\mapsto X_{n^{op}}$. We then have equivalences 
$$(\Noiun C)_{op}\sim \Noiun (C^{t}) ~~~\mbox{and}~~~ S(\Noiun C))_{op}\sim S(\Noiun (C^t))$$
Using the dualities defined in paragraph \ref{par:dualities fo omega}, we then have commutative diagrams
\[\begin{tikzcd}
	{(C^{t\Sigma S}\times C^{\Sigma S})^{\Sigma S}} && {\uni^{\Sigma S}} && {C\times C^t} \\
	{C^t\times C} && \uni && {C^t\times C} & \uni
	\arrow["{\hom_{C}}"', from=2-1, to=2-3]
	\arrow["{\hom^{\Sigma S}_{C^{\Sigma S}}}", from=1-1, to=1-3]
	\arrow["\sim"', from=1-1, to=2-1]
	\arrow["{(\uvar)^S}", from=1-3, to=2-3]
	\arrow["{\hom_C}"', from=2-5, to=2-6]
	\arrow["\tw"', from=1-5, to=2-5]
	\arrow["{\hom_{C^t}}", from=1-5, to=2-6]
\end{tikzcd}\]
where $\tw$ is the functor exchanging the argument. This two diagram corresponds to the natural transformations
$$\hom_{C^{\Sigma S}}(x,y)\sim \hom_{C}(x,y)^S~~~\mbox{and}~~~\hom_{C^t}(x,y)\sim \hom_{C}(y,x).$$

In combining the two previous diagrams, we get a commutative square:
\[\begin{tikzcd}
	{(C^{\circ t}\times C^{\circ})^{\circ t}} && {\uni^{{\circ t}}} \\
	{C^t\times C} && \uni
	\arrow["{\hom_{C}}"', from=2-1, to=2-3]
	\arrow["{\hom^{\circ t}_{C^\circ}}", from=1-1, to=1-3]
	\arrow["\tw"', from=1-1, to=2-1]
	\arrow["{(\uvar)^\circ}", from=1-3, to=2-3]
\end{tikzcd}\]
corresponding to the natural transformation
$$\hom_{C^\circ}(x,y)\sim \hom_{C}(y,x)^\circ.$$
\begin{prop}
\label{prop:Yoneda is Fb} 
Let $A$ be an locally $\U$-small $\io$-category.
Let $a$ be an object of $A$.
There is an equivalence
$$\int_{A}\hom_A(a,\uvar)\to \Fb h^{A}_a$$
Taking the fibers on $a$, the induced morphism $\hom_A(a,a)\to \hom_A(a,a)$ preserves the identity.
 In particular, for any object $c$ of $C$, this induces an equivalence
$$\int_{C^t}y_c\to \Fb h^{C^t}_c$$
\end{prop}
\begin{proof}
By construction, $\int_{A}\hom_A(a,\uvar)$ is the Grothendieck construction of the left fibration:
\[\begin{tikzcd}[column sep =0.4cm]
	\cdots & {\coprod_{x_0,x_1,x_2:A_0}\hom_{A}(a,x_0,x_1,x_2)} & {\coprod_{x_0,x_1:A_0}\hom_{A}(a,x_0,x_1)} & {\coprod_{x_0:A_0}\hom_{A}(a,x_0)} \\
	\cdots & {\coprod_{x_0,x_1,x_2:A_0}\hom_{A}(x_0,x_1,x_2)} & {\coprod_{x_0,x_1:A_0}\hom_{A}(x_0,x_1)} & {\coprod_{x_0:A_0}1}
	\arrow[shift right=4, from=2-2, to=2-3]
	\arrow[shift left=4, from=2-2, to=2-3]
	\arrow[from=2-2, to=2-3]
	\arrow[shift left=2, from=2-3, to=2-2]
	\arrow[shift right=2, from=2-3, to=2-2]
	\arrow[shift left=2, from=2-3, to=2-4]
	\arrow[shift right=2, from=2-3, to=2-4]
	\arrow[from=2-4, to=2-3]
	\arrow[from=1-3, to=2-3]
	\arrow[from=1-4, to=2-4]
	\arrow[shift left=2, from=1-3, to=1-4]
	\arrow[from=1-4, to=1-3]
	\arrow[shift right=2, from=1-3, to=1-4]
	\arrow[shift right=4, from=1-2, to=1-3]
	\arrow[from=1-2, to=1-3]
	\arrow[shift left=4, from=1-2, to=1-3]
	\arrow[shift right=2, from=1-3, to=1-2]
	\arrow[shift left=2, from=1-3, to=1-2]
	\arrow[from=1-2, to=2-2]
\end{tikzcd}\]
The results then follow from the corollary \ref{cor:antecedant of slice}.
\end{proof}

\p The identity $\w{C}\to \w{C}$ induces by currying a canonical morphism \sym{(ev@$\ev$}
$$\ev: C^t\times \w{C}\to \uni$$
called the \textit{evaluation functor}. Given an object $c$ of $C$ and $f$ of $\widehat{C}$, we then have $\ev(c,f)\sim f(c)$ and so 
$$(c,\{f\})^*\int_{C\times \w{C}}\ev\sim c^*\int_{C^t}f$$

Let $E$ be an object of $\ocatm_{/\w{C}^\sharp}$ corresponding to a morphism $g:X\to \w{C}^\sharp$.
We denote $\iota:X\to X^\sharp$ the canonical inclusion.
 A morphism 
 $$E\to \int_{\w{C}}\ev(c,\uvar)$$
  corresponds by adjunction to a morphism 
\begin{equation}
\label{eq:evaluation 1}
id_X\to g^*\int_{\w{C}}\ev(c,\uvar)
\end{equation}
However, we have a canonical commutative square
\[\begin{tikzcd}
	{X^\natural} & {\w{C}} \\
	{X^\natural\times C^t} & \uni
	\arrow["{\ev(c,\uvar)}", from=1-2, to=2-2]
	\arrow["{g^\natural}", from=1-1, to=1-2]
	\arrow["{\tilde{g}}"', from=2-1, to=2-2]
	\arrow["{X^\natural\times \{c\}}"', from=1-1, to=2-1]
\end{tikzcd}\]
where $\tilde{g}$ is the morphism defined by currying from $g^\natural:X^\natural\to \w{C}$. Using the naturality of the Grothendieck construction, the previous commutative square implies that the data of \eqref{eq:evaluation 1} corresponds to a morphism
$$id_X\to (\iota\times \{c\})^* \int_{X^\natural\times C^t}\tilde{g}$$
an by adjunction, to a morphism 
$$
X\times \Fb h_c^{C^t}\to (\iota\times (C^t)^\sharp)^*\int_{X^\natural\times C^t}\tilde{g}
$$
We then have constructed an equivalence 
\begin{equation}
\label{eq:evaluation 3}
\Hom(E, \int_{\w{C}}\ev(c,\uvar))\sim \Hom(X\times \Fb h_c^{C^t}, (\iota\times (C^t)^\sharp)^*\int_{X^\natural\times C^t}\tilde{g})
\end{equation}
natural in $E$.

Remark furthermore that if $E$ is $h^{\w{C}}_f$ for $f$ an object of $\w{C}$, the equivalence corresponds to the canonical equivalences
$$\begin{array}{rcl}
\Hom_{\ocatm_{/\w{C}^\sharp}}(h^{\w{C}}_f, \int_{\w{C}}\ev(c,\uvar))&\sim &\Hom_{\ocatm}(1,\{f\}^*\int_{\w{C}}\ev(c,\uvar))\\
&\sim& \Hom_{\ocatm}(1,c^*\int_{C^t}f)\\
&\sim& \Hom_{\ocatm_{/(C^t)^\sharp}}( \Fb h_c^{C^t},\int_{C^t}f)
\end{array}$$

\begin{prop}
\label{prop:un fonctorial Yoneda}
For any object $c$ of $C$, there exists a unique pair consisting of a morphism
$$\int_{\w{C}} \hom_{\w{C}}(y_c,\uvar)\to \int_{\w{C}}\ev(c,\uvar)$$
and a commutative square of shape
\begin{equation}
\label{eq:prop:un fonctorial Yoneda}
\begin{tikzcd}
	{\{id_{y_c}\}} & {\hom_{\widehat{C}}(y_c,y_c)\sim \{y_c\}^* \int_{\w{C}}\hom_{\w{C}}(y_c,\uvar)} \\
	{\{id_c\}} & {\hom_C(c,c)\sim \{y_c\}^* \int_{\w{C}}\ev(c,\uvar)}
	\arrow[Rightarrow, no head, from=1-1, to=2-1]
	\arrow[from=1-1, to=1-2]
	\arrow[from=2-1, to=2-2]
	\arrow[from=1-2, to=2-2]
\end{tikzcd}
\end{equation}
Moreover, this comparison morphism is an equivalence.
\end{prop}
\begin{proof}
The proposition \ref{prop:Yoneda is Fb} implies that $\int_{\w{C}}\hom_{\w{C}}(y_c,\uvar)$ is equivalent to $\Fb h^{\w{C}}_{y_c}$. A natural transformation $\int_{\w{C}}\hom_{\widehat{C}}(y_c,\uvar)\to g$ then corresponds to a morphism 
$\Fb h^{\w{C}}_{y_c}\to \int_{\w{C}}g$ and is then uniquely characterized by the value on $\{id_{y_c}\}$, which proves the uniqueness.

It remains to show the existence.
Let $E$ be an object of $\ocatm_{/\w{C}^\sharp}$ corresponding to a morphism $g:X\to \w{C}^\sharp$ . We denote $\iota:X\to X^\sharp$ the canonical inclusion. According to proposition \ref{prop:Yoneda is Fb}, a morphism $E\to \int_{\w{C}}\hom_{\widehat{C}}(y_c,\uvar)$ corresponds to a morphism $E\to \Fb h^{\w{C}}_{y_c}$, and so to a triangle
\[\begin{tikzcd}
	& {\w{C}^\sharp_{y_c/}} \\
	X & {\w{C}^\sharp}
	\arrow[from=2-1, to=1-2]
	\arrow[from=1-2, to=2-2]
	\arrow[from=2-1, to=2-2]
\end{tikzcd}\]
According to corollary \ref{cor:parametric univalence tranche}, this data is equivalent to the one of 
$$X\times \int_{C^t}y_c\to (\iota\times (C^t)^\sharp)^*\int_{X^\natural\times C^t}\tilde{g}$$
where $\tilde{g}$ is the morphism defined by currying from $g^\natural:X^\natural\to \w{C}$. The proposition \ref{prop:Yoneda is Fb}, and 
 the equivalence \eqref{eq:evaluation 3} then induce an equivalence:
 $$\Hom_{\ocatm_{/\w{C}^\sharp}}(E, \int_{\w{C}}\hom_{\w{C}}(y_c,\uvar))\sim \Hom_{\ocatm_{/\w{C}^\sharp}}(E,\int_{\w{C}}\ev(c,\uvar))$$
Walking through all the equivalences, we can easily see that when $E$ is $h^{\w{C}}_{y_c}$, this equivalence sends the upper horizontal morphism of \eqref{eq:prop:un fonctorial Yoneda} to the lower horizontal one.
We then have an equivalence
$$\int_{\w{C}} \hom_{\w{C}}(y_c,\uvar)\sim \int_{\w{C}}\ev(c,\uvar).$$
that comes along with the desired commutative square.
\end{proof}

\begin{theorem}
\label{theo:Yoneda ff}
The Yoneda embedding is fully faithful. As a consequence, every morphism $A\to \w{C}$ that is pointwise representable uniquely factors through the Yoneda embedding.
\end{theorem}
\begin{proof}
We fix an object $c$ of $C$.
By construction of the Yoneda embedding and the evaluation, we have an equivalence $\ev(c,y_d)\sim \hom_C(c,d)$ natural in $d:C$.  Applying the Grothendieck deconstruction to the equivalence given in proposition \ref{prop:un fonctorial Yoneda}, we then get an equivalence 
$$\eta_d:\hom_{\widehat{C}}(y_c,y_d)\sim \hom_C(c,d)$$
natural in $d:C$ and that preserves the identity. 

We also have a transformation 
$$\hom_{y}(c,d):\hom_{C}(c,d)\to \hom_{\w{C}}(y_c,y_d)$$
natural in $d:C$, that also preserves the identity. 
We then have constructed a natural transformation
$$\psi_{c,d}:\hom_C(c,d)\xrightarrow{\hom_{y}(c,d)} \hom_{\w{C}}(y_{c},y_{d})\xrightarrow{\eta_d}\hom_C(c,d)$$
natural in $d:C$, and which preserves the identity. As the Grothendieck construction of $\hom_{C}(c,\uvar)$ is $\Fb h^C_c$ according to proposition \ref{prop:Yoneda is Fb}, the morphism
$$\int_C\psi_{c}:\Fb h^C_c\to \Fb h^C_c$$ 
is characterized by its value on $\{id_c\}$ and is then the identity. This implies that $\psi_c$ is the identity.
By two out of three, this implies that $\hom_{y}(c,\uvar)$ also is an equivalence, which concludes the proof.
\end{proof}

\begin{lemma}
\label{lemma:a particular Kan extension}
Let $i:C\to D$ be a morphism between locally $\U$-small $\io$-categories.
The canonical morphism of $\LCart((C^t)^\sharp\times D^\sharp)$:
$$\Lb(id\times i)_!\int_{C^t\times C}\hom_{C} \to \int_{C^t\times D}\hom_D(i(\uvar),\uvar)$$
is an equivalence.
\end{lemma}
\begin{proof}
Let $c,d$ be any objects of respectively $C$ and $D$.  We then have equivalences
$$\begin{array}{rcll}
\Rb (c,d)^* \Lb (id\times i)_!\int_{ C^t\times C^t}\hom_{ C}&\sim&\Rb \{d\}^*  \Lb i_! \Rb (id\times \{c\})^*\int_{ C^t\times C}\hom_{C}& (\ref{prop:BC condition})\\
&\sim& \Rb \{d\}^* \Lb i_!\Fb h^{C}_{c} &(\ref{prop:Yoneda is Fb})\\
&\sim & \Rb \{d\}^* \Fb h^{D}_{i(c)}\\
&\sim & \hom_D(i(c),d)^\flat
\end{array}$$
Remark that we also have an equivalence 
$$\Rb (c,d)^*\int_{C^t\times D}\hom_D(i(\uvar),\uvar)\sim \hom_D(i(c),d)^\flat$$
and that the induced endomorphism of $ \hom_D(i(c),d)^\flat$ is the identity. As equivalences are detected pointwise, this concludes the proof.
\end{proof}

\begin{theorem}
\label{theo:Yoneda lemma}
Let $C$ be a locally $\U$-small $\io$-category. There is an equivalence between the functor
$$\hom_{\w{C}}(y_{\uvar},\uvar):C^t\times \w{C}\to \uni$$ and
the functor 
$$\ev:C^t\times \w{C}\to \uni.$$
Restricted to $\w{C}\times \{c\}$ for $c$ an object of $C$, this equivalence is the one of proposition \ref{prop:un fonctorial Yoneda}.
\end{theorem}
\begin{proof}
The triangle
\[\begin{tikzcd}
	C \\
	{\w{C}} & {\w{C}}
	\arrow["y", from=1-1, to=2-2]
	\arrow["y"', from=1-1, to=2-1]
	\arrow["id"', from=2-1, to=2-2]
\end{tikzcd}\] 
induces by adjunction a triangle
\[\begin{tikzcd}
	{C^t\times C} \\
	{C^t\times\w{C}} & {\w{C}}
	\arrow["\hom", from=1-1, to=2-2]
	\arrow["{id\times y}"', from=1-1, to=2-1]
	\arrow["\ev"', from=2-1, to=2-2]
\end{tikzcd}\]
This corresponds to an equivalence
$$\int_{C^t\times C}\hom_{C}(\uvar,\uvar)\to (id\times y)^*\int_{C^t\times \w{C}}\ev.$$
By naturality, for any object $c$ of $C$, the pullback of the previous equivalence along $C^t\times\{c\}$ is the identity. In particular, the induced morphism $\hom(c,c)\to \hom(c,c)$ between the fibers over $(c,c)$ preserves the object $\{id_c\}$. According to lemma \ref{lemma:a particular Kan extension}, the previous equivalence induces a morphism
\begin{equation}
\label{eq:proof of yoneda}
\int_{C^t\times \w{C}}\hom_{\w{C}}(y_{\uvar},\uvar)\to \int_{C^t\times \w{C}}\ev.
\end{equation}
that comes along, by construction, with a commutative square
\[\begin{tikzcd}
	{\{id_{y_c}\}} & {\hom_{\widehat{C}}(y_c,y_c)\sim \{y_c\}^* \int_{\w{C}}\hom_{\w{C}}(y_c,\uvar)} \\
	{\{id_c\}} & {\hom_C(c,c)\sim \{y_c\}^* \int_{\w{C}}\ev(c,\uvar)}
	\arrow[Rightarrow, no head, from=1-1, to=2-1]
	\arrow[from=1-1, to=1-2]
	\arrow[from=2-1, to=2-2]
	\arrow[from=1-2, to=2-2]
\end{tikzcd}\]
for any object $c$ of $C$. The restriction of the morphism \eqref{eq:proof of yoneda} to $\w{C}\times \{c\}$ is then equivalent to the natural transformation given in proposition \ref{prop:un fonctorial Yoneda}, and is an equivalence. As equivalences between left cartesian fibrations are detected on fibers, this concludes the proof.
\end{proof}

\begin{cor}
\label{cor: universal fibration 2}
The universal left cartesian fibration with $\U$-small fibers is the canonical projection 
$\uni^\sharp_{1/}\to \uni^\sharp$.
\end{cor}
\begin{proof}
The corollary \ref{cor: universal fibration 2} implies that universal left cartesian fibration with $\U$-small fibers is $\int_{\uni}id$. The Yoneda lemma implies that this left cartesian fibration is equivalent to $\int_{\uni}\hom_{\uni}(1,\uvar)$. Eventually, the proposition \ref{prop:Yoneda is Fb} states that this left cartesian fibration is equivalent to $\uni^\sharp_{1/}\to \uni^\sharp$.
\end{proof}

\subsection{Adjoint functors}

\begin{definition}
Let $C$ and $D$ be two locally $\U$-small $\io$-categories and $u:C\to D,$ $v:D\to C$ two functors. An \notion{adjoint structure} for the pair $(u,v)$ is the data of a invertible natural transformation
$$\phi: \hom_D(u(\uvar),\uvar)\sim \hom_C(\uvar,v(\uvar))$$
In this case, $u$ is a \wcnotion{left adjoint}{left or right adjoint} of $v$ and $v$ is a \textit{right adjoint} of $u$.
\end{definition}

\begin{prop}
\label{prop:adj if slice as terminal}
Let $u:C\to D$ be a functor between locally $\U$-small $\io$-categories. 
For $b$ an object of $D$, we define $(C^t)^\sharp_{b/}$ and $C^\sharp_{b/}$ as the marked $\io$-categories fitting in the cartesian squares:
\[\begin{tikzcd}
	{(C^t)^\sharp_{/b}} & {(D^t)^\sharp_{b/}} & {C^\sharp_{b/}} & {D^\sharp_{b/}} \\
	{(C^t)^\sharp} & {(D^t)^\sharp} & {C^\sharp} & {D^\sharp}
	\arrow[from=1-3, to=2-3]
	\arrow["u"', from=2-3, to=2-4]
	\arrow[from=1-4, to=2-4]
	\arrow["\lrcorner"{anchor=center, pos=0.125}, draw=none, from=1-3, to=2-4]
	\arrow[from=1-3, to=1-4]
	\arrow[from=1-1, to=2-1]
	\arrow[from=1-1, to=1-2]
	\arrow[from=1-2, to=2-2]
	\arrow["{u^t}"', from=2-1, to=2-2]
	\arrow["\lrcorner"{anchor=center, pos=0.125}, draw=none, from=1-1, to=2-2]
\end{tikzcd}\]
The following are equivalent.
\begin{enumerate}
\item The functor $u$ admits a right adjoint.
\item For any element $b$ of $D$, the marked $\io$-category $(C^t)^\sharp_{b/}$ 
admits an initial element.
\end{enumerate}
Similarly, the following are equivalent.
\begin{enumerate}
\item[(1)'] The functor $u$ admits a left adjoint.
\item[(2)'] For any element $b$ of $D$, $C^\sharp_{b/}$ admits an initial element.
\end{enumerate}
\end{prop}
\begin{proof}
Suppose first that $(1)$ is fulfilled, and let $v:D\to C$ be a functor and $\phi:\hom(u(a),b)\sim\hom(a,v(b))$ be an invertible natural transformation. In particular, this implies that we have an equivalence
$$\int_{C^t\times D}\hom_D(u(a),b)\sim \int_{C^t\times D}\hom_C(a,v(b))$$
Pulling back along $C^t\times \{b\}$ where $b$ is any object of $D$, we get an equivalence between 
$(C^t)^\sharp_{b/}$ and $(C^t)^\sharp_{v(b)/}$. As this last marked $\io$-category admits an initial element, given by the image $id_{v(b)}$, this shows the implication $(1)\Rightarrow (2)$.

For the converse, suppose that $u$ fulfills condition $(2)$. The functor $\hom_D(u(\uvar),\uvar)):C^t\times D\to \uni$ corresponds by adjonction to a functor $v':D\to \w{C}$. By assumption, for any $b\in B$, $v'(b)$ is a representable $\io$-presheaf. The Yoneda lemma then implies that $v$ factors through a functor $v:D\to C$. Using once again Yoneda lemma, we have a sequence of equivalences
$$\hom_D(u(a),b)\sim v'(b)(a)\sim \hom_C(b,v(a)).$$

The equivalence between $(1)'$ and $(2)'$ is proved similarly.
\end{proof}

\p
 Let $(u,v,\phi)$ be an adjoint structure. There is a transformation 
$$\hom_C(a,a')\to \hom_D(u(a),u(a'))\to \hom_C(a,vu(a'))$$
natural in $a:C^t$, $a':C$. According to the Yoneda lemma, this corresponds to a natural transformation $\mu: id_C \to vu$, called the \wcnotion{unit of the adjunction}{unit and counit of an adjunction}. Similarly, the natural transformation:
$$\hom_D(b,b')\to \hom_C(v(b),v(b'))\to \hom_C(uv(b),b')$$
induces a natural transformation $\epsilon:uv\to id_D$, called \textit{the counit of the adjunction.}

\begin{lemma}
\label{lemma:naturality of hom apply to natural transformation}
Suppose we have two morphisms $f:C\to D$ and $g:C\to D$ between locally $\U$-small $\io$-categories as well as a natural transformation $\nu:f\to g$. This induces a commutative diagram 
\[\begin{tikzcd}
	{\hom_C(a,b)} & {\hom_D(g(a),g(b))} \\
	{\hom_D(f(a),f(b))} & {\hom_D(f(a),g(b))}
	\arrow[from=1-1, to=2-1]
	\arrow[from=1-1, to=1-2]
	\arrow["{(\nu_{a})_!}", from=1-2, to=2-2]
	\arrow["{(\nu_{b})_!}"', from=2-1, to=2-2]
\end{tikzcd}\]
natural in $a:C^t, b:C$.
\end{lemma}
\begin{proof}
Remark that $\hom_{[1]}(0,1)\sim \hom_{[1]}(1,1)\sim \hom_{[1]}(0,0)=1$.
Using the naturality of the hom functor, we have a commutative diagram 
\[\begin{tikzcd}
	{\hom_C(a,b)\times \hom_{[1]}(0,0)} & {\hom_D(f(a),f(b))} \\
	{\hom_C(a,b)\times \hom_{[1]}(0,1)} & {\hom_D(f(a),g(b))} \\
	{\hom_C(a,b)\times \hom_{[1]}(1,1)} & {\hom_D(g(a),g(b))}
	\arrow["{(\nu_{a})_!}"', from=3-2, to=2-2]
	\arrow["{(\nu_{b})_!}", from=1-2, to=2-2]
	\arrow["\sim"', from=1-1, to=2-1]
	\arrow["\sim", from=3-1, to=2-1]
	\arrow[from=2-1, to=2-2]
	\arrow[from=3-1, to=3-2]
	\arrow[from=1-1, to=1-2]
\end{tikzcd}\]
where the left-hand vertical morphisms are equivalences.
\end{proof}

\begin{prop}
\label{prop:If unit and counit so adjunction}
Let $u:C\to D$ and $v:D\to C$ be two functors between locally $\U$-small $\io$-categories, $\mu:id_C\to vu$, $\epsilon:uv\to id_D$ be two natural transformations coming along with equivalences 
$$(\epsilon\circ_0 u)~\circ_1~(u\circ_0 \mu) \sim id_{u}~~~~ (v\circ_0 \epsilon)~\circ_1(\mu \circ_0 v )\sim id_{v}.$$
If we set $\phi$ as the composite 
$$\hom_D(u(a),b)\to \hom_C(vu(a),v(b))\xrightarrow{(\mu_a)_!} \hom_C(a,v(b)),$$
the triple $(u,v,\phi)$ is an adjoint structure.
Moreover, the unit of the adjunction is $\mu$ and its counit is $\epsilon$.
\end{prop}
\begin{proof}
Suppose we have such data. We define $\psi$ as the composite
$$\hom_C(a,v(b))\to \hom_D(u(a),uv(b))\xrightarrow{(\epsilon_a)_!} \hom_D(u(a),b)
$$
natural in $a:C^t$ and $b:D$. We then have to show that these two morphisms are inverse of each other. For this consider the diagram
\[\begin{tikzcd}
	{\hom_D(u(a),b)} & {\hom_C(vu(a),v(b))} & {\hom_C(a,v(b))} \\
	& {\hom_D(uvu(a),uv(b))} & {\hom_D(u(a),uv(b))} \\
	{\hom_D(u(a),b)} & {\hom_D(uvu(a),b)} & {\hom_D(u(a),b)}
	\arrow["{(\mu_a)_!}", from=1-2, to=1-3]
	\arrow["{(u(\mu_{a}))_!}"', from=3-2, to=3-3]
	\arrow[from=1-1, to=1-2]
	\arrow["{(u(\mu_{a}))_!}", from=2-2, to=2-3]
	\arrow[from=1-2, to=2-2]
	\arrow[from=1-3, to=2-3]
	\arrow["{(\epsilon_b)_!}", from=2-2, to=3-2]
	\arrow["{(\epsilon_b)_!}", from=2-3, to=3-3]
	\arrow["{(\epsilon_{u(a)})_!}"', from=3-1, to=3-2]
	\arrow[Rightarrow, no head, from=1-1, to=3-1]
\end{tikzcd}\]
which is commutative thanks to lemma \ref{lemma:naturality of hom apply to natural transformation} and the naturality of the $\hom$.
By hypothesis, the left lower horizontal morphism is equivalent to the identity.
The outer square then defines an equivalence between $\psi\circ \phi$ and the identity. We show similarly $\phi\circ \psi\sim id$.

For the second assertion, remark that the composition 
$$\hom_C(a,a')\to \hom_D(u(a),u(a'))\xrightarrow{\phi(a,u(a')} \hom_C(a,vu(a'))$$
is by definition equivalent to 
$$\hom_C(a,a')\to \hom_D(vu(a),vu(a'))\xrightarrow{(\mu_a)_!} \hom_C(a,vu(a'))$$
and according to the lemma \ref{lemma:naturality of hom apply to natural transformation}, to
$$\hom_C(a,a')\xrightarrow{(\mu_{a'})_!} \hom_C(a,vu(a'))$$
The Yoneda lemma then implies that the unit of the adjunction is $\mu$. We proceed similarly for the counit.
\end{proof}

\p In paragraph \ref{par: i pull and push beetwe io category of morphism}, for a morphism $i:I\to A^\sharp$ between marked $\io$-categories, we define the morphism $i_!:\gHom(I,\uni)\to \uHom(A,\uni)$ and when $i$ is proper, a morphism $i_*:\gHom(I,\uni)\to \uHom(A,\uni)$.

\begin{cor}
\label{cor:naive kan extension}
Let $i:I\to A^\sharp$ be a morphism between $\U$-small $\io$-category. The functor $i^*:\uHom(A,\uni)\to \gHom(I,\uni)$ has a left adjoint given by the functor $i_!:\gHom(I,\uni)\to \uHom(A,\uni)$. If $i$ is proper, the functor $i^*$ has a right adjoint $i_*:\gHom(I,\uni)\to \uHom(A,\uni)$.
\end{cor} 
\begin{proof}
With the characterization of adjunction given in proposition \ref{prop:If unit and counit so adjunction}, this is a direct consequence of natural transformations given in paragraph \ref{par: i pull and push beetwe io category of morphism}.
\end{proof}

\p
We conclude this section with the proof of the following theorem.
\begin{theorem}
\label{theo:two adjunction definition}
Let $u:C\to D$ and $v:D\to C$ be two functors between locally $\U$-small $\io$-categories. 
The two following are equivalent. 
\begin{enumerate}
\item The pair $(u,v)$ admits an adjoint structure.
\item Their exists a pair of natural transformations $\mu: id_C \to vu$ and $\epsilon:uv\to id_D$ together with equivalences $(\epsilon\circ_0 u)\circ_1(u\circ_0 \mu) \sim id_{u}$ and $(v\circ_0 \epsilon)\circ_1 (\mu \circ_0 v )\sim id_{v}$.
\end{enumerate}
\end{theorem}
We directly give a corollary:

\begin{cor}
\label{cor:adjonction induced adjunction by post composition}
Let $(u:B\to C,v:C\to B)$ be an adjoint pair between locally $\U$-small $\io$-categories and $D$ a locally $\U$-small $\io$-category.
If $C$ and $B$ are $\U$-small, this induces an adjunction
\[\begin{tikzcd}
	{\uvar\circ u:\uHom(C,D)} & {\uHom(B,D):\uvar\circ v}
	\arrow[""{name=0, anchor=center, inner sep=0}, shift left=2, from=1-1, to=1-2]
	\arrow[""{name=1, anchor=center, inner sep=0}, shift left=2, from=1-2, to=1-1]
	\arrow["\dashv"{anchor=center, rotate=-90}, draw=none, from=0, to=1]
\end{tikzcd}\]
and if $D$ is $\U$-small an adjunction
\[\begin{tikzcd}
	{u\circ \uvar:\uHom(D,C)} & {\uHom(D,B):v\circ\uvar}
	\arrow[""{name=0, anchor=center, inner sep=0}, shift left=2, from=1-1, to=1-2]
	\arrow[""{name=1, anchor=center, inner sep=0}, shift left=2, from=1-2, to=1-1]
	\arrow["\dashv"{anchor=center, rotate=-90}, draw=none, from=0, to=1]
\end{tikzcd}\]
\end{cor}
\begin{proof}
Let $\mu$ and $\epsilon$ be the unit and the counit of the adjunction. We define $\mu': \uHom(C,D)\times [1]\to \uHom(C,D)$, induced by currying the morphism 
$$ \uHom(C,D)\times [1]\times C\xrightarrow{id\times \mu} \uHom(C,D)\times C\xrightarrow{\ev} D$$
and $\epsilon':\uHom(B,D)\times [1]\to \uHom(B,D)$ by currying the morphism 
$$ \uHom(B,D)\times [1]\times B\xrightarrow{id\times \epsilon} \uHom(B,D)\times B\xrightarrow{\ev} B$$
We can easily check that $\mu'$ and $\epsilon'$ fulfill the triangle identities, and theorem \ref{theo:two adjunction definition} then implies that the pair $(\uvar\circ u,\uvar\circ v)$ admits an adjunction structure. We proceed similarly for the second assertion. 
\end{proof}

\p For the remaining, we fix two functors $u:C\to D$ and $v:D\to C$ between $\io$-categories as well as an equivalence
$$\phi:\hom_{D}(u(a),b)\sim \hom_C(a,v(b))$$
natural in $a:C^t$ and $b:D$.

\begin{lemma}
\label{lemma: if ajdunction then unit 1}
The natural transformation
$$\hom_D(u(a),b)\to \hom_C(vu(a),v(b))\xrightarrow{(\mu_a)_!}\hom_C(a,v(b))$$
is equivalent to $\phi:\hom_D(u(a),b)\to \hom_D(a,v(b))$.
Similarly, the natural transformation

$$\hom_C(a,v(b))\to \hom_D(u(a),uv(b))\xrightarrow{(\epsilon_b)_!}\hom_D(u(a),b)$$
is equivalent to $\phi^{-1}:\hom_D(a,v(b))\to \hom_D(u(a),b)$.
\end{lemma}
\begin{proof}
Remark that we have a commutative diagram
\[\begin{tikzcd}
	{\hom_C(a,b)} & {\hom_D(u(a),u(b))} & {\hom_C(vu(a),vu(b))} \\
	{\hom_D(u(a),u(b))} && {\hom_D(a,vu(b))}
	\arrow["{(\mu_a)_!}", from=1-3, to=2-3]
	\arrow[from=1-2, to=1-3]
	\arrow[from=1-1, to=1-2]
	\arrow["{(\mu_b)_!}"{description}, from=1-1, to=2-3]
	\arrow["\phi"{description}, from=2-1, to=2-3]
	\arrow[from=1-1, to=2-1]
\end{tikzcd}\]
The commutativity of the left triangle comes from the definition of $\mu$, and the second one, from the lemma \ref{lemma:naturality of hom apply to natural transformation}, applied to $\mu$.
This then induces a commutative square
\[\begin{tikzcd}
	{\int_{C^t\times C}\hom_C} && {\Rb(id\times u)^*\int_{C^t\times D}\hom_D(u(\uvar),\uvar)} \\
	{\Rb(id\times u)^*\int_{C^t\times D}\hom_D(u(\uvar),\uvar)} && {\Rb(id\times u)^*\int_{C^t\times D}\hom_C(\uvar,v(\uvar))}
	\arrow[from=1-1, to=1-3]
	\arrow["{\Rb (id\times u)^*\int_{C^t\times D}(\mu_a)_!\circ \hom_v}", from=1-3, to=2-3]
	\arrow[from=1-1, to=2-1]
	\arrow["{\Rb(id\times u)^*\int_{C^t\times D}\phi}"', from=2-1, to=2-3]
\end{tikzcd}\]
By adjunction, this corresponds to a commutative square 
\[\begin{tikzcd}
	{\Lb(id\times u)_!\int_{C^t\times C}\hom_C} && {\int_{C^t\times D}\hom_D(u(\uvar),\uvar)} \\
	{\int_{C^t\times D}\hom_D(u(\uvar),\uvar)} && {\int_{C^t\times D}\hom_C(\uvar,v(\uvar))}
	\arrow[from=1-1, to=1-3]
	\arrow["{\int_{C^t\times D}(\mu_a)_!\circ \hom_v}", from=1-3, to=2-3]
	\arrow["{\int_{C^t\times D}\phi}"', from=2-1, to=2-3]
	\arrow[from=1-1, to=2-1]
\end{tikzcd}\]
However, the top horizontal and left vertical morphisms are equivalences according to lemma \ref{lemma:a particular Kan extension}.
We then have an equivalence $$ \int_{C^t\times D}(\mu_a)_!\circ \hom_v\sim \int_{C^t\times D}\phi$$
which implies the result.
The other assertion is shown similarly.
\end{proof}

\begin{lemma}
\label{lemma:if ajdunction then unit 2}
There are equivalences
$(\epsilon\circ_0 u)\circ_1(u\circ_0 \mu) \sim id_{u}$ and $(v\circ_0 \epsilon)\circ_1 (\mu \circ_0 v )\sim id_{v}$.
\end{lemma}
\begin{proof}
As the proof of the two assertions are similar, we will only show the second one.
To demonstrate this, it is enough to show that the induced natural transformation 
\begin{equation}
\label{eq:sequence ajdunction}
\hom_C(a,v(b))\xrightarrow{(\mu_{v(b)})_!} \hom_C(a,vuv(b)) \xrightarrow{(v(\epsilon_{(b)}))_!} \hom_C(a,v(b))\xrightarrow{\phi^{-1}}\hom_D(u(a),b)
\end{equation}
is equivalent to $\phi^{-1}$.
By definition, the first morphism is equivalent to the composition
$$\hom_C(a,v(b))\to \hom_D(u(a),uv(b))\xrightarrow{\phi} \hom_C(a,vuv(b))$$
and as $\phi^{-1}$ is a natural transformation, we have a commutative square
\[\begin{tikzcd}
	{\hom_C(a,vuv(b))} & {\hom_C(a,v(b))} \\
	{\hom_C(u(a),uv(b))} & {\hom_D(u(a),b)}
	\arrow["{(v(\epsilon_{b}))_!}", from=1-1, to=1-2]
	\arrow["{\phi^{-1}}", from=1-2, to=2-2]
	\arrow["{\phi^{-1}}"', from=1-1, to=2-1]
	\arrow["{(\epsilon_{b})_{!}}"', from=2-1, to=2-2]
\end{tikzcd}\]
The composite of the sequence \eqref{eq:sequence ajdunction} is then equivalent to 
$$\hom_C(a,v(b))\to \hom_D(u(a),uv(b))\xrightarrow{(\epsilon_{b})_{!}} \hom_D(u(a),b)$$
which is itself equivalent to $\phi^{-1}$ according to lemma \ref{lemma: if ajdunction then unit 1}.
\end{proof}

\begin{proof}[Proof of theorem \ref{theo:two adjunction definition}]
The implication $(1)\Rightarrow (2)$ is given by proposition \ref{prop:If unit and counit so adjunction} and the contraposed by the lemma \ref{lemma:if ajdunction then unit 2}.
\end{proof}

\subsection{Lax colimits}
\p According to corollary \ref{cor:naive kan extension}, a morphism $f:A\to B$ between $\U$-small $\io$-categories induces an adjoint pair:
\begin{equation}
\label{eq:adjoint presheaves}
\begin{tikzcd}
	{f_!:\w{A}} & {\w{B}:f^*}
	\arrow[""{name=0, anchor=center, inner sep=0}, shift left=2, from=1-1, to=1-2]
	\arrow[""{name=1, anchor=center, inner sep=0}, shift left=2, from=1-2, to=1-1]
	\arrow["\dashv"{anchor=center, rotate=-90}, draw=none, from=0, to=1]
\end{tikzcd}
\end{equation}

\begin{prop}
\label{prop:left extension commutes with Yoneda}
Let $f:A\to B$ be a morphism between $\U$-small $\io$-categories.
There is an equivalence 
$$f_!(y_a)\sim y_{f(a)}$$
natural in $a:A$.
\end{prop}
\begin{proof}
Consider the sequence of equivalences
$$\begin{array}{rcll}
\hom_{\w{B}}(f_!(y_a), g)&\sim &\hom_{\w{A}}(y_a, f^*(g))& \eqref{eq:adjoint presheaves}\\
&\sim &\ev(a,f^*(g))&(\mbox{Yoneda lemma})\\
&\sim &\ev(f(a),g)&(\mbox{naturality of $\ev$})\\
&\sim & \hom_{\w{B}}(y_{f(a)}, g)&(\mbox{Yoneda lemma})\\
\end{array}$$
Eventually, the Yoneda lemma applied to $(\w{B})^t$ concludes the proof.
\end{proof}
\p For $I$ a marked $\io$-category and $A$ an $\io$-category,
we recall that $\gHom(I,A)$ is the $\io$-category whose value on a globular sum $a$ is given by:
$$\Hom(a,\gHom(I,A)):=\Hom(I\ominus a^\sharp, A^\sharp)$$
\begin{remark}
Let $B$ be an $\io$-category.
We want to give an intuition of the object $\gHom(B^\flat,\omega)$. The objects of this $\io$-category are the functors $I\to \omega$. The $1$-cells are the lax transformations $F\Rightarrow G$. For $n>1$, the $n$-cells are the lax transformations $F^{\times \Db_{n-1}}\Rightarrow G$ where $F^{\times \Db_{n-1}}:I\to \omega$ is the functor that sends $i$ onto $F(i)\times \Db_{n-1}$.
This last assertion is a consequence of the equivalence 
$$\tau_0(\LCart((I\ominus [b,n]^\sharp)^\sharp) \sim \Hom([n],\LCartc(I;b))$$
provided by the lemma \ref{lemma:lax univalence 4}.
\end{remark}
\begin{prop}
If $I$ is $\U$-small and $A$ is locally $\U$-small, the $\io$-category $\gHom(I,A)$ is locally $\U$-small.
\end{prop}
\begin{proof}
We have to check that for any globular sum $b$, the morphism 
$$\Hom(I\ominus [b,1]^\sharp,A^\sharp)\to \Hom(I\ominus (\{0\}\amalg\{1\}),A^\sharp)$$
has $\U$-small fibers. As $I$, seen as an $\infty$-presheaves on $t\Theta$, is a $\U$-small colimit of representables, we can reduce to the case where $I\in t\Theta$. As $A$ is local with respect to Segal extensions, and as $\ominus$ conserves them, we can reduce to the case where $I$ is of shape $[1]^\sharp$ or $[a,1]$ for $a$ a in $t\Theta$. If $I$ is $[1]^\sharp$, according to the second assertion of proposition \ref{prop:associativity of ominus}, $[1]^\sharp\ominus [b,1]^\sharp$ is equivalent to $([1]\times [b,1])^\sharp$ and the result follows from proposition \ref{prop:when Hom A B is locally small}.

For the second case, we fix a morphism $f:[a,1]\times (\{0\}\amalg\{1\})\to A$. 
Using the canonical equivalence between $[a,1]\ominus [b,1]^\sharp$ and the colimit of the diagram \eqref{eq:formula for the ominus marked case},
the $\infty$-groupoid $\Hom(I\ominus [b,1]^\sharp,A^\sharp)_f$ is the limit of the diagram:
\[\begin{tikzcd}[column sep=0.1cm]
	{\Hom(a^\natural,\hom(f(1,0),f(1,1)))} & {\Hom((a\otimes\{0\}^\sharp)^\natural\times b,\hom(f(0,0),f(1,1))} \\
	{\Hom((a\otimes[1]^\sharp)^\natural\times b,\hom(f(0,0),f(1,1)))} \\
	{\Hom(a^\natural,\hom(f(0,0),f(1,0)))} & {\Hom((a\otimes\{0\}^\sharp)^\natural\times b,\hom(f(0,0),f(1,1))))}
	\arrow[from=2-1, to=3-2]
	\arrow[from=2-1, to=1-2]
	\arrow[from=3-1, to=3-2]
	\arrow[from=1-1, to=1-2]
\end{tikzcd}\]
As all these objects are $\U$-small by assumption, this concludes the proof.
\end{proof}

\p
Let $I$ be a $\U$-small marked $\io$-category, $A$ a locally $\U$-small $\io$-category $A$ and $F:I\to A^\sharp$ a functor. 
A \notion{lax colimit} of $F$ is an object \wcnotation{$\laxcolim_IF$}{(laxcolim@$\laxcolim$} of $A$ together with an equivalence
$$\hom_{A}(\laxcolim_IF, b)\sim \hom_{\gHom(I,A)}(F,\cst b)$$
natural in $b:A$. 
Conversely, a \notion{lax limit} of $F$ is an object \wcnotation{$\laxlim_IF$}{(laxlim@$\laxlim$} of $A$ together with an equivalence
$$\hom_{A}(b,\laxlim_IF)\sim \hom_{\gHom(I,A)}(\cst b,F)$$
natural in $b:A$. 
We say that a locally $\U$-small $\io$-category $C$ is \notion{lax $\U$-complete} (resp. \notion{lax $\U$-cocomplete}), if for any $\U$-small marked $\io$-category $I$ and any functor $F:I\to C$, $F$ admits limits (resp. colimits).

Using proposition \ref{prop:adj if slice as terminal}, $C$ is lax $\U$-complete (resp. lax $\U$-cocomplete) if and only if for any $\U$-small marked $\io$-category $I$, the functor $\cst:C\to \gHom(I,C)$ admits a right adjoint (resp. a left adjoint).

The proposition \ref{prop:ominus and opmarked}  induces an equivalence
$$\gHom(I,A)^{\circ}\sim\gHom(I^{\circ},A^{\circ})$$
As a consequence, a functor $F:I\to A^\sharp$ admits a lax colimit if and only if $F^\circ:I^\circ\to (A^\circ)^\sharp$ admits a lax limit. If $F$ admits such lax colimit, the lax limit of $F^\circ$ is the image by the canonical equivalence $A_0\sim A^\circ_0$ of the lax colimit of $F$.

\begin{remark}
We want to give an intuition of the lax colimits.
Let $I$ be a $\U$-small marked $\io$-category, $A$ a locally $\U$-small $\io$-category $A$ and $F:I\to A^\sharp$ a functor admitting a lax colimit $\laxcolim_IF$. For any $1$-cell $i:a\to b$ in $I$, we have a triangle
\[\begin{tikzcd}
	{} & {F(b)} \\
	{F(a)} & {\laxcolim_IF}
	\arrow["{F(i)}", curve={height=-30pt}, from=2-1, to=1-2]
	\arrow[from=2-1, to=2-2]
	\arrow[shorten <=8pt, shorten >=8pt, Rightarrow, from=1-2, to=2-1]
	\arrow[draw=none, from=1-1, to=2-1]
	\arrow[from=1-2, to=2-2]
\end{tikzcd}\]
If $i$ is marked, the preceding $2$-cell is an equivalence. 
For any $2$-cell $u:i\to j$, we have a diagram
\[\begin{tikzcd}
	& {F(b)} & {} & {F(b)} \\
	{F(a)} & {\laxcolim_IF} & {F(a)} & {\laxcolim_IF}
	\arrow[""{name=0, anchor=center, inner sep=0}, "{F(i)}"{description}, from=2-1, to=1-2]
	\arrow[""{name=1, anchor=center, inner sep=0}, from=2-1, to=2-2]
	\arrow[from=1-2, to=2-2]
	\arrow[""{name=2, anchor=center, inner sep=0}, from=1-2, to=2-2]
	\arrow[""{name=3, anchor=center, inner sep=0}, "{F(j)}", curve={height=-30pt}, from=2-1, to=1-2]
	\arrow["{F(j)}", curve={height=-30pt}, from=2-3, to=1-4]
	\arrow[from=2-3, to=2-4]
	\arrow[from=1-4, to=2-4]
	\arrow[shorten <=8pt, shorten >=8pt, Rightarrow, from=1-4, to=2-3]
	\arrow[""{name=4, anchor=center, inner sep=0}, draw=none, from=1-3, to=2-3]
	\arrow[shift right=2, shorten <=12pt, shorten >=12pt, Rightarrow, from=2, to=1]
	\arrow[shorten <=4pt, shorten >=4pt, Rightarrow, from=3, to=0]
	\arrow[shift left=0.7, shorten <=14pt, shorten >=16pt, no head, from=2, to=4]
	\arrow[shorten <=14pt, shorten >=14pt, from=2, to=4]
	\arrow[shift right=0.7, shorten <=14pt, shorten >=16pt, no head, from=2, to=4]
\end{tikzcd}\]
If $u$ is marked, the $3$-cell is an equivalence. We can continue these diagrams in higher dimensions and we have
similar assertions for lax limits.

The marking therefore allows us to play on the "lax character" of the universal property that the lax colimit must verify.
\end{remark}

\p Let $A$ be a $\U$-small $\io$-category and $I$ a $\U$-small marked $\io$-category.
Recall that $\gHom(I,\w{A})$ is equivalent to $\gHom(I\times (A^t)^\sharp,\uni)$. Let $t$ be the canonical morphism $I\to 1$. 
As $t$ is smooth, corollary \ref{cor:naive kan extension} induces adjunctions
\begin{equation}
\label{eq:expliciti colimit for presheaves}
\begin{tikzcd}
	{\gHom(I,\w{A})} && {\w{A}}
	\arrow[""{name=0, anchor=center, inner sep=0}, "{(t\times id_A)^*}"{description}, from=1-3, to=1-1]
	\arrow[""{name=1, anchor=center, inner sep=0}, "{(t\times id_A)_!}", shift left=5, from=1-1, to=1-3]
	\arrow[""{name=2, anchor=center, inner sep=0}, "{(t\times id_A)_*}"', shift right=5, from=1-1, to=1-3]
	\arrow["\dashv"{anchor=center, rotate=-90}, draw=none, from=0, to=2]
	\arrow["\dashv"{anchor=center, rotate=-90}, draw=none, from=1, to=0]
\end{tikzcd}
\end{equation}
and $\w{A}$ is then lax $\U$-complete and lax $\U$-cocomplete. For a morphism $g:I\to \w{A}^\sharp$ corresponding to an object $E$ of $\LCartc(I\times (A^t)^\sharp)$, we then have 
\begin{equation}
\label{eq:expliciti colimit for presheaves2}
\int_{A^t}\laxcolim_I g \sim \Lb (t\times id_{(A^t)^\sharp})_!E~~~\int_{A^t}\laxlim_I g \sim \Rb (t\times id_{(A^t)^\sharp})_*E
\end{equation}
Let $i:B^\sharp\to A^\sharp$ be any morphism. The squares given in paragraph \ref{par: i pull and push beetwe io category of morphism} induce the commutative squares
\[\begin{tikzcd}
	{\gHom(I,\w{A})} & {\w{A}} & {\gHom(I,\w{A})} \\
	{\gHom(I,\w{B})} & {\w{B}} & {\gHom(I,\w{B})}
	\arrow["{\laxcolim_I}", from=1-1, to=1-2]
	\arrow["{\laxcolim_I}"', from=2-1, to=2-2]
	\arrow["{(id_I\times i^t)^*}"', from=1-1, to=2-1]
	\arrow["{i^*}", from=1-2, to=2-2]
	\arrow["{(id_I\times i^t)^*}", from=1-3, to=2-3]
	\arrow["{\laxlim_I}"', from=1-3, to=1-2]
	\arrow["{\laxlim_I}", from=2-3, to=2-2]
\end{tikzcd}\]
In particular, choosing $B:=1$, this implies that the lax colimits and limits in $\io$-presheaves commute with evaluation.

The next proposition implies that limits and colimits in $\io$-presheaves can be detected as the level of the sub maximal $\iun$-categories of $\gHom(I,\w{A})$ and $\w{A}$. We recall that the sub maximal $\iun$-categories of $\gHom(I,\w{A})$, denoted by $\tau_1\gHom(I,\w{A})$, is the adjoint of the functor $[n]\mapsto I\otimes[n]^\sharp$.
\begin{prop}
Let $I$ be a $\U$-small marked $\io$-category, and $g:I\to A^\sharp$ a functor. An object $f$ of $\w{A}$ has a structure of colimit of the functor $g$ if and only if there exists an equivalence
$$\Hom_{\tau_1\w{A}}(f,h)\sim \Hom_{\tau_1\gHom(I,\w{A})}(F,\cst h)$$
natural in $h:(\tau^1 \w{A})^{op}$.
Similarly, the object $f$ has a structure of limit of the functor $F$ if and only if there exists an equivalence
$$\Hom_{\tau_1\w{A}}(h,f)\sim \Hom_{\tau_1\gHom(I,\w{A})}(\cst h,F)$$
natural in $h:(\tau^1 \w{A})^{op}$.
\end{prop}
\begin{proof}
We recall that theorem \ref{theo:lcartc et ghom} and corollary \ref{cor:lcar et hom} induces equivalences
$$\tau_1\w{A}\sim \LCart_{\U}((A^t)^\sharp)~~~ \tau_1\gHom(I,A)\sim \LCartc_{\U}(I\otimes (A^t)^\sharp)$$
and that we have a triplet of adjoints
\[\begin{tikzcd}
	{\LCartc_{\U}(I\otimes (A^t)^\sharp)} && {\LCart_{\U}((A^t)^\sharp)}
	\arrow[""{name=0, anchor=center, inner sep=0}, "{(t\times id_{A^t})^*}"{description}, from=1-3, to=1-1]
	\arrow[""{name=1, anchor=center, inner sep=0}, "{\Lb (t\times id_{A^t})_!}", shift left=5, from=1-1, to=1-3]
	\arrow[""{name=2, anchor=center, inner sep=0}, "{\Rb(t\times id_{A^t})_*}"', shift right=5, from=1-1, to=1-3]
	\arrow["\dashv"{anchor=center, rotate=-90}, draw=none, from=1, to=0]
	\arrow["\dashv"{anchor=center, rotate=-90}, draw=none, from=0, to=2]
\end{tikzcd}\]
which is the image by $\tau_1$ of the triplet of adjoints \eqref{eq:expliciti colimit for presheaves}.
The first hypothesis induces an equivalence $$\int_{A^t}f \sim \Lb (t\times id_{(A^t)^\sharp})_!E$$ and the second one an equivalence 
$$\int_{A^t}f \sim \Rb (t\times id_{(A^t)^\sharp})_*E$$ where $E$ denote the object of $\LCartc(I\times (A^t)^\sharp)$ corresponding to $g$. The assertions then follow from the equivalences \eqref{eq:expliciti colimit for presheaves2}.
\end{proof}

\begin{example}
We recall that we denote by $\bot:\Arr(\ocatm)\to \ocat$ the functor sending a left fibration $Y\to A$ to the localization of $Y$ by marked cells. This functors sends initial and final morphisms to equivalences. If $E$ is a left cartesian fibration over a marked $\io$-category $I$, we then have $\bot E\sim \Lb t_! E$ where $t$ denotes the morphism $I\to 1$.

 Let $g:I\to \uni$ be a diagram. We denote $\iota:I\to I^\sharp$ the canonical inclusion.
By the explicit expression of lax colimit given above, we then have an equivalence 
$$\laxcolim_I g \sim \bot \iota^*\int_{I^{\natural}}g^\natural.$$
If $I$ is equivalent to $I^\flat$, we then have
$$\laxcolim_I g \sim \dom(\int_{I^{\natural}}g^\natural)^\natural.$$
 \begin{enumerate}
 \item[$-$]
Let $c:1\to \uni$ be a morphism corresponding to an $\io$-category $C$. For any $\io$-category $A$, we then have 
$$\laxcolim_{A^\sharp} \cst_c\sim (\tau_0 A)\times C~~~~~\laxcolim_{A^\flat} \cst_c\sim A\times C$$

 \item[$-$] Let $f:[b,1]\to \uni$ be a morphism corresponding to a morphism $A\times b\to B$. We then have 
 $$\laxcolim_{[b,1]^\flat} f\sim A\times (1\costar b)\coprod_{A\times b}B$$
 \end{enumerate}
\end{example}

\begin{example}
 
Using the explicit expression of lax limit given above, we have an equivalence
$$\laxlim_I g \sim \Map(id_I,\iota^*\int_{I^{\natural}}g^\natural)$$
 \begin{enumerate}
 \item[$-$]
Let $c:1\to \uni$ be a morphism corresponding to an $\io$-category $C$. For any $\io$-category $A$, we then have 
$$\laxlim_{A^\sharp} \cst_c\sim \uHom(\tau_0 A, C)~~~~~\laxlim_{A^\flat} \cst_c\sim \uHom(A,C)$$

 \item[$-$] Let $f:[b,1]\to \uni$ be a morphism corresponding to a morphism $A\times b\to B$. Let $c$ be a globular sum. According to corollary \ref{cor:univalence tranche}, a morphism $id_{[b,1]^\flat}\times c^\flat\to \iota^*\int_{[b,1]^{\flat}}g^\natural$ corresponds to a diagram 
\[\begin{tikzcd}
	1 \\
	& {1\costar [b,1]} & \uni \\
	{[b,1]}
	\arrow["f"', curve={height=12pt}, from=3-1, to=2-3]
	\arrow["{\{b\}}", curve={height=-12pt}, from=1-1, to=2-3]
	\arrow[from=1-1, to=2-2]
	\arrow[from=3-1, to=2-2]
	\arrow[from=2-2, to=2-3]
\end{tikzcd}\]
and according to proposition \ref{prop:lfib and W 3}, to a diagram	
\[\begin{tikzcd}
	{c\times b\otimes\{0\}} && {A\times b} \\
	& {c\times( b\otimes[1])} && B \\
	{c\times b\otimes\{1\}} && c
	\arrow[from=1-3, to=2-4]
	\arrow[from=2-2, to=2-4]
	\arrow[from=1-1, to=2-2]
	\arrow[from=3-1, to=3-3]
	\arrow[from=3-1, to=2-2]
	\arrow[from=3-3, to=2-4]
	\arrow[from=1-1, to=1-3]
\end{tikzcd}\]
where the upper horizontal morphism is of shape $g\times b$. We then have 
 $$\laxlim_{[b,1]^\flat} f\sim A\prod_{\Hom(b,B)} \Hom(b\star 1,B).
 $$
 \end{enumerate}
 \end{example}

\begin{prop}
\label{prop:colimit restricted to final}
Let $i:I\to J$ be a morphism between $\U$-small marked $\io$-categories, $A$ a $\U$-small $\io$-category and $f:J\to \w{A}^\sharp$ a morphism.  If $i$ is final, then the canonical morphism
$$\laxcolim_{I}f\circ i\to \laxcolim_{J}f$$
is an equivalence.

If $i$ is initial, then the canonical morphism
$$\laxlim_{J}f\to \laxlim_{I}f\circ i$$
is an equivalence. 
\end{prop}
\begin{proof}
We only show the first assertion as the second follows by duality.
As equivalences are detected pointwise and as the lax colimit commutes with evaluation, one can suppose that $A:=1$, and so $\w{A}:=\uni$. 
We denote by $E$ (resp. $H$) the object of $\LCart(J)$ (resp.$\LCart(I)$) corresponding to $f$ (resp.$f\circ i$) and $X\to I$ (resp. $Y\to J$) the corresponding left cartesian fibration. We then have a cartesian square
\[\begin{tikzcd}
	Y & X \\
	J & I
	\arrow["i"', from=2-1, to=2-2]
	\arrow["{i'}", from=1-1, to=1-2]
	\arrow["H", from=1-2, to=2-2]
	\arrow["E"', from=1-1, to=2-1]
\end{tikzcd}\]
As classified left cartesian fibrations are proper, $i'$ is final. We recall that we denote by $\bot:\ocatm\to \ocat$ the functor sending a marked $\io$-category to its localization by marked cells, and that $\bot$ sends final morphism to equivalences. If we denote by $t$ the two morphisms $I\to 1$ and $J\to 1$, we then have a sequence of equivalences:
$$\laxcolim_{I}f\circ i\sim \Lb t_! H \sim \bot Y\sim \bot X\sim \Lb t_! E \sim \laxcolim_{J}f$$
\end{proof}

\begin{lemma}
\label{lemma:tehcnical colimit}
Let $F:I\to A^\sharp$ be a morphism between $\U$-small marked $\io$-categories. 
There is an equivalence 
$$ \hom_{\gHom(I,A)}(\cst_a,F)\sim \laxlim_I\hom_A(a,F)$$
natural in $F: \gHom(I,A)$ and $a:A^t$.
\end{lemma}
\begin{proof}
Remark that there is a commutative square:
\[\begin{tikzcd}
	A & {\gHom(I,A)} \\
	{\w{A}} & {\gHom(I,\w{A})}
	\arrow["\cst", from=1-1, to=1-2]
	\arrow["\cst"', from=2-1, to=2-2]
	\arrow["y"', from=1-1, to=2-1]
	\arrow["{\gHom(I,y)}", from=1-2, to=2-2]
\end{tikzcd}\]
and that the right vertical morphism is fully faithful as $y$ is.
We then have a sequence of equivalences
$$
\begin{array}{rcll}
\hom_{\gHom(I,A)}(\cst_a,F)&\sim& \hom_{\gHom(I,\w{A})}(\cst_{y_a},\gHom(I,y)(F))\\
&\sim& \hom_{\w{A}}(y_a,\laxlim_{I}\gHom(I,y)(F)))\\
&\sim& (\laxlim_{I}\gHom(I,y)(F))(a)&\mbox{(Yoneda lemma)}\\
&\sim& \laxlim_I\hom_A(a,F(i))
\end{array}$$
where the last one comes from the fact that evaluations commute with lax limits.
\end{proof}

\begin{prop}
\label{prop:other characthereisation of limits}
Consider a functor $F:I\to A^\sharp$ between $\U$-small marked $\io$-categories. Then $F$ admits a lax limit if and only if there exists an object $l$ and an equivalence
$$\hom_A(a,l)\sim \laxlim_I\hom_A(a,F(i))$$
natural in $a:A^t$. If such an object exists, then $l$ is the lax limit of $F$.
Dually, $F$ admits a lax colimit if and only if there exists an object $c$ and an equivalence
$$\hom_A(c,a)\sim \laxlim_I\hom_A(F(i),a)$$
natural in $a:A$. If such an object exists, then $c$ is the lax colimit of $F$.
\end{prop}
\begin{proof}
The first assertion  is a direct application of lemma \ref{lemma:tehcnical colimit}. The second one follows by duality, using the fact that the functor 
$$(\uvar)^\circ:\uni\to \uni^{t\circ}$$
preserves limits as it is an equivalence.
\end{proof}

\begin{cor}
\label{cor:left adjoint preserves limits}
Left adjoints between $\U$-small $\io$-categories preserve colimits and right adjoints preserve limits.
\end{cor}
\begin{proof}
Let $u:C\to D$ and $v:D\to C$ be two adjoint functors. Let $F:I\to C^\sharp$ be a functor admitting a colimit.
We then have a sequence of equivalences
$$
\begin{array}{rclc}
\hom_C(u(\laxcolim_IF),b)&\sim &\hom_D(\laxcolim_IF,v(b))\\
&\sim & \laxlim_I\hom_D(F,v(b))&(\ref{prop:other characthereisation of limits})\\
&\sim &\laxlim_I\hom_C(u(F),b)\\
&\sim &\hom_C(\laxlim_Iu(F),b)&(\ref{prop:other characthereisation of limits})
\end{array}
$$
natural in $b:D$. The result then follows from the Yoneda lemma applied to $C^t$. The other assertion is proved similarly.
\end{proof}

\begin{cor}
Consider a functor $F:I\to A^\sharp$ between $\U$-small marked $\io$-categories. Then $F$ admits a limit if and only if there exists an object $l$ and an equivalence
$$\hom_A(a,l)\sim \hom_{\gHom(I,\uni)}(\cst 1,\hom_A(a,F(\uvar))$$
natural in $a:A^t$. If such an object exists, then $l$ is a limit of $F$.
Dually, $F$ admits a colimit if and only if there exists an object $c$ and an equivalence
$$\hom_A(c,a)\sim \hom_{\gHom(I,\uni)}( \cst 1,\hom_A(F(\uvar),a))$$
natural in $a:A$. If such an object exists, then $c$ is the colimit of $F$.
\end{cor}
\begin{proof}
Remark that we have an equivalence
$$\hom_{\gHom(I,\uni)}(\cst 1,\hom_A(a,F(\uvar)))\sim \hom_{\uni}(1,\laxlim_I\hom_A(a,F(\uvar))$$
Eventually, the Yoneda lemma implies that 
$$\hom_{\uni}(1,\laxlim_I\hom_A(a,F(\uvar))\sim\laxlim_I\hom_A(a,F(\uvar))$$
The result then follows from proposition \ref{prop:other characthereisation of limits}.
\end{proof}

\begin{remark}
The characterization of the lax colimit and limit given in previous corollary is the generalization to the case $\io$ of the characterization of lax colimit and limit for $(\infty,2)$-categories given in \cite[corollary 5.1.7]{Gagna_fibrations_and_lax_limit_infini_2_categories}.
\end{remark}

\begin{prop}
\label{prop:limit and final}
Let $i:I\to J$ and $F:J\to A^\sharp$ be two morphisms between $\U$-small marked $\io$-categories. If $i$ is initial, and $F$ admits a lax limit, the functor $F\circ i$ also admits a lax limit, and the canonical morphism:
$$\laxlim_{I}F\to \laxlim_{J}F\circ i$$
is an equivalence.
Dually, if $i$ is final, and $F$ admits a lax colimit, the functor $F\circ i$ also admits a lax colimit, and the canonical morphism:
$$ \laxcolim_{J}F\circ i \to \laxlim_{I}F$$
is an equivalence.
\end{prop}
\begin{proof}
The first assertion is a direct application of the characterization of limits given in proposition \ref{prop:other characthereisation of limits} and of proposition \ref{prop:colimit restricted to final}.
The second assertion follows by duality.
\end{proof}

The proof of the following lemma is a direct adaptation of the one of proposition 5.1 of \cite{Gepner_Lax_colimits_and_free_fibration}.
\begin{prop}
\label{prop:explicit hom between morphism}
Let $f:A\to B$ be any morphism between $\U$-small $\io$-categories..
There is an equivalence
$$\hom_{\uHom(A,B)}(f,g)\sim \laxlim_{a\to b: S(A)}\hom_{B}(f(a),g(a))$$
natural in $f$ and $g$.
\end{prop}
\begin{proof}
Remark first that the left term is in fact equivalent to 
$$\laxlim_{a\to b: S(A)}h^*\hom_{B}(\uvar,\uvar)$$
where $h$ is the left cartesian fibration $S(A)\to A^t\times A$ corresponding to $\hom_A: A^t\times A\to \uni$. We then have 
$$
\begin{array}{rcll}
\laxlim_{a\to b: S(A)}\hom_{B}(f(a),g(a)) &\sim &\hom_{\uni}(1,\laxlim_{a\to b: S(A)}h^*\hom_{B}(\uvar,\uvar)) &(\ref{theo:Yoneda lemma})\\
&\sim &\hom_{\gHom(S(A),\uni)}(\cst 1,h^*\hom_{B}(\uvar,\uvar))\\
&\sim &\hom_{\uHom(A^t\times A,\uni)}(h_! \cst 1,\hom_{B}(\uvar,\uvar))&(\ref{cor:naive kan extension})\\
\end{array}$$
By construction, $h_! \cst 1$ is the Grothendieck deconstruction of the left cartesian fibration $\Lb h_!id \sim h$, and so is equivalent to $\hom_A$. 
We then have 
$$\laxlim_{a\to b: S(A)}\hom_{B}(f(a),g(a))\sim \hom_{\uHom(A^t\times A,\uni)}(\hom_A(\uvar,\uvar),\hom_B(f(\uvar),g(\uvar)))$$
We have a canonical equivalence $\uHom(A^t\times A,\uni)\sim \uHom(A,\w{A})$ sending the functor $\hom_A$ to the Yoneda embedding $y^A$, and $\hom_B(f(\uvar),g(\uvar))$ is $f^*(y^B\circ g)$. 
We then have 
$$
\begin{array}{rcll}
\hom_{}(\hom_A(\uvar,\uvar),\hom_B(f(\uvar),g(\uvar)))&\sim &\hom_{\uHom(A,\w{A})}(y^A,f^*(y^B\circ g))\\
&\sim &\hom_{\uHom(A,\w{B})}(f_!\circ y^A,y^B\circ g)&(\ref{cor:naive kan extension})\\
&\sim &\hom_{\uHom(A,\w{B})}(y^B\circ f,y^B\circ g)&(\ref{prop:left extension commutes with Yoneda})\\
&\sim &\hom_{\uHom(A,B)}(f, g)&(\mbox{Yoneda lemma})\\
\end{array}$$
\end{proof}

\p We suppose the existence of a Grothendieck universe $\Z$ containing $\Wcard$. As a consequence, we can use all the results of the last three subsections to respectively $\V$-small and locally $\V$-small objects.

Let $A$ be a $\U$-small $\io$-category. Let $f$ be an object of $\w{A}$. We define $A^\sharp_{/f}$ as the following pullback
\[\begin{tikzcd}
	{A^\sharp_{/f}} & {\w{A}^\sharp_{/f}} \\
	{A^\sharp} & {\w{A}^\sharp}
	\arrow[from=1-1, to=2-1]
	\arrow[from=1-1, to=1-2]
	\arrow[from=1-2, to=2-2]
	\arrow[from=2-1, to=2-2]
\end{tikzcd}\]

\begin{theorem}
\label{theo:presheaevs colimi of representable}
The colimit of the functor 
$\pi:A^\sharp_{/f}\to A^\sharp\to \w{A}^\sharp$ is $f$.
\end{theorem}
\begin{proof}
We denote by $\pi'$ the canonical projection $\w{A}^\sharp_{/f}\to \w{A}^\sharp$, and 
$t_{A^\sharp_{/f}}:A^\sharp_{/f}\to 1$, $t_{ \w{A}^\sharp_{/f}}:\w{A}^\sharp_{/f}\to 1$ the canonical morphisms.
By the explicit construction of colimits in $\io$-presheaves, we have equivalences 
$$\int_{A^t}\colim_{A^\sharp_{/f}}\pi \sim (id_{(A^t)^\sharp}\times t_{A^\sharp_{/f}})_!E
~~~~~~~~
\int_{A^t}\colim_{\w{A}^\sharp_{/f}}\pi' \sim (id_{(A^t)^\sharp}\times t_{ \w{A}^\sharp_{/f}})_!F$$
where $E$ is the object of $\LCart(A^\sharp\times A^\sharp_{/f})$ induced by currying $\pi$, 
and $F$ is the object of $\LCart(A^\sharp\times \w{A}^\sharp_{/f})$ induced by currying $\pi'$.
We denote by $X\to A^\sharp\times A^\sharp_{/f}$ the left cartesian fibration corresponding to $E$,
and by $Y\to (A^t)^\sharp\times \w{A}^\sharp_{/f}$ the left fibration corresponding to $F$. All this data fits in the diagram
\[\begin{tikzcd}
	X && {S(A)} \\
	& Y && {\dom(\int_{A^t\times \w{A}}\ev)} \\
	{(A^t)^\sharp\times A^{\sharp}_{/f}} && {(A^t)^\sharp\times A^{\sharp}} \\
	& {(A^t)^\sharp\times (\w{A})^{\sharp}_{/f}} && {(A^t)^\sharp\times \w{A}^\sharp}
	\arrow["E", from=1-1, to=3-1]
	\arrow[from=3-1, to=3-3]
	\arrow[from=1-1, to=1-3]
	\arrow[from=1-3, to=3-3]
	\arrow[from=3-3, to=4-4]
	\arrow[from=3-1, to=4-2]
	\arrow["i", from=1-3, to=2-4]
	\arrow[from=2-4, to=4-4]
	\arrow[from=4-2, to=4-4]
	\arrow["j", from=1-1, to=2-2]
	\arrow[from=2-2, to=2-4]
	\arrow["F"{pos=0.4}, from=2-2, to=4-2]
\end{tikzcd}\]
where all squares are cartesian. 
Furthermore, according to the Yoneda lemma, $\dom(\int_{A^t\times \w{A}}\ev))$ is equivalent to $\dom(\int_{A^t\times \w{A}}\hom_{\w{A}}(y_{\uvar},\uvar))$, and 
lemma \ref{lemma:a particular Kan extension} implies that $i$ is initial. As the lower horizontal morphism is a right cartesian fibration, and the dual version of proposition \ref{prop:left cartesian fibration are smooth} induces that $j$ is initial. 
This implies that the canonical morphism
$$(id_{(A^t)^\sharp}\times \bot_{A^\sharp_{/f}})_!E\to (id_{(A^t)^\sharp}\times \bot_{ \w{A}^\sharp_{/f}})_!F$$
is an equivalence, and we then have
$$\colim_{A^\sharp_{/f}}\pi\sim \colim_{\w{A}^\sharp_{/f}}\pi'$$
However, 
$A^\sharp_{/f}$ admits a terminal element, given by $id_f$, and according to proposition \ref{prop:other characthereisation of limits}, we have 
$$\colim_{A^\sharp_{/f}}\pi\sim f.$$
\end{proof}

\begin{cor}
\label{cor:if cocomplete then Yoneda right adjoint}
A $\U$-small $\io$-category $A$ is lax $\U$-cocomplete if and only if the Yoneda embedding has a left adjoint, which we will also note by \wcnotation{$\laxcolim$}{(laxcolim@$\laxcolim:\widehat{C}\to C$}.
\end{cor}
\begin{proof}
If such a left adjoint exists, as $\w{A}$ is lax $\U$-cocomplete, corollary 
\ref{cor:left adjoint preserves limits} implies that $A$ is lax $\U$-cocomplete. Suppose now that $A$ is lax $\U$-cocomplete and let $f:A^t\to \uni$ be a functor. Let $c$ be the colimit of the functor $A^\sharp_{/f}\to A^\sharp$.
According to theorem \ref{theo:presheaevs colimi of representable}, we have a sequence of equivalences
$$\begin{array}{rcl}
 \hom_{\w{A}}(f,y(a))&\sim &\hom_{\w{A}}(\laxcolim_{A^\sharp_{/f}}y(\uvar),y(a))\\
 &\sim & \laxlim_{A^\sharp_{/f}}\hom_{\w{A}}(y(\uvar),y(a))\\
 &\sim & \laxlim_{A^\sharp_{/f}} \hom_A(\uvar,a)\\
 &\sim &\hom_{A} \hom(c,a)\\
\end{array}$$
natural in $a:A^t$. The functor 
$$a:A\mapsto \hom_{\w{A}}(f,y(a))$$
is then representable, which concludes the proof according to proposition \ref{prop:adj if slice as terminal}.
\end{proof}

\p Let $i:A\to B$ be a functor between two $\U$-small $\io$-categories. We define $N_i:B\to \w{A}$ as
$$a:A^t, b:B\mapsto \hom_B(i(a),b)$$
\begin{cor}
\label{cor:adjonction with prehseaves}
Let $i:A\to B$ be a functor between two $\U$-small $\io$-categories with $B$ lax $\U$-cocomplete. 
The morphism $N_i:B\to \w{A}$
admits a left adjoint that sends an $\io$-presheaf $f$ to $\laxcolim_{A^\sharp_{/f}} i(\uvar)$
\end{cor}
\begin{proof}
The proof is similar to the one of corollary \ref{cor:if cocomplete then Yoneda right adjoint}.
\end{proof}

\subsection{Kan extentions}

We suppose the existence of a Grothendieck universe $\Z$ containing $\Wcard$. As a consequence, we can use all the results of the last three subsections to respectively $\V$-small and locally $\V$-small objects.

\p 
Let $f:A\to B^\sharp$ be a morphism between marked $\U$-small $\io$-categories. This induces for any $\io$-category $C$ a morphism 
$$\uvar\circ f:\gHom(B,C)\to \uHom(A,C).$$
Let $g:A\to C$ be a morphism.
A \notion{left Kan extension} of $g$ along $f$ is a functor \sym{(lanf@$\Lan_fg$}$\Lan_fg:B\to C$ and an equivalence
$$\hom_{\uHom(B,C)}(\Lan_fg,h)\sim \hom_{\gHom(A,C)}(g, h\circ f).$$
Remark that if the left Kan extension along $f$ exists for any $g$, the proposition \ref{prop:adj if slice as terminal} implies that
the assignation $g\mapsto \Lan_fg$ can be promoted to a left adjoint, which is called the \notion{global left Kan extension} of $f$.

\begin{prop}
\label{prop:Kan extension an naive kan extension}
Let $C$ be a $\U$-small $\io$-category, $f:I\to B^\sharp$ a functor between $\U$-small $\io$-categories and $g:I\to \uHom(C,\uni)$ a functor. The functor $g$ then corresponds to a morphism $\tilde{g}:\gHom( C^\sharp\times I,\uni)$.
The left Kan extension of $f$ along $g$ corresponds to the morphism $(id_{C^\sharp}\times f)_!\tilde{g}$.
\end{prop}
\begin{proof}
This is a direct consequence of corollary \ref{cor:naive kan extension}.
\end{proof}
	
\begin{cor}
\label{cor:Kan extension of Yonedal along i}
Let $i:A\to B$ be a morphism between $\U$-small $\io$-categories. The left Kan extension of the Yoneda embedding $y:A\to \w{A}$ along $i$ is $N_i:B\to \widehat{A}$.
\end{cor}
\begin{proof}
According to proposition \ref{prop:Kan extension an naive kan extension}, the desired left Kan extension is given by 
$$(B^t\times i)_!\hom_B$$
which is $N_i$ according to lemma \ref{lemma:a particular Kan extension}.
\end{proof}

\begin{prop}
Let $i:A\to B$ a functor between $\U$-small $\io$-categories. The left Kan extension of $y^B\circ i$ along $y^A$ is given by $i_!$.
\end{prop}
\begin{proof}
Let $i:A\to B$ be any functor. Remark first that the Yoneda lemma and the corollary \ref{cor:Kan extension of Yonedal along i} imply that the left Kan extension of $y:A\to \w{A}$ along $y:A\to \w{A}$ is the identity of $\w{A}$.
We then have a sequence of equivalences
$$
\begin{array}{rcll}
\hom_{\uHom(\w{A},\w{A})}(i_!,f)&\sim &\hom_{\uHom(\w{A},\w{A})}(id,i^*\circ f)&(\ref{cor:naive kan extension}) \\
&\sim & \hom_{\uHom(A,\w{A})}(y_A,i^*\circ f\circ y^A)&(\mbox{Yoneda lemma}) \\
&\sim & \hom_{\uHom(A,\w{B})}(i_! \circ y^A, f\circ y^A)&(\ref{cor:naive kan extension}) \\
&\sim & \hom_{\uHom(A,\w{B})}( y_B\circ i, f\circ y^A)&(\ref{prop:left extension commutes with Yoneda})\\
\end{array}
$$
natural in $f:\uHom(\w{A},\w{B})$.
\end{proof}

\begin{cor}
For any morphism $A\to B$ between $\U$-small $\io$-categories with $B$ lax $\U$-cocomplete, there exists a unique colimit preserving functor $\w{A}\to B$ extending $i$.
\end{cor}
\begin{proof}
Let $|\uvar|_i: \w{A}\to B$ be the functor defined in corollary \ref{cor:adjonction with prehseaves}. 
As this functor is an extension of $A$, it fulfills the desired condition, that shows the existence.
The $\io$-category of functors verifying the desired property is given by the pullback 
\[\begin{tikzcd}
	{\uHom_!(\w{A},B)_{i}} & {\uHom_!(\w{A},B)} \\
	{\{i\}} & {\uHom(A,B)}
	\arrow[from=2-1, to=2-2]
	\arrow[from=1-2, to=2-2]
	\arrow[from=1-1, to=2-1]
	\arrow[from=1-1, to=1-2]
\end{tikzcd}\]
where $\uHom_!(\w{A},B)$ is the full sub $\io$-category of $\uHom(\w{A},B)$ whose objects are colimit preserving functors.
As $|\uvar|_i$ is the left Kan extension of $i$ along the Yoneda embedding, there is a transformation 
$$|\uvar|_i\to h$$ natural in $h:\uHom(\w{A},B))_{i}$. To conclude, one has to show that for any object $h$ of $\uHom(\w{A},B))_{i}$ , $|\uvar|_i\to h$ is an equivalence, and so that for any object $f$ of $\w{A}$, $|f|_i\to h(f)$ is an equivalence. As $f$ is a lax colimit of representables as shown in theorem \ref{theo:presheaevs colimi of representable} and as both $|\uvar|_i$ and $h$ preserve lax colimits, this is immediate.
\end{proof}

\begin{cor}
Let $A,B$ and $C$ be three $\U$-small $\io$-categories with $B$ lax $\U$-cocomplete, and
$i:A\to C$ and $f:A\to B$ two functors. The left Kan extension of $i$ along $f$ is given by the composite functor.
$$B\xrightarrow{N_f}\w{A}\xrightarrow{i_!}\w{C}\xrightarrow{\laxcolim_{}} C$$
\end{cor}
\begin{proof}
We have a sequence of equivalences
$$\begin{array}{rcll}
\hom_{\uHom(C,B)}(\laxcolim_{}\circ i_!\circ N_f,h)&\sim & \hom_{\uHom(C,\w{A})}( N_f,i^*\circ y^B\circ h)\\
&\sim & \hom_{\uHom(A,\w{A})}( y^A,i^*\circ y^B\circ h\circ f)&(\ref{cor:Kan extension of Yonedal along i})\\
&\sim & \hom_{\uHom(A,\w{B})}(i_!\circ y^A, y^B\circ h\circ f)&(\ref{cor:naive kan extension})\\
&\sim & \hom_{\uHom(A,\w{B})}(y^B\circ i, y^B\circ h\circ f)&(\ref{prop:left extension commutes with Yoneda})\\
&\sim & \hom_{\uHom(A,B)}( i, h\circ f)&(\ref{theo:Yoneda ff})
\end{array}$$
natural in $h:\uHom(C,B)$.
\end{proof}


\cleardoublepage
\phantomsection
\addcontentsline{toc}{part}{Index of symbols} 
\printindex[notation]
\clearpage
\phantomsection
\addcontentsline{toc}{part}{Index of notions} 
\printindex[notion]

\cleardoublepage
\phantomsection
\addcontentsline{toc}{part}{Bibliography} 
\bibliography{biblio}{}
\bibliographystyle{alpha}

\newpage
$~$
\end{document}